\theoremstyle{plain}
\newtheorem{thm}{Theorem}[subsection]
\newtheorem{lem}[thm]{Lemma}
\newtheorem{cor}[thm]{Corollary}
\newtheorem{prop}[thm]{Proposition}
\newtheorem{constr}[thm]{Construction}
\theoremstyle{remark}
\newtheorem{rem}[thm]{Remark}
\newtheorem{note}[thm]{Note}
\newtheorem{rmind}[thm]{Reminder}
\theoremstyle{definition}
\newtheorem{defn}[thm]{Definition}
\newtheorem{nota}[thm]{Notation}
\newtheorem{exmp}[thm]{Example}
\numberwithin{equation}{section}
\numberwithin{figure}{section}
\DeclareMathOperator{\PCon}{PCon}
\DeclareMathOperator{\Con}{Con}
\renewcommand{\Im}{\operatorname{Im\,}}
\begin{document}
\pagenumbering{arabic}

\title{Algebraic~Aspects~in Tropical~Mathematics}
\author{Tal Perri \\ Mathematics Department, Bar-Ilan University \\ Under the supervision of Professor Louis Rowen}%
\maketitle

\makeatletter
\let\thetitle\@title
\let\theauthor\@author
\makeatother

\begin{abstract}
 Much like in the theory of algebraic geometry, we develop a correspondence between certain types of  algebraic and geometric objects. The basic algebraic environment we work in is the a semifield of fractions $\mathbb{H}(x_1,...,x_n)$ of the polynomial semidomain $\mathbb{H}[x_1,...,x_n]$, where $\mathbb{H}$ is taken to be a bipotent semifield, while for the geometric environment we have the space $\mathbb{H}^n$ (where addition and scalar multiplication are defined coordinate-wise). We show that taking $\mathbb{H}$ to be bipotent makes both $\mathbb{H}(x_1,...,x_n)$ and $\mathbb{H}^n$ idempotent which turn out to satisfy many desired properties that we utilize for our construction.\\
The fundamental algebraic and geometric objects having interrelations are called  \emph{kernels}, encapsulating congruences over semifields (analogous to ideals in algebraic geometry) and \emph{skeletons} which serve as the analogue for zero-sets of algebraic geometry. As an analogue for the celebrated Nullstellenzats theorem which provides a correspondence between radical ideals and zero sets, we develop a correspondence between skeletons and a family of kernels called \emph{polars} originally developed in the theory of lattice-ordered groups.
For a special kind of skeletons, called \emph{principal skeletons}, we have simplified the correspondence by restricting our algebraic environment to a very special semifield which is also a kernel of $\mathbb{H}(x_1,...,x_n)$.\\
After establishing the linkage between kernels and skeletons we proceed to construct a second linkage, this time between a family of skeletons and what we call  `corner-loci'. Essentially a corner locus is what is called a tropical variety in the theory of tropical geometry, which is a set of corner roots of some set of tropical polynomials. The relation between a skeleton and a corner-locus is that they define the exact same subset of $\mathbb{H}^n$, though in different ways: while a corner locus is defined by corner roots of tropical polynomials, the skeleton is defined by an equality, namely, equating fractions from $\mathbb{H}(x_1,...,x_n)$ to $1$. All the connections presented above form a path connecting a tropical variety to a kernel. In this paper we also develop a correspondence between supertropical varieties (generalizing tropical varieties) introduced by Izhakian, Knebusch and Rowen to the lattice of principal kernels. Restricting this correspondence to a sublattice of kernels, whom we call regular kernels, yields the correspondence described above. The research we conduct shows that the theory of supertropical geometry is in fact a natural generalization of tropical geometry.\\
We conclude by developing some algebraic structure notions such as composition series and convexity degree, along with some notions holding a geometric interpretation, like reducibility and hyperdimension.
\end{abstract}

\tableofcontents
\clearpage

\section{Overview}

In the following overview, we will establish a linkage between our construction and the widely known theory of algebraic geometry, in order to give the reader some additional insights.\\

As noted in the abstract,  we develop a geometric structure corresponding to a semifield of fractions $\mathbb{H}(x_1,...,x_n)$ over a bipotent semifield $\mathbb{H}$. We concentrate on the case where $\mathbb{H}$ is a divisible archimedean semifield, motivated by our interest to link the theory to tropical geometry. We also consider a divisible bipotent archimedean semifield $\mathscr{R}$, which is also complete (and thus isomorphic to the positive reals with max-plus operations). \\

The role of ideals is played by an algebraic structures called kernels. While ideals encapsulate the structure of the preimages of zero of rings homomorphisms, kernels encapsulate the preimages of $\{1\}$ with respect to semifield homomorphisms with the substantial difference that considering homomorphisms one gets a sublattice of kernels with respect to intersection and multiplication, $(\Con(\mathscr{R}(x_1,...,x_n)), \cdot, \cap)$.\\

For each kernel $K$ of $\Con(\mathscr{R}(x_1,...,x_n))$, we define an analogue for zero sets corresponding to ideals, namely, a skeleton $Skel(K) \subseteq \mathscr{R}^n$, which is defined to be the set of all points in $\mathscr{R}^n$ over which all the elements of the kernel $K$ are evaluated to be $1$. \\

While in the classical theory all ideals are finitely generated, in our case we explicitly consider the finitely generated kernels in $\Con(\mathscr{R}(x_1,...,x_n))$. This family of finitely generated kernels in $\Con(\mathscr{R}(x_1,...,x_n))$ forms a sublattice of $(\Con(\mathscr{R}(x_1,...,x_n)), \cdot, \cap)$, which we denote as $\PCon(\mathscr{R}(x_1,...,x_n))$. At this point, the theory is considerably simplified, since a  well-known result in the theory of semifields states that every finitely generated kernel is principal, i.e., generated as a kernel by a single element.

The generator of a principal kernel is not unique, though there is a designated set of generators which provides us with many tools for implementing the theory. \\

First we establish a Zariski-like correspondence between a special kind of kernels called polars and general skeletons. Then we find some special semifield of $\mathscr{R}(x_1,...,x_n)$ over which the above correspondence gives rise to a simplified correspondence between principal (finitely generated) kernels of and principal (finitely generated) skeletons which hold most of our interest. We show that a skeleton is uniquely defined by any of its corresponding kernel generators, define a maximal kernel, and show that the maximal principal kernels correspond to points in $\mathscr{R}^n$.  Along the way, we provide some theory concerning general (not necessarily principal) kernels and skeleton.\\

The geometric interpretation established for the theory of semifields provides us with a topology, called the Stone Topology, defined on the family of the so-called  irreducible kernels (analogous to prime ideals) and the family of maximal kernels. This topology is in essence the semifields version of the famous Zariski topology.\\

After establishing this geometric framework, we introduce a map linking  supertropical varieties (cf. \cite{SuperTropicalAlg}), which we call  `Corner loci', to a subfamily of principal skeletons. We use the latter to construct a correspondence between principal supertropical varieties (the analogue for hypersurfaces of algebraic geometry) and a lattice of  kernels generated by special principal kernels which we call corner-integral. In addition,  we characterize a family of skeletons that coincide with the family of `regular' tropical varieties via which we obtain a correspondence with a sublattice of principal kernels called regular kernels.\\

Finally, we establish a notion of reducibility and some other notions and properties of kernels and their corresponding skeletons, such as convex-dependence, dimensionality etc.\\
We begin our thesis by introducing the relevant results in the theory of semifields.

\newpage
\section{Background}
\ \\

\subsection{Basic notions in lattice theory}

\ \\

\begin{defn}
A poset $(X, \leq)$ is \emph{directed} if
for any pair of elements $a, b \in X$ there exists  $c \in X$ such that $a \leq c$ and $b \leq c$, i.e., $c$ is an upper bound for $a$ and $b$.\\
A poset $(X, \leq)$ is a \emph{lattice} or \emph{lattice-ordered set} if
for $a, b \in X$, the set $\{a, b\}$ has a join $a \vee b$ (also known as the least upper bound, or the supremum)
and a meet $a \wedge b$ (also known as the greatest lower bound, or the infimum).\\
Equivalently, a lattice can be defined as a directed poset $(X, \vee, \wedge )$, consisting of a set $X$ and two associative and commutative binary operations $\vee$ and $\wedge$ defined on $X$, such that for all elements $a, b \in X$.
$$a \vee ( a \wedge b) = a \wedge (a \vee b) = a.$$
The first definition is derived from the second by defining a partial order on $X$ by
$$a \leq  b \Leftrightarrow a = a \wedge b \ \ \text{or equivalently,} \ a \leq  b \Leftrightarrow b = a \vee b.$$
\end{defn}

\begin{defn}
A lattice $(X, \vee, \wedge )$ is said to be \emph{distributive} if the following condition holds for any $a,b,c \in X$:
$$a \wedge (b \vee c) = (a \wedge b) \vee (a \wedge c).$$
A fundamental result in lattice theory states that this condition is equivalent to the condition
$$a \vee (b \wedge c) = (a \vee b) \wedge (a \vee c).$$
\end{defn}

\begin{defn}\label{defn_cond_complete}
A \emph{conditionally complete} lattice is a poset $P$ in which every nonempty subset that has an upper bound in $P$ has a least upper bound (i.e., a supremum) in $P$ and every nonempty subset that has an lower bound in $P$ has an infimum in $P$. A lattice $P$ is said to be \emph{complete} if all its subsets have both a join and a meet.
\end{defn}

\begin{exmp}
The poset of the real numbers $\mathbb{R}$ is conditionally complete, and become complete when adjoining ${-\infty, \infty}$. Likewise the positive reals $\mathbb{R}^{+}$ is conditionally complete, and become complete when adjoining ${0, \infty}$. Note that for example the set $\{1/n : n \in \mathbb{N} \} \subset \mathbb{R}^{+}$ is not bounded below in $\mathbb{R}^{+}$ since $0 \not \in \mathbb{R}^{+}$.
\end{exmp}

\begin{defn}\label{defn_completely_closed}
The subset $A$ of the poset $P$ is said to be \emph{completely closed} in $P$ if
$A$ contains the least upper bound or greatest lower bound of any of its nonempty
subsets, if either exists in $P$.
\end{defn}

\begin{defn}
Given two lattices $(X, \vee_X, \wedge_X)$ and $(Y, \vee_Y, \wedge_Y)$, a \emph{homomorphism of lattices} or \emph{lattice homomorphism} is a function $f : X \rightarrow Y$ such that
$$f(a \vee_X b) = f(a) \vee_Y f(b), \ \text{and} \ f(a \wedge_X b) = f(a) \wedge_Y f(b).$$
\end{defn}

\ \\
\subsection{Semifields}

\ \\

\subsubsection*{Basic setting and assumptions}
\ \\

\begin{defn}
A \emph{semiring}  $\mathbb{S}$ is a set $S$ equipped with two binary operations $+$ and $\cdot$, such that $(S, +)$ is a commutative monoid with identity element $0$, $(S, \cdot)$ is a monoid with identity element $1$, multiplication left and right distributes over addition and multiplication by $0$ annihilates $S$. $\mathbb{S}$ is called commutative when $(S, \cdot)$ is commutative.\\
$\mathbb{S}$ is called a \emph{domain} when $\mathbb{S}$ is multiplicatively cancellative.
\end{defn}

\begin{defn}\label{defn_semifield}
A \emph{semifield} is a semiring  $(\mathbb{H},+,\cdot)$ in which all nonzero elements have a multiplicative inverse. A semifield is said to be \emph{proper} if it is not a field.
\end{defn}

\begin{note}
Though in general, a semifield is not assumed to be commutative, in the scope of our study we consider commutative semifields. Thus, we always assume a semifield to be commutative. Nevertheless, we introduce some definitions and results in the wider context in which a semifield is not necessarily commutative. For example, the notion of a semifield-kernel is defined as a normal subgroup implying the definition refers to the wider context.
\end{note}

From now on, unless stated otherwise we assume a semifield to be commutative and proper.

\begin{lem}\label{lem_proper_semifield}
If \ $\mathbb{H}$ is a proper semifield then $a + b \neq 0$ for all $a,b \in \mathbb{H} \setminus \{0\}$.
\end{lem}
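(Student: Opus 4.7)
The plan is to argue by contrapositive: I will assume that there exist nonzero $a,b \in \mathbb{H}$ with $a+b=0$ and deduce that $\mathbb{H}$ must in fact be a field, contradicting the hypothesis that $\mathbb{H}$ is proper.

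First I would exploit the multiplicative structure. Since $a \neq 0$, the element $a$ has a multiplicative inverse $a^{-1}$ in $\mathbb{H}$. Multiplying the identity $a+b=0$ by $a^{-1}$ and using distributivity together with the fact that $0$ annihilates $\mathbb{H}$, I obtain $1 + a^{-1}b = 0$. Thus the element $u := a^{-1}b \in \mathbb{H}$ is an additive inverse of $1$.

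Next I would propagate this additive inverse to every element of $\mathbb{H}$. For any $c \in \mathbb{H}$, distributivity gives
\[
c + cu \;=\; c\cdot 1 + c\cdot u \;=\; c\,(1+u) \;=\; c\cdot 0 \;=\; 0,
\]
so $cu$ serves as an additive inverse of $c$. Consequently every element of $\mathbb{H}$ has an additive inverse, which together with the existing semifield structure makes $\mathbb{H}$ a field, contradicting the assumption that $\mathbb{H}$ is proper.

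I do not anticipate a genuine obstacle here; the only subtlety worth flagging is the use of the semiring axiom that multiplication by $0$ annihilates $\mathbb{H}$, which is what licenses the passage from $a+b=0$ to $a^{-1}(a+b)=0$ and from $c\cdot 0 = 0$ to the final cancellation. Everything else is routine distributivity and the existence of multiplicative inverses guaranteed by Definition~\ref{defn_semifield}.
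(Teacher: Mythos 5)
Your proof is correct and follows essentially the same route as the paper's: multiply $a+b=0$ by $a^{-1}$ to get $1+a^{-1}b=0$, then observe that for any $c$ the element $c\cdot(a^{-1}b)$ is an additive inverse of $c$, so $\mathbb{H}$ is a field. You are merely more explicit about invoking the annihilation axiom, which is a reasonable thing to flag.
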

\begin{proof}
Let $\mathbb{H}$ be a semifield and let $a \in \mathbb{H}$ be any element of $\mathbb{H}$. If there exists $b \in \mathbb{H}$ such that $a+b = 0$, then $1+a^{-1}b = 0$. Thus $-1 = a^{-1}b \in \mathbb{H}$ and thus for any $c \in \mathbb{H}$, we have $c + (-1)c = 0$ and $-c \in \mathbb{H}$, yielding that $\mathbb{H}$ is a field.
\end{proof}

\begin{note}
Throughout this dissertation, we assume a semifield to be a proper semifield.
In view of Lemma \ref{lem_proper_semifield}, this implies that every element of a semifield is not invertible with respect to addition. This makes the zero element somewhat redundant. Thus we generally assume a proper semifield to have no zero element. Whenever we choose to adjoin such an element we will indicate it.
\end{note}
\ \\

\begin{exmp}\label{exmp_semifield_examples}
The following are some well-known examples for semifields:
\begin{enumerate}
  \item The positive real numbers with the usual addition and multiplication form a (commutative) semifield.
  \item The rational functions of the form $f/g$, where $f$ and $g$ are polynomials in one variable with positive coefficients, comprise a (commutative) semifield.
  \item The max-plus algebra, or the tropical semiring, $(\mathbb{R}, max, +)$ is a semifield. Here the sum of two elements is defined to be their maximum, and the product to be their ordinary sum.
  \item If $(A,\leq)$ is a lattice ordered group then $(A,+,·)$ is an additively idempotent semifield. The semifield sum is defined to be the sup of two elements. Conversely, any additively idempotent semifield $(A,+,·)$ defines a lattice-ordered group $(A,\leq)$, where $a \leq b$ if and only if $a + b = b$.
\end{enumerate}
\end{exmp}

\begin{rem}
As stated in Example \ref{exmp_semifield_examples}(4) (additively) idempotent semifields correspond to lattice ordered groups. In the stated correspondence, the addition operation $+$ of the semifield coincides with the so-called 'join' operation $\vee$ defined on the underlying lattice structure of the lattice ordered group. Due to this and in order to emphasize the underlying lattice structure of the semifield,  when considering idempotent semifields we denote addition by $\dotplus$.
\end{rem}

\begin{defn}
An \emph{idempotent semiring} is a semiring $\mathbb{S}$ such that $$\forall a \in \mathbb{S} : a + a = a.$$
An \emph{idempotent semifield} is  an idempotent semiring which is a semifield. Note that considering a semifield
the condition of idempotency is equivalent to demanding that $1 + 1 = 1$, as $a = 1a = (1+1)a = a+a$.
\end{defn}

\begin{defn}
 We define a \emph{bipotent semiring} $\mathbb{S}$ to be a semiring with 1 (multiplicative identity element) admitting bipotent addition, i.e.,
 $ \alpha + \beta  \in  \{\alpha, \beta \}  $ for any $\alpha, \beta \in \mathbb{S}$.
 When $\mathbb{S}$ is a semifield, i.e., every nonzero element of $\mathbb{S}$ is invertible with respect to multiplication, we say that $\mathbb{S}$ is a \emph{bipotent semifield}.
\end{defn}

\begin{rem}
A bipotent semifield (semiring) is a special case of an idempotent semifield (semiring).
\end{rem}

Motivated by tropical geometry, we have a special interest in the following particular semiring.
\begin{defn}\label{defn_max_semifield}
Let $(H,\cdot,1)$ be a lattice ordered monoid. Define addition on $H$ to be the operation of supremum, denoted by $\dotplus$, i.e., for any $a,b \in H$
\begin{equation}
a \dotplus b = \sup(a,b).
\end{equation}
Adjoin a zero element $0$ to $H$ such that $\forall a \in H \ : \ a \dotplus 0 = 0 \dotplus a = a$.
Then $\mathbb{H} = (H,\cdot,\dotplus, 1, 0)$ is a semiring. If $\mathbb{H}$ is totally ordered then supremum is maximum and the semiring is bipotent.
Taking $\mathbb{H}$ to be a multiplicative group, $\mathbb{H}$ becomes an idempotent semifield.
In such a case, for $a,b \in \mathbb{H}$, $(a \dotplus  b)^{-1} = \inf(a^{-1}, b^{-1})$. Thus $\dotplus$ induces a infimum operation, to be denoted by $\wedge$, such that $$a \wedge b = (a^{-1} \dotplus b^{-1})^{-1} = \inf(a,b).$$ In particular, for any $a \in \mathbb{H}$, $(a \dotplus a^{-1})^{-1} = a \wedge a^{-1}$.\\
If $\mathbb{H}$ is totally ordered infimum is minimum.
\end{defn}

\begin{rem}
The semifield described in Definition \ref{defn_max_semifield}, is idempotent (bipotent) and thus can be viewed as a commutative lattice ordered-group (totally ordered) $(\mathbb{H}, \dotplus , \cdot)$ where for $a,b \in \mathbb{H}$,  $a \leq b \Leftrightarrow a \dotplus b = b$.
\end{rem}


\begin{flushleft}The following lemma and the subsequent remarks establish the connection between a semifield addition and a natural order it induces.\end{flushleft}

\begin{lem}\label{lem_quasi_identity}
For every proper semifield $\mathbb{H}$, the following quasi-identity holds:
\begin{equation}
a+b+c= a \Rightarrow a+b=a
\end{equation}
for $a,b,c \in \mathbb{H}.$
\end{lem}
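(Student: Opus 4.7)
My first move is to normalize: dividing the hypothesis $a+b+c = a$ by $a$ (which is invertible since $a \neq 0$ in a semifield), I reduce to proving the statement
\[
1 + y + z = 1 \ \Longrightarrow \ 1 + y = 1,
\]
where $y = a^{-1}b$ and $z = a^{-1}c$ are nonzero elements of $\mathbb{H}$. Once $1+y=1$ is established, multiplying back by $a$ gives $a+b=a$.

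The next step is to exploit the multiplicative structure. Multiplying $1 + y + z = 1$ by $(b+c)^{-1} = a \cdot (b+c)^{-1}$ after un-normalizing—or directly observing that multiplication by $y+z$ is invertible since $y+z \neq 0$ by Lemma~\ref{lem_proper_semifield}—produces the identity
\[
(1+y)(1+z) = 1 + y + z + yz = 1 + yz,
\]
which recasts the hypothesis multiplicatively. Similarly, multiplying by $(1+y)^{-1}$ (nonzero by Lemma~\ref{lem_proper_semifield}) yields
\[
1 + (1+y)^{-1}z = (1+y)^{-1}.
\]
I would read this as saying $(1+y)^{-1}$ lies ``above'' $1$ in the natural relation induced by addition, while $(1+y) + z = 1$ says $1+y$ lies ``below'' $1$ in that same relation. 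Combining these two statements via multiplicative cancellation—together with the fact, ensured by Lemma~\ref{lem_proper_semifield}, that every sum of nonzero elements we encounter is itself nonzero and hence invertible—should force $(1+y)^{-1} = 1$, i.e.\ $1+y = 1$.

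The hard part, and the place where the genuine content of Lemma~\ref{lem_proper_semifield} enters, is bridging the following gap: the claim is in essence the antisymmetry of the algebraic preorder $x \le y \Leftrightarrow \exists w\in\mathbb{H} : x + w = y$, since $a+b \le a$ (witness $c$) and $a \le a+b$ (witness $b$) together with antisymmetry would give $a+b=a$ immediately. Arguing antisymmetry directly is circular, so the proof must proceed by a purely multiplicative manipulation that never presupposes it. The subtlety is that the hypothesis is \emph{vacuous} for additively cancellative semifields such as $\mathbb{R}^+$ (where $a+b+c>a$ always), and \emph{nontrivial} for idempotent ones such as max-plus; a unified proof must handle both regimes. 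I expect the decisive step to be an appeal to Lemma~\ref{lem_proper_semifield} to rule out additive inverses and thereby force the multiplicative computation to collapse $1+y$ onto $1$.
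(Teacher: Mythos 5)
Your framing is exactly right: normalize by $a$, invoke Lemma~\ref{lem_proper_semifield} to guarantee that the relevant sums are nonzero and hence invertible, and look for a multiplicative identity to which cancellation can be applied. You also correctly diagnose that arguing antisymmetry of the preorder $\leq$ directly would be circular, and that the proof must handle both the additively cancellative regime (where the hypothesis is vacuous) and the idempotent one. This matches the strategy of the paper's proof in spirit.

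The gap is that neither of the two identities you actually derive is cancellation-ready, and you do not supply the missing manipulation (you say ``should force'' and ``I expect the decisive step to be\dots'' without carrying it out). Concretely: $(1+y)(1+z)=1+yz$ has no nonzero factor common to both sides that you could cancel to reach $1+y=1$; and $1+(1+y)^{-1}z=(1+y)^{-1}$ is, after setting $u=(1+y)^{-1}$, just $1+uz=u$, i.e.\ the same shape as the hypothesis $1+y+z=1$ with different symbols, so it makes no progress. The identity you need is obtained by a different manipulation: starting from $1+y+z=1$, multiply by $y$ to get $y+y^2+yz=y$, then add $z$ to both sides to get $y+y^2+yz+z=y+z$, and factor the left side as $(y+z)(1+y)$. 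This gives $(y+z)(1+y)=y+z$, and \emph{now} Lemma~\ref{lem_proper_semifield} (which says $y+z\neq 0$) lets you cancel $y+z$ to conclude $1+y=1$. This is precisely the factorization the paper produces (there, in unnormalized form, by multiplying $a+b+c=a$ by $a^{-1}ba^{-1}$ and adding $ca^{-1}$). Your proposal reaches the threshold but stops short of this one algebraic move.
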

\begin{proof}
Let $a+b+c = a$. Multiplying both sides of the equation by $a^{-1}ba^{-1}$ and then adding $ca^{-1}$ to both parts yield
that $(ba^{-1} + ca^{-1})(1+ba^{-1}) = ba^{-1} + ca^{-1}$. Hence, as multiplicative cancellation holds, we get $1+ba^{-1} = 1$ and therefore $a+b = a$.
\end{proof}

\begin{rem}\label{rem_natural_order_on_semifield}
Every (commutative with respect to addition) semifield $\mathbb{H}$ is endowed with a partial order defined by
\begin{equation}\label{order1}
a \leq b \Leftrightarrow a=b \ \text{or} \  a+c = b \ \text{for some} \ c \in \mathbb{H}
\end{equation}
and is ordered with respect to a natural order, i.e., $a \leq b$ implies that $a+c \leq b+c$, $ac \leq bc$ and $ca \leq cb$ for all $a,b,c \in \mathbb{H}$.
In the special case of idempotent semifields, the relation~\eqref{order1} can be rephrased as
\begin{equation}\label{order2}
a \leq b \Leftrightarrow a+b = b.
\end{equation}
\end{rem}

\begin{note}
In the literature (for example in \cite{Hom_Semifields}), semifields are sometimes not \linebreak assumed to be commutative with respect to addition and thus do not always have a natural order.
 Since the semifields we consider are additively commutative (abelian), a semifield in our scope is always partially ordered with respect to the natural order.
\end{note}

\begin{note}
For two elements $a,b$ of a semifield $\mathbb{H}$, we say that $a$ and $b$ are \emph{comparable} (or $a$ is comparable to $b$) if $a \leq b$ or $b \leq a$. \\
A delicate point arises when considering functions over some semifield. For example, consider the semifield of fractions in one variable $\mathbb{H}(x)$ with $\mathbb{H}$ a semifield. Although $x + 1 \geq x$ and $x \neq x + 1$, one has $x+1 \not > x$ (for example take $x=1$).
\end{note}

\begin{defn}\label{defn_frob_property}
A semiring $S$ is said to satisfy the \emph{Frobenius property} if for every $m \in \mathbb{N}$ and for every $a,b \in S$:
$$(a+b)^m = a^{m} + b^{m}.$$
\end{defn}
\begin{lem}
Every bipotent semifield satisfies the Frobenius property.
\end{lem}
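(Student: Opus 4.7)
The plan is to reduce the identity $(a+b)^m = a^m + b^m$ to a monotonicity statement about powers, using the fact that bipotence forces a total order.

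First I would observe that in a bipotent semifield the natural order from Remark~\ref{rem_natural_order_on_semifield} is total: since bipotence gives $a \dotplus b \in \{a,b\}$, either $a \dotplus b = b$ (so $a \leq b$) or $a \dotplus b = a$ (so $b \leq a$). Without loss of generality I may therefore assume $a \leq b$, in which case $a + b = b$ and consequently $(a+b)^m = b^m$.

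Next I would argue that $a \leq b$ implies $a^m \leq b^m$ for every $m \in \mathbb{N}$. This is a straightforward induction on $m$: the base $m=1$ is the hypothesis, and for the step one multiplies $a \leq b$ by $a^{m-1}$ and $b \leq b$ by $a^{m-1}$ (using the monotonicity of multiplication stated in Remark~\ref{rem_natural_order_on_semifield}), then chains $a^m = a \cdot a^{m-1} \leq b \cdot a^{m-1} \leq b \cdot b^{m-1} = b^m$. Translating $a^m \leq b^m$ back through the natural order gives $a^m + b^m = b^m$, hence $(a+b)^m = b^m = a^m + b^m$, as required.

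I do not anticipate any genuine obstacle here; the argument is essentially a bookkeeping exercise once one notices that bipotence promotes the natural partial order to a total order. The only mild care needed is to invoke the correct version of the order characterization from Remark~\ref{rem_natural_order_on_semifield} (which applies because bipotent implies idempotent) and to use the order-preservation of multiplication in that same remark, so that the induction step goes through without extra hypotheses.
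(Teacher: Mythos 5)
Your proof is correct and follows essentially the same route as the paper: use bipotence to get a total order, assume $a \leq b$ without loss of generality, and reduce both sides to $b^m$. If anything yours is a touch cleaner, since you note that $a+b=b$ gives $(a+b)^m = b^m$ directly, whereas the paper detours through a comparison of monomials $a^{j_1}b^{k_1} \leq a^{j_2}b^{k_2}$ suggestive of a binomial expansion before reaching the same conclusion.
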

\begin{proof}
Since a bipotent semifield is totally ordered, for any pair of elements $a,b$ of the semifield one has that $a \leq b$ or $b \leq a$. Assume that $a \leq b$. Then $a^{j_1}b^{k_1} \leq a^{j_2}b^{k_2}$ for $j_1 + k_1 = j_2 + k_2 = m$ and $j_1 \geq j_2$ which in turn implies that $(a+b)^m = b^{m}$ and so, as $a^m \leq b^m$ we can write  $(a+b)^m = a^{m} + b^{m}$. Analogously $b \leq a$ implies that $(a+b)^m = b^{m} = a^{m} + b^{m}$.
\end{proof}

\begin{rem}
The converse implication does not hold, namely, a semifield satisfying the Frobenius property is not necessarily bipotent. See Example \ref{exmp_frob_not_bipotent}.
\end{rem}
%

\begin{defn}
Let $S_1,S_2$ be semirings. A map $\phi : S_1 \rightarrow S_2$ is a \emph{semiring homomorphism} if for any
$a,b \in S_1$ the following conditions hold:
$$\phi(a \cdot b) = \phi(a) \cdot \phi(b)  \  \text{and} \  \phi(a+b) = \phi(a) + \phi(b).$$
\end{defn}

\begin{rem}
A semiring homomorphism is order preserving, in the sense that for a semiring homomorphism $\phi: S_1 \rightarrow S_2$, if $a,b \in S_1$ such that $a \leq b$, then $\phi(a) \leq \phi(b)$.\\
Indeed, $a \geq b$ yields that there exists $c \in S_1$ such that $a = b + c$ thus $\phi(a) \geq \phi(b)$ since $\phi(a) = \phi(b+c) = \phi(b) + \phi(c)$.
\end{rem}

\begin{prop}\cite[Proposition (2.4)]{Hom_Semifields}
Let $\mathbb{S}_1,...,\mathbb{S}_t$, $t \in \mathbb{N}$, be proper semifields. Then their direct
product $\mathbb{S} = \mathbb{S}_1 \times \dots \times \mathbb{S}_t$, defined as the set
$$\{ (s_l,... ,s_t) :  s_i \in S_i, \ 1 \leq i \leq t\}$$
with component-wise addition and multiplication, is also a proper semifield.
\end{prop}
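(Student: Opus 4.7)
The plan is to verify the semifield axioms for $\mathbb{S}$ coordinate-wise, reducing each required identity to the corresponding axiom in one of the factors $\mathbb{S}_i$, and then separately to check that $\mathbb{S}$ inherits properness.

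First I would check that $(\mathbb{S}, +, \cdot)$ is a commutative semiring. Commutativity and associativity of $+$ and $\cdot$, the existence of the identity elements $1 = (1, \ldots, 1)$ (and, if one chooses to adjoin it, $0 = (0, \ldots, 0)$), and left/right distributivity of $\cdot$ over $+$ all follow immediately from the corresponding axioms in each $\mathbb{S}_i$ applied in each coordinate. This part is wholly routine.

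Next I would verify the existence of multiplicative inverses. Given a nonzero element $(s_1, \ldots, s_t) \in \mathbb{S}$, I would invoke the convention established in the paper (in the note following Lemma \ref{lem_proper_semifield}) that a proper semifield has no zero element. Under that convention every coordinate $s_i$ lies in $\mathbb{S}_i \setminus \{0\}$, so $s_i^{-1}$ exists in $\mathbb{S}_i$, and then $(s_1^{-1}, \ldots, s_t^{-1})$ is manifestly a two-sided inverse of $(s_1, \ldots, s_t)$. Thus $\mathbb{S}$ is a semifield.

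Finally, properness would follow from Lemma \ref{lem_proper_semifield} applied component-wise: if $a = (a_1, \ldots, a_t)$ and $b = (b_1, \ldots, b_t)$ were nonzero elements of $\mathbb{S}$ with $a + b = 0$, then $a_i + b_i = 0$ in each $\mathbb{S}_i$, and since each $\mathbb{S}_i$ is proper this forces $a_i = 0$ or $b_i = 0$ for every $i$, contradicting that both $a$ and $b$ are nonzero in every coordinate. The only subtle point in the whole argument — and the one I would take most care with — is this interpretational issue of what ``nonzero'' means in $\mathbb{S}$: if one admits genuine zero coordinates, an element like $(1, 0) \in \mathbb{S}_1 \times \mathbb{S}_2$ is nonzero yet not invertible, so the proposition as stated really does require the paper's no-zero convention (or an implicit interpretation of ``nonzero'' as ``nonzero in every coordinate''). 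I would flag this convention explicitly at the outset of the proof and then the remaining verifications are mechanical.
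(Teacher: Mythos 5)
The paper itself does not give a proof of this proposition---it is imported verbatim as a citation from Weinert's work on semifield homomorphisms---so there is no argument of the author's to compare against. Your coordinate-wise verification is the standard and correct one, and you have put your finger on the one genuine subtlety: the statement is only true under the paper's running convention that proper semifields carry no zero element, since otherwise an element such as $(1,0) \in \mathbb{S}_1 \times \mathbb{S}_2$ would be ``nonzero'' without being invertible. Flagging this at the outset is exactly right.

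Two small points worth tightening. First, in the properness step, from $a_i + b_i = 0$ and ``$a_i = 0$ or $b_i = 0$'' you should observe that in fact \emph{both} are forced to vanish (if, say, $a_i = 0$, then $b_i = a_i + b_i = 0$), so $a = b = 0$ outright; the phrase ``nonzero in every coordinate'' is not needed and slightly obscures the logic, especially since under the no-zero convention this entire case is vacuous anyway. Second, ``proper'' in the paper means ``not a field'' (Definition~\ref{defn_semifield}); under the no-zero convention $\mathbb{S}$ has no additive identity at all and hence trivially fails to be a field, which is a cleaner route to properness than invoking Lemma~\ref{lem_proper_semifield} a second time. Neither issue affects the soundness of your argument---it is correct as written under the convention you identified.
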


\begin{defn}
A semiring $\mathbb{H}$ is \emph{divisible} (also called \emph{radicalizable}) if for any $n \in \mathbb{N}$ and $\alpha \in \mathbb{H}$, there exists some $\beta \in \mathbb{H}$ such that $\beta^n = \alpha$.
\end{defn}

\begin{rem}
A homomorphic image of a divisible semifield is divisible.
\end{rem}
\begin{proof}
Let $\mathbb{H}$ be a semifield and let $\phi : \mathbb{H} \rightarrow \Im(\phi)$ be a semifield homomorphism.
Since a homomorphic image of a semifield is a semifield, we only need to show that $\Im(\phi)$ is divisible.
Let $a = \phi(\alpha) \in \Im(\phi)$. Since $\mathbb{H}$ is divisible for any $n \in \mathbb{N}$ there exists some $\beta~\in~\mathbb{S}$ such that $\beta^n = \alpha$. Taking $b = \phi(\beta) \in \Im(\phi)$, \ $b^n = \phi(\beta)^n = \phi(\beta^n) = \phi(\alpha) = a$, thus $\Im(\phi)$ is divisible.
\end{proof}

\begin{defn}
A po-group (partially ordered group)  $(G, \cdot)$ is called \emph{archimedean} if $a^{\mathbb{Z}} \leq b$ implies that $a = 1$.
A semifield $(\mathbb{H}, + , \cdot)$ is said to be \emph{archimedean} if $\mathbb{H} \setminus \{0\}$ archimedean as a po-group.
\end{defn}

\begin{note}
The archimedean property is widely used in the context of  totally ordered groups, where every pair of elements are comparable. Those who are used to working in the total order setting, may find the implications of this property in the wider context of partial order groups somewhat confusing.
\end{note}

\begin{defn}\label{defn_divisible_bipotent_base_semifield}
Throughout this chapter we denote by $\mathscr{R}$ the \emph{bipotent} semifield defined in Definition \ref{defn_max_semifield} with the supplementary properties of being divisible, \linebreak archimedean (as a po-group) and complete in the sense that the underlying lattice (with $\vee$ ($\dotplus$) and $\wedge$ as its operations) is conditionally complete.
\end{defn}

\begin{defn}
A \emph{semimodule} $\mathbb{M}$ over a semifield $\mathbb{H}$ is a semigroup $(\mathbb{M},+)$ \linebreak endowed with scalar multiplication such that for every $\alpha \in \mathbb{H}$ and $a \in \mathbb{M}$, $\alpha~\cdot~a~\in~\mathbb{M}$.
A \emph{semialgebra} $\mathbb{A}$ over a semifield $\mathbb{H}$ is a semimodule $(\mathbb{A},+)$ endowed with multiplication, such that $(\mathbb{A}, \cdot)$ is a semigroup and distributivity of multiplication over addition holds.
\end{defn}

\begin{rem}\label{rem_inverse_free}
 If $\mathbb{M}$ is a semimodule over an idempotent semifield $\mathbb{H}$, then $\mathbb{M}$ lacks inverses with respect to addition. \\
Indeed, the idempotency  of $\mathbb{H}$ implies that $\mathbb{M}$ is idempotent with respect to addition. Thus if for some $u \in \mathbb{M}$ there exists $v \in \mathbb{M}$ such that $u+v = 0$ we have that $u = u + 0 =  u+(u+v) = (u+u)+v = u(1+1) + v = u + v = 0$, proving our claim.
\end{rem}
In view of Remark \ref{rem_inverse_free}, any semialgebra (in particular a semifield) over a bipotent semifield is inverse free with respect to addition.

\begin{defn}
A semiring $\mathbb{D}$ is said to be an \emph{extension} of a semifield $\mathbb{H}$ if $\mathbb{D} \supseteq \mathbb{H}$
and $\mathbb{H}$ is a subsemiring of $\mathbb{D}$.
\end{defn}

\begin{defn}\label{defn_affine_semifield_extension}
Let $\mathbb{H}$ be a semifield, and let $\mathbb{D}$ be a semiring extending $\mathbb{H}$.
We say that $\mathbb{D}$ is generated by a subset $A \subset \mathbb{D}$ over $\mathbb{H}$ if every element $a \in \mathbb{D}$ is of the form $\sum_{i=1}^{n}\alpha_i \prod_{j=1}^{m}a_{i,j}^{k_{i,j}}$ with $a_{i,j} \in A $ and $k_{i,j} \in \mathbb{N}$. $\mathbb{D}$ is said to be \emph{affine} over $\mathbb{H}$, or an \emph{affine extension} of $\mathbb{H}$, if $A$ is finite.
\end{defn}

\begin{note}
Later in this section we introduce the notion of a `semifield with a generator'. In this definition, we define generators of a semifield to be elements generating it as a kernel. In order to avoid ambiguity,  when we refer to generation as in Definition~\ref{defn_affine_semifield_extension} we will indicate it explicitly. In any other case we consider generation as a kernel.
\end{note}

\begin{defn}\label{def_semiring_of_functions}
For any set $X$ and any semiring $\mathbb{S}$, $Fun(X,\mathbb{S})$ denotes the set of functions $f : X \rightarrow  \mathbb{S}$.
$Fun(X,\mathbb{S})$ also is a semiring, whose operations are given pointwise:
$$(fg)(a) = f(a)g(a), \ \ (f+g)(a) = f(a) + g(a)$$
for all $a \in X$. The unit element of $Fun(X,\mathbb{S})$ is the constant function always taking on the value $1_\mathbb{S}$.
\end{defn}

\begin{rem}
If $\mathbb{S}$ is a semifield without a zero element then $Fun(X,\mathbb{S})$ is a semifield (without a zero element).
Indeed, for any $f \in Fun(X,\mathbb{S})$ we have that $f^{-1}(x) = f(x)^{-1}$ is the inverse function of $f$.
\end{rem}

Recall that we always assume a semifield does not contain a zero element, unless stated otherwise.

\begin{rem}
For a semiring $\mathbb{S}$, there are two distinct semiring structures arising on $\mathbb{S}[x_1,...,x_n]$.
The first is obtained by considering $\mathbb{S}[x_1,...,x_n]$ as a subset of $Fun(\mathbb{D}^n,\mathbb{D})$ with $\mathbb{D}$ taken to be any extension of $\mathbb{S}$, i.e., the elements of $\mathbb{S}[x_1,...,x_n]$ are considered as functions defined over $\mathbb{D}^n$. The second way is to consider the variables $x_1,...,x_n$ as symbols rather than functions and taking the formal addition and multiplication operations on $\mathbb{S}[x_1,...,x_n]$.
\end{rem}

\begin{note}\label{note_semiring_of_functions}
We always consider the polynomial semiring $\mathbb{S}[x_1,...,x_n]$ (and its semifield of fractions)  mapped to the semiring of functions.
\end{note}

\begin{exmp}\label{exmp_frob_not_bipotent}
For a nontrivial bipotent semifield $\mathbb{H}$, the semiring $\mathbb{H}[x_1,...,x_n]$ (considered as a subsemiring of $Fun(\mathbb{H}^n,\mathbb{H})$) is idempotent but not bipotent (for example the constant function $\alpha$ for any $\alpha \in \mathbb{H}$ and the function $x$ are incomparable). Since $\mathbb{H}$ is bipotent, for any pair of polynomials $f,g \in \mathbb{H}[x_1,...,x_n]$ we have that  $(f(a) + g(a))^m = f(a)^m + g(a)^m$ at any given point $a \in \mathbb{H}^n$. Thus $(f + g)^m = f^m + g^m$ globally over $\mathbb{H}^n$, i.e., as elements of $\mathbb{H}[x_1,...,x_n]$. So $\mathbb{H}[x_1,...,x_n]$ satisfies the Frobenius property though it is not bipotent. Note that the arguments introduced above apply more generally to the semiring of functions yielding that the Frobenius property holds there too.
\end{exmp}

\subsubsection*{The structure of a semifield}

Like a normal subgroup in group theory and an ideal in commutative ring theory, the kernel encapsulates relations on semifields. In the following few paragraphs,  we \linebreak introduce the notion of a kernel along with some of its properties.

\begin{defn}
A subset $K$ of a semifield $\mathbb{H}$ is a \emph{semifield-kernel} of $\mathbb{H}$ if $K$ is a normal subgroup in $\mathbb{H}$ with the convexity property that for every $x,y \in \mathbb{H}$ such that $x+y = 1$,
\begin{equation}\label{defn_ker_convexity}
a,b \in K  \ \Rightarrow xa + by \in K.
\end{equation}
The set of all the kernels of a semifield $\mathbb{H}$ is denoted by $\Con(\mathbb{H})$.
\end{defn}

\begin{note}
We note that the name `kernel' and the notation $Con$ are customary in previous study of semifields.
From now on we refer to a semifield-kernel simply as a `kernel'. In places where confusion arises with the notion of a kernel of a homomorphism, we provide clarification.
\end{note}

\begin{note}
\begin{itemize}
  \item Some may find the name `kernel' not necessarily  the best choice for a name for the above structure. We presume the motivation for the name is that a `kernel' is a kernel of a semifield homomorphism. It obviously might cause a little confusion. When  such confusion may occur we explicitly indicate to which notion of kernel we refer.
  \item Though the so-called `convexity' condition \eqref{defn_ker_convexity} may give a somewhat \linebreak misleading impression, kernels are nothing but a special kind of groups inside the semifield $\mathbb{H}$.
\end{itemize}
\end{note}

\ \\

\begin{rem}\cite[Proposition 1.1]{Prop_Semifields}
An equivalent definition of a kernel of a semifield $\mathbb{H}$ is the class $[1]_{\rho}$ of an arbitrary congruence
$\rho$ on $\mathbb{H}$.
\end{rem}

\begin{rem}
If $\mathbb{S}$ is an idempotent semifield since $1 + 1 = 1$, we get that for any kernel $K$ of $\mathbb{S}$,  $a,b \in K \Rightarrow a + b = 1a + 1b \in K$, yielding that $K$ is itself a semifield. A particular case of interest is the semifield of fractions $\mathbb{H}(x_1,...,x_n)$ of $\mathbb{H}[x_1,...,x_n]$. If $\mathbb{H}$ is idempotent, then $\mathbb{H}(x_1,...,x_n)$ is idempotent which yields that the kernels of $\mathbb{H}(x_1,...,x_n)$ are subsemifields of $\mathbb{H}(x_1,...,x_n)$.
\end{rem}

\begin{note}
Throughout this dissertation, we work with an underlying semifield $\mathbb{H}$ of $\mathbb{H}(x_1,...,x_n)$ which is both idempotent and archimedean. In particular, under the assumption of idempotency, all kernels are also semifields.
\end{note}

\begin{thm}\cite[Theorem 3.6]{Hom_Semifields}
The set $\Con(\mathbb{H})$ of all kernels of a semifield $\mathbb{H}$ forms a full modular lattice with respect to the operations of multiplication and the \linebreak intersection of kernels, canonically isomorphic to the lattice of all possible
congruencies on $\mathbb{H}$.
\end{thm}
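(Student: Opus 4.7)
The plan is to exhibit a natural bijection between $\Con(\mathbb{H})$ and the set $\mathrm{Cong}(\mathbb{H})$ of all congruences on $\mathbb{H}$, transfer the complete-lattice structure through this bijection, identify the induced operations intrinsically on kernels as intersection and product, and finally verify modularity by a direct Dedekind-style computation in the multiplicative group.

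First I would associate to each kernel $K$ the relation $\rho_K$ on $\mathbb{H}$ defined by $a \,\rho_K\, b \iff ab^{-1} \in K$. Since $K$ is a normal subgroup of the multiplicative group, $\rho_K$ is an equivalence relation compatible with multiplication. Compatibility with addition is where the convexity axiom \eqref{defn_ker_convexity} does the work: given $k_1 = ab^{-1}$, $k_2 = cd^{-1}$ in $K$, one writes
$$
(a+c)(b+d)^{-1} = k_1\cdot\frac{b}{b+d} + k_2 \cdot \frac{d}{b+d},
$$
and applies convexity with $x = b(b+d)^{-1}$ and $y = d(b+d)^{-1}$, which sum to $1$. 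The inverse map $\rho \mapsto [1]_\rho$ is already noted to land in $\Con(\mathbb{H})$, and unwinding definitions shows the two maps are mutually inverse and order-preserving (under inclusion on both sides). This immediately transfers completeness: since $\mathrm{Cong}(\mathbb{H})$ is closed under arbitrary meets and joins, so is $\Con(\mathbb{H})$, giving \emph{fullness}.

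Next I would identify the operations on the kernel side. The meet corresponds to ordinary intersection: intersections of normal subgroups are normal, and \eqref{defn_ker_convexity} is trivially preserved under intersections, so $\bigcap_i K_i \in \Con(\mathbb{H})$ and $\rho_{\cap K_i} = \bigcap_i \rho_{K_i}$. For finite joins, in any group the join of two normal subgroups coincides with their product $K_1 \cdot K_2 = \{k_1k_2 : k_i \in K_i\}$; I would check that the product of kernels is again a kernel (normality is automatic, convexity follows by combining the convexity estimates for the two factors), and that under the bijection product corresponds to the congruence join via the permutability $\rho_{K_1} \circ \rho_{K_2} = \rho_{K_2} \circ \rho_{K_1} = \rho_{K_1 K_2}$.

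The main obstacle is modularity, which I would prove directly on kernels. For $A, B, C \in \Con(\mathbb{H})$ with $A \subseteq C$, the claim $A\cdot(B \cap C) = (A\cdot B)\cap C$ has the easy inclusion $\subseteq$ by monotonicity. For the reverse, take $x = ab \in C$ with $a \in A$ and $b \in B$; since $A \subseteq C$ one has $a^{-1} \in C$, hence $b = a^{-1}x \in C$, so $b \in B \cap C$ and $x \in A\cdot(B \cap C)$. This is the classical Dedekind argument transplanted to the multiplicative group of $\mathbb{H}$, and it closes the proof: the bijection $K \leftrightarrow \rho_K$ is canonical, order-preserving, and carries $(\cap, \cdot)$ on kernels to $(\wedge, \vee)$ on congruences, exhibiting the asserted isomorphism of full modular lattices.
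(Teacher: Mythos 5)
The paper does not prove this theorem: it is quoted verbatim as \cite[Theorem 3.6]{Hom_Semifields}, so there is no in-paper argument to compare against. Taken on its own, your proof is the standard one and it is essentially correct. The bijection $K \leftrightarrow \rho_K$ is set up properly, the convexity axiom is deployed exactly where it is needed (compatibility of $\rho_K$ with addition, via the identity $(a+c)(b+d)^{-1} = \frac{b}{b+d}\cdot ab^{-1} + \frac{d}{b+d}\cdot cd^{-1}$ and $\frac{b}{b+d}+\frac{d}{b+d}=1$), the inverse map $\rho \mapsto [1]_\rho$ is the one already noted in the text, and the Dedekind modularity computation in the multiplicative group is airtight.

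One point deserves tightening. You assert in passing that for the product $K_1K_2$ ``convexity follows by combining the convexity estimates for the two factors.'' That direct verification is not as mechanical as the phrase suggests: given $p=k_1k_2$, $q=l_1l_2$, and $x+y=1$, there is no obvious factoring of $xp+yq$ into an element of $K_1$ times an element of $K_2$ without already knowing $K_1K_2$ is a kernel. The clean route is the one you also name immediately afterward: semifield congruences permute (the Mal'cev term $xy^{-1}z$ is available since inversion is respected by every semifield congruence), so $\rho_{K_1}\circ\rho_{K_2}$ is itself a congruence, its $1$-class is exactly $K_1K_2$, and therefore $K_1K_2$ is a kernel and is the join. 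I would drop the ``combine the convexity estimates'' clause and rely solely on permutability; as written it reads as a second, independent argument, but it is not one.
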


\begin{rem}\label{rem_ker_latice_operations}\cite{Hom_Semifields}
Let $K_1$ and $K_2$ be kernels of the semifield $\mathbb{H}$. Then $K_1 \cap K_2$ and $K_1 \cdot K_2$ are kernels of $\mathbb{H}$. Moreover $K_1 \cdot K_2$ is the smallest kernel in $\mathbb{H}$ containing $K_1 \cup K_2$.
\end{rem}

\begin{note}
Note that multiplication of kernels is formulated by
$$K_1 \cdot K_2 = \{ ab \ : \ a \in K_1, b \in K_2 \}$$
as customary in the theory of groups.
\end{note}

\begin{lem}\label{lem_kernels_algebra}\cite[Lemma 4.1]{Prop_Semifields}
The following equalities hold for arbitrary kernels $A,B$ and $K$ of a semifield $\mathbb{H}$, among which at least one is a semifield:
\begin{equation}
AK \cap BK  = (A \cap B)K ;
\end{equation}
\begin{equation}
(A \cap K)(B \cap K)= AB  \cap K.
\end{equation}
\end{lem}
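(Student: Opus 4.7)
The plan is to prove each equation by establishing the nontrivial inclusion; the reverse inclusions $(A\cap B)K\subseteq AK\cap BK$ and $(A\cap K)(B\cap K)\subseteq AB\cap K$ are one-line set-theoretic checks. The key leverage comes from the hypothesis that one kernel (say $K$) is a subsemifield, hence closed under $\dotplus$ and $\wedge$, which combined with the convexity axiom from the definition of a kernel makes it an order-convex sub-$\ell$-group of the multiplicative group of $\mathbb{H}$; in the idempotent context of the thesis, all three kernels automatically have this property.

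For (1), suppose $c\in AK\cap BK$. First I would reduce to $c\geq 1$ by splitting $c=(c\dotplus 1)\cdot(c^{-1}\dotplus 1)^{-1}$ into its positive and negative parts, each of which still lies in $AK\cap BK$ because that set is closed under $\dotplus$ with $1$ and under inversion. Writing $c=\alpha k_1=\beta k_2$ with $\alpha\in A$, $\beta\in B$, $k_i\in K$, I would further arrange $\alpha,\beta,k_1,k_2\geq 1$ by a positive-part adjustment that remains inside each kernel (using order-convexity). Take $\gamma := \alpha\wedge\beta$: the sandwiches $1\le\gamma\le\alpha\in A$ and $1\le\gamma\le\beta\in B$ place $\gamma\in A\cap B$, while the $\ell$-group identity $\alpha/(\alpha\wedge\beta)=(\alpha\dotplus\beta)/\beta=(\alpha/\beta)\dotplus 1$ combined with $\alpha/\beta=k_2/k_1\in K$ and the $\dotplus$-closure of $K$ yields
\[
c/\gamma \;=\; k_1\cdot\bigl((\alpha/\beta)\dotplus 1\bigr)\;\in\; K,
\]
so $c=\gamma\cdot(c/\gamma)\in(A\cap B)K$.

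For (2), let $c=ab\in AB\cap K$ and reduce to $c\geq 1$ in the same way. Set $a' := c\wedge(a\dotplus 1)$ and $b' := c/a'$. The sandwich $1\le a'\le a\dotplus 1\in A$ forces $a'\in A$, and $1\le a'\le c\in K$ forces $a'\in K$, hence $a'\in A\cap K$. A brief case split on whether $a\geq 1$ (in which $b'\in\{1,b\}$, both visibly in $B\cap K$) or $a<1$ (in which $a'=1$ and $b'=c$, where the sandwich $1\le c\le b=c/a\in B$ together with $c\in K$ puts $b'\in B\cap K$) completes the verification. Since $a'b'=c$ by construction, $c\in(A\cap K)(B\cap K)$.

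The main obstacle, and where the semifield hypothesis is genuinely used, is ensuring that the lattice-theoretic expressions $\alpha\wedge\beta$, $c\wedge(a\dotplus 1)$, and $c/a'$ really lie inside the intended kernels. Pure multiplicative closure (the normal-subgroup part of the kernel definition) is insufficient; what delivers the conclusion is \emph{order-convexity} together with $\dotplus$-closure, the latter being precisely the content of the semifield hypothesis on at least one of $A,B,K$. Without it the lattice $\Con(\mathbb{H})$ is merely modular rather than distributive, and the chosen decompositions need not remain inside $A\cap B$, $A\cap K$, or $K$.
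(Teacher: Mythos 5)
The paper cites this lemma from an external reference and supplies no proof of its own, so there is nothing internal to compare against; your $\ell$-group argument is therefore evaluated on its own merits, and it is a natural and largely correct proof for the idempotent setting, which is the only setting the paper actually invokes (see the corollary immediately following).

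There is, however, a genuine gap in your proof of the second identity: the dichotomy ``$a\geq 1$ or $a<1$'' is not exhaustive in a lattice-ordered group that is not totally ordered, and $\mathscr{R}(x_1,\dots,x_n)$ is precisely such a group. Even within the branch $a\geq 1$, the claim $b'\in\{1,b\}$ fails when $b$ is incomparable with $1$, since then $b'=1\dotplus b=\max(1,b)$ is neither. The argument is repairable without any case split: from $a\dotplus 1\leq a\dotplus a^{-1}=|a|$ one gets $1\leq a'\leq |a|$, hence $a'\in A$ by convexity; from $ab(a\dotplus 1)^{-1}=b\cdot a(a\dotplus 1)^{-1}\leq b$ one gets $b'=1\dotplus ab(a\dotplus 1)^{-1}\leq 1\dotplus b\leq |b|$, hence $1\leq b'\leq |b|$ and $b'\in B$; and $1\leq a',b'\leq c$ places both in $K$. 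The product $a'b'=c$ is immediate. A second, milder issue is that your argument invokes $\dotplus$-closure of several distinct kernels ($K$ in part (1) for $k_1\dotplus k_2\in K$; $A$ and $B$ in part (2) for $|a|\in A$, $|b|\in B$; and $AK$, $BK$, $AB\cap K$ for the reduction to $c\geq 1$), not just one. That is harmless when $\mathbb{H}$ is idempotent, where every kernel is automatically a subsemifield, but it does not establish the lemma under the stated weaker hypothesis that only one of $A,B,K$ is a semifield; you should state explicitly that the proof is confined to the idempotent case.
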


\begin{cor}
Since every kernel of an idempotent semifield is also a semifield, we have by Lemma \ref{lem_kernels_algebra} that its lattice of kernels  is distributive. Thus every idempotent semifield is distributive.
\end{cor}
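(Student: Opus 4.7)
The plan is to deduce distributivity of the kernel lattice directly from the second identity of Lemma \ref{lem_kernels_algebra}, exploiting the fact that in the idempotent case the hypothesis ``at least one is a semifield'' is automatic.

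First I would recall, as noted in the excerpt, that when $\mathbb{H}$ is idempotent every kernel $K \subseteq \mathbb{H}$ is closed under addition (since $a+b = 1a + 1b \in K$ by the convexity condition applied with $x=y=1$, using $1+1=1$), and hence is itself a subsemifield of $\mathbb{H}$. Consequently, for any three kernels $A,B,C \in \Con(\mathbb{H})$ the hypothesis of Lemma \ref{lem_kernels_algebra} is trivially satisfied, so both identities of that lemma hold without restriction.

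Next I would unpack what distributivity of $(\Con(\mathbb{H}), \cdot, \cap)$ means: with the join given by $\cdot$ (which, by Remark \ref{rem_ker_latice_operations}, is the smallest kernel containing $K_1 \cup K_2$) and the meet given by $\cap$, distributivity amounts to the identity
\begin{equation*}
A \cap (BC) \;=\; (A \cap B)(A \cap C) \qquad \text{for all } A,B,C \in \Con(\mathbb{H}).
\end{equation*}
But this is precisely the second identity of Lemma \ref{lem_kernels_algebra}, namely $(A \cap K)(B \cap K) = AB \cap K$, after the relabelling $K \mapsto A$, $A \mapsto B$, $B \mapsto C$. Since the relabelled hypothesis is met (all kernels are semifields), the identity applies and yields exactly the required distributive law.

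There is essentially no obstacle here beyond bookkeeping; the content of the corollary is entirely absorbed into the earlier lemma, and the only input from idempotency is the observation that kernels in an idempotent semifield are themselves semifields, removing the side condition in Lemma \ref{lem_kernels_algebra}. One could optionally verify the dual identity $A \cdot (B \cap C) = (AB) \cap (AC)$ by the same substitution into the first identity of the lemma, but in a lattice the two distributive laws are equivalent, so either one suffices.
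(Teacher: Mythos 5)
Your proposal is correct and matches the paper's own (one-line) argument: kernels of an idempotent semifield are themselves semifields, so the side condition of Lemma \ref{lem_kernels_algebra} is vacuous, and the second identity of that lemma is exactly the distributive law $(A\cap B)(A\cap C) = A\cap(BC)$ after relabelling. Your version simply spells out the relabelling and the verification that kernels are subsemifields, which the paper leaves implicit.
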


\begin{flushleft}The following are the three fundamental isomorphism theorems.\end{flushleft}

\begin{thm}\label{thm_kernels_hom_relations}\cite[Theorems 3.4 and 3.5]{Hom_Semifields}
Let $\mathbb{H}_1, \mathbb{H}_2$ be semifields and let $R \subset \mathbb{H}_1$ be a subsemifield of $\mathbb{H}_1$. Let $\phi : \mathbb{H}_1 \rightarrow \mathbb{H}_2$ be a semiring homomorphism and let $K$ be the homomorphism kernel of $\phi$. Then the following hold:
\begin{enumerate}
  \item $\phi(R) \subset \mathbb{H}_2$ is a subsemifield of $\mathbb{H}_2$. The homomorphism kernel of the restriction $\phi:R \rightarrow \phi(R)$ is $R \cap K$.
  \item $\phi^{-1}(\phi(R))= KR$ which is a subsemifield of \ $\mathbb{H}_1$.
  \item For any kernel $L$ of \ $\mathbb{H}_1$, $\phi(L)$ is a kernel of \ $\phi(\mathbb{S}_1)$.
  \item For a kernel $K$ of \ $\phi(\mathbb{H}_1)$, $\phi^{-1}(K)$ is a kernel of \ $\mathbb{H}_1$. In particular, for any kernel $L$ of \ $\mathbb{H}_1$ we have that $\phi^{-1}(\phi(L)) = K\L$ is a kernel of $\mathbb{H}_1$.
\end{enumerate}
\end{thm}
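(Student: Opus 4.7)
The plan is to verify each of the four items by walking carefully through the kernel axioms (normal subgroup plus the convexity clause \eqref{defn_ker_convexity}). The routine multiplicative-group computations are straightforward; the main obstacle throughout will be the convexity clause, since one cannot simply transport an identity of the form $x+y=1$ through $\phi$ or $\phi^{-1}$ without rescaling. I would therefore dispatch the subgroup and identity-mapping portions first and then concentrate effort on convexity.

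For item (1), I check that $\phi(R)$ is closed under sum, product, and multiplicative inverse (the latter via $\phi(r^{-1}) = \phi(r)^{-1}$), making $\phi(R)$ a subsemifield. The homomorphism kernel of the restriction is $\{r \in R : \phi(r) = 1\} = R \cap K$ by definition. For item (2), the inclusion $KR \subseteq \phi^{-1}(\phi(R))$ is immediate from $\phi(kr) = \phi(r)$. For the reverse inclusion, given $a \in \phi^{-1}(\phi(R))$, I pick $r \in R$ with $\phi(a) = \phi(r)$; then $\phi(ar^{-1}) = 1$, so $ar^{-1} \in K$ and $a = (ar^{-1})r \in KR$. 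To upgrade $KR$ from a subset to a subsemifield, closure under multiplication and inversion is immediate, but additive closure is the real work: for $k_1 r_1, k_2 r_2 \in KR$, I would write
\[
k_1 r_1 + k_2 r_2 \;=\; (r_1 + r_2)\Bigl( \tfrac{r_1}{r_1+r_2}\, k_1 + \tfrac{r_2}{r_1+r_2}\, k_2 \Bigr),
\]
observe that $\tfrac{r_1}{r_1+r_2} + \tfrac{r_2}{r_1+r_2} = 1$, and invoke the convexity of $K$ to place the parenthesized element in $K$, so the sum lies in $(r_1 + r_2)K \subseteq RK = KR$.

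For item (3), $\phi(L)$ is a subgroup of $\phi(\mathbb{H}_1)^{\times}$ containing $1$, and normality is inherited from $L$. For convexity, take $a, b \in \phi(L)$ and $x, y \in \phi(\mathbb{H}_1)$ with $x + y = 1$. I lift to $\alpha, \beta \in L$ and to $\xi, \eta \in \mathbb{H}_1$ with $\phi(\xi) = x$, $\phi(\eta) = y$. The delicate point is that $\xi + \eta$ need not equal $1$, only $\phi(\xi + \eta) = 1$, so $\xi + \eta \in K$; I rescale by setting $\xi' = \xi/(\xi+\eta)$ and $\eta' = \eta/(\xi+\eta)$, which satisfy $\xi' + \eta' = 1$ and still map to $x$ and $y$ respectively. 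Convexity of $L$ then gives $\xi'\alpha + \eta'\beta \in L$, whose image is precisely $xa + yb$, so $xa + yb \in \phi(L)$. Item (4) is cleaner because everything happens on the $\mathbb{H}_1$ side: given $a, b \in \phi^{-1}(M)$ and $x + y = 1$ in $\mathbb{H}_1$, I compute $\phi(xa + yb) = \phi(x)\phi(a) + \phi(y)\phi(b)$ with $\phi(x) + \phi(y) = 1$ and $\phi(a), \phi(b) \in M$, so convexity of $M$ delivers $xa + yb \in \phi^{-1}(M)$.

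Finally, the identity $\phi^{-1}(\phi(L)) = KL$ is obtained by the same argument used for (2), with $R$ replaced by $L$. The genuinely nontrivial steps of the whole theorem, and the reason the convexity clause \eqref{defn_ker_convexity} was built into the definition of a kernel in the first place, are the additive-closure computation in (2) and the rescaling trick in (3): the multiplicative-group facts are routine, whereas these two arguments are exactly what distinguish kernels from arbitrary normal subgroups.
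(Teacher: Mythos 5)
Your proof is correct. The paper itself does not supply a proof of this theorem — it is cited from the reference \cite{Hom_Semifields} (Theorems 3.4 and 3.5) — so there is no in-text argument to compare against, but your verification is sound: the rescaling of the lifts $\xi, \eta$ by $(\xi+\eta)^{-1}$ in item (3) correctly handles the fact that $\xi+\eta$ only lands in $K$ rather than equaling $1$, and the identity
\[
k_1 r_1 + k_2 r_2 = (r_1 + r_2)\Bigl( \tfrac{r_1}{r_1+r_2}\, k_1 + \tfrac{r_2}{r_1+r_2}\, k_2 \Bigr)
\]
together with the convexity clause is exactly the right mechanism for additive closure of $KR$ in item (2), with $r_1+r_2$ invertible by Lemma~\ref{lem_proper_semifield}. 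The remaining routine group-theoretic and homomorphism checks are all as you describe.
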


\begin{cor}
As a special case of (4), taking $K = \{1\}$, we have that the homomorphism kernel $\phi^{-1}(1)$ of a semifield homomorphism $\phi : \mathbb{H}_1 \rightarrow \mathbb{H}_2$ is a kernel.
\end{cor}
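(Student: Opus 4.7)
The plan is to obtain the corollary as an immediate instance of part (4) of Theorem \ref{thm_kernels_hom_relations}. First I would check that the trivial subgroup $\{1\}$ qualifies as a kernel of the subsemifield $\phi(\mathbb{H}_1)$: as a singleton containing the multiplicative identity it is trivially a normal subgroup, and the convexity condition \eqref{defn_ker_convexity} is vacuous, since the only available choice is $a = b = 1$, for which $xa + yb = x + y = 1 \in \{1\}$ whenever $x + y = 1$.

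With $\{1\}$ confirmed to be a kernel of $\phi(\mathbb{H}_1)$, I would then invoke part (4) of Theorem \ref{thm_kernels_hom_relations}, which asserts that for any kernel $K$ of $\phi(\mathbb{H}_1)$ the preimage $\phi^{-1}(K)$ is a kernel of $\mathbb{H}_1$. Specializing to $K = \{1\}$ yields $\phi^{-1}(1) \in \Con(\mathbb{H}_1)$, which is precisely the assertion of the corollary.

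The only potential subtlety worth flagging is the distinction between $\phi(\mathbb{H}_1)$ and the codomain $\mathbb{H}_2$: part (4) is phrased in terms of kernels of the image, not of $\mathbb{H}_2$, so the verification above must be carried out in $\phi(\mathbb{H}_1)$. Since this verification is essentially a one-line check, no real obstacle is expected, and the remainder of the argument is a direct appeal to the theorem just cited.
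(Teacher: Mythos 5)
Your argument is exactly the paper's: it is a direct specialization of Theorem~\ref{thm_kernels_hom_relations}(4) to $K=\{1\}$, with a brief (and correct) check that $\{1\}$ is a kernel of $\phi(\mathbb{H}_1)$. The subtlety you flag about working in $\phi(\mathbb{H}_1)$ rather than $\mathbb{H}_2$ is a sensible precaution but does not change the route.
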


\begin{thm}\cite{KA_Semifields}\label{thm_nother_1_and_3}
Let $\mathbb{H}$ be a semifield and $K$ a kernel of \ $\mathbb{H}$.
\begin{enumerate}
  \item If $\mathbb{U}$ is a subsemifield of  $\mathbb{H}$, then $\mathbb{U} \cap K$ is a kernel of $\mathbb{U}$ and $K$ a kernel of the subsemifield $\mathbb{U} \cdot K = \{u \cdot k \ : \ u \in \mathbb{U}, \ k \in K \}$ of \ $\mathbb{H}$ and one has the isomorphism $$\mathbb{U}/(\mathbb{U} \cap K) \cong \mathbb{U} \cdot K/K.$$
  \item If $L$ is a kernel of \ $\mathbb{H}$, then $L \cap K$ is a kernel of $L$ and $K$ a kernel of $L \cdot K$. Now, one has in general only the group isomorphism $$L/(L \cap K) \cong L \cdot K/K$$ which is a semifield isomorphism exactly in the case when $L$ is also a \linebreak subsemifield of \ $\mathbb{H}$.
\end{enumerate}
\end{thm}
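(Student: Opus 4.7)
The plan is to prove both parts by restricting the canonical projection $\pi\colon \mathbb{H} \to \mathbb{H}/K$ to the relevant subobject and applying the corollary of Theorem \ref{thm_kernels_hom_relations} (the first isomorphism theorem for semifields). Both statements are direct analogues of the classical second/third isomorphism theorems, with the convexity condition \eqref{defn_ker_convexity} playing the role normality plays for groups.

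For part (1), the key steps are as follows. First, I would check that $\mathbb{U}\cap K$ is a kernel of $\mathbb{U}$: it is clearly a normal subgroup of $\mathbb{U}$, and given $x,y\in\mathbb{U}$ with $x+y=1$ and $a,b\in\mathbb{U}\cap K$, the element $xa+by$ lies in $\mathbb{U}$ since $\mathbb{U}$ is a subsemifield, and in $K$ by the convexity of $K$ in $\mathbb{H}$. Second, I would show $\mathbb{U}\cdot K$ is a subsemifield of $\mathbb{H}$: closure under multiplication and inverses is immediate since $K$ is normal, while closure under addition follows by writing
\[
u_1 k_1 + u_2 k_2 \;=\; (u_1+u_2)\Bigl(\tfrac{u_1}{u_1+u_2}\, k_1 + \tfrac{u_2}{u_1+u_2}\, k_2\Bigr),
\]
where $\tfrac{u_1}{u_1+u_2} + \tfrac{u_2}{u_1+u_2} = 1$, so the second factor lies in $K$ by convexity and the whole expression lies in $\mathbb{U}\cdot K$. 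Third, $K$ is a kernel of $\mathbb{U}\cdot K$ because normality is automatic and the convexity condition is inherited from $\mathbb{H}$. Finally, the restriction $\pi|_{\mathbb{U}}\colon \mathbb{U} \to (\mathbb{U}\cdot K)/K$ sends $u \mapsto uK$ and is surjective since every element $uk \in \mathbb{U}\cdot K$ satisfies $ukK = uK$; its homomorphism kernel is exactly $\mathbb{U}\cap K$. Invoking the corollary of Theorem \ref{thm_kernels_hom_relations} yields the isomorphism $\mathbb{U}/(\mathbb{U}\cap K) \cong \mathbb{U}\cdot K/K$.

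For part (2), I would replicate the construction with $L$ in place of $\mathbb{U}$. Since $L$ is a kernel of $\mathbb{H}$, $L\cap K$ is a normal subgroup of $L$ and the convexity verification for $L\cap K$ inside $L$ is identical (indeed convexity is a universal condition on pairs $x+y=1$ taken in $\mathbb{H}$). The product $L\cdot K$ is a kernel of $\mathbb{H}$ by Remark \ref{rem_ker_latice_operations}, so in particular $K$ is a kernel of $L\cdot K$. The restriction $\pi|_L\colon L \to (L\cdot K)/K$ is a surjective group homomorphism with homomorphism kernel $L\cap K$, which yields the group isomorphism $L/(L\cap K)\cong L\cdot K/K$. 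This is automatically a semifield isomorphism precisely when $L$ is closed under addition, i.e., a subsemifield, in which case the additive computation of part (1) applies verbatim to show that $\pi|_L$ also respects addition. Conversely, if the map is a semifield isomorphism, then the additive structure on $L\cdot K/K$ pulls back along $\pi|_L$ to additive closure of $L$ inside $\mathbb{H}$.

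The main potential obstacle is carefully tracking which operations are actually defined on each object in part (2): a general kernel $L$ is only a multiplicative normal subgroup with the convexity property, and need not be closed under the ambient addition of $\mathbb{H}$. This is exactly why the second isomorphism downgrades from a semifield isomorphism to merely a group isomorphism in the general case, and why additive closure of $L$ is precisely the missing ingredient. Everything else is a routine translation of the classical isomorphism theorems via the first isomorphism theorem for semifields already recorded in Theorem \ref{thm_kernels_hom_relations}.
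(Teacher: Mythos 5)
The paper cites this theorem from \cite{KA_Semifields} and supplies no proof of its own, so there is nothing to compare against directly; you are on your own here.

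Your route is the standard second-isomorphism-theorem argument, and the bulk of it is correct. In particular, the verification that $\mathbb{U}\cap K$ is a kernel of $\mathbb{U}$ is right, the closure of $\mathbb{U}\cdot K$ under addition via
$u_1 k_1 + u_2 k_2 = (u_1+u_2)\bigl(\tfrac{u_1}{u_1+u_2}k_1 + \tfrac{u_2}{u_1+u_2}k_2\bigr)$ is exactly the trick one needs (the inner factor lies in $K$ by the convexity condition, and $u_1+u_2 \neq 0$ since the semifield is proper), and the identification of the restriction of the quotient map as a surjection with homomorphism kernel $\mathbb{U}\cap K$ is the right way to invoke the first isomorphism theorem. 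You could in fact have shortened parts of this by citing Theorem \ref{thm_kernels_hom_relations}(1) and (2) directly, which already say that the homomorphism kernel of $\phi|_R$ is $R\cap K$ and that $\phi^{-1}(\phi(R)) = KR$ is a subsemifield; but re-deriving them from the convexity axiom as you do is harmless and arguably clarifying.

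The one genuine soft spot is the converse direction in part (2). You write that if the map is a semifield isomorphism then ``the additive structure on $L\cdot K/K$ pulls back along $\pi|_L$ to additive closure of $L$ inside $\mathbb{H}$,'' but that pullback argument, when made precise, only yields the following: for $a,b\in L$, the coset $(a+b)K$ lies in $L\cdot K/K$, hence $a+b$ lies in $cK\subseteq L\cdot K$ for some $c\in L$. That gives $a+b\in L\cdot K$, not $a+b\in L$, so it does not establish additive closure of $L$. The cleaner (and essentially tautological) observation is simply that for the map $L/(L\cap K)\to L\cdot K/K$ to be a \emph{semifield} isomorphism, the domain $L/(L\cap K)$ must carry a semifield structure induced from $\mathbb{H}$, and the quotient $L/(L\cap K)$ inherits the addition of $\mathbb{H}$ precisely when $L$ itself is closed under that addition, i.e.\ when $L$ is a subsemifield. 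If you rephrase the converse in those terms, the gap closes and no pullback argument is needed.

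Everything else is fine and matches what one would expect the proof in \cite{KA_Semifields} to look like.
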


\begin{thm}\cite{KA_Semifields}\label{thm_nother2}
Let  $\mathbb{H}$ be a semifield and let $K$ and $L$ be kernels of \ $\mathbb{H}$ satisfying $K \subseteq L$. Then $L/K$ is a kernel of \ $\mathbb{H}/K$ and one has the semifield isomorphism $$\mathbb{H}/L \cong (\mathbb{H}/K)/(L/K).$$
\end{thm}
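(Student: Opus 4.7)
The plan is to construct a surjective semifield homomorphism $\psi : \mathbb{H}/K \to \mathbb{H}/L$ whose homomorphism kernel is precisely $L/K$, and then to invoke the standard isomorphism $\mathbb{H}_1/\ker\phi \cong \phi(\mathbb{H}_1)$ which follows from Theorem \ref{thm_kernels_hom_relations}.

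First I would verify that $L/K$ is indeed a kernel of $\mathbb{H}/K$. Let $\pi_K : \mathbb{H} \to \mathbb{H}/K$ denote the canonical projection. Since $L$ is a kernel of $\mathbb{H}$, Theorem \ref{thm_kernels_hom_relations}(3) applied to $\pi_K$ gives that $\pi_K(L)$ is a kernel of $\pi_K(\mathbb{H}) = \mathbb{H}/K$, and using $K \subseteq L$ this image equals $\{aK : a \in L\} = L/K$.

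Next I would define $\psi : \mathbb{H}/K \to \mathbb{H}/L$ by $\psi(aK) = aL$. Well-definedness uses the hypothesis $K \subseteq L$ in an essential way: if $aK = bK$, then $a b^{-1} \in K \subseteq L$, so $aL = bL$. The map $\psi$ is visibly a semifield homomorphism, and it is surjective since $\pi_L = \psi \circ \pi_K$ is. Its homomorphism kernel is
\[
\psi^{-1}(1_{\mathbb{H}/L}) = \{ aK \in \mathbb{H}/K : aL = L \} = \{ aK : a \in L \} = L/K.
\]

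Applying the correspondence between homomorphism kernels and quotients (the first isomorphism theorem, a direct consequence of Theorem \ref{thm_kernels_hom_relations}) to $\psi$ then yields $(\mathbb{H}/K)/(L/K) \cong \operatorname{Im}(\psi) = \mathbb{H}/L$, which is the desired isomorphism. The only genuinely delicate point in the argument is the well-definedness of $\psi$, where the containment $K \subseteq L$ is used crucially; the remaining verifications are routine manipulations of congruence classes.
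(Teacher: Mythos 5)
The paper cites this result from \cite{KA_Semifields} without supplying its own proof, so there is no in-paper argument to compare against. Your proof is correct: it is the natural adaptation of the third isomorphism theorem to semifields and kernels, constructing the connecting homomorphism $\psi(aK)=aL$ on cosets, verifying well-definedness via $K\subseteq L$, identifying $\ker\psi=L/K$, and closing with the first isomorphism theorem (available via Theorem~\ref{thm_kernels_hom_relations}). One point worth making explicit, since it is the only place a semifield argument differs from the group-theoretic one, is that $\psi$ respects addition as well as multiplication; this holds because the coset addition in $\mathbb{H}/K$ is $(aK)+(bK)=(a+b)K$ (the congruence corresponding to a kernel respects $+$ thanks to the convexity condition), so $\psi((a+b)K)=(a+b)L=aL+bL$. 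With that remark supplied, the argument is complete.
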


The following result concerns the induced order of the quotient semifield. It holds for any idempotent semifield $\mathbb{S}$, and in particular for $\mathbb{H}(x_1,...,x_n)$ with $\mathbb{H}$ an idempotent semifield.\\

\begin{thm}\label{cor_qoutient_corr}
Let $\mathbb{H}$ be a semifield and let $L \in \Con(\mathbb{H})$ be a kernel of $\mathbb{H}$. Every kernel of $\mathbb{H}/L$ has the form $K/L$ for some kernel $K \in \Con(\mathbb{H})$ uniquely determined such that $K \supseteq L$, and there is a $1:1$ correspondence
$$ \{ \text{Kernels of} \ \mathbb{H}/L \} \rightarrow  \{ \text{Kernels of} \ \mathbb{H} \ \text{containing} \ L \}$$
given by $K/L \mapsto K$.
\end{thm}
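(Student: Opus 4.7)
The plan is to deduce the theorem from the earlier isomorphism/correspondence results, specifically Theorem \ref{thm_kernels_hom_relations}, by exhibiting mutually inverse maps between the two posets. Let $\pi : \mathbb{H} \to \mathbb{H}/L$ denote the canonical projection, which is a semiring homomorphism whose homomorphism kernel is exactly $L$. I will define two assignments: given a kernel $M$ of $\mathbb{H}/L$, send it to $\pi^{-1}(M) \subseteq \mathbb{H}$; given a kernel $K$ of $\mathbb{H}$ with $K \supseteq L$, send it to $\pi(K) \subseteq \mathbb{H}/L$, which by definition is $K/L$.

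First I would verify that both assignments land in the correct sets. That $\pi^{-1}(M)$ is a kernel of $\mathbb{H}$ follows immediately from Theorem \ref{thm_kernels_hom_relations}(4), and since $1_{\mathbb{H}/L} \in M$ we have $\pi^{-1}(M) \supseteq \pi^{-1}(\{1\}) = L$, so this map actually lands in the sublattice of kernels containing $L$. Conversely, that $\pi(K) = K/L$ is a kernel of $\pi(\mathbb{H}) = \mathbb{H}/L$ is Theorem \ref{thm_kernels_hom_relations}(3).

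Next I would check that the two maps are mutually inverse. The equality $\pi(\pi^{-1}(M)) = M$ is a consequence of the surjectivity of $\pi$. For the opposite composition, I would use Theorem \ref{thm_kernels_hom_relations}(4), which gives $\pi^{-1}(\pi(K)) = L \cdot K$; since by hypothesis $L \subseteq K$ and $K$ is closed under multiplication, we get $L \cdot K \subseteq K$, while the reverse inclusion $K \subseteq L \cdot K$ is automatic (multiplying by $1 \in L$). Thus $\pi^{-1}(\pi(K)) = K$, as required. Uniqueness of $K$ with $K/L = M$ and $K \supseteq L$ then follows, because any such $K$ must equal $\pi^{-1}(M)$.

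I do not anticipate a genuine obstacle: the statement is essentially the lattice/correspondence theorem in the semifield setting, and the technical content has already been packaged into Theorem \ref{thm_kernels_hom_relations}. The one subtlety worth being careful about is to make sure that when we write $K/L$ we interpret it as $\pi(K)$, and to use the hypothesis $L \subseteq K$ precisely at the step $\pi^{-1}(\pi(K)) = L \cdot K = K$; without that hypothesis one only recovers $L \cdot K$, which is strictly larger than $K$ in general and would break the bijection. Order-theoretic properties (the bijection being inclusion-preserving, hence a lattice isomorphism) fall out for free from the fact that both $\pi$ and $\pi^{-1}$ preserve inclusions, though the statement as given only asserts the set-theoretic bijection.
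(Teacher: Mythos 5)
Your proof is correct and follows essentially the same route as the paper: both reduce to the group-theoretic correspondence for normal subgroups together with the fact (Theorem~\ref{thm_kernels_hom_relations}) that images and preimages of kernels under the projection $\pi$ are again kernels. You have simply unpacked what the paper leaves implicit, in particular the step $\pi^{-1}(\pi(K)) = L\cdot K = K$ when $L \subseteq K$, which is the heart of the bijection.
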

\begin{proof}
From the theory of groups we have that there is such a bijection for normal subgroups. To apply the theorem for kernels, we only need to show that a homomorphic image and preimage of a kernel are kernels, which in turn is true by \linebreak Theorem \ref{thm_kernels_hom_relations}.
\end{proof}

\ \\

\begin{rem}
Let $K$ be a kernel of an idempotent semifield $\mathbb{S}$. Then the induced order on the quotient semifield $\mathbb{S}/K$ is such that $aK \leq bK \Leftrightarrow (1 + ab^{-1}) \in K$.
\end{rem}
\begin{proof}
Let $a, b$ be elements of $\mathbb{H}$. The induced order on $\mathbb{H}/K$ is given
by: $aK \leq bK$ if and only if there exists some $c \in K$ such that $a \leq cb$. Now, the following identity holds in $\mathbb{H}$: \ $a = (a \wedge b)(ab^{-1} \dotplus 1)$ ( distributing the right hand side and the left hand side one at a time give opposite weak inequalities).
Consequently, if $ (ab^{-1} \dotplus 1) \in K$ then $a =  (a \wedge b)(ab^{-1} \dotplus 1)$, implying that $aK = (a \wedge b)K \leq bK$. Conversely, let $c \in K$ such that $a \le cb$. Then we have
$(a \wedge b) \leq a \leq cb \Rightarrow (a \wedge b)b^{-1} \leq ab^{-1} \leq c \Rightarrow
ab^{-1} \dotplus 1 \leq ab^{-1} \leq c \Rightarrow 1 \dotplus (ab^{-1} \dotplus 1) \leq (ab^{-1} \dotplus 1) \leq 1 \dotplus c \Rightarrow
1 \leq (ab^{-1} \dotplus 1) \leq 1 \dotplus c$. Since $1, 1 \dotplus c \in K$ we have that $(ab^{-1} \dotplus 1) \in K$.
\end{proof}

\begin{flushleft}We conclude this part with the definition of a large kernel.\end{flushleft}

\begin{defn}
Let $\mathbb{S}$ be a semifield.
A kernel $K$ of a semifield $\mathbb{S}$ is said to be \emph{large} in $\mathbb{S}$ if  $L \cap K \neq \{1 \}$ for each kernel $L \neq \{ 1 \}$ of $\mathbb{S}$.
\end{defn}

\newpage

\subsection{Semifield with a generator and generation of kernels}

\ \\

\begin{defn}\label{defn_generation_of_kernels}
Let $A$ be a subset of a semifield $\mathbb{H}$. Denote by $\langle A \rangle$ the smallest kernel in $\mathbb{H}$ containing~$A$. It is equal to the intersection of all kernels in $\mathbb{H}$ containing $A$. If $\mathbb{H} = \langle A \rangle$, then $A$ is called a set of generators of the semifield $\mathbb{H}$ (as a kernel). \\
A kernel $K$ is said to be \emph{finitely generated}  if $K = \langle A \rangle$ where $A$ is a finite set of elements of $\mathbb{H}$.  By Remark \ref{rem_ker_latice_operations}, if $K$ is generated by $\{a_1,...,a_t \} \subset \mathbb{H}$ then $K = \langle a_1 \rangle \cdot \dots \cdot \langle a_t \rangle$
(the smallest kernel containing $\{a_1,...,a_t \}$). In such a case, we write $K = \langle a_1,...,a_t \rangle$ to indicate that $K$ is generated by $\{a_1,...,a_t \}$. If $K = \langle a \rangle$ for some $a \in \mathbb{H}$, then $K$ is called a \emph{principal kernel}.
A semifield is said to be \emph{finitely generated} if it is finitely generated as a kernel.
If $\mathbb{H} = \langle a \rangle$ for some $a \in \mathbb{H}$, then $a$ is called a \emph{generator} of $\mathbb{H}$ and $\mathbb{H}$ is said to be a \emph{semifield with a generator}. In other words, a semifield with a generator is a semifield which is principal as a kernel of itself.
\end{defn}

\begin{lem}\label{lem_generators}\cite[Property 2.3]{Prop_Semifields}
Let $K$ be a kernel  of a semifield $\mathbb{H}$. Then for $a,b \in \mathbb{H}$,
\begin{equation}
a + a^{-1} \in K \ \ \text{or} \ \ a + a^{-1}+ b \in K \ \ \Rightarrow a \in K.
\end{equation}
\end{lem}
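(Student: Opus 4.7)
My plan is to work in the quotient semifield $\mathbb{H}/K$, translate the hypothesis into an equation with the unit $1$, and then extract $\bar{a}=1$ (hence $a\in K$) from the natural order on that semifield.

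Since a kernel of a semifield is precisely the class $[1]_{\rho}$ of a congruence $\rho$ on $\mathbb{H}$, the quotient $\mathbb{H}/K := \mathbb{H}/\rho$ is again a proper semifield. Writing $\bar{x}$ for the class of $x$, the hypothesis $a+a^{-1}\in K$ becomes $\bar{a}+\bar{a}^{-1}=1$ in $\mathbb{H}/K$, while $a+a^{-1}+b\in K$ becomes $\bar{a}+\bar{a}^{-1}+\bar{b}=1$. In either case the task reduces to showing $\bar{a}=1$ in $\mathbb{H}/K$.

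Next I would apply the natural order on $\mathbb{H}/K$ (Remark~\ref{rem_natural_order_on_semifield}): in a proper semifield, whenever $u+v=w$ one automatically has $u\le w$, with $c:=v$ witnessing the defining relation $u+c=w$. Reading off the equation in the quotient and using commutativity of addition to bring the desired summand to the front, this gives at once both $\bar{a}\le 1$ and $\bar{a}^{-1}\le 1$ (for the second case, group the remaining terms into a single witness $\bar{a}+\bar{b}$).

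Finally I would turn the inequality $\bar{a}^{-1}\le 1$ into a lower bound on $\bar{a}$ by invoking the order-reversing behaviour of inversion: if $\bar{a}^{-1}+\bar{c}=1$ for some $\bar{c}\in\mathbb{H}/K$, then multiplication of both sides by $\bar{a}$ yields $1+\bar{c}\bar{a}=\bar{a}$, i.e.\ $1\le\bar{a}$. Combining with $\bar{a}\le 1$ gives $\bar{a}=1$, hence $a\in K$, which dispatches both cases uniformly. The only point requiring care is the confirmation that $\mathbb{H}/K$ is again a proper semifield inheriting the natural order (otherwise the step ``$u+v=w\Rightarrow u\le w$'' could fail), but this is immediate from the congruence description of kernels and the isomorphism theorems recalled earlier in the section.
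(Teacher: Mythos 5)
Your proof is correct and takes essentially the same route as the paper's: pass to the quotient $\mathbb{H}/K$, read off $\bar{a}\le 1$ and $\bar{a}^{-1}\le 1$ from the natural order, and conclude $\bar{a}=1$. The paper handles the second hypothesis by a terse pointer to Lemma~\ref{lem_quasi_identity}, while you treat both cases uniformly through the order argument---but these coincide, since antisymmetry of the natural order on a proper semifield is precisely what that quasi-identity delivers.
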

\begin{proof}
Let $a + a^{-1} \in K$. Then in the factor semifield $\mathbb{H}/K$, we get $w + w^{-1} = 1$ for each element $w = aK$
which yields that $w \leq  1, w^{-1} \leq 1$, and thus  $w = 1$, i.e., $aK = K$ and so $a \in K$. For the second condition just apply Lemma \ref{lem_quasi_identity}.
\end{proof}

\begin{prop}\label{prop_absolute_generator}\cite{Prop_Semifields}
Let $\mathbb{H}$ be a semifield. Then for any $a \in \mathbb{H}$ such that the kernel generated by $a$ is a semifield, the following equality holds:
\begin{equation*}
\langle a \rangle = \langle a + a^{-1} \rangle.
\end{equation*}
In words, the kernel generated by $a$ coincides with the kernel generated by $a + a^{-1}$.
\end{prop}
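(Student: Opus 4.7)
The proof should be a routine double inclusion once one has Lemma \ref{lem_generators} in hand, so the plan is simply to articulate why each inclusion holds and to pin down exactly where the hypothesis that $\langle a \rangle$ is a semifield is used.

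For the inclusion $\langle a + a^{-1} \rangle \subseteq \langle a \rangle$, the plan is to show that $a + a^{-1}$ already lies in $\langle a \rangle$, and then invoke the definition of $\langle a + a^{-1} \rangle$ as the smallest kernel containing $a + a^{-1}$. Since $a \in \langle a \rangle$ and $\langle a \rangle$ is a normal subgroup of $\mathbb{H}$, we also have $a^{-1} \in \langle a \rangle$. Here is the only place the hypothesis comes in: a kernel is a priori only a multiplicative (normal) subgroup, so to conclude that the sum $a + a^{-1}$ lies inside $\langle a \rangle$, I use that $\langle a \rangle$ is by assumption a semifield and hence closed under addition. This gives $a + a^{-1} \in \langle a \rangle$, whence $\langle a + a^{-1} \rangle \subseteq \langle a \rangle$.

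For the reverse inclusion $\langle a \rangle \subseteq \langle a + a^{-1} \rangle$, the plan is to apply Lemma \ref{lem_generators} directly to the kernel $K = \langle a + a^{-1} \rangle$. By construction $a + a^{-1} \in K$, so the lemma yields $a \in K$. Since $\langle a \rangle$ is the smallest kernel containing $a$, it follows that $\langle a \rangle \subseteq K = \langle a + a^{-1} \rangle$.

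Combining the two inclusions gives the claimed equality. There is no real obstacle here; the only subtlety worth flagging in writing is the asymmetry in how the hypothesis is used: one direction is a trivial consequence of Lemma \ref{lem_generators} and requires no assumption on $\langle a \rangle$, while the other direction genuinely requires $\langle a \rangle$ to be closed under addition, which is precisely why one must assume it is a semifield rather than merely a kernel. Once this is observed, the proof is essentially a two-line verification.
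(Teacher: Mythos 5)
Your proof is correct and follows exactly the same route as the paper: Lemma \ref{lem_generators} gives $\langle a \rangle \subseteq \langle a + a^{-1} \rangle$, and closure of $\langle a \rangle$ under addition (the semifield hypothesis) gives the converse inclusion. You spell out the two inclusions in slightly more detail than the paper, which is fine.
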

\begin{proof}
A direct consequence of Lemma \ref{lem_generators}, which implies that $a \in \langle a + a^{-1} \rangle$, and thus $\langle a \rangle  \subseteq \langle a + a^{-1} \rangle$. The converse inclusion follows from the fact that $\langle a \rangle$ is a semifield.
\end{proof}

\ \\
\ \\

\begin{rem} \label{rem_kernel_is_convex}\cite[Property 2.4]{Prop_Semifields}
Every kernel $K$ of a semifield $\mathbb{H}$ is convex with respect to the natural order on $\mathbb{H}$:
for $a,c \in K$ and  $b \in \mathbb{H}$
\begin{equation}
a \leq b \leq c \ \Rightarrow \ b \in K.
\end{equation}
\end{rem}
\begin{proof}
$a \leq b \leq c$ implies that $a + u = b$ and $b + v =c$ for $u,v \in \mathbb{H}$, i.e., $a+u+v=c$. The equality $ K + uK + vK = K$ holds in the factor semifield $\mathbb{H}/K = \{ yK \ : \ y \in \mathbb{H} \} $. Then $bK = (a + u)K = aK + uK = K$ by Lemma \ref{lem_quasi_identity}. Thus $b \in K$.
\end{proof}

\begin{prop}\label{prop_full_ordered_group_is_kernel}
Let $N$ be a convex normal subgroup of a semifield $\mathbb{H}$. If each $a \in N$ is comparable with $1$, i.e., if $\leq$ is a total order on $N$, then $N$ is a kernel of $\mathbb{H}$.
\end{prop}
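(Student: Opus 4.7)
The plan is to verify the convexity condition \eqref{defn_ker_convexity} in the definition of a kernel, since $N$ is assumed to already be a normal subgroup of $\mathbb{H}$. So fix $x,y \in \mathbb{H}$ with $x+y=1$ and $a,b \in N$; the goal is to show $xa + yb \in N$.

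First I would extract the two structural facts I need from the hypothesis $x+y=1$. By the definition of the natural order (Remark \ref{rem_natural_order_on_semifield}), the equation $x + y = 1$ directly witnesses that $x \leq 1$ and $y \leq 1$. Next, using the total order on $N$, I may assume without loss of generality that $a \leq b$. Then using that the natural order is compatible with multiplication and addition, I can sandwich $xa+yb$:
\begin{equation*}
a \;=\; (x+y)a \;=\; xa + ya \;\leq\; xa + yb \;\leq\; xb + yb \;=\; (x+y)b \;=\; b,
\end{equation*}
where the first inequality uses $ya \leq yb$ (since $a \leq b$) and the second uses $xa \leq xb$.

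Finally, since $a, b \in N$ and $a \leq xa + yb \leq b$, the convexity of $N$ (which is part of the hypothesis) immediately gives $xa + yb \in N$, completing the verification of \eqref{defn_ker_convexity}. The only place where the hypothesis that $\leq$ is \emph{total} on $N$ enters is in the reduction to $a \leq b$; without it one could not sandwich $xa+yb$ between two members of $N$, so this is the essential use of totality. No other hypothesis beyond normality and convexity of $N$ is needed, and the argument is a short monotonicity computation rather than a delicate one.
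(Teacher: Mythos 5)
Your proof is correct and follows essentially the same route as the paper's: sandwich the convex combination between two elements of $N$ using the comparability hypothesis, then conclude by the convexity of $N$. The only minor difference is that you verify the two-element form $xa+yb \in N$ directly (bounding it by $a$ and $b$), whereas the paper's proof implicitly reduces to the one-element case, showing $s + ta \in N$ and sandwiching between $1$ and $a$, leaving the group-theoretic reduction from $xa+yb$ to $s+ta$ unstated; your version is therefore a touch more self-contained. One small remark: the facts $x \le 1$ and $y \le 1$ that you extract at the start are not actually used in the chain of inequalities — only the monotonicity $a \le b \Rightarrow xa \le xb$ and $ya \le yb$ is needed.
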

\begin{proof}
In order to prove $N$ is a kernel, we need to show that for $a \in N$ and $s,t \in \mathbb{H}$ such that $s+t=1$, $s+th \in N$. By assumption, we have that $1 \leq a$ or $a \leq 1$. In the former case, we have $t \leq ta$ and $s \leq sa$, thus $1 = s+t \leq s +ta \leq sa + ta = (s + t)a = a$. Since $N$ is convex and $a,1 \in N$ we get that $s+ta \in N$. The latter case yields that $a \leq s +ta \leq 1$ which implies the same.
\end{proof}

\ \\

The following remark yields an important property of a kernel, which we call \linebreak `power-radicality', to be introduced shortly.
\begin{rem}\label{rem_torsion_free}
The multiplicative group of every proper semifield $\mathbb{H}$ is a torsion-free group, i.e., all of its elements that are not equal to $1$ have infinite order.
\end{rem}
\begin{proof}
If $a^{n} = 1$ for $a \in \mathbb{H}$ and $n \in \mathbb{N}$, then
$$a(a^{n-1} + a^{n-2} + \dots + a + 1) = a^{n} + (a^{n-1} + \dots + a) = 1 + (a^{n-1} + \dots + a)$$
$$=a^{n-1} + \dots + a + 1$$ which yields that $a = 1$.
\end{proof}

The following remark is a straightforward consequence of Remark \ref{rem_torsion_free}.
\begin{rem}\label{rem_radicality_of_ker}
Let $K$ be a kernel of a proper semifield $\mathbb{H}$. For every $a \in \mathbb{H}$, if $a^{n} \in K$ for some $n \in \mathbb{N}$ then $a \in K$. We refer to this property of kernels by saying that a kernel is \emph{power-radical}.
\end{rem}
\begin{proof}
Indeed, if there exists an element $a \in \mathbb{H}$ not admitting the stated property, then its image $\phi(a)$ in $\mathbb{H}/K$ under the quotient homomorphism is torsion, which by Remark \ref{rem_torsion_free} is not possible since $\mathbb{H}/K$ is a semifield.
\end{proof}

The following subsequent statements establish a connection between the (normal) group generated by a set of elements and the kernel generated by the set. Recall that, kernels are a specific kind of group, the (normal) subgroup generated by a set of elements need not be a kernel.

\begin{prop}\label{prop_ker_stracture_by group}\cite[Proposition (3.13)]{Hom_Semifields}
Let $\mathbb{H}$ be a semifield and let $N$ be a normal subgroup of $( \mathbb{H}, \cdot)$. Then the smallest kernel containing $N$ is given by
\begin{equation}
\mathcal{K}(N) = \left\{ \sum_{i=1}^{n}s_ih_i \ : \ n \in \mathbb{N}, \ h_i \in  N, \ s_i \in \mathbb{H} \ \text{such that} \ \sum_{i=1}^{n}s_i = 1 \right\}.
\end{equation}
\end{prop}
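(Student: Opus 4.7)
The plan is to prove two inclusions: (i) every kernel of $\mathbb{H}$ containing $N$ contains the set on the right, and (ii) that set is itself a kernel containing $N$. The containment $N \subseteq \mathcal{K}(N)$ is immediate by taking $n=1$, $s_1 = 1$ (which sums to $1$), and $h_1$ an arbitrary element of $N$.

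For (i), let $K$ be any kernel of $\mathbb{H}$ with $N \subseteq K$, and let $a = \sum_{i=1}^{n} s_i h_i$ with $h_i \in N$ and $\sum s_i = 1$. I would argue by induction on $n$: for $n=1$, $a = h_1 \in N \subseteq K$, and for $n=2$, the convexity condition~\eqref{defn_ker_convexity} applied to $s_1 + s_2 = 1$ and $h_1, h_2 \in K$ gives $a \in K$. For $n \geq 3$, I would set $t := \sum_{i=2}^{n} s_i$ (nonzero and hence invertible, since $\mathbb{H}$ is proper and thus zero-free) and write
$$ a = s_1 h_1 + t \cdot \left( \sum_{i=2}^{n} (s_i t^{-1}) h_i \right). $$
The inner sum has $n-1$ terms with coefficients summing to $1$, so by induction it lies in $K$; one more application of the convexity condition (with $s_1 + t = 1$) finishes the step.

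For (ii), I need to verify that $\mathcal{K}(N)$ is a normal subgroup satisfying the convexity condition. Containment of $1$ and closure under products are straightforward, since $(\sum_i s_i h_i)(\sum_j t_j k_j) = \sum_{i,j} (s_i t_j)(h_i k_j)$ and $\sum_{i,j} s_i t_j = (\sum s_i)(\sum t_j) = 1$, with $h_i k_j \in N$. Normality is inherited from that of $N$ (in any case automatic in the commutative setting we adopt). The key step, and the only slightly clever one, is closure under multiplicative inverses. Given $a = \sum_{i=1}^{n} s_i h_i \in \mathcal{K}(N)$, I would set $t_i := s_i h_i a^{-1}$ and observe that
$$ \sum_{i=1}^{n} t_i = a^{-1} \sum_{i=1}^{n} s_i h_i = a^{-1} a = 1, $$
while $\sum_i t_i h_i^{-1} = \sum_i s_i a^{-1} = a^{-1}$. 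Since $h_i^{-1} \in N$, this exhibits $a^{-1}$ in the required form.

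Finally, for convexity of $\mathcal{K}(N)$: given $a = \sum_i s_i h_i$, $b = \sum_j u_j k_j$ in $\mathcal{K}(N)$ and $x+y=1$, the element $xa + yb = \sum_i (xs_i) h_i + \sum_j (yu_j) k_j$ has all its base terms in $N$ and coefficients summing to $x \sum_i s_i + y \sum_j u_j = x+y = 1$, so $xa + yb \in \mathcal{K}(N)$. The main obstacle is the inverse closure step, which is resolved by the identity above; the rest is a direct, almost mechanical, verification.
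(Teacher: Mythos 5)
Your proof is correct. Note that the paper only cites this proposition from an external source (Weinert--Wiegandt) and does not reproduce a proof, so there is nothing in the text itself to compare your argument against; you have supplied a self-contained verification.

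Both halves check out under the paper's standing conventions (commutative, proper, zero-free semifield). In part (i), the induction is sound: the pivot $t = \sum_{i\geq 2} s_i$ lies in $\mathbb{H}$ because $\mathbb{H}$ is closed under addition and has no zero, so it is invertible, and the two-term case is precisely the convexity axiom~\eqref{defn_ker_convexity}. In part (ii), closure under products follows from distributivity, convexity is the mechanical coefficient bookkeeping you describe, and normality — which you correctly flag as automatic in the commutative setting — would in the noncommutative case still hold via $g(\sum_i s_i h_i)g^{-1} = \sum_i (g s_i g^{-1})(g h_i g^{-1})$, since conjugation preserves both the sum-to-one condition and membership in $N$. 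The inverse-closure step is indeed the only nontrivial observation: from $a = \sum_i s_i h_i$, setting $t_i = s_i h_i a^{-1}$ gives $\sum_i t_i = 1$ and $\sum_i t_i h_i^{-1} = a^{-1}$, exhibiting $a^{-1}$ in the required form with the base elements $h_i^{-1} \in N$. That identity is exactly the right device, and the rest of the argument is routine.
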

\ \\

\begin{rem}
Let $S \subset \mathbb{H}$. The kernel generated by $S$ is $\langle S \rangle = \mathcal{K}(\mathcal{G}(S))$ where $\mathcal{G}(S)$ is the (multiplicative) group generated by $S$.
\end{rem}
\begin{proof}
As a kernel is defined to be a multiplicative (normal) group, the assertion is immediate.
\end{proof}

\begin{rem}
Let $S_1,...,S_r \subset \mathbb{H}$ and let $G_1,...,G_r$ be the groups generated by $S_1,...,S_r$ respectively.
Then $\langle \bigcup_{i=1}^{r}{S_i} \rangle = \mathcal{K}(\prod_{i=1}^{r}G_i) = \prod_{i=1}^{r}\mathcal{K}(G_i) = \prod_{i=1}^{r}\langle S_i \rangle$
\end{rem}
\begin{proof}
By definition, $\prod_{i=1}^{r}\mathcal{K}(G_i)$ is a kernel, and thus a group, which contains all the groups $G_i$ for $i = 1,...,r$, thus also contains the group $\prod_{i=1}^{r}G_i$. Since
$\mathcal{K}(\prod_{i=1}^{r}G_i)$ is the smallest kernel containing the group $\prod_{i=1}^{r}G_i$, we get that
$\mathcal{K}(\prod_{i=1}^{r}G_i) \subseteq \prod_{i=1}^{r}\mathcal{K}(G_i)$. Now, since $G_i \subseteq \mathcal{K}(\prod_{i=1}^{r}G_i)$ for every $i = 1,...,r$, we have that $\bigcup_{i=1}^{r}G_i \subseteq \mathcal{K}(\prod_{i=1}^{r}G_i)$. As $\prod_{i=1}^{r}\mathcal{K}(G_i)$ is the smallest kernel containing $\bigcup_{i=1}^{r}G_i$ (see Remark \ref{rem_ker_latice_operations}), we get the converse inclusion and thus equality.
All other equalities are group theoretic basic equalities.
\end{proof}

\begin{prop}\label{prop_principal_ker}\cite[Proposition (3.1)]{Prop_Semifields}
Let $K = \langle a \rangle$, a principal kernel in a semifield $\mathbb{H}$ with $a \in \mathbb{H}$ such that $a \geq 1$. Then
\begin{equation}
K = \{ x \in \mathbb{H} \ : \ \exists n \in \mathbb{N} \  \ \ \text{such that} \ \ \  a^{-n} \leq x \leq a^{n} \}.
\end{equation}
\end{prop}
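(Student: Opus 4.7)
Let $T = \{x \in \mathbb{H} : \exists\, n \in \mathbb{N},\ a^{-n} \leq x \leq a^n\}$ denote the right-hand side. The plan is to prove the two inclusions separately, with the containment $T \subseteq K$ following immediately from convexity of kernels, and the reverse containment $K \subseteq T$ reduced to showing that $T$ is itself a kernel of $\mathbb{H}$ containing $a$.

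For $T \subseteq K$: given $x \in T$, pick $n$ with $a^{-n} \leq x \leq a^n$. Since $a \in K$ and $K$ is a (multiplicative) group, both $a^n$ and $a^{-n}$ lie in $K$. By Remark \ref{rem_kernel_is_convex}, every element of $\mathbb{H}$ sandwiched between two elements of $K$ lies in $K$, so $x \in K$.

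For $K \subseteq T$: since $K = \langle a \rangle$ is by definition the smallest kernel containing $a$, it suffices to exhibit $T$ as a kernel and to note that $a \in T$ (which is trivial from $a \geq 1$, taking $n=1$ so that $a^{-1} \leq 1 \leq a = a^1$). To verify $T$ is a kernel I would check four things. First, closure under multiplication: if $a^{-n} \leq x \leq a^n$ and $a^{-m} \leq y \leq a^m$, then multiplying these inequalities (which is permissible because addition and multiplication preserve the natural order, see Remark \ref{rem_natural_order_on_semifield}) yields $a^{-(n+m)} \leq xy \leq a^{n+m}$. Second, closure under inverses: from $x \leq a^n$ write $x + c = a^n$ for some $c$, then multiply by $x^{-1}a^{-n}$ to obtain $a^{-n} + cx^{-1}a^{-n} = x^{-1}$, which shows $a^{-n} \leq x^{-1}$; symmetrically, $a^{-n} \leq x$ gives $x^{-1} \leq a^n$. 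Third, normality is automatic since we work with commutative semifields.

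The last and only slightly delicate step is the defining convexity condition \eqref{defn_ker_convexity}: given $s, t \in \mathbb{H}$ with $s + t = 1$, and $u, v \in T$, I need $su + tv \in T$. Pick a common $n$ large enough that $a^{-n} \leq u,v \leq a^n$ (take $n$ as the maximum of the bounds for $u$ and $v$). Using order preservation of multiplication by $s$ and by $t$, and then of addition, I would bound
\begin{equation*}
a^{-n} \;=\; (s+t)a^{-n} \;=\; sa^{-n} + ta^{-n} \;\leq\; su + tv \;\leq\; sa^{n} + ta^{n} \;=\; (s+t)a^{n} \;=\; a^{n},
\end{equation*}
so $su + tv \in T$. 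This completes the proof that $T$ is a kernel containing $a$, hence $K \subseteq T$, and the two inclusions give the desired equality. The only subtle point worth flagging is that all the computations rely on the fact that $\leq$ is a \emph{natural} partial order (Remark \ref{rem_natural_order_on_semifield}), so that multiplying or adding preserves inequalities — no total-order assumption on $\mathbb{H}$ is needed.
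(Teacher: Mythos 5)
Your proof is correct, and worth a small note: the paper itself cites this proposition from the reference {Prop_Semifields} (as Proposition 3.1 there) and does not reproduce the argument, so there is no proof in the paper to compare against. Your reconstruction is the natural one — show $T \subseteq K$ via Remark \ref{rem_kernel_is_convex} (convexity of kernels), and show $K \subseteq T$ by verifying directly that $T$ is itself a kernel containing $a$, forcing $\langle a\rangle \subseteq T$ by minimality — and all four checks (multiplicative closure, inverses, normality via commutativity, and the defining convexity condition \eqref{defn_ker_convexity}) are carried out correctly, relying only on the fact that the natural order is preserved under both addition and multiplication. Two tiny points worth tightening for a polished write-up: in the inverse step, the case $x = a^n$ exactly (no $c$ with $x + c = a^n$) should be dispatched separately, though it is trivial since then $x^{-1} = a^{-n}$; and the claim that a single $n$ can be chosen uniformly for $u$ and $v$ relies on the monotonicity $a^{-m} \leq a^{-n} \leq a^n \leq a^m$ for $m \geq n$, which follows from $a \geq 1$ and is worth one explicit sentence.
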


\begin{cor}\label{cor_positive_generator_of_a_semifield}
Every nontrivial semifield $\mathbb{H}$ with a generator has a generator $a \geq 1$ such that $a \neq 1$.
\end{cor}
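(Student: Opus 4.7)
My plan is to start from any generator $b$ of $\mathbb{H}$ and replace it by $a := b + b^{-1}$. Since $\mathbb{H} = \langle b\rangle$ is itself a semifield, the hypothesis of Proposition~\ref{prop_absolute_generator} is satisfied, so I immediately obtain $\langle a\rangle = \langle b + b^{-1}\rangle = \langle b\rangle = \mathbb{H}$. Thus $a$ is a generator of $\mathbb{H}$ for free, and only the two conditions $a\ge 1$ and $a\ne 1$ remain to be checked.

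For $a \ne 1$, I would argue by contradiction. If $b + b^{-1} = 1$, then $b + b^{-1}$ lies in the trivial kernel $\{1\}$, and Lemma~\ref{lem_generators} applied to $K = \{1\}$ forces $b = 1$. But then $\mathbb{H} = \langle b\rangle = \langle 1\rangle = \{1\}$ is trivial, contradicting the hypothesis.

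For $a \ge 1$, I would exploit the standing assumption that $\mathbb{H}$ is idempotent, so that addition coincides with the lattice join $\vee$ of the underlying abelian $\ell$-group. Then $a = b \vee b^{-1} \ge b$ and $a = b \vee b^{-1} \ge b^{-1}$, and multiplying these two inequalities gives $a^{2} \ge b\,b^{-1} = 1$. The remaining step is the passage from $a^{2}\ge 1$ to $a\ge 1$, and I expect this to be the main technical point, since the natural order on $\mathbb{H}$ need not be total. I would handle it via the classical representation of every commutative $\ell$-group as a subdirect product of totally ordered abelian groups: in each totally ordered factor, $a^{2}\ge 1$ immediately implies $a \ge 1$ (otherwise $a < 1$ would give $a^{2} < a < 1$ by torsion-freeness), and the conclusion propagates to the subdirect product coordinatewise, yielding $a \ge 1$ in $\mathbb{H}$.
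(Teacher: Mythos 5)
Your proof is correct, but it takes a noticeably different and heavier route than the paper on the crucial step $a \ge 1$. The paper does not stop at $b + b^{-1}$: it takes $a = (b + b^{-1})^{2}$. After Proposition~\ref{prop_absolute_generator} gives that $b + b^{-1}$ generates, power-radicality (Remark~\ref{rem_radicality_of_ker}) gives that $(b + b^{-1})^{2}$ generates as well, and then $a \ge 1$ is completely trivial, because $(b+b^{-1})^{2} = b^{2} + b\,b^{-1} + b^{-1}b + b^{-2}$ expands to an expression of the form $1 + c$ (and in the idempotent case simply to $b^{2} + b^{-2} + 1$), so that $a \ge 1$ holds by the very definition of the natural order, with no lattice-theoretic input at all. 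You instead keep $a = b + b^{-1}$, observe $a^{2} \ge 1$, and then invoke the standing idempotency, the correspondence with $\ell$-groups, and Clifford's subdirect decomposition to pull $a \ge 1$ back from the totally ordered factors. That argument is valid (the embedding is an $\ell$-monomorphism, so it reflects order, and in a totally ordered torsion-free group $a^2 \ge 1$ forces $a \ge 1$), but it costs you two things: it imports substantially more machinery than the problem needs, and it silently restricts the statement to idempotent $\mathbb{H}$, whereas the corollary sits in the general ``semifield with a generator'' section and the paper's squaring trick works for any proper commutative semifield without assuming $+ = \vee$. On the plus side, you explicitly verify $a \ne 1$ (via Lemma~\ref{lem_generators} applied to $K = \{1\}$), a point the paper leaves implicit; that is a small gap in the paper's write-up that your argument fills. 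If you want to stay within the paper's generality, the cleanest fix is simply to square $b + b^{-1}$ as the paper does, after which the only content left is the torsion-free / power-radicality observation you already supply for $a \ne 1$.
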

\begin{proof}
Let $u \in \mathbb{H} \setminus \{1\}$ be a generator of $\mathbb{H}$. By Lemma \ref{lem_generators}, the element $u + u^{-1}$ is also a generator of $\mathbb{H}$ which yields that the element $a = (u + u^{-1})^2 = u^2 + u^{-2} + 1 \geq 1$ is a generator of $\mathbb{H}$ too by Proposition \ref{prop_principal_ker}.
\end{proof}

\begin{rem}\label{rem_non_trivial_element}
Every semifield $\mathbb{H}$ such that $\mathbb{H} \neq \{ 1 \}$ has an element
$a \in \mathbb{H}$ such that $a > 1$.
\end{rem}
\begin{proof}
Indeed, $\mathbb{H} \neq \{ 1 \}$, so there exists $a \in \mathbb{H} \setminus \{1 \}$. Now, if $a$ is not comparable with $1$ then take $1+a$. Note that $1 + a \neq 1$ since otherwise, it would imply that $a \leq 1$, contradicting our assumption that $a$ and $1$ are not comparable. Thus $1+a >1$. On the other hand, if $a$ is comparable with $1$ and if $a < 1$, take $1 < a^{-1} \in \mathbb{H}$.
\end{proof}


\begin{cor}\label{cor_principal_ker_by_order}
If $\mathbb{H}$ is a semifield, then for any element $a \in \mathbb{H}$ we have that
\begin{equation}
\langle a \rangle = \{ x \in \mathbb{H} \ : \ \exists n \in \mathbb{N} \ \text{such that} \  (a+a^{-1})^{-n} \leq x \leq (a + a^{-1})^{n} \}.
\end{equation}
\end{cor}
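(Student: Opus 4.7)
The plan is to reduce the general principal kernel $\langle a\rangle$ to the case $a\geq 1$ already handled by Proposition~\ref{prop_principal_ker}, by replacing $a$ with the manifestly order-nonnegative element $a+a^{-1}$, or rather $(a+a^{-1})^2$. Concretely, I will first invoke Proposition~\ref{prop_absolute_generator} to replace $\langle a\rangle$ by $\langle a+a^{-1}\rangle$; this is legitimate because in the idempotent setting every kernel is a semifield, so the hypothesis of that proposition is automatic. Then I will square, setting $c=(a+a^{-1})^2$, and use Remark~\ref{rem_radicality_of_ker} (power-radicality of kernels) to conclude $\langle c\rangle = \langle a+a^{-1}\rangle$. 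In summary, the chain $\langle a\rangle=\langle a+a^{-1}\rangle=\langle(a+a^{-1})^2\rangle$ moves the problem to a generator satisfying the hypothesis of Proposition~\ref{prop_principal_ker}.

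The point of squaring is that $c=(a+a^{-1})^2$ is comparable with $1$ without any case analysis: expanding gives $c = a^2 + a^{-2} + 1$ (absorbing $1+1=1$ in the idempotent case, or directly in the bipotent case where one of $a^2, a^{-2}$ dominates), and since the summand $1$ appears, the natural order yields $c \geq 1$ immediately. With $c\geq 1$, Proposition~\ref{prop_principal_ker} produces
\[
\langle c\rangle = \{\, x\in\mathbb{H}\ :\ \exists\, n\in\mathbb{N}\ \text{such that}\ c^{-n}\leq x\leq c^{n}\,\}.
\]
Rewriting $c^n=(a+a^{-1})^{2n}$, I obtain the asserted set with exponents $2n$ in place of $n$.

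Finally, since $n$ is existentially quantified, the family of intervals indexed by $n\in\mathbb{N}$ and the family indexed by $2n\in 2\mathbb{N}$ determine the same union: any bound $(a+a^{-1})^{-n}\leq x\leq(a+a^{-1})^{n}$ implies the weaker bound $(a+a^{-1})^{-2n}\leq x\leq(a+a^{-1})^{2n}$, and conversely any bound of the latter form is already of the form exhibited in the former (with a possibly larger exponent). Hence the two descriptions coincide, yielding the stated equality.

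I do not anticipate a serious obstacle: the only subtlety is the appeal to Proposition~\ref{prop_absolute_generator}, whose standing hypothesis that $\langle a\rangle$ be a semifield holds in our idempotent setting by the earlier remark that kernels of idempotent semifields are themselves semifields. The use of $(a+a^{-1})^2$ rather than $a+a^{-1}$ is just a safety measure to guarantee $\geq 1$ without invoking an auxiliary lemma on absolute values in lattice-ordered groups, and it is harmless because the existential quantifier on $n$ absorbs the factor of $2$.
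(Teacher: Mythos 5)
Your overall strategy — reduce to Proposition~\ref{prop_principal_ker} via Proposition~\ref{prop_absolute_generator} — is exactly the paper's, and that part is sound. The issue lies in the last step, where you try to translate the description with exponents $2n$ (obtained for the generator $c=(a+a^{-1})^2$) back into a description with arbitrary exponents $n$.

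The claim that ``any bound $(a+a^{-1})^{-n}\leq x\leq(a+a^{-1})^{n}$ implies the weaker bound $(a+a^{-1})^{-2n}\leq x\leq(a+a^{-1})^{2n}$'' amounts to $(a+a^{-1})^{n}\geq 1$, equivalently $(a+a^{-1})^{2n}\geq (a+a^{-1})^{n}$. For even $n$ this follows from $c\geq 1$, but for odd $n$ it is precisely $a+a^{-1}\geq 1$ in disguise (more precisely, it follows from $a+a^{-1}\geq 1$, or from the harder $\ell$-group fact that $h^2\geq 1\Rightarrow h\geq 1$). So the squaring you introduced as a ``safety measure to guarantee $\geq 1$ without invoking an auxiliary lemma on absolute values'' does not actually accomplish its goal: the need for that auxiliary fact reappears, implicitly, in your exponent-matching step. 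The paper avoids this entirely by simply using $a+a^{-1}\geq 1$ and applying Proposition~\ref{prop_principal_ker} to $a+a^{-1}$ directly — the same fact your detour tries, and fails, to sidestep.

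The gap is fixable within your framework. One clean repair: from $(a+a^{-1})^{-n}\leq x\leq(a+a^{-1})^{n}$, multiply the two inequalities to get $(a+a^{-1})^{-2n}\leq x^{2}\leq(a+a^{-1})^{2n}=c^{n}$, so $x^{2}\in\langle c\rangle$ by Proposition~\ref{prop_principal_ker}, and then $x\in\langle c\rangle$ by the power-radicality of kernels (Remark~\ref{rem_radicality_of_ker}), which you already invoked. But as written, the argument has a circularity: it relies on the very monotonicity of powers of $a+a^{-1}$ that you claimed to avoid.
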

\begin{proof}
This is a direct consequence of Proposition \ref{prop_principal_ker} and Proposition \ref{prop_absolute_generator}.
\end{proof}

\begin{note}
Note that for an idempotent semifield the equality introduced in Corollary \ref{cor_principal_ker_by_order} can be restated as
\begin{equation}
\langle a \rangle = \{ x \in \mathscr{R} \ : \ \exists n \in \mathbb{N} \ \text{such that} \  (a \wedge a^{-1})^{n} \leq x \leq (a \dotplus a^{-1})^{n} \}
\end{equation}
using the underlying lattice operation $\wedge$.
\end{note}

By Corollary \ref{cor_principal_ker_by_order} we have
\begin{rem}\label{rem_kernel_by_abs_value}
For any element $a \in \mathbb{H}$ we have that
\begin{equation}\label{eq_kernel_by_abs_value}
\langle a \rangle = \{ x \in \mathbb{H} \ : \ \exists n \in \mathbb{N} \  \text{such that} \  (x + x^{-1}) \leq (a + a^{-1})^{n} \}.
\end{equation}
\end{rem}
\begin{proof}
Just take inverses in equation $(a+a^{-1})^{-n} \leq x \leq (a + a^{-1})^{n}$ and sum up both sides of resulting weak inequalities.
\end{proof}

\begin{note}
Equality \eqref{eq_kernel_by_abs_value} of Remark \ref{rem_kernel_by_abs_value} can be written as
\begin{equation}\label{eq_kernel_by_abs_value1}
\langle a \rangle = \{ x \in \mathbb{H} \ : \ \exists n \in \mathbb{N} \  \text{such that} \  |x| \leq |a|^n \}.
\end{equation}
\end{note}

\begin{rem}\label{rem_homomorphic_image_of_semifield_with_generator}\cite[Property (3.2)]{Prop_Semifields}
The homomorphic image of a generator is a generator of the image. In particular,
a homomorphic image of a semifield with a generator is a semifield with a generator.
\end{rem}
\begin{proof}
If $\phi : \mathbb{H} \rightarrow \mathbb{U}$ is a homomorphism of a semifield $\mathbb{H}$ onto a semifield $\mathbb{U}$ and $\mathbb{H} = \langle a \rangle$, then $\mathbb{U} = \langle \phi(a) \rangle$, because the preimage of a kernel at a homomorphism of semifields is always a kernel.
\end{proof}

\begin{rem}\label{rem_image_of_principal_kernel}
Let $\langle a \rangle$ be a principal kernel of a semifield $\mathbb{H}$, which is also a semifield and let $\phi : \mathbb{H} \rightarrow \mathrm{U}$ be a semifield epimorphism. Then
$$\phi(\langle a \rangle) = \langle \phi(a) \rangle = \{ b \in U \ : \ \exists n \in \mathbb{N} \  \text{such that} \  |b| \leq |\phi(a)|^n \},$$
i.e., the image of $\langle a \rangle$ is the kernel generated by $\phi(a)$ in $U$.
\end{rem}
\begin{proof}
As $\langle a \rangle$ is also a semifield, it is a semifield with a generator $a$. Thus by \linebreak Remark~\ref{rem_homomorphic_image_of_semifield_with_generator} its homomorphic image is also a semifield with a generator $\phi(a)$ and by Theorem \ref{thm_kernels_hom_relations} it is a kernel. Thus, its homomorphic image is a principal kernel $\langle \phi(a) \rangle$ which is also a semifield.
\end{proof}
\ \\

We can apply Remark \ref{rem_image_of_principal_kernel} and get
\begin{cor}\label{cor_homomorphic_image_of_principal_kernel}
 Let $\phi : \mathscr{R}(x_1,...,x_n) \rightarrow \mathrm{U}$ be a semifield epimorphism. Then for every principal kernel $\langle f \rangle$ of $\mathscr{R}(x_1,...,x_n)$, one has that
\begin{equation}\label{eq_homomorphic_image_of_principal_kernel}
\phi(\langle f \rangle) = \langle \phi(f) \rangle_{U} = \{ g \in \phi( \mathscr{R}(x_1,...,x_n)) \ : \ \exists n \in \mathbb{N} \  \text{such that} \  |g| \leq |\phi(f)|^n \}.
\end{equation}
\end{cor}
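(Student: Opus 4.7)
The plan is simply to invoke the preceding Remark \ref{rem_image_of_principal_kernel} with $\mathbb{H} = \mathscr{R}(x_1,\ldots,x_n)$ and $a = f$. The only thing to check is that the hypothesis of that remark, namely that the principal kernel $\langle f \rangle$ is itself a semifield, is automatic in our setting.

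First I would observe that, since $\mathscr{R}$ is bipotent, it is in particular idempotent, and therefore $\mathscr{R}(x_1,\ldots,x_n)$ is idempotent as well. By the remark noted earlier in the excerpt (the one stating that in an idempotent semifield every kernel is itself a (sub)semifield, since $a,b \in K$ forces $a+b = 1\cdot a + 1 \cdot b \in K$), the principal kernel $\langle f \rangle$ of $\mathscr{R}(x_1,\ldots,x_n)$ is a semifield.

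Consequently the hypotheses of Remark \ref{rem_image_of_principal_kernel} are met: $\langle f \rangle$ is a semifield with generator $f$, and $\phi$ restricts to a semifield epimorphism from $\langle f \rangle$ onto $\phi(\langle f \rangle)$. Applying the remark verbatim yields
\[
\phi(\langle f \rangle) \;=\; \langle \phi(f) \rangle_{U} \;=\; \bigl\{ g \in \phi(\mathscr{R}(x_1,\ldots,x_n)) \ : \ \exists n \in \mathbb{N} \text{ with } |g| \leq |\phi(f)|^n \bigr\},
\]
which is precisely \eqref{eq_homomorphic_image_of_principal_kernel}. There is no real obstacle here; the content of the corollary is the specialization of Remark \ref{rem_image_of_principal_kernel} to the case $\mathbb{H} = \mathscr{R}(x_1,\ldots,x_n)$, and the only substantive check — that $\langle f \rangle$ is a semifield — is guaranteed by the idempotency of the ambient semifield $\mathscr{R}(x_1,\ldots,x_n)$.
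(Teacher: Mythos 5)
Your proof is correct and takes exactly the same route as the paper, which simply invokes Remark~\ref{rem_image_of_principal_kernel} without further comment; you have usefully spelled out the one hypothesis the paper leaves implicit, namely that $\langle f \rangle$ is itself a semifield, which as you observe follows from the idempotency of $\mathscr{R}(x_1,\ldots,x_n)$.
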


\begin{note}
Note that if $\phi$ is not onto $U$, then the kernel generated by $\phi(f)$ in $U$, $\langle \phi(f) \rangle_{U}$, may contain elements that are not in the image of $\phi$. In general one has that $\langle \phi(f) \rangle_{\Im(\phi)} \subseteq \langle \phi(f) \rangle_{U}$.
\end{note}

\begin{thm}\label{thm_semi_with_a_gen}
If a semifield $\mathbb{H}$ has a finite number of generators, then $\mathbb{H}$ is a semifield with a generator.
\end{thm}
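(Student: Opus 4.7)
The plan is induction on the number $n$ of generators, reducing to the two-generator case. For $n = 1$ there is nothing to prove. For $n \geq 2$, using the identity $\langle a_1, \dots, a_n \rangle = \langle a_1, \dots, a_{n-1} \rangle \cdot \langle a_n \rangle$ from Definition \ref{defn_generation_of_kernels}, the inductive hypothesis applied to the first factor yields a single generator $c'$, so that $\mathbb{H} = \langle c', a_n \rangle$. Hence it suffices to show: given $a, b \in \mathbb{H}$ with $\mathbb{H} = \langle a, b \rangle$, there exists $c \in \mathbb{H}$ with $\mathbb{H} = \langle c \rangle$.

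For this I will take $c := (1 + a + a^{-1})(1 + b + b^{-1})$, whose expansion reads
\[
c \;=\; 1 + |a| + |b| + |a|\,|b|,
\]
where $|x| := x + x^{-1}$ denotes the absolute value of Remark \ref{rem_kernel_by_abs_value}. Note that $c \geq 1$ in the natural order, and $c \geq |a|$ and $c \geq |b|$ since each of those terms appears as a summand in the expansion. For the containment $c \in \langle a, b \rangle$, I fully expand $c$ into a sum of monomials $a^{i} b^{j}$ with $i, j \in \{-1, 0, 1\}$: each such monomial lies in the multiplicative group generated by $\{a, b\}$ and so in $\langle a, b \rangle$, and the sum itself belongs to the kernel either automatically (in the idempotent setting, where every kernel is a subsemifield) or via Proposition \ref{prop_ker_stracture_by group} realizing $c$ as a convex combination $\sum s_i h_i$ with $\sum s_i = 1$.

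For the reverse inclusion, by Remark \ref{rem_kernel_by_abs_value} it suffices to find, for each of $a$ and $b$, an exponent $k$ with $|a|, |b| \leq |c|^{k}$. But $|a| \leq c$ from the expansion, and $c \leq c + c^{-1} = |c|$, so $|a| \leq |c|$ already gives $a \in \langle c \rangle$; symmetrically $b \in \langle c \rangle$. Consequently $\langle a, b \rangle \subseteq \langle c \rangle \subseteq \langle a, b \rangle$, establishing the two-generator case and closing the induction.

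The one potentially delicate step is verifying $c \in \langle a, b \rangle$ in the fully general (non-idempotent) setting: kernels are defined as multiplicative subgroups, so the addition appearing in the expansion of $c$ must be reinterpreted via the convex-combination description of Proposition \ref{prop_ker_stracture_by group}, which requires explicitly exhibiting coefficients $s_i \in \mathbb{H}$ that sum to $1$ and correctly weigh each monomial. Under the idempotent hypothesis that underlies most of this dissertation this difficulty disappears, since kernels coincide with subsemifields and the sum $c$ lies in $\langle a, b \rangle$ by inspection; the remaining order comparisons $|a| \leq c \leq |c|$ then finish the argument cleanly.
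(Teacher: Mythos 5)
Your proof takes a genuinely different route from the paper's, and the two differ in an instructive way. The paper's argument is direct and avoids induction entirely: it sets $u = \sum_{i=1}^{n}(u_i + u_i^{-1})$, observes that $u \in \mathbb{H} = \langle u_1,\dots,u_n\rangle$ trivially, and then applies Lemma~\ref{lem_generators} (``$a+a^{-1}+b \in K \Rightarrow a \in K$'') to each summand to get $u_i \in \langle u \rangle$, hence $\mathbb{H} = \langle u \rangle$. Your argument uses a multiplicative witness $c = \prod_i(1+u_i+u_i^{-1})$ and the order characterization of Remark~\ref{rem_kernel_by_abs_value} in place of Lemma~\ref{lem_generators}, which is a perfectly good alternative for the containment $u_i \in \langle c \rangle$: your checks that $|a| \leq c \leq |c|$ are correct in the natural order of any proper semifield.

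The gap is in the induction step, not in the place you flag. Once you fix $\mathbb{H} = \langle a,b\rangle$, the containment $c \in \langle a,b\rangle$ is automatic, since $\langle a,b\rangle$ \emph{is} $\mathbb{H}$ and $c \in \mathbb{H}$; there is no convex-combination issue to resolve. The real problem is that to apply the inductive hypothesis to $\langle a_1,\dots,a_{n-1}\rangle$ you need that kernel to be a semifield on which the theorem's hypotheses make sense, and you need the conclusion ``$\langle a_1,\dots,a_{n-1}\rangle = \langle c'\rangle$'' to hold with $\langle c'\rangle$ computed inside $\mathbb{H}$. In the idempotent case both points can be arranged (kernels are subsemifields, and Corollary~\ref{rem_transitivity_for_idemotent_kernels} plus Theorem~\ref{thm_nother_1_and_3}(1) show the kernel generated by $c'$ is the same whether computed in $K$ or in $\mathbb{H}$), but the theorem as stated is for arbitrary proper semifields, where a kernel need not be a subsemifield and the inductive hypothesis simply doesn't apply. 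Worse, if you try to repair this by proving the stronger claim ``every finitely generated kernel $K$ is principal,'' then the step $c \in K$ really does become the delicate point (for a \emph{proper} kernel $K = \langle a,b\rangle$ the sum $1 + |a| + |b| + |a||b|$ is a genuine sum of group elements, not a convex combination with coefficients summing to $1$), and your appeal to Proposition~\ref{prop_ker_stracture_by group} would require an explicit exhibition of such coefficients, which you do not supply.

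The clean fix is to drop the induction: take $c = \prod_{i=1}^{n}(1 + u_i + u_i^{-1})$ at the outset, note $c \in \mathbb{H} = \langle u_1,\dots,u_n\rangle$ (automatic), and then run the order argument $|u_i| \leq c \leq |c|$ for each $i$ to get $u_i \in \langle c \rangle$ via Remark~\ref{rem_kernel_by_abs_value}. That yields $\mathbb{H} = \langle u_1,\dots,u_n\rangle \subseteq \langle c \rangle \subseteq \mathbb{H}$ in one stroke, valid for any proper semifield, and is then essentially a multiplicative twin of the paper's additive witness.
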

\begin{proof}
Let $\mathbb{H} = \langle u_1 \rangle \cdot \dots \cdot \langle u_n \rangle$ with the finite set of generators $\{ u_1,...,u_n \}$. By Remark~\ref{lem_generators}, $u_1,...,u_n$ are contained in the kernel $K = \langle u \rangle \subseteq \mathbb{H}$ where $ u  = u_1 + u_1^{-1} + \dots + u_n + u_n^{-1}$, thus $\mathbb{H} = \langle u \rangle$ as desired.
\end{proof}

\begin{rem}\label{rem_affine_semifields_as_images}
Let $\mathbb{H}$ be an idempotent semifield.
Let $\mathbb{K}=\mathbb{H}(a_1,...,a_n)$ be an affine semifield extension of the semifield $\mathscr{R}$.
Then $\mathbb{K} \cong \mathbb{H}(x_1,...,x_n)/K$ for some \linebreak $K \in \Con(\mathbb{H}(x_1,...,x_n))$.
\end{rem}
\begin{proof}
 Let $\phi : \mathbb{H}(x_1,...,x_n) \rightarrow \mathbb{K}$ be the substitution map sending $x_i \mapsto a_i$. Then~$\phi$~is an epimorphism. Taking $K = Ker\phi$ we have by the first isomorphism theorem that $\mathbb{K} \cong \mathbb{H}(x_1,...,x_n)/K$.
\end{proof}

\begin{cor}\label{affine_semifield_has_a_generator}
Every affine semifield over an idempotent semifield $\mathbb{H}$ is a semifield with a generator.
\end{cor}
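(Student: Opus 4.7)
The plan is to exhibit a finite kernel generating set of $\mathbb{K}$ directly and then invoke Theorem~\ref{thm_semi_with_a_gen}.

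Under the standing assumption of the paper that the idempotent ground semifield $\mathbb{H}$ is itself a semifield with a generator (for instance $\mathbb{H}=\mathscr{R}$, where by archimedean-ness any $h>1$ generates $\mathbb{H}$ as a kernel of itself), fix such a generator $h\in\mathbb{H}$. I claim that the finite set $\{h,a_1,\ldots,a_n\}$ generates $\mathbb{K}$ as a kernel of itself.

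To prove this, I would first show $\mathbb{H}\subseteq\langle h\rangle_{\mathbb{K}}$: by Remark~\ref{rem_kernel_by_abs_value} this reduces to the bound $|\alpha|\leq|h|^N$ for some $N\in\mathbb{N}$, which already holds inside $\mathbb{H}$ since $h$ generates $\mathbb{H}$ as a kernel of itself, and the same bound persists in $\mathbb{K}$. Consequently $\langle h,a_1,\ldots,a_n\rangle_{\mathbb{K}}$ contains $\mathbb{H}\cup\{a_1,\ldots,a_n\}$. Since $\mathbb{K}$ is idempotent (being an affine extension of an idempotent semifield, or equivalently, by Remark~\ref{rem_affine_semifields_as_images}, a quotient of the idempotent $\mathbb{H}(x_1,\ldots,x_n)$), every kernel of $\mathbb{K}$ is itself a subsemifield of $\mathbb{K}$. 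Hence $\langle h,a_1,\ldots,a_n\rangle_{\mathbb{K}}$ is a subsemifield of $\mathbb{K}$ that contains the generating set $\mathbb{H}\cup\{a_1,\ldots,a_n\}$ of the semifield extension $\mathbb{K}=\mathbb{H}(a_1,\ldots,a_n)$; by minimality of $\mathbb{K}$ as such an extension, $\langle h,a_1,\ldots,a_n\rangle_{\mathbb{K}}=\mathbb{K}$. Theorem~\ref{thm_semi_with_a_gen} then promotes this finite kernel generating set to a single generator of $\mathbb{K}$, namely $u=h+h^{-1}+\sum_{i=1}^{n}(a_i+a_i^{-1})$.

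The main obstacle is the reduction step $\mathbb{H}\subseteq\langle h\rangle_{\mathbb{K}}$, which depends essentially on $\mathbb{H}$ itself being a semifield with a generator. Without this hypothesis—implicit in the paper's standing archimedean framework but not recorded in the statement of the corollary—coefficients from $\mathbb{H}$ need not lie in any principal kernel of $\mathbb{K}$, and the finite-generation reduction collapses. An alternative route through Remark~\ref{rem_affine_semifields_as_images} combined with Remark~\ref{rem_homomorphic_image_of_semifield_with_generator} would face the same bottleneck, now transferred to the assertion that $\mathbb{H}(x_1,\ldots,x_n)$ is itself a semifield with a generator.
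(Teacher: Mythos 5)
Your proof is correct, and it takes a genuinely different route from the paper's. The paper factors through the semifield of fractions: it cites Remark~\ref{rem_affine_semifields_as_images} to realize $\mathbb{K}$ as a homomorphic image of $\mathbb{H}(x_1,\ldots,x_n)$, takes for granted that $\mathbb{H}(x_1,\ldots,x_n)$ is a semifield with a generator (Proposition~\ref{prop_frac_semifield_finitely_gen}), and then applies the generator-propagation lemma (Remark~\ref{rem_homomorphic_image_of_semifield_with_generator}) to push the generator down to $\mathbb{K}$. You instead work entirely inside $\mathbb{K}$: you produce the finite kernel generating set $\{h,a_1,\ldots,a_n\}$ directly, by first absorbing $\mathbb{H}$ into $\langle h\rangle_{\mathbb{K}}$ via Remark~\ref{rem_kernel_by_abs_value} and then observing that the resulting kernel is a subsemifield containing the semifield generating set of $\mathbb{K}$, hence equals $\mathbb{K}$; Theorem~\ref{thm_semi_with_a_gen} finishes the job. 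Your route is more constructive, in that it names an explicit generator $u = h + h^{-1} + \sum_i(a_i + a_i^{-1})$, and in effect re-proves Proposition~\ref{prop_frac_semifield_finitely_gen} in situ for the quotient. The paper's route is shorter because it delegates the work to already-proved lemmas. Both proofs share the same hidden dependency, which you correctly surface: the argument needs $\mathbb{H}$ itself to be a semifield with a generator (true under the paper's standing archimedean/bipotent hypotheses, but not stated in the corollary itself). The paper buries this dependency inside Proposition~\ref{prop_frac_semifield_finitely_gen}, which requires $\mathbb{H}$ bipotent archimedean; your proof makes it explicit as the step $\mathbb{H}\subseteq\langle h\rangle_{\mathbb{K}}$.
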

\begin{proof}
By Remark \ref{rem_affine_semifields_as_images} an affine semifield is a homomorphic image of of the semifield of fractions, which is a semifield with a generator, thus by Remark \ref{rem_homomorphic_image_of_semifield_with_generator} is also a semifield with a generator.
\end{proof}

\begin{rem}\label{rem_the contant_generated_kernel}
Let $\mathbb{H}$ be an archimedean semifield. Then
$$\langle \alpha \rangle = \langle \beta \rangle \in \PCon(\mathbb{H}(x_1,...,x_n))$$
for any $\alpha, \beta \in \mathbb{H} \setminus \{1 \}$.
\end{rem}
\begin{proof}
Indeed, since $\mathbb{H}$ is archimedean, Corollary \ref{cor_principal_ker_by_order} implies that $\alpha \in \langle \beta \rangle$ and \linebreak $\beta \in \langle \alpha \rangle$ so  $\langle \alpha \rangle = \langle \beta \rangle$.
\end{proof}

\begin{nota}
As it does not depend on the choice of constant generator $\alpha \in \mathbb{H} \setminus \{1\}$, we denote the kernel generated by $\alpha$ by $\langle \mathbb{H} \rangle$.
\end{nota}

Note that if $\mathbb{H}$ is an idempotent semifield then the semifield $\mathbb{H}(x_1,...,x_n)$ is also idempotent, so $\langle \mathbb{H} \rangle \in \PCon(\mathbb{H}(x_1,...,x_n))$ is a subsemifield of  $\mathbb{H}(x_1,...,x_n)$. Also note that the elements of $\langle \mathbb{H} \rangle$ are rational functions which are not necessarily constant. We discuss the structure of $\langle \mathbb{H} \rangle$ thoroughly in the subsequent sections.

\begin{note}
\textbf{Henceforth we always assume affine extensions, in particular the semifield of fractions, to be defined over an idempotent semifield, which make the extensions idempotent.
}
\end{note}

\ \\

\subsection{Simple semifields}

\ \\

\begin{defn}
A kernel $K$ of a semifield $\mathbb{H}$ which contain no kernels but the trivial ones, $\{1\}$ and $K$ itself, is called \emph{simple}.
A semifield is \emph{simple} if it is simple as a kernel of itself.
\end{defn}

\begin{rem}\label{rem_order_simple}
Every totally (linearly) ordered archimedean semifield (i.e., bipotent semifield) $\mathbb{H}$ has no kernels but the trivial ones, i.e., is simple.
\end{rem}
\begin{proof}
We may assume $\mathbb{H} \neq \{ 1 \}$. Let $a \in \mathbb{H}$ such that $a > 1$ (there exists such $a$ by Remark \ref{rem_non_trivial_element}). Now, since $\mathbb{H}$ is a linearly (totally) ordered semifield, for every $b \in \mathbb{H}$ there exists $m \in \mathbb{N}$ such that $a^{-m} \leq b \leq a^{m}$. Then by Proposition \ref{prop_principal_ker} we have that $b \in \langle a \rangle$. Thus $\langle a \rangle = \mathbb{H}$ and our claim is proved.
\end{proof}

\begin{rem}\label{rem_simple_is_with_generator}
Any simple semifield is a semifield with a generator.
\end{rem}
\begin{proof}
Indeed, if $\mathbb{H}$ is trivial then the assertion is obvious. Assume $\mathbb{H} \neq \{1\}$, then there exist some $\alpha \in \mathbb{H} \setminus \{1 \}$ and so $\langle 1 \rangle \subset \langle \alpha \rangle \subseteq \mathbb{H}$. Since $\mathbb{H}$ is simple we have that $\langle \alpha \rangle = \mathbb{H}$, so $\mathbb{H}$ is a semifield with a generator.
\end{proof}

\begin{cor}
The semifield $\mathscr{R}$ is simple and thus by Remark \ref{rem_simple_is_with_generator} a semifield with a generator.
\end{cor}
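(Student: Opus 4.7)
The plan is essentially to invoke the two immediately preceding remarks, since $\mathscr{R}$ was defined to satisfy exactly the hypotheses they require. First I would recall that by Definition \ref{defn_divisible_bipotent_base_semifield}, $\mathscr{R}$ is bipotent and archimedean (the divisibility and conditional completeness play no role here). Bipotency gives that the natural order on $\mathscr{R}$ is total, so $\mathscr{R}$ is a totally ordered archimedean semifield.

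Next I would apply Remark \ref{rem_order_simple} verbatim to $\mathbb{H} = \mathscr{R}$: every totally ordered archimedean semifield is simple, so $\mathscr{R}$ has no kernels other than $\{1\}$ and itself. Then Remark \ref{rem_simple_is_with_generator} applies directly to conclude that $\mathscr{R}$ is a semifield with a generator; concretely, any $\alpha \in \mathscr{R} \setminus \{1\}$ (which exists by Remark \ref{rem_non_trivial_element} in the nontrivial case) generates the whole semifield, since by archimedeanness together with the trichotomy of the total order one has $\alpha^{-m} \leq b \leq \alpha^{m}$ for every $b \in \mathscr{R}$ and some $m \in \mathbb{N}$, so $b \in \langle \alpha \rangle$ by Proposition \ref{prop_principal_ker}.

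There is no real obstacle here; the corollary is a two-line consequence of the two preceding remarks. The only thing that needs verification is that the assumptions in Definition \ref{defn_divisible_bipotent_base_semifield} line up with those of Remark \ref{rem_order_simple}, namely total ordering (granted by bipotency) and the archimedean property (granted explicitly). Hence the proof reduces to citing these two remarks in sequence.
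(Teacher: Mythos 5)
Your proposal is correct and follows the route the paper clearly intends: $\mathscr{R}$ is bipotent (hence totally ordered) and archimedean by Definition \ref{defn_divisible_bipotent_base_semifield}, so Remark \ref{rem_order_simple} gives simplicity, and Remark \ref{rem_simple_is_with_generator} (already cited in the corollary's statement) gives the generator. The paper omits a proof precisely because this chain of citations is immediate, and your extra paragraph unpacking the generator via Proposition \ref{prop_principal_ker} is accurate but optional.
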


\newpage

\subsection{Irreducible kernels, maximal kernels and the Stone topology}

\ \\

\begin{defn}\label{defn_irreducible_maximal_kernels}
A proper (non-trivial) kernel $K$ of a semifield $\mathbb{H}$ is said to be \linebreak \emph{irreducible} if for any pair of kernels $A,B$ of $\mathbb{H}$
\begin{equation}
A \cap B \subseteq K \Rightarrow A \subseteq K \ \text{or} \ B \subseteq K.
\end{equation}
A kernel $K$ is called \emph{weakly irreducible}  if for any pair of kernels $A,B$ of $\mathbb{H}$
\begin{equation}
A \cap B = K \Rightarrow A = K \ \text{or} \ B = K.
\end{equation}
$K$ is called \emph{maximal} if for any kernel $A$ of $\mathbb{H}$
\begin{equation}
K \subseteq A \Rightarrow K=A \ \text{or} \ A = K.
\end{equation}
\end{defn}

%
%

\begin{defn}\label{defn_reduced_semifield}
A semifield $\mathbb{H}$ is said to be \emph{reduced} if for any pair of kernels $A$ and~$B$ of $\mathbb{H}$,
$A \cap B = \{1\}$ implies that $A = \{1\}$ or $B = \{1\}$.
\end{defn}

\begin{rem}
If $P$ be an irreducible kernel of $\mathbb{H}$, then the quotient semifield $\mathbb{H}/P$ is reduced.
\end{rem}
\begin{proof}
Let $\phi : \mathbb{H} \rightarrow \mathbb{H}/P$ be the quotient map and let $A \neq \{1 \} ,B \neq \{1 \}$  kernels of $\mathbb{H}/P$ such that $A \cap B = \{ 1 \}$. Then the kernels $A'=\phi^{-1}(A)$ and $B'=\phi^{-1}(B)$
admit $A' \cap B' \supseteq  \phi^{-1}(A \cap B) = \phi^{-1}(\{1\}) = P$ which yields that either $A' \subseteq P$ or $B' \subseteq P$ so $A = \phi(A') \subseteq \phi(P) = \{1\}$ or $B = \phi(B') \subseteq \phi(P) = \{1\}$, contradicting our assumption that $A \neq \{1 \} ,B \neq \{1 \}$. Thus $A = \{1\}$ or $B = \{1 \}$ and $\mathbb{H}/P$ is reduced.
\end{proof}

\begin{thm}\label{thm_irr_kernels}\cite[Theorem (4.1)]{Prop_Semifields}
Let $K$ be a proper kernel of a semifield $\mathbb{H}$. Then there exists at least one irreducible kernel $P$ of $\mathbb{H}$ such that $K \subseteq P$.
\end{thm}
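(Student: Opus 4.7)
The plan is the standard Zorn's lemma argument, essentially parallel to the classical proof that every proper ideal in a commutative ring is contained in a prime ideal.

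First, since $K$ is proper there exists some $a \in \mathbb{H} \setminus K$. I would let $\mathcal{F}$ denote the family of all kernels $L$ of $\mathbb{H}$ with $K \subseteq L$ and $a \notin L$, partially ordered by inclusion. Note $\mathcal{F}$ is non-empty since $K \in \mathcal{F}$. For any chain $\{L_i\}_{i \in I}$ in $\mathcal{F}$, I would verify that $\bigcup_{i} L_i$ is again a kernel: normality and closure under inverses transfer directly from the chain condition, and the convexity property \eqref{defn_ker_convexity} only involves finitely many elements at a time, so any two elements of the union already lie in a common $L_{i_0}$ and the combination stays in $L_{i_0}$. This union also lies in $\mathcal{F}$, since $K \subseteq \bigcup_i L_i$ and $a \notin \bigcup_i L_i$ (otherwise $a$ would belong to some $L_i \in \mathcal{F}$).

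By Zorn's lemma, $\mathcal{F}$ contains a maximal element $P$. I claim that $P$ is irreducible. Suppose, for contradiction, that there exist kernels $A, B$ of $\mathbb{H}$ with $A \cap B \subseteq P$ but $A \not\subseteq P$ and $B \not\subseteq P$. By Remark \ref{rem_ker_latice_operations}, both $PA$ and $PB$ are kernels of $\mathbb{H}$, and they strictly contain $P$ (since $PA \supseteq A$ and $PB \supseteq B$). Thus neither $PA$ nor $PB$ lies in $\mathcal{F}$, so by maximality of $P$ we must have $a \in PA$ and $a \in PB$, i.e., $a \in PA \cap PB$.

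The decisive step is the distributive identity $PA \cap PB = P(A \cap B)$, supplied by Lemma \ref{lem_kernels_algebra} (this is where one uses that at least one of the involved kernels is a semifield, which is automatic in the idempotent setting where every kernel is a subsemifield). Since $A \cap B \subseteq P$ and $P$ is a multiplicative subgroup, $P(A \cap B) \subseteq P \cdot P = P$, so $a \in P$, contradicting $a \notin P$. Hence no such pair $A,B$ can exist and $P$ is irreducible as required. The main obstacle is precisely the distributive identity used in the last step; everything else is routine, but this identity is already packaged into Lemma \ref{lem_kernels_algebra}, so the proof goes through with only the verification that chain unions of kernels are kernels left as bookkeeping.
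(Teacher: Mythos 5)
Your Zorn's lemma construction is standard and the chain-union verification is correct: the subgroup and convexity conditions are finitary, so a chain union of kernels is a kernel, and it avoids $a$ since every link does, and normality is automatic for commutative $\mathbb{H}$. The gap is at the decisive step. The identity $PA \cap PB = P(A \cap B)$ is the first equation of Lemma \ref{lem_kernels_algebra} with $K := P$, and that lemma carries a hypothesis: at least one of $A$, $B$, $P$ must be a \emph{subsemifield} of $\mathbb{H}$. You acknowledge this in a parenthetical and note it is "automatic in the idempotent setting," which is true, but Theorem \ref{thm_irr_kernels} is stated for an arbitrary semifield $\mathbb{H}$. For a non-idempotent proper semifield, a kernel is a normal subgroup with the convexity property but need not be closed under addition, so there is no reason for any of $A$, $B$, $P$ to be a subsemifield, and the distributive identity is simply unavailable. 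Abstractly, $\Con(\mathbb{H})$ is only guaranteed to be modular (Theorem 3.6 of \cite{Hom_Semifields}), and in a merely modular lattice the element maximal with respect to excluding $a$ is \emph{weakly} irreducible (meet-irreducible) but need not be irreducible in the meet-prime sense the theorem asserts — the diamond $M_3$ is modular, not distributive, and contains no proper meet-prime element. That the two notions separate exactly in the non-distributive case is the point of Proposition \ref{prop_dist_semifield1}(1).

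So as a proof of the statement in the stated generality, your argument does not close. It does prove the result for idempotent $\mathbb{H}$, which is the setting the rest of the paper works in, but you should flag that restriction rather than present Lemma \ref{lem_kernels_algebra} as if its hypothesis were free. The paper itself offers no proof — Theorem \ref{thm_irr_kernels} is cited from \cite{Prop_Semifields} — so there is no in-paper argument to compare against; the cited source presumably either imposes a distributivity-type hypothesis or replaces the lattice-theoretic appeal with an element-level argument exploiting the semifield structure (in the spirit of the ring-theoretic prime-ideal proof, which manipulates elements of $P$ plus a principal ideal directly rather than invoking distributivity of the ideal lattice).
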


An immediate consequence of Theorem \ref{thm_irr_kernels} is
\begin{rem}\label{cor_max_kernels}
Every maximal kernel is irreducible.
\end{rem}

\ \\

In the following we prove some assertions concerning maximal kernels.

\begin{rem}
Let $\mathbb{H}$ be a semifield. Let $K$ be a kernel of $\mathbb{H}$ and $S \subseteq \mathbb{H}$ a subset. By Remark \ref{rem_ker_latice_operations},  The smallest kernel of $\mathbb{H}$ containing both $K$ and $S$ is $\langle M \cup S \rangle = M \cdot \langle S \rangle$.
\end{rem}

\begin{rem}\label{rem_maximal_kernel_property}
Let $M$ be a kernel of a semifield $\mathbb{H}$. $M$ is maximal if and only if for any $a \in \mathbb{H} \setminus M$, $\langle M \cup \{ a \} \rangle = M \cdot \langle a \rangle = \mathbb{H}$.
\end{rem}
\begin{proof}
Let $M$ be maximal kernel. Since $a \not \in M$ and since $M$ is a kernel, we have that $M \subset \langle M \cup \{ a \} \rangle$ and thus $\langle M \cup \{ a \} \rangle = \mathbb{H}$. Conversely, assume $M$ is not maximal, then there exists a kernel $N \neq \mathbb{H}$ such that $M \subset N$, thus there exists $a \in N$ such that $a \not \in M$ and so we get $M \subset \langle M \cup \{ a \} \rangle = \mathbb{H} \subseteq N$, contradicting $N$ being a maximal kernel.
\end{proof}

\begin{cor}\label{cor_max_ker_simple_corr}
For any semifield $\mathbb{H}$ and a kernel $K$ of $\mathbb{H}$, $K$ is a maximal kernel if and only if $\mathbb{H}/K$ is simple.
\end{cor}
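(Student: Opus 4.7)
The plan is to reduce the statement to a direct application of the correspondence theorem for quotient semifields (Theorem \ref{cor_qoutient_corr}), which supplies a bijection between kernels of $\mathbb{H}/K$ and kernels of $\mathbb{H}$ containing $K$, given by $L/K \mapsto L$.

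First I would unwind both sides of the biconditional into statements about the lattice of kernels. By Definition \ref{defn_irreducible_maximal_kernels}, $K$ is maximal in $\mathbb{H}$ exactly when the only kernels of $\mathbb{H}$ containing $K$ are $K$ itself and $\mathbb{H}$. By the definition of simplicity, $\mathbb{H}/K$ is simple precisely when the only kernels of $\mathbb{H}/K$ are the trivial ones, namely $\{1\}$ and $\mathbb{H}/K$ itself.

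Next I would invoke Theorem \ref{cor_qoutient_corr} to identify these two conditions. Under the correspondence, the kernel $K$ of $\mathbb{H}$ itself corresponds to $K/K = \{1\} \subseteq \mathbb{H}/K$, and the full semifield $\mathbb{H}$ corresponds to $\mathbb{H}/K$. Since the correspondence is an order-preserving bijection of the kernels of $\mathbb{H}$ containing $K$ onto the kernels of $\mathbb{H}/K$, the set of kernels between $K$ and $\mathbb{H}$ consists of exactly two elements if and only if the set of kernels of $\mathbb{H}/K$ consists of exactly two elements. This gives both directions simultaneously.

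No significant obstacle is expected: the work is essentially bookkeeping on top of the correspondence theorem. The only point requiring a little care is to note that the statement is trivial in the degenerate case $K = \mathbb{H}$ (where $\mathbb{H}/K$ is the trivial semifield and $K$ is not a proper kernel), so one should tacitly assume $K \neq \mathbb{H}$ in accordance with the convention in Definition \ref{defn_irreducible_maximal_kernels} that maximal (proper) kernels are not the whole semifield.
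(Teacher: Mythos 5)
Your proof is correct, and it takes a cleaner route than the paper. The paper argues the two directions separately by contraposition, working directly with the quotient map $\phi : \mathbb{H} \rightarrow \mathbb{H}/K$ and invoking Theorem \ref{thm_kernels_hom_relations} to push kernels forward and backward along $\phi$: if $\mathbb{H}/K$ has a nontrivial proper kernel $B$, then $\phi^{-1}(B)$ sits strictly between $K$ and $\mathbb{H}$; conversely, if $M$ sits strictly between $K$ and $\mathbb{H}$, then $\phi(M)$ is a nontrivial proper kernel of $\mathbb{H}/K$. Your approach instead cites Theorem \ref{cor_qoutient_corr} once, which already packages precisely the bijection $\{ \text{kernels of } \mathbb{H}/K \} \leftrightarrow \{ \text{kernels of } \mathbb{H} \text{ containing } K\}$, and then just counts: each side has exactly two elements iff the other does. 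Since the paper proves Theorem \ref{cor_qoutient_corr} itself via Theorem \ref{thm_kernels_hom_relations}, the two arguments rest on the same underlying facts, but yours avoids re-deriving the correspondence inline and is shorter for it. One minor remark: the bijection itself suffices for the counting argument — you don't even need to invoke order-preservation, as you briefly do — but that observation is harmless and certainly true.
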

\begin{proof}
 If $\mathbb{H}/K$ is not simple, then there exists a kernel $\{ 1\} \subset B \subset  \mathbb{H}/K$.\\ If $\phi:~\mathbb{H}~\rightarrow~\mathbb{H}/K$ is the quotient homomorphism, then by Theorem \ref{thm_kernels_hom_relations}, $\phi^{-1}(B)$ is a kernel of $\mathbb{H}$ and $ K =\phi^{-1}(\{1\}) \subset \phi^{-1}(B) \subset \phi^{-1}(\mathbb{H}/K) = \mathbb{H}$, so $\phi^{-1}(B)$ is proper and contains $K$.
Assume $K$ is not maximal, then there is some kernel $M$ of $\mathbb{H}$ containing (not equal to) $K$. Now, by  Theorem \ref{thm_kernels_hom_relations}(4), $K \subset \phi^{-1}(\phi(M)) = KM = M$ (since \ $K \subset M$) which in turn yields that $\phi(M) \subset \mathbb{H}/K$ is a proper kernel of $\mathbb{H}/K$ (for otherwise $\phi^{-1}(\phi(M)) = \mathbb{H}$) and $\phi(M) \neq \{1\}$ (for otherwise by the above $M=K$) thus $\mathbb{H}/K$ is not simple.
\end{proof}

\begin{defn}
The set \emph{Spec($\mathbb{H}$)} of all irreducible kernels of a semifield $\mathbb{H}$ is called the \emph{irreducible spectrum} of $\mathbb{H}$. The subset of $Spec(\mathbb{H})$ consisting of all maximal kernels \emph{Max($\mathbb{H}$)} is called the \emph{maximal spectrum} of $\mathbb{H}$.
\end{defn}

We now introduce the Stone topology defined on $Spec(\mathbb{H})$:
\begin{rem}
The sets $D(A) = \{P \in Spec(\mathbb{H}) \ : \ A \not \subseteq P \}$ with $A$ a kernel of $\mathbb{H}$ are open in the Stone topology. Denote $D(\langle u \rangle)$ for $u \in \mathbb{H}$ by $D(u)$. Then $D(1) = \emptyset$, $D(\mathbb{H})=Spec(\mathbb{H})$, and $D(\prod A_i) = \bigcup D(A_i)$ for any family $\{A_i \}$ of kernels of $\mathbb{H}$. Moreover, $D(A \cap B) = D(A) \cap D(B)$ for any kernels $A,B$ of $\mathbb{H}$. Thus the collection \linebreak $\{ D(u) \ : \ u \in \mathbb{H} \}$ is a basis of a topology on $Spec(\mathbb{H})$ called the \emph{Stone topology}.
\end{rem}

\begin{rem}
$Spec(\mathbb{H})$ is a topological space with respect to the Stone topology and $Max(\mathbb{H})$ is a subspace of $Spec(\mathbb{H})$ (by definition, with respect to the induced topology).
\end{rem}

\begin{cor}
If $A$ is a kernel of a semifield $\mathbb{H}$, then $D(A) = Spec(\mathbb{H})$ implies $A =\mathbb{H}$.
\end{cor}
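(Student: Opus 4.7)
The plan is to argue by contrapositive: assuming $A \neq \mathbb{H}$, I will produce an irreducible kernel $P$ with $A \subseteq P$, which by definition of $D(A)$ means $P \notin D(A)$ and hence $D(A) \neq Spec(\mathbb{H})$.

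The key tool is Theorem \ref{thm_irr_kernels}, which was stated precisely for this kind of situation: for every proper kernel $K$ of $\mathbb{H}$, there exists at least one irreducible kernel $P$ with $K \subseteq P$. Applied to $A$, this immediately yields a witness $P \in Spec(\mathbb{H})$ satisfying $A \subseteq P$, so that $P$ fails the defining condition $A \not\subseteq P$ of membership in $D(A)$.

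To complete the argument one simply chains the implications: $A \neq \mathbb{H}$ gives a proper kernel, Theorem \ref{thm_irr_kernels} supplies $P \in Spec(\mathbb{H})$ with $A \subseteq P$, and hence $P \notin D(A)$, contradicting $D(A) = Spec(\mathbb{H})$. There is no genuine obstacle here; the corollary is essentially a direct restatement (in topological language on $Spec(\mathbb{H})$) of Theorem \ref{thm_irr_kernels}. The only small point to keep in mind is that the definition of $Spec(\mathbb{H})$ restricts to proper, non-trivial irreducible kernels, so the $P$ produced by Theorem \ref{thm_irr_kernels} is automatically an element of $Spec(\mathbb{H})$ and the contradiction is valid.
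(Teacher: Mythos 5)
Your proof is correct and is precisely the intended argument: the corollary is stated in the paper without proof, and the obvious route (which you take) is to apply Theorem~\ref{thm_irr_kernels} contrapositively — if $A \neq \mathbb{H}$ then $A$ is proper, so there is an irreducible $P \supseteq A$, and since irreducible kernels are by definition the elements of $Spec(\mathbb{H})$, this $P$ witnesses $P \notin D(A)$. No gap.
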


\begin{thm}\label{thm_semifield_with_gen_property}\cite[Theorem (4.2)]{Prop_Semifields}
The following conditions are equivalent for any semifield $\mathbb{H}$:
\begin{enumerate}
  \item $\mathbb{H}$ is a semifield with a generator.
  \item $Spec(\mathbb{H})$ is compact.
  \item $Max(\mathbb{H})$ is compact, and every proper kernel of $\mathbb{H}$ is contained in some maximal kernel.
\end{enumerate}
\end{thm}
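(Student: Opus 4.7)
My plan is to establish the cycle $(1) \Rightarrow (3) \Rightarrow (2) \Rightarrow (1)$, leveraging a single structural fact throughout: by Remark \ref{rem_ker_latice_operations}, the smallest kernel containing a family $\{A_\alpha\}_{\alpha \in I}$ is the directed union $\bigcup_{F} \prod_{\alpha \in F} A_\alpha$ over finite $F \subseteq I$, so any element of such an ``infinite product'' already lies in some finite subproduct. Combined with the basis identity $D(\prod_i A_i) = \bigcup_i D(A_i)$ and with Theorem \ref{thm_irr_kernels} (every proper kernel lies in an irreducible one), this is essentially all the machinery that is needed.

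For $(1) \Rightarrow (3)$ I would fix $\mathbb{H} = \langle a \rangle$. Given any proper kernel $K$, I apply Zorn's lemma to the poset of kernels that contain $K$ but not $a$; unions of chains are still kernels and still avoid $a$, so a maximal element $M$ exists, and any kernel strictly containing $M$ must contain $a$, hence contains $\langle a \rangle = \mathbb{H}$, so $M$ is a maximal kernel. For compactness of $Max(\mathbb{H})$, given a cover $\{D(A_\alpha) \cap Max(\mathbb{H})\}_\alpha$, the kernel $\prod_\alpha A_\alpha$ cannot be proper: otherwise it would sit inside some maximal $M_0$ by what was just shown, and then every $A_\alpha \subseteq M_0$ would force $M_0 \notin \bigcup_\alpha D(A_\alpha)$, contradicting the cover. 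Hence $\prod_\alpha A_\alpha = \mathbb{H}$, so $a$ lies in a finite subproduct $A_{\alpha_1}\cdots A_{\alpha_n}$, which must therefore equal $\mathbb{H}$, and by the same ``every proper kernel sits in a maximal one'' argument the corresponding $D(A_{\alpha_i})$ already cover $Max(\mathbb{H})$.

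For $(3) \Rightarrow (2)$ I would take an arbitrary cover $\{D(A_\alpha)\}$ of $Spec(\mathbb{H})$, restrict it to the compact $Max(\mathbb{H})$, extract a finite subcover $D(A_1),\ldots,D(A_n)$, and conclude $A_1 \cdots A_n = \mathbb{H}$ (else it is proper and sits in some maximal kernel missed by the subcover), giving $\bigcup_i D(A_i) = D(\mathbb{H}) = Spec(\mathbb{H})$. For $(2) \Rightarrow (1)$ I would apply compactness to the tautological cover $Spec(\mathbb{H}) = \bigcup_{u \in \mathbb{H}} D(u)$ to get a finite subcover $D(u_1) \cup \cdots \cup D(u_n) = Spec(\mathbb{H})$; then $\langle u_1,\ldots,u_n\rangle$ is contained in no irreducible kernel, so by Theorem \ref{thm_irr_kernels} it equals $\mathbb{H}$, and Theorem \ref{thm_semi_with_a_gen} upgrades finite generation to a single generator. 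The main obstacle I anticipate is the compactness of $Max(\mathbb{H})$ in $(1) \Rightarrow (3)$: since $Max(\mathbb{H})$ is not in general closed in $Spec(\mathbb{H})$, its compactness cannot simply be inherited from that of $Spec(\mathbb{H})$, and the fix is to rerun the ``proper extends to maximal'' argument directly against an arbitrary cover of $Max(\mathbb{H})$ rather than trying to lift it to $Spec(\mathbb{H})$.
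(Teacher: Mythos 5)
The paper cites this as Theorem (4.2) of \cite{Prop_Semifields} and does not supply its own proof, so there is no in-paper argument to compare against; your proof stands on its own and is correct. The key ingredients — realizing the join of an arbitrary family of kernels as the directed union of finite subproducts, the Zorn argument that a kernel avoiding the fixed generator $a$ extends to a maximal kernel, the identity $D(\prod A_\alpha)=\bigcup D(A_\alpha)$, and the appeal to Theorem~\ref{thm_irr_kernels} plus Theorem~\ref{thm_semi_with_a_gen} to close the cycle at $(2)\Rightarrow(1)$ — are all sound, and you are right that compactness of $Max(\mathbb{H})$ must be argued directly rather than inherited, since $Max(\mathbb{H})$ need not be closed in $Spec(\mathbb{H})$. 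One small phrasing slip: at the end of $(1)\Rightarrow(3)$ the claim that the $D(A_{\alpha_i})$ cover $Max(\mathbb{H})$ does not need a second run of the Zorn argument; once $A_{\alpha_1}\cdots A_{\alpha_n}=\mathbb{H}$, any maximal $M$ outside every $D(A_{\alpha_i})$ would contain each $A_{\alpha_i}$, hence their product $\mathbb{H}$, which is impossible.
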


\begin{prop}\cite[Proposition (4.1)]{Prop_Semifields}
Any irreducible kernel in a semifield $\mathbb{H}$ contains a minimal irreducible kernel.
\end{prop}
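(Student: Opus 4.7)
The plan is to apply Zorn's lemma to the family $\mathcal{F} := \{Q \in Spec(\mathbb{H}) : Q \subseteq P\}$ of irreducible kernels contained in the given irreducible kernel $P$, partially ordered by reverse inclusion. Since $P \in \mathcal{F}$, the family is non-empty, and a maximal element of $\mathcal{F}$ with respect to reverse inclusion is exactly a minimal irreducible kernel contained in $P$. So the task reduces to showing that every chain in $\mathcal{F}$ admits a lower bound in $\mathcal{F}$.

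Given a chain $\{P_\alpha\}_{\alpha \in I}$ in $\mathcal{F}$ (a family totally ordered by inclusion), I would take $Q := \bigcap_{\alpha \in I} P_\alpha$. Arbitrary intersections of kernels are again kernels --- the normal subgroup property and the convexity condition \eqref{defn_ker_convexity} are both preserved under intersection --- and clearly $Q \subseteq P \subsetneq \mathbb{H}$, so $Q$ is proper. The heart of the argument is to verify the irreducibility property for $Q$. Suppose $A \cap B \subseteq Q$ for kernels $A, B$ of $\mathbb{H}$; then $A \cap B \subseteq P_\alpha$ for every $\alpha$, and by the irreducibility of each $P_\alpha$ either $A \subseteq P_\alpha$ or $B \subseteq P_\alpha$. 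If neither $A \subseteq Q$ nor $B \subseteq Q$, there exist $\alpha_0, \beta_0 \in I$ with $A \not\subseteq P_{\alpha_0}$ and $B \not\subseteq P_{\beta_0}$, which forces $B \subseteq P_{\alpha_0}$ and $A \subseteq P_{\beta_0}$. By the total ordering I may assume $P_{\alpha_0} \subseteq P_{\beta_0}$; but then $B \subseteq P_{\alpha_0} \subseteq P_{\beta_0}$, contradicting the choice of $\beta_0$. Thus $A \subseteq Q$ or $B \subseteq Q$, as needed.

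The step I expect to be the main obstacle is showing $Q \neq \{1\}$: Definition \ref{defn_irreducible_maximal_kernels} requires an irreducible kernel to be proper and non-trivial, yet the intersection of a descending chain of non-trivial kernels could a priori collapse to $\{1\}$. One must either rule this out using additional structure on $\mathbb{H}$ --- for instance by exploiting the explicit form of the kernels afforded by Corollary \ref{cor_principal_ker_by_order} to show that an element witnessing non-triviality persists across the chain --- or interpret ``minimal irreducible'' so as to permit $\{1\}$ whenever $\mathbb{H}$ is reduced in the sense of Definition \ref{defn_reduced_semifield}, in which case $\{1\}$ itself already satisfies the irreducibility condition. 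Modulo that point, $Q$ is a lower bound of the chain inside $\mathcal{F}$, and Zorn's lemma produces the desired minimal irreducible kernel inside $P$.
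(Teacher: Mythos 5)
The paper states this proposition by citation and does not give its own proof, so there is no in-paper argument to compare against directly; the closest internal text is the remark in Section~3.1 on lattice-ordered groups, which asserts that the intersection of a chain of prime subgroups is prime and obtains a minimal prime below a given one by intersecting a maximal chain. Your argument is the semifield translation of exactly that, and the verification that the intersection $Q=\bigcap_\alpha P_\alpha$ of a chain of irreducibles is again irreducible is correct: the point is precisely that totality of the chain lets you choose the ``side'' coherently, which your $\alpha_0,\beta_0$ argument does cleanly.

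On the obstacle you single out, your second proposed resolution is the right one, and it is not optional --- one genuinely cannot rule out $Q=\{1\}$. Already in an o-group one can manufacture a strictly descending chain of nontrivial convex subgroups (all of them prime) with trivial intersection, e.g.\ formal power series under addition with the leading-coefficient order and $C_n=\{f:\operatorname{ord}(f)\ge n\}$; since idempotent semifields are precisely commutative $\ell$-groups, this carries over. So there is no hope of using ``additional structure'' to keep $Q$ away from $\{1\}$ in general. The saving observation is the one you make: if $Q=\{1\}$, then the irreducibility condition you just verified for $Q$ is exactly the statement that $\mathbb{H}$ is reduced in the sense of Definition~\ref{defn_reduced_semifield}, so $\{1\}$ does satisfy the defining implication. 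In the lattice-ordered group literature the trivial subgroup is explicitly allowed to be prime (it is prime in every o-group), and the parenthetical ``(non-trivial)'' in Definition~\ref{defn_irreducible_maximal_kernels} should be read as a gloss on ``proper,'' i.e.\ $K\ne\mathbb{H}$, rather than as an exclusion of $\{1\}$; otherwise the proposition itself would be false. With that reading, $Q\in\mathcal{F}$ always, the chain has a lower bound, and Zorn finishes the proof --- your write-up is essentially complete once you commit to that interpretation instead of leaving both options open.
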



\begin{prop}\label{prop_maximal_kernels_in_semifield_of_fractions_part1}
Let $\mathbb{H}(x_1,...,x_n)$ be the semifield of fractions where $\mathbb{H}$  is a bipotent semifield. Then
for any $\gamma_1,....,\gamma_n \in \mathbb{H}$ the kernel $\frac{x_1}{\gamma_1},...,\frac{x_n}{\gamma_n} \rangle$ is a maximal kernel of $\mathbb{H}(x_1,...,x_n)$.
\end{prop}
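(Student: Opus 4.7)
The plan is to identify $K = \langle x_1/\gamma_1, \ldots, x_n/\gamma_n \rangle$ as the homomorphism kernel of the substitution epimorphism $\phi : \mathbb{H}(x_1,\ldots,x_n) \to \mathbb{H}$, $x_i \mapsto \gamma_i$. Once this identification is in hand, the first isomorphism theorem gives $\mathbb{H}(x_1,\ldots,x_n)/K \cong \mathbb{H}$, which is simple by Remark \ref{rem_order_simple} (since $\mathbb{H}$ is bipotent, hence totally ordered archimedean), so $K$ is maximal by Corollary \ref{cor_max_ker_simple_corr}.

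The inclusion $K \subseteq \ker\phi$ is immediate: each generator $x_i/\gamma_i$ of $K$ maps to $1$ under $\phi$, and $\ker\phi$ is a kernel by Theorem \ref{thm_kernels_hom_relations}. The substantive work is the reverse inclusion. As a warm-up I would handle monomials: for $m(x) = a\, x_1^{k_1}\cdots x_n^{k_n}$, one has $m(x)/m(\gamma) = \prod_i (x_i/\gamma_i)^{k_i}$, which is a product of generators of $K$ and hence lies in $K$.

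Next I would extend to a general polynomial $g = \sum_i m_i$ by writing
$$\frac{g}{g(\gamma)} \;=\; \sum_i \frac{m_i(\gamma)}{g(\gamma)} \cdot \frac{m_i(x)}{m_i(\gamma)} \;=\; \sum_i \lambda_i k_i,$$
where $k_i := m_i(x)/m_i(\gamma) \in K$ by the monomial step and $\lambda_i := m_i(\gamma)/g(\gamma) \in \mathbb{H}$ satisfy $\sum_i \lambda_i = g(\gamma)/g(\gamma) = 1$. An iterated application of the kernel convexity condition (pair two terms, use convexity to replace the pair by a single element of $K$ scaled by $\lambda_i + \lambda_j$, and recurse) then yields $g/g(\gamma) \in K$. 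Finally, for any $f = g/h \in \ker\phi$ one has $g(\gamma) = h(\gamma)$, so
$$f \;=\; \frac{g}{h} \;=\; \frac{g/g(\gamma)}{h/h(\gamma)} \;\in\; K,$$
since $K$ is a multiplicative group and both factors lie in $K$. This establishes $\ker\phi = K$ and completes the proof.

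The main thing to watch is the recursive use of the \emph{binary} kernel-convexity axiom to accommodate a sum of arbitrarily many terms; fortunately this extension is routine because the partial sums of the $\lambda_i$ are invertible in $\mathbb{H}$ (the semifield has no zero), so at each step one may normalize the paired coefficients to sum to $1$ before invoking convexity. The bipotence of $\mathbb{H}$ is not needed for this inductive step itself, but it is essential at the end to conclude simplicity of the quotient $\mathbb{H}$ via Remark \ref{rem_order_simple}.
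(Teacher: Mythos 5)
Your proof is correct and follows essentially the same route as the paper's: identify $K$ as the homomorphism kernel of the substitution $\phi : x_i \mapsto \gamma_i$, establish $\ker\phi \subseteq K$ via the convex-combination structure of a general fraction whose value at $\gamma$ is $1$, then conclude maximality from simplicity of $\mathbb{H}$ (Remark \ref{rem_order_simple}) via Corollary \ref{cor_max_ker_simple_corr}. The only stylistic difference is that the step you carry out by recursive application of the binary convexity axiom (with renormalized coefficients) can be cited directly from Proposition \ref{prop_ker_stracture_by group}, which already packages the $n$-ary convex combination $\sum s_i h_i$ with $\sum s_i = 1$ and $h_i$ in the generating group as exactly the description of the kernel generated by that group; that is what the paper's proof tacitly invokes when it writes $a = \frac{\alpha}{\beta}\cdot\frac{\sum(\alpha_i/\alpha)x^i}{\sum(\beta_j/\beta)x^j}$ and reads off membership in $\langle x \rangle$.
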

\begin{proof}
Let $\mathbb{H}(x)$ be the semifield of fractions of $\mathbb{H}[x]$ where $\mathbb{H}$  is a bipotent semifield.
We will now show that $\langle x \rangle$ is a maximal kernel of $\mathbb{H}(x)$. Consider the substitution homomorphism $\psi : \mathbb{H}(x) \rightarrow \mathbb{H}$ defined by mapping $x \mapsto 1$. Write $a = \frac{\sum_{i=1}^{m}\alpha_i x^{i}}{\sum_{j=1}^{k}\beta_i x^{j}}$  with $\alpha_i, \beta_j \in \mathbb{H}$ for a general element of $\mathbb{H}(x)$. Then $\psi(a) = \frac{\sum_{i=1}^{m}\alpha_i}{\sum_{j=1}^{k}\beta_j}$ and thus $\psi(a)= 1$ if and only if $\sum_{i=1}^{m}\alpha_i=\sum_{j=1}^{k}\beta_j$ . We will show that if $\psi(a)=1$ then $a \in \langle x \rangle$. Indeed, assume $\psi(a)=1$, denote $\alpha =\sum_{i=1}^{m}\alpha_i, \beta = \sum_{j=1}^{k}\beta_j$ , then by the above  $\alpha = \beta$. We have   $\frac{\sum_{i=1}^{m}\alpha_ix^{i}}{\sum_{j=1}^{k}\beta_j x^{j}} = (\frac{\alpha}{\beta}) \frac{\sum_{i=1}^{m}(\frac{\alpha_i}{\alpha})x^{i}}{\sum_{j=1}^{k}(\frac{\beta_j}{\beta})x^{j}}$. Now, since $x^{i},x^{j} \in \langle x \rangle$, since $\langle x \rangle$ is multiplicative group and since  $\sum_{i=1}^{m}\frac{\alpha_i}{\alpha} = \sum_{j=1}^{k}\frac{\beta_j}{\beta} = 1$, we have that $\sum_{i=1}^{m}\frac{\alpha_i}{\alpha}x^{i}, (\sum_{j=1}^{k}\frac{\beta_j}{\beta}x^{j})^{-1} \in \langle x \rangle$. By assumption $\frac{\alpha}{\beta} = 1$, and thus $a \in \langle x \rangle$ as desired. \\
As $\mathbb{H}$ is a bipotent semifield and thus completely ordered and hence simple, we can use the assertions above and  Corollary \ref{cor_max_ker_simple_corr}, to deduce that $\langle x \rangle$ is a maximal kernel of $\mathbb{H}(x)$.\\

Applying a change of variable $y= \frac{x}{\gamma}$ for each $\gamma \in \mathbb{H}$, the above proof implies that the kernel $\langle \frac{x}{\gamma} \rangle$ is the kernel of the substitution map $\psi_{\gamma} : \mathbb{H}(x) \rightarrow \mathbb{H}$ defined by $ x \mapsto \gamma$. Since $\mathbb{H} \neq \{1\}$ is simple it is generated by any $\gamma \in \mathbb{H}$ so $\psi_{\gamma}$ is onto and consequently, by the same argument used above, $\langle \frac{x}{\gamma} \rangle$ is maximal for any $\gamma \in \mathbb{H}$.\\

Taking a general element $\frac{\sum_{i \in I} \alpha_{i}x^{I(i)}}{\sum_{j \in J} \beta_{j}x^{J(j)}}$ of $\mathbb{H}(x_1,...,x_n)$ where $I, J$ are both finite sets of multi-indices and $x = (x_1,...,x_n)$ and performing the exact same procedure described above, substituting $ \vec{1}$ for $x$, and then $(\frac{x_1}{\gamma_1},...,\frac{x_n}{\gamma_n})$ for $x$. We get that for any choice of $\vec{\gamma}=(\gamma_1,...,\gamma_n) \in \mathbb{H}^n$  the kernel $\langle \frac{x_1}{\gamma_1},...,\frac{x_n}{\gamma_n} \rangle$ is a maximal kernel of $\mathbb{H}(x_1,...,x_n)$ and it corresponds to the substitution map $\mathbb{H}(x_1,...,x_n) \rightarrow \mathbb{H}$, defined by $$(x_1,...,x_n) \mapsto (\gamma_1,...,\gamma_n).$$

The case where $\mathbb{H} = \{1\}$ is trivial since any homomorphism (in particular substitution) has all the domain $\mathbb{H}(x_1,...,x_n)$ as its kernel.\\
\end{proof}

\begin{exmp}
Let $\mathbb{H}$ be an idempotent semifield. For \ $a = (\alpha_1,...,\alpha_n) \in \mathbb{H}^n$  let
$$\phi_a: \mathbb{H}(x_1,...,x_n) \rightarrow \mathbb{H}$$
be the substitution homomorphism defined by $f \mapsto f(a)$. Then we have that \linebreak $Ker(\phi_a) = L_a = \langle \alpha_1 x_1, \dots, \alpha_n x_n \rangle$ . Taking the constant fractions in $\mathbb{H}(x_1,...,x_n)$, we have that $\phi_a$ is onto. By Theorem \ref{thm_kernels_hom_relations} we have that $$\mathbb{H}(x_1,...,x_n) = \phi_a^{-1}(\mathbb{H}) = \mathbb{H}\cdot L_a = \mathbb{H}\cdot \langle \alpha_1 x_1, \dots, \alpha_n x_n \rangle.$$
Intersecting both sides of the last equality with $\langle \mathbb{H} \rangle $, we get
\begin{align*}
\langle \mathbb{H} \rangle = \ & (\mathbb{H} \cdot L_a) \cap \langle \mathbb{H} \rangle =
\mathbb{H} \cdot \langle \alpha_1 x_1, \dots, \alpha_n x_n \rangle \cap \langle \mathbb{H} \rangle
= \mathbb{H} \cdot (\langle \alpha_1 x_1, \dots, \alpha_n x_n \rangle \cap \langle \mathbb{H} \rangle) \\
= \ & \mathbb{H} \cdot \langle  |\alpha_1 x_1| \wedge |\alpha|, \dots, |\alpha_n x_n| \wedge |\alpha| \rangle
\end{align*}

for any $\alpha \in \mathscr{R} \setminus \{1\}$. 
\end{exmp}

\ \\

\newpage\subsection{Distributive semifields}

\ \\

As will be shown in the section concerning idempotent semifields and lattice-ordered groups, the lattice of kernels every of an idempotent semifield is distributive. Idempotent semifields play an important role in our theory.
Here we introduce some of the properties of semifields which have a distributive lattice of kernels.

\ \\

\begin{defn}
A semifield $\mathbb{H}$ is called \emph{distributive} if the lattice $\Con(\mathbb{H})$ is distributive.
\end{defn}

\begin{prop}\label{prop_dist_semifield1}\cite[Proposition (4.2)]{Prop_Semifields}
If $\mathbb{H}$ is a distributive semifield, then the following statements hold:
\begin{enumerate}
  \item All weakly irreducible kernels of the semifield $\mathbb{H}$ are irreducible.
  \item All proper kernels of $\mathbb{H}$ are intersections of its irreducible kernels.
  \item $D(A) \subseteq D(B) \Leftrightarrow A \subseteq B$ for any pair of kernels $A,B$ of \ $\mathbb{H}$.
  \item $D(A) = D(B) \Leftrightarrow A = B$ for any pair of kernels $A,B$ of \ $\mathbb{H}$.
\end{enumerate}
\end{prop}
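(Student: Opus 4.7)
My plan is to exploit the distributive identity
\[
(AK) \cap (BK) = (A \cap B)\cdot K,
\]
which holds unconditionally for kernels $A,B,K$ of $\mathbb{H}$ once $\Con(\mathbb{H})$ is distributive, together with a Zorn-style construction that, given a kernel $K$ and an element $a \notin K$, produces an irreducible kernel containing $K$ but missing $a$. All four items then follow, with the main work being concentrated in verifying irreducibility of the Zorn-maximal element.

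For (1), I suppose $K$ is weakly irreducible and $A \cap B \subseteq K$, and replace $A,B$ by $AK,BK$, each of which contains $K$. Distributivity gives $(AK) \cap (BK) = (A \cap B)K \subseteq K$, and the reverse inclusion is automatic, so $(AK) \cap (BK) = K$. Weak irreducibility then forces $AK = K$ or $BK = K$, i.e.\ $A \subseteq K$ or $B \subseteq K$, which is the irreducibility condition.

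For (2), I fix a proper kernel $K$ and an element $a \in \mathbb{H}\setminus K$, and set $\mathcal{F}_a = \{L \in \Con(\mathbb{H}) : K \subseteq L,\ a \notin L\}$. This is nonempty (it contains $K$) and closed under unions of chains (the convexity condition defining a kernel involves only finitely many elements), so Zorn yields a maximal element $P_a \in \mathcal{F}_a$. The claim is that $P_a$ is irreducible: if $A,B$ are kernels with $A \cap B \subseteq P_a$ and neither is contained in $P_a$, then $AP_a$ and $BP_a$ both strictly contain $P_a$, so by maximality each contains $a$, and hence by distributivity $a \in (AP_a) \cap (BP_a) = (A \cap B)P_a \subseteq P_a$, a contradiction. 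Taking the intersection over all $a \notin K$ yields $K = \bigcap_{a \notin K} P_a$.

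For (3), the implication $A \subseteq B \Rightarrow D(A) \subseteq D(B)$ is the contrapositive of the trivial fact $B \subseteq P \Rightarrow A \subseteq P$. Conversely, if $A \not\subseteq B$, pick $a \in A \setminus B$ and apply the Zorn construction from (2) to the family $\{L \in \Con(\mathbb{H}) : B \subseteq L,\ a \notin L\}$, which is nonempty since it contains $B$; this produces an irreducible kernel $P$ with $B \subseteq P$ (so $P \notin D(B)$) and $a \notin P$ (so $A \not\subseteq P$, i.e.\ $P \in D(A)$), contradicting $D(A) \subseteq D(B)$. Item (4) is then the antisymmetric specialization of (3). The only genuine obstacle is showing the Zorn-maximal element is irreducible, and this is precisely the step that consumes the distributivity hypothesis.
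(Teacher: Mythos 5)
Your proof is correct, and the argument is clean. Note, however, that the paper itself does not supply a proof of this proposition --- it is cited verbatim as Proposition (4.2) of the reference \cite{Prop_Semifields}, so there is no in-paper proof to compare against. Your reconstruction is a standard distributive-lattice argument: part (1) is the usual trick of replacing $A,B$ by $AK,BK$ and using $(AK)\cap(BK)=(A\cap B)K$; part (2) is the Zorn-maximal-excluding-an-element construction, where distributivity is needed exactly once, to show the maximal element is irreducible (this also strengthens the paper's Theorem \ref{thm_irr_kernels}, which only asserts existence of an irreducible kernel containing $K$, not one avoiding a prescribed $a$); and parts (3),(4) follow formally from the separation property established in (2). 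The one step worth making explicit if you were to write this up is that the union of a chain in $\mathcal{F}_a$ is indeed a kernel: normality and the convexity condition \eqref{defn_ker_convexity} each involve only finitely many elements, so they pass to directed unions, and membership of $K$ and exclusion of $a$ are clearly preserved. Everything else checks out.
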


As every reducible kernel is weakly irreducible, the first assertion of Proposition~\ref{prop_dist_semifield1} states that
\begin{note}
In a distributive semifield $\mathbb{H}$, a kernel $K$ of $\mathbb{H}$ is irreducible if and only if
\begin{equation}
A \cap B = K \Rightarrow A = K \ \text{or} \ B = K
\end{equation}
for any pair of kernels $A,B$ of $\mathbb{H}$.
\end{note}

The following is a nice example of utilizing of irreducible kernels.
\begin{rem}\label{rem_prod_principal_kernels}
Let $\mathbb{H}$ be a distributive semifield. Then, for any $a,b \in \mathbb{H}$ such that  \linebreak $\langle a \rangle \cap \langle b \rangle = \{ 1 \}$,
\begin{equation}
\langle a \rangle \langle b \rangle = \langle ab \rangle.
\end{equation}
\end{rem}
\begin{proof}
As $ab \in \langle a \rangle \langle b \rangle$, obviously $\langle ab \rangle \subseteq \langle a \rangle \langle b \rangle$. Since $\langle a \rangle \cap \langle b \rangle = \{ 1 \}$, we have that $a \in P$ or $b \in P$ for any $P \in Spec(\mathbb{H})$ (by definition of irreducibility of a kernel and the fact that $\{1\} \subseteq P$). If $ab \in P$, then if $a \in P$ we get $b = a^{-1}(ab) \in P$ and if $b \in P$ we get $a = (ab)b^{-1} \in P$. In any case both $a$ and $b$ are in $P$ and thus we have shown that \linebreak $\langle ab \rangle \subseteq P $ implies $\langle a \rangle \langle b \rangle \subseteq P$. Now, as $\mathbb{H}$ is distributive, we have by Proposition~\ref{prop_dist_semifield1}~(1) that $\langle a \rangle \langle b \rangle = \langle ab \rangle$ as they are both intersections of the irreducible kernels containing them.
\end{proof}

Kernels having  trivial intersection ( $\{ 1 \}$ ) play a very important role in our theory.

\begin{rem}\cite{Prop_Semifields}
Let $\mathbb{H}$ be a distributive semifield. Then any proper kernel of $\mathbb{H}$ is the intersection of all irreducible kernels containing it. In particular, the intersection of all the irreducible kernels of $\mathbb{H}$ equals $\{ 1 \}$.
\end{rem}


\ \\
\subsection{Idempotent semifields : Part 1}

\ \\

In the following subsection, we concentrate our attention on the theory of \linebreak idempotent semifields. We will continue the study of idempotent in subsequent \linebreak sections after introducing the theory of lattice-ordered groups.\\

It turns out that this special kind of semifield has some very nice additional \linebreak properties to those of general semifields that make it quite easy to work with. \\

As we have already noted, kernels of an idempotent semifield are themselves \linebreak semifields. We now introduce a few
more interesting properties of such semifields.\\


\begin{rem}\label{rem_finitely_generated_are_principal}
Finitely generated kernels of an idempotent semifield are principal.
\end{rem}
\begin{proof}
This follows directly from Theorem \ref{thm_semi_with_a_gen}, as a kernel of an idempotent \linebreak semifield is itself a semifield.
\end{proof}

\begin{rem}\label{rem_a_generator_comparable to 1}
Since every finitely generated kernel of an idempotent semifield is itself a semifield with a generator,  by Corollary \ref{cor_positive_generator_of_a_semifield}, we have that for any such kernel we can choose a generator $a$ such that $|a|= a \geq 1$. This issue is important, as for any such an element $1 \dotplus a = \sup(1,a) = \max(1,a) = a$ and $1 \wedge a = \inf(1,a) = \min(1,a) = 1$, which imply that $1 \dotplus (a^{-1}) =  \sup(1,a^{-1}) = \max(1,a^{-1}) = 1$ and that $1 \wedge (a^{-1}) = \inf(1,a^{-1}) = \min(1,a^{-1}) = a^{-1}$. By convention, we always take a generator of a kernel of an idempotent semifield to be such a `positive' generator unless stated otherwise. The importance of such generator is that it is \emph{comparable} to $1$.
\end{rem}

\begin{rem}\label{rem_sum_prod_absolute_values}
For any kernel $K$ of an idempotent semifield the following holds:
\begin{equation}\label{eq_sum_prod_absolute_values}
|g||h| \in K \ \ \Leftrightarrow \ \ |g| \dotplus |h| \in K.
\end{equation}
\end{rem}
\begin{proof}
Indeed, Since $|g|, |h| \geq 1$, we have that $|g| \leq |g||h|$ and $|h| \leq |g||h|$ thus
$$|g| \dotplus |h| = \sup(|g|,|h|) \leq |g||h|.$$
On the other hand, we have that
$$(|g| \dotplus |h|)^2 = |g|^2 \dotplus |g||h| \dotplus |h|^2 \geq |g||h|.$$
So, by Remark \ref{rem_kernel_by_abs_value} we have that $\langle |g||h| \rangle = \langle |g| + |h| \rangle$ and equality \eqref{eq_sum_prod_absolute_values} follows.
\end{proof}

\begin{rem}\label{rem_sum_prod_absolute_values2}
For any kernel $K$ of an idempotent semifield the following holds:
\begin{equation*}
 g,h \in K \ \ \Leftrightarrow \ \ |g| \dotplus |h| \in K;
\end{equation*}
\begin{equation*}
 g,h \in K \ \ \Leftrightarrow \ \ |g||h| \in K.
\end{equation*}
\end{rem}
\begin{proof}
By Remark \ref{rem_sum_prod_absolute_values} we only have to prove the first equality. If $g,h \in K$ then $|g|,|h| \in K$ thus $|g| \dotplus |h| \in K$. On the other hand, since $|g|,|h| \leq |g| \dotplus |h|$, \linebreak by  Remark~\ref{rem_kernel_by_abs_value}, we have that $|g| \dotplus |h| \in K$ implies $|g|, |h| \in K$, which in turn yields that $g,h \in K$ proving our claim.
\end{proof}

\begin{prop}\label{prop_algebra_of_generators_of_kernels}\cite[Theorem 2.2.4(d)]{OrderedGroups3}
Let $\mathbb{S}$ be an idempotent semifield. For $X,Y \subset \mathbb{S}$, denote by $\langle X \rangle$ and $\langle Y \rangle$ the kernels generated by $X$ and $Y$, respectively. Then for any $X,Y \subset \mathbb{S}$,  $K \in \Con(\mathbb{S})$ and $a,b \in \mathbb{S}$ the following statements hold:
\begin{enumerate}
  \item $\langle X \rangle \cdot \langle Y \rangle = \langle X \cup Y \rangle = \langle \{|x| \dotplus |y| : x \in X, y \in Y \} \rangle$.
  \item $\langle X \rangle \cap \langle Y \rangle = \langle \{|x| \wedge |y| : x \in X, y \in Y \} \rangle$.
  \item $\langle K, a \rangle \cap \langle K ,b \rangle  = K \cdot \langle |a| \wedge |b| \rangle$, where $\langle K, a \rangle$ denotes the kernel generated by the set $K \cup \{a\}$.
\end{enumerate}
\end{prop}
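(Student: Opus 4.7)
The plan is to prove the three parts in order, with parts (1) and (3) being relatively quick applications of earlier machinery once part (2) is established.

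For part (1), the first equality $\langle X \rangle \cdot \langle Y \rangle = \langle X \cup Y \rangle$ is immediate from Remark \ref{rem_ker_latice_operations}, since the product of two kernels is the smallest kernel containing their union. For the second equality, I would use Remark \ref{rem_sum_prod_absolute_values2}: any kernel containing $x$ and $y$ contains $|x| \dotplus |y|$, so the right-hand side is contained in $\langle X \cup Y \rangle$; conversely, any kernel containing $|x| \dotplus |y|$ contains both $x$ and $y$, giving the reverse inclusion.

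Part (2) is the main obstacle. The inclusion $\supseteq$ is immediate: for $x \in X$ and $y \in Y$ we have $1 \leq |x| \wedge |y| \leq |x|$ and $|x| \wedge |y| \leq |y|$, so by Corollary \ref{cor_principal_ker_by_order} (and its reformulation in the idempotent case), $|x| \wedge |y|$ lies in both $\langle X \rangle$ and $\langle Y \rangle$. For the reverse inclusion, I would characterize an element $z \in \langle X \rangle$ via Theorem \ref{thm_semi_with_a_gen} and Remark \ref{rem_sum_prod_absolute_values2} as follows: there exist finitely many $x_1,\dots,x_k \in X$ and $n \in \mathbb{N}$ such that $|z| \leq (|x_1| \dotplus \cdots \dotplus |x_k|)^n$. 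If $z$ also lies in $\langle Y \rangle$, a symmetric bound holds with common exponent $n$ (after enlarging if necessary). I would then combine the two bounds using the identity
\[
u^n \wedge v^n = (u \wedge v)^n
\]
valid in any abelian lattice-ordered group (provable from $(u \wedge v)(u \vee v) = uv$ together with the Frobenius-type identity $(u \vee v)^n = u^n \vee v^n$ of Definition \ref{defn_frob_property}), together with the distributivity of the underlying lattice to expand
\[
\Bigl(\sum_i |x_i|\Bigr) \wedge \Bigl(\sum_j |y_j|\Bigr) = \bigvee_{i,j}\bigl(|x_i| \wedge |y_j|\bigr).
\]
This expresses $|z|$ as dominated by a power of a sum of elements of the form $|x| \wedge |y|$, which by Remark \ref{rem_kernel_by_abs_value} and the first part of the proposition places $z$ in $\langle \{|x| \wedge |y| : x \in X,\ y \in Y\}\rangle$.

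For part (3), the plan is to use the fact that the kernel lattice of an idempotent semifield is distributive (the corollary to Lemma \ref{lem_kernels_algebra}). Writing $\langle K, a \rangle = K \cdot \langle a \rangle$ and similarly for $b$, Lemma \ref{lem_kernels_algebra} gives
\[
(K \langle a \rangle) \cap (K \langle b \rangle) = K \cdot (\langle a \rangle \cap \langle b \rangle),
\]
and applying part (2) to the singletons $X = \{a\}$, $Y = \{b\}$ yields $\langle a \rangle \cap \langle b \rangle = \langle |a| \wedge |b| \rangle$, completing the proof. The main difficulty, as noted, is the Frobenius-style identity for meets, which I expect to either cite from the l-group literature already referenced as \cite{OrderedGroups3} or derive in a short lemma before the main argument.
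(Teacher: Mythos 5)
The paper does not prove this proposition; it is quoted directly from Darnel (\cite[Theorem 2.2.4(d)]{OrderedGroups3}) and used as a black box, so there is no proof of the paper's to compare against. Your reconstruction is essentially a correct proof and is the natural one.

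One point needs care, and you flagged it yourself: the identity $(u\dotplus v)^n = u^n \dotplus v^n$ (and dually $(u\wedge v)^n = u^n\wedge v^n$) is not actually established in the paper for a general idempotent semifield $\mathbb{S}$. The lemma following Definition~\ref{defn_frob_property} proves the Frobenius property only for \emph{bipotent} semifields, and Example~\ref{exmp_frob_not_bipotent} extends it only to the semiring of functions over such a base via a pointwise argument; neither covers an abstract idempotent $\mathbb{S}$. The fact is nevertheless true --- additively, $n(a\vee b) = na\vee nb$ is a standard identity in abelian $\ell$-groups, and the meet version follows as you say from $(u\wedge v)(u\vee v) = uv$ --- so citing \cite{OrderedGroups3} for it, or giving the short inductive $\ell$-group argument, is exactly what is needed to close the gap. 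With that lemma in hand, the rest of part (2) is sound: the finite-support reduction via Proposition~\ref{prop_ker_stracture_by group} and Theorem~\ref{thm_semi_with_a_gen}, the common-exponent bound, the lattice distributivity expanding $\bigl(\sum_i|x_i|\bigr)\wedge\bigl(\sum_j|y_j|\bigr)$ into $\bigvee_{i,j}(|x_i|\wedge|y_j|)$, and the final appeal to Remark~\ref{rem_kernel_by_abs_value}. Parts (1) and (3) are correct as written, though it is worth noting that the second equality in part (1) (and likewise part (2)) implicitly requires $X$ and $Y$ both nonempty, since otherwise the set of mixed terms $\{|x|\dotplus|y|\}$ is empty while $\langle X\cup Y\rangle$ need not be trivial.
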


\begin{cor}\label{cor_max_semifield_principal_kernels_operations}
Let $\mathbb{S}$ be an idempotent semifield. Then the intersection and product of two principal kernels are principal kernels. Namely,
for every $f,g \in \mathbb{S}$
\begin{equation}\label{eq_max_principal_operations}
\langle f \rangle \cap \langle g \rangle = \langle (f \dotplus f^{-1}) \wedge (g \dotplus g^{-1}) \rangle \ ; \
\langle f \rangle \cdot \langle g \rangle = \langle (f \dotplus f^{-1})(g \dotplus g^{-1}) \rangle.
\end{equation}
\end{cor}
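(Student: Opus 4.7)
The plan is to observe that this corollary is essentially a direct specialization of Proposition \ref{prop_algebra_of_generators_of_kernels}, applied with singleton generating sets, together with the bridge between the sum-of-absolute-values and the product-of-absolute-values provided by Remark \ref{rem_sum_prod_absolute_values}. Throughout I will use the shorthand $|a| = a \dotplus a^{-1}$ as introduced in the note following Remark \ref{rem_kernel_by_abs_value}.

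First I would handle the intersection. Taking $X = \{f\}$ and $Y = \{g\}$ in Proposition \ref{prop_algebra_of_generators_of_kernels}(2) immediately yields
\begin{equation*}
\langle f \rangle \cap \langle g \rangle \;=\; \langle |f| \wedge |g| \rangle \;=\; \langle (f \dotplus f^{-1}) \wedge (g \dotplus g^{-1}) \rangle,
\end{equation*}
which is the first claimed equality. This step requires no further work beyond noting the definition of $|{\cdot}|$.

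Next I would treat the product. Proposition \ref{prop_algebra_of_generators_of_kernels}(1) applied to the same singletons gives $\langle f \rangle \cdot \langle g \rangle = \langle |f| \dotplus |g| \rangle$. To convert $|f| \dotplus |g|$ into the product form $|f| \cdot |g|$ appearing in the statement, I would invoke Remark \ref{rem_sum_prod_absolute_values}, whose content is precisely that for any kernel $K$ of an idempotent semifield one has $|f||g| \in K \Leftrightarrow |f| \dotplus |g| \in K$. Applying this to every kernel containing one of the two elements shows that the smallest kernels containing $|f| \dotplus |g|$ and $|f||g|$ coincide, so
\begin{equation*}
\langle f \rangle \cdot \langle g \rangle \;=\; \langle |f| \dotplus |g| \rangle \;=\; \langle |f| \cdot |g| \rangle \;=\; \langle (f \dotplus f^{-1})(g \dotplus g^{-1}) \rangle.
\end{equation*}

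There is no genuine obstacle here; the only thing to be careful about is that the reformulation of Proposition \ref{prop_algebra_of_generators_of_kernels} in terms of the specific expressions $f \dotplus f^{-1}$ (rather than arbitrary generators of $\langle f \rangle$) is legitimate because, by Proposition \ref{prop_absolute_generator}, $\langle f \rangle = \langle |f| \rangle$ and similarly for $g$, so the absolute-value form of a generator is always available. With this observation the corollary reduces to a two-line application of the proposition and the remark.
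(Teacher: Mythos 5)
Your proposal is correct and matches the paper's own proof almost exactly: the paper likewise takes $X=\{f\}$, $Y=\{g\}$ in Proposition \ref{prop_algebra_of_generators_of_kernels} for the intersection identity, and for the product uses Proposition \ref{prop_algebra_of_generators_of_kernels}(1) followed by Remark \ref{rem_sum_prod_absolute_values} to pass between $|f|\dotplus|g|$ and $|f||g|$. Your extra remark invoking Proposition \ref{prop_absolute_generator} is a harmless piece of added care, but the argument is the same.
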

\begin{proof}
Taking $X = \{f\}$ and $Y = \{g\}$ in Proposition \ref{prop_algebra_of_generators_of_kernels} yield the equalities,
where for the second equality, applying Remark \ref{rem_sum_prod_absolute_values}  yields that
$$\langle f \rangle \cdot \langle g \rangle = \langle (f \dotplus f^{-1}) \dotplus (g \dotplus g^{-1}) \rangle,$$
 and from there we apply the proposition.
\end{proof}

A direct consequence of Corollary \ref{cor_max_semifield_principal_kernels_operations} is
\begin{cor}\label{cor_principal_kernels_sublattice}
The set of principal kernels of an idempotent semifield forms a \linebreak sublattice of the lattice of kernels (i.e., a lattice with respect to intersection and \linebreak multiplication)
\end{cor}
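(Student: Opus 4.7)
The plan is to recognize that Corollary \ref{cor_principal_kernels_sublattice} is essentially a repackaging of the content already established in Corollary \ref{cor_max_semifield_principal_kernels_operations}. By Remark \ref{rem_ker_latice_operations}, the ambient structure $(\Con(\mathbb{S}), \cdot, \cap)$ is already a lattice, so to exhibit the set of principal kernels as a sublattice I need only verify closure under the two operations $\cdot$ and $\cap$. The lattice axioms (associativity, commutativity, absorption) will then transfer automatically by restriction.

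First I would note that the set of principal kernels is a subset of $\Con(\mathbb{S})$, since each $\langle f \rangle$ is by definition a kernel. Then I would invoke Corollary \ref{cor_max_semifield_principal_kernels_operations} directly: for every $f,g \in \mathbb{S}$, the equalities
\[
\langle f \rangle \cap \langle g \rangle \;=\; \langle (f \dotplus f^{-1}) \wedge (g \dotplus g^{-1}) \rangle, \qquad \langle f \rangle \cdot \langle g \rangle \;=\; \langle (f \dotplus f^{-1})(g \dotplus g^{-1}) \rangle
\]
exhibit both $\langle f \rangle \cap \langle g \rangle$ and $\langle f \rangle \cdot \langle g \rangle$ as principal kernels, with an explicit single generator built from $f$ and $g$. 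This is precisely closure of the set of principal kernels under the two lattice operations inherited from $\Con(\mathbb{S})$, which completes the argument.

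There is no genuine obstacle; the real work was already carried out in Proposition \ref{prop_algebra_of_generators_of_kernels} and Corollary \ref{cor_max_semifield_principal_kernels_operations}, and the present statement is the clean lattice-theoretic conclusion. The only thing worth flagging in passing is that the join operation of the sublattice is multiplication (not the semifield addition $\dotplus$), which is the lattice operation on $\Con(\mathbb{S})$ by Remark \ref{rem_ker_latice_operations}.
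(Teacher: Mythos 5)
Your argument is correct and is exactly the approach the paper takes: the paper presents this corollary as "a direct consequence of Corollary \ref{cor_max_semifield_principal_kernels_operations}," which establishes closure of principal kernels under $\cap$ and $\cdot$, and that is precisely what you invoke. Your additional remark that the lattice axioms transfer by restriction, and that the join is multiplication rather than $\dotplus$, is accurate and harmless elaboration.
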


\begin{defn}\label{defn_principal_kernels_in_semifield_of_fractions}
Denote the collection of principal kernels of an idempotent \linebreak semifield $\mathbb{S}$ by $\PCon(\mathbb{S})$.
In particular we denote the collection of principal kernels of $\mathbb{H}(x_1,...,x_n)$ by
$\PCon(\mathbb{H}(x_1,...,x_n))$ (where $\mathbb{H}$ is idempotent).
\end{defn}

\begin{rem}\label{rem_principal_kernels_algebra}
Let $\mathbb{S}$ be an idempotent semifield. Then
the following equalities hold for $\langle f \rangle, \langle g \rangle, \langle h \rangle \in \PCon(\mathbb{S})$:
\begin{equation*}
(\langle f \rangle \cdot \langle h \rangle) \cap (\langle g \rangle \cdot \langle h \rangle)  = (\langle f \rangle \cap \langle g \rangle) \cdot \langle h \rangle;
\end{equation*}
\begin{equation*}
\langle (|f| \dotplus |h|) \wedge (|g| \dotplus |h|) \rangle  = \langle (|f| \wedge |g|)  \dotplus  |h| \rangle;
\end{equation*}
\begin{equation*}
(\langle f \rangle \cap \langle h \rangle) \cdot (\langle g \rangle \cap \langle h \rangle)= (\langle f \rangle \cdot \langle g \rangle)  \cap \langle h \rangle;
\end{equation*}
\begin{equation*}
\langle (|f| \wedge |h|) \dotplus (|g| \wedge |h|) \rangle= \langle (|f| \dotplus |g|) \wedge |h| \rangle.
\end{equation*}
\end{rem}
\begin{proof}
Follows directly from Lemma \ref{lem_kernels_algebra} and Corollary \ref{cor_max_semifield_principal_kernels_operations} above. The proof can also be found in \cite{OrderedGroups3}
\end{proof}

\begin{rem}\label{rem_iso_thms_for_PCon}
As principal kernels form a sublattice of $\PCon(\mathbb{S})$ for any \linebreak idempotent semifield $\mathbb{S}$ and since for a semifield homomorphism $\phi$ whose kernel is a principal kernel, both homomorphic images and preimages (which are then a product of principal kernels by Theorem \ref{thm_kernels_hom_relations}) with respect $\phi$  are principal kernels, we have that all three isomorphism theorems apply to the restriction of $\Con(\mathbb{S})$ to $\PCon(\mathbb{S})$.
\end{rem}

\begin{cor}\label{cor_nother_4_for_principal_kernels}
In the setting of Theorem \ref{cor_qoutient_corr}, let $L = \langle a \rangle \in \PCon(\mathbb{H})$ and let $\phi_L : \mathbb{H} \rightarrow \mathbb{H}/L$ be the quotient epimorphism. Then by Remark \ref{rem_image_of_principal_kernel} the image of a principal kernel of $\mathbb{H}$ under $\phi_L$ is a principal kernel of $\mathbb{H}/L$  and the preimage of a principal kernel $\langle b \rangle/L \in \PCon(\mathbb{H}/L)$ is $\langle b \rangle \cdot L = \langle b \rangle \cdot \langle a \rangle = \langle |b|+|a| \rangle$  which is a principal kernel. Thus we have that the correspondence of Theorem \ref{cor_qoutient_corr} applies to the principal kernels in $\PCon(\mathbb{H})$ and the principal kernels in $\PCon(\mathbb{H}/L)$ containing $L$, namely, there is a $1:1$ correspondence
$$ \{ \text{Principal kernels of} \ \mathbb{H}/L \} \rightarrow  \{ \text{Principal kernels of} \ \mathbb{H} \ \text{containing} \ L \}$$
given by $\langle b \rangle / \langle a \rangle \mapsto \langle b \rangle$.
\end{cor}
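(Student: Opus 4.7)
The plan is to obtain this corollary by combining the general quotient correspondence of Theorem \ref{cor_qoutient_corr} with the principal-kernel arithmetic already established in Corollary \ref{cor_max_semifield_principal_kernels_operations} and Remark \ref{rem_image_of_principal_kernel}. The idea is to take the existing bijection between kernels of $\mathbb{H}/L$ and kernels of $\mathbb{H}$ containing $L$, and verify that each direction preserves principality, so that the bijection restricts to the desired correspondence between $\PCon(\mathbb{H}/L)$ and the principal kernels of $\mathbb{H}$ containing $L$.

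First I would appeal to Theorem \ref{cor_qoutient_corr}, which guarantees that every kernel of $\mathbb{H}/L$ has the form $K/L$ for a unique kernel $K \supseteq L$ of $\mathbb{H}$, and that the map $K/L \mapsto K$ is a bijection. The remaining work is thus to show that this bijection restricts well: (i) if $M \in \PCon(\mathbb{H})$, say $M = \langle b \rangle$, then $\phi_L(M) \in \PCon(\mathbb{H}/L)$; and (ii) if $N/L \in \PCon(\mathbb{H}/L)$, then the unique $K \supseteq L$ with $K/L = N/L$ is itself principal in $\PCon(\mathbb{H})$.

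For (i), I would invoke Remark \ref{rem_image_of_principal_kernel} directly: since $\langle b \rangle$ is itself a semifield (being an idempotent kernel) and $\phi_L$ is a semifield epimorphism onto its image, $\phi_L(\langle b \rangle) = \langle \phi_L(b) \rangle$ is principal in $\mathbb{H}/L$. For (ii), suppose $N/L = \langle \phi_L(b) \rangle$ for some $b \in \mathbb{H}$. By Theorem \ref{thm_kernels_hom_relations}(4), the preimage is $\phi_L^{-1}(\langle \phi_L(b) \rangle) = L \cdot \langle b \rangle = \langle a \rangle \cdot \langle b \rangle$. Then by Corollary \ref{cor_max_semifield_principal_kernels_operations} (or equivalently Proposition \ref{prop_algebra_of_generators_of_kernels}(1) together with Remark \ref{rem_sum_prod_absolute_values}), this product is principal, generated by $|a| \dotplus |b|$, which I can rewrite as $\langle |b| + |a| \rangle$ as stated.

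Finally, combining these two verifications with Theorem \ref{cor_qoutient_corr} yields the restricted bijection $\langle b \rangle / \langle a \rangle \mapsto \langle b \rangle$ on the nose, where the right-hand side ranges over principal kernels of $\mathbb{H}$ containing $L$. The only subtle point — essentially the only place where idempotency is used in a nontrivial way — is step (ii), where one needs the algebra of principal kernels to ensure that a product of principal kernels remains principal; this is precisely what Corollary \ref{cor_max_semifield_principal_kernels_operations} provides, and everything else is a direct appeal to results already in hand.
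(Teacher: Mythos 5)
Your proof is correct and takes essentially the same approach as the paper: you invoke the general bijection of Theorem~\ref{cor_qoutient_corr}, then verify that both directions preserve principality via Remark~\ref{rem_image_of_principal_kernel} for the image and Theorem~\ref{thm_kernels_hom_relations}(4) together with Corollary~\ref{cor_max_semifield_principal_kernels_operations} for the preimage. The paper leaves the preimage reasoning implicit in the chain of equalities $\langle b \rangle \cdot L = \langle a \rangle \cdot \langle b \rangle = \langle |b| + |a| \rangle$, whereas you make the citations explicit, but the content is identical.
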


\begin{prop}\label{prop_frac_semifield_finitely_gen}
Let $\mathbb{H}$ be a bipotent archimedean semifield.
Let $\mathbb{H}(x_1,...,x_n)$ be the semifield of fractions of $\mathbb{H}[x_1,...,x_n]$. Then $\mathbb{H}(x_1,...,x_n)$ is finitely generated by $\{x_1,...,x_n\}$ as a semifield over $\mathbb{H}$.
$\mathbb{H}(x_1,...,x_n)$ is also finitely generated by $\{e ,x_1,...,x_n\}$ as a kernel
for any $e \in \mathbb{H}$ (by Remark \ref{rem_order_simple} in case $\mathbb{H}$ is trivial we can omit $e$, otherwise we can choose $e > 1$ such that $ \mathbb{H} \subseteq \langle e \rangle$). In fact, $\mathbb{H}(x_1,...,x_n) = \langle e \rangle \cdot \prod_{i=1}^{n} \langle x_i \rangle$ and by Theorem \ref{thm_semi_with_a_gen} we have that  $\mathbb{H}(x_1,...,x_n)$ is a semifield with a generator $\sum_{i=1}^n|x_{i}|+ |e|$.
\end{prop}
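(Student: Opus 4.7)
The plan is to deduce the three assertions—generation of $\mathbb{H}(x_1,\ldots,x_n)$ as a semifield over $\mathbb{H}$ by $\{x_1,\ldots,x_n\}$, as a kernel by $\{e,x_1,\ldots,x_n\}$, and finally as a principal kernel—in sequence, each building on the previous. The first point is essentially by the definition of the semifield of fractions: every element is obtained from $\mathbb{H}\cup\{x_1,\ldots,x_n\}$ by finitely many applications of sum, product and multiplicative inverse, which is precisely the assertion (cf.\ Definition \ref{defn_affine_semifield_extension} combined with closure under inversion on the nonzero elements of a semifield).

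The core of the proof is the identity $\mathbb{H}(x_1,\ldots,x_n)=\langle e\rangle\cdot\prod_{i=1}^n\langle x_i\rangle$. I would first dispose of the degenerate case $\mathbb{H}=\{1\}$, where Remark \ref{rem_order_simple} makes $\mathbb{H}$ simple and the constant $e$ is superfluous. In the nontrivial case I fix $e\in\mathbb{H}$ with $e>1$ and show $\mathbb{H}\subseteq\langle e\rangle$ using the archimedean hypothesis: for any $c\in\mathbb{H}$ with $c>1$, the archimedean property on the ordered group $\mathbb{H}$ furnishes an $n\in\mathbb{N}$ with $c\leq e^n$, and $e^{-n}\leq 1\leq c$, so Proposition \ref{prop_principal_ker} places $c$ in $\langle e\rangle$; the case $c<1$ follows from $c^{-1}>1$ and closure under inversion, and $c=1$ is trivial.

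Once $\mathbb{H}\subseteq\langle e\rangle$ is in hand, set $K:=\langle e\rangle\cdot\prod_{i=1}^n\langle x_i\rangle$. By Remark \ref{rem_ker_latice_operations} this is the smallest kernel containing $\{e,x_1,\ldots,x_n\}$, and since $\mathbb{H}$ is bipotent (hence idempotent) the semifield $\mathbb{H}(x_1,\ldots,x_n)$ is idempotent, so every one of its kernels is in fact a subsemifield. Thus $K$ is a subsemifield containing $\mathbb{H}\cup\{x_1,\ldots,x_n\}$, and closure under sum and product makes it contain every $\mathbb{H}$-polynomial in the $x_i$'s; closure under multiplicative inverse then promotes every such polynomial to the status of denominator, so $K$ contains every fraction of $\mathbb{H}$-polynomials, i.e., all of $\mathbb{H}(x_1,\ldots,x_n)$. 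The reverse inclusion is immediate, giving the equality.

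Finally, $\mathbb{H}(x_1,\ldots,x_n)$ being finitely generated as a kernel by $\{e,x_1,\ldots,x_n\}$, Theorem \ref{thm_semi_with_a_gen} produces a single kernel generator, which as exhibited in the proof of that theorem is $|e|+\sum_{i=1}^n|x_i|$. The only step requiring any actual work is the archimedean argument placing $\mathbb{H}$ inside $\langle e\rangle$; the remainder is bookkeeping on the closure properties of kernels in an idempotent semifield, and the last assertion is a direct invocation of Theorem \ref{thm_semi_with_a_gen}.
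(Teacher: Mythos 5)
Your proof is correct, and it takes a genuinely cleaner route than the paper's. The paper argues computationally: after noting that $K=\langle e\rangle\cdot\prod\langle x_i\rangle$ is closed under sum and product, it reduces (via Remark \ref{rem_kernel_by_abs_value}) to bounding every monomial, and then produces for each monomial $f=\alpha x_1^{p_1}\cdots x_n^{p_n}$ an explicit $k$ with $|f|\leq(\sum|x_i|\dotplus|e|)^k$, invoking the Frobenius property to expand the power. You bypass the explicit bound entirely: having first established $\mathbb{H}\subseteq\langle e\rangle$ via the archimedean property (which is the same fact the paper implicitly leans on through Remark \ref{rem_order_simple}), you observe that $K$, being a kernel of an idempotent semifield, is a subsemifield; since it contains $\mathbb{H}$ and each $x_i$, closure under sum and product absorbs every polynomial, and closure under inversion absorbs every fraction. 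The two proofs rest on the same underlying ingredients (archimedean ordering of $\mathbb{H}$, and kernels of idempotent semifields being subsemifields), but yours is more structural and avoids the monomial-by-monomial estimate; the paper's version gives, as a by-product, an explicit degree $k$ for any given rational function's membership in a power of the generator, which the structural argument does not exhibit.
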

\begin{proof}
First note that as $\langle e \rangle \cdot \prod_{i=1}^{n} \langle x_i \rangle$ is closed under multiplication and addition (since it is a semifield), by Remark \ref{rem_kernel_by_abs_value} it is enough to prove that for any monomial $f$ in $\mathbb{H}[x_1,...,x_n]$ there exists some $k \in \mathbb{N}$ such that
$$|f| \leq (\sum_{i=1}^n|x_{i}| \dotplus |e|)^k  = \sum_{i=1}^n|x_{i}|^k \dotplus |e|^k.$$
Let $f(x_1,...,x_n) = \alpha x_{1}^{p_i}\cdot \dots \cdot x_{n}^{p_n}$ where $\alpha \in \mathbb{H}$. Since $\mathbb{H}$ is a semifield with a generator, and $e$ is a generator of $\mathbb{H}$, we have that $\alpha \in \langle e \rangle$. Thus there exists some $s$ such that $|\alpha| \leq |e|^{s}$. Now, as $|x_1^{p_1} \cdot \dots \cdot x_n^{p_n}| \leq |x_1 \dotplus \dots  \dotplus x_n|^{|p_1| \dotplus \dots \dotplus |p_n|} = |x_1|^{|p_1| + \dots + |p_n|} \dotplus \dots \dotplus |x_n|^{|p_1| + \dots + |p_n|}$. For $k = \max( s, |p_1| + \dots + |p_n|)$, we get $|f| \leq \sum_{i=1}^n|x_{i}|^k \dotplus |e|^k$.
\end{proof}

\ \\

\begin{exmp}\label{exmp_frac_semifield_construction}
Let $\mathbb{H}(x)$ be the semifield of fractions in one variable of $\mathbb{H}[x]$, where $\mathbb{H}$ is a bipotent semifield. Consider the kernel, $\langle x \rangle$, generated by $x$ in  $\mathbb{H}(x)$. Since $\{ x^{k} \ : \ k \in \mathbb{Z} \}$ is the (normal) group generated by $x$,  by Proposition \ref{prop_ker_stracture_by group}, $\langle x \rangle$ is
\begin{equation}
 \left\{ \sum_{i=1}^{m}f_{i}(x)x^{I(i)} \ : \ m \in \mathbb{N},  \ f_i \in \mathbb{H}(x) \ \text{such that} \ \sum_{i=1}^{m}f_i = 1 \right\}
\end{equation}
where $ I \subset \mathbb{Z}$ containing $n$ elements.\\
First, note that since $\mathbb{H}$ is idempotent ($1+1 =1$), $\langle x \rangle$ is closed with respect to addition and thus a semifield.
Now, $\sum_{i=1}^{n}f_i(x) = 1$ yields by the natural order that $f_i \leq 1$ for every $i =1,...,n$. Write $f_i(x) = \frac{h_i(x)}{g_i(x)}$, and let $g(x) = \prod_{i=1}^{n}g_i(x)$ and $s_i(x) = \prod_{j \neq i}g_j(x)$, then $\frac{\sum_{i=1}^{n}h_i(x)s_i(x)}{g(x)} = 1$ so $g(x)=\sum_{i=1}^{n}h_i(x)s_i(x)$ and $f_i(x) = \frac{h_i(x)s_i(x)}{g(x)} = \frac{h_i(x)s_i(x)}{\sum_{i=1}^{n}h_i(x)s_i(x)}$. Thus we can assume $f_i(x)$ is of the form $\frac{r_i(x)}{\sum_{i=1}^{n}r_i(x)}$ with $r_i(x) \in \mathbb{H}[x]$.\\
For simplicity of notation, denote $x_0 = e$, where as above $e$ is the generator of $\mathbb{H}$ as a kernel over itself.\\
The exact same construction yields that the kernel $\langle x_k \rangle$ in $\mathbb{H}(x_1, ... , x_n)$ with $k = 0,1,...,n$ is of the form
\begin{equation}\label{eq_kernel_of_field_of_fructions}
 \left\{ \frac{\sum_{i=1}^{m} x_{k}^{I(i)}\cdot f_{i}(x_1,...,x_n)}{\sum_{i=1}^{m}f_{i}(x_1,...,x_n)} \ : \ m \in \mathbb{N},  \ f_i \in \mathbb{H}[x_1,...,x_n]  \right\}
\end{equation}
where $ I \subset \mathbb{Z}$ containing $n$ elements.
Notice that in the special case where $e=1$ the kernel in \eqref{eq_kernel_of_field_of_fructions} degenerates to $\{ 1 \}$. Further, \eqref{eq_kernel_of_field_of_fructions} applies to any element $f(x_1,...,x_n)$ of $\mathbb{H}(x_1,...,x_n)$ taken in place of $x_k$.\\
Now, for each $x_i$, \ $i = 0,1,....,n$  since $\langle x_i \rangle$ are principal kernels and since none of the elements in $\{x_0=e,x_1,...,x_n\}$ are comparable to each other, Proposition \ref{prop_principal_ker} yields
that $x_i \not \in \langle x_j \rangle$ for any $j \neq i$. Thus, none of these kernels is contained in another.
\end{exmp}

\begin{exmp}
Consider the semifield of fractions $\mathscr{R}(x,y)$ and the substitution \linebreak homomorphism $\phi : \mathscr{R}(x,y) \rightarrow \mathscr{R}(y)$ defined by $x \mapsto 1$. Then $\phi(\mathscr{R}(x,y)) = \mathscr{R}(y)$.

1. Since  $\phi(y) = y$, we have that
   $$\phi(\langle y \rangle_{\mathscr{R}(x,y)} ) = \langle \phi(y) \rangle_{\mathscr{R}(y)}  = \langle y \rangle_{\mathscr{R}(y)} = \{ f \in \mathscr{R}(y) \ : \ \exists n \in \mathbb{N} \  \text{such that} \  |f| \leq |y|^n \}$$
   where $\langle y \rangle_{\mathscr{R}(x,y)} = \{ f \in \mathscr{R}(x,y) \ : \ \exists n \in \mathbb{N} \  \text{such that} \  |f| \leq |y|^n \}$.\\
2. Since $\phi(|x| \wedge |y|) = |\phi(x)| \wedge |\phi(y)| = 1 \wedge |y| = 1$, we have that
   $$\phi(\langle |x| \wedge |y| \rangle_{\mathscr{R}(x,y)} ) = \langle \phi(|x| \wedge |y|) \rangle_{\mathscr{R}(y)} = \langle 1 \rangle_{\mathscr{R}(y)};$$
   This is expected as $|x| \wedge |y| \in \langle x \rangle = Ker\phi$. \\
3. Since $\phi(|x| \dotplus |y|) = |\phi(x)| \dotplus |\phi(y)| = 1 \dotplus |y| = |y|$, we have that
   $$\phi(\langle |x| \dotplus |y| \rangle_{\mathscr{R}(x,y)} ) = \langle \phi(|x| \dotplus |y|) \rangle_{\mathscr{R}(y)} = \langle |y| \rangle_{\mathscr{R}(y)} = \langle y \rangle_{\mathscr{R}(y)}.$$
   Thus we have that $\phi(\langle y \rangle_{\mathscr{R}(x,y)} ) = \phi(\langle |x| \dotplus |y| \rangle_{\mathscr{R}(x,y)} )$.\\
   By Theorem \ref{thm_kernels_hom_relations} \ $\phi^{-1}(\langle y \rangle_{\mathscr{R}(y)}) = \langle x \rangle_{\mathscr{R}(x,y)} \cdot \langle y \rangle_{\mathscr{R}(x,y)} = \langle |x| \dotplus |y| \rangle_{\mathscr{R}(x,y)}$, thus \linebreak $\langle |x| \dotplus |y| \rangle$ is a preimage of $\phi$ while $\langle y \rangle$ is not.

\end{exmp}

\begin{prop}\label{prop_induced_stone_topology}
Since $\PCon(\mathscr{R}(x_1,...,x_n))$ forms a sublattice of the lattice of  kernels in  $\mathscr{R}(x_1,...,x_n)$ with respect to multiplications and intersections, and since the kernels $1 = \langle 1 \rangle$ and  $\mathscr{R}(x_1,...,x_n)$ are principal (since $\mathscr{R}(x_1,...,x_n)$ is a semifield with a generator), the Stone topology induces a topology on \linebreak  $\PCon(\mathscr{R}(x_1,...,x_n))$.\\
The collection \emph{PSpec($\mathscr{R}(x_1,...,x_n)$)} of all irreducible principal kernels of the semifield $\mathscr{R}(x_1,...,x_n)$ is called the \emph{irreducible principal spectrum} of $\mathscr{R}(x_1,...,x_n)$. It is a topological space with respect to the principal Stone topology. Its subspace \linebreak \emph{PMax($\mathscr{R}(x_1,...,x_n)$)}, consisting of all maximal principal kernels, is called the \emph{maximal principal spectrum}.\\
As noted above, $1,\mathscr{R}(x_1,...,x_n) \in PSpec(\mathscr{R}(x_1,...,x_n))$. Thus we have that $D(1)~=~\emptyset$, $D(\mathscr{R}(x_1,...,x_n))=PSpec(\mathscr{R}(x_1,...,x_n))$ are in the topology.
The sets $$D(\langle f \rangle)~=~\{P~\in~Spec(\PCon(\mathscr{R}(x_1,...,x_n))) \ : \ \langle f \rangle \not \subseteq P \}$$ are open in the induced topology.
\end{prop}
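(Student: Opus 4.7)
The plan is to view $PSpec(\mathscr{R}(x_1,\ldots,x_n))$ as a subspace of $Spec(\mathscr{R}(x_1,\ldots,x_n))$ and to verify that the subspace topology admits a basis indexed by principal kernels, namely the family $\{D(\langle f \rangle) \cap PSpec : f \in \mathscr{R}(x_1,\ldots,x_n)\}$. Since the basic open sets of the ambient Stone topology are $\{D(u) : u \in \mathscr{R}(x_1,\ldots,x_n)\}$, which by definition coincide with $\{D(\langle u \rangle) : u \in \mathscr{R}(x_1,\ldots,x_n)\}$, intersecting each such set with $PSpec$ gives precisely this family, and the subspace topology is then well-defined by general principles.

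Next I would verify the individual claims of the statement. That $D(\langle 1 \rangle) = \emptyset$ is immediate, since $\langle 1 \rangle = \{1\}$ is contained in every kernel, and in particular in every $P \in PSpec$. That $D(\mathscr{R}(x_1,\ldots,x_n)) = PSpec$ requires that $\mathscr{R}(x_1,\ldots,x_n)$ itself be a principal kernel, which is exactly the content of Proposition \ref{prop_frac_semifield_finitely_gen}, where it is exhibited as generated, as a kernel of itself, by the element $\sum_{i=1}^{n}|x_i| \dotplus |e|$ for any $e \in \mathscr{R}\setminus\{1\}$. Since every $P \in PSpec$ is proper by Definition \ref{defn_irreducible_maximal_kernels}, one has $\mathscr{R}(x_1,\ldots,x_n) \not\subseteq P$, confirming the equality.

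For the basis property, I would recall that on $Spec(\mathscr{R}(x_1,\ldots,x_n))$ one has $D(A) \cap D(B) = D(A \cap B)$ for arbitrary kernels $A,B$. Specializing to principal kernels $\langle f \rangle, \langle g \rangle$ and invoking Corollary \ref{cor_max_semifield_principal_kernels_operations}, the intersection $\langle f \rangle \cap \langle g \rangle = \langle (f \dotplus f^{-1}) \wedge (g \dotplus g^{-1}) \rangle$ is again principal, so the family is stable under finite intersections. Combined with $\emptyset$ and $PSpec$ lying in the family, this confirms that it is indeed a basis, and the subspace topology on $PSpec$ is the one generated by it. The assertion about $PMax(\mathscr{R}(x_1,\ldots,x_n))$ being a subspace is then automatic from $PMax \subseteq PSpec$ carrying the induced topology.

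The only mild obstacle to note is that the analogue for unions, $\bigcup_{i} D(\langle f_i \rangle) = D(\prod_i \langle f_i \rangle)$, produces a potentially infinite product of principal kernels, which need not be principal; this, however, does not obstruct the construction, since a basis is required to be stable only under finite intersections, whereas arbitrary unions are adjoined freely when forming the generated topology. Hence the sets $D(\langle f \rangle) \cap PSpec$ are open in the induced topology, as claimed.
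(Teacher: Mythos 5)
Your proposal is correct and, since the paper presents this proposition with the justifications embedded in the statement rather than via a separate proof, you have essentially unpacked the intended argument. You correctly treat $PSpec$ as carrying the subspace topology from $Spec$, note that the ambient basic opens $D(u) = D(\langle u \rangle)$ are already indexed by principal kernels, invoke Corollary~\ref{cor_max_semifield_principal_kernels_operations} for closure of the basis under finite intersections, cite Proposition~\ref{prop_frac_semifield_finitely_gen} for principality of $\mathscr{R}(x_1,\ldots,x_n)$ and Definition~\ref{defn_irreducible_maximal_kernels} for properness of irreducible kernels (giving $D(1)=\emptyset$ and $D(\mathscr{R}(x_1,\ldots,x_n))=PSpec$), and correctly flag that stability under arbitrary unions is not required of a basis. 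No gaps.
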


\begin{defn}\label{defn_principal_stone_topology}
We call the induced topology on $\PCon(\mathscr{R}(x_1,...,x_n))$ introduced in Proposition \ref{prop_induced_stone_topology}, the \emph{principal Stone topology}.
\end{defn}

\begin{rem}
The Stone topology on $\Con(\mathscr{R}(x_1,...,x_n))$ gives rise to the induced topology on the kernels of the semifield with a generator $\langle \mathscr{R} \rangle$,  the kernels of which, $\Con(\langle \mathscr{R} \rangle)$, form a lattice of kernels which embeds  as a sublattice of  $\Con(\mathscr{R}(x_1,...,x_n))$.
\end{rem}

\begin{rem}
In the next section, we show that
$$\PCon(\langle \mathscr{R} \rangle) = \{ \langle f \rangle \cap \langle \mathscr{R} \rangle : f \in \PCon(\mathscr{R}(x_1,...,x_n)) \} \supset \PCon(\mathscr{R}(x_1,...,x_n))$$ which forms a sublattice of $\PCon(\mathscr{R}(x_1,...,x_n)$. Thus the principal Stone topology induces a topology on $\PCon(\langle \mathscr{R} \rangle)$.
\end{rem}

\ \\

\subsection{Affine extensions of idempotent archimedean semifields}

\ \\

In the following, we characterize the  affine idempotent semifield extensions of an idempotent archimedean semifield $\mathbb{H}$ which are divisible over $\mathbb{H}$ (divisible extensions are the analogue for algebraic extensions in ring theory).

\begin{defn}
For an idempotent archimedean semifield $\mathbb{H}$ satisfying the Frobenius property, we denote by $\overline{\mathbb{H}}$ the divisible closure of $\mathbb{H}$, i.e., the smallest divisible semifield containing $\mathbb{H}$.
\end{defn}

\begin{note}
In Section \ref{Section:lattice_ordered_groups}, concerning lattice ordered groups, we revisit the notion of divisible closure and show that the semifield $\overline{\mathbb{H}}$ exists and is also idempotent and archimedean.
\end{note}

%

\begin{lem}
If $\mathbb{H}$ is an idempotent semifield satisfying the Frobenius property, then $\overline{\mathbb{H}}$ also satisfies the Frobenius property.
\end{lem}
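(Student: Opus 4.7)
The plan is to reduce the Frobenius identity in $\overline{\mathbb{H}}$ to the same identity in $\mathbb{H}$ by raising both sides to a suitably chosen power that lands everything in $\mathbb{H}$, and then to descend back using the torsion-freeness of the multiplicative group of a proper semifield (Remark \ref{rem_torsion_free}).

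First I would fix $a,b \in \overline{\mathbb{H}}$ and $m \in \mathbb{N}$, and choose $N \in \mathbb{N}$ such that $a^N,b^N \in \mathbb{H}$. Such an $N$ exists because, $\overline{\mathbb{H}}$ being the divisible closure of $\mathbb{H}$, every element of $\overline{\mathbb{H}}$ has some power lying in $\mathbb{H}$, and a common exponent for $a$ and $b$ is obtained by multiplying the two separate ones.

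The key auxiliary claim I would need is: whenever $x,y \in \overline{\mathbb{H}}$ satisfy $x^N,y^N \in \mathbb{H}$, one has $(x+y)^N = x^N + y^N$. This is really a statement about how the addition on $\overline{\mathbb{H}}$ is inherited from the join on $\mathbb{H}$ via the prescription $x+y := (x^N+y^N)^{1/N}$, where the $N$-th root exists by divisibility and is unique by torsion-freeness. The Frobenius property in $\mathbb{H}$ is precisely what guarantees that this prescription is independent of the choice of $N$, and it is what the section on lattice-ordered groups --- to which the note preceding the lemma defers --- is expected to establish in its construction of $\overline{\mathbb{H}}$. Granting the claim, I apply it twice: once to $(a,b)$ with exponent $N$, giving $(a+b)^N = a^N+b^N$, and once to $(a^m,b^m)$ with exponent $N$ (admissible since $(a^m)^N = (a^N)^m \in \mathbb{H}$ and similarly for $b$), giving $(a^m+b^m)^N = a^{Nm}+b^{Nm}$. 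Combined with Frobenius in $\mathbb{H}$ applied to $a^N,b^N$, which yields $(a^N+b^N)^m = a^{Nm}+b^{Nm}$, I obtain the chain
\[
\bigl((a+b)^m\bigr)^N = (a+b)^{Nm} = \bigl((a+b)^N\bigr)^m = (a^N+b^N)^m = a^{Nm}+b^{Nm} = (a^m+b^m)^N.
\]
Torsion-freeness of the multiplicative group of $\overline{\mathbb{H}}$ then forces $(a+b)^m = a^m+b^m$, as required.

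The principal obstacle is the auxiliary claim, i.e., pinning down how the lattice operation on $\overline{\mathbb{H}}$ interacts with $N$-th roots. This is a structural fact about the divisible closure of an idempotent archimedean semifield that is logically prior to the lemma: without Frobenius in $\mathbb{H}$ one could not even consistently define the extension of addition to $\overline{\mathbb{H}}$, and the lemma is essentially the assertion that the consistency achieved for the specific exponents appearing in the construction propagates to all natural number exponents.
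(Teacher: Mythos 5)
Your argument is correct in its formal manipulations (the chain of identities and the final appeal to torsion-freeness are fine), but it hinges entirely on the auxiliary claim that $(x+y)^N = x^N + y^N$ whenever $x^N, y^N \in \mathbb{H}$, and that claim is not established. You justify it by saying that addition on $\overline{\mathbb{H}}$ is ``inherited from the join on $\mathbb{H}$ via the prescription $x+y := (x^N+y^N)^{1/N}$,'' so that the equality holds by construction. But the paper's construction of $\overline{\mathbb{H}}$ (Remark~\ref{rem_divisible_hull}, deferring to the $\ell$-group literature) is the abstract divisible hull: $\overline{\mathbb{H}}$ is produced as an $\ell$-group extending $\mathbb{H}$ whose join is the supremum in the extended lattice order, \emph{not} defined by the formula you propose. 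Your remark that ``without Frobenius in $\mathbb{H}$ one could not even consistently define the extension of addition'' is not correct --- the divisible hull exists for any torsion-free $\ell$-group, without any Frobenius assumption. So the auxiliary claim is precisely what has to be proved, and your ``grant the claim'' step does not discharge it; it merely relocates the whole difficulty.

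In fact, proving the auxiliary claim forces you into exactly the computation the paper carries out: one must show that each cross-term $x^s y^{N-s}$ in $(x+y)^N = \sum_{s=0}^{N} x^s y^{N-s}$ is dominated by $x^N + y^N$, which is done by raising to a power that lands in $\mathbb{H}$ (e.g.\ $(x^s y^{N-s})^N = (x^N)^s(y^N)^{N-s} \leq (x^N+y^N)^N$ by Frobenius in $\mathbb{H}$) and then descending using the order-cancellation $z^N \leq w^N \Rightarrow z \leq w$. The paper does this cross-term estimate directly for the general exponent $t$, raising to $km$ where $a^k, b^m \in \mathbb{H}$, and needs no auxiliary claim. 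Note also that the descent step in the paper's proof uses the \emph{inequality} cancellation $z^n \leq w^n \Rightarrow z \leq w$ (which it attributes to the archimedean property, though it is really a general $\ell$-group fact), whereas your final step only uses the \emph{equality} cancellation from torsion-freeness; you would still need the inequality version inside the proof of the auxiliary claim. So even after filling the gap, your argument would be a longer route to the same destination.
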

\begin{proof}
For $a,b \in \overline{\mathbb{H}}$ there exist some (minimal) $k,m \in \mathbb{N}$ such that $a^k = \alpha, b^m = \beta$ are elements of $\mathbb{H}$. First note that for every $\alpha, \beta \in \mathbb{H}$ and $t \in \mathbb{N}$,  $\alpha^s\beta^{t-s} \leq \alpha^{t} +\beta^{t}$ for any $0 \leq s \leq t$ since $(\alpha + \beta)^t = \alpha^t + \beta^{t}$. Now $(a^{s} b^{t-s})^{km} = \alpha^{sm} \beta^{k(t-s)} \leq (\alpha^{m} + \beta^{k})^t$ by the former observation. Since both $\alpha^m \in \mathbb{H}$ and $\beta^k \in \mathbb{H}$ we have that the Frobenius property holds, so that $(\alpha^{m} + \beta^{k})^t = \alpha^{mt} + \beta^{kt} = a^{kmt} + b^{kmt}$. As $a^{kmt} + b^{kmt} \leq (a^{t} + b^{t})^{km}$, we conclude that $(a^{s} b^{t-s})^{km} \leq (a^{t} + b^{t})^{km}$ which implies that $a^{s} b^{t-s} \leq a^{t} + b^{t}$ for any $0 \leq s \leq t$ (since $\overline{\mathbb{H}}$ is archimedean) and so we get that $(a+b)^t = \sum_{s=0}^{t}a^{s} b^{t-s} = a^{t} + b^{t}$, as desired.
\end{proof}

\begin{rem}\label{rem_divisibe_extensions}
Let $\mathbb{H}$ be an idempotent semifield satisfying the Frobenius property.
Let $\mathbb{K}=\mathbb{H}(a_1,...,a_n)$ be an affine semifield extension of $\mathbb{H}$ with $a_1,...,a_n \in \overline{\mathbb{H}}$.
The substitution map $\phi : \mathbb{H}(x_1,...,x_n) \rightarrow \mathbb{K}$  sending $x_i \mapsto a_i$  is an epimorphism. For each $a_i$ let $\alpha_i$ be such that $\alpha_i = a_i^{k(i)}$ with $k(i) \in \mathbb{N}$ minimal such that $a_i^{k(i)} \in \mathbb{H}$.
Consider the kernel $$K = \langle \alpha_{1}^{-1} x_{1}^{k(1)}, ... ,\alpha_{n}^{-1} x_{n}^{k(n)} \rangle  \in \PCon(\mathbb{H}(x_1,...,x_n)).$$ $K$ is a principal kernel. We claim that $Ker\phi = K$.\\
First, as all generators of $K$ are mapped to $1$ we have that $K \subseteq Ker\phi$.
Let $\overline{\mathbb{H}}$ be the divisible closure of $\mathbb{H}$. If $\mathbb{H} = \overline{\mathbb{H}}$ (i.e., $\mathbb{H}$ is divisible) then for every $1 \leq i \leq n$ we have that $\alpha_i = a_i$. Thus by Proposition \ref{prop_maximal_kernels_in_semifield_of_fractions_part1} we have that $K$ is a maximal kernel and $K = ker\phi$. Let us denote this last kernel of $\overline{\mathbb{H}}(x_1,...,x_n)$ by $\overline{K}$. Now, for each $1 \leq i \leq n$ we have that  $\alpha_{i}^{-1} x_{i}^{k(i)} = a_i^{-k(i)}x_{i}^{k(i)} = (a_i^{-1}x_{i})^{k(i)}$ thus $\alpha_{i}^{-1}x_{i}^{k(i)}$ is a generator of the kernel $\langle a_i^{-1} x_{i} \rangle$. Consider the restriction $$\tilde{\phi} = \phi|_{\mathbb{H}(x_1,...,x_n)} : \mathbb{H}(x_1,...,x_n) \rightarrow \mathbb{H}(a_1,...,a_n)$$ of $\phi : \overline{\mathbb{H}}(x_1,...,x_n) \rightarrow \overline{\mathbb{H}}(a_1,...,a_n)$ to $\mathbb{H}(x_1,...,x_n)$. Then $\tilde{\phi}$ is epimorphism and by Theorem \ref{thm_nother_1_and_3} (1) taking $\alpha_{i}^{-1}x_{i}^{k(i)}$ as a generator of $\overline{K}$ we have that \linebreak $Ker\tilde{\phi} = \overline{K} \cap \mathbb{H}(x_1,...,x_n) = K$. By the first isomorphism theorem we have that
$$\mathbb{H}(x_1,...,x_n)/K \cong \mathbb{H}(a_1,...,a_n),$$
proving our claim.
\end{rem}

\newpage

\section{Lattice-ordered groups, idempotent semifields and the semifield of fractions $\mathbb{H}(x_1,...,x_n)$.}\label{Section:lattice_ordered_groups}

\ \\

An affine semifield over an idempotent semifield $\mathbb{H}$, in particular, the semifield of fractions $\mathbb{H}(x_1,...,x_n)$, is an idempotent semifield. It turns out that $\mathbb{H}(x_1,...,x_n)$ and generally every idempotent semifield can be considered as a special kind of group, called a lattice-ordered group, or $\ell$-group, in which the notion of kernels coincides with the notion of normal and convex $\ell$-subgroups.\\
Although we consider $\mathbb{H}(x_1,...,x_n)$, all results given below hold for any idempotent semifield, in particular for the kernels of  $\mathbb{H}(x_1,...,x_n)$ each of which is an idempotent semifield in its own right. A particularly important such  kernel is $\langle \mathbb{H} \rangle$ which denotes the kernel of $\mathbb{H}(x_1,...,x_n)$ generated by any $\alpha \in \mathbb{H} \setminus \{1 \}$.\\

Due to the central role idempotent semifields play in our theory, we hereby introduce some basic notions and some important results in the theory of lattice-ordered groups.
All relevant definitions can be found in \cite{OrderedGroups3}. We note that in \cite{OrderedGroups3} the group operation is taken to be addition while in our context we take it to be multiplication, as we use $\dotplus$ for $\sup$ , i.e., $\vee$. The order preserving map $x \mapsto a^x$ with $a$ a symbol is used to translate the statements presented there to our language where the group is taken to be multiplicative. In particular $(\mathbb{R},+)$ is translated to $(\mathbb{R}^{+},\cdot)$.

\newpage

\subsection{Lattice-ordered groups}

\ \\


\begin{defn}
A \emph{partially-ordered group} (p.o group) is a group $G$ endowed with a partial order such that the group operation preserves the order on $G$, that is,
$$a \leq  b \Rightarrow \forall g \in G \ (ag \leq bg \ \ \text{and} \ \ ga \leq gb).$$
$$a \leq  b \Rightarrow \forall g \in G \ ( g (a \wedge b) = ga \wedge gb \ \text{and} \  (a \wedge b) g = ag \wedge bg).$$
If the partial order on $G$ is directed, then $G$ is a \emph{directed group}.
If the partial order on $G$ is a lattice order, then $G$ is a \emph{lattice-ordered group} or
$\ell$-group. If the order on $G$ is a linear order, then $G$ is called a \emph{totally-ordered
group} or o-group.
\end{defn}

\begin{defn}
An $\ell$-subgroup of a po-group is a subgroup which is also a sublattice.
\end{defn}

\begin{rem}\label{rem_sufficient_condition_for_l-subgroup}
Since $x \vee y = (xy^{-1} \vee 1)y$ and $x \wedge y = (x^{-1} \vee y^{-1})^{-1}$, $L$ is an $\ell$-subgroup of a po-group $G$ exactly when $z \in L$ implies that $z \vee 1 \in L$ (whenever $z \vee 1$ exists in $G$).
\end{rem}

\begin{defn}
If $G$ and $H$ are $\ell$-groups, then a group homomorphism \\
$\phi : G \rightarrow H$ which is also a lattice homomorphism (preserves $\wedge$ and $\vee$ ($\dotplus$ in our notation)) is called an \emph{$\ell$-homomorphism}.
\end{defn}

\begin{defn}
A subset $S$ of a poset $P$ is said to be \emph{convex} if $a \leq p \leq b$ with $a,b \in S$ implies that $p \in S$.
\end{defn}

\begin{rem}\cite[Theorem (2.2.3)]{OrderedGroups3}
All fundamental group isomorphism theorems hold for normal convex $\ell$-subgroups of an $\ell$-group.
\end{rem}

\begin{thm}\label{thm_coresspodence_of_l_groups_lattices}\cite[Theorem (2.2.3)]{OrderedGroups1}
Let $N$ be  a normal convex $\ell$-subgroup of an \linebreak $\ell$-group $G$.
The mapping $A \mapsto A/N$ is a lattice isomorphism between the lattice of
convex $\ell$-subgroups of $G$ that contain $N$ and the lattice of convex $\ell$-subgroups of $G/N$.
\end{thm}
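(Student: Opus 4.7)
The plan is to build on the standard group-theoretic correspondence theorem, which already yields an inclusion-preserving bijection between subgroups of $G$ containing $N$ and subgroups of $G/N$ via $A \mapsto A/N$, and its inverse $B \mapsto \pi^{-1}(B)$ where $\pi : G \to G/N$ is the quotient map. Hence the only real work is to show that this bijection restricts to convex $\ell$-subgroups on each side, and that it preserves the lattice operations.

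First I would check the inverse direction, which is straightforward: if $B$ is a convex $\ell$-subgroup of $G/N$, then $A = \pi^{-1}(B)$ is a convex $\ell$-subgroup of $G$ containing $N$. Indeed, $\pi$ is an $\ell$-homomorphism, so $a, b \in A$ implies $\pi(a \vee b) = \pi(a) \vee \pi(b) \in B$, giving $a \vee b \in A$, and by Remark~\ref{rem_sufficient_condition_for_l-subgroup} this suffices. Convexity is immediate: $a \leq b \leq c$ with $a, c \in A$ forces $\pi(a) \leq \pi(b) \leq \pi(c)$ in $G/N$, whence $\pi(b) \in B$ and $b \in A$. Normality passes as well.

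The forward direction is the main technical point. That $A/N = \pi(A)$ is a normal $\ell$-subgroup of $G/N$ follows from $\pi$ being an $\ell$-homomorphism. The subtle part is convexity: given $aN \leq pN \leq bN$ in $G/N$ with $a,b \in A$, I have to produce a representative of $pN$ that actually lies in $A$. The trick I would use is the element
\[
p' \;=\; (p \vee a) \wedge b.
\]
On the one hand $\pi(p') = (\pi(p) \vee \pi(a)) \wedge \pi(b) = \pi(p)$, using $aN \leq pN \leq bN$. On the other hand $a \wedge b \leq p' \leq b$ in $G$: the upper bound is clear, and $a \wedge b \leq a \leq p \vee a$ together with $a \wedge b \leq b$ gives the lower bound. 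Since $A$ is an $\ell$-subgroup both $a \wedge b$ and $b$ lie in $A$, and convexity of $A$ then forces $p' \in A$. Hence $pN = p'N \in A/N$, proving that $A/N$ is convex.

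Finally, the bijection is order-preserving in both directions and its inverse is as well, so it is an order isomorphism between the two posets. Since a meet in the lattice of convex $\ell$-subgroups is intersection (which $\pi$ and $\pi^{-1}$ visibly preserve when both arguments contain $N$) and joins are determined from meets and inclusions, the order isomorphism is automatically a lattice isomorphism. The hard part is really the single computation $p' = (p \vee a) \wedge b$ above; once convexity descends to the quotient, the rest is formal bookkeeping assembled from the group correspondence and the fact that $\pi$ is an $\ell$-homomorphism.
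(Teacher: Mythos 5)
The paper states this theorem as a citation to \cite[Theorem (2.2.3)]{OrderedGroups1} without giving a proof, so there is nothing internal to compare your argument against. On its own merits your proof is correct, and the element $p' = (p \vee a) \wedge b$ is exactly the standard device used in the literature for this result: it simultaneously has the same image as $p$ under $\pi$ and sits between $a \wedge b$ and $b$, both of which lie in $A$ because $A$ is an $\ell$-subgroup, so convexity of $A$ forces $p' \in A$. The rest of your argument — that the group-theoretic correspondence is an order isomorphism and hence a lattice isomorphism, and that preimages of convex $\ell$-subgroups under the $\ell$-epimorphism $\pi$ are convex $\ell$-subgroups containing $N$ — is routine and correctly handled.
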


\begin{rem}\label{rem_transitivity_for_l_groups}\cite[Proposition (4.3)]{OrderedGroups1}
Let $G$ be an $\ell$-group. Let $K$ be a normal convex $\ell$-subgroup of $G$ and $L$ a normal convex $\ell$-subgroup of $K$. Then $L$ is a normal convex $\ell$-subgroup of $G$ if and only if $L$ is a normal subgroup of $G$.
\end{rem}

\ \\

\begin{prop}\label{prop_convex_l_subgroups_distributive_lattice}\cite[Theorem (2.2.5)]{OrderedGroups3}
Let $G$ be an $\ell$-group.
The lattice of convex $\ell$-subgroups of $G$ is a complete distributive sublattice of the lattice of subgroups of $G$,
and, in fact, it satisfies the infinite distributive law
$$A \cap \left( \bigvee_{i} B_i \right) = \bigvee_{i}(A \cap B_i).$$
\end{prop}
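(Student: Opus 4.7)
The plan is to verify completeness of the lattice of convex $\ell$-subgroups, identify a workable description of arbitrary joins, and then reduce the infinite distributive law to a purely element-wise computation inside the $\ell$-group that exploits the two-sided distributivity of multiplication over $\wedge$.

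First I would establish the lattice structure. The meet of any family of convex $\ell$-subgroups is simply the intersection, and one checks directly that an intersection of convex subgroups that are also sublattices is again convex and a sublattice; this gives arbitrary meets. For joins I would take the concrete description
\begin{equation*}
\bigvee_{i} B_i \;=\; \bigl\{\, g \in G : |g| \leq b_{i_1}\cdots b_{i_n} \text{ for some } n, \text{ with } 1 \leq b_{i_j} \in B_{i_j}\,\bigr\},
\end{equation*}
and verify, using the identity $g = (g \vee 1)(g^{-1} \vee 1)^{-1}$ together with a Riesz-style decomposition of a positive element dominated by a product of positive elements, that this set is already a subgroup closed under $\vee$ and $\wedge$. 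This simultaneously shows that the join in the lattice of convex $\ell$-subgroups coincides with the join in the lattice of subgroups, giving the sublattice claim, and that the lattice is complete.

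For the infinite distributive law, the inclusion $\bigvee_i (A \cap B_i) \subseteq A \cap \bigvee_i B_i$ is immediate: each $A \cap B_i$ is contained in both $A$ and $\bigvee_i B_i$, and $A \cap \bigvee_i B_i$ is itself a convex $\ell$-subgroup, so it contains the join. The reverse inclusion is the real content. Given $g \in A \cap \bigvee_i B_i$, I would replace $g$ by $|g|$ (an element of a convex $\ell$-subgroup iff the original is), and thus assume $1 \leq g \leq b_{1}\cdots b_{n}$ with positive $b_j \in B_{i_j}$. Then set $c_j = g \wedge b_j$. Convexity of $A$ (since $1 \leq c_j \leq g \in A$) and convexity of $B_{i_j}$ (since $1 \leq c_j \leq b_j \in B_{i_j}$) place $c_j \in A \cap B_{i_j}$. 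It remains to show $g \leq c_1 \cdots c_n$, for then $g$ lies in the convex $\ell$-subgroup generated by $\bigcup_i (A \cap B_i)$, namely $\bigvee_i (A \cap B_i)$.

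The key computation, which I expect to be the main obstacle, is verifying that $(g \wedge b_1)\cdots(g \wedge b_n) \geq g$. Using the fact that in any $\ell$-group multiplication distributes over $\wedge$ from both sides, one expands the product as
\begin{equation*}
(g \wedge b_1)\cdots(g \wedge b_n) \;=\; \bigwedge_{S \subseteq \{1,\dots,n\}} g^{\,n-|S|}\prod_{j \in S} b_j,
\end{equation*}
and observes that every term in the meet dominates $g$: for $S \neq \{1,\dots,n\}$ one has $g^{n-|S|} \geq g$ and $\prod_{j \in S} b_j \geq 1$, while the term $S = \{1,\dots,n\}$ equals $b_1 \cdots b_n \geq g$ by hypothesis. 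Justifying the two-sided distributivity used in this expansion and the explicit description of $\bigvee_i B_i$ are the technical points; everything else reduces to convexity and the lattice axioms already collected in the preceding subsection.
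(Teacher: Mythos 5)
The paper cites this proposition from \cite[Theorem (2.2.5)]{OrderedGroups3} without reproducing a proof, so there is no internal argument to compare against. Your proposal is essentially the standard proof of this classical result, and it is correct. The explicit description of the join as all $g$ with $|g| \leq b_{i_1}\cdots b_{i_n}$ for positive $b_{i_j} \in B_{i_j}$, the reduction to $|g|$ via the decomposition $g = (g\vee 1)(g^{-1}\vee 1)^{-1}$, and the pivotal trick of setting $c_j = g \wedge b_j$ and checking $g \leq c_1\cdots c_n$ are all exactly the right moves, and the convexity argument placing each $c_j$ in $A \cap B_{i_j}$ is sound.

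One minor imprecision: in a noncommutative $\ell$-group the expansion of $(g\wedge b_1)\cdots(g\wedge b_n)$ does not produce terms of the literal form $g^{n-|S|}\prod_{j\in S}b_j$; rather, each term is an ordered product $t_1\cdots t_n$ with $t_j \in \{g, b_j\}$, so the $g$-factors and $b_j$-factors are interleaved according to position. This does not damage the argument, since the only fact you use is that any ordered product of elements $\geq 1$, at least one of which equals $g$, dominates $g$ (repeated use of $ax \geq x$ and $xa \geq x$ for $a \geq 1$), together with $b_1\cdots b_n \geq g$ for the $S = \{1,\dots,n\}$ term. For the paper's actual use case (commutative $\ell$-groups arising from idempotent semifields) your notation is exact. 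It would be worth spelling out the Riesz-decomposition step you defer in verifying the explicit form of the join, but the plan as laid out is complete and follows the canonical route.
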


\begin{defn}\label{defn_subdirect_prod}
An $\ell$-group $G$ is a \emph{subdirect product} of the family $\{ G_i : i \in I \}$ of
$\ell$-groups if there is a monomorphism $\phi : G \rightarrow \prod G_i$ such that each composite $\pi_i \circ \phi$ is
an epimorphism, and $G$ is \emph{subdirectly irreducible} if, in any such representation of $G$
there is an index $i$ such that $\pi_i \circ \phi$ is an isomorphism.
We indicate that  $G$ is a \emph{subdirect product} of the family $\{ G_i : i \in I \}$ by writing $G \xrightarrow[s.d]{} \prod G_i$.
\end{defn}

\begin{note}
We use the same notation presented in Definition \ref{defn_subdirect_prod} in the context of idempotent semifields.
\end{note}

\begin{rem}\label{rem_sub_direct_prod_construction}
A function $\phi : G \rightarrow \prod G_i$ on an $\ell$-group $G$ is uniquely determined by the family $ \{ \pi_i \circ \phi : i \in I\}$, and is an
$\ell$-homomorphism if and only if each $\pi_i \circ \phi$ is an $\ell$-homomorphism. In this case $Ker\phi =
\bigcap_{i} Ker (\pi_i \circ \phi)$ and $G/Ker(\pi_i \circ \phi) \cong \Im(\pi_i \circ \phi)$. Consequently, each family $\{ N_i : i \in I \}$ of normal convex $\ell$-subgroups of the $\ell$-group $G$ determines a homomorphism
$G \rightarrow \prod G/N_i$ with kernel $N = \bigcap_{i}N_i$, and $G/N$ is a subdirect product of the family
$\{ N_i : i \in I \}$ and all subdirect product representations of $G/N$ essentially arise in
this way. Clearly, a nonzero $\ell$-group $G$ is subdirectly irreducible if and only if it has
a smallest nontrivial (i.e., $\neq \{1\}$) normal convex $\ell$-subgroup.
\end{rem}

\begin{thm}\cite{OrderedGroups3}
Each $\ell$-group is a subdirect product of a family of subdirectly irreducible
$\ell$-groups.
\end{thm}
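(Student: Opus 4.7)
The plan is to follow the standard Birkhoff-style argument: for each nontrivial element of $G$, construct a quotient that is subdirectly irreducible, and then assemble these quotients into a subdirect product whose kernel is trivial.

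First, I fix $a \in G$ with $a \neq 1$ and consider the collection $\mathcal{F}_a$ of normal convex $\ell$-subgroups of $G$ that do not contain $a$. This family is nonempty since $\{1\} \in \mathcal{F}_a$. To apply Zorn's Lemma, I would check that $\mathcal{F}_a$ is closed under unions of chains: a union of a chain of subgroups is a subgroup, convexity and normality transfer to unions of chains without difficulty, and the condition $z \vee 1 \in L$ from Remark \ref{rem_sufficient_condition_for_l-subgroup} is also preserved, so the union is a normal convex $\ell$-subgroup; it still avoids $a$ because each member does. This yields a maximal element $N_a \in \mathcal{F}_a$.

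Next I would show that $G/N_a$ is subdirectly irreducible. By Theorem \ref{thm_coresspodence_of_l_groups_lattices}, the normal convex $\ell$-subgroups of $G/N_a$ are exactly the quotients $M/N_a$ where $M$ is a normal convex $\ell$-subgroup of $G$ containing $N_a$, and this correspondence is a lattice isomorphism. By maximality of $N_a$, any such $M$ strictly containing $N_a$ must contain $a$. Hence the intersection of all nontrivial normal convex $\ell$-subgroups of $G/N_a$ corresponds (via the infinite distributive intersection guaranteed by Proposition \ref{prop_convex_l_subgroups_distributive_lattice}) to a normal convex $\ell$-subgroup of $G$ that still contains $a$, and in particular properly contains $N_a$. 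Thus $G/N_a$ has a smallest nontrivial normal convex $\ell$-subgroup, which by the criterion recalled in Remark \ref{rem_sub_direct_prod_construction} is exactly subdirect irreducibility.

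Finally, I would assemble the quotients. Set $N = \bigcap_{a \neq 1} N_a$. Since $a \notin N_a$ for each $a \neq 1$, we have $N = \{1\}$. By Remark \ref{rem_sub_direct_prod_construction} the family $\{N_a : a \in G \setminus \{1\}\}$ induces an $\ell$-homomorphism $\phi \colon G \to \prod_{a} G/N_a$ whose kernel is $N = \{1\}$, and each composite $\pi_a \circ \phi$ is surjective onto $G/N_a$. Hence $G \xrightarrow[s.d]{} \prod_a G/N_a$ exhibits $G$ as a subdirect product of subdirectly irreducible $\ell$-groups.

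The main obstacle I anticipate is the Zorn step, specifically the verification that the union of a chain of normal convex $\ell$-subgroups remains a normal convex $\ell$-subgroup; the other ingredients (the correspondence theorem, the characterization of subdirect irreducibility by a smallest nontrivial normal convex $\ell$-subgroup, and the subdirect-product construction from a family with trivial intersection) are already recorded in the excerpt and can be invoked directly.
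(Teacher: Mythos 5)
Your proof is correct and follows essentially the same route as the paper: for each $a\neq 1$, use Zorn's Lemma to choose $N_a$ maximal among normal convex $\ell$-subgroups avoiding $a$, note that every normal convex $\ell$-subgroup of $G/N_a$ other than the trivial one contains the coset $aN_a$ (so $G/N_a$ is subdirectly irreducible), and then conclude from $\bigcap_{a\neq 1}N_a=\{1\}$ that $G$ is a subdirect product of the $G/N_a$. The only cosmetic difference is that the paper identifies the smallest nontrivial normal convex $\ell$-subgroup of $G/N_a$ directly as the one generated by the coset $aN_a$, whereas you phrase it as an intersection; both amount to the same observation.
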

\begin{proof}
Let $G$ be an $\ell$-group. For $a \in G \setminus \{1 \}$ let $N_a$ be a normal convex $\ell$-group of
$G$ which is maximal with respect to excluding $a$. The existence of $N_a$ is given by
Zorn's Lemma. Since each normal convex $\ell$-subgroup of $G$ that properly contains
$N_a$ must contain $a$, $G/N_a$ is subdirectly irreducible (it has a smallest nontrivial (contains the coset $aN_a$) normal convex $\ell$-subgroup. But $\bigcap_{a \neq 1}N_a = \{1\}$, so $G$ is isomorphic to a subdirect product of the $G/N_a$.
\end{proof}

Shortly we will introduce a much stronger result for commutative $\ell$-groups.

\begin{defn}\cite[Section (2.4)]{OrderedGroups3}
A convex $\ell$-subgroup $P$ of the $\ell$-group $G$ is called a \emph{prime subgroup} if whenever
$a, b \in G$ with $a \wedge b \in P$, then $a \in P$ or $b \in P$.
\end{defn}

\begin{note}
In the context of idempotent semifields, we use the notion `irreducible' for `prime'.
\end{note}

\begin{prop}\cite[Theorem (2.4.1)]{OrderedGroups3}
The following statements are equivalent for the convex $\ell$-subgroup
$P$ of $G$:
\begin{enumerate}
  \item $P$ is a prime subgroup.
  \item If $a,b \in G$ with $a \wedge b = 1$ then $a \in P$ or $b \in P$.
  \item The lattice of (left) cosets  $G/P$ is totally ordered.
  \item The lattice of convex $\ell$-subgroups of $G$ that contain $P$ is totally ordered.
\end{enumerate}
\end{prop}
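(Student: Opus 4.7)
The plan is to establish the chain $(1) \Leftrightarrow (2) \Rightarrow (3) \Rightarrow (4) \Rightarrow (1)$. The central technique throughout is a \emph{disjointification} trick: for any pair $a, b \in G$, setting $c = a \wedge b$ and $a' = ac^{-1}$, $b' = bc^{-1}$ produces a pair with $a' \wedge b' = 1$ and $a', b' \geq 1$. Combined with the distributive law of Proposition \ref{prop_convex_l_subgroups_distributive_lattice}, this device carries most of the work.

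For $(1) \Leftrightarrow (2)$, the direction $(1) \Rightarrow (2)$ is immediate since $a \wedge b = 1 \in P$. Conversely, if $a \wedge b = c \in P$, apply disjointification to reduce to the meet-$1$ pair $a', b'$; by (2) one of $a', b'$ lies in $P$, and multiplying back by $c \in P$ yields $a \in P$ or $b \in P$. For $(2) \Rightarrow (3)$, given arbitrary $a, b \in G$, the disjoint pair $a', b'$ meets $P$ by hypothesis; assuming $a' \in P$, we obtain $aP = cP \leq bP$ since $c \leq b$. Hence any two cosets are comparable.

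For $(3) \Rightarrow (4)$, rather than invoking the correspondence of Theorem \ref{thm_coresspodence_of_l_groups_lattices} (which requires normality of $P$), I would argue directly. Given convex $\ell$-subgroups $A, B \supseteq P$ with $A \not\subseteq B$, pick $a \in A \setminus B$ with $a \geq 1$ (replacing $a$ by $a \vee 1$ if necessary). For every $b \in B$ with $b \geq 1$, (3) forces $aP$ and $bP$ to be comparable. The case $aP \leq bP$ would give $a \leq br$ for some $r \in P \subseteq B$, placing $a$ inside $B$ by convexity, contrary to choice. So $bP \leq aP$, which by the symmetric argument puts $b$ in $A$. Since each element of $B$ decomposes as $(g \vee 1) \cdot (g^{-1} \vee 1)^{-1}$ with positive factors, this yields $B \subseteq A$.

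The main obstacle is $(4) \Rightarrow (1)$, where the hypothesis is purely lattice-theoretic and one must leverage the distributive structure on the lattice of convex $\ell$-subgroups. Suppose for contradiction that $a \wedge b \in P$ but $a, b \notin P$; disjointification reduces us to the case $a \wedge b = 1$, $a, b \geq 1$, and $a, b \notin P$. The key subsidiary fact is that disjoint positive elements generate convex $\ell$-subgroups with trivial intersection: the standard $\ell$-group identity $a \wedge b = 1 \Rightarrow a^m \wedge b^n = 1$ for all $m, n \geq 0$, together with the description $\langle a \rangle = \{ g : |g| \leq a^n \text{ for some } n\}$ for $a \geq 1$, yields $\langle a \rangle \cap \langle b \rangle = \{1\}$. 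Invoking the dual form of Proposition \ref{prop_convex_l_subgroups_distributive_lattice}, we then compute
$$\langle P, a \rangle \cap \langle P, b \rangle \;=\; P \vee \bigl(\langle a \rangle \cap \langle b \rangle\bigr) \;=\; P.$$
But (4) forces $\langle P, a \rangle$ and $\langle P, b \rangle$ to be comparable, so their intersection equals the smaller of the two, which strictly contains $P$, a contradiction. The delicate step is justifying both the subsidiary disjointness lemma for $\ell$-groups and the dual distributive law, neither of which appears explicitly in the excerpt.
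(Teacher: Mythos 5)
Since the paper does not prove this proposition but merely cites it to [OrderedGroups3, Theorem (2.4.1)], there is no in-paper proof to compare against; I evaluate your argument on its own merits. The cyclic scheme $(1)\Leftrightarrow(2)\Rightarrow(3)\Rightarrow(4)\Rightarrow(1)$ is sound, and your two main devices — disjointification plus distributivity of the lattice of convex $\ell$-subgroups — are exactly what carries the result. In particular, your direct argument for $(3)\Rightarrow(4)$ avoiding the correspondence theorem is the right call, since a prime subgroup need not be normal. For $(4)\Rightarrow(1)$, the subsidiary facts you flag are both available in the paper's apparatus: Proposition \ref{prop_algebra_of_generators_of_kernels}(2) gives $\langle a\rangle \cap \langle b\rangle = \langle |a|\wedge|b|\rangle = \{1\}$ directly (or alternatively Remark \ref{rem_Riesz and disjoint}(4) plus Proposition \ref{prop_principal_ker}), and the dual form of finite distributivity follows from Proposition \ref{prop_convex_l_subgroups_distributive_lattice} by the standard lattice-theoretic self-duality noted in the paper's own Definition of distributive lattices.

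Two small points of care, both harmless in the commutative setting that is the paper's actual concern but worth tightening if you want the argument to cover non-abelian $\ell$-groups as in [OrderedGroups3]. First, in $(3)\Rightarrow(4)$, an element $a\in A\setminus B$ may still have $a\vee 1\in B$; you should take whichever of $a\vee 1$ and $a^{-1}\vee 1$ lies outside $B$ (at least one does, since $a = (a\vee 1)(a^{-1}\vee 1)^{-1}$ and $B$ is a subgroup). Second, your disjointification $a' = ac^{-1}$ is a right cancellation, so the conclusion $aP = a'cP = cP$ requires $c^{-1}a'c \in P$, which uses normality. Replacing this with the left version $a'' = c^{-1}a$, $b'' = c^{-1}b$ gives $a'' \wedge b'' = c^{-1}(a\wedge b) = 1$ and $a = ca''$, whence $aP = ca''P = cP$ follows from $a''\in P$ with no normality assumption, matching the left-coset convention in statement (3).
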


\begin{rem}\cite{OrderedGroups3}
Each subgroup $L$ that contains a prime subgroup is an $\ell$-subgroup, and hence is
itself prime if it is convex. For if $a \in L$, then, since $(a \vee 1) \wedge ( a^{-1} \vee 1) = 1$, $(a \vee 1) = a ( a^{-1} \vee 1) \in L$ and so by Remark \ref{rem_sufficient_condition_for_l-subgroup} $L$ is an $\ell$-subgroup.
It can be easily seen that the intersection of any chain of prime subgroups is prime.
In particular, if $P$ is a prime subgroup and $\{ P \}$ is enlarged to a maximal chain
$\{P_i : i \in I \}$ of prime subgroups, then $\bigcap P$ is a minimal prime contained in $P$.
\end{rem}

\begin{prop}
Each convex $\ell$-subgroup is the intersection of prime subgroups.
\end{prop}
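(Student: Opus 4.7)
The claim is $C = \bigcap\{P : P \supseteq C,\ P \text{ prime}\}$ for any convex $\ell$-subgroup $C$ of $G$. The inclusion $C \subseteq \bigcap$ is tautological, so the content lies in the reverse inclusion, which reduces to the following separation statement: for every $a \in G \setminus C$ there is a prime subgroup $P_a$ with $C \subseteq P_a$ and $a \notin P_a$. If we can produce such $P_a$ then $\bigcap_{a \notin C} P_a \subseteq C$ because each $a \notin C$ is excluded by the corresponding $P_a$.

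My plan is the standard Zorn's lemma separation argument. Fix $a \notin C$ and let $\mathcal{F}_a$ be the family of convex $\ell$-subgroups of $G$ that contain $C$ and do not contain $a$. This family is nonempty (it contains $C$) and closed under unions of chains, since a union of a chain of convex $\ell$-subgroups of $G$ is again a convex $\ell$-subgroup (closure under $\vee$, $\wedge$, and the group operations is preserved by taking unions of nested such subgroups, and convexity is clearly preserved). Zorn's lemma then yields a maximal element $P_a \in \mathcal{F}_a$.

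The crux is to show that $P_a$ is prime. I will use characterization (2) from the preceding proposition: it suffices to show that whenever $x, y \in G$ satisfy $x \wedge y = 1$, at least one of $x, y$ lies in $P_a$. Suppose for contradiction that neither belongs to $P_a$, and let $X$ and $Y$ denote the convex $\ell$-subgroups generated by $P_a \cup \{x\}$ and $P_a \cup \{y\}$ respectively. By maximality of $P_a$, both $X$ and $Y$ strictly contain $P_a$ and hence must contain $a$, so $a \in X \cap Y$. The intended contradiction is that $X \cap Y = P_a$, forcing $a \in P_a$; in the commutative (idempotent semifield) setting this follows immediately from the identity $\langle P_a, x \rangle \cap \langle P_a, y \rangle = P_a \cdot \langle |x| \wedge |y| \rangle$ of Proposition \ref{prop_algebra_of_generators_of_kernels}(3), since $x \wedge y = 1$ gives $|x| \wedge |y| = 1$ after replacing $x, y$ by $x \vee 1, y \vee 1$. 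The main obstacle, and the step that will require the most care, is the analogous element-level argument in the general (possibly non-commutative) $\ell$-group case: one must describe a typical element of $X$ as being bounded in absolute value by some $p|x|^n$ with $p \in P_a$, combine the resulting bounds for $a$ in $X$ and $Y$ using infinite distributivity from Proposition \ref{prop_convex_l_subgroups_distributive_lattice} together with $x \wedge y = 1$, and then invoke convexity of $P_a$ to force $a \in P_a$. This completes the proof of primality of $P_a$, and the intersection formula follows.
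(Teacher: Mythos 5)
The paper gives no proof of this proposition; it only cites \cite{OrderedGroups3} (Theorem 2.4.2), so there is nothing in the paper itself to compare your argument against. Your Zorn--plus--separation argument is the standard proof and is correct as far as you carry it out: the reduction to separating each excluded element $a$ by a prime, the chain-closure observation justifying Zorn's lemma, the use of the characterization of primeness via $x \wedge y = 1$, and the maximality argument showing $a \in X \cap Y$ are all sound. In the commutative case — which is all that matters for the paper, since its idempotent semifields correspond to abelian $\ell$-groups — Proposition \ref{prop_algebra_of_generators_of_kernels}(3) does give $X \cap Y = P_a \cdot \langle |x| \wedge |y| \rangle = P_a$ immediately, and the proof is complete. (Note also that $x \wedge y = 1$ already forces $x, y \geq 1$, so the replacement by $x \vee 1$, $y \vee 1$ is harmless but unnecessary.)

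In the general non-commutative case, which you only sketch, the one imprecision worth flagging is the claim that an element of $X = C(P_a \cup \{x\})$ is bounded in absolute value by $p|x|^n$ with $p \in P_a$. Without normality of $P_a$ under conjugation by $x$, the correct description is a bound $|g| \leq w$ where $w$ is an arbitrary finite product of elements of $P_a^+ \cup \{|x|\}$ in some order. One then still reaches $w_1 \wedge w_2 \in P_a$, but via the facts that $x \wedge y = 1$ propagates to $x^m \wedge y^n = 1$, that disjointness is preserved under conjugation, and the Riesz decomposition property — not from a single-power bound. This does not undermine your strategy, and the commutative case you completed suffices for the paper's purposes, but the general sketch would need this adjustment to stand on its own.
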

\begin{proof}
See \cite{OrderedGroups3} Theorem (2.4.2).
\end{proof}

\begin{defn}
Let $G$ be an $\ell$-group. For $a,b \in G$, $a$ and $b$ are said to be \emph{disjoint} or \emph{orthogonal}
if $|a| \wedge |b| = 1$.
\end{defn}

\ \\

\begin{rem}\label{rem_polar_kernels}
Let $G$ be a commutative $\ell$-group. For any $a \in G$, the set
$$\perp a = \{ x \in G : |a| \wedge |x| = 1 \}$$
is a convex $\ell$-subgroup of $G$.
\end{rem}

\ \\

The following theorems can be found in \cite{OrderedGroups4} (Theorems XIII.21 and XIII.22).

\begin{thm}\label{thm_l_group_main_1}
A commutative $\ell$-group is either linearly ordered or subdirectly \linebreak reducible.
\end{thm}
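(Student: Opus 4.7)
The plan is to show the contrapositive in the form: a commutative $\ell$-group that is not linearly ordered fails to be subdirectly irreducible. By Remark \ref{rem_sub_direct_prod_construction}, an $\ell$-group is subdirectly irreducible iff it possesses a smallest nontrivial normal convex $\ell$-subgroup, so it suffices to exhibit two nontrivial normal convex $\ell$-subgroups whose intersection is $\{1\}$. Since $G$ is commutative, normality comes for free, so the task reduces to producing two nontrivial convex $\ell$-subgroups with trivial intersection.

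First, I would extract a pair of disjoint positive nontrivial elements from the failure of linearity. If $G$ is not linearly ordered, pick incomparable $a,b \in G$ and set
\[
x = a(a \wedge b)^{-1}, \qquad y = b(a \wedge b)^{-1}.
\]
A short computation using translation-invariance of $\wedge$ gives $x,y \geq 1$, $x \wedge y = 1$, and the incomparability of $a$ and $b$ forces $x \neq 1$ and $y \neq 1$.

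Next I would use the polar construction of Remark \ref{rem_polar_kernels} to obtain the two desired subgroups. Consider
\[
\perp x = \{ g \in G : |g| \wedge |x| = 1\}, \qquad \perp\!\perp x = \{ g \in G : |g| \wedge |h| = 1 \text{ for all } h \in \perp x\},
\]
both convex $\ell$-subgroups by Remark \ref{rem_polar_kernels}. They are nontrivial: $y \in \perp x$ because $x \wedge y = 1$, and $x \in \perp\!\perp x$ directly from the definition of $\perp x$. The key observation is that $\perp x \cap \perp\!\perp x = \{1\}$, because any $z$ in both satisfies $|z| \wedge |z| = 1$ (taking $h = z$ in the defining condition of $\perp\!\perp x$), hence $|z| = 1$ and $z = 1$.

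Finally, since $G$ is commutative, $\perp x$ and $\perp\!\perp x$ are normal convex $\ell$-subgroups. If $G$ had a smallest nontrivial normal convex $\ell$-subgroup $M$, then $M \subseteq \perp x \cap \perp\!\perp x = \{1\}$, contradicting $M \neq \{1\}$. Hence no such smallest element exists, and $G$ is subdirectly reducible. The only step requiring any care is verifying that the polars are convex $\ell$-subgroups (already cited) and that $\perp x \cap \perp\!\perp x$ is trivial; the rest is bookkeeping once the disjoint pair $x,y$ has been produced.
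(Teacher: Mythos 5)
The paper does not actually prove this theorem; it is quoted from \cite{OrderedGroups4} (Theorems XIII.21 and XIII.22) with no argument supplied, so there is no proof to compare against. Your argument is correct and is the standard proof of the reducibility half of Clifford's theorem.

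Two small remarks on the write-up. First, you cite Remark \ref{rem_polar_kernels} to justify that $\perp\!\perp x$ is a convex $\ell$-subgroup, but that remark only handles the polar $\perp a$ of a single element. Since $\perp\!\perp x = (\perp x)^{\perp} = \bigcap_{h \in \perp x} \perp h$ is an intersection of convex $\ell$-subgroups, that gives the result at once; alternatively, Theorem \ref{thm_properties of polars}(1) covers polars of arbitrary subsets. Second, you could avoid the double polar entirely: once you have the disjoint positive pair $x, y > 1$ with $x \wedge y = 1$, the principal convex $\ell$-subgroups $\langle x \rangle$ and $\langle y \rangle$ already do the job, since $\langle x \rangle \cap \langle y \rangle = \langle x \wedge y \rangle = \{1\}$ (Corollary \ref{cor_max_semifield_principal_kernels_operations} via the semifield--$\ell$-group correspondence), and both are nontrivial because $x, y \neq 1$. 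Note that replacing your $\perp\!\perp x$ by $\perp y$ would \emph{not} work, since $\perp x \cap \perp y$ can easily contain an element disjoint from both $x$ and $y$ (e.g.\ $(0,0,1)$ in $\mathbb{Z}^3$ with $x = (1,0,0)$, $y = (0,1,0)$); your choice of $\perp\!\perp x$ is the right one among the polar-based options.
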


\begin{thm}{(Clifford)}\label{thm_l_group_main_2}
Any commutative $\ell$-group is a subdirect product of subdirectly irreducible
linearly ordered $\ell$-groups.
\end{thm}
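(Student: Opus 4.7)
The plan is to combine Theorem \ref{thm_l_group_main_1} with the general subdirect decomposition already established in the excerpt. First I would invoke the earlier unnamed theorem stating that every $\ell$-group $G$ is a subdirect product of a family of subdirectly irreducible $\ell$-groups; the proof of that result gave us an explicit construction via Zorn's lemma, producing for each $a \in G \setminus \{1\}$ a normal convex $\ell$-subgroup $N_a$ maximal with respect to excluding $a$, and showing that $G/N_a$ is subdirectly irreducible while $\bigcap_{a \neq 1} N_a = \{1\}$.

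Next, I would observe that commutativity is inherited by quotients: if $G$ is commutative, then every $G/N_a$ is commutative. This is the pivotal step, because it lets us apply Theorem \ref{thm_l_group_main_1} to each factor. Indeed, each $G/N_a$ is a commutative $\ell$-group which is, by construction, subdirectly irreducible, hence certainly not subdirectly reducible; by Theorem \ref{thm_l_group_main_1} it must therefore be linearly ordered.

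Putting these pieces together, the canonical monomorphism $G \hookrightarrow \prod_{a \neq 1} G/N_a$ constructed in Remark \ref{rem_sub_direct_prod_construction} exhibits $G$ as a subdirect product of the family $\{G/N_a : a \in G \setminus \{1\}\}$, each member of which is a subdirectly irreducible linearly ordered $\ell$-group, which is precisely the conclusion sought.

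Honestly, there is no serious obstacle here: both ingredients (the existence of a subdirect decomposition into subdirectly irreducible pieces, and the dichotomy for commutative $\ell$-groups) are already in hand. The only point that requires a moment's care is verifying that the $N_a$ produced by Zorn's lemma are in fact \emph{normal} convex $\ell$-subgroups (so that the quotients $G/N_a$ make sense as $\ell$-groups), but commutativity of $G$ makes every convex $\ell$-subgroup normal, which is why the result is stated specifically for commutative $\ell$-groups.
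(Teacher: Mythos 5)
Your proof is correct, and it is the standard derivation. Note that the paper itself offers no proof of this theorem — it is stated as a citation to \cite{OrderedGroups4}, so there is no in-paper argument to compare against. Your route of combining the earlier unnamed theorem (every $\ell$-group is a subdirect product of subdirectly irreducible $\ell$-groups, via Zorn's lemma on normal convex $\ell$-subgroups maximal with respect to excluding a given $a \neq 1$) with Theorem \ref{thm_l_group_main_1} applied to each commutative quotient $G/N_a$ is exactly the right way to deduce Clifford's theorem, and every step you state is sound.

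One small inaccuracy worth flagging: your closing remark attributes the restriction to commutative $\ell$-groups to the need for the $N_a$ to be normal. That is not the binding constraint — the unnamed subdirect-decomposition theorem is stated and proved in the paper for arbitrary $\ell$-groups precisely because it works with \emph{normal} convex $\ell$-subgroups from the start, with no appeal to commutativity. The genuine reason commutativity is required is that Theorem \ref{thm_l_group_main_1} (the dichotomy between linearly ordered and subdirectly reducible) fails for non-commutative $\ell$-groups: there exist non-commutative subdirectly irreducible $\ell$-groups that are not totally ordered. So commutativity is what licenses the step ``subdirectly irreducible $\Rightarrow$ linearly ordered,'' not the normality of the $N_a$. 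This does not affect the validity of your proof, only the parenthetical explanation at the end.
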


\ \\

\subsection{Idempotent semifields versus lattice-ordered groups}

\ \\

Endowed with the natural order given in Remark \ref{rem_natural_order_on_semifield}, an idempotent semifield of fractions $\mathbb{H}(x_1,...,x_n)$  can be considered as a $\ell$-group.
The following results establish the connection between $\ell$-groups and additively commutative and idempotent semifields in general, and in particular $\mathbb{H}(x_1,...,x_n)$.\\

There exists a correspondence between $\ell$-groups and additively commutative and idempotent semifields.\\

The following results can be found in section 4 of \cite{Semifields_LO_Groups}.
\begin{prop}\cite{Semifields_LO_Groups}\label{prop_l-group_idempotent_semifield_corr_1}
1. Let $(A, \leq , .)$ be an $\ell$-group and define
$$a + b = a \vee b = sup\{a, b\} \ \text{for all} \ a, b \in A.$$
Then $(A, +, .)$ is a semifield such that $(A, +, \cdot)$ is commutative and idempotent.\\
2. Conversely, let $(A, +, .)$ be commutative and idempotent semifield and define $a \leq b$ whenever
$a + b = b$ for $a, b \in A$. Then $(A, \leq , .)$ is an $\ell$-group satisfying $a \vee b = a + b$
and $a \wedge b = (a^{-1} + b^{-1})^{-1}$.\\
3. This correspondence is bijective.
Moreover, the following statements are equivalent in this context:
\begin{itemize}
  \item $\phi$ is a semifield homomorphism of $(A, +, \cdot)$ into $(B, +, \cdot)$.
  \item $\phi$ is a $\vee$ -preserving group homomorphism of $(A, \leq , \cdot)$ into $(B, \leq , \cdot)$.
  \item $\phi$ is a $\wedge$-preserving group homomorphism of $(A, \leq , \cdot)$ into $(B, \leq , \cdot)$.
\end{itemize}
\end{prop}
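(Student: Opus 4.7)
\medskip

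\noindent\textbf{Proof proposal.} The plan is to verify the three parts in turn, with the bulk of the work consisting of routine checks that the defined operations interact correctly. I would organize the argument around the two constructions and then combine them to obtain the bijection and the homomorphism equivalences.

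For part (1), starting from an $\ell$-group $(A,\leq,\cdot)$, I would define $a+b = a\vee b$ and verify the semifield axioms one by one. Commutativity, associativity and idempotency of $+$ are immediate lattice properties. The only non-trivial axiom is distributivity of $\cdot$ over $+$, i.e.\ $c(a\vee b) = ca \vee cb$ and $(a\vee b)c = ac\vee bc$. This is exactly the content of the second displayed identity in the definition of a po-group (translated multiplicatively), since order-preservation by left and right translation together with the existence of the join forces translation to commute with $\vee$. As $A$ is already a group, every nonzero element is invertible, so $(A,+,\cdot)$ is a semifield; commutativity and idempotency of the addition are by construction.

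For part (2), starting from a commutative idempotent semifield $(A,+,\cdot)$, I would first verify that the relation $a\leq b \iff a+b = b$ is a partial order. Reflexivity is idempotency; transitivity follows by rewriting $a+c = (a+b)+c = a+(b+c) = a+c$; antisymmetry follows from commutativity of $+$. Order-preservation under multiplication is immediate by distributivity: if $a+b = b$ then $ca+cb = c(a+b) = cb$. It is then direct that $a+b$ is an upper bound of $\{a,b\}$ (since $a+(a+b) = a+b$ by idempotency), and any upper bound $u$ satisfies $a+u = b+u = u$, hence $(a+b)+u = u$, so $a+b$ is the least upper bound. For the meet, I would set $c = (a^{-1}+b^{-1})^{-1}$ and check that $c\leq a$, $c\leq b$, and that any lower bound of $\{a,b\}$ is $\leq c$, using that inversion reverses the order in any po-group (which follows from order-preservation of multiplication applied to $a^{-1}b^{-1}$). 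This gives the $\ell$-group structure with the stated formulae for $\vee$ and $\wedge$.

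For part (3), the bijectivity is a matter of checking that the two constructions are mutually inverse. Starting from an $\ell$-group, passing to the semifield and back recovers $\leq$ because $a\vee b = b$ iff $a\leq b$; starting from a semifield, passing to the $\ell$-group and back recovers $+$ because the induced join is exactly $a+b$ by the argument above. For the homomorphism equivalences, a map $\phi$ between semifields is a semifield homomorphism iff it is a group homomorphism that preserves $+ = \vee$; since $a\wedge b = (a^{-1}+b^{-1})^{-1}$ and any group homomorphism commutes with inversion, preservation of $\vee$ is equivalent to preservation of $\wedge$. The main obstacle, if any, lies in carefully handling the order-preservation-under-multiplication axiom when going from semifield to $\ell$-group (since one must use distributivity rather than any standalone compatibility hypothesis), but this is exactly the step enabled by the semifield distributive law together with idempotency, so the argument closes cleanly.
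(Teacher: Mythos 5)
The paper cites this proposition from \cite{Semifields_LO_Groups} without reproducing a proof, so there is no internal proof to compare against; your blind verification stands on its own merits. Your overall plan is sound and covers the right checkpoints: in part (1), the only non-routine axiom is distributivity of $\cdot$ over $+$, which you correctly identify as the translation-commutes-with-join property of the $\ell$-group; in part (2), you correctly read off order-preservation from the semifield distributive law, deduce that $+$ realizes the join, and derive the meet formula from order-reversal of inversion; in part (3), the inverse-pair observation and the fact that group homomorphisms commute with inversion (so $\vee$-preservation is equivalent to $\wedge$-preservation under the formula $a \wedge b = (a^{-1} + b^{-1})^{-1}$) are exactly the right points to make.

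One slip to fix: your displayed chain for transitivity, $a+c = (a+b)+c = a+(b+c) = a+c$, is circular and does not reach the conclusion. What you want is
$$a + c = a + (b+c) = (a+b) + c = b + c = c,$$
substituting $c = b+c$ in the first step, reassociating, substituting $a+b = b$, and then using $b+c = c$ again. The underlying idea is clearly the one you intended, but as written the chain terminates at $a+c$ rather than at $c$.
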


\begin{prop}\label{prop_l-group_idempotent_semifield_corr_2}
Each kernel $K$ of an additively commutative and idempotent \linebreak
semifield $(A,+, .)$ is a normal and convex subgroup of $(A, \leq , .)$ such that
$a \vee b \in K$ holds for all $a, b \in K$, and conversely. In fact, $K$ is a subsemifield of
$(A,+, .)$ and a sublattice of $(A, \vee, \wedge)$.
\end{prop}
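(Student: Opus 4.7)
The plan is to verify each direction of the claimed equivalence and then obtain the subsemifield/sublattice assertion as an immediate corollary. For the forward direction, let $K$ be a kernel. Normality is built into the definition, and convexity with respect to the natural order is Remark~\ref{rem_kernel_is_convex}. Closure of $K$ under $\vee$ is then forced by idempotency: since $1+1=1$, the pair $(x,y)=(1,1)$ satisfies $x+y=1$, and the kernel axiom~\eqref{defn_ker_convexity} yields $a+b = 1\cdot a + 1\cdot b \in K$ for any $a,b\in K$, which is exactly $a\vee b\in K$.

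For the converse, let $K$ be a normal convex subgroup of the multiplicative group of $A$ that is closed under $\vee$. First observe that $K$ is automatically closed under $\wedge$, since $a\wedge b = (a^{-1}\vee b^{-1})^{-1}$ and $K$ is a subgroup. The substance of the proof is verifying the kernel convexity axiom: for $a,b\in K$ and $x,y\in A$ with $x+y=1$, show $xa+yb\in K$. My plan is to trap $xa+yb$ between two elements of $K$ and then invoke convexity. The relation $x+y=1$ together with idempotency forces $x,y\leq 1$, so multiplicative monotonicity delivers the upper bound $xa+yb \leq a+b = a\vee b \in K$. For the lower bound I compute, using distributivity of $\cdot$ over $+$,
\begin{equation*}
(xa+yb)(a^{-1}+b^{-1}) \;=\; (x+ya^{-1}b) + (xab^{-1}+y) \;=\; 1 + xab^{-1} + ya^{-1}b \;\geq\; 1,
\end{equation*}
so $xa+yb \geq (a^{-1}+b^{-1})^{-1} = a\wedge b \in K$. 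Convexity of $K$ now gives $xa+yb\in K$.

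The main obstacle is precisely this lower bound: the only global hypothesis on $x,y$ is the single relation $x+y=1$, and the key idea is to realise that multiplying $xa+yb$ by $(a\wedge b)^{-1}$, rewritten in $\ell$-group form as $a^{-1}+b^{-1}$, collapses through distributivity to $1 + xab^{-1} + ya^{-1}b$, which dominates $1$ by idempotency. Once the equivalence is in hand, the last sentence of the statement follows at once: under either description $K$ is a multiplicative subgroup closed under $+$ (which coincides with $\vee$) and under $\wedge$, so it is simultaneously a subsemifield of $(A,+,\cdot)$ and a sublattice of $(A,\vee,\wedge)$.
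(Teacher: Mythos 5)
Your proposal is correct. Worth noting: the paper itself gives no proof of this proposition, citing \cite{Semifields_LO_Groups} for the result, so there is no internal argument to compare against. Your forward direction is routine and matches what the surrounding text would suggest (the kernel axiom with $x=y=1$ and idempotency gives closure under $\dotplus$, and convexity is Remark~\ref{rem_kernel_is_convex}).

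The converse is where you supply genuine content, and your argument is both correct and efficient. The paper does contain a neighbouring result, Proposition~\ref{prop_full_ordered_group_is_kernel}, which proves a convex normal subgroup $N$ is a kernel under the \emph{extra} hypothesis that $\leq$ restricts to a total order on $N$: there one traps $s+ta$ between $1$ and $a$ (or $a$ and $1$) directly, because $a$ and $1$ are comparable. Your argument dispenses with that comparability hypothesis by instead trapping $xa+yb$ between $a\wedge b$ and $a\vee b$, which is exactly the right generalisation to the lattice-ordered (rather than totally ordered) setting. The identity $(xa+yb)(a^{-1}+b^{-1}) = 1 + xab^{-1} + ya^{-1}b \geq 1$ is the key computation, and the passage from $uv\geq 1$ to $u\geq v^{-1}$ is valid in a po-group. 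One small expository point: the reduction of the two-element axiom to what you actually verify is cleanest if you observe $xa+yb = b\,(x\,ab^{-1} + y)$ with $ab^{-1}\in K$, so one may take $b=1$ without loss — but your direct trap works fine as written and avoids that reduction. The closing sentence (subsemifield and sublattice) follows as you say, since either description of $K$ gives a multiplicative subgroup closed under $\dotplus$ (which is $\vee$) and under $\wedge$.
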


The following is a direct consequence of Remark \ref{rem_transitivity_for_l_groups}:

\begin{cor}\label{rem_transitivity_for_idemotent_kernels}
Let $\mathbb{H}$ be an idempotent commutative semifield. If $K$ is a kernel of $\mathbb{H}$ and $L$ is a kernel of $K$ (viewing $K$ as an idempotent semifield) then $L$ is a kernel of~$\mathbb{H}$.
\end{cor}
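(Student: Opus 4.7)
The plan is to translate the statement into the $\ell$-group language and then invoke Remark \ref{rem_transitivity_for_l_groups} essentially directly. By Proposition \ref{prop_l-group_idempotent_semifield_corr_1} the idempotent commutative semifield $\mathbb{H}$ corresponds to a commutative $\ell$-group $(\mathbb{H}, \leq, \cdot)$, with join given by semifield addition and meet given by $(a^{-1} \dotplus b^{-1})^{-1}$. Under the correspondence described in Proposition \ref{prop_l-group_idempotent_semifield_corr_2}, kernels of the idempotent semifield coincide with normal convex $\ell$-subgroups (the closure under $\vee$ being automatic since $K$ is itself a semifield). So $K$ is a normal convex $\ell$-subgroup of $\mathbb{H}$, and $L$ is a normal convex $\ell$-subgroup of $K$, where $K$ is viewed as an $\ell$-group in its own right.

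The goal reduces to showing that $L$ is a normal convex $\ell$-subgroup of $\mathbb{H}$. Remark \ref{rem_transitivity_for_l_groups} grants us this provided $L$ is a normal subgroup of $\mathbb{H}$. Here commutativity of $\mathbb{H}$ does all the work: any subgroup of a commutative group is automatically normal, so $L \le K \le \mathbb{H}$ forces $L$ to be normal in $\mathbb{H}$. Thus the hypotheses of Remark \ref{rem_transitivity_for_l_groups} are fully satisfied, and it delivers the conclusion that $L$ is a normal convex $\ell$-subgroup of $\mathbb{H}$.

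Translating back via Proposition \ref{prop_l-group_idempotent_semifield_corr_2} gives that $L$ is a kernel of $\mathbb{H}$, as required. There is really no hard step in this proof; the only thing to check carefully is that one is justified in invoking Remark \ref{rem_transitivity_for_l_groups} in this commutative setting, and that the lattice closure properties needed to identify kernels with normal convex $\ell$-subgroups are preserved. In particular, one should note that convexity of $L$ in $\mathbb{H}$ is, if one prefers a direct argument, a straightforward two-step squeeze: if $a \le b \le c$ with $a, c \in L \subseteq K$ and $b \in \mathbb{H}$, then convexity of $K$ in $\mathbb{H}$ places $b$ in $K$, after which convexity of $L$ in $K$ places $b$ in $L$. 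This makes the transitivity of convexity transparent and confirms that the corollary is truly immediate.
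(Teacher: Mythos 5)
Your proof is correct and matches the paper's own route exactly: the paper gives no argument beyond stating the corollary is "a direct consequence of Remark~\ref{rem_transitivity_for_l_groups}," and your write-up simply spells out that deduction (translation to $\ell$-groups, commutativity giving normality for free, then applying the cited remark). The only thing you add is the direct two-step convexity argument as a sanity check, which is harmless and consistent with what the cited remark delivers.
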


\ \\
\textbf{
In view of the above, all statements presented in \ref{prop_l-group_idempotent_semifield_corr_1} and \ref{prop_l-group_idempotent_semifield_corr_2}  referring to \\ $\ell$-groups are true for idempotent semifields, changing notation from $\vee$ to $\dotplus$.
}

\ \\

\subsection{Idempotent semifields: Part 2}

\ \\

The structure of lattice-ordered groups is thoroughly studied in \cite{OrderedGroups1} and \cite{OrderedGroups2}, and
many results given there are applicable for a semifield of fractions $\mathbb{H}(x_1,...,x_n)$ with $\mathbb{H}$ a bipotent semifield and generally for affine semifield extensions of $\mathbb{H}$ (which are just quotients of $\mathbb{H}(x_1,...,x_n)$ by Remark \ref{rem_affine_semifields_as_images}).

The results we introduce in the remainder of this section are derived from known results in the theory
of lattice-ordered groups.

The notions of an $\ell$-group and  normal convex $\ell$-subgroups  correspond to an \linebreak idempotent semifield and their kernels. In our case the idempotent semifield is also commutative, thus normality of an $\ell$-subgroup is insignificant.

%
\ \\

\begin{defn}\label{defn_positive_cone of_fractions}
Let $\mathbb{S}$ be an idempotent semifield (equivalently, an $\ell$-group). Define the \emph{positive cone} of $\mathbb{S}$ to be
$$\mathbb{S}^{+} = \left\{ s \in \mathbb{S} : s \geq 1 \right\}$$ and the negative cone  of $\mathbb{S}$ to be $$\mathbb{S}^{-} = \left\{s^{-1} : s  \in \mathbb{S}^{+} \right\} = \left\{s  \in \mathbb{S} : s  \leq 1 \right\}.$$\\
In the special case of the idempotent semifield $\mathbb{H}(x_1,...,x_n)$, we call $\mathbb{H}(x_1,...,x_n)^{+}$ and $\mathbb{H}(x_1,...,x_n)^{-}$ the \emph{positive cone of fractions} and the \emph{negative cone of fractions} and denote them by $\mathcal{P}^{+}$ and $\mathcal{P}^{-}$, respectively.
\end{defn}

As the following Theorem demonstrates, the partial orders of the group \\ $(\mathbb{H}(x_1,...,x_n), \cdot)$ that make it into a po-group are in one-to-one correspondence with the positive cones of $\mathbb{H}(x_1,...,x_n)$.

\begin{thm}\label{thm_cones_and_orderes}\cite[Theorem (2.1.1)]{OrderedGroups3}
The following statements hold for the positive and negative cones of an idempotent semifield $\mathbb{S}$ (equivalently an $\ell$-group):
\begin{enumerate}
  \item $(\mathbb{S}^{+},\cdot)$ is a (normal) subsemigroup of $\mathbb{S}^{+}$,
  \item $\mathbb{S}^{+} \cap \mathbb{S}^{-1} = \{1\}$,
  \item For every $f,g \in \mathbb{S}$, \ $f \leq g \Leftrightarrow gf^{-1} \in \mathbb{S}^{+}$.
\end{enumerate}
Conversely, if $P$ is a (normal) sub semigroup of \ $\mathbb{S}$ that satisfies (2),
then the relation defined in (3) is a partial order of \ $\mathbb{S}$ which makes $\mathbb{S}$ into a partially ordered group with positive cone $P$.
\end{thm}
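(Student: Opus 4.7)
The plan is to prove the three itemized properties of $\mathbb{S}^{+}$ directly from the definition of the natural order together with compatibility of multiplication, and then verify the converse by constructing the order from $P$ and checking the po-group axioms one by one.

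For the forward direction, property (1) amounts to observing that if $f,g \geq 1$ then, by the order-compatibility of multiplication in a po-group (recorded already for semifields in Remark \ref{rem_natural_order_on_semifield}), $fg \geq 1 \cdot 1 = 1$, so $\mathbb{S}^{+}$ is closed under multiplication; since $\mathbb{S}$ is commutative (our standing assumption), normality is automatic. Property (2) is immediate from antisymmetry of the order: $f \geq 1$ and $f \leq 1$ force $f = 1$. Property (3) is the usual translation: multiplying both sides of $f \leq g$ by $f^{-1}$ (which preserves order by Remark \ref{rem_natural_order_on_semifield}) gives $1 \leq gf^{-1}$, and the converse is obtained by multiplying $1 \leq gf^{-1}$ by $f$. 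None of these steps are substantive; they are bookkeeping.

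For the converse, I would fix a (normal) subsemigroup $P$ of $\mathbb{S}$ satisfying $P \cap P^{-1} = \{1\}$ (so in particular $1 \in P$), and define the relation $f \leq_{P} g \Leftrightarrow gf^{-1} \in P$. Reflexivity follows from $1 = ff^{-1} \in P$. Antisymmetry follows because $f \leq_{P} g$ and $g \leq_{P} f$ give $gf^{-1} \in P$ and $gf^{-1} = (fg^{-1})^{-1} \in P^{-1}$, so $gf^{-1} \in P \cap P^{-1} = \{1\}$, whence $f=g$. Transitivity comes from closure of $P$ under products: $hf^{-1} = (hg^{-1})(gf^{-1}) \in P$. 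Finally, order preservation under left and right multiplication by $h$ uses commutativity (or normality of $P$ in general): $(gh)(fh)^{-1} = gf^{-1} \in P$, and similarly on the left. The positive cone of the resulting po-group is exactly $\{f : 1 \leq_{P} f\} = \{f : f \cdot 1^{-1} \in P\} = P$.

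The only mildly delicate point, and the main thing to flag, is the role of the hypothesis that $P$ is \emph{normal}: in a noncommutative po-group the translation-invariance of the order on both sides requires $h^{-1}Ph \subseteq P$ to conclude $(hg)(hf)^{-1} = hgf^{-1}h^{-1} \in P$ from $gf^{-1} \in P$. In the commutative setting that is the backdrop of this thesis this reduces to triviality, but stating the theorem for a general $\ell$-group makes normality the essential hypothesis. No further obstacle arises, since every verification reduces to semigroup closure, the defining property $P \cap P^{-1} = \{1\}$, and (normal) conjugation invariance.
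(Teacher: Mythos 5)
Your proof is correct and is the standard argument for this classical fact about positive cones of po-groups; the paper itself gives no proof but only cites Theorem (2.1.1) of the reference on ordered groups, so there is nothing in the text to compare against. Your closing remark about the role of normality is well placed: it is indeed vacuous in the commutative setting used throughout the thesis, but is the hypothesis that makes the translation-invariance two-sided in the general $\ell$-group statement.
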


\begin{note}
Theorem \ref{thm_cones_and_orderes} gives another perspective for viewing quotient semifields.
\end{note}

%

\begin{thm}\label{thm_cones_as_generators} \cite[Theorem (2.1.2)]{OrderedGroups3}
Let $G$ be a po-group.
\begin{itemize}
  \item $G$ is totally ordered if and only if  $G = G^{+} \cup G^{-}$.
  \item $G$ is directed if and only if  $G^{+}$ generates $G$. Moreover, if $G^{+}$ generates $G$, then $G =
 G^{+} \cdot G^{-} = \{ a b^{-1} \ : \ a,b \in G^{+} \}$.
  \item $G$ is an $\ell$-group if and only if $G^{+}$ is a lattice and generates $G$ as a group.
\end{itemize}
\end{thm}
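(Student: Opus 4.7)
\textbf{Proof plan for Theorem~\ref{thm_cones_as_generators}.} The plan is to prove the three equivalences one at a time, in each case reducing a statement about pairs of group elements to a statement about single elements via the transformation $(a,b)\mapsto ab^{-1}$, exploiting that multiplication is order-preserving on both sides.

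For the totally ordered case, the forward direction is immediate: every $g \in G$ is comparable with $1$, so $g \in G^{+} \cup G^{-}$. For the converse, given $G = G^{+}\cup G^{-}$ and arbitrary $a,b\in G$, apply the hypothesis to $ab^{-1}$: if $ab^{-1}\in G^{+}$ then $a \geq b$, and if $ab^{-1}\in G^{-}$ then $a \leq b$.

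For the directed case, assume $G$ is directed and let $g\in G$. Choose a common upper bound $c$ of $\{g,1\}$; then $c\in G^{+}$ and $g^{-1}c\geq 1$, hence $g = c(g^{-1}c)^{-1}\in G^{+}\cdot G^{-}$, which establishes both $G=G^{+}\cdot G^{-}$ and the fact that $G^{+}$ generates $G$. Conversely, assuming $G^{+}$ generates $G$, one first notes that $G^{+}$ is conjugation-invariant (if $g\geq 1$ then $hgh^{-1}\geq 1$ because multiplication preserves the order on both sides), so any word in elements of $G^{+}\cup G^{-}$ can be consolidated into the form $ab^{-1}$ with $a,b\in G^{+}$; this yields $G = G^{+}\cdot G^{-}$. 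Then, given $u,v\in G$, write $u = u_1 u_2^{-1}$ and $v = v_1 v_2^{-1}$ with $u_i,v_i\in G^{+}$, and take $c = u_1 v_1$: since $u_2^{-1},v_2^{-1}\leq 1$ and multiplication by a positive element preserves the order, $c$ dominates both $u$ and $v$.

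For the $\ell$-group case, the forward direction is short: if $G$ is an $\ell$-group it is directed (hence $G^{+}$ generates $G$ by the previous step), and for $a,b\in G^{+}$ both $a\vee b$ and $a\wedge b$ lie in $G^{+}$ (since $a\vee b \geq a \geq 1$ and since $1$ is a common lower bound of $a,b$), making $G^{+}$ a lattice in its own right. The converse is the main obstacle, as one must manufacture sups and infs in all of $G$ from those available only in $G^{+}$. The strategy is translation by a sufficiently large positive element: given $a,b\in G$ and writing $a = a_1 a_2^{-1}$, $b = b_1 b_2^{-1}$ via the directed case, choose $d = a_2 b_2\in G^{+}$, so that $ad,bd\in G^{+}$; compute $s = (ad)\vee(bd)$ inside the lattice $G^{+}$, and propose $sd^{-1}$ as $a\vee b$. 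The delicate point is verifying that an arbitrary upper bound $w$ of $\{a,b\}$ in $G$ satisfies $w\geq sd^{-1}$; this works because $wd\geq ad\geq 1$ forces $wd\in G^{+}$, so the inequality $wd\geq s$ follows from the defining property of $s$ inside~$G^{+}$, and right-translating by $d^{-1}$ gives $w\geq sd^{-1}$. A symmetric argument, translating by an element that sends $a,b$ into $G^{-}$, produces $a\wedge b$.
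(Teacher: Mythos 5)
The paper does not prove this theorem — it is cited from the reference \texttt{OrderedGroups3} (Theorem~2.1.2) without an argument — so there is no in-text proof to compare against. Your proposal is a correct, self-contained proof along the standard lines of the lattice-ordered-group literature. All the key steps check out: the totally-ordered equivalence reduces cleanly to comparing $ab^{-1}$ with $1$; in the directed case, the identity $g = c(g^{-1}c)^{-1}$ (with $c$ an upper bound of $\{g,1\}$) gives $G = G^{+}G^{-}$, and the converse correctly relies on the normality of $G^{+}$ (conjugation preserves positivity) to consolidate words into the form $ab^{-1}$, after which $c=u_1v_1$ visibly dominates both $u$ and $v$; in the $\ell$-group case, translating $a,b$ into $G^{+}$ by $d=a_2b_2$ and pulling back the join computed in $G^{+}$ is exactly the right device, and your verification that an arbitrary upper bound $w$ satisfies $wd\in G^{+}$ (because $wd \geq ad \geq 1$) is the crux. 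One small remark: for the final meet construction you need not run a symmetric translation argument — once binary joins exist in $G$, the group identity $a\wedge b = \left(a^{-1}\vee b^{-1}\right)^{-1}$ (valid in any po-group as soon as the right-hand side exists) produces all binary meets for free, which shortens the proof and is the route the standard references take.
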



\begin{cor}
$\mathcal{P}^{+}$ is a lattice and generates $\mathbb{H}(x_1,...,x_n)$.
\end{cor}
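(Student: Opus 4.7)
\vspace{1ex}

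The plan is to recognise this as an immediate application of the third bullet of Theorem \ref{thm_cones_as_generators} once we verify that $\mathbb{H}(x_1,\ldots,x_n)$ fits into the $\ell$-group framework. First I would observe that since $\mathbb{H}$ is (bipotent and hence) idempotent, the polynomial semiring $\mathbb{H}[x_1,\ldots,x_n]$ is idempotent under the pointwise operations (as noted in Example \ref{exmp_frob_not_bipotent} and Note \ref{note_semiring_of_functions}), and therefore so is its semifield of fractions $\mathbb{H}(x_1,\ldots,x_n)$.

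Next I would invoke Proposition \ref{prop_l-group_idempotent_semifield_corr_1} to translate this idempotent commutative semifield into an $\ell$-group $(\mathbb{H}(x_1,\ldots,x_n), \leq, \cdot)$ whose lattice operations are $f \vee g = f \dotplus g$ and $f \wedge g = (f^{-1} \dotplus g^{-1})^{-1}$, with the natural order of Remark \ref{rem_natural_order_on_semifield}. Under this correspondence, the positive cone $\mathcal{P}^{+} = \mathbb{H}(x_1,\ldots,x_n)^{+}$ of Definition \ref{defn_positive_cone of_fractions} is precisely the positive cone in the $\ell$-group sense.

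Finally I would apply the third bullet of Theorem \ref{thm_cones_as_generators}: since $\mathbb{H}(x_1,\ldots,x_n)$ is an $\ell$-group, its positive cone $\mathcal{P}^{+}$ is a lattice and generates it as a group, which is exactly the assertion. As a sanity check on the ``generates'' part, any $f \in \mathbb{H}(x_1,\ldots,x_n)$ can be written as $f = (f \dotplus 1) \cdot (f^{-1} \dotplus 1)^{-1}$ with both $f \dotplus 1$ and $f^{-1} \dotplus 1$ in $\mathcal{P}^{+}$, which witnesses $\mathcal{P}^{+} \cdot (\mathcal{P}^{+})^{-1} = \mathbb{H}(x_1,\ldots,x_n)$. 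There is essentially no obstacle here; the only subtlety worth mentioning is ensuring that the order on $\mathbb{H}(x_1,\ldots,x_n)$ used in the two theorems coincides, and this is guaranteed by the equivalence in part (2) of Proposition \ref{prop_l-group_idempotent_semifield_corr_1} together with the natural order description in Remark \ref{rem_natural_order_on_semifield}.
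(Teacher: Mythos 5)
Your proof is correct and takes essentially the same route as the paper: the paper also just applies Theorem \ref{thm_cones_as_generators} directly, merely recording what ``lattice'' means for the positive cone. Your additional remarks — the explicit check that $\mathbb{H}(x_1,\ldots,x_n)$ is idempotent hence an $\ell$-group, and the decomposition $f = (f \dotplus 1)(f^{-1} \dotplus 1)^{-1}$ witnessing generation — are accurate (the latter follows from the identity $f(f \wedge 1)^{-1} = f \dotplus 1$ in Remark \ref{rem_wedge_vee_operations_relations}) but are elaborations the paper leaves implicit rather than a different argument.
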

\begin{proof}
Note that $\mathcal{P}^{+}$ being a lattice means that for any $f,g \in \mathcal{P}^{+}$ both $f \wedge g$ and $f \dotplus g$ (i.e., $f \vee g$) are in $\mathcal{P}^{+}$. The assertion follows directly from Theorem \ref{thm_cones_as_generators}.
\end{proof}
%

\begin{defn}
A lattice  $L$ is infinitely distributive if whenever $\{ x_i \}$ is a subset of $L$ for which
$\bigvee x_i$ exists, then, for each $y \in L$,
$\bigvee (y \wedge x_i)$ exists and $$y \wedge \bigvee x_i=\bigvee (y \wedge x_i).$$ The dual
also holds.
\end{defn}

The following is a consequence of $\mathbb{H}(x_1,...,x_n)$ being an $\ell$-group.
\begin{prop}\cite[Theorem (2.1.3)]{OrderedGroups3}
\begin{enumerate}
  \item $\mathbb{H}(x_1,...,x_n)$ is infinitely distributive and hence is distributive.
  \item $\mathbb{H}(x_1,...,x_n)$ satisfies the property  $f^k \geq 1 \Rightarrow f \geq 1$ for any $f \in \mathbb{H}(x_1,...,x_n)$ and any integer $k \geq 1$, hence it has no nonzero elements of finite order.
\end{enumerate}
\end{prop}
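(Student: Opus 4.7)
The plan is to exploit the identification of $\mathbb{H}(x_1,\dots,x_n)$ with a commutative $\ell$-group via Proposition~\ref{prop_l-group_idempotent_semifield_corr_1}, under which $\dotplus$ corresponds to the lattice join $\vee$. Both assertions are standard facts in the theory of commutative $\ell$-groups, and I would derive each by straightforward manipulations of the order-group identities, without appealing to the particular polynomial structure of the semifield of fractions.

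For part~(1), the starting point is that in a po-group left multiplication by any fixed element is an order isomorphism, so it preserves every existing supremum and infimum: $c\cdot\bigvee_{i} x_{i}=\bigvee_{i}(c x_{i})$ whenever the left-hand side exists, and dually for meets; inversion reverses order, yielding $\bigl(\bigvee_{i} x_{i}\bigr)^{-1}=\bigwedge_{i} x_{i}^{-1}$. To prove $y\wedge\bigvee_{i} x_{i}=\bigvee_{i}(y\wedge x_{i})$, set $s=\bigvee_{i} x_{i}$; the inequality $\bigvee_{i}(y\wedge x_{i})\leq y\wedge s$ is immediate from monotonicity. For the reverse I would invoke the commutative identity $ab=(a\vee b)(a\wedge b)$ (equivalently $a\wedge b=ab(a\vee b)^{-1}$) to rewrite $y\wedge x_{i}=yx_{i}(y\vee x_{i})^{-1}$ and $y\wedge s=ys(y\vee s)^{-1}$, and then combine with the translation identity applied to the family $\{y\vee x_{i}\}$, whose supremum is $y\vee s$. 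Finite distributivity is then a special case.

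For part~(2), I would use the decomposition $f=f^{+}\cdot f^{-}$ with $f^{+}=f\dotplus 1\geq 1$ and $f^{-}=f\wedge 1\leq 1$, which follows from $ab=(a\vee b)(a\wedge b)$ with $b=1$. These components are \emph{orthogonal} in the sense $f^{+}\wedge(f^{-})^{-1}=1$: using Definition~\ref{defn_max_semifield} and part~(1), the product $f^{+}\cdot(f^{-})^{-1}$ equals $f\vee f^{-1}\vee 1$, and reapplying $xy=(x\vee y)(x\wedge y)$ to $x=f^{+}$, $y=(f^{-})^{-1}$ forces $x\wedge y=1$. By commutativity, $f^{k}=(f^{+})^{k}(f^{-})^{k}$, and a standard induction using the distributive laws of part~(1) shows that orthogonality persists under taking powers: $(f^{+})^{k}\wedge(f^{-})^{-k}=1$. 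The hypothesis $f^{k}\geq 1$ now reads $(f^{-})^{-k}\leq(f^{+})^{k}$, which combined with orthogonality forces $(f^{-})^{-k}=1$, hence $(f^{-})^{k}=1$. Since $f^{-}\leq 1$ gives $(f^{-})^{k}\leq f^{-}\leq 1$, we conclude $f^{-}=1$ and $f=f^{+}\geq 1$. The torsion-free statement then drops out: $f^{k}=1$ applied to both $f$ and $f^{-1}$ yields $f\geq 1$ and $f^{-1}\geq 1$, so $f=1$.

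The main obstacle is the reverse inequality in~(1): unlike finite distributivity, extending it to arbitrary families cannot be proved by induction and requires careful coordination of the translation identities with the modular identity $ab=(a\vee b)(a\wedge b)$, in particular handling the family $\{y\vee x_{i}\}$ whose supremum one must identify with $y\vee s$. A secondary subtlety is the persistence of orthogonality under powers used in~(2); this genuinely uses commutativity of the $\ell$-group and is the step most likely to require appeal to a general $\ell$-group lemma rather than a one-line manipulation.
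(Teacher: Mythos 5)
The paper gives no proof of this proposition; it is cited directly from \cite[Theorem (2.1.3)]{OrderedGroups3} via the correspondence between idempotent semifields and $\ell$-groups. You are supplying the underlying $\ell$-group argument, which is the right thing to do and in outline is correct. For part (2), your decomposition $f=f^{+}f^{-}$ with $f^{+}=f\dotplus 1$, $f^{-}=f\wedge 1$, the orthogonality $f^{+}\wedge(f^{-})^{-1}=1$, and the conclusion via $u\wedge v=1$ and $u\leq v\Rightarrow u=1$ are all sound; I will just note that the paper already has these ingredients in place (Remark~\ref{rem_absolute_operators_relations} gives $|f|_{\geq}\wedge|f|_{\leq}=1$, and Remark~\ref{rem_Riesz and disjoint}(4) gives $f\wedge g=1,\ f\wedge h=1\Rightarrow f\wedge gh=1$), and that for a proper semifield the torsion-freeness also has a one-line algebraic proof not going through $\ell$-groups at all, namely Remark~\ref{rem_torsion_free}.

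Two points to sharpen. First, in part (1), the step you flag as the main obstacle is indeed where your sketch is incomplete: rewriting both sides as $y\wedge x_i = yx_i(y\vee x_i)^{-1}$ and $y\wedge s = ys(y\vee s)^{-1}$ does not by itself let you pass to a supremum, because the factor $(y\vee x_i)^{-1}$ varies with $i$. The way to close this is to use the other half of the identity: after reducing to $y=1$ (so one wants $z\wedge 1\leq w$ where $z=\bigvee z_i$ and $w=\bigvee(z_i\wedge 1)$), write $z_i=(z_i\wedge 1)(z_i\vee 1)\leq w(z\vee 1)$, take the supremum over $i$ to get $z\leq w(z\vee 1)$, and then multiply by $(z\vee 1)^{-1}$ to get $z\wedge 1=z(z\vee 1)^{-1}\leq w$. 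The essential move is the uniform upper bound $z_i\vee 1\leq z\vee 1$, which is what your phrase "combine with the translation identity applied to the family $\{y\vee x_i\}$" is gesturing at but does not pin down. Second, your claim that orthogonality persists under powers "by a standard induction using the distributive laws of part~(1)" is slightly off: what one actually uses is the Riesz-type inequality $f\wedge gh\leq (f\wedge g)(f\wedge h)$ for positives (which in the paper appears as Remark~\ref{rem_Riesz and disjoint}(3)), not infinite distributivity of the lattice. These are both repairable, and once fixed your proof is a correct and complete derivation of the cited theorem.
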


\begin{rem}\cite[Chapter 2]{OrderedGroups1}\label{rem_wedge_vee_operations_relations}
For any $f,g \in \mathbb{H}(x_1,...,x_n)$ the following equalities hold:
\begin{enumerate}
  \item $(f + g)^{-1} = f^{-1} \wedge g^{-1}$,
  \item $(f \wedge g)^{-1} = f^{-1} + g^{-1}$,
  \item $ f (f + g)^{-1} g = f \wedge g$,
  \item $f (f \wedge g)^{-1} g = f + g$,
  \item $(f (f \wedge g)^{-1}) \wedge (g (f \wedge g)^{-1}) = 1$,
  \item $|f g^{-1}| = (f + g)(f \wedge g)^{-1}$.
\end{enumerate}
\end{rem}

\begin{lem}\label{lem_plus_wedge_distributive_relation}
If $\mathbb{H}$ is bipotent then $\dotplus$ is distributive over $\wedge$ in $\mathbb{H}(x_1,...,x_n)$, i.e.,
for any $f,g,h \in \mathbb{H}(x_1,...,x_n)$ the following equality holds:
\begin{equation}\label{eq_distributive_identity_of_wedge_and_plus}
f \dotplus ( g \wedge h) = (f \dotplus g) \wedge (f \dotplus h).
\end{equation}
\end{lem}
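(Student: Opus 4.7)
The plan is to reduce the identity to a pointwise verification in the bipotent base semifield $\mathbb{H}$, exploiting the function-semiring viewpoint adopted in Note \ref{note_semiring_of_functions}. Under that convention, every element of $\mathbb{H}(x_1,\ldots,x_n)$ is a function $\mathbb{H}^n \to \mathbb{H}$, and both of the operations appearing in the lemma are computed pointwise: for all $a \in \mathbb{H}^n$ one has $(p \dotplus q)(a) = p(a) \dotplus q(a)$, while by the definition $p \wedge q = (p^{-1} \dotplus q^{-1})^{-1}$ (Definition \ref{defn_max_semifield}) one also has $(p \wedge q)(a) = p(a) \wedge q(a)$, since the multiplicative inverse in the function semifield is taken pointwise.

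Consequently, evaluating both sides of \eqref{eq_distributive_identity_of_wedge_and_plus} at a point $a \in \mathbb{H}^n$ reduces the lemma to proving the scalar identity
\begin{equation*}
\alpha \dotplus (\beta \wedge \gamma) = (\alpha \dotplus \beta) \wedge (\alpha \dotplus \gamma)
\end{equation*}
for all $\alpha, \beta, \gamma \in \mathbb{H}$, where $\alpha = f(a)$, $\beta = g(a)$, $\gamma = h(a)$. I would then verify this identity by a short case analysis: since $\mathbb{H}$ is bipotent, it is totally ordered, so $\dotplus$ coincides with $\max$ and $\wedge$ coincides with $\min$. Assuming without loss of generality $\beta \leq \gamma$, the left-hand side equals $\alpha \dotplus \beta$, and since then $\alpha \dotplus \beta \leq \alpha \dotplus \gamma$, the right-hand side also equals $\alpha \dotplus \beta$. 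The symmetric case $\gamma \leq \beta$ is identical, completing the scalar argument.

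Since the scalar identity holds at every point $a \in \mathbb{H}^n$, the two functions $f \dotplus (g \wedge h)$ and $(f \dotplus g) \wedge (f \dotplus h)$ agree on $\mathbb{H}^n$, hence are equal in $\mathbb{H}(x_1,\ldots,x_n)$ under the identification of Note \ref{note_semiring_of_functions}. I do not anticipate a real obstacle here; the only mildly delicate point is justifying that the algebraically defined $\wedge$ truly evaluates as the pointwise infimum, but this is immediate from the pointwise inverse in $Fun(\mathbb{H}^n,\mathbb{H})$ combined with the formula $f \wedge g = (f^{-1} \dotplus g^{-1})^{-1}$. An alternative, more abstract route would be to invoke the fact (recorded in Proposition \ref{prop_convex_l_subgroups_distributive_lattice} and reaffirmed in the subsequent discussion) that every $\ell$-group has a distributive underlying lattice, so in particular $\dotplus$ distributes over $\wedge$; however, the pointwise proof is more elementary and makes transparent where the bipotency hypothesis on $\mathbb{H}$ is used.
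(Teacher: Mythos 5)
Your proof is correct and follows essentially the same route as the paper: both reduce the identity to a pointwise check in the totally ordered bipotent semifield $\mathbb{H}$ and close with a short case analysis (the paper enumerates five configurations of $f(x),g(x),h(x)$; you streamline this to two via the WLOG assumption $\beta \le \gamma$). Your closing remark that distributivity already holds in any $\ell$-group is a fair observation that the bipotency hypothesis is stronger than necessary, but since you present the pointwise argument as the main proof, the two are substantively the same.
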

\begin{proof}
We can rewrite \eqref{eq_distributive_identity_of_wedge_and_plus} as $\max(f,\min(g,h)) = \min(\max(f,g),\max(f,h))$.
Let $x \in \mathbb{H}^n$. If $f(x) \geq g(x),h(x)$, $h(x) \geq f(x) \geq g(x) $ or $g(x) \geq f(x) \geq h(x) $, then both sides of the equality are equal to $f(x)$. If $g(x) \geq h(x) \geq f(x)$  then both sides of the equality are equal to $h(x)$. If $h(x) \geq g(x) \geq f(x)$  then both sides of the equality are equal to $g(x)$.\\
As the above holds for every $x \in \mathbb{H}^n$, \eqref{eq_distributive_identity_of_wedge_and_plus} is an identity.
\end{proof}

\begin{rem}\label{rem_wedge_is gcd}
Let $g,h \in \mathbb{H}(x_1,...,x_n)$. For every $f \in \mathbb{H}(x_1,...,x_n)$ such that \linebreak $f \geq |g| \wedge |h|$ (thus $f \geq 1$ and $f = |f|$), if $f \leq |g|$ and $f \leq |h|$ then $f = |g| \wedge |h|$.  \\
Indeed, $f = f \dotplus   (|g| \wedge |h|) = (f \dotplus |g|) \wedge (f \dotplus |h|) = |g| \wedge |h|$. The first equality from the right follows the assumption that $f \geq |g| \wedge |h|$, the second follows Lemma \ref{lem_plus_wedge_distributive_relation} and the last follows the assumptions $|f| \leq |g|$ and $|f| \leq |h|$.
\end{rem}

\begin{defn}\label{defn_max_semifield_of_fractions_operations}
For any $f \in \mathbb{H}(x_1,...,x_n)$ we define
\begin{equation}
|f|_{\geq}(x) = f(x) \dotplus 1 = \begin{cases}
1  &  \    f(x) \leq  1 ;\\
f(x)  & \ f(x) > 1,
\end{cases}
\end{equation}
and
\begin{equation}
|f|_{\leq}(x) = (f(x) \wedge 1)^{-1} = \begin{cases}
f(x)^{-1}  &  \    f(x) \leq  1 ; \\
1  & \ f(x) > 1.
\end{cases}
\end{equation}
By definition $|f|_{\geq},|f|_{\leq} \in \mathbb{H}(x_1,...,x_n)$.
\end{defn}
\ \\

\begin{rem}
By Remark \ref{rem_wedge_vee_operations_relations} we have that
$$|f|_{\leq} = (f \wedge 1)^{-1} = f^{-1} \dotplus 1.$$
\end{rem}

\ \\

\begin{rem}\cite[Chapter 2]{OrderedGroups1}
For any $f,g \in \mathbb{H}(x_1,...,x_n)$ the following statements hold:
\begin{enumerate}
  \item $|fg|_{\geq} \leq |f|_{\geq} |g|_{\geq}$.
  \item $|fg|_{\geq} = f(f \wedge g^{-1})^{-1}$.
  \item $|f^{n}|_{\geq} = |f|_{\geq}^n$ and $|f^{n}|_{\leq} = |f|_{\leq}^n$ for any $n \in \mathbb{N}$.
\end{enumerate}
\end{rem}

\ \\

\begin{rem}\label{rem_absolute_operators_relations}
For any $f,g \in \mathbb{H}(x_1,...,x_n)$ the following statements hold:
\begin{enumerate}
  \item $|f|_{\geq} \cdot |f|_{\leq}^{-1}  = f$.
  \item $|f|_{\geq} \cdot |f|_{\leq} = |f|_{\geq} \dotplus |f|_{\leq} = |f|$.
  \item $|f|_{\geq} \wedge |f|_{\leq} = 1$.
  \item $| | f | | = |f|$, $||f| \wedge |g|| = |f| \wedge |g|$, $||f| \dotplus |g|| = |f| \dotplus |g|$.
  \item $| |f|_{\geq} | = |f|_{\geq}$, $| |f|_{\leq} | = |f|_{\leq}$.
  \item $\langle f \rangle = \langle |f| \rangle$.
  \item $|f|^{k} = |f^{k}|$ for any $k \in \mathbb{N}$.

\end{enumerate}
\end{rem}
\begin{proof}
1. Since $f \cdot |f|_{\leq}^{-1}  = f(f \wedge 1)^{-1} = f ( f^{-1} + 1) = 1 + f = |f|_{\geq}$ we have that  $|f|_{\geq} \cdot |f|_{\leq}  = f$.\\
3. $ |f|_{\geq} \wedge |f|_{\leq}^{-1} = (|f|_{\geq} |f|_{\leq} |f|_{\geq}^{-1}) \wedge |f|_{\geq}^{-1} = (|f|_{\geq}|f|_{\leq} \wedge 1 ) |f|_{\geq}^{-1} = (f \wedge 1 ) |f|_{\geq}^{-1} = |f|_{\geq}|f|_{\geq}^{-1} = 1$.
The sixth statement follows from Proposition \ref{prop_absolute_generator}. The last statement is due to the fact that $|f(x)|^{k}= (f(x) + (f(x))^{-1})^{k} = f(x)^k + (f(x))^{-k}$ for any $x \in \mathbb{H}^n$.
The rest of the statements are obvious and can be found in chapter 2 of \cite{OrderedGroups1}.
\end{proof}

\ \\

\begin{rem}\label{rem_Riesz and disjoint}
Suppose $f,g,h \in \mathbb{H}(x_1,...,x_n)$
\begin{enumerate}
  \item The following are equivalent:
    \begin{itemize}
      \item $f \wedge g = 1$.
      \item $f g = f + g$.
      \item $f = |f g^{-1}|_{\geq}$ \ and \ $g = |f g^{-1}|_{\leq}$.
    \end{itemize}

  \item (The Riesz decomposition property) If $f,g,h \in \mathcal{P}^{+}$ and $f \leq g h$, then $f = g' h'$ where $1 \leq g' \leq g$ and $1 \leq h' \leq h$.
  \item If $f,g,h \in \mathcal{P}^{+}$, then $f \wedge (g h) \leq  (f \wedge g)(f \wedge h)$.
  \item If $f \wedge g = 1$ and $f \wedge h = 1$, then $f \wedge (g h) = 1$.
\end{enumerate}
\end{rem}
\begin{proof}
We will only give the proof for the Riesz decomposition property. For all other properties
see \cite{OrderedGroups3} (2.1.4).
Let $g' = f \wedge g$ and $h' = (g')^{-1} f$. Then $1 \leq  g' \leq g$,  $h' = (f^{-1} + g^{-1})f = 1 + g^{-1}f \leq h$, $h' \geq 1$, and $f = g' h'$.
\end{proof}

\begin{rem}
For any $f,g \in \mathbb{H}(x_1,...,x_n)$ the following relations hold:
\begin{enumerate}
  \item $|f + g | \leq |f| + |g|, \ |f| \cdot |g|$.
  \item $| f g^{-1} | = (f + g)(f \wedge g)^{-1}$.
  \item $| f g | \leq |f| \cdot |g|$.
\end{enumerate}
\end{rem}
\begin{proof}
See \cite{OrderedGroups1}, Chapter 2. We note that the third statement is true due to the \linebreak commutativity
of $\mathbb{H}(x_1,...,x_n)$.
\end{proof}

%

\ \\
\subsubsection{Structure of affine extensions of a bipotent semifield}

\ \\

Due to the correspondence between idempotent semifields and $\ell$-groups, we can apply
Theorems \ref{thm_l_group_main_1} and \ref{thm_l_group_main_2} to $\mathbb{H}(x_1,...,x_n)$ and deduce

\begin{cor}\label{cor_l_group_main_deduction}
$\mathbb{H}(x_1,...,x_n)$ is a subdirect product of simple semifields.
\end{cor}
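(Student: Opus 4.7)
The plan is to lift the statement via the idempotent-semifield/$\ell$-group dictionary established in Propositions \ref{prop_l-group_idempotent_semifield_corr_1} and \ref{prop_l-group_idempotent_semifield_corr_2} and then invoke Clifford's theorem. Concretely, since $\mathbb{H}$ is bipotent (hence idempotent) and $\mathbb{H}(x_1,\ldots,x_n)$ is idempotent over $\mathbb{H}$, the dictionary lets us view $\mathbb{H}(x_1,\ldots,x_n)$ as a commutative $\ell$-group in which the convex $\ell$-subgroups are exactly the semifield kernels. Because a semifield map is the same data as a $\vee$-preserving group homomorphism, subdirect product representations transfer identically in both directions: a subdirect product of commutative $\ell$-groups, in the sense of Definition \ref{defn_subdirect_prod}, is precisely a subdirect product in the category of idempotent semifields.

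The central step is to apply Clifford's theorem (Theorem \ref{thm_l_group_main_2}) to $\mathbb{H}(x_1,\ldots,x_n)$ viewed as a commutative $\ell$-group. This produces a family $\{G_i\}_{i\in I}$ of subdirectly irreducible linearly ordered $\ell$-groups together with a subdirect embedding $\mathbb{H}(x_1,\ldots,x_n)\xrightarrow[s.d.]{}\prod_{i\in I}G_i$. Translating back through the dictionary, each $G_i$ is a totally (linearly) ordered idempotent semifield, i.e., a bipotent semifield, and the embedding realizes $\mathbb{H}(x_1,\ldots,x_n)$ as a subdirect product of these bipotent semifield factors. Remark \ref{rem_order_simple} then identifies each such factor as a simple semifield, completing the deduction.

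The main delicacy I anticipate is that Remark \ref{rem_order_simple} in fact uses the archimedean property (the step that covers any $b$ by some power $a^m$), whereas subdirect irreducibility of a linearly ordered $\ell$-group does not automatically force archimedicity; one must argue that in the present context the relevant factors inherit archimedicity from $\mathbb{H}$, or, failing that, refine each factor further by intersecting with a maximal chain of convex $\ell$-subgroups whose quotients are archimedean. A fallback route which avoids Clifford altogether is to use Proposition \ref{prop_maximal_kernels_in_semifield_of_fractions_part1} to exhibit the family of maximal kernels $M_\gamma=\langle x_1/\gamma_1,\ldots,x_n/\gamma_n\rangle$ for $\gamma\in\mathbb{H}^n$: since $\mathbb{H}(x_1,\ldots,x_n)$ is identified with functions on $\mathbb{H}^n$ (Note \ref{note_semiring_of_functions}), any $f\neq 1$ is nonconstant and so escapes some $M_\gamma$, giving $\bigcap_\gamma M_\gamma=\{1\}$ and hence a subdirect embedding into the simple quotients $\mathbb{H}(x_1,\ldots,x_n)/M_\gamma\cong\mathbb{H}$ via Corollary \ref{cor_max_ker_simple_corr}.
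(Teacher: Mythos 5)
Your main line of attack (apply the idempotent-semifield/$\ell$-group dictionary, invoke Clifford's theorem, read off that the factors are linearly ordered, then cite Remark \ref{rem_order_simple}) is exactly the route the paper takes. The paper's proof is two sentences: it notes that kernels of an idempotent semifield are idempotent semifields, then asserts that ``linearly (totally) ordered $\ell$-groups have no proper convex $\ell$-subgroups and thus correspond to semifield having no proper kernels, i.e., simple semifields,'' pointing to Remark \ref{rem_order_simple}. So structurally you and the paper agree; the interesting thing is that you have put your finger on a genuine soft spot in that argument. A totally ordered $\ell$-group is not automatically simple --- $\mathbb{Z}\times\mathbb{Z}$ with the lexicographic order is totally ordered yet has the proper convex $\ell$-subgroup $\{0\}\times\mathbb{Z}$ --- and Remark \ref{rem_order_simple} indeed supplies archimedicity (it is where the step ``for every $b$ there is $m$ with $a^{-m}\leq b\leq a^m$'' lives). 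Clifford's theorem gives \emph{subdirectly irreducible} linearly ordered factors, i.e.\ ones with a unique minimal nontrivial convex $\ell$-subgroup, which is still weaker than simple. The paper does not address this, and you are right to flag it; your observation that one would need the factors to be archimedean (or to refine the decomposition) is the correct diagnosis.

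Your proposed fallback --- avoiding Clifford entirely and using the evaluation kernels $M_\gamma=\langle x_1/\gamma_1,\ldots,x_n/\gamma_n\rangle$ together with Corollary \ref{cor_max_ker_simple_corr} and the functional identification from Note \ref{note_semiring_of_functions} --- is sound, and it is in fact how the paper itself handles the refined version of this statement later on, in Proposition \ref{prop_sub_direct_prod_of_semifield_of_fractions}, where the intersection $\bigcap_{a\in\overline{\mathbb{H}}^n}L_a=\{1\}$ is taken over the divisible closure so that the functional identification is faithful. That proposition also flags the one subtlety your sketch elides: if $\mathbb{H}$ is not divisible, an $f\neq 1$ in $\mathbb{H}(x_1,\ldots,x_n)$ could in principle evaluate to $1$ at every $\mathbb{H}$-rational point, so one should either take the index set to be $\overline{\mathbb{H}}^n$ (getting bipotent archimedean --- hence simple --- quotients $\mathbb{H}(a_1,\ldots,a_n)$ rather than $\mathbb{H}$ itself) or explicitly assume $\mathbb{H}$ divisible, as the paper does elsewhere. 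With that adjustment your alternative argument is cleaner and more self-contained than the Clifford route, and it closes the gap you identified.
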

\begin{proof}
First note that any kernel of an idempotent semifield is itself an idempotent semifield.
Now, linearly (totally) ordered $\ell$-groups have no proper convex $\ell$-subgroups and thus correspond to  semifield having no proper kernels, i.e., simple semifields (see Remark \ref{rem_order_simple}).
\end{proof}

\begin{rem}\cite[Lemmas (3.1.8),(3.1.16)]{Mark}\cite[Sec. 2.1]{OrderedGroups3}\label{rem_divisible_hull}
For a torsion free partially ordered abelian group $G$ there exists a group $\overline{G}$ which is the smallest divisible group containing $G$ extending its order. If $G$ is lattice ordered, directed or totally ordered then so is $\overline{G}$, lattice ordered, directed or totally ordered, respectively. $\overline{G}$ is called the \emph{divisible hull} of $G$.
\end{rem}

\begin{defn}
Viewing an idempotent semifield $\mathbb{H}$ as an $\ell$-group, we define the divisible closure $\overline{\mathbb{H}}$ of $\mathbb{H}$ to be its divisible hull. By the above, $\overline{\mathbb{H}}$ is also an idempotent semifield (as it is an $\ell$-group).
\end{defn}

The following two results will be considered and proved in our subsequent discussions:

\begin{prop}\label{prop_sub_direct_prod_of_semifield_of_fractions}
Let $\mathbb{H}$ be a bipotent semifield. As we have previously shown, any bipotent semifield is totally (linearly) ordered and thus a simple semifield.\\
For each point $a = (\alpha_1,...,\alpha_n) \in \overline{\mathbb{H}}^n$, let $$L_a = \langle (\beta_1)^{-1} x_1^{k_1} ,...., (\beta_n)^{-1}x_n^{k_n} \rangle$$ where $k_i \in \mathbb{N}$ is minimal such that $\beta_i = (\alpha_i)^{k_i} \in \mathbb{H}$.  Then $L =\bigcap_{ \ a \in \overline{\mathbb{H}}^n}L_a = \{1\}$ and $$\mathbb{H}(x_1,...,x_n) = \mathbb{H}(x_1,...,x_n)/L \xrightarrow[s.d]{} \prod_{ \ a \in \overline{\mathbb{H}}^n}Q_a$$ where  $Q_a = \mathbb{H}(x_1,...,x_n)/L_a$. Each $Q_a$ is an affine bipotent (thus totally ordered) semifield extension of $\mathbb{H}$ namely $\mathbb{H}(\alpha_1,...,\alpha_n)$ which is a simple.
In the case where $\mathbb{H}$ is divisible, we have each $Q_a \cong \mathbb{H}$.
\end{prop}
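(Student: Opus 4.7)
My plan is to identify each $L_a$ as the kernel of a substitution homomorphism and then use the function-theoretic interpretation of $\mathbb{H}(x_1,\dots,x_n)$ to conclude that the intersection of all such kernels is trivial.

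First, I would invoke Remark \ref{rem_divisibe_extensions}: for $a=(\alpha_1,\dots,\alpha_n)\in\overline{\mathbb{H}}^n$ with $k_i$ minimal so that $\beta_i=\alpha_i^{k_i}\in\mathbb{H}$, the substitution map $\phi_a:\mathbb{H}(x_1,\dots,x_n)\to\mathbb{H}(\alpha_1,\dots,\alpha_n)$ sending $x_i\mapsto\alpha_i$ is an epimorphism whose kernel is exactly $L_a=\langle\beta_1^{-1}x_1^{k_1},\dots,\beta_n^{-1}x_n^{k_n}\rangle$. By the first isomorphism theorem, $Q_a=\mathbb{H}(x_1,\dots,x_n)/L_a\cong\mathbb{H}(\alpha_1,\dots,\alpha_n)$, an affine extension of $\mathbb{H}$ sitting inside $\overline{\mathbb{H}}$. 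Since the divisible hull of a totally ordered archimedean $\ell$-group is again totally ordered and archimedean (Remark \ref{rem_divisible_hull}), $\overline{\mathbb{H}}$ and hence $Q_a$ is bipotent archimedean; by Remark \ref{rem_order_simple} it is simple. When $\mathbb{H}$ itself is divisible, $\overline{\mathbb{H}}=\mathbb{H}$, each $\alpha_i$ already lies in $\mathbb{H}$, so $Q_a\cong\mathbb{H}$.

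Next I would show $L=\bigcap_{a\in\overline{\mathbb{H}}^n}L_a=\{1\}$. Take $f\in L$; then $\phi_a(f)=1$ for every $a\in\overline{\mathbb{H}}^n$, i.e., $f(a)=1$ when $f$ is regarded (via the natural evaluation of its polynomial numerator and denominator on $\overline{\mathbb{H}}^n$) as a function on $\overline{\mathbb{H}}^n$. By Note \ref{note_semiring_of_functions}, $\mathbb{H}(x_1,\dots,x_n)$ is identified with its image in the semiring of functions; since $\overline{\mathbb{H}}^n\supseteq\mathbb{H}^n$, the identity $f\equiv 1$ on $\overline{\mathbb{H}}^n$ forces $f=1$ as an element of $\mathbb{H}(x_1,\dots,x_n)$. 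I expect this to be the main technical obstacle, since it relies on the function-theoretic identification genuinely separating distinct rational expressions; once admitted via the standing convention, the argument closes cleanly.

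Finally, to obtain the subdirect product representation, I would apply Remark \ref{rem_sub_direct_prod_construction}: the family $\{L_a\}_{a\in\overline{\mathbb{H}}^n}$ induces the diagonal $\ell$-homomorphism
\[
\Phi:\mathbb{H}(x_1,\dots,x_n)\longrightarrow\prod_{a\in\overline{\mathbb{H}}^n}Q_a,\qquad f\mapsto(\phi_a(f))_{a},
\]
whose kernel is exactly $L=\{1\}$, so $\Phi$ is injective; and each composite $\pi_a\circ\Phi=\phi_a$ is surjective by the first step. These are precisely the data required by Definition \ref{defn_subdirect_prod}, yielding $\mathbb{H}(x_1,\dots,x_n)\xrightarrow[s.d]{}\prod_{a\in\overline{\mathbb{H}}^n} Q_a$ as claimed.
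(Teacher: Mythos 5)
Your argument follows essentially the same route as the paper's: identify $L_a$ with the kernel of the substitution map via Remark \ref{rem_divisibe_extensions}, use the function-theoretic identification to conclude that $f$ vanishing identically on $\overline{\mathbb{H}}^n$ forces $f=1$, and then invoke Remark \ref{rem_sub_direct_prod_construction} for the subdirect product. You simply spell out the last step, which the paper leaves implicit, in more detail.
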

\begin{proof}
Let $f \in L =\bigcap_{ \ a \in \overline{\mathbb{H}}^n}L_a$. Then for each $a \in \overline{\mathbb{H}}^n$ since $f \in L_a$ we have that \linebreak $f(a) = 1$. Thus, since $\overline{\mathbb{H}}$ is divisible and $f$ coincides with $1$ over $\overline{\mathbb{H}}^n$, we have that $f = 1$. By Remark \ref{rem_divisibe_extensions}, we have that $Q_a \cong \mathbb{H}(\alpha_1,...,\alpha_n)$ is thus a bipotent semifield (extending $\mathbb{H}$).
\end{proof}

\begin{rem}
The analogous construction holds for the subsemifield  $\langle\mathbb{H}\rangle$ of \\
$\mathbb{H}(x_1,...,x_n)$ which is the principal kernel $\langle |\gamma| \rangle$ with $\gamma$ any element of $\mathbb{H}\setminus \{1\}$,
taking $$L_a \cap \langle \mathbb{H} \rangle = \langle |(\beta_1)^{-1} x_1^{k_1}| \wedge |\gamma| ,...., |(\beta_n)^{-1}x_n^{k_n}| \wedge |\gamma| \rangle$$
with $\gamma \in \mathbb{H} \setminus \{1\}$.
\end{rem}

\ \\

\subsection{Archimedean idempotent semifields}

\ \\

All the statements introduced in this section were stated originally for additive \linebreak $\ell$-groups and have been translated by us to the language of commutative idempotents semifields, where the group operation is multiplication instead of addition.

Recall that an idempotent semifield $\mathbb{H}$ is said to be \emph{archimedean} if it is archimedean as a po-group,
i.e., for $a,b \in \mathbb{H}$ if $a^{\mathbb{Z}} \leq b$ then $a = 1$.

\begin{prop}\label{prop_semifield_of_fractions_is_archimedean}
If $\mathbb{H}$ is archimedean, then $\mathbb{H}(x_1,...,x_n)$ is \linebreak
archimedean. Moreover, every $K \in \Con(\mathbb{H}(x_1,...,x_n))$ is archimedean as a \linebreak subsemifield of $\mathbb{H}(x_1,...,x_n)$.
\end{prop}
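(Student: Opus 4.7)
The plan is to exploit the fact, established in Note~\ref{note_semiring_of_functions}, that $\mathbb{H}(x_1,...,x_n)$ is realized as a subsemifield of $Fun(\mathbb{H}^n,\mathbb{H})$, where addition and multiplication are defined pointwise. This will let me reduce the archimedean property of the semifield of fractions to the archimedean property of the base semifield $\mathbb{H}$, applied one point at a time.

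First, I would verify that the natural order on $\mathbb{H}(x_1,...,x_n)$, as given by Remark~\ref{rem_natural_order_on_semifield}, agrees with the pointwise order. Indeed, $f\leq g$ in the idempotent semifield $\mathbb{H}(x_1,...,x_n)$ means $f\dotplus g = g$, and since $\dotplus$ is computed pointwise in $Fun(\mathbb{H}^n,\mathbb{H})$, this is equivalent to $f(x)\dotplus g(x)=g(x)$, i.e., $f(x)\leq g(x)$, for every $x\in\mathbb{H}^n$. (No evaluation pathology arises because $\mathbb{H}$ has no zero, so every denominator, being a finite sum of nonzero products of elements of $\mathbb{H}$ and of the $x_i$, evaluates to a nonzero element.)

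Now suppose $f,g\in\mathbb{H}(x_1,...,x_n)$ satisfy $f^{\mathbb{Z}}\leq g$. Fixing an arbitrary $x\in\mathbb{H}^n$, the previous paragraph gives $f(x)^k\leq g(x)$ in $\mathbb{H}$ for every $k\in\mathbb{Z}$. Since $\mathbb{H}$ is archimedean, this forces $f(x)=1$. As $x$ was arbitrary, $f$ equals the constant function $1$ on $\mathbb{H}^n$, hence $f=1$ in $\mathbb{H}(x_1,...,x_n)$. This proves archimedeanness of $\mathbb{H}(x_1,...,x_n)$.

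For the second assertion, let $K\in\Con(\mathbb{H}(x_1,...,x_n))$. Because $\mathbb{H}(x_1,...,x_n)$ is idempotent, $K$ is itself a semifield (see the remark following Theorem~\ref{thm_kernels_hom_relations} in the text preceding the result, namely that kernels of idempotent semifields are subsemifields). If $a,b\in K$ satisfy $a^{\mathbb{Z}}\leq b$ in $K$, then the same inequalities hold in the ambient $\mathbb{H}(x_1,...,x_n)$, because the operations and hence the induced natural order on $K$ are restrictions of those on $\mathbb{H}(x_1,...,x_n)$. By the first part, $a=1$, and $K$ is archimedean.

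The only delicate point I anticipate is ensuring that equality of rational functions as elements of $\mathbb{H}(x_1,...,x_n)$ is the same as equality as functions on $\mathbb{H}^n$; this is precisely the content of Note~\ref{note_semiring_of_functions}, where the semifield of fractions is always identified with its image in $Fun(\mathbb{H}^n,\mathbb{H})$, so no further argument is needed.
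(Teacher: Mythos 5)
Your proposal is correct and follows essentially the same argument as the paper: evaluate the chain of inequalities $f^{\mathbb{Z}}\leq g$ pointwise at each $a\in\mathbb{H}^n$, apply archimedeanness of $\mathbb{H}$ at each point, and conclude $f\equiv 1$, with the kernel case handled by restriction of the ambient order. The only difference is that you spell out more carefully the identification of the natural order with the pointwise order and the fact that kernels are subsemifields, which the paper leaves implicit.
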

\begin{proof}
Let $f,g \in \mathbb{H}(x_1,...,x_n)$ such that $f^{\mathbb{Z}} \leq g$ . If $a \in \mathbb{H}^n$ then
by our assumption $f(a)^{k} \leq g(a)$ for all $k \in \mathbb{Z}$. Since $\mathbb{H}$ is archimedean and $f(a),g(a) \in \mathbb{H}$, we have that $f(a) = 1$. Since this holds for any $a \in \mathbb{H}^n$ we have $f(\mathbb{H}^n)=1$ implying  $f = 1$. The arguments above hold for any kernel of $\mathbb{H}(x_1,...,x_n)$.
\end{proof}

\begin{rem}
The arguments given for Proposition \ref{prop_semifield_of_fractions_is_archimedean} yield that the semifield $Fun(\mathbb{H}^n,\mathbb{H})$ is archimedean.
\end{rem}

\begin{defn}
A idempotent semifield $\mathbb{H}$ is said to be \emph{complete} if its underlying lattice is conditionally complete, cf. Definition \ref{defn_cond_complete}.
\end{defn}

\begin{rem}\label{rem_complete_semifield_of_functions}
If the idempotent semifield $\mathbb{H}$ is complete, then $Fun(\mathbb{H}^n,\mathbb{H})$ is complete.
\end{rem}
\begin{proof}
Let $X \subset Fun(\mathbb{H}^n,\mathbb{H})$ be bounded from below, say by $h \in Fun(\mathbb{H}^n,\mathbb{H})$.
Then for any $a \in X$ the set $\{f(a) : f \in X \}$ is bounded from below by $h(a)$ thus has an infimum $\bigwedge_{f \in X}f(a)$. It is readily seen that the function $g \in Fun(\mathbb{H}^n,\mathbb{H})$ defined by $g(a) = \bigwedge_{f \in X}f(a)$ is an infimum for $X$, i.e., $g = \bigwedge_{f \in X}f$. Analogously, if $X$ is bounded from above then $(\bigvee_{f \in X}f)(a) = \bigvee_{f \in X}f(a)$ is the supremum of $X$.
\end{proof}

\begin{defn}
A \emph{completion} of the idempotent semifield $\mathbb{H}$ is a pair $(H,\theta)$ where $H$ is a complete idempotent semifield and $\theta : \mathbb{H}  \rightarrow H$ is a monomorphism whose image is dense in $H$.
\end{defn}

The following theorem states that each archimedean idempotent semifield has a unique completion.

\begin{thm}\label{thm_completion_of_l_group}\cite[Theorem 2.3.4]{OrderedGroups3}
An idempotent semifield has a completion if and only if it is archimedean. If
$(A, \theta_A)$ and $(B, \theta_B )$ are two completions of the idempotent semifield $\mathbb{H}$, then there is a unique isomorphism
$\rho : A \rightarrow B$ such that $\rho \circ \theta_A = \theta_B$.
\end{thm}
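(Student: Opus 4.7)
The plan is to prove each direction of the ``if and only if'' separately, and then establish uniqueness via density.

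For the easy direction (completion implies archimedean), I would first show that any conditionally complete idempotent semifield $H$ is itself archimedean. Suppose $a,b \in H$ with $a^{\mathbb{Z}} \leq b$. Using $|a|_{\geq}$ from Definition \ref{defn_max_semifield_of_fractions_operations}, I may reduce to the case $a \geq 1$. Then $\{a^n : n \in \mathbb{N}\}$ is nonempty and bounded above by $b$, so its supremum $s$ exists. Since multiplication by $a$ is an order-automorphism of $H$ preserving existing suprema (an $\ell$-group property), I get $a s = \bigvee_n a^{n+1} = s$, forcing $a = 1$. Now $\theta : \mathbb{H} \hookrightarrow H$ is an order-preserving monomorphism, so the archimedean property descends to $\mathbb{H}$.

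For the hard direction (archimedean implies completion exists), I would use the structure theory already in the excerpt. By Clifford's theorem (Theorem \ref{thm_l_group_main_2}) $\mathbb{H}$ is a subdirect product of subdirectly irreducible totally ordered $\ell$-groups $\{G_i\}_{i \in I}$. Since $\mathbb{H}$ is archimedean, each factor $G_i$ inherits the archimedean property (no nontrivial $a^{\mathbb{Z}} \leq b$ survives under projection from an archimedean group), and by Hölder's theorem each $G_i$ embeds order- and group-theoretically into $\mathscr{R}$. Composing, I obtain an embedding $\theta_0 : \mathbb{H} \hookrightarrow \prod_{i \in I} \mathscr{R}$, and the target is a complete idempotent semifield (coordinate-wise, by the same argument as Remark \ref{rem_complete_semifield_of_functions}). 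The completion $H$ is then the Dedekind--MacNeille completion of $\theta_0(\mathbb{H})$ taken inside $\prod_i \mathscr{R}$: the smallest conditionally complete sublattice containing $\theta_0(\mathbb{H})$.

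The main obstacle is verifying that this Dedekind--MacNeille closure is not just a complete lattice but a complete idempotent semifield in which $\theta_0(\mathbb{H})$ is dense. One must check that multiplication extends consistently: if $a = \bigvee a_\lambda$ and $b = \bigvee b_\mu$ with $a_\lambda, b_\mu \in \theta_0(\mathbb{H})$, then $a b = \bigvee_{\lambda, \mu} a_\lambda b_\mu$, which relies on the infinite distributivity of multiplication over join in $\ell$-groups and on the archimedean property (to ensure the sup is the intended one rather than merely an upper bound). Inversion is handled via the duality $(\bigvee x_\lambda)^{-1} = \bigwedge x_\lambda^{-1}$ from Remark \ref{rem_wedge_vee_operations_relations}.

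For uniqueness, suppose $(A, \theta_A)$ and $(B, \theta_B)$ are two completions. Density of $\theta_A(\mathbb{H})$ in $A$ means every $a \in A$ equals $\bigvee \theta_A(S_a)$ for $S_a = \{h \in \mathbb{H} : \theta_A(h) \leq a\}$. I define $\rho : A \to B$ by $\rho(a) := \bigvee \theta_B(S_a)$, using the completeness of $B$. A parallel formula with infima gives the inverse. Checking that $\rho$ preserves $\cdot$, $\dotplus$, $\wedge$ uses the compatibility of these operations with suprema in an $\ell$-group, together with the fact that $\theta_B \circ \theta_A^{-1}$ is an $\ell$-isomorphism on $\theta_A(\mathbb{H})$. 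Uniqueness of $\rho$ is immediate: any isomorphism $\rho' : A \to B$ with $\rho' \circ \theta_A = \theta_B$ agrees with $\rho$ on the dense subset $\theta_A(\mathbb{H})$, hence on all of $A$ because $\rho'$ preserves arbitrary suprema (it is a lattice isomorphism between complete lattices).
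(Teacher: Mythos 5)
The paper gives no proof of this theorem; it is cited as Theorem~2.3.4 of \cite{OrderedGroups3}, so your argument can only be judged on its own. The easy direction (complete $\Rightarrow$ archimedean: $a\cdot\sup_n a^n=\sup_n a^{n+1}=\sup_n a^n$, using that multiplication by $a$ is an order automorphism) and the uniqueness argument via density and preservation of arbitrary suprema by an $\ell$-isomorphism between complete $\ell$-groups are both fine in outline.

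The hard direction has a genuine gap at its very first step. You claim that every subdirectly irreducible factor $G_i$ in Clifford's decomposition of the archimedean $\mathbb{H}$ is itself archimedean, with the justification that ``no nontrivial $a^{\mathbb{Z}}\leq b$ survives under projection.'' This reasoning is backwards: the quotient map $\pi:\mathbb{H}\to G_i$ by a convex $\ell$-subgroup $K$ does not reflect order. A relation $\pi(\alpha)^n\leq\pi(\beta)$ only means $\alpha^n\leq\beta\,k_n$ for some $k_n\in K$, with no uniform majorant in $\mathbb{H}$, so archimedeanness does not descend to quotients. Concretely, take $H$ to be the $\ell$-group of integer sequences $f:\mathbb{N}\to\mathbb{Z}$ that agree with a polynomial for all large $n$, pointwise ordered; $H$ is archimedean. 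The convex $\ell$-ideal $N$ of finitely supported sequences is maximal with respect to excluding the constant sequence $1$, so it is a legitimate choice of $N_a$ in Clifford's construction. But $H/N$ carries the ``eventual comparison'' order, is totally ordered and subdirectly irreducible, and is \emph{not} archimedean ($k\cdot[1]\leq[n\mapsto n]$ for every $k$). So H\"{o}lder's theorem cannot be applied to the factors, and your embedding $\theta_0:\mathbb{H}\hookrightarrow\prod_i\mathscr{R}$ is not established. Whether one can \emph{choose} the subdirect decomposition so that all factors are archimedean (equivalently, whether every archimedean $\ell$-group $\ell$-embeds in a power of $\mathbb{R}$) is a substantially stronger claim than archimedeanness and is not addressed. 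The classical proof of the theorem builds the completion intrinsically from Dedekind cuts of $\mathbb{H}$, using the archimedean hypothesis to make the group and lattice operations on cuts well defined, rather than by first embedding $\mathbb{H}$ into an ambient complete semifield.
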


\begin{defn}\label{defn_coinitial}
A subset $A$ of the poset $P$ is called \emph{co-final} in $P$ if for every $x \in P$ there exists some $a \in A$ such that $x \leq  a$. $A$ is said to be \emph{co-initial} in $P$ if for every $x \in P$ there exists some $a \in A$ such that $a \leq  x$.
\end{defn}

\begin{defn}\label{defn_lef_right_dense}
The subset $A$ of the idempotent semifield $\mathbb{H}$ is called \emph{left dense} in~$\mathbb{H}$ if $A^{+} \setminus \{1\}$ (where $A^{+} = \{ a \in A : a \geq 1 \}$) is co-initial in $\mathbb{H}^{+} \setminus \{1 \}$, and $A$ is called \emph{right dense} in H if $A^{+}$ is co-final in $\mathbb{H}^{+}$.
\end{defn}

\begin{thm}\cite[Theorem (2.3.6)]{OrderedGroups3}\label{thm_completion_of_a_semifield}
Let $\mathbb{H}$ be an archimedean subsemifield of the complete idempotent semifield $H$. Then the following
statements are equivalent:
\begin{enumerate}
  \item $H$ is the completion of $\mathbb{H}$.
  \item $\mathbb{H}$ is left dense in $H$, and if $A$ is an idempotent subsemifield of $H$ that is complete and
contains $\mathbb{H}$, then $A = H$.
\end{enumerate}
\end{thm}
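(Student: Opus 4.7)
My plan is to prove the equivalence in two directions, treating (1)$\Rightarrow$(2) as the easier one (a direct consequence of the density built into the definition of a completion) and using the existence and uniqueness of the completion furnished by Theorem~\ref{thm_completion_of_l_group} for the harder direction (2)$\Rightarrow$(1): transport the abstract completion of $\mathbb{H}$ into $H$ as a complete subsemifield, and then invoke the minimality hypothesis.

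For the direction (1)$\Rightarrow$(2), assume $H$ is a completion of $\mathbb{H}$, so every $x\in H$ is the join in $H$ of some bounded set of $\mathbb{H}$-elements lying below $x$ and the meet of some bounded set of $\mathbb{H}$-elements lying above $x$. To obtain left density, take $x>1$; the join representation of $x$ then forces at least one $h\in\mathbb{H}$ in the representing set to satisfy $1<h\leq x$, for otherwise the entire set would sit in $\mathbb{H}^{-}$ and its join would be $\leq 1$. For the minimality clause, let $A$ be a complete idempotent subsemifield of $H$ containing $\mathbb{H}$, and for arbitrary $x\in H$ consider $D_x^{-}=\{h\in\mathbb{H}:h\leq x\}$ and $D_x^{+}=\{h\in\mathbb{H}:h\geq x\}$, both contained in $A$ and each bounded by any element of the other. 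Conditional completeness of $A$ delivers $y=\bigvee_{A}D_x^{-}$ and $z=\bigwedge_{A}D_x^{+}$ in $A$, and these bracket $x$ in $H$ via $x\leq y\leq z\leq x$, forcing $x=y=z\in A$ and hence $A=H$.

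For the direction (2)$\Rightarrow$(1), invoke Theorem~\ref{thm_completion_of_l_group} to obtain the completion $(H_0,\theta_0)$ of $\mathbb{H}$, unique up to $\mathbb{H}$-fixing isomorphism. Define
\[
\rho : H_0 \longrightarrow H, \qquad \rho(c) \;:=\; \bigvee\{\,h\in\mathbb{H}:\theta_0(h)\leq c\,\},
\]
where the join is computed in $H$. This join exists because the indicated set is bounded above in $H$ by any $\mathbb{H}$-element dominating $c$ in $H_0$ (such an element exists by density of $\mathbb{H}$ in $H_0$), and $H$ is conditionally complete. One then verifies that $\rho$ restricts to the identity on $\mathbb{H}$ (using left density of $\mathbb{H}$ in $H$ to identify each $h\in\mathbb{H}$ with the join of its $\mathbb{H}$-predecessors), that $\rho$ is an $\ell$-semifield homomorphism (exploiting the infinite distributivity of semifield operations against joins, as recorded in Remark~\ref{rem_wedge_vee_operations_relations}), and that $\rho$ is injective: if $c_1<c_2$ in $H_0$, density in $H_0$ yields some $h\in\mathbb{H}$ strictly between them, which lies in the defining set for $\rho(c_2)$ but not for $\rho(c_1)$. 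Then $\rho(H_0)$ is a complete idempotent subsemifield of $H$ containing $\mathbb{H}$, so the minimality hypothesis forces $\rho(H_0)=H$, exhibiting $H$ as the completion of $\mathbb{H}$.

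The main obstacle is verifying that $\rho$ is a semifield homomorphism. The lower-cut definition makes preservation of joins essentially tautological, but multiplication interacts with joins only via the infinite distributivity $a\cdot\bigvee_{i}b_i=\bigvee_{i}ab_i$, and the meet operation is most naturally accessed through the identity $c_1\wedge c_2=(c_1^{-1}\dotplus c_2^{-1})^{-1}$ from Remark~\ref{rem_wedge_vee_operations_relations}, which compels a compatibility check between the lower-cut and the dual upper-cut description of $\rho(c)$ in $H$. Ensuring that these two descriptions pin down the same element of $H$ is precisely the role played by the left density hypothesis: without it, a ``gap'' in $H$ between the two descriptions could yield distinct candidates for the image of the same element of $H_0$, breaking multiplicativity.
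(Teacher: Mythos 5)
The paper does not actually prove this theorem (it is cited from \cite{OrderedGroups3}), so there is no in-paper proof to compare against; what follows is an assessment of your argument on its own merits.

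Your direction (1)$\Rightarrow$(2) is correct. Your direction (2)$\Rightarrow$(1) has the right overall architecture --- invoke the abstract completion $(H_0,\theta_0)$ from Theorem~\ref{thm_completion_of_l_group}, define $\rho$ by lower cuts, show $\rho$ embeds $H_0$ into $H$ as a complete subsemifield, and then conclude by minimality --- but the two load-bearing verifications are not actually carried out. First, multiplicativity: infinite distributivity gives only
$\rho(c_1)\rho(c_2)=\bigvee\{h_1h_2:\theta_0(h_i)\leq c_i\}\leq\bigvee\{g:\theta_0(g)\leq c_1c_2\}=\rho(c_1c_2)$,
i.e.\ one inequality. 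You write as though distributivity settles multiplication and only the meet needs the lower/upper-cut compatibility, but in fact the reverse inequality $\rho(c_1c_2)\leq\rho(c_1)\rho(c_2)$ is exactly where the work lies, and it is not established. The standard way to close this is to introduce the dual map $\sigma(c)=\bigwedge\{h\in\mathbb{H}:\theta_0(h)\geq c\}$, prove $\rho=\sigma$ using left density of $\mathbb{H}$ in $H$ together with density of $\mathbb{H}$ in $H_0$ (if $\rho(c)<\sigma(c)$, pick $1<h\leq\sigma(c)\rho(c)^{-1}$ by left density, push back into $H_0$ to get $c\,\theta_0(h)\leq c$, contradiction), and then squeeze: $\rho(c_1)\rho(c_2)\leq\rho(c_1c_2)=\sigma(c_1c_2)\leq\sigma(c_1)\sigma(c_2)=\rho(c_1)\rho(c_2)$. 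You gesture at this compatibility in your final paragraph, but you assert it rather than prove it, and you attribute it to the wrong spot (the meet) rather than to multiplicativity itself.

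Second, your injectivity argument is incorrect as stated: density of $\mathbb{H}$ in $H_0$ does \emph{not} yield an $\mathbb{H}$-element strictly between $c_1$ and $c_2$. What density gives is an $h\in\mathbb{H}$ with $\theta_0(h)\leq c_2$ but $\theta_0(h)\not\leq c_1$ (since $c_2=\bigvee\{\theta_0(h):\theta_0(h)\leq c_2\}$ and $c_1$ is not an upper bound of that set); this alone does not show $h\not\leq\rho(c_1)$, so it does not show $\rho(c_1)<\rho(c_2)$. Once multiplicativity is established, injectivity comes for free --- for $c>1$, left density of $\mathbb{H}$ in $H_0$ produces $h>1$ with $\theta_0(h)\leq c$, so $\rho(c)\geq h>1$, whence the homomorphism kernel is trivial --- so the cleaner route is to prove multiplicativity first and then extract injectivity, rather than to argue injectivity independently via an intermediate-element claim that is false. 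As written, the (2)$\Rightarrow$(1) direction is a correct plan with the hard steps left as unverified assertions.
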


\begin{cor}\label{cor_completion_of_kernels}\cite{OrderedGroups3}
Suppose that $\mathbb{H}$ is a left dense archimedean idempotent subsemifield of the complete idempotent semifield $H$. Then if $A$ is a kernel of $\mathbb{H}$, then the completion of $A$ is the kernel of $H$ generated by $A$.
\end{cor}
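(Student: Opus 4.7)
The plan is to apply Theorem \ref{thm_completion_of_a_semifield} to the inclusion $A \subseteq \tilde{A}$, where $\tilde{A}$ denotes the kernel of $H$ generated by $A$. Since $A$ is a kernel (hence subsemifield) of the archimedean idempotent semifield $\mathbb{H}$, $A$ is itself archimedean; so once I verify (i) $\tilde{A}$ is complete, (ii) $A$ is left dense in $\tilde{A}$, and (iii) every complete idempotent subsemifield of $\tilde{A}$ containing $A$ equals $\tilde{A}$, the cited theorem identifies $\tilde{A}$ as the completion of $A$.

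First I would give the explicit description $\tilde{A} = \{x \in H : |x| \leq a \text{ for some } a \in A^+\}$: the right-hand side is a convex subsemifield of $H$ containing $A$, and by the convexity of kernels (Remark \ref{rem_kernel_is_convex}) any kernel of $H$ containing $A$ must already contain it. Completeness of $\tilde{A}$ then reduces to that of $H$: if $X \subseteq \tilde{A}$ is bounded above by some $b$ with $|b| \leq a_1 \in A^+$, then $s := \sup_H X$ exists in $H$; pinning $s$ between $a_0^{-1}$ (using any $x_0 \in X$ with $|x_0| \leq a_0 \in A^+$) and $a_1$ gives $|s| \leq a_0 a_1 \in A$, so $s \in \tilde{A}$. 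The argument for infima is dual.

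For left density of $A$ in $\tilde{A}$, given $y \in \tilde{A}^+$ with $y > 1$, I pick $a_0 \in A^+$ with $y \leq a_0$ and invoke the left density of $\mathbb{H}$ in $H$ to obtain $h \in \mathbb{H}^+ \setminus \{1\}$ with $h \leq y$. Then $1 \leq h \leq a_0$ with $1, a_0 \in A$, and since $A$ is convex in $\mathbb{H}$ (Remark \ref{rem_kernel_is_convex}), $h \in A$.

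The main obstacle is (iii). Setting $S_y := \{a \in A : 1 \leq a \leq y\}$ and $S_y^{\vee} := \{a \in A : a \geq y\}$, the heart of the argument is the pair of identities $y = \sup_{\tilde{A}} S_y = \inf_{\tilde{A}} S_y^{\vee}$ for every $y \in \tilde{A}^+$ with $y > 1$. I would establish the first by contradiction: if $s^* := \sup_{\tilde{A}} S_y$ satisfies $s^* < y$, the left density of $A$ in $\tilde{A}$ from step (ii) supplies $c \in A^+$ with $1 < c \leq y(s^*)^{-1}$; one then checks $c \cdot S_y \subseteq S_y$ (each $ca$ lies in $A$ and satisfies $1 \leq ca \leq cs^* \leq y$), and since multiplication by $c$ preserves suprema in the $\ell$-group $\tilde{A}$, this forces $c s^* = \sup_{\tilde{A}}(c S_y) \leq s^*$, contradicting $c > 1$. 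The second identity is dual, using $c^{-1} S_y^{\vee} \subseteq S_y^{\vee}$. Given both identities, if $B$ is a complete idempotent subsemifield of $\tilde{A}$ containing $A$, then $s_B := \sup_B S_y$ and $i_B := \inf_B S_y^{\vee}$ exist in $B$ and satisfy $s_B \leq i_B$ (every $a' \in S_y^{\vee}$ is an upper bound for $S_y$ in $B$); comparing with the $\tilde{A}$-sup and $\tilde{A}$-inf forces $y \leq s_B \leq i_B \leq y$, so $y = s_B \in B$. Decomposing an arbitrary $y \in \tilde{A}$ as $y = |y|_{\geq} \cdot |y|_{\leq}^{-1}$ (Remark \ref{rem_absolute_operators_relations}) yields $\tilde{A} \subseteq B$, hence $B = \tilde{A}$.
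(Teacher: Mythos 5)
The paper states this corollary with a citation to \cite{OrderedGroups3} and supplies no proof of its own, so there is no internal argument against which to compare; the question is simply whether your argument is sound, and it is. You correctly reduce the claim to verifying the hypotheses of Theorem~\ref{thm_completion_of_a_semifield} for the pair $A \subseteq \tilde{A}$, and each of your verifications goes through. The description $\tilde{A} = \{x \in H : |x| \leq a \text{ for some } a \in A^{+}\}$ is right precisely because $A$ is already a kernel of $\mathbb{H}$: this makes $A^{+}$ filtered and closed under products, so the right-hand side is a convex sublattice-subgroup (hence a kernel, since we are in the abelian idempotent setting) of $H$ containing $A$, and by convexity of kernels it is the smallest such. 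The ``pinning'' step correctly imports conditional completeness from $H$ to $\tilde{A}$; left density of $A$ in $\tilde{A}$ correctly combines left density of $\mathbb{H}$ in $H$ with convexity of $A$ in $\mathbb{H}$; and the sup/inf sandwich, together with the fact that translation by a group element is an order automorphism of the $\ell$-group $\tilde{A}$, is exactly the right mechanism to force any complete idempotent subsemifield $B$ with $A \subseteq B \subseteq \tilde{A}$ to satisfy $\tilde{A}^{+} \subseteq B$. The closing decomposition $y = |y|_{\geq} \cdot |y|_{\leq}^{-1}$ then correctly extends this to all of $\tilde{A}$. The one thing left implicit --- that the convex subsemifield you describe is indeed a kernel of $H$, which is what makes it coincide with the kernel generated by $A$ rather than merely sit inside it --- is routine in the commutative idempotent setting, and you clearly have it in mind. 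The proof is complete and correct.
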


We now state the well-known result by H\"{o}lder for $\ell$-groups:

\begin{thm}\cite[Theorem (2.3.10)(H\"{o}lder)]{OrderedGroups3}
The following statements are equivalent for an $\ell$-group $G$.
\begin{enumerate}
  \item The only convex $\ell$-subgroups of $G$ are $1$ and $G$.
  \item $G$ is totally ordered and archimedean.
  \item $G$ can be embedded in $(\mathbb{R}, + )$.
\end{enumerate}
\end{thm}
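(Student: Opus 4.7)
The plan is to prove the three equivalences cyclically: $(1) \Rightarrow (2) \Rightarrow (3) \Rightarrow (1)$, which is the standard route for H\"{o}lder's theorem.

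For $(1) \Rightarrow (2)$, I would first establish that $G$ is totally ordered by exploiting the polar construction from Remark \ref{rem_polar_kernels}. Suppose $G$ is not totally ordered; then using the Riesz decomposition tools (Remark \ref{rem_Riesz and disjoint}), one can produce two disjoint positive elements $a, b > 1$ with $a \wedge b = 1$. Then $\perp a$ is a convex $\ell$-subgroup containing $b \neq 1$, and it fails to contain $a$ itself, giving a nontrivial proper convex $\ell$-subgroup, a contradiction to (1). For the archimedean property, suppose instead there exist $a, b \in G$ with $a > 1$ and $a^{\mathbb{Z}} \leq b$. The principal convex $\ell$-subgroup generated by $a$, described as $\{x : \exists n \in \mathbb{N}, \ a^{-n} \leq x \leq a^{n}\}$ (the analog of Proposition \ref{prop_principal_ker} for $\ell$-groups), is nontrivial since it contains $a$, and proper since it excludes $b$ — again contradicting (1).

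For $(2) \Rightarrow (3)$, which is the classical (and most substantial) step, I would fix an element $a > 1$ in $G$ (if $G = \{1\}$, the result is trivial) and define, for each $g \in G^{+}$,
\[
\phi(g) \;=\; \inf\bigl\{ m/n \;:\; n \in \mathbb{N},\ m \in \mathbb{Z},\ g^{n} \leq a^{m} \bigr\}.
\]
The archimedean hypothesis together with total ordering guarantees that this set is nonempty and bounded below, so $\phi(g) \in \mathbb{R}$. I would then extend $\phi$ to all of $G$ by $\phi(g^{-1}) = -\phi(g)$, and verify in turn: (i) $\phi$ is order-preserving; (ii) $\phi$ is a group homomorphism — this is the main technical point, and I would prove $\phi(gh) \leq \phi(g) + \phi(h)$ by choosing rational approximations $g^{n} \leq a^{m}$ and $h^{n'} \leq a^{m'}$, then use total order and the archimedean property to squeeze $\phi(gh)$ to equality; and (iii) $\phi$ is injective, because $\phi(g) = 0$ forces $g^{n} \leq a$ and $a \leq g^{n}$ for infinitely many $n$, hence $g = 1$ by archimedeanness.

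For $(3) \Rightarrow (1)$, let $\iota : G \hookrightarrow (\mathbb{R}^{+}, \cdot)$ (translating additive notation as in the preamble to this section) be an order-preserving embedding, and let $H$ be a nontrivial convex $\ell$-subgroup of $G$. Pick $h \in H$ with $h > 1$; for any $g \in G$ with $g > 1$, archimedeanness of $\mathbb{R}$ gives $n \in \mathbb{N}$ with $\iota(g) \leq \iota(h)^{n} = \iota(h^{n})$, hence $g \leq h^{n}$, and by convexity $g \in H$. Since $G^{+}$ generates $G$ (Theorem \ref{thm_cones_as_generators}), we conclude $H = G$. The main obstacle in the whole argument is the additivity of $\phi$ in step $(2) \Rightarrow (3)$: the archimedean hypothesis must be invoked delicately to show the subadditive estimate is sharp.
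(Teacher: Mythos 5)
The paper cites this theorem from \cite{OrderedGroups3} without providing its own proof, so there is no internal argument to compare against; you are on your own here. Your cyclic plan $(1)\Rightarrow(2)\Rightarrow(3)\Rightarrow(1)$ is the standard route for H\"{o}lder's theorem, and the implications $(1)\Rightarrow(2)$ and $(3)\Rightarrow(1)$ are essentially sound. Two small imprecisions in $(1)\Rightarrow(2)$: Remark~\ref{rem_polar_kernels} is stated only for commutative $\ell$-groups, whereas the theorem concerns arbitrary ones (the underlying fact that $\perp a$ is a convex $\ell$-subgroup does hold in general, but you should not lean on that specific remark); and ``Riesz decomposition'' is not what produces the disjoint positives --- the relevant tool is the identity $(u(u\wedge v)^{-1}) \wedge (v(u\wedge v)^{-1}) = 1$ of Remark~\ref{rem_wedge_vee_operations_relations}(5), applied to an incomparable pair $u,v$.

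There is, however, a genuine gap in $(2)\Rightarrow(3)$, precisely at the step you yourself flag as ``the main technical point''. To show $\phi$ is additive, you want to pass from $g^{n}\le a^{m}$ and $h^{n}\le a^{m'}$ to $(gh)^{n}\le a^{m+m'}$; but $(gh)^{n}=g^{n}h^{n}$ only when $g$ and $h$ commute, and $G$ is an arbitrary $\ell$-group. Commutativity is not a hypothesis of (2) --- it is part of the conclusion, because $(\mathbb{R},+)$ is abelian --- and it is a genuinely nontrivial consequence of archimedeanness together with total order. The classical proof disposes of this by first proving a separate lemma: an archimedean totally ordered group is abelian (roughly, a nontrivial commutator $[g,h]$ would be dominated by all powers of $g$ or of $h$, making some element's integer powers bounded and contradicting archimedeanness). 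Without that lemma, or a commutator-aware additivity estimate replacing the naive one, your verification that $\phi(gh)=\phi(g)+\phi(h)$ does not go through. You should insert that commutativity lemma before defining $\phi$; once $G$ is known to be abelian, the rest of your sketch (well-definedness, order preservation, the squeeze to equality, and injectivity via archimedeanness) is correct in outline.
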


\begin{prop}\cite{OrderedGroups3}
A divisible totally ordered archimedean group which is \linebreak complete is isomorphic to $(\mathbb{R},+)$.
\end{prop}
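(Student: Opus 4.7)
The plan is to invoke the Hölder theorem stated just above to embed $G$ order-isomorphically into $(\mathbb{R},+)$ as a subgroup, and then show that the image must be all of $\mathbb{R}$. The two key ingredients will be (a) divisibility, which forces the image to be order-dense in $\mathbb{R}$, and (b) conditional completeness of $G$, which, transported across the embedding, forces every Dedekind cut of the image to be realized by an element of the image.

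First, I would apply Hölder's theorem to obtain an order-preserving group monomorphism $\phi : G \hookrightarrow (\mathbb{R},+)$ (the trivial case $G = \{0\}$ being excluded, since it contradicts the statement as written). Pick any $a \in \phi(G)$ with $a > 0$, which exists because $G$ is nontrivial and totally ordered. Since divisibility is preserved by homomorphism, $\phi(G)$ is a divisible subgroup of $(\mathbb{R},+)$, hence contains $(m/n)a$ for every $m \in \mathbb{Z}$ and $n \in \mathbb{N}$. In particular $\phi(G)$ accumulates at $0$ and is unbounded both above and below, so $\phi(G)$ is order-dense in $\mathbb{R}$.

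Next, I would verify surjectivity. Fix $r \in \mathbb{R}$ and consider
\[
S = \{\, g \in \phi(G) : g \leq r \,\},
\]
which is nonempty (by density) and bounded above in $\phi(G)$ (by unboundedness of $\phi(G)$ above $r$). Pulling back by the order-isomorphism $\phi : G \to \phi(G)$, the set $\phi^{-1}(S) \subseteq G$ is nonempty and has an upper bound in $G$, so by conditional completeness it admits a supremum $g^{\star} \in G$. Set $s = \phi(g^{\star}) \in \phi(G)$; since $\phi$ preserves order in both directions, $s = \sup S$ computed in $\phi(G)$, and clearly $s \leq r$. If the inequality were strict, then density of $\phi(G)$ in $\mathbb{R}$ would yield some $t \in \phi(G)$ with $s < t < r$, placing $t$ in $S$ and contradicting $s = \sup S$. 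Hence $s = r$, so $r \in \phi(G)$; since $r \in \mathbb{R}$ was arbitrary, $\phi(G) = \mathbb{R}$.

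The main obstacle is essentially bookkeeping: one must ensure that the upper bound used in invoking conditional completeness lies inside $G$ (not merely $\mathbb{R}$), which is precisely what density and unboundedness of $\phi(G)$ give us. Once Hölder is available and divisibility is properly exploited, each step is routine, so the core creative input is recognizing that "divisible $+$ conditionally complete" forces a dense archimedean subgroup of $\mathbb{R}$ to exhaust $\mathbb{R}$.
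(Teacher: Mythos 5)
The paper gives no proof of this proposition, merely a citation to \cite{OrderedGroups3}, so there is nothing to compare your route against; judged on its own merits, your strategy (H\"older to embed, divisibility to get density, conditional completeness to force surjectivity) is the standard and correct one.

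There is, however, a small gap at the phrase ``clearly $s \leq r$.'' Recall that $s$ is the supremum of $S$ computed inside the ordered set $\phi(G)$, not inside $\mathbb{R}$, and the real number $r$ need not belong to $\phi(G)$. Thus $r$ is an upper bound of $S$ in $\mathbb{R}$ but not a priori an upper bound that competes with $s$ for the ``least'' title inside $\phi(G)$; a $\phi(G)$-supremum of $S$ could in principle sit strictly above $r$ if $\phi(G)$ had a gap just above $r$. The inequality $s \leq r$ is true, but it requires exactly the same kind of density argument you spell out for the other direction: if $s > r$, density yields $t \in \phi(G)$ with $r < t < s$, and this $t$ is an upper bound of $S$ lying in $\phi(G)$ and strictly below $s$, contradicting leastness of $s$. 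With that sentence added, both $s \geq r$ and $s \leq r$ are established symmetrically and the proof closes cleanly. Everything else --- the reduction to the nontrivial case, the preservation of divisibility under the embedding, the observation that $\mathbb{Q}a \subseteq \phi(G)$ forces density, and the transfer of conditional completeness across the order isomorphism $G \cong \phi(G)$ --- is correct as written.
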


Translating the above to the language of idempotent semifields and using the \linebreak isomorphism $(\mathbb{R}, +) \rightarrow (\mathbb{R}^{+}, \cdot)$ defined by $x \mapsto e^x$  (where $\mathbb{R}^{+}$ is the set of positive real numbers) yield

\begin{thm}\label{thm_holder_translated}
The following statements are equivalent for an idempotent \linebreak semifield $\mathbb{H}$.
\begin{enumerate}
  \item $\mathbb{H}$ is simple.
  \item $\mathbb{H}$ is totally ordered and archimedean.
  \item $\mathbb{H}$ can be embedded in $(\mathbb{R}^{+}, \dotplus, \cdot)$.
\end{enumerate}
\end{thm}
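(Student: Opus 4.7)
The plan is to translate the classical H\"{o}lder theorem for $\ell$-groups, stated immediately above, into the language of idempotent semifields via the explicit bijective correspondence between additively commutative idempotent semifields and $\ell$-groups given in Proposition \ref{prop_l-group_idempotent_semifield_corr_1}, together with the kernel $\leftrightarrow$ convex normal $\ell$-subgroup dictionary of Proposition \ref{prop_l-group_idempotent_semifield_corr_2}. Since the correspondence is bijective and the three conditions above are word-for-word translations of the three conditions in the $\ell$-group version, the theorem should follow by transport.

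Concretely, first I would establish $(1) \Leftrightarrow (2)$. An idempotent semifield $\mathbb{H}$ being simple means its only kernels are $\{1\}$ and $\mathbb{H}$; under the correspondence of Proposition \ref{prop_l-group_idempotent_semifield_corr_2} this is exactly condition (1) of the $\ell$-group H\"{o}lder theorem, which by H\"{o}lder is equivalent to the $\ell$-group being totally ordered and archimedean. On the semifield side, a totally ordered $\ell$-group corresponds to a bipotent semifield (compare the definition of bipotency with the equality $a \dotplus b = a \vee b = \sup(a,b)$, which lies in $\{a,b\}$ precisely when the order is total), and the archimedean property is defined identically in both settings by Definition \ref{defn_positive_cone of_fractions} and the paragraph preceding it. This yields $(1) \Leftrightarrow (2)$.

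For $(2) \Leftrightarrow (3)$, I would use the order-preserving group isomorphism $\phi : (\mathbb{R},+) \longrightarrow (\mathbb{R}^{+}, \cdot)$, $x \mapsto e^x$. Since $\phi$ is a strictly increasing bijection, it preserves both $\wedge$ and $\vee$, hence is an $\ell$-group isomorphism, and therefore by Proposition \ref{prop_l-group_idempotent_semifield_corr_1} induces a semifield isomorphism $(\mathbb{R}, \max, +) \cong (\mathbb{R}^{+}, \dotplus, \cdot)$. Composing the H\"{o}lder embedding $G \hookrightarrow (\mathbb{R},+)$ with $\phi$ gives a semifield embedding $\mathbb{H} \hookrightarrow (\mathbb{R}^{+}, \dotplus, \cdot)$, which is (3). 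Conversely, if $\mathbb{H}$ embeds into $(\mathbb{R}^{+}, \dotplus, \cdot)$ then, since both total ordering and the archimedean property pass to subsemifields, (2) follows.

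The argument is really a bookkeeping exercise once the translation dictionary is in place, so there is no serious obstacle; the only point requiring mild care is confirming that the exponential map respects the lattice operations (so that it is an $\ell$-isomorphism, not merely a group isomorphism), and that totality of the order on the group side precisely matches bipotency of addition on the semifield side. Both are straightforward from the definitions.
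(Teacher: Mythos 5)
Your proposal is correct and follows exactly the route the paper sketches: the paper derives the theorem by "translating" the $\ell$-group H\"{o}lder theorem via the semifield/$\ell$-group dictionary and the exponential isomorphism $(\mathbb{R},+) \rightarrow (\mathbb{R}^{+},\cdot)$, and you have simply spelled out that translation. The only thing worth noting in passing is that identifying kernels with convex $\ell$-subgroups (rather than normal convex $\ell$-subgroups) relies on commutativity, which the paper assumes throughout, so no gap results.
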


\begin{cor}\label{cor_totaly_ordered_isomorphic_to_reals}
A divisible totally ordered archimedean idempotent semifield which is complete is isomorphic to $(\mathbb{R}^{+}, \dotplus, \cdot)$.
\end{cor}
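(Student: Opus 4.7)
The plan is to recognize this corollary as the idempotent-semifield restatement of the immediately preceding proposition on $\ell$-groups (which asserts that a divisible, totally ordered, archimedean, complete group is isomorphic to $(\mathbb{R},+)$). One transports that result across the correspondence of Proposition \ref{prop_l-group_idempotent_semifield_corr_1} and post-composes with the exponential isomorphism $(\mathbb{R},+) \xrightarrow{\sim} (\mathbb{R}^{+},\cdot)$.

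First, I would view $\mathbb{H}$ as an $\ell$-group via Proposition \ref{prop_l-group_idempotent_semifield_corr_1}, recovering $\dotplus$ as the lattice operation $\vee$ and $\wedge$ as the induced infimum. Each hypothesis then carries over verbatim: totally ordered, divisible and archimedean refer to exactly the same group-theoretic data in both formulations, while completeness (Definition \ref{defn_cond_complete}) refers in both contexts to conditional completeness of the very same underlying lattice. The $\ell$-group associated with $\mathbb{H}$ therefore satisfies the hypotheses of the preceding proposition, yielding a group isomorphism $\phi:(\mathbb{H},\cdot)\xrightarrow{\sim}(\mathbb{R},+)$ which preserves order (and hence preserves $\vee$ and $\wedge$, since in totally ordered $\ell$-groups these coincide with $\max$ and $\min$).

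Next, I would post-compose $\phi$ with the exponential $\exp:(\mathbb{R},+)\xrightarrow{\sim}(\mathbb{R}^{+},\cdot)$, itself an order-preserving group isomorphism onto the multiplicative group of positive reals. The composite $\psi=\exp\circ\phi:\mathbb{H}\to(\mathbb{R}^{+},\cdot)$ is then an order-preserving group isomorphism, i.e., an $\ell$-group isomorphism. Finally, I would invoke Proposition \ref{prop_l-group_idempotent_semifield_corr_1}(3) in the opposite direction: an $\ell$-group isomorphism between the multiplicative structures underlying two idempotent semifields is exactly a semifield isomorphism. Thus $\psi:(\mathbb{H},\dotplus,\cdot)\xrightarrow{\sim}(\mathbb{R}^{+},\dotplus,\cdot)$ is the desired semifield isomorphism, where in the codomain $\dotplus$ denotes the supremum operation.

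There is no genuine obstacle here; the argument is a pure translation through the dictionary of Proposition \ref{prop_l-group_idempotent_semifield_corr_1}, with the real content already supplied by the cited $\ell$-group proposition. The only point worth a moment of care is confirming that the notions of completeness in the two settings coincide, which is automatic since both refer to conditional completeness of the same lattice $(\mathbb{H},\vee,\wedge)$; the only alternative route, avoiding the preceding proposition, would be to combine Hölder's theorem (Theorem \ref{thm_holder_translated}) with a density argument using divisibility to show that the resulting embedding $\mathbb{H}\hookrightarrow\mathbb{R}^{+}$ is surjective, and then invoke uniqueness of completion (Theorem \ref{thm_completion_of_l_group}), but this reproduces the proof of the $\ell$-group proposition and so adds nothing new.
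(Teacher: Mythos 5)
Your proposal is correct and coincides with the paper's approach: the paper simply remarks that the corollary follows by translating the preceding $\ell$-group proposition through the exponential isomorphism $(\mathbb{R},+)\to(\mathbb{R}^{+},\cdot)$, which is exactly the transport-via-Proposition~\ref{prop_l-group_idempotent_semifield_corr_1} argument you spell out. You merely make explicit the dictionary steps that the paper leaves implicit.
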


In view of the above, we may regard the designated semifield $\mathscr{R}$ as being $(\mathbb{R}^{+}, \dotplus, \cdot)$.

\newpage

\section{Skeletons and kernels of skeletons}
\   \\

In this section we introduce the notions of `skeletons' and `kernels of skeletons'. We define a pair of operators
$Skel$ and $Ker$ where $Skel$ maps a kernel to its skeleton and $Ker$ maps a skeleton to its corresponding kernel. Although we define these \linebreak operators with respect to the semifield of fractions $\mathbb{H}(x_1,...,x_n)$, since we only use the fact that $\mathbb{H}(x_1,...,x_n)$ is a semifield to prove our assertions, all the properties proved in subsections \ref{Subsection:Skeletons} and \ref{Subsection:Kernels_of_Skeletons} hold replacing $\mathbb{H}(x_1,...,x_n)$ by any subsemifield of the semifield of functions $Fun(\mathbb{H}^n, \mathbb{H})$.

\subsection{Skeletons}\label{Subsection:Skeletons}
\    \\
We now define a geometric object in the semifield $\mathbb{H}^{n}$ to which we aim to associate to a kernel of the semifield of fractions $\mathbb{H}(x_1,...,x_n)$. Namely, we define an operator
$$Skel : \mathbb{P}(\mathbb{H}(x_1,...,x_n)) \rightarrow \mathbb{H}^{n}$$
which associates a subset of $\mathbb{H}^{n}$ to every kernel of $\mathbb{H}(x_1,...,x_n)$.

\begin{note}
In the field of computer vision the notion of skeleton (or topological \linebreak skeleton) of a shape is a thin version of that shape that is equidistant to its boundaries. \linebreak Resembling  the tropical variety in its shape,  we have decided to call the next geometric object , which will be shown to generalize the notion of tropical variety, a `skeleton'.
\end{note}
\ \\

\begin{defn}
Let $S$ be a subset of \ $\mathbb{H}(x_1,...,x_n)$. Define the subset $Skel(S)$ of $\mathbb{H}^n$ \linebreak  to be
\begin{equation}
Skel(S) = \{(a_1,...,a_n) \in \mathbb{H}^n  \ : \  f(a_1,...,a_n) = 1,  \ \forall f \in S \}.
\end{equation}
\end{defn}

\begin{defn}
A subset $\mathcal{S}$ in $\mathbb{H}^n$ is said to be a \emph{skeleton} if there exists a subset $S \subset \mathbb{H}(x_1,...,x_n)$  such that $\mathcal{S} = Skel(S)$.
\end{defn}

\begin{defn}
A skeleton $\mathcal{S}$ in $\mathbb{H}^n$ is said to be a  \emph{principal skeleton}, if there exists $f \in \mathbb{H}(x_1,...,x_n)$ such that $\mathcal{S} = Skel(\{f\})$.
\end{defn}

\begin{prop}\label{prop_skel_property1}
For $S_i \subset \mathbb{H}(x_1,...,x_n)$ the following statements hold:
\begin{enumerate}
  \item $S_1 \subseteq S_2 \Rightarrow Skel(S_2) \subseteq Skel(S_1)$.
  \item $Skel(S_1) = Skel(\langle S_1 \rangle)$.
  \item $\bigcap_{i \in I}Skel(S_i) = Skel(\bigcup_{i \in I}S_i)$ for any index set $I$ and in particular, \linebreak $Skel(S)= \bigcap_{f \in S}Skel(f)$.
\end{enumerate}
\end{prop}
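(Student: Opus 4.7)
The plan is to verify the three statements in order, each by chasing the definition of $Skel$ and exploiting the fact that the substitution map at a point is a semifield homomorphism whose kernel (in the homomorphism sense, which by the corollary following Theorem~\ref{thm_kernels_hom_relations} is a kernel in our sense) sits inside $\mathbb{H}(x_1,\dots,x_n)$.

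For (1), I would just unwind definitions: if $a = (a_1,\dots,a_n)\in Skel(S_2)$, then $f(a)=1$ for every $f\in S_2$, hence in particular for every $f\in S_1\subseteq S_2$, so $a\in Skel(S_1)$. This is purely set-theoretic and needs no semifield structure.

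For (2), one inclusion is immediate from (1) because $S_1\subseteq\langle S_1\rangle$ gives $Skel(\langle S_1\rangle)\subseteq Skel(S_1)$. The reverse inclusion is the content of the statement, and this is the step where I expect to use real structure. Fix $a\in Skel(S_1)$ and consider the substitution homomorphism $\phi_a:\mathbb{H}(x_1,\dots,x_n)\to\mathbb{H}$ sending $f\mapsto f(a)$. Its homomorphism-kernel $K_a=\phi_a^{-1}(1)$ is a kernel in the sense of Definition of $\Con$ by the corollary to Theorem~\ref{thm_kernels_hom_relations}. By assumption $S_1\subseteq K_a$, and since $\langle S_1\rangle$ is by Definition~\ref{defn_generation_of_kernels} the intersection of all kernels containing $S_1$, we obtain $\langle S_1\rangle\subseteq K_a$, i.e.\ every element of $\langle S_1\rangle$ evaluates to $1$ at $a$, so $a\in Skel(\langle S_1\rangle)$.

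For (3), I would again chase the equivalence: $a\in\bigcap_{i\in I}Skel(S_i)$ iff for every $i\in I$ and every $f\in S_i$ one has $f(a)=1$, iff for every $f\in\bigcup_{i\in I}S_i$ one has $f(a)=1$, iff $a\in Skel\bigl(\bigcup_{i\in I}S_i\bigr)$. The particular case $Skel(S)=\bigcap_{f\in S}Skel(f)$ is then obtained by taking the index set to be $S$ itself and letting $S_f=\{f\}$ for each $f\in S$, so that $\bigcup_{f\in S}\{f\}=S$.

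The only genuinely nontrivial step is the second inclusion in (2); the rest are formal. That step relies essentially on the fact that the preimage of $\{1\}$ under a semifield homomorphism is a kernel, together with the characterisation of $\langle S_1\rangle$ as the smallest kernel containing $S_1$.
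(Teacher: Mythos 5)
Your proof is correct, and for part (2) it takes a cleaner route than the paper. The paper verifies directly, by hand, that the set of $f$ with $f(a)=1$ is closed under multiplication, inverses, addition, and the convexity condition \eqref{defn_ker_convexity}, thereby re-establishing in this special case that it is a kernel containing $S_1$. You instead invoke the already-available fact (the corollary to Theorem~\ref{thm_kernels_hom_relations}) that $\phi_a^{-1}(1)$ is a kernel whenever $\phi_a$ is a semifield homomorphism, and apply the minimality of $\langle S_1 \rangle$. This is more modular: it avoids duplicating the kernel axioms and makes the role of the substitution homomorphism explicit. The only thing worth checking, which you implicitly use, is that $\phi_a$ really is a well-defined semifield homomorphism on the semifield of fractions; this holds because the paper's convention is that a proper semifield has no zero element, so denominators never vanish under evaluation. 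Parts (1) and (3) are purely set-theoretic in both your proof and the paper's, and match.
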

\begin{proof}
The first statement is set theoretically obvious and since $S_1 \subseteq \langle S_1 \rangle$, it implies that $Skel(\langle S_1 \rangle) \subseteq Skel(S_1)$ in the second assertion. To prove the opposite inclusion, we note that $(fg)(x) = f(x)g(x) = 1 \cdot 1 = 1$, $f^{-1}(x)=f(x)^{-1} = 1^{-1}=1$ and  $(f+g)(x) = f(x) + g(x) = 1+1=1$, for any $f,g$ such that $f(x)=g(x)=1$
thus proving the skeleton of the semifield generated by $S_1$ is in $Skel(S_1)$. Now, we have to show that convexity is preserved. Since for $k_1,...,k_t \in \mathbb{H}(x_1,...,x_n)$ s.t. $\sum_{i=1}^{t}k_i=1$ and for any $s_1,...,s_t$ in the semifield generated by $S_1$ we have $(\sum_{i=1}^{t}k_is_i)(x) = \sum_{i=1}^{t}k_i(x)s_i(x)= \sum_{i=1}^{t}(k_i(x)\cdot 1)= \sum_{i=1}^{t}k_i(x) = 1(x)=1$, we have that convexity holds.
For the last assertion, by (1) we have that $Skel(\bigcup_{i \in I}S_i) \subseteq Skel(S_i)$ for each $i \in I$, thus $Skel(\bigcup_{i \in I}S_i) \subseteq \bigcap_{i \in I}Skel(S_i)$. The converse direction is obvious.
\end{proof}

\begin{rem}\label{rem_max_semifield_principal_skeletons}
For $\mathbb{H}$ a bipotent semifield, the following statements hold for \linebreak $f,g \in \mathbb{H}(x_1,...,x_n)$:
\begin{enumerate}
  \item $Skel(fg), Skel(f \dotplus g), Skel( f \wedge g) \supseteq Skel(f) \cap Skel(g)$.
  \item $Skel(fg) = Skel(f \dotplus g)  = Skel(f) \cap Skel(g)$ for all $f,g \geq 1$.
  \item $Skel(f \wedge g)  = Skel(f) \cup Skel(g)$ for all $f,g \leq 1$.
  \item $Skel(f) = Skel(f^{-1}) = Skel(f \dotplus f^{-1})  = Skel(f \wedge f^{-1})$.
\end{enumerate}
\end{rem}
\begin{proof}
(1) If $x \in Skel(f) \cap Skel(g)$ then $f(x)=g(x)=1$, so $f(x) \dotplus g(x) =  f(x) \wedge g(x) =f(x)g(x) = 1$.\\
(2) We have that $(f \dotplus g)(x) = f(x) \dotplus g(x) = \sup(f(x),g(x)) = 1$. Since $f,g \geq 1$ we have that $\sup(f(x),g(x)) = 1$ if and only if $f(x) = 1$ and $g(x)=1$. Analogously the same holds for $f(x)g(x) = 1$ when both $f,g \geq 1$.\\
(3) We have that $(f \wedge g)(x) = f(x) \wedge g(x) = \inf(f(x),g(x)) = 1$. Since $f,g \geq 1$ we have that $\inf(f(x),g(x)) = 1$ if and only if $f(x) = 1$ or $g(x)=1$. \\
(4) follows Proposition \ref{prop_skel_property1}(2) since $\langle f \rangle = \langle f^{-1} \rangle = \langle f \dotplus f^{-1} \rangle = \langle f \wedge f^{-1} \rangle$ (recalling that $f \wedge f^{-1} =  (f \dotplus f^{-1})^{-1}$).
\end{proof}

\begin{defn}\label{defn_principal_kernels_in_semifield_of_fractions}
Denote the collection of skeletons in $\mathbb{H}^n$ by $Skl(\mathbb{H}^n)$
and the collection of principal skeletons in $\mathbb{H}^n$ by $PSkl(\mathbb{H}^n)$.
\end{defn}

\ \\

\subsection{Kernels of skeletons}\label{Subsection:Kernels_of_Skeletons}

\ \\

In the following discussion, we construct an operator $Ker : \mathbb{H}^n \rightarrow \mathbb{P}(\mathbb{H}(x_1,...,x_n))$ which associates a kernel of the semifield of fractions $\mathbb{H}(x_1,...,x_n)$ to any skeleton in $Skl(\mathbb{H}^n)$. Then we will proceed to study the relation between this operator $Ker$ and the operator $Skel$ defined in the previous subsection.\\
Throughout this section, we take $\mathbb{H}$ to be a bipotent (totally ordered) semifield.
\ \\

\begin{defn}
Given a subset $Z$ of $\mathbb{H}^n$ define following subset of $\mathbb{H}(x_1,...,x_n)$:
\begin{equation}
Ker(Z) = \{ f \in \mathbb{H}(x_1,...,x_n) \ : \  f(a_1,...,a_n) = 1, \ \forall (a_1,...,a_n) \in Z    \}
\end{equation}
\end{defn}

\begin{rem}\label{rem_ker_properties_1}
For $Z,Z_i \subset \mathbb{H}^n$ the following statements hold:
\begin{enumerate}
  \item $Ker(Z)$ is a kernel of $\mathbb{H}(x_1,...,x_n)$.
  \item If $Z_1 \subseteq Z_2$, then $Ker(Z_2) \subseteq Ker(Z_1)$.
  \item $Ker(\bigcup_{i \in I}Z_i) = \bigcap_{i \in I}Ker(Z_i)$.
  \item $K \subseteq Ker(Skel(K))$ for any kernel $K$ of $\mathbb{H}(x_1,...,x_n)$.
  \item $Z \subseteq Skel(Ker(Z))$.
\end{enumerate}
\end{rem}
\begin{proof}
The first assertion follows from the proof of (2) of Proposition \ref{prop_skel_property1}.
The \linebreak second assertion is a trivial set theoretic fact, which in turn implies that \linebreak
$Ker(\bigcup_{i \in I}Z_i) \subseteq \bigcap_{i \in I}Ker(Z_i)$. The second inclusion of (3) is trivial.
Assertions (4) and (5) are straightforward consequences of the definitions of $Skel$ and $Ker$.
\end{proof}

\begin{prop}\label{prop_skel_ker_relations}
\begin{enumerate}
  \item $Skel(Ker(Z)) = Z$ if $Z$ is a skeleton.
  \item $Ker(Skel(K)) = K$, \ if $K = Ker(Z)$ for some $Z \subseteq \mathbb{H}^{n}$.
\end{enumerate}
\end{prop}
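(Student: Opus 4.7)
The proof is a standard Galois connection argument built entirely from Proposition~\ref{prop_skel_property1}(1) and Remark~\ref{rem_ker_properties_1}. Both operators $Skel$ and $Ker$ are inclusion-reversing, and we already have in hand the two extensive inequalities $K \subseteq Ker(Skel(K))$ and $Z \subseteq Skel(Ker(Z))$. The whole proposition is the observation that the fixed points of $Skel \circ Ker$ are exactly the images of $Skel$, and symmetrically for $Ker \circ Skel$.

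For part (1), suppose $Z$ is a skeleton, so $Z = Skel(S)$ for some $S \subseteq \mathbb{H}(x_1,\dots,x_n)$. The inclusion $Z \subseteq Skel(Ker(Z))$ is Remark~\ref{rem_ker_properties_1}(5). Conversely, Remark~\ref{rem_ker_properties_1}(4) applied to any set (or, equivalently, just the definition of $Ker$) gives $S \subseteq Ker(Skel(S)) = Ker(Z)$; applying $Skel$ and using Proposition~\ref{prop_skel_property1}(1) (order-reversal) yields $Skel(Ker(Z)) \subseteq Skel(S) = Z$.

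Part (2) is the mirror image. Suppose $K = Ker(Z)$ for some $Z \subseteq \mathbb{H}^n$. The inclusion $K \subseteq Ker(Skel(K))$ is Remark~\ref{rem_ker_properties_1}(4). For the reverse, Remark~\ref{rem_ker_properties_1}(5) gives $Z \subseteq Skel(Ker(Z)) = Skel(K)$, and then Remark~\ref{rem_ker_properties_1}(2) (order-reversal of $Ker$) gives $Ker(Skel(K)) \subseteq Ker(Z) = K$.

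There is essentially no obstacle here: the content of the proposition is that the properties already collected in Remark~\ref{rem_ker_properties_1} constitute a Galois connection between $\mathbb{P}(\mathbb{H}(x_1,\dots,x_n))$ and $\mathbb{P}(\mathbb{H}^n)$, and the stated equalities are the characterization of its closed elements. The only thing to be mildly careful about is not assuming the inner arguments are already closed; the argument uses the fact that $Z$ is \emph{a priori} of the form $Skel(S)$ (respectively, $K$ of the form $Ker(Z)$) to push the hypothesized presentation through the order-reversal.
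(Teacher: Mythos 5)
Your proof is correct and is essentially the same argument the paper gives: both directions use the two extensive inclusions from Remark~\ref{rem_ker_properties_1}(4)--(5) together with order-reversal of $Skel$ (Proposition~\ref{prop_skel_property1}(1)) and of $Ker$ (Remark~\ref{rem_ker_properties_1}(2)), pushing the hypothesized presentations $Z=Skel(S)$ and $K=Ker(Z)$ through the reversal. The only cosmetic difference is that the paper's proof carries a small citation slip (referring to Proposition~\ref{prop_skel_property1}(5), which does not exist, where it means Remark~\ref{rem_ker_properties_1}(5)), which your write-up does not reproduce.
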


\begin{proof}
In view of Proposition \ref{prop_skel_property1} (5), we need to show that $Skel(Ker(Z)) \subseteq Z$. \linebreak Indeed, writing $Z = Skel(S)$ we have by Proposition \ref{prop_skel_property1} that $S \subset Ker(Skel(S))$ and thus $$Z =Skel(S) \supseteq Skel(Ker(Skel(S))) = Skel(Ker(Z)).$$
For the second assertion, by Remark \ref{rem_ker_properties_1} (5), $Z \subseteq Skel(Ker(Z))$, so
$$Ker(Skel(K)) = Ker(Skel(Ker(Z))) \subseteq Ker(Z) = K.$$
The reverse inclusion follows Remark \ref{rem_ker_properties_1} (4).
\end{proof}

\begin{defn}\label{defn_k_kernels}
A \emph{$\mathcal{K}$-kernel} of $\mathbb{H}(x_1,...,x_n)$ is a kernel of the form $Ker(Z)$, where $Z$ is a suitable skeleton.
\end{defn}

By the above assertions we have
\begin{prop}\label{prop_correspondence_between_K_kernels_and_skeletons}
There is a $1:1$ order reversing correspondence
\begin{equation}
\{ \text{skeletons of } \ \mathbb{H}^{n}  \} \rightarrow \{ \mathcal{K}- \text{kernels of } \ \mathbb{H}(x_1,...,x_n) \},
\end{equation}
given by $Z \mapsto Ker(Z)$;  the reverse map is given by $K \mapsto Skel(K)$.
\end{prop}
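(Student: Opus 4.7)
The plan is to observe that this proposition is essentially a bookkeeping exercise assembling the results already established in Proposition~\ref{prop_skel_ker_relations} and Remark~\ref{rem_ker_properties_1}, together with the definition of $\mathcal{K}$-kernel (Definition~\ref{defn_k_kernels}). There is no real obstacle; the proof reduces to verifying that the two maps land in the correct target sets, that they are mutually inverse, and that each reverses the order.

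First I would check well-definedness. For $Z \mapsto Ker(Z)$ with $Z$ a skeleton, one simply invokes Remark~\ref{rem_ker_properties_1}(1), which ensures $Ker(Z)$ is a kernel, combined with Definition~\ref{defn_k_kernels}: since $Z$ is (in particular) a subset of $\mathbb{H}^n$, the output $Ker(Z)$ is by definition a $\mathcal{K}$-kernel. For the reverse direction $K \mapsto Skel(K)$ with $K$ a $\mathcal{K}$-kernel, the image is trivially a skeleton (it is the set of common zeros of the elements of $K$, hence of the form $Skel(S)$ for $S = K$).

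Next I would show the two assignments are mutually inverse. On the skeleton side, take $Z$ a skeleton; then Proposition~\ref{prop_skel_ker_relations}(1) gives $Skel(Ker(Z)) = Z$. On the $\mathcal{K}$-kernel side, take $K$ a $\mathcal{K}$-kernel, so by Definition~\ref{defn_k_kernels} one may write $K = Ker(Z)$ for some $Z \subseteq \mathbb{H}^n$; then Proposition~\ref{prop_skel_ker_relations}(2) applies and yields $Ker(Skel(K)) = K$. These two identities together show that the maps are bijections and inverse to one another.

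Finally, the order-reversing property is immediate: Remark~\ref{rem_ker_properties_1}(2) says $Z_1 \subseteq Z_2 \Rightarrow Ker(Z_2) \subseteq Ker(Z_1)$, and Proposition~\ref{prop_skel_property1}(1) says $K_1 \subseteq K_2 \Rightarrow Skel(K_2) \subseteq Skel(K_1)$. Hence both $Z \mapsto Ker(Z)$ and $K \mapsto Skel(K)$ are inclusion-reversing, completing the proof. The only subtle point worth emphasizing (and the reason one restricts to $\mathcal{K}$-kernels on the algebraic side rather than to all kernels) is that while $Skel \circ Ker$ is the identity on the full collection of skeletons, $Ker \circ Skel$ need not recover an arbitrary kernel $K$ — only those of the form $Ker(Z)$, i.e., precisely the $\mathcal{K}$-kernels, are recovered by the round trip.
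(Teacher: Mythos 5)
Your proof is correct and takes essentially the same route the paper intends: the paper gives no explicit proof for this proposition, instead introducing it with the phrase "By the above assertions we have," and your write-up simply makes explicit the assembly of Remark~\ref{rem_ker_properties_1}, Proposition~\ref{prop_skel_property1}(1), Proposition~\ref{prop_skel_ker_relations}, and Definition~\ref{defn_k_kernels} that the paper is implicitly invoking. Your closing remark correctly identifies the reason $\mathcal{K}$-kernels (rather than all kernels) appear on the algebraic side.
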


\begin{prop}\label{prop_ker_skel_correspondence}
Let $E_1$ and $E_2$ be kernels in $\mathbb{H}(x_1,...,x_n)$, and let $\mathcal{S}_1~=~Skel(E_1)$ and $\mathcal{S}_2 = Skel(E_2)$ be their corresponding skeletons. Then the following statements hold:
\begin{equation}
Skel(E_1 \cdot E_2) = \mathcal{S}_1 \cap \mathcal{S}_2;
\end{equation}
\begin{equation}
\mathcal{S}_1 \cup \mathcal{S}_2 = Skel(E_1 \cap E_2).
\end{equation}
\end{prop}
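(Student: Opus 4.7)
The first equality follows almost formally from the earlier material. By Remark \ref{rem_ker_latice_operations}, $E_1 \cdot E_2 = \langle E_1 \cup E_2 \rangle$. Combining parts (2) and (3) of Proposition \ref{prop_skel_property1} then gives
\[
Skel(E_1 \cdot E_2) = Skel(\langle E_1 \cup E_2 \rangle) = Skel(E_1 \cup E_2) = Skel(E_1) \cap Skel(E_2) = \mathcal{S}_1 \cap \mathcal{S}_2,
\]
so no new ingredient is needed here.

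For the second equality, the inclusion $\mathcal{S}_1 \cup \mathcal{S}_2 \subseteq Skel(E_1 \cap E_2)$ is immediate from the order-reversing property in Proposition \ref{prop_skel_property1}(1): since $E_1 \cap E_2 \subseteq E_i$, we get $\mathcal{S}_i = Skel(E_i) \subseteq Skel(E_1 \cap E_2)$ for each $i$, and taking unions yields the inclusion.

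The reverse inclusion is the substantive step, and I would prove it by contrapositive. Suppose $x \in \mathbb{H}^n$ lies in neither $\mathcal{S}_1$ nor $\mathcal{S}_2$; then there exist $f_i \in E_i$ with $f_i(x) \neq 1$ for $i = 1,2$. Since $\mathbb{H}(x_1,\dots,x_n)$ is idempotent, each kernel $E_i$ is in fact a subsemifield and hence contains $|f_i| = f_i \dotplus f_i^{-1}$. Applying Proposition \ref{prop_algebra_of_generators_of_kernels}(2) with $X = \{f_1\}$ and $Y = \{f_2\}$ yields
\[
|f_1| \wedge |f_2| \in \langle f_1 \rangle \cap \langle f_2 \rangle \subseteq E_1 \cap E_2.
\]
Because $\mathbb{H}$ is bipotent, evaluation at $x$ commutes with $\wedge$ as $\min$, and the hypothesis $f_i(x) \neq 1$ forces $|f_i|(x) = \max(f_i(x), f_i(x)^{-1}) > 1$ strictly. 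Hence $(|f_1| \wedge |f_2|)(x) = \min(|f_1|(x), |f_2|(x)) > 1$, exhibiting an element of $E_1 \cap E_2$ that does not evaluate to $1$ at $x$, so $x \notin Skel(E_1 \cap E_2)$.

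The main obstacle is precisely the reverse inclusion above, and the crucial leverage is bipotency of $\mathbb{H}$: it is what allows us both to identify $\wedge$ with pointwise $\min$ and, more importantly, to conclude that the minimum of two values strictly above $1$ remains strictly above $1$. In a merely idempotent but not totally ordered base this step could collapse, so the standing assumption that $\mathbb{H}$ is bipotent is indispensable here.
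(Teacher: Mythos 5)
Your proof is correct and follows essentially the same route as the paper. The substantive second assertion is handled identically: both you and the paper argue by contrapositive/contradiction, use Proposition \ref{prop_algebra_of_generators_of_kernels}(2) to exhibit $|f_1|\wedge|f_2|$ as an element of $E_1\cap E_2$, and then invoke bipotency of $\mathbb{H}$ to conclude that the pointwise minimum of two values strictly above $1$ stays strictly above $1$. Your treatment of the first assertion is slightly tidier than the paper's (you chain Remark \ref{rem_ker_latice_operations} with parts (2) and (3) of Proposition \ref{prop_skel_property1} rather than re-deriving both inclusions from scratch), but this is an optimization rather than a different argument.
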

\begin{proof}
For the first assertion, denote $K = E_1 \cdot E_2$. Since $E_1$ and $E_2$ are kernels, by Remark \ref{rem_ker_latice_operations} $K$ also is a kernel and $E_1, E_2 \subseteq K$. Thus, by Proposition \ref{prop_skel_property1} (1), $Skel(K) \subseteq Skel(E_1)$ and $Skel(K) \subseteq Skel(E_2)$, so $Skel(K) \subseteq \mathcal{S}_1 \cap \mathcal{S}_2$. Conversely, if $x~\in~\mathcal{S}_1 \cap \mathcal{S}_2$ then $\forall f \in E_1$ and $\forall g \in E_2$, \ $f(x) = g(x) =1$ thus $(fg)(x) = f(x)g(x) = 1 \cdot 1 = 1$. Consequently,
\begin{align*}
\mathcal{S}_1 \cap \mathcal{S}_2 = \ &  \left\{  x \in \mathbb{H}^n \ : \ f(x) = 1 \ \forall f \in E_1 \right\} \cap \left\{  x \in \mathbb{H}^n \ : \ g(x) = 1 \ \forall g \in E_2 \right\} \\
= \ & \left\{  x \in \mathbb{H}^n \ : \ f(x) = 1, g(x) = 1 \  \forall f \in E_1 \ \forall g \in E_2 \right\} \\
\subseteq \ & \left\{ x \in \mathbb{H}^n \ : \ (fg)(x) =f(x)g(x) =  1 \  \forall f \in E_1 \ \forall g \in E_2 \right\}\\
= \ & \left\{ x \in \mathbb{H}^n \ : \ h(x) = 1 \ \forall h \in K \right\} =Skel(K),
\end{align*}
as desired.\\
For the second assertion, denote $K = E_1 \cap E_2$. Since $K \subseteq E_1, E_2$ we have by Proposition \ref{prop_skel_property1} that $\mathcal{S}_1 =Skel(E_1)\subseteq Skel(K)$ and $\mathcal{S}_2 = Skel(E_2) \subseteq Skel(K)$ thus $\mathcal{S}_1 \cup \mathcal{S}_2~\subseteq~Skel(K)$. Conversely, since $E_1$ and $E_2$ are kernels $\langle E_1 \rangle = E_1$ and $\langle E_2 \rangle = E_2$, thus Proposition \ref{prop_algebra_of_generators_of_kernels} yields that $K =  \langle \{|f| \wedge |g| : f \in E_1, g \in E_2 \} \rangle$ and so, by Proposition \ref{prop_skel_property1}(2), \ $Skel(K) = Skel\left( \{|f| \wedge |g| : f \in E_1, g \in E_2 \} \right)$. In view of the above we have that
$$x \in Skel(K) \ \  \Leftrightarrow  \ \  |f(x)| \wedge |g(x)| = 1 \ \ \forall f \in E_1, g \in E_2.$$
Now, Let $x \in Skel(K)$ and assume to the contrary that $x \not \in Skel(E_1)$ and \linebreak $x \not \in Skel(E_2)$. Then there are $f' \in E_1$ and $g' \in E_2$ such that $|f'(x)| > 1$ and $|g'(x)| > 1$. Since $\mathbb{H}$ is bipotent and $|f'(x)|,|g'(x)| \in \mathbb{H}$ we have that $(|f'| \wedge |g'|)(x) = |f'(x)| \wedge |g'(x)|  = \min(|f'(x)|, |g'(x)|)> 1$ which yields that $x \not \in Skel(K)$. A contradiction.
\end{proof}

As a special case of Proposition \ref{prop_ker_skel_correspondence} we have

\begin{cor}\label{cor_principal_skel_correspondence}
For $f,g \in \mathbb{H}(x_1,...,x_n)$
\begin{equation}\label{eq_1_prop_principal_skel_correspondence}
Skel(\langle f \rangle \cdot \langle g \rangle ) = Skel(f) \cap Skel(g);
\end{equation}
\begin{equation}\label{eq_2_prop_principal_skel_correspondence}
Skel(\langle f \rangle \cap \langle g \rangle) = Skel(f) \cup Skel(g).
\end{equation}
Note that by convention $Skel(f) = Skel(\langle f \rangle)$.
\end{cor}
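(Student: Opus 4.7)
The plan is to recognize this as a direct instantiation of Proposition \ref{prop_ker_skel_correspondence} in the case of principal kernels, so no new machinery is required. First I would set $E_1 = \langle f \rangle$ and $E_2 = \langle g \rangle$; since these are kernels in $\mathbb{H}(x_1,\dots,x_n)$, the hypotheses of Proposition \ref{prop_ker_skel_correspondence} are satisfied. Writing $\mathcal{S}_1 = Skel(\langle f \rangle)$ and $\mathcal{S}_2 = Skel(\langle g \rangle)$, the proposition immediately gives
\begin{equation*}
Skel(\langle f \rangle \cdot \langle g \rangle) = \mathcal{S}_1 \cap \mathcal{S}_2, \qquad Skel(\langle f \rangle \cap \langle g \rangle) = \mathcal{S}_1 \cup \mathcal{S}_2.
\end{equation*}

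Next I would invoke the stated convention $Skel(f) := Skel(\langle f \rangle)$ together with Proposition \ref{prop_skel_property1}(2), which guarantees $Skel(\{f\}) = Skel(\langle f \rangle)$ (so the two notations agree unambiguously). Substituting $Skel(f)$ for $\mathcal{S}_1$ and $Skel(g)$ for $\mathcal{S}_2$ in the two equalities above yields exactly \eqref{eq_1_prop_principal_skel_correspondence} and \eqref{eq_2_prop_principal_skel_correspondence}.

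Since this reduction is essentially a restatement, there is no real obstacle; the only thing worth remarking on is the implicit use of Proposition \ref{prop_skel_property1}(2) to pass between the skeleton of a single element and the skeleton of the principal kernel it generates. This justifies the shorthand $Skel(f) = Skel(\langle f \rangle)$ and makes the corollary fall out of Proposition \ref{prop_ker_skel_correspondence} without any additional computation.
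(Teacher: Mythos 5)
Your proof is correct and matches the paper's own approach: the paper explicitly presents the corollary as a special case of Proposition \ref{prop_ker_skel_correspondence} obtained by setting $E_1 = \langle f \rangle$ and $E_2 = \langle g \rangle$, using Proposition \ref{prop_skel_property1}(2) to identify $Skel(f)$ with $Skel(\langle f \rangle)$. Nothing further is needed.
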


\newpage

\section{The structure of  the semifield of fractions}

\ \\
\subsection{Bounded rational functions in the semifield of fractions}

\ \\
Throughout this section we assume $\mathbb{H}$ to be a bipotent and divisible semifield. Any supplemental assumptions regarding $\mathbb{H}$ will be explicitly stated.

\begin{flushleft}We begin by introducing an example which motivates our subsequent discussion in this section. \end{flushleft}

\begin{exmp}
Consider the principal kernel $\langle x \rangle$. Its corresponding skeleton is \linebreak defined by the equation $x=1$. Let $\alpha \in \mathscr{R}$ be such that $\alpha \neq 1$. The principal \linebreak kernel  $ \langle x \rangle \cap \langle \alpha \rangle = \langle |x| \wedge |\alpha| \rangle$ where $|x| \wedge |\alpha| = \frac{|x| |\alpha|}{|x| + |\alpha| } = \min(|x|,|\alpha|)$. Since \linebreak $ \min(|x|,|\alpha|) = 1 \Leftrightarrow |x| = 1$ we get that $\langle |x| \wedge |\alpha| \rangle$ defines exactly the same skeleton as $\langle x \rangle$.  As $x \not \in \langle x \rangle \cap \langle \alpha \rangle$ ($x$ cannot be bounded by a bounded function)  we have \linebreak that $\langle x \rangle \supset  \langle x \rangle \cap \langle \alpha \rangle$.\\
The cause of this phenomenon is the kernels of the form $\langle \alpha \rangle$ with $\alpha~\in~\mathscr{R}~\setminus~ \{1\}$. These kernels are not kernels of $\mathscr{R}$-homomorphisms, since for any such homomorphism $\phi$, one must have $\alpha =\alpha \phi(1) = \phi(\alpha)$ but as $\alpha \in Ker\phi$ we have that $\phi(\alpha) = 1$ too.
Thus kernels containing them  also fail to be kernels of $\mathscr{R}$-homomorphisms.
\end{exmp}

\begin{defn}
$f \in \mathscr{R}(x_1,...,x_n)$ is said to be \emph{bounded from below}  if there exists some $\alpha > 1$ in $\mathscr{R}$
such that $|f| \geq \alpha$.
\end{defn}

\begin{rem}\label{rem_bounded_from_below_contain_H_kernel}
Let $\langle f \rangle$ be a principal kernel of $\mathbb{H}(x_1,...,x_n)$. Then $f$ is bounded from below if and only if $\langle f \rangle \supseteq \langle \alpha \rangle$ for some $\alpha > 1$ in $\mathbb{H}$.
\end{rem}
\begin{proof}
If $\langle f \rangle \supseteq \langle \alpha \rangle$ then in particular $\alpha \in \langle f \rangle$ thus there exists some $k \in \mathbb{N}$ such that $|\alpha| \leq |f|^k$ thus $\beta \leq |f|$ for $\beta \in \mathbb{H}$ such that $\beta^k = |\alpha|$ (such $\beta$ exists as $\mathbb{H}$ is divisibly closed). Thus $f$ is bounded from below. Conversely, if $f$ is bounded from below, then there exists some $\alpha > 1$ in $\mathbb{H}$ such that $|f| \geq \alpha = |\alpha|$ which yields that $\alpha \in \langle f \rangle$ and thus $\langle \alpha \rangle \subseteq \langle f \rangle$.
\end{proof}

\ \\

\begin{rem}\label{rem_generator_of_down_bounded_kernel}
 Let $\langle f \rangle$ be a principal kernel such that $f$ is bounded from below. Then any generator $g \in \langle f \rangle$ is bounded from below.
\end{rem}
\begin{proof}
 Since $g$ is a generator, we in particular have that $f \in \langle g \rangle$. Then, By \linebreak Remark~\ref{rem_kernel_by_abs_value}, there exists some $k \in \mathbb{N}$ such that $|f| \leq |g|^k$. Since $f$ is bounded from below there exists some $\alpha > 1$ in $\mathbb{H}$ such that $|f| \geq \alpha$, thus $|g|^k \geq |f| \geq \alpha$, which yields that  $g$ is bounded from below by $\beta \in \mathbb{H}$ such that $\beta^k = \alpha$ (and so $\beta > 1$).
\end{proof}

\begin{defn}
A principal kernel $\langle f \rangle$  of $\mathbb{H}(x_1,...,x_n)$ is said to be \emph{bounded from below}, if it is generated by a function bounded from below.
\end{defn}

Let $\mathbb{H}$ be a divisible bipotent semifield. We will begin by characterizing the principal kernels $K = \langle f \rangle \subseteq \mathbb{H}(x_1,...,x_n)$ for which $Skel(f) = \emptyset$.

\begin{rem}\label{rem_emptyset_kernels}
Let $\langle f \rangle$ be a principal kernel of $\mathbb{H}(x_1,...,x_n)$. Then $Skel(f) = \emptyset$ \linebreak if and only if
$|f(x)|  > 1$ for every $x \in \mathbb{H}^n$.
\end{rem}
\begin{proof}
$Skel(f) = Skel(|f|) = \{ x \in \mathbb{H}^n : h(x) = 1 \ \forall h \in \langle |f| \rangle \} \subseteq \{ x \in \mathbb{H}^n : |f(x)| = 1 \} = \emptyset$. Conversely, as the skeleton is determined by any generator of $\langle f \rangle$, in particular $|f|$, $Skel(f) = \emptyset$ implies that there is no solutions to the equation $|f(x)| = 1$. Since $|f(x)| \geq 1$ for every $x \in \mathbb{H}^n$, this implies that $|f(x)|  > 1$ for every $x \in \mathbb{H}^n$.
\end{proof}

\begin{prop}\label{prop_emptyset_kernels}
Let $\langle f \rangle$ be a principal kernel in $\mathbb{H}(x_1,...,x_n)$. Then the following statements are equivalent:
\begin{enumerate}
  \item $Skel(\langle f \rangle) = \emptyset.$
  \item There exists a generator $f'$ of $\langle f \rangle$ such that $f' = f' \dotplus \gamma$ with $\gamma > 1$.
  \item There exists a generator of $\langle f \rangle$ which is bounded from below.
\end{enumerate}
\end{prop}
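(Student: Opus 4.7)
The plan is to close a short cycle of implications in which three of the four arrows are direct from the definitions and only $(1) \Rightarrow (3)$ requires real work. The easy implications will use the characterization of $Skel(\langle f \rangle)$ via $|f|$ together with Remarks~\ref{rem_emptyset_kernels} and~\ref{rem_absolute_operators_relations}. The hard implication will exploit the tropical (piecewise affine) structure of $|f|$.

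For $(3) \Rightarrow (1)$, given a generator $f'$ of $\langle f \rangle$ with $|f'| \geq \alpha$ for some $\alpha > 1$ in $\mathbb{H}$, note that $|f'(x)| \geq \alpha > 1$ for every $x \in \mathbb{H}^n$, so $|f'(x)| = 1$ has no solutions, and Remark~\ref{rem_emptyset_kernels} applied to the generator $f'$ of $\langle f \rangle$ yields $Skel(\langle f \rangle) = Skel(\langle f' \rangle) = \emptyset$. For $(2) \Rightarrow (3)$, reading $f' = f' \dotplus \gamma$ pointwise gives $f'(x) = \sup(f'(x), \gamma)$, i.e., $f'(x) \geq \gamma > 1$ everywhere, so $|f'| \geq f' \geq \gamma$ and $f'$ is a generator bounded from below. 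Conversely, for $(3) \Rightarrow (2)$, given a generator $f'$ with $|f'| \geq \alpha > 1$, replace it by $f'' = |f'|$; by Remark~\ref{rem_absolute_operators_relations}(6), $\langle f'' \rangle = \langle f' \rangle = \langle f \rangle$, so $f''$ is still a generator, and $f'' \geq \alpha$ rewrites as $f'' = f'' \dotplus \alpha$, giving~(2) with $\gamma = \alpha$.

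The main direction is $(1) \Rightarrow (3)$. Assume $Skel(\langle f \rangle) = \emptyset$; by Remark~\ref{rem_emptyset_kernels}, $|f(x)| > 1$ for every $x \in \mathbb{H}^n$. The target is to produce a uniform lower bound $|f| \geq \alpha$ with $\alpha \in \mathbb{H}$, $\alpha > 1$; then $|f|$ itself, which is a generator of $\langle f \rangle$ by Remark~\ref{rem_absolute_operators_relations}(6), will witness~(3). Write $|f| = p/q$ with $p, q \in \mathbb{H}[x_1, \ldots, x_n]$ max-plus polynomials. Since $\mathbb{H}$ is bipotent, divisible and archimedean, Theorem~\ref{thm_holder_translated} embeds $\mathbb{H} \hookrightarrow (\mathbb{R}^+, \dotplus, \cdot)$; passing to logarithmic coordinates, $\log|f|$ becomes a continuous piecewise affine function on a dense subset of $\mathbb{R}^n$ with finitely many affine pieces, each piece being the restriction of an affine function $a_{ij}\cdot y + b_{ij}$ to a closed polyhedron $P_{ij}$ whose defining inequalities have integer exponent coefficients and constants in $\log \mathbb{H}$.

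The hard part will be extracting a positive uniform lower bound from the pointwise positivity $\log|f|>0$. The key classical fact I will invoke is: an affine function on a non-empty closed polyhedron with finite infimum attains that infimum (by Minkowski--Weyl, the polyhedron decomposes as a polytope plus its recession cone, on which the linear part of the objective must be non-negative, reducing the minimization to the compact polytope). Applying this on each of the finitely many pieces $P_{ij}$, the restriction of $\log|f|$ to $P_{ij}$ attains its infimum at some $y^*_{ij} \in P_{ij}$; since $\log|f|(y^*_{ij}) > 0$, each such infimum is strictly positive, and taking the minimum over the finitely many pieces gives $\inf_{\mathbb{R}^n} \log|f| > 0$, i.e., $\inf |f| > 1$. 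Finally, divisibility of $\mathbb{H}$ ensures the vertices of the polyhedral subdivision have coordinates in $\mathbb{H}^n$ (the defining equations have integer coefficients and constants in $\log \mathbb{H}$, and $\log \mathbb{H}$ is a $\mathbb{Q}$-vector subspace of $\mathbb{R}$), so the attained infimum of $|f|$ lies in $\mathbb{H}$; alternatively, archimedeanness directly produces $\alpha \in \mathbb{H}$ with $1 < \alpha \leq \inf|f|$. Either way, $|f| \geq \alpha > 1$, establishing~(3).
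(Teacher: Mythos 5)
Your proof is essentially the same as the paper's: both reduce the hard direction $(1)\Rightarrow(3)$ to a finite polyhedral decomposition of the domain into pieces where $|f|$ is a single Laurent monomial (i.e., $\log|f|$ is affine), show the infimum on each piece is attained, and take the minimum over the finitely many pieces. Your Minkowski--Weyl phrasing is cleaner than the paper's informal ``one-dimensional curve'' argument for unbounded pieces, but it is the same idea.

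One point of exposition worth reordering: your claim that ``$\log|f|(y^*_{ij})>0$'' is not justified at the point where you assert it, because $y^*_{ij}$ is a priori just a point of $\mathbb{R}^n$, and the hypothesis only gives $\log|f|>0$ on the dense subset $\log\mathbb{H}^n$, hence by continuity only $\log|f|\geq 0$ at $y^*_{ij}$. The observation you defer to a ``Finally'' --- that the minimizer can be taken to be a vertex whose coordinates, by Cramer's rule over $\mathbb{Q}$ applied to integer-coefficient systems with constants in $\log\mathbb{H}$, land in $\log\mathbb{H}^n$ (using divisibility so that $\log\mathbb{H}$ is a $\mathbb{Q}$-subspace) --- is precisely the justification for that claim, not an afterthought. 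Similarly, the ``alternatively, archimedeanness directly produces $\alpha$'' phrase only works once $\inf|f|>1$ is already established as a real number, which again needs the vertex-in-$\log\mathbb{H}^n$ fact; density of $\log\mathbb{H}$ then gives you an $\alpha\in\mathbb{H}$ between $1$ and the infimum. So the ``alternative'' is not independent of the first route, it relies on it.
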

\begin{proof}
$(2) \Leftrightarrow (3)$ since $f' = f' \dotplus \gamma \Leftrightarrow |f'| = f' \geq \gamma > 1$. \\
We now prove  $(1) \Leftrightarrow (2)$. By Remark \ref{rem_emptyset_kernels}, $Skel(\langle f \rangle) = \emptyset$ if and only if $|f(x)|~>~1$ for every $x \in \mathbb{H}^n$. We claim that  $|f(x)|>1$ for every $x \in \mathbb{H}^n$ if and only if $|f| = |f| \dotplus \gamma$ with $\gamma > 1$ or in other words, that $|f| \geq \gamma$.
If $|f| = |f| \dotplus \gamma$,  then $|f(x)| > \gamma > 1$  for every $x \in \mathbb{H}^n$ and so $Skel(f) = \emptyset$. Conversely, let $|f| = \frac{h}{g} = \frac{\sum_{i=1}^{k}h_i}{\sum_{j=1}^{m}g_j}$ with $h_i$ and $g_j$ monomials in $\mathbb{H}[x_1,...,x_n]$. $|f| : \mathbb{H}^n \rightarrow \mathbb{H}$ defines a partition of $\mathbb{H}^n$ to a finite number of regions. Over each of these regions, $|f|$ has the form of a Laurent monomial  $l = \frac{h_{i_0}}{g_{j_0}}$ where $h = h_{i_0}$ and $g = g_{j_0}$. Each region $R$ is either closed and bounded or unbounded. In the former case $|f|$, being continuous, attains a minimum value over $R$; thus the image of $|f|$ is bounded from below by some $\gamma > 1$. In the latter case, if $l$ is constant then it trivially attains a minimal value and is bounded from below by some $\gamma > 1$. The only possibility we are left with is that $l$ is not constant. Assume the image of $l$ is not bounded from below by any $\gamma > 1$. Then there exists a one dimensional curve over which $l$ is monotonically decreasing  in a constant rate and thus must obtain the value $1$ contradicting the assumption that $|f(x)| > 1$ for all $x \in \mathbb{H}^n$. ( It is convenient to consider $l$ in logarithmic scale so that $l$ takes the form of a linear form and the curve is a one dimensional affine space (a straight line) over which $l$ has a constant slope.)
\end{proof}

%
%

\begin{cor}\label{cor_empty_kernels_correspond_to_bfb_kernels}
Let $\langle f \rangle$ be a principal kernel in $\mathbb{H}(x_1,...,x_n)$. Then \linebreak $Skel(\langle f \rangle) = \emptyset$ if and only if $\langle f \rangle$ is bounded from below.
\end{cor}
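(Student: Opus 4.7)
The plan is to deduce the corollary as a direct consequence of Proposition \ref{prop_emptyset_kernels} together with the very definition of a principal kernel being bounded from below. The key observation is that Proposition \ref{prop_emptyset_kernels} already gives the equivalence of $Skel(\langle f \rangle) = \emptyset$ with the existence of \emph{some} generator of $\langle f \rangle$ that is bounded from below, and by definition a principal kernel is bounded from below exactly when it admits such a generator.

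First, I would prove the implication $(\Leftarrow)$: assume $\langle f \rangle$ is bounded from below. By the definition of a bounded-from-below principal kernel, $\langle f \rangle$ has a generator which is bounded from below, so Proposition \ref{prop_emptyset_kernels} $(3)\Rightarrow(1)$ immediately gives $Skel(\langle f \rangle) = \emptyset$.

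For the converse $(\Rightarrow)$: suppose $Skel(\langle f \rangle) = \emptyset$. By Proposition \ref{prop_emptyset_kernels} $(1)\Rightarrow(3)$, there exists a generator of $\langle f \rangle$ which is bounded from below. By the definition of a bounded-from-below principal kernel, this is precisely the statement that $\langle f \rangle$ is bounded from below.

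Since the content of the corollary is entirely bookkeeping over the previous proposition and definition, there is essentially no obstacle. The only minor subtlety worth noting (though not strictly required) is that Remark \ref{rem_generator_of_down_bounded_kernel} guarantees this notion is well-posed: if one generator is bounded from below then \emph{every} generator is, so the definition does not depend on the particular choice of generator used to witness boundedness.
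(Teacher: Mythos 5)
Your proof is correct and takes essentially the same route as the paper: both rely on Proposition~\ref{prop_emptyset_kernels} (the equivalence $(1)\Leftrightarrow(3)$) and unwind the definition of a bounded-from-below principal kernel. The paper's converse direction detours through Remark~\ref{rem_emptyset_kernels} rather than citing the proposition's $(3)\Rightarrow(1)$ directly, but this is cosmetic; your observation about Remark~\ref{rem_generator_of_down_bounded_kernel} matches the paper as well.
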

\begin{proof}
If $Skel(\langle f \rangle) = \emptyset$ then by Proposition \ref{prop_emptyset_kernels} $\langle f \rangle$ is generated by a bounded from below element, thus is a bounded from below kernel. Moreover, by Remark \ref{rem_generator_of_down_bounded_kernel} we have that any generator of $\langle f \rangle$ is bounded from below. Conversely, if $\langle f \rangle$ is a bounded from below kernel, then $f'$ is bounded from below for any generator $f'$ of $\langle f \rangle$, in particular for $f' = f$. Thus there exists some $\alpha > 1$ in $\mathbb{H}$ such that $|f| \geq \alpha$,  thus, by Remark \ref{rem_emptyset_kernels}, we have that $Skel(\langle f \rangle)= \emptyset$.
\end{proof}

\begin{defn}
$f \in \mathbb{H}(x_1,...,x_n)$ is said to be \emph{bounded from above} (or simply \emph{bounded}) if there exists some $\alpha \geq 1$ in $\mathbb{H}$
such that $|f| \leq \alpha$.
\end{defn}

\begin{rem}\label{rem_generator_of_up_bounded_kernel}
 Let $\langle f \rangle$ be a principal kernel such that $f$ is bounded from above. Then any $g \in \langle f \rangle$ is bounded from above. In particular, any generator of $\langle f \rangle$ is bounded from above.
\end{rem}
\begin{proof}
By Remark \ref{rem_kernel_by_abs_value} for any $g \in \langle f \rangle$ there exists some $k \in \mathbb{N}$ such that $|g|~\leq~|f|^k$. Since $f$ is bounded there exists some $\alpha \geq 1$ in $\mathbb{H}$ such that $|f| \leq \alpha$, thus $|g| \leq |f|^k \leq \alpha^k$, which yields that  $g$ is bounded.
\end{proof}

\begin{defn}
A principal kernel $\langle f \rangle$  of $\mathbb{H}(x_1,...,x_n)$ is said to be \emph{bounded from above} if it is generated by a function bounded from above.
\end{defn}

\begin{rem}\label{rem_up_bounded_is_in_H_kernel}
Let $\langle f \rangle$ be a principal kernel of $\mathbb{H}(x_1,...,x_n)$. Then $f$ is bounded from above if and only if $\langle f \rangle \subseteq \langle \alpha \rangle$ for some $\alpha \in \mathbb{H}$.
\end{rem}
\begin{proof}
If $\langle f \rangle \subseteq \langle \alpha \rangle$ then in particular $f \in \langle \alpha \rangle$. Thus there exists some $k \in \mathbb{N}$ such that $|f| \leq |\alpha|^k$, thus $f$ is bounded from above. Conversely, if $f$ is bounded from above, then there exists some $\alpha \in \mathbb{H}$ ( $\alpha \geq 1$) such that $|f| \leq \alpha = |\alpha|$ which yields that $f \in \langle \alpha \rangle$ and thus $\langle f \rangle \subseteq \langle \alpha \rangle$.
\end{proof}

\begin{rem}\label{rem_down_bounded_kernel_contains_H}
Let $\langle f \rangle$ be a principal kernel of $\mathbb{H}(x_1,...,x_n)$. Then $\langle f \rangle$ is bounded from below if and only if $\mathbb{H} \subseteq \langle f \rangle$, if and only if there exists some $\alpha > 1$ in $\mathbb{H}$ such that $\alpha \in \langle f \rangle$.
\end{rem}
\begin{proof}
If $\langle f \rangle$ is bounded from below, then there exists some $\alpha > 1$ in $\mathbb{H}$ such that $|f| \geq \alpha$ thus by Remark \ref{rem_kernel_by_abs_value} we have that $\alpha \in \langle f \rangle$. Since any $\alpha \neq 1$ is a generator of $\mathbb{H}$, we get that $\mathbb{H} = \langle \alpha \rangle \subseteq \langle f \rangle$.
Conversely, if $\mathbb{H} \subseteq \langle f \rangle$ then in particular $\alpha \in \langle f \rangle$ for any $\alpha > 1$ in $\mathbb{H}$. If $f$ is not bounded from below, then for any $\beta > 1$ there exists some $a_{\beta} \in \mathbb{H}^n$ such that $|f(a_{\beta})| < \beta$. Now, as $\alpha \in \langle f \rangle$, there exists some $k \in \mathbb{N}$ such that $\alpha \leq |f|^k$. Thus $\beta \leq |f|$ for $\beta \in \mathbb{H}$ such that $\beta^k = \alpha$ (such $\beta$ exists as $\mathbb{H}$ is divisibly closed, and $\beta > 1$ since $\alpha > 1$), a contradiction, since $|f(a_{\beta})| < \beta$.
\end{proof}

\begin{rem}\label{rem_bound_wrt_homomorphism}
Let $\phi : \mathbb{H}(x_1,...,x_n) \rightarrow \mathbb{H}(x_1,...,x_n)$ be an $\mathbb{H}$-homomorphism. If \linebreak $f \in \mathbb{H}(x_1,...,x_n)$ is bounded from below then $\phi(f)$ is bounded from below, and if $f$ is bounded from above then $\phi(f)$ is bounded from above.
\end{rem}
\begin{proof}
Let $f \in \mathbb{H}(x_1,...,x_n)$ be bounded from below. Then there exists some $\alpha > 1$ such that $|f| \dotplus \alpha = |f|$. As $|\phi(f)| = \phi(|f|) = \phi(|f| \dotplus \alpha) = \phi(|f|) \dotplus \phi(\alpha) = |\phi(f)| \dotplus \alpha$,  we have that $\phi(f)$ is bounded from below.\\ Let $f$ be bounded from above. Then there exists some $\alpha \geq 1$ such that $|f| \dotplus \alpha = \alpha$. As $\alpha = \phi(\alpha) = \phi(|f| \dotplus \alpha) = |\phi(f)| \dotplus \alpha$  we have that $\phi(f)$ is bounded from above.
\end{proof}

\begin{prop}
Let $\phi : \mathbb{H}(x_1,...,x_n) \rightarrow \mathbb{H}(x_1,...,x_n)$ be an $\mathbb{H}$-homomorphism. Then $\phi$ sends bounded from above principal kernels to bounded from above \linebreak principal kernels and bounded from below principal kernels to bounded from below principal kernels.
\end{prop}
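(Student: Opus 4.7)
The plan is to assemble three prior remarks. First, by Remarks \ref{rem_generator_of_up_bounded_kernel} and \ref{rem_generator_of_down_bounded_kernel}, the property of being bounded from above (respectively from below) propagates from a principal kernel to every one of its generators. So if $\langle f \rangle$ is a bounded-from-above (resp.\ bounded-from-below) principal kernel, then the designated generator $f$ itself witnesses the bound: there exists some $\alpha \in \mathbb{H}$ with $|f| \leq \alpha$ (resp.\ some $\alpha > 1$ in $\mathbb{H}$ with $\alpha \leq |f|$).

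Second, since $\mathbb{H}(x_1,\ldots,x_n)$ is idempotent, the principal kernel $\langle f \rangle$ is itself a subsemifield. The restriction $\phi|_{\langle f \rangle} : \langle f \rangle \longrightarrow \phi(\langle f \rangle)$ is therefore an epimorphism of semifields, and Remark \ref{rem_image_of_principal_kernel} identifies
$$\phi(\langle f \rangle) = \langle \phi(f) \rangle$$
as a principal kernel of $\Im(\phi)$ generated by $\phi(f)$.

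Third, I would invoke Remark \ref{rem_bound_wrt_homomorphism}: because $\phi$ is an $\mathbb{H}$-homomorphism it fixes $\alpha$ pointwise, so the inequality $|f| \leq \alpha$ (resp.\ $\alpha \leq |f|$) is preserved verbatim under $\phi$ to give $|\phi(f)| \leq \alpha$ (resp.\ $\alpha \leq |\phi(f)|$). Hence $\phi(f)$ is bounded from above (resp.\ from below), and therefore the principal kernel $\langle \phi(f) \rangle = \phi(\langle f \rangle)$ is bounded from above (resp.\ from below) by definition.

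I do not anticipate any real obstacle here; the argument is almost entirely bookkeeping of previously established remarks. The only minor subtlety is the convention that $\phi(\langle f \rangle)$ is to be read as a principal kernel of $\Im(\phi)$ rather than of the full semifield $\mathbb{H}(x_1,\ldots,x_n)$, since $\phi$ need not be onto; under this reading, the $\mathbb{H}$-linearity of $\phi$ lets the bound transfer directly without any new estimate.
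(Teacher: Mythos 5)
Your proposal is correct and matches the paper's proof essentially step for step: both rest on the facts that $\phi(\langle f \rangle) = \langle \phi(f) \rangle$, that a principal kernel is bounded from above or from below precisely when any (hence every) generator is, and that Remark \ref{rem_bound_wrt_homomorphism} transports these bounds along an $\mathbb{H}$-homomorphism. Your added caveat about reading $\phi(\langle f \rangle)$ as a principal kernel of $\Im(\phi)$ when $\phi$ is not onto is a reasonable clarification but does not change the argument.
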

\begin{proof}
The image of a kernel under a homomorphism is a kernel. Now, since \linebreak $\phi(\langle f \rangle) = \langle \phi(f) \rangle$ and since a principal kernel is bounded from above or bounded from below if and only if its generator is, the result follows from Remark \ref{rem_bound_wrt_homomorphism}.
\end{proof}

\begin{rem}\label{rem_bounded_from_above_elements_kernel}
$$\langle \mathbb{H} \rangle = \{ f \in \mathbb{H}(x_1,...,x_n) \ : \ f \ \text{is bounded from above} \ \}.$$
\end{rem}
\begin{proof}
First note that for $\alpha \in \mathbb{H}$ such that $\alpha \neq 1$, $\alpha$ generates $\mathbb{H}$ as a semifield and thus  $\langle \alpha \rangle = \langle \mathbb{H} \rangle$.
The assertion follows from Remark \ref{rem_up_bounded_is_in_H_kernel}, since $f \in \langle \alpha \rangle = \langle \mathbb{H} \rangle$ if and only if $\langle f \rangle \subseteq  \langle \alpha \rangle$.
\end{proof}

\ \\

As we shall now show, there is a close connection between bounded from above and bounded from below kernels:\\

Let $\langle f \rangle \in \PCon(\mathbb{H}(x_1,...,x_n))$ be a principal kernel. By Theorem \ref{thm_nother_1_and_3}(2) we have that
$$ \langle f \rangle / (\langle f \rangle \cap \langle \mathbb{H} \rangle) \cong \langle f \rangle \cdot \langle \mathbb{H} \rangle / \langle \mathbb{H} \rangle.$$
In view of the preceding discussion in this section, $\langle f \rangle \cdot \langle \mathbb{H} \rangle$ is the smallest bounded from below kernel containing $\langle f \rangle$ while $\langle f \rangle \cap \langle \mathbb{H} \rangle$ is the largest bounded from above kernel contained in $\langle f \rangle$. One can consider the quotient $ \langle f \rangle / (\langle f \rangle \cap \langle \mathbb{H} \rangle)$ as a measure for ``how far a kernel is from being bounded".

The kernel $\langle \mathscr{R} \rangle$ plays an important role in our theory. It actually contains all the \linebreak kernels needed to form a correspondence  between principal kernels and principal skeletons. In essence, every principal kernel $\langle f \rangle$ in $\PCon(\mathscr{R}(x_1,...,x_n))$ has an bounded from above ``copy" in the family of principal kernels contained in $\langle \mathscr{R} \rangle$, which possesses the exact same skeleton. Our so called ``Zariski correspondence" will take place between the principal kernels contained in $\langle \mathscr{R} \rangle$ and the skeletons in~$\mathscr{R}^n$.

\ \\

\subsection{The structure of $\langle \mathbb{H} \rangle$}

\ \\

Let $\mathbb{H}$ be a bipotent divisible archimedean semifield. In the following discussion, we study the structure of the subsemifield and kernel $\langle \mathbb{H} \rangle$ of $\mathbb{H}(x_1,...,x_n)$, which by Remark \ref{rem_the contant_generated_kernel} is generated, as a kernel, by any $\alpha \in \mathbb{H} \setminus \{1 \}$. As we have shown in Remark \ref{rem_bounded_from_above_elements_kernel}, $\langle \mathbb{H} \rangle$  is comprised of bounded from above elements of $\mathbb{H}(x_1,...,x_n)$. Note that $\mathbb{H}$ is always assumed to be a bipotent divisible semifield while any additional assumptions regarding $\mathbb{H}$ will be stated when necessary.\\

\begin{prop}\label{prop_unbounded_generator}
For any principal kernel $\langle f \rangle \in \PCon(\langle \mathbb{H} \rangle)$ bounded from above, there exists an unbounded from above kernel $\langle f' \rangle \in \PCon(\mathbb{H}(x_1,...,x_n)$ such that $$ \langle f \rangle = \langle f' \rangle \cap \langle \mathbb{H} \rangle.$$
In particular, $\langle f' \rangle \supset \langle f \rangle$  and $Skel(f') = Skel(f)$.
\end{prop}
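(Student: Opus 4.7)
The plan is to leverage Proposition~\ref{prop_algebra_of_generators_of_kernels}(2) to rewrite the target intersection explicitly and then engineer an unbounded generator $f'$ that realises it. Since $\langle f\rangle\in\PCon(\langle\mathbb{H}\rangle)$ is bounded from above, first I would fix $\alpha\in\mathbb{H}$ with $\alpha>1$ and $|f|\leq\alpha$. By Remark~\ref{rem_the contant_generated_kernel} we then have $\langle\mathbb{H}\rangle=\langle\alpha\rangle$, and Proposition~\ref{prop_algebra_of_generators_of_kernels}(2) gives, for every $f'\in\mathbb{H}(x_1,\dots,x_n)$, the identity
\[
\langle f'\rangle\cap\langle\mathbb{H}\rangle \;=\; \langle\,|f'|\wedge\alpha\,\rangle.
\]
Thus the task reduces to producing an unbounded-from-above $f'$ with $\langle|f'|\wedge\alpha\rangle=\langle|f|\rangle=\langle f\rangle$.

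The guiding idea is to realise $|f|$ as a ``cap at height $\alpha$'' of some unbounded rational function: I would seek $f^{*}\in\mathbb{H}(x_1,\dots,x_n)$ with $f^{*}\geq 1$, $f^{*}$ unbounded, and $f^{*}\wedge\alpha=|f|$ pointwise on $\mathbb{H}^n$. This equality forces $f^{*}=|f|$ wherever $|f|<\alpha$, while permitting $f^{*}\geq\alpha$—and in particular unbounded values—on the plateau $P=\{x:|f(x)|=\alpha\}$. Concretely, I would set $f'=|f|\cdot u$, where $u\in\mathbb{H}(x_1,\dots,x_n)$ is a rational function with $u\geq 1$, $u\equiv 1$ off $P$, and $u$ unbounded on $P$; $u$ is built from the coordinate generators $x_1,\dots,x_n$ and the constant $\alpha$ using the lattice identities of Remark~\ref{rem_principal_kernels_algebra}, adapted to the particular rational expression of $|f|$.

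The proposition then follows from three short verifications. From $|f'|\wedge\alpha=|f|$ pointwise together with the displayed identity, one obtains $\langle f'\rangle\cap\langle\mathbb{H}\rangle=\langle|f|\rangle=\langle f\rangle$, which is the main equality. Since $|f'|\geq\alpha$ on $P$ (and in fact unbounded there), Remark~\ref{rem_up_bounded_is_in_H_kernel} shows $\langle f'\rangle\not\subseteq\langle\alpha\rangle=\langle\mathbb{H}\rangle$, so $\langle f'\rangle$ is unbounded from above; combined with $\langle f\rangle=\langle f'\rangle\cap\langle\mathbb{H}\rangle\subseteq\langle f'\rangle$, this yields the strict inclusion $\langle f\rangle\subsetneq\langle f'\rangle$. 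Finally, $Skel(f')\subseteq Skel(f)$ follows from $\langle f\rangle\subseteq\langle f'\rangle$ by Proposition~\ref{prop_skel_property1}(1), while the reverse inclusion holds because at any $x\in Skel(f)$ one has $|f|(x)=1<\alpha$, so $x\notin P$, whence by construction $u(x)=1$ and therefore $f'(x)=|f|(x)\cdot u(x)=1$.

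The main obstacle is the construction of the auxiliary factor $u$: producing a genuinely rational function in $\mathbb{H}(x_1,\dots,x_n)$ that is identically $1$ off the plateau $P$ and unbounded on $P$ requires care, and draws on both the explicit generators of $\mathbb{H}(x_1,\dots,x_n)$ described in Proposition~\ref{prop_frac_semifield_finitely_gen} and the order-theoretic characterisation of principal kernels from Corollary~\ref{cor_principal_ker_by_order}. Once $u$ is in place, the remainder of the proof is a routine pointwise verification based on the identity $\langle g\rangle=\langle|g|\rangle$ and Proposition~\ref{prop_algebra_of_generators_of_kernels}(2).
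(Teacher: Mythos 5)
Your plan parallels the paper's at a high level — produce $f'$ by attaching to $|f|$ an auxiliary rational factor that is trivial near the skeleton and unbounded far away — but you combine multiplicatively ($f' = |f|\cdot u$) where the paper combines additively ($f' = g \dotplus |f|$), and this is not a harmless cosmetic change. Forcing the pointwise identity $|f'| \wedge |\alpha| = |f|$ with $f' = |f|\cdot u$ pins $u$ down to be identically $1$ on the whole set $\{x : |f(x)| < |\alpha|\}$, not merely on some convenient neighbourhood of $Skel(f)$, and you compute this yourself. Since $u$ is a rational function and hence continuous on $\mathbb{H}^n$, this is fatal whenever the plateau $P = \{x : |f(x)| = |\alpha|\}$ has empty interior: then $\{|f| < |\alpha|\}$ is dense, continuity forces $u \equiv 1$, and $f' = |f|$ is still bounded, so the construction produces nothing. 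Such $f$ exist: take $|f| = (|x_1| \wedge \alpha)(|x_2| \wedge \alpha)^{-1}\dotplus 1$ in $\mathscr{R}(x_1,x_2)$; here $|f|\leq\alpha$ but $P$ is the one-dimensional set $\{x_2 = 1,\ |x_1| \geq \alpha\}$. (For this same $f$ one even has $Skel(f) \supseteq \{|x_2| \geq \alpha\}$, and a Lipschitz bound on the piecewise-linear logarithm of any candidate $|f'|$ then caps $|f'|$ by a power of $\alpha$; so no unbounded $f'$ with $Skel(f') = Skel(f)$ exists at all, which suggests the statement as written needs an extra hypothesis ruling out skeletons that contain a coordinate half-space.)

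Even granting the approach, the step you flag as ``the main obstacle'' is precisely the one that carries all the content, and you leave it untouched. The paper does two things you omit. First, it derives from boundedness a geometric fact: $|f|$ becomes constant when each $|x_i|$ is large, so that $\{|f| < \alpha\}$ is swallowed by the complement of a box $S = \{\,|x_i| > \beta \ \text{for all } i\,\}$ for suitable $\beta\in\mathbb{H}$. Second, it picks the auxiliary function to be controlled on that simple, $f$-independent set $S^{c}$ — concretely $g = (\beta^{-1}|x_1| \wedge \cdots \wedge \beta^{-1}|x_n|) \dotplus 1$ — rather than on the complicated region $\{|f| < \alpha\}$. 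Your sketch instead wants $u$ ``adapted to the particular rational expression of $|f|$'', i.e.\ matched to $\{|f| < \alpha\}$ exactly, which is both unnecessary (a coarser auxiliary that is $1$ on any superset works) and, as shown above, sometimes impossible. Without the eventual-constancy observation, the box reduction, and an explicit auxiliary, what you have is a plan for a proof rather than a proof.
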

\begin{proof}
Let $f(x_1,...,x_n) \in \langle \mathbb{H} \rangle$ be bounded from above. Then there exists some $\beta_1 \in \mathbb{H}$ such that
$|f(x_1,...,x_n)| = |\alpha_1| \in \mathbb{H}$  for every $|x_1| \geq \beta_1$ for otherwise $f$ will not be bounded from above since for every $\gamma \in \mathbb{H}$ there exists some $a =(a_1,...,a_n) \in \mathbb{H}^n$ with $|a_1| > \beta(\gamma)$ such that $|f(a,x_2,....,x_n)| > \gamma$. Similarly for each $2 \leq i \leq n$ there exists some $\beta_i \in \mathbb{H}$ such that $|f(x_1,...,x_n)| = |\alpha_i| \in \mathbb{H}$ for every $|x_i| \geq \beta_i$. As~$|f|$ is continuous we have that $\alpha_i = \alpha$ are all the same. Now define the following function
$$f' = | (\beta^{-1}|x_1| \wedge .... \wedge \beta^{-1}|x_n|+1)| + |f(x_1,...,x_n)|$$
where $\beta = \sum_{i=1}^{n}|\beta_i|$. Write $g(x_1,...x_n) = \beta^{-1}|x_1| + .... + \beta^{-1}|x_n|+1$. Let
$$S = \{x =(x_1,...,x_n) \in \mathbb{H}^n :  |x_i| > \beta \ \forall i \}.$$ Then for every $a \in S$, $f'(a)=g(a)+|\alpha|$. Moreover, for every $b=(b_1,...,b_n) \not \in S$ there exists some $j$ such that $|b_j| \leq \beta$   thus we have that $(\beta^{-1}|b_1| + .... + \beta^{-1}|b_n|) \leq 1$ and so $g(b) = 1$. By construction $Skel(f) \subseteq Skel(g)$, so  $Skel(f') = Skel(|g| + |f|) = Skel(g) \cap Skel(f) = Skel(f)$. Finally as $|g|$ is not bounded and $|f'| = |g| + |f| \geq |g|$, we have that $f'$ is not bounded.
Now, as $|f'| = |g| + |f|$ we have that $|f| \leq |f'|$, so $f \in \langle f' \rangle$. On the other hand, since $f'$ is not bounded from above, Remark \ref{rem_generator_of_up_bounded_kernel} implies that $f' \not \in \langle f \rangle$. Finally,  \ $g(a) \geq 1$ for any $a \in S$. Thus,  $f'(a) \wedge |\alpha| = (g(a) + |\alpha|) \wedge |\alpha| = |\alpha|$, while for $a \not \in S$ \ $f'(a) \wedge |\alpha| = (g(a) + |f(a)|) \wedge |\alpha| = (1 + |f(a)|) \wedge |\alpha| =  |f(a)|  \wedge |\alpha| = |f(a)|$, since $|f| \leq |\alpha|$. So we get that $|f'| \wedge |\alpha| = |f|$  which means that $\langle f \rangle = \langle f' \rangle \cap \langle \mathbb{H} \rangle$ \linebreak (Note that $f' = |f'|$ by definition, since $f' \geq 1$.)
\end{proof}

\begin{cor}\label{cor_unbounded_copy}
For every  $f \in \mathbb{H}(x_1,...,x_n)$ there exists some $f' \in \mathbb{H}(x_1,...,x_n)$ such that $f'$ is not bounded from above, $\langle f \rangle \subseteq \langle f' \rangle$ and $Skel(f') = Skel(f)$.
\end{cor}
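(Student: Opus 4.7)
The plan is to reduce Corollary \ref{cor_unbounded_copy} to a case distinction based on whether $f$ is bounded from above, and apply Proposition \ref{prop_unbounded_generator} in the nontrivial case.

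First I would handle the trivial case: if $f \in \mathbb{H}(x_1,\dots,x_n)$ is already not bounded from above, then setting $f' = f$ clearly satisfies all three required properties ($\langle f \rangle \subseteq \langle f' \rangle$ with equality, and $Skel(f') = Skel(f)$), so there is nothing to do.

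The interesting case is when $f$ is bounded from above. By Remark \ref{rem_bounded_from_above_elements_kernel}, this is equivalent to $f \in \langle \mathbb{H} \rangle$, which in turn places $\langle f \rangle$ inside $\PCon(\langle \mathbb{H} \rangle)$ as a principal kernel bounded from above. At this point Proposition \ref{prop_unbounded_generator} applies directly and produces an element $f' \in \mathbb{H}(x_1,\dots,x_n)$ which is not bounded from above, satisfies $\langle f \rangle = \langle f' \rangle \cap \langle \mathbb{H} \rangle \subseteq \langle f' \rangle$, and admits $Skel(f') = Skel(f)$. These are exactly the three conditions demanded by the corollary.

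There is no real obstacle here: the bulk of the work was already done in Proposition \ref{prop_unbounded_generator}, where the ``unbounded copy'' $f'$ was explicitly constructed via the auxiliary function $g = \beta^{-1}|x_1| \dotplus \dots \dotplus \beta^{-1}|x_n| \dotplus 1$, with $\beta$ chosen large enough that $|f|$ has already stabilized outside of the box $\{|x_i| \leq \beta\}$. The corollary is a direct packaging of that result, with the case $f \notin \langle \mathbb{H} \rangle$ being automatic. I would simply write a two-line proof making both cases explicit and citing the proposition.
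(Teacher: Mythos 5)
Your proposal is correct and is essentially the same two-case argument the paper uses: take $f' = f$ when $f$ is already unbounded, and otherwise invoke Proposition \ref{prop_unbounded_generator}. The only (harmless) extra step you add is explicitly routing through Remark \ref{rem_bounded_from_above_elements_kernel} to verify $\langle f \rangle \in \PCon(\langle \mathbb{H} \rangle)$, which the paper leaves implicit.
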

\begin{proof}
By Proposition \ref{prop_unbounded_generator} for every $f$ bounded from above there exists such $f'$. On the other hand, if $f$ is not bounded from above, take $f' = f$.
\end{proof}

The following definition generalizes the notion of principal kernels bounded from above and from below introduced earlier.

\begin{defn}\label{defn_general_bounded_kernels}
Let $\mathbb{H}$ be a bipotent semifield.
A kernel $K \in \Con(\mathbb{H}(x_1,...,x_n))$ is said to be  \emph{bounded}
if $K = K \cap \langle \mathbb{H} \rangle$. $K$ is said to be \emph{bounded from below} if $\langle \mathbb{H} \rangle \subseteq K$ or equivalently $K = K \cdot \langle \mathbb{H} \rangle$.
\end{defn}

By definition \ref{defn_general_bounded_kernels} every kernel $K \subseteq \langle \mathbb{H} \rangle$ is a bounded kernel and vice versa.\\

\begin{rem}
Bounded kernels form a sublattice of $(\PCon(\mathbb{H}(x_1,...,x_n)), \cdot, \cap)$ (with respect to multiplication and intersection) and principal bounded kernels form a sublattice of $(\PCon(\mathbb{H}(x_1,...,x_n)), \cdot, \cap)$.
\end{rem}
\begin{proof}
For any $A,B \in \Con(\mathbb{H}(x_1,...,x_n)$
$$(A \cap \langle \mathbb{H} \rangle ) \cap (B \cap \langle \mathbb{H} \rangle) = (A \cap B) \cap \langle \mathbb{H} \rangle.$$
By Lemma \ref{lem_kernels_algebra}
$$(A \cap \langle \mathbb{H} \rangle )(B \cap \langle \mathbb{H} \rangle)= A \cdot B  \cap \langle \mathbb{H} \rangle.$$
Thus $(\mathrm{B},  \cdot, \cap)$  is a sublattice of $(\Con(\mathbb{H}(x_1,...,x_n)), \cdot, \cap)$.\\
Let $\langle f \rangle  \in \PCon(\mathbb{H}(x_1,...,x_n))$. Then $\langle f \rangle \cap \langle \mathbb{H} \rangle = \langle |f| + |\alpha| \rangle$ for any $\alpha \in \mathbb{H} \setminus \{1 \}$, thus  $\langle f \rangle \cap \langle \mathbb{H} \rangle$ is a bounded principal kernel.
\end{proof}

\begin{rem}\label{rem_wedge_preserves_1}
For any $f \in \mathbb{H}(x_1,...,x_n)$ and $\alpha \in \mathbb{H} \setminus \{1\}$
$$|f| \wedge |\alpha| = 1 \Leftrightarrow f = 1.$$
\end{rem}
\begin{proof}
$(\Leftarrow)$ \ is obvious. \\
$(\Rightarrow)$ \ is true since for any  $a \in \mathbb{H}^n$  \  $|f(a)| \wedge |\alpha| = 1$ if and only if $|f(a)|= 1$.
\end{proof}

\begin{rmind}
Let $\mathbb{S}$ be a semifield.
A kernel $K$ of a semifield $\mathbb{S}$ is said to be \emph{large} in $\mathbb{S}$ if  $L \cap K \neq \{1 \}$ for each kernel $L \neq \{ 1 \}$ of $\mathbb{S}$.
\end{rmind}

\begin{cor}\label{cor_intersection_with_the kernel_of_H}
Let $\langle f \rangle \in \PCon(\mathbb{H}(x_1,...,x_n))$. Then
$$\langle f \rangle \cap \langle \mathbb{H} \rangle = \{ 1 \}  \Leftrightarrow \langle f \rangle = \{ 1 \}.$$
Consequently,
$$K \cap \langle \mathbb{H} \rangle = \{ 1 \}  \Leftrightarrow K = \{ 1 \}$$
for any $K \in \Con(\mathbb{H}(x_1,...,x_n))$. Thus $\langle \mathbb{H} \rangle$ is large kernel in $\mathbb{H}(x_1,...,x_n).$
\end{cor}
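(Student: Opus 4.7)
The plan is to reduce both equivalences to the single fact that a principal kernel is trivial exactly when its generator equals $1$, together with the explicit formula for the intersection of two principal kernels from Corollary \ref{cor_max_semifield_principal_kernels_operations}. The remark immediately preceding this corollary (Remark \ref{rem_wedge_preserves_1}) is the decisive ingredient: it tells us that $|f|\wedge|\alpha|=1$ forces $f=1$, which is precisely the pointwise obstruction we need.

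First I would handle the principal case. By Remark \ref{rem_the contant_generated_kernel}, $\langle \mathbb{H} \rangle = \langle \alpha \rangle$ for any fixed $\alpha \in \mathbb{H}\setminus\{1\}$. Applying Corollary \ref{cor_max_semifield_principal_kernels_operations} then gives
\[
\langle f \rangle \cap \langle \mathbb{H} \rangle \;=\; \langle f \rangle \cap \langle \alpha \rangle \;=\; \langle\, |f|\wedge|\alpha|\,\rangle.
\]
The implication $(\Leftarrow)$ is immediate. For $(\Rightarrow)$, suppose the left-hand side is $\{1\}$; then the principal kernel $\langle |f|\wedge|\alpha|\rangle$ is trivial, so its generator must equal $1$, i.e.\ $|f|\wedge|\alpha|=1$. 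By Remark \ref{rem_wedge_preserves_1} this forces $f=1$, hence $\langle f\rangle=\{1\}$.

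For the general statement, $(\Leftarrow)$ is again trivial. For the contrapositive of $(\Rightarrow)$, assume $K\neq\{1\}$ and pick $f\in K\setminus\{1\}$. Then $\{1\}\subsetneq\langle f\rangle\subseteq K$, and by the principal case just proved, $\langle f\rangle\cap\langle \mathbb{H}\rangle\neq\{1\}$. Since $\langle f\rangle\cap\langle\mathbb{H}\rangle\subseteq K\cap\langle\mathbb{H}\rangle$, this yields $K\cap\langle\mathbb{H}\rangle\neq\{1\}$. Applied to an arbitrary nontrivial kernel $L$ in place of $K$, this is exactly the definition of $\langle\mathbb{H}\rangle$ being large in $\mathbb{H}(x_1,\dots,x_n)$.

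I do not expect a genuine obstacle here: the proof is essentially a two-line calculation once Remark \ref{rem_wedge_preserves_1} and Corollary \ref{cor_max_semifield_principal_kernels_operations} are in hand. The only subtle point worth stating explicitly is the passage ``$\langle h\rangle=\{1\}\Rightarrow h=1$'', which holds because $h\in\langle h\rangle$; this is what licenses the step from $\langle|f|\wedge|\alpha|\rangle=\{1\}$ to $|f|\wedge|\alpha|=1$. Everything else is formal manipulation.
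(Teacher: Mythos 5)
Your proof is correct and follows essentially the same route as the paper: both use the identity $\langle f \rangle \cap \langle \mathbb{H} \rangle = \langle\, |f|\wedge|\alpha|\,\rangle$ together with Remark \ref{rem_wedge_preserves_1} for the principal case, and then pass to the general case by picking a nontrivial $f\in K$ and using $\langle f\rangle\cap\langle\mathbb{H}\rangle\subseteq K\cap\langle\mathbb{H}\rangle$. The small additional remark you make (that $\langle h\rangle=\{1\}$ forces $h=1$ because $h\in\langle h\rangle$) is a useful explicit justification that the paper leaves implicit.
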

\begin{proof}
The first statement follows from the equality $\langle f \rangle \cap \langle \mathbb{H} \rangle  = \langle |f| \wedge |\alpha| \rangle$ and \linebreak Remark~\ref{rem_wedge_preserves_1}. For the second statement, if $K \neq \{1\}$ then taking $f \in K \setminus \{1\}$ we have $$K \cap  \langle \mathbb{H} \rangle \supseteq \langle f \rangle \cap \langle \mathbb{H} \rangle \neq \{ 1 \}.$$
\end{proof}


\begin{rem}\label{rem_the_lattice_of_kernels_of_the_bounded_kernel}
The following hold for the kernels of the semifield $\langle \mathbb{H} \rangle$:
$$\Con(\langle \mathbb{H} \rangle) =  \{K \cap \langle \mathbb{H} \rangle : K \in \Con(\mathbb{H}(x_1,...,x_n)) \},$$
and
$$\PCon(\langle \mathbb{H} \rangle) = \{ \langle f \rangle \cap \langle \mathbb{H} \rangle : \langle f \rangle \in \PCon(\mathbb{H}(x_1,...,x_n)) \}.$$
Moreover, every kernel of $\langle \mathbb{H} \rangle$ is also a kernel of $\mathbb{H}(x_1,...,x_n).$
\end{rem}
\begin{proof}
Since $\langle \mathbb{H} \rangle$ is a subsemifield of $\mathbb{H}(x_1,...,x_n)$
by Theorem  \ref{thm_nother_1_and_3}(1) for any kernel $K$ of $\mathbb{H}(x_1,...,x_n)$, \ $K \cap \langle \mathbb{H} \rangle$ is a kernel of $\langle \mathbb{H} \rangle$, i.e., $K \cap \langle \mathbb{H} \rangle \in \Con(\langle \mathbb{H} \rangle$. Conversely, as $\mathbb{H}(x_1,...,x_n)$ is idempotent and commutative, by Remark \ref{rem_transitivity_for_idemotent_kernels} we have that any kernel $L$ of $\langle \mathbb{H} \rangle$ is a kernel of $\mathbb{H}(x_1,...,x_n)$. Moreover, since $L \subseteq \langle \mathbb{H} \rangle$ we have that $L = L \cap \langle \mathbb{H} \rangle$. The second equality holds since for any principal kernels $\langle f \rangle \in \PCon(\mathbb{H}(x_1,...,x_n))$, $\langle f \rangle \cap \langle \mathbb{H} \rangle = \langle |f| \wedge |\alpha| \rangle \in \PCon(\langle \mathbb{H} \rangle)$.
\end{proof}

\newpage

\section{The polar-skeleton correspondence}

In subsection \ref{Subsection:Kernels_of_Skeletons} we introduced the notion of a $\mathcal{K}$-kernels of  $\mathbb{H}(x_1,...,x_n)$ whom have been shown to correspond to skeletons in $\mathbb{H}^n$ (Proposition \ref{prop_correspondence_between_K_kernels_and_skeletons}). Our next aim is to characterize these special kind of kernels. In analogy to algebraic geometry, we are looking for our `radical ideals' which are shown to correspond to algebraic sets (zero sets) by the Nullstellensatz theorem. Unfortunately in our context `radicality' does help much since by Remark \ref{rem_radicality_of_ker} every kernel is power-radical. It actually turns out that in our theory, the role of radical ideals is played by polars, a notion originating from the theory of lattice ordered groups. One can think of the theory introduced in this section as the analogue to the celebrated Nullstellensatz theorem.\\

We begin our discussion here considering general idempotent semifields. We will later restrict ourselves to the designated semifield of fractions $\mathbb{H}(x_1,...,x_n)$ with $\mathbb{H}$ a bipotent semifield.

In the following section, $\mathbb{S}$ is always assumed to be an idempotent semifield.

\subsection{Polars}

\ \\

In this subsection we introduce the notion of a polar, borrowed from the theory of lattice ordered groups (\cite[section (2.2)]{OrderedGroups3}).
We will see that polars are a special kind of kernels and use them to construct the so called $\mathcal{K}$-kernels introduced in the previous section.
\ \\

Translating Proposition \ref{prop_convex_l_subgroups_distributive_lattice} to the language of idempotent semifields we have:
\begin{prop}\label{prop_distributive_lattice_of_kernels}
The lattice of kernels $\Con(\mathbb{S})$ of an idempotent semifield $\mathbb{S}$ is complete distributive and it satisfies the infinite distributive law:
\begin{equation}\label{eq_distributivity_law}
A \cap \left( \prod_{i} B_i \right) = \prod_{i}(A \cap B_i)
\end{equation}
for any $A,B_i \in \Con(\mathbb{S})$.
\end{prop}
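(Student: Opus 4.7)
The plan is to obtain this proposition essentially for free by invoking the correspondence between idempotent semifields and $\ell$-groups established in Propositions \ref{prop_l-group_idempotent_semifield_corr_1} and \ref{prop_l-group_idempotent_semifield_corr_2}, together with the $\ell$-group result stated in Proposition \ref{prop_convex_l_subgroups_distributive_lattice}. Under the correspondence, $\mathbb{S}$ is identified with a commutative $\ell$-group $(\mathbb{S},\leq,\cdot)$ in which $\dotplus = \vee$ and $\wedge = (\,\cdot^{-1}\dotplus\,\cdot^{-1})^{-1}$, and by Proposition \ref{prop_l-group_idempotent_semifield_corr_2} the kernels of $\mathbb{S}$ are precisely the convex $\ell$-subgroups of $\mathbb{S}$ (normality being automatic by commutativity). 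Thus $\Con(\mathbb{S})$ is canonically isomorphic, as a poset ordered by inclusion, to the lattice of convex $\ell$-subgroups of $\mathbb{S}$.

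The key technical step is to identify the join operation in this lattice. In the $\ell$-group formulation of Proposition \ref{prop_convex_l_subgroups_distributive_lattice}, $\bigvee_i B_i$ denotes the smallest convex $\ell$-subgroup containing $\bigcup_i B_i$; in the semifield formulation, Remark \ref{rem_ker_latice_operations} states that for two kernels $K_1\cdot K_2$ is the smallest kernel containing $K_1\cup K_2$, and the same argument (forming the subgroup generated by $\bigcup_i B_i$ and then closing under the kernel convexity condition) gives that $\prod_i B_i$ is the smallest kernel containing $\bigcup_i B_i$. Hence the join operation in $\Con(\mathbb{S})$ is exactly the (possibly infinite) multiplication $\prod_i$.

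Once this identification is in place, the proposition is an immediate restatement of Proposition \ref{prop_convex_l_subgroups_distributive_lattice}: completeness of the lattice of convex $\ell$-subgroups translates to completeness of $\Con(\mathbb{S})$, distributivity translates verbatim, and the infinite distributive law
\[
A \cap \Big(\bigvee_{i} B_i\Big) = \bigvee_{i}(A \cap B_i)
\]
becomes exactly the displayed identity~\eqref{eq_distributivity_law}. No fresh computation is required.

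The main (minor) obstacle is the verification that $\bigvee_i$ in the $\ell$-group sense coincides with $\prod_i$ in the semifield sense for arbitrary index sets; since the paper has only established this identification for finite products via Remark \ref{rem_ker_latice_operations}, I would briefly argue that the product of an arbitrary family of kernels, defined as the set of finite products of elements drawn from the various $B_i$, is closed under the kernel operations (it is a subgroup by commutativity, and the convexity condition \eqref{defn_ker_convexity} is inherited since each factor lies in some $B_i\subseteq \prod_i B_i$), and is manifestly the smallest kernel containing $\bigcup_i B_i$. Everything else is simply reading off Proposition \ref{prop_convex_l_subgroups_distributive_lattice}.
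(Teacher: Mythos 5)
Your proof is correct and takes essentially the same route as the paper: both identify $\Con(\mathbb{S})$ with the lattice of convex $\ell$-subgroups (normality being automatic by commutativity) via Propositions \ref{prop_l-group_idempotent_semifield_corr_1} and \ref{prop_l-group_idempotent_semifield_corr_2}, and then read the claim directly off Proposition \ref{prop_convex_l_subgroups_distributive_lattice}. The paper's own proof is a two-line version of this; the only difference is that you explicitly verify that $\prod_i B_i$ agrees with the $\ell$-group join $\bigvee_i B_i$ for an arbitrary index set, a point the paper leaves implicit (Remark \ref{rem_ker_latice_operations} treats only binary products), and that small extra step is a genuine improvement in rigor.
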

\begin{proof}
Viewing $\mathbb{S}$ as an $\ell$-group, since $\mathbb{S}$ is commutative every convex $\ell$-subgroup is normal and thus a kernel. Thus Proposition \ref{prop_convex_l_subgroups_distributive_lattice} applies to $\Con(\mathbb{S})$.
\end{proof}

\begin{defn}\label{def_polar}
Let $A$ be a subset of $\mathbb{S}$. The \emph{polar} of $A$ is the set
\begin{equation}
A^{\bot} = \{ x  \in \mathbb{S} : |x| \wedge |a| = 1 \ ; \ \forall a \in A \}.
\end{equation}
For $a \in \mathbb{S}$  we shortly write $a^{\bot}$ for $\{ a \}^{\bot}$.
The set of all polars in $\mathbb{S}$ is denoted by $\mathscr{B}(\mathbb{S})$.
\end{defn}

\begin{rem}\label{rem_basic_properites_of_polar}
The following statements are direct consequences of Definition \ref{def_polar}.
For any  $K,L \subset \mathbb{S}$
\begin{enumerate}
  \item $K \subseteq L \Rightarrow K^{\bot} \supseteq L^{\bot}$.
  \item $K \subseteq K^{\bot \bot}$.
  \item $K^{\bot} = K^{\bot \bot \bot}$.
\end{enumerate}
\end{rem}

The following definition generalizes Definition \ref{defn_positive_cone of_fractions}:
\begin{defn}\label{defn_positive_cone}
Let $K$ be a kernel of an idempotent semifield $\mathbb{S}$. The \emph{positive cone} of $K$ is the set
$$K^{+} = \{ k \in K : k \geq 1 \}.$$
In particular the positive cone of $\mathbb{S}$ is $\mathbb{S}^{+} = \{ h \in \mathbb{S} : h \geq 1 \}$.
\end{defn}

\begin{rem}\label{rem_inf_prod_of_groups}
For a family of groups $\{ G_i : i \in I \}$, with an arbitrary index set $I$
$$\prod_{i \in I}G_i = \{ g_{i_1} \cdot \dots \cdot  g_{i_k} :  g_{i_j} \in G_{i_j}, k \in \mathbb{N} \}.$$
Let $G$ be a group. A subgroup $H \leq G$ is said to be generated by a family of subgroups $\{ G_i : i \in I \}$ of $G$, with an arbitrary index set $I$, if
$$H = \prod_{i \in I}G_i.$$
In particular for every $h \in H$ there are some $i_1,...,i_k$ such that $h = g_{i_1} \cdot \dots \cdot  g_{i_k}$ where $g_{i_1} \in G_{i_1}, ...,g_{i_k} \in G_{i_k}$.
\end{rem}

\begin{prop}\label{kernel_as_a_product}\cite[Theorem (2.2.4)(c)]{OrderedGroups3}
The subgroup $K$ of an idempotent \linebreak semifield $\mathbb{H}$ generated by a family of kernels $\{ K_i : i \in I \}$ (where $I$ is an arbitrary index set) is a kernel,
and its positive cone $K^{+} = \{ k \in K : k \geq 1 \}$ is the subsemigroup of $\mathbb{H}^{+}$ generated by
the corresponding family of positive cones.
\end{prop}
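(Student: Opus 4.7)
The plan is to work with $\mathbb{S}$ as a commutative $\ell$-group via Propositions \ref{prop_l-group_idempotent_semifield_corr_1} and \ref{prop_l-group_idempotent_semifield_corr_2}, and to let $P$ denote the subsemigroup of $\mathbb{S}^{+}$ generated by $\bigcup_i K_i^{+}$. By commutativity, the subgroup $K$ generated by $\bigcup_i K_i$ in $(\mathbb{S},\cdot)$ consists of finite products $k_{i_1}\cdots k_{i_n}$ with each $k_{i_j}\in K_{i_j}$; and since every kernel of an idempotent semifield is itself a semifield, each $K_i$ is closed under $\dotplus$ and $\wedge$, so the decomposition $k=|k|_{\geq}\cdot|k|_{\leq}^{-1}$ (Remark \ref{rem_absolute_operators_relations}) with $|k|_{\geq},|k|_{\leq}\in K_i^{+}$ lets me rewrite every element of $K$ as a quotient $a/b$ with $a,b\in P$. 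Grouping factors over a common finite index set (padding with $1$'s), I may assume $a=a_{i_1}^{+}\cdots a_{i_n}^{+}$ and $b=b_{i_1}^{+}\cdots b_{i_n}^{+}$ with $a_{i_j}^{+},b_{i_j}^{+}\in K_{i_j}^{+}$.

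I would first prove the positive cone identity $K^{+}=P$. The inclusion $P\subseteq K^{+}$ is immediate. Conversely, given $k\in K^{+}$, write $k=a/b$ as above; since $k\geq 1$ one has $b\leq a$, and iterating the Riesz decomposition property (Remark \ref{rem_Riesz and disjoint}(2)) on $b\leq a_{i_1}^{+}\cdots a_{i_n}^{+}$ yields a decomposition $b=b'_{i_1}\cdots b'_{i_n}$ with $1\leq b'_{i_j}\leq a_{i_j}^{+}$. By convexity of each kernel $K_{i_j}$ (Remark \ref{rem_kernel_is_convex}), each $b'_{i_j}$ lies in $K_{i_j}^{+}$, so $k=\prod_j(a_{i_j}^{+}/b'_{i_j})\in P$, as required.

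With $K^{+}=P$ secured, showing $K$ is a kernel reduces, via Proposition \ref{prop_l-group_idempotent_semifield_corr_2}, to verifying $K$ is a convex $\ell$-subgroup (normality is free in the commutative setting). For convexity, an inequality $a\leq c\leq b$ with $a,b\in K$ multiplies to $1\leq ca^{-1}\leq ba^{-1}\in K^{+}$; writing $ba^{-1}=k_{i_1}^{+}\cdots k_{i_n}^{+}$ via the identity just proven and applying Riesz again produces a factorization $ca^{-1}=c_{i_1}\cdots c_{i_n}$ with $1\leq c_{i_j}\leq k_{i_j}^{+}$, so convexity of each $K_{i_j}$ places $ca^{-1}\in K$, hence $c\in K$. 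For the $\ell$-subgroup property, Remark \ref{rem_sufficient_condition_for_l-subgroup} shows it suffices to verify $z\dotplus 1\in K$ whenever $z\in K$; writing $z=a/b$ with $a,b\in P$, the identity $z\dotplus 1=(a\dotplus b)/b$ together with the bounds $b\leq a\dotplus b\leq ab$ (valid since $a,b\geq 1$) yields $1\leq (a\dotplus b)/b\leq a\in K$, and the convexity just established finishes the argument.

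The main obstacle is organizing the Riesz decompositions so that the produced factors land in the intended component kernels $K_{i_j}^{+}$: aligning two arbitrary product presentations over a common index set, inducting on the number of factors, and invoking convexity of each $K_{i_j}$ at exactly the right moment are the delicate pieces, while the rest of the argument then follows mechanically from the positive cone identity.
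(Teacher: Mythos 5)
The paper does not prove this proposition; it is stated with a citation to \cite[Theorem (2.2.4)(c)]{OrderedGroups3} and no argument is given. Your proof is a correct, self-contained argument, and it follows the standard $\ell$-group route that the cited reference uses: express every element of the generated subgroup as a quotient of elements of the candidate positive cone $P$, establish $K^{+}=P$ by iterated Riesz decomposition plus convexity of each $K_{i}$, and then leverage $K^{+}=P$ to verify convexity and the sublattice condition $z\dotplus 1\in K$. The key steps check out: the decomposition $k=|k|_{\geq}|k|_{\leq}^{-1}$ with both factors in $K_{i}^{+}$ is valid because each $K_{i}$, being a kernel of an idempotent semifield, is itself a semifield closed under $\dotplus$; the Riesz factorization of $b\leq a_{i_1}^{+}\cdots a_{i_n}^{+}$ lands the pieces inside the $K_{i_j}$ precisely by the convexity of Remark \ref{rem_kernel_is_convex}; and the bound $b\leq a\dotplus b\leq ab$ for $a,b\geq 1$ correctly reduces the $\ell$-subgroup criterion of Remark \ref{rem_sufficient_condition_for_l-subgroup} to convexity. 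One small presentational caveat: you invoke Remark \ref{rem_Riesz and disjoint}(2), which the paper states for $\mathcal{P}^{+}\subset\mathbb{H}(x_1,\dots,x_n)$, but the Riesz decomposition property holds in every commutative $\ell$-group and in particular in a general idempotent semifield, so your appeal is sound; it would be cleanest to note this explicitly rather than cite a remark phrased in a narrower context.
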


\begin{cor}
For any kernel $K$ of an idempotent semifield $\mathbb{H}$
$$K = \prod_{f \in K} \langle f \rangle = \langle \bigcup_{f \in K} \{f \} \rangle.$$
\end{cor}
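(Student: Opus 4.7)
The plan is to verify both equalities by straightforward double-inclusion arguments, built directly on Proposition \ref{kernel_as_a_product}, which guarantees that the product of an arbitrary family of kernels is again a kernel. No serious obstacle arises here; this is an immediate packaging of the preceding proposition.

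For the first equality $K = \prod_{f \in K} \langle f \rangle$, I would argue as follows. Since each $f \in K$ lies in the kernel $K$, the minimality of the principal kernel in Definition \ref{defn_generation_of_kernels} yields $\langle f \rangle \subseteq K$ for every $f \in K$. By Remark \ref{rem_inf_prod_of_groups}, any element of $\prod_{f \in K}\langle f \rangle$ is a finite product of elements drawn from the various $\langle f \rangle$, and these all lie in $K$. Since $K$ is closed under multiplication (being a subgroup of $\mathbb{H}$), we get $\prod_{f \in K}\langle f \rangle \subseteq K$. Conversely, for every $f \in K$ we have $f \in \langle f \rangle \subseteq \prod_{g \in K}\langle g \rangle$ (regarding $f$ as a one-term product drawn from $\langle f \rangle$), so $K \subseteq \prod_{f \in K}\langle f \rangle$.

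For the second equality $\prod_{f \in K}\langle f \rangle = \langle \bigcup_{f \in K}\{f\}\rangle$, recall from Definition \ref{defn_generation_of_kernels} that $\langle \bigcup_{f \in K}\{f\}\rangle$ is the intersection of all kernels of $\mathbb{H}$ that contain the set $\{f : f \in K\}$, and hence the smallest such kernel. By Proposition \ref{kernel_as_a_product} the product $\prod_{f \in K}\langle f \rangle$ is a kernel, and as noted above it contains each $f \in K$; therefore $\langle \bigcup_{f \in K}\{f\}\rangle \subseteq \prod_{f \in K}\langle f \rangle$. For the reverse inclusion, any kernel $L$ that contains $\bigcup_{f \in K}\{f\}$ must, by minimality of $\langle f \rangle$, contain each $\langle f \rangle$, and being closed under multiplication it must contain the product; applying this to $L = \langle \bigcup_{f \in K}\{f\}\rangle$ yields $\prod_{f \in K}\langle f \rangle \subseteq \langle \bigcup_{f \in K}\{f\}\rangle$.

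The only point requiring a small amount of attention is interpreting the potentially infinite product according to Remark \ref{rem_inf_prod_of_groups} as the set of \emph{finite} products of elements from the constituent subgroups, so that the elementary closure arguments used above apply uniformly without any convergence or cardinality concerns.
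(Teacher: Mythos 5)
Your proof is correct and follows essentially the same double-inclusion argument as the paper, using the facts that the product of a family of kernels is a kernel (Proposition \ref{kernel_as_a_product}) and that $\langle f\rangle\subseteq K$ for each $f\in K$. The only minor variation is in the second equality, where the paper simply observes that $\bigcup_{f\in K}\{f\}=K$ and invokes the first equality, while you give a direct minimality argument; both are equally valid.
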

\begin{proof}
For $a \in \prod_{f \in K} \langle f \rangle$, there are some $f_1,...,f_k \in K $ such that $a = g_{1} \cdot \dots \cdot  g_{k}$   where $g_{1} \in \langle f_{1} \rangle, ...,g_{k} \in \langle f_{k} \rangle.$
Since $K$ is a kernel, for any $f \in K$, \ $\langle f \rangle \subseteq K$ so \linebreak $a = g_{1} \cdot \dots \cdot  g_{k} \in K$. So $K \supseteq \prod_{f \in K} \langle f \rangle$. The opposite inclusion is obvious as each \linebreak $f \in K$ is by definition in $\prod_{f \in K} \langle f \rangle$.  Since the kernel generated by a kernel $K$ (as a set) is the kernel $K$ itself, the last equality holds.
\end{proof}

\begin{rem}
Let $\mathbb{H}$ be an idempotent semifield and let $g \in \mathbb{H}$. Then \\ $K \cap \langle g \rangle = \{1\}$ if and only if $\langle f \rangle \cap \langle g \rangle = \{1\}$ for every $f \in K$.
\end{rem}
\begin{proof}
If $K = \bigcup_{f \in K}\{ f \}$ then $K = \bigoplus_{f \in K} \langle f \rangle$.
Since the lattice of kernels of $\mathbb{S}$ admit the infinite distributive law \eqref{eq_distributivity_law} we have that
$$K \cap \langle g \rangle = \left(\bigoplus_{f \in K} \langle f \rangle\right) \cap \langle g \rangle = \prod_{f \in K}\left( \langle f \rangle  \cap \langle g \rangle\right)$$
from which the statement follows at once.
\end{proof}

\begin{rem}
Let $K$ be a kernel of \ $\mathbb{S}$. The polar of $K$ is the set
\begin{equation}
K^{\bot} = \{ x  \in \mathbb{S} : \langle x  \rangle \cap  K = \{1\} \}.
\end{equation}
\end{rem}
\begin{proof}
For any $a,b \in \mathbb{S}$ since  $\langle a \rangle \cap \langle b \rangle = \langle |a| \wedge |b| \rangle$,
we have that $$|a| \wedge |b| = 1 \Leftrightarrow \langle a \rangle \cap \langle b \rangle = \{ 1\}.$$
Moreover, for any kernel $K$ and any $x \in \mathbb{S}$, $K \cap \langle x \rangle = \{1\}$ if and only if $\langle a \rangle \cap \langle x \rangle =\{1\}$ for all $a \in K$.
\end{proof}

\begin{thm}\label{thm_properties of polars}
\begin{enumerate}
  \item For any subset $A$ of \ $\mathbb{S}$,  $A^{\bot}$ is a kernel of \ $\mathbb{S}$.
  \item If $L \in \Con(\mathbb{S})$, then for any $K \in \Con(\mathbb{S})$
  $$L \cap K = \{1\} \Leftrightarrow K \subseteq L^{\bot}.$$
  \item $(\mathscr{B}(\mathbb{S}), \cdot, \cap, \bot, \{1\},\mathbb{S})$ is a complete Boolean algebra.
\end{enumerate}
\end{thm}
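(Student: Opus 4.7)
For part (1) I would reduce to the single-element case: by Remark~\ref{rem_polar_kernels} each $a^{\bot}$ is a convex $\ell$-subgroup of $\mathbb{S}$, and since $\mathbb{S}$ is commutative normality is automatic, so by Proposition~\ref{prop_l-group_idempotent_semifield_corr_2} each $a^{\bot}$ is a kernel. Then $A^{\bot} = \bigcap_{a \in A} a^{\bot}$ is a kernel, as an intersection of kernels is a kernel.

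For part (2) my plan is to invoke the identity $\langle x \rangle \cap \langle a \rangle = \langle |x| \wedge |a| \rangle$ from Corollary~\ref{cor_max_semifield_principal_kernels_operations}. Forward: if $L \cap K = \{1\}$ then for any $x \in K$ and $a \in L$, $\langle |x| \wedge |a| \rangle \subseteq K \cap L = \{1\}$ forces $|x| \wedge |a| = 1$, so $K \subseteq L^{\bot}$. Reverse: if $K \subseteq L^{\bot}$ and $y \in K \cap L$, taking $a = y$ in the definition of $L^{\bot}$ yields $|y| \wedge |y| = |y| = 1$, so $y = 1$.

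For part (3) I would first identify $\mathscr{B}(\mathbb{S})$ with the fixed set $\{K \in \Con(\mathbb{S}) : K = K^{\bot\bot}\}$ of the closure operator $K \mapsto K^{\bot\bot}$, via Remark~\ref{rem_basic_properites_of_polar}(3). The bounds $\{1\} = \mathbb{S}^{\bot}$ (take $a = x$ in the definition) and $\mathbb{S} = \{1\}^{\bot}$ are polars; closure under arbitrary meets follows from $\bigcap_i A_i^{\bot} = (\bigcup_i A_i)^{\bot}$; joins I would define as $\bigvee_i P_i := \bigl( \prod_i P_i \bigr)^{\bot\bot} = \bigl( \bigcap_i P_i^{\bot} \bigr)^{\bot}$, which are polars by construction, yielding completeness. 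For orthocomplementation, $P \cap P^{\bot} = \{1\}$ follows from part~(2) applied with $L = P$, $K = P^{\bot}$, while $P \vee P^{\bot} = (P^{\bot} \cap P^{\bot\bot})^{\bot} = (P^{\bot} \cap P)^{\bot} = \{1\}^{\bot} = \mathbb{S}$, using $P = P^{\bot\bot}$ since $P$ is a polar.

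The hard part will be distributivity; I would isolate a key lemma: for any polar $P$ and any kernel $Q$, $P \cap Q^{\bot\bot} \subseteq (P \cap Q)^{\bot\bot}$. Given $x \in P \cap Q^{\bot\bot}$ and $z \in (P \cap Q)^{\bot}$, set $w := |x| \wedge |z|$; convexity of $P$ (Remark~\ref{rem_kernel_is_convex}) applied to $1 \leq w \leq |x| \in P$ places $w \in P$, and $w \leq |z|$ with $z \in (P \cap Q)^{\bot}$ forces $w \in (P \cap Q)^{\bot}$. For any $v \in Q$ the element $|v| \wedge |w|$ lies in both $P$ and $Q$ by convexity in each factor, hence in $P \cap Q$; orthogonality of $w$ to $P \cap Q$ then gives $|w| \wedge (|v| \wedge |w|) = |v| \wedge |w| = 1$, so $w \in Q^{\bot}$. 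Since $x \in Q^{\bot\bot}$, $|x| \wedge |w| = 1$, but $|x| \wedge |w| = w$, so $w = 1$, i.e., $|x| \wedge |z| = 1$. Combining this lemma with the infinite distributive law from Proposition~\ref{prop_distributive_lattice_of_kernels}, namely $P \cap \prod_i Q_i = \prod_i (P \cap Q_i)$ in $\Con(\mathbb{S})$, one obtains $P \cap \bigvee_i Q_i = P \cap \bigl(\prod_i Q_i\bigr)^{\bot\bot} \subseteq \bigl(\prod_i (P \cap Q_i)\bigr)^{\bot\bot} = \bigvee_i (P \cap Q_i)$, and the reverse containment is automatic, completing the distributive law and hence the Boolean algebra structure.
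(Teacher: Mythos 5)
Your proof is correct and is considerably more self-contained than the paper's, which for all three parts simply cites Theorems (2.2.4) and (2.2.5) of the lattice-ordered-groups reference. The reduction in part (1) to the single-element polars $a^{\bot}$ via $A^{\bot}=\bigcap_{a\in A}a^{\bot}$ together with the commutativity-gives-normality observation is exactly the right modernization of the cited external result, and part (2) is a clean direct calculation with principal kernels replacing the black-box citation. The real content you supply is in part (3): the key lemma $P\cap Q^{\bot\bot}\subseteq (P\cap Q)^{\bot\bot}$ for $P$ a polar and $Q$ a kernel is proved correctly — the three uses of convexity (to place $w$ in $P$, then in $(P\cap Q)^{\bot}$, then to place $|v|\wedge|w|$ in $P\cap Q$) and the final absorption $|x|\wedge w = w$ are all valid — and feeding this through the infinite distributive law from Proposition~\ref{prop_distributive_lattice_of_kernels} is precisely how the cited Theorem (2.2.5) of Anderson–Feil/Darnel would go. One small point worth spelling out: your claim that the reverse inclusion $\bigvee_i(P\cap Q_i)\subseteq P\cap\bigvee_i Q_i$ is ``automatic'' tacitly uses that $P$ itself is a polar (the left side is the smallest polar containing $\prod_i(P\cap Q_i)$, and both $P$ and $\bigvee_i Q_i$ are polars containing it), which is true here but deserves a word since the inclusion would not be automatic for a general kernel on the right. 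Overall: same mathematical content as the paper's referenced source, but presented as an actual proof rather than a citation, with the distributivity lemma isolated as a reusable statement — that is a genuine improvement in exposition.
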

\begin{proof}
1. \ By Theorem (2.2.4)(e) in \cite{OrderedGroups3}, $A^{\bot}$ is a convex $\ell$-subgroup. Since $\mathbb{S}$ is also commutative, we have that $A^{\bot}$ is normal and thus a kernel of $\mathbb{S}$.\\
2. See Theorem (2.2.4)(d) in \cite{OrderedGroups3}.\\
3. See \cite{OrderedGroups3}, Theorem (2.2.5). If  $\{A_i\}$ is a collection of subsets of $\mathbb{S}$
   over some index set,  then  $$\left(\bigcup_{i} A_i\right)^{\bot} = \bigcap_{i}A_i^{\bot}.$$
   Closure under negation is a consequence of (2).
\end{proof}

\begin{rem}\label{rem_polar_of_kernel}\cite{OrderedGroups3}
Let $A$ be a subset of the $\mathbb{S}$. Then $$A^{\bot} = G(A)^{\bot} = SF(A)^{\bot} = \langle A \rangle^{\bot}$$
where $G(A)$ and $SF(A)$ are the subgroup and subsemifield generated by $A$ in $\mathbb{S}$.
\end{rem}

\begin{rem}\label{rem_double polar of a polar}
For any polar $P$, \ $P^{\bot \bot} = P$.\\
Indeed, $P$ is a polar then $P = V^{\bot}$ for some $V \subseteq \mathbb{S}$. So $P^{\bot \bot} = (V^{\bot})^{\bot \bot} =V^{\bot} = P$.
\end{rem}

\begin{prop}\label{prop_minimal_polar_containing_a_set}
For any subset $S \subseteq \mathbb{S}$ , $S^{\bot \bot}$ is the minimal polar containing $S$.
\end{prop}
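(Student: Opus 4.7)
The plan is to use the three basic properties of the polar operator already recorded in Remark~\ref{rem_basic_properites_of_polar} together with the involutivity of $\bot$ on polars (Remark~\ref{rem_double polar of a polar}). The argument breaks into two short verifications: that $S^{\bot\bot}$ is a polar containing $S$, and that it lies inside every polar containing $S$.

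First, I would observe that $S^{\bot\bot}$ is itself a polar: by definition it equals $(S^{\bot})^{\bot}$, which is the polar of the subset $S^{\bot}\subseteq\mathbb{S}$. It contains $S$ by Remark~\ref{rem_basic_properites_of_polar}(2), applied to $S$. So $S^{\bot\bot}$ is a polar containing $S$, and by Theorem~\ref{thm_properties of polars}(1) it is in particular a kernel.

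Next, for minimality, suppose $P$ is any polar with $S\subseteq P$. Applying Remark~\ref{rem_basic_properites_of_polar}(1) (order reversal under $\bot$) gives $P^{\bot}\subseteq S^{\bot}$, and applying it once more gives $S^{\bot\bot}\subseteq P^{\bot\bot}$. Since $P$ is a polar, Remark~\ref{rem_double polar of a polar} yields $P^{\bot\bot}=P$, so $S^{\bot\bot}\subseteq P$. This gives the desired minimality.

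There is no real obstacle here; the only subtlety is to be careful that the involutivity $P^{\bot\bot}=P$ is invoked only for $P$ a polar (it would fail in general for an arbitrary subset, where one only has the inclusion $S\subseteq S^{\bot\bot}$). Combining the two verifications, $S^{\bot\bot}$ is a polar containing $S$ and is contained in every polar containing $S$, hence it is the minimal such polar.
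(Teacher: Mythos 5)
Your proof is correct and follows essentially the same route as the paper: note that $S^{\bot\bot}$ is a polar containing $S$, then use order-reversal of $\bot$ twice and the involutivity $P^{\bot\bot}=P$ for polars $P$ to get $S^{\bot\bot}\subseteq P$.
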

\begin{proof}
By the definition of a polar,  $S^{\bot \bot}$ is a polar of $S^{\bot}$ and $S \subset S^{\bot \bot}$.
Let $P$ be a polar such that $S \subseteq P$. Then since polar is inclusion reversing and  $S \in P$ we have that
$S^{\bot} \supseteq P^{\bot}$ and so $S^{\bot \bot} = (S^{\bot})^{\bot} \subseteq (P^{\bot})^{\bot}$. As $P$ is a polar we have by Remark \ref{rem_double polar of a polar} that $(P^{\bot})^{\bot} = P$ and thus $S^{\bot \bot} \subseteq P$.
\end{proof}

\begin{defn}
Let $S \subseteq \mathbb{S}$. We say that a polar $P$ is \emph{generated} by $S$ if $P~=~S^{\bot \bot}$. If $S = \{a \}$ then we also write $a^{\bot \bot}$ for the polar generated by $\{a\}$.
\end{defn}

\begin{rem}
The following statements hold:

\begin{enumerate}
  \item A polar $B$ is generated by itself.
  \item For any subset $A \subset \mathbb{S}$
            $$\langle A \rangle^{\bot \bot} = A^{\bot \bot} = G(A)^{\bot \bot} = SF(A)^{\bot \bot}$$
            where $G(A)$ and $SF(A)$ are the subgroup and subsemifield generated by $A$ in $\mathbb{S}$.

\end{enumerate}
\end{rem}
\begin{proof}
The first assertion follows directly from Remark \ref{rem_double polar of a polar} while the second one is a direct consequence of Remark \ref{rem_polar_of_kernel}.
\end{proof}

\begin{defn}
A polar $P$ of $\mathbb{S}$ is said to be \emph{principal} if there exists some $a \in \mathbb{S}$ such that $P  = a^{\bot \bot}$, i.e, $P$ is the polar generated by $a$. We denote the collection of principal polars of a semifield $\mathbb{S}$ by $\mathscr{PB}(\mathbb{S})$.
\end{defn}

\begin{prop}\label{prop_principal_polars_operations}\cite[Section (2.2) q-13]{OrderedGroups3} \ \\
The function $\phi : |a| \mapsto |a|^{\bot \bot}$ is a lattice homomorphism from $\mathbb{S}^{+}$ to $\mathscr{B}(\mathbb{S})$.
Namely, \linebreak for any $a,b \in \mathbb{S}$
$$(|a| \wedge |b|)^{\bot \bot} =  \phi(|a| \wedge |b|) = \phi(|a|) \cap \phi(|b|) = |a|^{\bot \bot} \cap |b|^{\bot \bot}$$  and
$$(|a| \dotplus |b|)^{\bot \bot} =  \phi(|a| \dotplus |b|) = \phi(|a|) \cdot \phi(|b|) = |a|^{\bot \bot} \cdot |b|^{\bot \bot}.$$
\end{prop}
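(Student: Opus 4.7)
My plan is to prove the two identities separately using Proposition~\ref{prop_minimal_polar_containing_a_set} (that $A^{\bot\bot}$ is the minimal polar containing $A$) together with a handful of $\ell$-group manipulations; since $|a| = |\,|a|\,|$ and $|a|\geq 1$, I may freely assume $a,b\geq 1$. For the intersection identity $(|a|\wedge|b|)^{\bot\bot} = |a|^{\bot\bot}\cap|b|^{\bot\bot}$, the inclusion $\subseteq$ is immediate: $|a|\wedge|b|\leq|a|$ places this element in $\langle a\rangle\subseteq|a|^{\bot\bot}$ and symmetrically in $|b|^{\bot\bot}$, and the intersection of polars is a polar (Theorem~\ref{thm_properties of polars}), so Proposition~\ref{prop_minimal_polar_containing_a_set} applies. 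For the reverse inclusion I would take $x\in|a|^{\bot\bot}\cap|b|^{\bot\bot}$ and a test element $y\in(|a|\wedge|b|)^{\bot}$, so that $|y|\wedge|a|\wedge|b| = 1$. Setting $u := |y|\wedge|a|$ gives $u\wedge|b| = 1$, so $u\in|b|^{\bot}$; since $x\in|b|^{\bot\bot}$, we get $|x|\wedge u = |x|\wedge|y|\wedge|a| = 1$, which means $|x|\wedge|y|$ itself lies in $|a|^{\bot}$. Applying $x\in|a|^{\bot\bot}$ once more, together with $|x|\wedge|y|\leq|x|$, yields $|x|\wedge|y| = |x|\wedge(|x|\wedge|y|) = 1$, as needed.

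For the product identity $(|a|\dotplus|b|)^{\bot\bot} = |a|^{\bot\bot}\cdot|b|^{\bot\bot}$, my strategy is to show that both sides are polars with the same polar complement. Two computations drive this. First, by lattice distributivity in the $\ell$-group $\mathbb{S}$ one has $|y|\wedge(|a|\dotplus|b|) = (|y|\wedge|a|)\dotplus(|y|\wedge|b|)$ for every $y\in\mathbb{S}$; since each summand is $\geq 1$, the join equals $1$ iff both summands do, giving $(|a|\dotplus|b|)^{\bot} = |a|^{\bot}\cap|b|^{\bot}$. Second, for arbitrary kernels $K,L$ of $\mathbb{S}$ I claim $(K\cdot L)^{\bot} = K^{\bot}\cap L^{\bot}$: the inclusion $\subseteq$ is clear since $K,L\subseteq K\cdot L$, while for $\supseteq$, any $y$ disjoint from every $k\in K$ and every $l\in L$ is disjoint from every $k l$ by Remark~\ref{rem_Riesz and disjoint}(4), because $|kl|\leq|k||l|$. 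Applied to $K = |a|^{\bot\bot}$ and $L = |b|^{\bot\bot}$, and using $A^{\bot\bot\bot} = A^{\bot}$ from Remark~\ref{rem_basic_properites_of_polar}(3), this yields $(|a|^{\bot\bot}\cdot|b|^{\bot\bot})^{\bot} = |a|^{\bot}\cap|b|^{\bot}$, so comparison with the first identity gives $(|a|\dotplus|b|)^{\bot\bot} = (|a|^{\bot\bot}\cdot|b|^{\bot\bot})^{\bot\bot}$.

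The main obstacle is to promote this to the sharper equality $(|a|\dotplus|b|)^{\bot\bot} = |a|^{\bot\bot}\cdot|b|^{\bot\bot}$, which amounts to knowing that the set-theoretic product of two principal polars is itself a polar. This is exactly the content of the Boolean-algebra assertion of Theorem~\ref{thm_properties of polars}(3), which states that $\mathscr{B}(\mathbb{S})$ is closed under $\cdot$. To make the last step explicit one may argue as follows: the inclusion $|a|^{\bot\bot}\cdot|b|^{\bot\bot}\subseteq(|a|\dotplus|b|)^{\bot\bot}$ is routine, since $(|a|\dotplus|b|)^{\bot\bot}$ is a polar containing both $|a|$ and $|b|$, hence by Proposition~\ref{prop_minimal_polar_containing_a_set} it contains $|a|^{\bot\bot}$ and $|b|^{\bot\bot}$, and being a subgroup it contains their product; once $|a|^{\bot\bot}\cdot|b|^{\bot\bot}$ is known to coincide with its own double polar, Proposition~\ref{prop_minimal_polar_containing_a_set} applied to the polar $|a|^{\bot\bot}\cdot|b|^{\bot\bot}$ (which contains $|a|\dotplus|b|$) delivers the reverse inclusion $(|a|\dotplus|b|)^{\bot\bot}\subseteq|a|^{\bot\bot}\cdot|b|^{\bot\bot}$, completing the proof.
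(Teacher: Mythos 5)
Your proof is correct, and for the wedge identity it takes a genuinely more self-contained route than the paper. The paper's proof is a single chain of equalities, $(|a| \wedge |b|)^{\bot\bot} = (\langle a \rangle \cap \langle b \rangle)^{\bot\bot} = (\langle a \rangle^{\bot} \cup \langle b \rangle^{\bot})^{\bot} = |a|^{\bot\bot} \cap |b|^{\bot\bot}$, in which the middle step (equivalently, the nontrivial inclusion $|a|^{\bot\bot} \cap |b|^{\bot\bot} \subseteq (|a|\wedge|b|)^{\bot\bot}$) is asserted as part of the Boolean-algebra machinery of Theorem~\ref{thm_properties of polars} rather than argued. Your two-stage element-chase --- first using $x \in |b|^{\bot\bot}$ against $u = |y|\wedge|a|$, then using $x \in |a|^{\bot\bot}$ against $|x|\wedge|y|$ --- supplies an honest proof of exactly that inclusion from the definitions, which the paper elides.

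For the $\dotplus$ identity, your approach is essentially the same as the paper's but more explicit about the dependency. Both proofs ultimately reduce the claim to knowing that the product $|a|^{\bot\bot}\cdot|b|^{\bot\bot}$ is itself a polar, and both appeal to Theorem~\ref{thm_properties of polars}(3) for this; the paper simply writes ``proved analogously'' while you make the reduction transparent via the auxiliary identities $(|a|\dotplus|b|)^{\bot} = |a|^{\bot}\cap|b|^{\bot}$ (distributivity) and $(K\cdot L)^{\bot} = K^{\bot}\cap L^{\bot}$ (Remark~\ref{rem_Riesz and disjoint}(4) with $|kl|\leq|k||l|$), obtaining $(|a|^{\bot\bot}\cdot|b|^{\bot\bot})^{\bot\bot} = (|a|\dotplus|b|)^{\bot\bot}$ before the closure step. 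One caveat to flag: Theorem~\ref{thm_properties of polars}(3) states that $(\mathscr{B}(\mathbb{S}), \cdot, \cap, \bot)$ is a Boolean algebra, but the paper never spells out whether the join ``$\cdot$'' there denotes the subgroup product $PQ$ or the polar closure $(P\cup Q)^{\bot\bot}$ of the union; your argument (and the paper's) needs the former reading to reach the asserted equality with the set-theoretic product. That ambiguity is already present in the source you are both citing, so it is not a gap introduced by your proposal, but it is worth being aware that the closure of $|a|^{\bot\bot}\cdot|b|^{\bot\bot}$ under double polar is the one fact you are importing rather than proving.
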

\begin{proof}
Indeed, by Theorem \ref{thm_properties of polars} and Remark \ref{rem_polar_of_kernel} the following holds: \\
$(|a| \wedge |b|)^{\bot \bot} = (\langle |a| \wedge |b| \rangle)^{\bot \bot} = (\langle a \rangle \cap \langle b \rangle)^{\bot \bot} =(\langle a \rangle^{\bot} \cup \langle b \rangle^{\bot})^{\bot} = \langle a \rangle^{\bot \bot} \cap \langle b \rangle^{\bot \bot} = |a|^{\bot \bot} \cap |b|^{\bot \bot} = |a|^{\bot \bot} \cap |b|^{\bot \bot}$. The second assertion is proved analogously.
\end{proof}

\begin{rem}\label{rem_lattice_of_principal_polars}
$(\mathscr{PB}(\mathbb{S}), \cdot, \cap)$ is a sublattice of $(\mathscr{B}(\mathbb{S}), \cdot, \cap)$.
\end{rem}
\begin{proof}
Since for any  $a,b \in \mathbb{S}$ \ $(|a| \wedge |b|)^{\bot \bot} = (\langle |a| \wedge |b| \rangle)^{\bot \bot} = (\langle a \rangle \cap \langle b \rangle)^{\bot \bot}$, $(|a| \dotplus |b|)^{\bot \bot} = (\langle |a| \dotplus |b| \rangle)^{\bot \bot} = (\langle a \rangle \cdot \langle b \rangle)^{\bot \bot}$  and since $|a|^{\bot \bot} = a^{\bot \bot}$ we have that
$$(\langle a \rangle \cap \langle b \rangle)^{\bot \bot} =  a^{\bot \bot} \cap b^{\bot \bot}$$
and
$$(\langle a \rangle \cdot \langle b \rangle)^{\bot \bot} =  a^{\bot \bot} \cdot b^{\bot \bot}.$$
\end{proof}

\begin{cor}
$\mathscr{PB}(\mathbb{H}(x_1,...,x_n)), \cdot, \cap)$ is a sublattice of  $(\mathscr{B}(\mathbb{H}(x_1,...,x_n)), \cdot, \cap)$ having $\mathbb{H}(x_1,...,x_n)$ and $\{ 1\}$ as its maximal and minimal elements respectively.
\end{cor}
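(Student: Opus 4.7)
The plan is to recognize this as a direct specialization of Remark~\ref{rem_lattice_of_principal_polars} to the idempotent semifield $\mathbb{S} = \mathbb{H}(x_1,\ldots,x_n)$. That remark already established that $(\mathscr{PB}(\mathbb{S}),\cdot,\cap)$ is a sublattice of $(\mathscr{B}(\mathbb{S}),\cdot,\cap)$ for any idempotent semifield, so closure under $\cdot$ and $\cap$ requires no new argument in our setting. What remains, therefore, is to verify the two extremal assertions, i.e.\ to exhibit $\{1\}$ and $\mathbb{H}(x_1,\ldots,x_n)$ as actual elements of $\mathscr{PB}(\mathbb{H}(x_1,\ldots,x_n))$ and argue that they bound every principal polar from below and above, respectively.

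For the minimum element, I would identify $\{1\}$ as the principal polar generated by $1$. Since every $x \in \mathbb{H}(x_1,\ldots,x_n)$ satisfies $|x| \geq 1$, we have $|x| \wedge 1 = 1$, so $1^{\bot} = \mathbb{H}(x_1,\ldots,x_n)$. Applying $\bot$ again and using Remark~\ref{rem_polar_of_kernel} together with the defining property of a polar yields $1^{\bot \bot} = \mathbb{H}(x_1,\ldots,x_n)^{\bot} = \{1\}$. Since every polar (being a kernel) contains $1$, the singleton $\{1\}$ is indeed the smallest element of $\mathscr{PB}(\mathbb{H}(x_1,\ldots,x_n))$.

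For the maximum element, I would invoke Proposition~\ref{prop_frac_semifield_finitely_gen}, which exhibits $\mathbb{H}(x_1,\ldots,x_n)$ as a semifield with a generator, say $a = \sum_{i=1}^{n}|x_i| \dotplus |e|$; that is, $\langle a \rangle = \mathbb{H}(x_1,\ldots,x_n)$. By Remark~\ref{rem_polar_of_kernel} we then get $a^{\bot} = \langle a \rangle^{\bot} = \mathbb{H}(x_1,\ldots,x_n)^{\bot} = \{1\}$, so $a^{\bot \bot} = \{1\}^{\bot} = \mathbb{H}(x_1,\ldots,x_n)$, exhibiting $\mathbb{H}(x_1,\ldots,x_n)$ as a principal polar. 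Since every polar is by definition a subset of $\mathbb{H}(x_1,\ldots,x_n)$, this is trivially the maximum.

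The only real content is the observation that $\mathbb{H}(x_1,\ldots,x_n)$ occurs as a principal polar, and I would expect that to be the main (and still very mild) obstacle: it depends critically on $\mathbb{H}(x_1,\ldots,x_n)$ being finitely generated as a kernel, which is exactly what Proposition~\ref{prop_frac_semifield_finitely_gen} supplies. For a general idempotent semifield with no single generator, the top element of $\mathscr{B}$ need not lie in $\mathscr{PB}$, so the assertion in the corollary is genuinely leveraging the archimedean/bipotent hypotheses on~$\mathbb{H}$ that ensure a single generator exists.
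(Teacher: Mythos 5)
Your argument is correct, but it takes a different (and somewhat heavier) route to the key point than the paper does. Both proofs reduce to exhibiting $\mathbb{H}(x_1,\ldots,x_n)$ and $\{1\}$ as principal polars, and both dispose of $\{1\} = 1^{\bot\bot}$ the same way. The divergence is in how the top element is realized. You appeal to Proposition~\ref{prop_frac_semifield_finitely_gen} to produce a kernel-generator $a$ of the whole semifield of fractions and then compute $a^{\bot} = \langle a\rangle^{\bot} = \mathbb{H}(x_1,\ldots,x_n)^{\bot} = \{1\}$, whence $a^{\bot\bot} = \mathbb{H}(x_1,\ldots,x_n)$. The paper instead takes any constant $\alpha \in \mathbb{H}\setminus\{1\}$ and uses the elementary observation (Remark~\ref{rem_wedge_preserves_1}, essentially bipotency of $\mathbb{H}$) that $|f| \wedge |\alpha| = 1$ forces $f = 1$, so already $\alpha^{\bot} = \{1\}$ and $\alpha^{\bot\bot} = \mathbb{H}(x_1,\ldots,x_n)$. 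The paper's version is more economical and shows that a single constant polar-generates the whole semifield; your version works but invokes finite generation as a kernel, which pulls in the archimedean hypothesis through Proposition~\ref{prop_frac_semifield_finitely_gen}. As a result, your closing comment — that the corollary ``genuinely leverages'' the existence of a kernel-generator — slightly misidentifies where the weight lies: the paper's argument shows that the essential input is just bipotency of $\mathbb{H}$ and $\mathbb{H} \neq \{1\}$, not finite generation of $\mathbb{H}(x_1,\ldots,x_n)$ as a kernel over itself.
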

\begin{proof}
$\mathbb{H}(x_1,...,x_n) = \alpha^{\bot \bot}$ for any $\alpha \in \mathbb{H} \setminus \{1\}$ since $|f| \wedge |\alpha| = 1$ if and only if \linebreak $f = 1$ and $1^{\bot} =  \mathbb{H}(x_1,...,x_n)$.
$\{1\} = 1^{\bot \bot}$ as $1^{\bot \bot} =\mathbb{H}(x_1,...,x_n)^{\bot} \subseteq \alpha^{\bot} = \{1\}$.
Thus $\{1\}, \mathbb{H}(x_1,...,x_n)  \in \mathscr{PB}(\mathbb{H}(x_1,...,x_n))$. The rest of the assertion is a special case of Remark \ref{rem_lattice_of_principal_polars}.
\end{proof}

\ \\
\ \\

\begin{prop}\cite[section (2.2)q-13]{OrderedGroups3}
The following statements hold  for any idempotent semifield $\mathbb{S}$:
\begin{enumerate}
  \item If $\{ B_i : i \in I \} \subseteq \mathscr{B}(\mathbb{S})$ then $\prod_{i}B_i = (\bigcup_{i}B_i)^{\bot \bot}$.
  \item If $B \in \mathscr{B}(\mathbb{S})$ then $B = \prod_{b \in B}b^{\bot \bot}$.
  \item For any $a,b \in \mathbb{S}$ \ $(|a|\cdot|b|)^{\bot \bot} = (|a| \dotplus |b|)^{\bot \bot}$.
\end{enumerate}
\end{prop}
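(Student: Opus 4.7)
My plan would be to tackle the three parts in order of increasing difficulty, with (3) essentially a translation of an earlier remark, (2) following directly from the minimality property of the double polar, and (1) requiring the substantive structural work.

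For (3), I would invoke Remark \ref{rem_sum_prod_absolute_values}, which says $|a||b| \in K \Leftrightarrow |a| \dotplus |b| \in K$ for every kernel $K$. Applied to $\langle |a||b| \rangle$ and $\langle |a| \dotplus |b| \rangle$ this gives $\langle |a||b| \rangle = \langle |a| \dotplus |b| \rangle$. Since by Remark \ref{rem_polar_of_kernel} the polar of a set depends only on the kernel it generates, we conclude $(|a||b|)^{\bot \bot} = \langle |a||b| \rangle^{\bot \bot} = \langle |a| \dotplus |b| \rangle^{\bot \bot} = (|a| \dotplus |b|)^{\bot \bot}$.

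For (2), I would verify both inclusions of $B = \prod_{b \in B} b^{\bot \bot}$ directly. Since $B$ is itself a polar containing $b$, Proposition \ref{prop_minimal_polar_containing_a_set} gives $b^{\bot \bot} \subseteq B$ for each $b \in B$; as $B$ is a kernel, it is closed under the relevant finite products, so $\prod_{b \in B} b^{\bot \bot} \subseteq B$. Conversely, $b \in b^{\bot \bot} \subseteq \prod_{b \in B} b^{\bot \bot}$ for every $b \in B$, giving the reverse inclusion.

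For (1), the easy direction is $\prod_i B_i \subseteq (\bigcup_i B_i)^{\bot \bot}$: by Remark \ref{rem_basic_properites_of_polar}, $\bigcup_i B_i \subseteq (\bigcup_i B_i)^{\bot \bot}$, and by Theorem \ref{thm_properties of polars} the right-hand side is a polar, hence a kernel, so it must contain the kernel $\prod_i B_i = \langle \bigcup_i B_i \rangle$. For the reverse, Remark \ref{rem_polar_of_kernel} yields $(\prod_i B_i)^{\bot} = \langle \bigcup_i B_i \rangle^{\bot} = (\bigcup_i B_i)^{\bot}$, hence $(\bigcup_i B_i)^{\bot \bot} = (\prod_i B_i)^{\bot \bot}$. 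Thus it suffices to prove that $\prod_i B_i$ is itself a polar, i.e., $(\prod_i B_i)^{\bot \bot} \subseteq \prod_i B_i$.

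The hard part will be precisely this last reduction. For a finite family, one can argue inductively via (2) together with Proposition \ref{prop_principal_polars_operations}: every polar is a product of principal polars, and a finite product $a_1^{\bot \bot} \cdots a_k^{\bot \bot}$ equals $(|a_1| \dotplus \dots \dotplus |a_k|)^{\bot \bot}$, which is a (principal) polar by construction. For an arbitrary index set $I$, the obstacle is genuine: one must combine the infinite distributive law of $\Con(\mathbb{S})$ (Proposition \ref{prop_distributive_lattice_of_kernels}) with the fact that $(\prod_i B_i)^{\bot} = \bigcap_i B_i^{\bot}$, reducing the problem to a dual statement $(\bigcap_i C_i)^{\bot} = \prod_i C_i^{\bot}$ inside the complete Boolean algebra $\mathscr{B}(\mathbb{S})$ from Theorem \ref{thm_properties of polars}. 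This is where I would lean most heavily on the cited reference \cite[Section (2.2) q-13]{OrderedGroups3}, applying its argument, which uses the Riesz-decomposition-style properties (Remark \ref{rem_Riesz and disjoint}) to express $(\bigcap_i C_i)^{\bot}$ as a supremum of principal polars lying in the appropriate product.
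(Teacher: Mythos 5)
The paper does not prove this proposition; it is cited directly from the lattice-ordered group literature (Darnel, Section (2.2), q-13) and used without internal justification, so there is no in-paper argument to compare against. Your attempt must therefore stand on its own. Parts (2) and (3) are sound: for (3) the appeal to Remark \ref{rem_sum_prod_absolute_values} (which gives $\langle |a||b|\rangle=\langle |a|\dotplus|b|\rangle$) together with Remark \ref{rem_polar_of_kernel} does exactly what is needed, and for (2) both inclusions are correctly handled via Proposition \ref{prop_minimal_polar_containing_a_set} and the fact that a polar is a kernel.

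Part (1) contains a genuine gap, and while you have correctly located it, your sketch for closing it does not work. Your reduction to the claim that $\prod_i B_i$ is itself a polar is valid, but your finite-case argument then appeals to part (2) --- and (2) expresses each $B_i$ as an a priori \emph{infinite} product of principal polars, so there is no reduction to a finite product of principal polars and the intended induction never starts. The ingredient you then invoke, $a_1^{\bot\bot}\cdots a_k^{\bot\bot}=(|a_1|\dotplus\cdots\dotplus|a_k|)^{\bot\bot}$ from Proposition \ref{prop_principal_polars_operations}, is precisely the two-element instance of the very fact that products of polars are polars, so using it to establish (1) even for finite families is circular. For the infinite case you explicitly defer to the reference. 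Thus (1) is not actually proved: you have performed a clean reduction to "a kernel product of polars is a polar," identified plausible tools (infinite distributivity, the Riesz decomposition), but left the substantive step entirely to the external source, exactly as the paper does. It is also worth flagging that "products of polars are polars" is a delicate claim in lattice-ordered groups without a completeness hypothesis (in a non-complete $\ell$-group $B\cdot B^{\bot}$ need not be a polar even though both factors are), so any attempt to fill this in by hand, rather than by quoting the reference, will need to check carefully what hypotheses and what meaning of $\prod$ the cited text is actually using.
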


Recall that a kernel $K$ of an semifield $\mathbb{S}$ is a large kernel if $K \cap L \neq \{ 1 \}$ for each kernel $L$ of $\mathbb{S}$.\\

\begin{rem}
Let $K$ be a kernel of an idempotent semifield $\mathbb{S}$. Then $K$ is large as a subkernel of $K^{\bot \bot}$.
\end{rem}
\begin{proof}
First $K \subset K^{\bot \bot}$ and $K^{\bot \bot} \in \Con(\mathbb{S})$ thus $K \in \Con(K^{\bot \bot})$. If $L~\in~\Con(K^{\bot \bot})$ then in particular $L \in \Con(\mathbb{S})$ (since  $\Con(K^{\bot \bot}) \subseteq \Con(\mathbb{S})$). If  $K \cap L = \{1\}$ then
$L \subseteq  K^{\bot}$ but also $L \subseteq  K^{\bot \bot}$, thus $L \cap L = \{1\}$which yields that $L = \{ 1 \}$. So, $K \cap L = \{1\} \Rightarrow  L = \{1\}$ for any $L \in \Con(K^{\bot \bot})$ and $K$ is a large kernel in $K^{\bot \bot}$.
\end{proof}

\begin{thm}\label{thm_completely_closed_polar_equivalence}\cite[Theorem (2.3.7)]{OrderedGroups3}
Consider the following conditions on the \linebreak kernel $K$ of the idempotent semifield $\mathbb{S}$:
\begin{enumerate}
  \item $K$ is a polar.
  \item $K$ is completely closed in $\mathbb{S}$.
\end{enumerate}
Then (1) implies (2), and, if $\mathbb{S}$ is complete, (2) implies (1).
\end{thm}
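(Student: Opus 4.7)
The plan is to treat the two implications by rather different arguments: infinite distributivity in the $\ell$-group $\mathbb{S}$ will drive the forward direction, while completeness together with the convexity of kernels (Remark \ref{rem_kernel_is_convex}) will drive the converse.

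For (1) $\Rightarrow$ (2), I would write $K = A^{\bot}$ and take a nonempty $S \subseteq K$ whose supremum $s$ exists in $\mathbb{S}$; the infimum case is dual, obtained by inverting and using that the kernel $K$ is closed under inversion. Fixing $a \in A$, the goal is $|s| \wedge |a| = 1$. The strategy is to expand $|s| = s \dotplus s^{-1}$, apply the distributivity of $\wedge$ over $\dotplus$, and then pull $\wedge |a|$ inside the $\sup$ and the companion $\inf$ by means of the infinite distributive laws
$$|a| \wedge \sup_{x \in S} x = \sup_{x \in S}(|a| \wedge x), \qquad |a| \wedge \inf_{x \in S} x^{-1} = \inf_{x \in S}(|a| \wedge x^{-1}),$$
which hold in any $\ell$-group. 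For each individual $x \in S$, the defining relation $|x| \wedge |a| = (x \dotplus x^{-1}) \wedge |a| = (x \wedge |a|) \dotplus (x^{-1} \wedge |a|) = 1$ forces both $x \wedge |a| \leq 1$ and $x^{-1} \wedge |a| \leq 1$. Collecting these bounds gives $|s| \wedge |a| \leq 1$, while the reverse inequality is automatic since $|s|, |a| \geq 1$. Hence $s \in A^{\bot} = K$.

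For (2) $\Rightarrow$ (1) under completeness of $\mathbb{S}$, the inclusion $K \subseteq K^{\bot \bot}$ is free and it suffices to show $K^{\bot \bot} \subseteq K$. Since $K$ is a kernel, $a \in K$ if and only if $|a| \in K$, so I may restrict to $a \in K^{\bot \bot}$ with $a \geq 1$. I would form
$$T = \{ k \in K : 1 \leq k \leq a \},$$
which is nonempty (it contains $1$) and bounded above by $a$; completeness of $\mathbb{S}$ furnishes $b = \sup T$, and the assumption that $K$ is completely closed places $b$ in $K$. I plan to show $b = a$ by contradiction from $u := a b^{-1} > 1$.

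The main obstacle is this last step: I must produce an element $c \in K$ with $1 < c \leq u$, for then $bc \in T$ while $bc > b$, contradicting the choice of $b$. The trick I have in mind is that $u \in K^{\bot \bot}$ together with $u \neq 1$ implies $u \notin K^{\bot}$ (else $u$ would sit in $K^{\bot} \cap K^{\bot \bot} = \{1\}$), so there is some $k \in K$ with $c := |k| \wedge u > 1$. Since $1 \leq c \leq |k|$ with both $1$ and $|k|$ in $K$, the convexity of kernels gives $c \in K$, supplying the required element. This forces $u = 1$, hence $a = b \in K$, so $K = K^{\bot \bot}$ is a polar.
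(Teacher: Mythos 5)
The paper cites this result to Darnel [OrderedGroups3, Theorem (2.3.7)] and gives no proof of its own, so there is nothing to match against. Your proof is correct and self-contained using only facts available in the paper, so let me briefly certify each leg.

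For $(1)\Rightarrow(2)$: the key identity $|x|\wedge|a|=(x\wedge|a|)\dotplus(x^{-1}\wedge|a|)$ correctly reduces membership in $A^{\bot}$ to two one-sided bounds, and pulling $\wedge|a|$ through $\sup$ is exactly where the infinite distributive law of $\ell$-groups (available in the paper as Theorem (2.1.3) of \cite{OrderedGroups3}) is needed; the $\inf$ side is automatic from monotonicity without any distributivity. The reduction of the infimum case to the supremum case by inversion is also sound since kernels are closed under $x\mapsto x^{-1}$.

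For $(2)\Rightarrow(1)$: the argument is the standard completeness-driven one. The only nontrivial steps are (a) that $u=ab^{-1}\in K^{\bot\bot}\setminus\{1\}$ cannot lie in $K^{\bot}$, which follows from $K^{\bot}\cap K^{\bot\bot}=\{1\}$ (Theorem~\ref{thm_properties of polars}(2)), and (b) that $c=|k|\wedge u$ is pulled into $K$ by the convexity of kernels (Remark~\ref{rem_kernel_is_convex}), with $1<c\le u$ making $bc\in T$ and $bc>b$, contradicting maximality of $b=\sup T$. Both are handled correctly; note that the reduction to $a\ge 1$ is legitimate because $K^{\bot\bot}$ is itself a kernel (Theorem~\ref{thm_properties of polars}(1)) and hence closed under $|\cdot|$, while $|a|\in K\Rightarrow a\in K$ by Lemma~\ref{lem_generators}. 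No gaps.
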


\newpage

\subsection{The polar-skeleton correspondence}
\ \\

Recall $\mathscr{R}$ is our designated semifield defined to be bipotent, divisible, archimedean and complete.
Also recall that the semifield of fractions $\mathscr{R}(x_1,...,x_n)$ is considered as mapped to the semiring of functions $Fun(\mathscr{R}^n,\mathscr{R})$ as defined in \ref{def_semiring_of_functions}.\\

Recall that since $\mathscr{R}$ is idempotent and archimedean semifield $Fun(\mathscr{R}^n,\mathscr{R})$ is an idempotent and archimedean semifield and so is $\mathscr{R}(x_1,...,x_n)$ (cf. Proposition \ref{prop_semifield_of_fractions_is_archimedean}). Moreover, $Fun(\mathscr{R}^n,\mathscr{R})$ is also complete since $\mathscr{R}$ is complete (cf. Remark \ref{rem_complete_semifield_of_functions}).\\
By Theorem \ref{thm_completion_of_l_group}, $\mathscr{R}(x_1,...,x_n)$ has a unique completion to a complete archimedean idempotent semifield $\overline{\mathscr{R}(x_1,...,x_n)}$ in $Fun(\mathscr{R}^n,\mathscr{R})$. By Theorem \ref{thm_completion_of_a_semifield}, $\mathscr{R}(x_1,...,x_n)$ is dense in $\overline{\mathscr{R}(x_1,...,x_n)}$. \\

In this section we concentrate our attention to $\overline{\mathscr{R}(x_1,...,x_n)}$. Doing so, we consider the natural extensions to $\overline{\mathscr{R}(x_1,...,x_n)}$ of the operators $Skel$ and $Ker$ defined in Subsection \ref{Subsection:Kernels_of_Skeletons} with respect to $\mathscr{R}(x_1,...,x_n)$. We write $Ker_{\mathscr{R}}(x_1,...,x_n)$ to denote the \linebreak restriction of $Ker$ to $\mathscr{R}(x_1,...,x_n)$.\\

\begin{prop}\label{prop_polarity_and_skeletons_infinite_connection}
Let $K$ be a kernel of $\mathscr{R}(x_1,...,x_n)$. If $\langle g \rangle \cap K = \{ 1 \}$ for some \linebreak $g \in \mathscr{R}(x_1,...,x_n)$ then $$Skel(g) \cup Skel(K) = \mathscr{R}^{n}.$$
\end{prop}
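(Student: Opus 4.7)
The plan is to argue by contrapositive on a point-by-point basis, using the polar characterization from Theorem \ref{thm_properties of polars}(2) together with the pointwise interpretation of the lattice operations on $\mathscr{R}(x_1,\dots,x_n)\subseteq Fun(\mathscr{R}^n,\mathscr{R})$.

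First I would translate the hypothesis $\langle g\rangle\cap K=\{1\}$ into a polar statement. By Theorem \ref{thm_properties of polars}(2), this is equivalent to $K\subseteq\langle g\rangle^{\bot}=g^{\bot}$. Unpacking the definition of the polar, this means that for every $f\in K$ one has $|f|\wedge|g|=1$ as an element of $\mathscr{R}(x_1,\dots,x_n)$.

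Next I would exploit the fact that $\mathscr{R}(x_1,\dots,x_n)$ is embedded in $Fun(\mathscr{R}^n,\mathscr{R})$ in such a way that $\wedge$ is pointwise infimum (i.e., pointwise minimum, since $\mathscr{R}$ is bipotent). Fix an arbitrary point $a\in\mathscr{R}^n$ and suppose $a\notin Skel(g)$; I want to show $a\in Skel(K)$. Since $a\notin Skel(g)$ means $g(a)\neq 1$, and since $|g|(a)=g(a)\dotplus g(a)^{-1}\geq 1$ with equality iff $g(a)=1$, we conclude $|g|(a)>1$ strictly. Now for any $f\in K$, the identity $|f|\wedge|g|=1$ evaluated at $a$ gives
\[
\min\bigl(|f|(a),|g|(a)\bigr)=1,
\]
and because $|g|(a)>1$ we are forced into $|f|(a)=1$, which in turn gives $f(a)=1$ (again by Lemma \ref{lem_generators} or directly, since $f(a)+f(a)^{-1}=1$ forces $f(a)=1$).

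Since this holds for every $f\in K$, we obtain $a\in Skel(K)$, which is exactly the desired inclusion $\mathscr{R}^n\setminus Skel(g)\subseteq Skel(K)$, i.e., $Skel(g)\cup Skel(K)=\mathscr{R}^n$. The main (and essentially only) conceptual step is the pointwise reading of the identity $|f|\wedge|g|=1$; once one commits to viewing elements of $\mathscr{R}(x_1,\dots,x_n)$ as honest $\mathscr{R}$-valued functions (as stipulated in Note \ref{note_semiring_of_functions}), the rest is a one-line deduction. No use of the completion $\overline{\mathscr{R}(x_1,\dots,x_n)}$ is needed here, and the argument in fact works for any subsemifield of $Fun(\mathscr{R}^n,\mathscr{R})$.
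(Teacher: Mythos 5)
Your proof is correct, but it takes a genuinely different route from the paper's. The paper first invokes the infinite distributive law for kernels to reduce to the statement that $\langle f\rangle\cap\langle g\rangle=\{1\}$ for every $f\in K$, then applies Corollary \ref{cor_principal_skel_correspondence} (so that $Skel(f)\cup Skel(g)=Skel(\langle f\rangle\cap\langle g\rangle)=Skel(\{1\})=\mathscr{R}^n$ termwise), and finally uses the infinite distributive law for sets to pull the union inside the intersection $Skel(K)=\bigcap_{f\in K}Skel(f)$. You instead go straight through Theorem \ref{thm_properties of polars}(2) to rewrite the hypothesis as $K\subseteq g^{\bot}$, and then read off the polar condition $|f|\wedge|g|=1$ pointwise: at any $a\notin Skel(g)$ one has $|g|(a)>1$, forcing $|f|(a)=1$ and hence $f(a)=1$ for every $f\in K$. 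Your version avoids both appeals to infinite distributivity and the skeleton–intersection machinery; it also makes explicit the single observation that really drives the result (that at any point, at most one of $|f|(a)$, $|g|(a)$ can exceed $1$ when $|f|\wedge|g|=1$), and it clearly transfers unchanged to any subsemifield of $Fun(\mathscr{R}^n,\mathscr{R})$. The paper's route is a touch heavier but fits the flow of the surrounding lattice-theoretic development and reuses already-established correspondences. Both are fine; yours is the leaner of the two.
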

\begin{proof}
If $K \cap \langle g \rangle = \{1\}$ then $\langle f \rangle \cap \langle g \rangle = \{ 1 \}$ for every $f \in K$. Thus as \linebreak $Skel(K) =\bigcap_{f \in K}Skel(f)$ we have that $Skel(K) \cup Skel(g) = (\bigcap_{f \in K}Skel(f)) \cup Skel(g)$ \linebreak $=\bigcap_{f \in K}(Skel(f) \cup Skel(g)) = \bigcap_{f \in K}\mathscr{R}^n = \mathscr{R}^n$. Note that infinite distributive laws hold for sets with respect to intersections and unions.
\end{proof}

Note that the same arguments hold taking $\overline{\mathscr{R}(x_1,...,x_n)}$ instead of $\mathscr{R}(x_1,...,x_n)$.

\begin{rem}\label{rem_carachterization_of_completions}
As any kernel $K \in \Con(\mathscr{R}(x_1,...,x_n))$ is a bipotent archimedean semifield in its own right, it has a completion too, which we denote by $\bar{K}$ and $\bar{K} \subset \overline{\mathscr{R}(x_1,...,x_n)}$.
By Corollary \ref{cor_completion_of_kernels} $\bar{K}$ is a kernel of $\overline{\mathscr{R}(x_1,...,x_n)}$.\\ 

The question arises which kernels of $\overline{\mathscr{R}(x_1,...,x_n)}$ are completions of kernels of $\mathscr{R}(x_1,...,x_n)$. The answer to this question is all kernels of $\overline{\mathscr{R}(x_1,...,x_n)}$ that are \linebreak completely closed. Indeed,  $\mathscr{R}(x_1,...,x_n)$ is a subsemifield of $\overline{\mathscr{R}(x_1,...,x_n)}$, so by \linebreak Theorem  \ref{thm_nother_1_and_3}(1) we have that
$$\Con(\mathscr{R}(x_1,...,x_n)) = \{ K \cap \mathscr{R}(x_1,...,x_n) :  K \in \Con(\overline{\mathscr{R}(x_1,...,x_n)}).$$
Since $\mathscr{R}(x_1,...,x_n)$ is dense in $\overline{\mathscr{R}(x_1,...,x_n)}$ for every kernel $K$ of $\overline{\mathscr{R}(x_1,...,x_n)}$, the kernel $L = K \cap \mathscr{R}(x_1,...,x_n)$ of $\mathscr{R}(x_1,...,x_n)$ is dense in $K$. Thus $\bar{L} \supseteq K$ where $\bar{L}$ is the completion of $L$ and so, since $L \subseteq K$, we have that $\bar{L} = \bar{K}$, i.e., $\bar{L} = K$ if and only if $K$ is completely closed.
\end{rem}

\begin{cor}\label{cor_correspondence_of_kernels_with_the_completions}
By Remark \ref{rem_carachterization_of_completions}, each completely closed kernel $K$ of $\overline{\mathbb{H}(x_1,...,x_n)}$ defines a unique kernel of $\mathbb{H}(x_1,...,x_n)$ given by $L = K \cap \mathbb{H}(x_1,...,x_n)$ for which $\overline{L} = K$.
\end{cor}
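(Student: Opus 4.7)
The plan is to extract the content from Remark \ref{rem_carachterization_of_completions}, since the corollary essentially repackages what has already been proven there. There are three things to verify: that $L = K \cap \mathbb{H}(x_1,\ldots,x_n)$ is a kernel, that its completion recovers $K$, and that it is the only such kernel.

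First I would apply Theorem \ref{thm_nother_1_and_3}(1) to the subsemifield $\mathbb{H}(x_1,\ldots,x_n) \subseteq \overline{\mathbb{H}(x_1,\ldots,x_n)}$ and the kernel $K$. This immediately gives that $L = K \cap \mathbb{H}(x_1,\ldots,x_n)$ is a kernel of $\mathbb{H}(x_1,\ldots,x_n)$, and is therefore an archimedean idempotent semifield in its own right that admits a completion $\bar L$ by Theorem \ref{thm_completion_of_l_group}.

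Next I would show $\bar L = K$ by a two-sided inclusion: using Theorem \ref{thm_completion_of_a_semifield}, $\mathbb{H}(x_1,\ldots,x_n)$ is (left) dense in $\overline{\mathbb{H}(x_1,\ldots,x_n)}$, which transports to $L$ being dense in $K$, giving $\bar L \supseteq K$; for the reverse, since $L \subseteq K$ and $K$ is completely closed, invoking the characterization of the completion as the smallest complete idempotent subsemifield containing the dense subsemifield (Theorem \ref{thm_completion_of_a_semifield}(2)) yields $\bar L \subseteq K$. Uniqueness of $L$ is then formal: the prescription $L = K \cap \mathbb{H}(x_1,\ldots,x_n)$ is determined by $K$, and conversely if $L'$ is any kernel of $\mathbb{H}(x_1,\ldots,x_n)$ with $\bar{L'} = K$ then $L' \subseteq K \cap \mathbb{H}(x_1,\ldots,x_n) = L$, while the density of $L'$ in $K$ combined with Theorem \ref{thm_completion_of_l_group} forces $L' = L$.

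The main obstacle, if any, is really just bookkeeping: there is no new content beyond Remark \ref{rem_carachterization_of_completions} — the corollary serves to single out the assignment $K \mapsto K \cap \mathbb{H}(x_1,\ldots,x_n)$ as a well-defined function on the completely closed kernels of $\overline{\mathbb{H}(x_1,\ldots,x_n)}$. The only subtle point is the tacit use of Theorem \ref{thm_completely_closed_polar_equivalence} to know that ``completely closed'' is the right condition for completions to be recoverable from their dense part, and that this condition is preserved under the operations we are performing.
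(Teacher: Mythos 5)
Your proof follows essentially the same line as the paper's own Remark~\ref{rem_carachterization_of_completions}: Theorem~\ref{thm_nother_1_and_3}(1) gives that $L$ is a kernel, denseness gives $\bar L \supseteq K$, and complete-closedness of $K$ gives the reverse inclusion. The only departure is in packaging the reverse inclusion: the paper argues $\bar L = \bar K$ (since $L$ is dense in $K$, which is dense in $\bar K$) and then notes $\bar K = K$ precisely when $K$ is completely closed, whereas you invoke Theorem~\ref{thm_completion_of_a_semifield}(2) as though it said the completion is the \emph{smallest} complete semifield containing $L$. That theorem actually only says $\bar L$ has no proper complete subsemifield containing $L$; to extract $\bar L \subseteq K$ from it, you should first observe that $K \cap \bar L$ is a complete subsemifield of $\bar L$ containing $L$ (intersections of completely closed kernels are completely closed, and completely closed subsets of a complete semifield are complete), so $K \cap \bar L = \bar L$. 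With that small patch, the argument closes cleanly and is a genuine, if minor, rearrangement of the paper's bookkeeping.
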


\begin{exmp}\label{exmp_completion_of_bounded_kernel}
Consider the kernel $K = \langle |x| \wedge |\alpha| \rangle \in \PCon(\mathscr{R}(x))$ and the subset \linebreak $X = \{|x| \wedge \alpha^n : n \in \mathbb{N}  \}$. Then since $X \subset K$ one has that $|x| = \bigvee_{f \in X}f \in \overline{K}$, thus \linebreak $\langle x \rangle  \subset \overline{K}$ which yields that $\overline{\langle |x| \wedge |\alpha| \rangle} = \overline{\langle |x| \rangle}$.
\end{exmp}

\begin{rem}
For every $K \in \mathscr{R}(x_1,...,x_n)$
$$Skel(\bar{K}) = Skel(K)$$
where $\bar{K}$ is the completion of $K$ in $\overline{\mathscr{R}(x_1,...,x_n)}$.
\end{rem}
\begin{proof}
First note that since $K \subset \bar{K}$ in $\overline{\mathscr{R}(x_1,...,x_n)}$ we have that \linebreak $Skel(\bar{K}) \subseteq Skel(K)$. Now,  let $Z = Skel(K)$ and let $S$ be any nonempty subset of $K$. If $\bigvee_{s \in S}s \in \overline{\mathscr{R}(x_1,...,x_n)}$ then for any $a \in Z$ \ $(\bigvee_{s \in S}s) (a) = \bigvee_{s \in S}s(a) = \bigvee_{s \in S}1 =1$, yielding that $Skel(\bigvee_{s \in S}) \supseteq Z$ (cf. Remark \ref{rem_complete_semifield_of_functions}). Similarly, if $\bigwedge_{s \in S}s \in \overline{\mathscr{R}(x_1,...,x_n)}$ then for any $a \in Z$ we have that $(\bigwedge_{s \in S}s) (a) = \bigwedge_{s \in S}s(a) = \bigwedge_{s \in S}1 =1$, yielding that $Skel(\bigwedge_{s \in S}) \supseteq Z$. Thus as none of its supplementary elements reduces the size of $Z$, we have that $Skel(\bar{K}) = Skel(K)$.
\end{proof}

\begin{note}
Let $\mathbb{S}$ be an idempotent semifield and let $H$ be a subset of $\mathbb{S}$. For a subset $A$ of $\mathbb{S}$ let
$$A^{\bot  H} = \{ h \in H : |h| \wedge |a| = 1  \ \ \forall a \in  A \},$$
then $A^{\bot  H} = A^{\bot \mathbb{S}} \cap H$.\\
We only write $A^{\bot H}$ when $H$ is a proper subset of the semifield in which $A$ is considered. For example if $K \in \Con(\overline{\mathscr{R}(x_1,...,x_n)})$ then $K^{\bot} = K^{\bot \overline{\mathbb{H}(x_1,...,x_n)}}$.
\end{note}

\begin{defn}
Let $K$ be a kernel of $\overline{\mathscr{R}(x_1,...,x_n)}$. Then $K$ is said to be a \emph{$\mathcal{K}$-kernel}
if $$K = Ker(Skel(K)).$$
In other words, $K$ is a preimage of its skeleton with respect to the map $$Ker : \mathbb{P}(\mathbb{H}^n) \rightarrow \Con(\overline{\mathbb{H}(x_1,...,x_n)})$$  where $\mathbb{P}(X)$ is the powerset of the set $X$.
\end{defn}

\begin{prop}\label{prop_polar_is_a_K_kernel}
A polar in  $\mathscr{B}(\overline{\mathscr{R}(x_1,...,x_n)})$ is a $\mathcal{K}$-kernel.
\end{prop}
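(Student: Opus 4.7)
The plan is to prove the two inclusions of $K = Ker(Skel(K))$ separately, exploiting that $K$ being a polar means $K = K^{\bot\bot}$ (Remark after Theorem \ref{thm_properties of polars}).

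The inclusion $K \subseteq Ker(Skel(K))$ is immediate from the definitions (it is the analogue of Remark \ref{rem_ker_properties_1}(4) applied in the completion): every $f \in K$ evaluates to $1$ on every point of $Skel(K)$.

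For the reverse inclusion $Ker(Skel(K)) \subseteq K = K^{\bot\bot}$, fix $f \in Ker(Skel(K))$. I want to show that $f \in g^{\bot}$ for every $g \in K^{\bot}$, which gives $f \in (K^{\bot})^{\bot} = K^{\bot\bot} = K$. So fix an arbitrary $g \in K^{\bot}$; I must check that $|f| \wedge |g| = 1$ pointwise on $\mathscr{R}^n$. The key observation is that $\mathscr{R}$ is bipotent (totally ordered), so at any $a \in \mathscr{R}^n$, $|f|(a) \wedge |g|(a) = \min(|f|(a),|g|(a))$, and both values are $\geq 1$. Now split into two cases:
\begin{enumerate}
\item If $a \in Skel(K)$, then $f(a) = 1$, hence $|f|(a) = 1$, so $\min(|f|(a), |g|(a)) = 1$.
\item If $a \notin Skel(K)$, then there exists some $h \in K$ with $h(a) \neq 1$, i.e.\ $|h|(a) > 1$. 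Since $g \in K^{\bot}$, we have $|g| \wedge |h| = 1$, so $\min(|g|(a), |h|(a)) = 1$. Because $|h|(a) > 1$, this forces $|g|(a) = 1$, and hence $\min(|f|(a), |g|(a)) = 1$.
\end{enumerate}
In either case $|f|(a) \wedge |g|(a) = 1$, so $|f| \wedge |g| = 1$ globally, i.e.\ $f \in g^{\bot}$, as required.

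I don't anticipate a serious obstacle; the proof is essentially a pointwise dichotomy argument. The only subtle point is the use of bipotency of $\mathscr{R}$ to turn the algebraic condition $|g|\wedge |h| = 1$ in the semifield (or its completion) into the pointwise statement ``at each $a$, either $|g|(a) = 1$ or $|h|(a) = 1$''; this is what allows the two cases above to cover all of $\mathscr{R}^n$ and tie $g \in K^{\bot}$ to the geometric set $Skel(K)$.
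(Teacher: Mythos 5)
Your proof is correct. The underlying mechanism is the same as the paper's: over the bipotent $\mathscr{R}$, the algebraic statement $|g|\wedge|h|=1$ in $Fun(\mathscr{R}^n,\mathscr{R})$ is exactly the pointwise disjunction ``at each $a$, $|g|(a)=1$ or $|h|(a)=1$,'' equivalently $Skel(g)\cup Skel(h)=\mathscr{R}^n$. The paper packages this observation as Proposition \ref{prop_polarity_and_skeletons_infinite_connection}, writes $K=V^{\bot}$, and runs a contradiction: if $g\in Ker(Skel(K))\setminus K$ then some $v\in V$ has $Skel(g)\cup Skel(v)\neq\mathscr{R}^n$, yet $K\cap\langle v\rangle=\{1\}$ forces $Skel(K)\cup Skel(v)=\mathscr{R}^n\subseteq Skel(g)\cup Skel(v)$. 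You instead characterize $K$ via $K=K^{\bot\bot}$ and verify $f\in g^{\bot}$ directly for each $g\in K^{\bot}$ by the two-case pointwise dichotomy. The two routes are dual formulations of the same idea; yours is slightly more self-contained (it bypasses Proposition \ref{prop_polarity_and_skeletons_infinite_connection} in favor of the bare pointwise argument) and avoids the contradiction, at the small cost of explicitly invoking the double-polar identity. One cosmetic point: since $f\in g^{\bot}$ iff $|f|\wedge|g|=1$, and $|f|\wedge|g|\in Fun(\mathscr{R}^n,\mathscr{R})$, it would be worth saying in one clause that equality of functions is tested pointwise and $\wedge$ is pointwise $\min$ here — which you do — so the ``global'' conclusion is literally the conjunction over $a\in\mathscr{R}^n$ of the two cases; no compactness or completeness of $\overline{\mathscr{R}(x_1,...,x_n)}$ is needed beyond it living inside $Fun(\mathscr{R}^n,\mathscr{R})$.
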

\begin{proof}
Let $K = V^{\bot}$ for some $V \subset \overline{\mathscr{R}(x_1,...,x_n)}$ and let $g \in \overline{\mathscr{R}(x_1,...,x_n)}$ such that $Skel(g) \supseteq Skel(K)$. Assume $g \not \in K$, then there exists some $v \in V$ such that $|g| \wedge |v| \neq 1$ so $Skel(g) \cup Skel(v) = Skel(\langle g \rangle \cap \langle v \rangle) = Skel(|g| \wedge |v|) \neq \mathscr{R}^n$. Since $K = V^{\bot}$, by definition $K \cap \langle v \rangle = \{1\}$, so, Proposition \ref{prop_polarity_and_skeletons_infinite_connection} implies that $Skel(K) \cup Skel(v) = \mathscr{R}^n$.    But $Skel(g) \supseteq Skel(K)$ implies that $Skel(g)\cup Skel(v) \supseteq Skel(K) \cup Skel(v) = \mathscr{R}^n$. A contradiction. Thus $g \in K$.
\end{proof}

\begin{prop}\label{prop_mathcal_K_kernel_is_completely_closed}
Every $\mathcal{K}$-kernel of $\overline{\mathscr{R}(x_1,...,x_n)}$ is  completely closed.
\end{prop}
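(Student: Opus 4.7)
The plan is to reduce the problem to a pointwise evaluation argument. Let $K$ be a $\mathcal{K}$-kernel of $\overline{\mathscr{R}(x_1,\dots,x_n)}$ and set $Z=Skel(K)$, so that by the defining property $K=Ker(Z)$, i.e.\ $K=\{f\in\overline{\mathscr{R}(x_1,\dots,x_n)}\;:\;f(a)=1\ \forall a\in Z\}$. I must show that whenever $S\subseteq K$ is nonempty and $g:=\bigvee S$ (respectively $\bigwedge S$) exists in $\overline{\mathscr{R}(x_1,\dots,x_n)}$, one has $g\in K$. Since a kernel is closed under inversion and $f\mapsto f^{-1}$ interchanges $\bigvee$ and $\bigwedge$, the infimum case is reduced to the supremum case by applying the argument to $\{s^{-1}:s\in S\}$.

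The crux is to justify the pointwise formula
\[
 g(a)\;=\;\bigvee_{s\in S}s(a),\qquad a\in\mathscr{R}^n,
\]
where the right-hand side is computed in $\mathscr{R}$. Once this is established the rest is immediate: for each $a\in Z$ and each $s\in S\subseteq K$ we have $s(a)=1$, hence $g(a)=\bigvee\{1\}=1$, giving $g\in Ker(Z)=K$.

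The main obstacle is the pointwise formula itself, since $\overline{\mathscr{R}(x_1,\dots,x_n)}$ sits as a subsemifield of the complete semifield $Fun(\mathscr{R}^n,\mathscr{R})$ (cf.\ Remark~\ref{rem_complete_semifield_of_functions}), whose lattice operations are pointwise, and a priori the supremum computed in the sublattice $\overline{\mathscr{R}(x_1,\dots,x_n)}$ need only dominate the pointwise supremum $\tilde g=\bigvee_{Fun}S$. My plan to close this gap is to identify $\overline{\mathscr{R}(x_1,\dots,x_n)}$ with the sub-semifield $\mathcal{C}\subseteq Fun(\mathscr{R}^n,\mathscr{R})$ obtained by closing $\mathscr{R}(x_1,\dots,x_n)$ under pointwise suprema and infima of bounded subsets. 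One checks that $\mathcal{C}$ is a complete sub-semifield (closure under multiplication and addition uses that pointwise suprema distribute over these operations on the positive cone, and inverses interchange $\bigvee$ with $\bigwedge$) in which $\mathscr{R}(x_1,\dots,x_n)$ is left dense. The uniqueness clause of Theorem~\ref{thm_completion_of_l_group} together with the minimality characterization provided by Theorem~\ref{thm_completion_of_a_semifield} then forces $\mathcal{C}=\overline{\mathscr{R}(x_1,\dots,x_n)}$. With $\mathcal{C}$ closed under pointwise suprema, $\tilde g$ already lies in $\overline{\mathscr{R}(x_1,\dots,x_n)}$; comparing the two upper bounds (each of $g$ and $\tilde g$ bounds $S$ pointwise, and the one lying in $\overline{\mathscr{R}(x_1,\dots,x_n)}$ must dominate the sup taken there) forces $g=\tilde g$, which completes the argument.
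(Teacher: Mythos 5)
Your proof follows the same route as the paper's: fix a nonempty $S\subseteq K=Ker(Z)$ whose supremum $g$ exists in $\overline{\mathscr{R}(x_1,\dots,x_n)}$, show pointwise that $g(a)=1$ for every $a\in Z$, conclude $g\in Ker(Z)=K$, and treat infima by the same token (the paper writes both cases out; your reduction via $s\mapsto s^{-1}$ is a valid shortcut). The load-bearing step in both is the identity $(\bigvee_{s\in S}s)(a)=\bigvee_{s\in S}s(a)$. The paper dispatches it with a one-line citation of Remark~\ref{rem_complete_semifield_of_functions}; you, rightly, observe that this remark is a statement about the ambient $Fun(\mathscr{R}^n,\mathscr{R})$, whereas the definition of completely closed takes $\bigvee S$ inside the sub-semifield $\overline{\mathscr{R}(x_1,\dots,x_n)}$, and a supremum computed in a sub-lattice can in general strictly dominate the pointwise one. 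That this is not a vacuous worry is illustrated by the remark in the paper immediately following the proposition, where $X=\{|x^n|\wedge\alpha:n\in\mathbb{N}\}\subset\mathscr{R}(x)$ has abstract supremum $\alpha$ even though $\bigvee_{f\in X}f(1)=1$.

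Your proposed remedy — identify $\overline{\mathscr{R}(x_1,\dots,x_n)}$ with the pointwise closure $\mathcal{C}$ of $\mathscr{R}(x_1,\dots,x_n)$ in $Fun(\mathscr{R}^n,\mathscr{R})$, so that it is completely closed in $Fun$ and hence pointwise sups coincide with lattice sups — is the right idea, and it is a more careful version of what the paper's citation implicitly assumes. However, as written the remedy is itself a sketch, and the two verifications you defer are exactly where the difficulty sits. Left density of $\mathscr{R}(x_1,\dots,x_n)$ in $\mathcal{C}$ is stated but not checked. More seriously, the minimality clause of Theorem~\ref{thm_completion_of_a_semifield} quantifies over subsemifields $A\subseteq\mathcal{C}$ that are complete as \emph{abstract} lattices; the supremum taken in such an $A$ need not be the pointwise one, so ``$A$ is complete and contains $\mathscr{R}(x_1,\dots,x_n)$'' does not immediately force $A$ to contain all the pointwise sups that build up $\mathcal{C}$. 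You would need an additional argument (for instance, using left density to show that any abstract upper bound in $A$ must already dominate the pointwise sup, forcing the two to agree) before the uniqueness machinery closes the identification $\mathcal{C}=\overline{\mathscr{R}(x_1,\dots,x_n)}$. In short: correct outline, same strategy as the paper, and a genuine improvement in that you surface the hidden assumption — but the gap you name is not yet fully closed.
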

\begin{proof}
By Proposition \ref{prop_skel_ker_relations} if  $Z =Skel(K)$ then  $Skel(Ker(Z)) = Z$. \linebreak
Let $Z = Skel(K) \subseteq \mathscr{R}^n$ where  $K = Ker(Z) \in \Con(\overline{\mathscr{R}(x_1,...,x_n)})$. Let $S \subseteq K$ be any nonempty subset of $K$ then for any $s \in S$ we have that  $Skel(s) \supseteq Z$. \\
Now, if $\bigvee_{s \in S}s \in \overline{\mathscr{R}(x_1,...,x_n)}$ and $\bigwedge_{s \in S}s \in \overline{\mathscr{R}(x_1,...,x_n)}$, then by Remark \ref{rem_complete_semifield_of_functions},\linebreak
for any $a \in Z$
$$(\bigvee_{s \in S}s) (a) = \bigvee_{s \in S}s(a) = \bigvee_{s \in S}1 =1 \ \ \text{and} \ \
(\bigwedge_{s \in S}s) (a) = \bigwedge_{s \in S}s(a) = \bigwedge_{s \in S}1 =1.$$
Thus
$$Skel(\bigvee_{s \in S}s), Skel(\bigwedge_{s \in S}s) \supseteq Z.$$
So $\bigvee_{s \in S}s, \bigwedge_{s \in S}s \in K$ and $K$ is completely closed in $\overline{\mathscr{R}(x_1,...,x_n)}$.
\end{proof}

\begin{rem}
Proposition \ref{prop_mathcal_K_kernel_is_completely_closed} is not true when considering $\mathcal{K}$-kernels of \linebreak $\mathscr{R}(x_1,...,x_n)$ instead of $\overline{\mathscr{R}(x_1,...,x_n)}$. Let $\alpha \in \mathscr{R}$ such that $\alpha > 1$. Consider the subset
$$X = \{  |x^n| \wedge \alpha : n \in \mathbb{N}\},$$
then $X \subset \langle x \rangle$, \ $\bigvee_{f \in X}f = \alpha$ (where $\alpha$ is the constant function) and $Skel(f) = \{ 1 \} \subset \mathscr{R}$ for every $f \in X$. Thus $\alpha = (\alpha)(1) = (\bigvee_{f \in X}f) (1) \neq \bigvee_{f \in X}f(1) = 1$ and $\alpha$ is not in the preimage of $Skel(x)$ (see figure \ref{fig:Expol}).\\ So we deduce that $\mathcal{K}$-kernels of $\mathscr{R}(x_1,...,x_n)$ are not necessarily completely closed \linebreak in $\mathscr{R}(x_1,...,x_n)$. \\
Since every polar is completely closed (cf. Theorem \ref{thm_completely_closed_polar_equivalence}), by the example given above we have that $\alpha \in x^{\bot \bot}$ which yields that $x^{\bot} = x^{\bot \bot \bot} = (x^{\bot \bot})^{\bot} =\{1\}$.
\end{rem}

\begin{figure}
\centering

\begin{tikzpicture}[scale=3,
    axis/.style={thin, color=gray, ->},
    ar important line/.style={very thick, ->},
    important line/.style={very thick},
    dashed line/.style={thick,dashed}]

    \draw[axis] (-1,0)  -- (1,0) node[right, color=black] {$x$} ;
    \draw[axis] (0,-1) -- (0,1);
    \draw[important line] (0,0)  -- (-0.7,0.7)  node[below= 0.5cm] {$|x^1| \wedge \alpha$};
    \draw[dashed line] (0,0)  -- (-0.35,0.7)  node[above= 0.3cm,left=0cm] {$|x^2| \wedge \alpha$};
    \draw[dashed line] (0,0)  -- (-0.1,0.7)  node[above= 0.3cm,right=0.2cm] {$|x^k| \wedge \alpha$};
    \draw[ar important line] (0,0)  -- (0,0.7) -- (1,0.7);
    \draw[ar important line] (0,0)  -- (0,0.7) -- (-1,0.7);
    \draw [black, fill=black] (0,0) circle (1pt);
    \draw [black, fill=black] (0,0.7) circle (0.3pt)node[above= 0.15cm,left=0.01cm] {$\alpha$};
    \draw[important line] (0,0) -- (0.7,0.7) -- (1,0.7) node[below=0.2cm,right=0.2cm]{} ;
    \draw[dashed line] (0,0)  -- (0.35,0.7);
    \draw[dashed line] (0,0)  -- (0.1,0.7);

 \end{tikzpicture}

\caption{$X = \{  |x^n| \wedge \alpha : n \in \mathbb{N}\}$ and its supremum.}
\label{fig:Expol}

\end{figure}
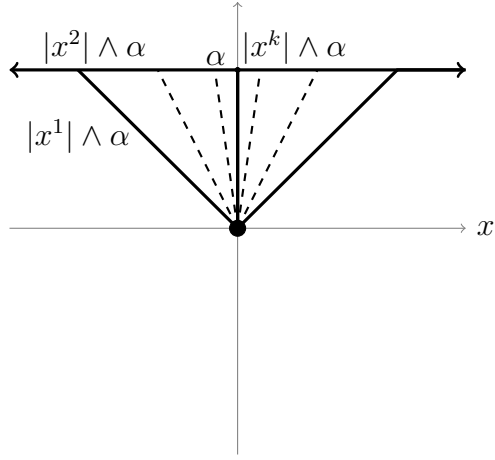

%

\begin{cor}\label{cor_mathcal_K_kernel_is_a_polar}
Every $\mathcal{K}$-kernel of $\overline{\mathscr{R}(x_1,...,x_n)}$ is a polar.
\end{cor}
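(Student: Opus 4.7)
The plan is to obtain this corollary as a direct combination of the two preceding results together with the completeness of $\overline{\mathscr{R}(x_1,...,x_n)}$. Concretely, I would argue as follows: let $K$ be a $\mathcal{K}$-kernel of $\overline{\mathscr{R}(x_1,...,x_n)}$. By Proposition \ref{prop_mathcal_K_kernel_is_completely_closed}, $K$ is completely closed in $\overline{\mathscr{R}(x_1,...,x_n)}$. Since $\overline{\mathscr{R}(x_1,...,x_n)}$ is by construction the completion of $\mathscr{R}(x_1,...,x_n)$ inside the complete semifield $Fun(\mathscr{R}^n,\mathscr{R})$ (cf.\ Remark \ref{rem_complete_semifield_of_functions} and Theorem \ref{thm_completion_of_l_group}), it is itself a complete idempotent semifield. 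Hence the second (nontrivial) implication of Theorem \ref{thm_completely_closed_polar_equivalence} applies, which turns ``completely closed'' into ``polar'', yielding that $K$ is a polar.

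So there is essentially nothing to grind through: the corollary is a one-line synthesis. The only point that would deserve a brief sentence of justification is why $\overline{\mathscr{R}(x_1,...,x_n)}$ is complete so that Theorem \ref{thm_completely_closed_polar_equivalence}(2)$\Rightarrow$(1) is available; this is exactly the content of the discussion preceding Proposition \ref{prop_polarity_and_skeletons_infinite_connection}, where $\overline{\mathscr{R}(x_1,...,x_n)}$ is set up as the completion sitting inside $Fun(\mathscr{R}^n,\mathscr{R})$.

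Combining this corollary with Proposition \ref{prop_polar_is_a_K_kernel}, one in fact obtains the cleaner conclusion that within $\overline{\mathscr{R}(x_1,...,x_n)}$ the $\mathcal{K}$-kernels coincide with the polars, and hence, via the bijection of Proposition \ref{prop_correspondence_between_K_kernels_and_skeletons}, skeletons in $\mathscr{R}^n$ are in order-reversing bijection with $\mathscr{B}(\overline{\mathscr{R}(x_1,...,x_n)})$. I would probably record this as a remark immediately after the proof, since it is the ``Nullstellensatz-type'' statement motivating the section. The main obstacle, such as it is, has already been absorbed into Proposition \ref{prop_mathcal_K_kernel_is_completely_closed} and Theorem \ref{thm_completely_closed_polar_equivalence}; no further work is needed here beyond citing them and invoking completeness of the ambient semifield.
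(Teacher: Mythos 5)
Your proof is correct and is essentially identical to the paper's: both cite Proposition \ref{prop_mathcal_K_kernel_is_completely_closed} to get that the $\mathcal{K}$-kernel is completely closed, and then invoke Theorem \ref{thm_completely_closed_polar_equivalence} together with the completeness of $\overline{\mathscr{R}(x_1,...,x_n)}$ to conclude it is a polar. The extra remark about the bijection with skeletons duplicates the content of Corollary \ref{cor_polar_k_kernel_completion_equivalence} and Theorem \ref{thm_polar_skeleton_corrsepondence}, which the paper records separately.
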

\begin{proof}
By Proposition \ref{prop_mathcal_K_kernel_is_completely_closed}, every $\mathcal{K}$-kernel of $\overline{\mathscr{R}(x_1,...,x_n)}$ is completely closed and  thus by Theorem \ref{thm_completely_closed_polar_equivalence} is a polar since $\overline{\mathscr{R}(x_1,...,x_n)}$ is complete.
\end{proof}

Summarizing the above assertions we have
\begin{cor}\label{cor_polar_k_kernel_completion_equivalence}
Let $K$ be a kernel of $\overline{\mathscr{R}(x_1,...,x_n)}$. Then the following statements are equivalent:
 \begin{enumerate}
   \item $K$ is a $\mathcal{K}$-kernel.
   \item $K$ is a polar.
   \item $K$ is completely closed.
 \end{enumerate}
\end{cor}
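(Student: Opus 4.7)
The plan is to establish the equivalence by assembling the three implications in a short cyclic argument, since each of the pieces has already been laid out in the preceding propositions. I would arrange the cycle as $(2) \Rightarrow (1) \Rightarrow (3) \Rightarrow (2)$, so that each step invokes exactly one of the results just proved.

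First, for $(2) \Rightarrow (1)$, I would quote Proposition \ref{prop_polar_is_a_K_kernel} directly: every polar $K = V^{\bot}$ in $\overline{\mathscr{R}(x_1,\dots,x_n)}$ was shown there to coincide with $Ker(Skel(K))$, i.e., to be a $\mathcal{K}$-kernel. No additional computation is required. Next, for $(1) \Rightarrow (3)$, I would appeal to Proposition \ref{prop_mathcal_K_kernel_is_completely_closed}, where it was shown, using pointwise evaluation together with the fact that $Fun(\mathscr{R}^n,\mathscr{R})$ is complete (Remark \ref{rem_complete_semifield_of_functions}), that if $K = Ker(Z)$ then any suprema or infima of subsets of $K$ which exist in $\overline{\mathscr{R}(x_1,\dots,x_n)}$ must still vanish to $1$ on $Z$, hence still lie in $K$; thus $K$ is completely closed.

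For the closing implication $(3) \Rightarrow (2)$, I would invoke Theorem \ref{thm_completely_closed_polar_equivalence}, noting that its second half applies precisely because the ambient semifield $\overline{\mathscr{R}(x_1,\dots,x_n)}$ is by construction the completion of $\mathscr{R}(x_1,\dots,x_n)$ and is therefore complete (this is where the passage from $\mathscr{R}(x_1,\dots,x_n)$ to its completion is crucial). Hence every completely closed kernel there is a polar.

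I do not expect any real obstacle: all three implications are immediate citations of results established immediately above, and the only point that needs explicit emphasis is that the equivalence holds for $\overline{\mathscr{R}(x_1,\dots,x_n)}$ rather than for $\mathscr{R}(x_1,\dots,x_n)$ itself, because the $(3) \Rightarrow (2)$ half of Theorem \ref{thm_completely_closed_polar_equivalence} requires the ambient semifield to be complete. (The preceding remark and figure explicitly warn that $\mathcal{K}$-kernels of $\mathscr{R}(x_1,\dots,x_n)$ need not be completely closed, so this is the subtle step to flag in the write-up.)
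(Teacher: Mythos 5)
Your proof is correct and matches the paper's argument exactly: the corollary is stated immediately after Propositions \ref{prop_polar_is_a_K_kernel} and \ref{prop_mathcal_K_kernel_is_completely_closed} and Corollary \ref{cor_mathcal_K_kernel_is_a_polar} precisely as a summary of the cycle you describe, with Theorem \ref{thm_completely_closed_polar_equivalence} supplying $(3)\Rightarrow(2)$ via completeness of $\overline{\mathscr{R}(x_1,\dots,x_n)}$. You also correctly flag the necessity of working in the completion, which the paper emphasizes in the remark following Proposition \ref{prop_mathcal_K_kernel_is_completely_closed}.
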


\begin{prop}\label{prop_principal_polars_and_principal_skeletons}
For any $f \in \overline{\mathscr{R}(x_1,...,x_n)}$,
$$Skel(f) = Skel(f^{\bot \bot}) \  \text{and} \ f^{\bot \bot} = Ker(Skel(f)).$$
\end{prop}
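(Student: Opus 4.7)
The plan is to leverage Corollary \ref{cor_mathcal_K_kernel_is_a_polar}, which tells us that inside the completion $\overline{\mathscr{R}(x_1,\dots,x_n)}$ the notions of $\mathcal{K}$-kernel and polar coincide, together with Proposition \ref{prop_minimal_polar_containing_a_set}, which characterizes $f^{\bot\bot}$ as the smallest polar containing $f$. The whole argument reduces to a short chain of inclusions; no genuine ``hard step'' appears, so long as one is careful about the ambient semifield (everything takes place in $\overline{\mathscr{R}(x_1,\dots,x_n)}$, where all the needed structural theorems apply).

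First I would establish the easy inclusion $Skel(f^{\bot\bot})\subseteq Skel(f)$ by noting $f\in f^{\bot\bot}$ (Remark \ref{rem_basic_properites_of_polar}), so $\langle f\rangle\subseteq f^{\bot\bot}$, and then invoking the order-reversing property of $Skel$ from Proposition \ref{prop_skel_property1}(1), combined with $Skel(f)=Skel(\langle f\rangle)$ from Proposition \ref{prop_skel_property1}(2). For the reverse containment I would first show the kernel inclusion $f^{\bot\bot}\subseteq Ker(Skel(f))$. To do this, observe that $Ker(Skel(f))$ is a $\mathcal{K}$-kernel by Proposition \ref{prop_skel_ker_relations}(2) (it equals $Ker$ of a skeleton), and hence by Corollary \ref{cor_mathcal_K_kernel_is_a_polar} it is a polar. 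Since $f(a)=1$ for every $a\in Skel(f)$, we have $f\in Ker(Skel(f))$, so $Ker(Skel(f))$ is a polar containing $f$; Proposition \ref{prop_minimal_polar_containing_a_set} then yields $f^{\bot\bot}\subseteq Ker(Skel(f))$.

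Applying $Skel$ to this inclusion (again using the order-reversing property from Proposition \ref{prop_skel_property1}(1)) gives $Skel(Ker(Skel(f)))\subseteq Skel(f^{\bot\bot})$, and Proposition \ref{prop_skel_ker_relations}(1) collapses the left-hand side to $Skel(f)$, producing $Skel(f)\subseteq Skel(f^{\bot\bot})$. Combined with the earlier direction, this proves $Skel(f)=Skel(f^{\bot\bot})$.

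For the second equality, I would use that $f^{\bot\bot}$ is itself a polar and hence, by Corollary \ref{cor_polar_k_kernel_completion_equivalence}, a $\mathcal{K}$-kernel, so $f^{\bot\bot}=Ker(Skel(f^{\bot\bot}))$. Substituting the already-proved identity $Skel(f^{\bot\bot})=Skel(f)$ gives $f^{\bot\bot}=Ker(Skel(f))$, completing the proof. I do not expect any serious obstacle; the only thing to watch is that Corollary \ref{cor_mathcal_K_kernel_is_a_polar} (which makes every $\mathcal{K}$-kernel a polar) requires the ambient semifield to be complete, which is precisely why the statement is formulated in $\overline{\mathscr{R}(x_1,\dots,x_n)}$ rather than in $\mathscr{R}(x_1,\dots,x_n)$.
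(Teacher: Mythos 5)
Your proof is correct and takes essentially the same route as the paper's. You identify $Ker(Skel(f))$ as a polar containing $f$, invoke minimality of $f^{\bot\bot}$ to get the key inclusion $f^{\bot\bot}\subseteq Ker(Skel(f))$, and then close the circle with $Skel(Ker(Skel(f)))=Skel(f)$ — which is exactly what the paper does, just phrased with the auxiliary name $K$ for $Ker(Skel(f))$ and with one fewer explicit application of $Skel$.
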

\begin{proof}
Since $f^{\bot \bot} \supseteq \langle f \rangle$ we have that $Skel(f^{\bot \bot}) \subseteq Skel(f)$. Let $K$ be the $\mathcal{K}$-kernel such that $Skel(K) = Skel(f)$. Then $f \in K$ and $K$ is a polar by Corollary \ref{cor_mathcal_K_kernel_is_a_polar}. By Proposition \ref{prop_minimal_polar_containing_a_set}, \ $f^{\bot \bot}$ is the minimal polar containing $f$ thus  $K \supseteq f^{\bot \bot}$ and so $Skel(f) = Skel(K) \subseteq Skel(f^{\bot \bot}) \subseteq Skel(f)$.  Finally, as $f^{\bot \bot}$ is a polar it is a $\mathcal{K}$-kernel and so $f^{\bot \bot}= Ker(Skel(f^{\bot \bot})) = Ker(Skel(f))$.
\end{proof}

\begin{thm}\label{thm_polar_skeleton_corrsepondence}
There is a $1:1$ correspondence
\begin{equation}\label{eq_polar_skeleton_corrsepondence}
\mathscr{B}(\overline{\mathscr{R}(x_1,...,x_n)}) \leftrightarrow Skl(\mathscr{R}^n)
\end{equation}
between the skeletons in $\mathscr{R}^{n}$ and the polars of $\overline{\mathscr{R}(x_1,...,x_n)}$
given by $B \mapsto Skel(B)$ and $Z \mapsto Ker(Z)$.

This correspondence restricts to a correspondence
\begin{equation}\label{eq_polar_skeleton_corrsepondence}
\mathscr{PB}(\overline{\mathscr{R}(x_1,...,x_n)}) \leftrightarrow PSkl(\mathscr{R}^n)
\end{equation}
between the principal skeletons in $\mathscr{R}^{n}$ and the principal polars of $\overline{\mathscr{R}(x_1,...,x_n)}$.
\end{thm}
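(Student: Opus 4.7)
The plan is to assemble the correspondence from machinery already in place. The operators $Skel$ and $Ker$ (extended in the obvious way to $\overline{\mathscr{R}(x_1,\ldots,x_n)}$) have been shown in Proposition \ref{prop_correspondence_between_K_kernels_and_skeletons} to give a $1{:}1$ order-reversing correspondence between the skeletons in $\mathscr{R}^n$ and the $\mathcal{K}$-kernels of $\overline{\mathscr{R}(x_1,\ldots,x_n)}$. So the first statement will follow at once by intersecting this bijection with the equivalence of Corollary \ref{cor_polar_k_kernel_completion_equivalence}, which identifies the $\mathcal{K}$-kernels of $\overline{\mathscr{R}(x_1,\ldots,x_n)}$ with its polars. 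More precisely, for any $B \in \mathscr{B}(\overline{\mathscr{R}(x_1,\ldots,x_n)})$ one has $B = Ker(Skel(B))$ since $B$ is a $\mathcal{K}$-kernel, and for any skeleton $Z \in Skl(\mathscr{R}^n)$ one has $Skel(Ker(Z)) = Z$ by Proposition \ref{prop_skel_ker_relations}, and $Ker(Z)$ is a polar by Corollary \ref{cor_polar_k_kernel_completion_equivalence} (being a $\mathcal{K}$-kernel by construction). This gives the claimed mutually inverse maps.

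For the restriction to the principal lattices, I would argue in two directions. First, let $f \in \overline{\mathscr{R}(x_1,\ldots,x_n)}$; Proposition \ref{prop_principal_polars_and_principal_skeletons} gives $Skel(f^{\bot\bot}) = Skel(f)$, so the polar $f^{\bot\bot} \in \mathscr{PB}(\overline{\mathscr{R}(x_1,\ldots,x_n)})$ is sent by $Skel$ to the principal skeleton $Skel(f) \in PSkl(\mathscr{R}^n)$. Conversely, given a principal skeleton $Z = Skel(f)$, the same proposition asserts $Ker(Z) = f^{\bot\bot}$, so $Ker$ sends principal skeletons into principal polars. Since the maps $Skel$ and $Ker$ are already known to be mutually inverse on the larger lattices by the first part, their restrictions to the principal sublattices are automatically mutually inverse bijections.

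The only delicate point is the well-definedness and sharpness of the principal case, namely that the image of a principal polar really is a principal skeleton and vice versa, with no widening. This is guaranteed by Proposition \ref{prop_principal_polars_and_principal_skeletons}, whose content is exactly that $f^{\bot\bot}$ and $Skel(f)$ are paired under the larger correspondence. I therefore expect no real obstacle; the proof is essentially a citation of Corollary \ref{cor_polar_k_kernel_completion_equivalence} together with Propositions \ref{prop_correspondence_between_K_kernels_and_skeletons} and \ref{prop_principal_polars_and_principal_skeletons}, unified into one statement.
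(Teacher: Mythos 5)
Your proof is correct and follows essentially the same route as the paper's: identify polars with $\mathcal{K}$-kernels via Corollary~\ref{cor_polar_k_kernel_completion_equivalence} (the paper cites Corollary~\ref{cor_mathcal_K_kernel_is_a_polar} together with Proposition~\ref{prop_polar_is_a_K_kernel}, which are its two halves), then read off the bijection from $Ker(Skel(B)) = B$ and $Skel(Ker(Z)) = Z$, and finally settle the principal case by Proposition~\ref{prop_principal_polars_and_principal_skeletons}. The only cosmetic difference is that you explicitly invoke Proposition~\ref{prop_correspondence_between_K_kernels_and_skeletons} for the general bijection, whereas the paper unwinds that directly; the content is identical.
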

\begin{proof}
By Corollary \ref{cor_mathcal_K_kernel_is_a_polar} and Proposition \ref{prop_polar_is_a_K_kernel}, $B$ is a polar if and only if $B$ is  a \linebreak $\mathcal{K}$-kernel thus $Ker(Skel(B)) = B$.
If $B = f^{\bot \bot}$ for some $f \in \overline{\mathscr{R}(x_1,...,x_n)}$, so by Proposition \ref{prop_principal_polars_and_principal_skeletons}  $Skel(B) = Skel(f)$ and $Ker(Skel(f^{\bot \bot})) = Ker(Skel(f)) = f^{\bot \bot}$.
\end{proof}

\ \\
\begin{cor}\label{cor_polar_skeleton_corrsepondence}
Let $$\mathrm{B} = \{ B \cap \mathscr{R}(x_1,...,x_n) : B \in \mathscr{B}(\overline{\mathscr{R}(x_1,...,x_n)}) \}$$
and $$\mathrm{P}\mathrm{B} = \{ B \cap \mathscr{R}(x_1,...,x_n) : B \in \mathscr{PB}(\overline{\mathscr{R}(x_1,...,x_n)})) \}.$$
Since $(\mathscr{B}(\mathbb{H}(x_1,...,x_n)), \cdot, \cap)$ is a lattice and $\mathscr{PB}(\mathbb{H}(x_1,...,x_n)), \cdot, \cap)$ is a sublattice of  $(\mathscr{B}(\mathbb{H}(x_1,...,x_n)), \cdot, \cap)$, $\mathrm{B}$ is a lattice and $\mathrm{P}\mathrm{B}$ is a sublattice of $\mathrm{B}$.\\
By Corollary \ref{cor_correspondence_of_kernels_with_the_completions}, the correspondence in Theorem \ref{eq_polar_skeleton_corrsepondence} yields a correspondence
\begin{equation}\label{eq_cor_polar_skeleton_corrsepondence}
 \mathrm{B} \leftrightarrow Skl(\mathscr{R}^n)
\end{equation}
given by $K \mapsto Skel(K)$ and $Z \mapsto Ker(Z) \cap \mathscr{R}(x_1,...,x_n) = Ker_{\mathscr{R}}(x_1,...,x_n)(Z)$.\\
By Corollary \ref{cor_polar_k_kernel_completion_equivalence} and Remark \ref{rem_carachterization_of_completions} for a kernel $K \in \Con(\mathscr{R}(x_1,...,x_n))$ we have that $Ker_{\mathscr{R}}(x_1,...,x_n)(Skel(K)) = \overline{K} \cap \mathscr{R}(x_1,...,x_n)$. \\
By Theorem \ref{thm_polar_skeleton_corrsepondence}, \eqref{eq_cor_polar_skeleton_corrsepondence} restricts to a correspondence
\begin{equation}\label{eq_polar_skeleton_corrsepondence}
\mathrm{P}\mathrm{B} \leftrightarrow PSkl(\mathscr{R}^n).
\end{equation}
\end{cor}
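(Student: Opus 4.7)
The plan is to transfer the bijection $\mathscr{B}(\overline{\mathscr{R}(x_1,\dots,x_n)})\leftrightarrow Skl(\mathscr{R}^n)$ furnished by Theorem \ref{thm_polar_skeleton_corrsepondence} down to $\mathscr{R}(x_1,\dots,x_n)$ by intersecting polars of the completion with the subsemifield. The enabling fact is that, by Corollary \ref{cor_polar_k_kernel_completion_equivalence}, every polar of $\overline{\mathscr{R}(x_1,\dots,x_n)}$ is completely closed, so Remark \ref{rem_carachterization_of_completions} promotes $B\mapsto B\cap\mathscr{R}(x_1,\dots,x_n)$ to a bijection $\mathscr{B}(\overline{\mathscr{R}(x_1,\dots,x_n)})\to\mathrm{B}$ whose inverse is $L\mapsto\overline{L}$. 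The identity $Skel(\overline{L})=Skel(L)$, already recorded in the excerpt, will ensure that skeletons are preserved under this descent.

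First I would verify the lattice assertions. Closure of $\mathrm{B}$ under $\cap$ is immediate from $(B_1\cap \mathscr{R}(x_1,\dots,x_n))\cap(B_2\cap \mathscr{R}(x_1,\dots,x_n))=(B_1\cap B_2)\cap \mathscr{R}(x_1,\dots,x_n)$ together with closure of polars under intersection (Theorem \ref{thm_properties of polars}). Closure under multiplication uses Lemma \ref{lem_kernels_algebra}, applied with the semifield $\mathscr{R}(x_1,\dots,x_n)$, to write $(B_1\cap \mathscr{R}(x_1,\dots,x_n))(B_2\cap \mathscr{R}(x_1,\dots,x_n))= B_1 B_2\cap \mathscr{R}(x_1,\dots,x_n)$, combined with closure of polars under product (Theorem \ref{thm_properties of polars}). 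The sublattice claim for $\mathrm{PB}$ follows by repeating this argument with Proposition \ref{prop_principal_polars_operations} in place of the general closure statement.

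Next I would install the correspondence. Define $\Phi:\mathrm{B}\to Skl(\mathscr{R}^n)$ by $K\mapsto Skel(K)$ and $\Psi:Skl(\mathscr{R}^n)\to\mathrm{B}$ by $Z\mapsto Ker(Z)\cap \mathscr{R}(x_1,\dots,x_n)$; the latter really lands in $\mathrm{B}$ because Theorem \ref{thm_polar_skeleton_corrsepondence} identifies $Ker(Z)$ as a polar of the completion. For $\Psi\circ\Phi=\operatorname{id}$, write $K=B\cap \mathscr{R}(x_1,\dots,x_n)$ with $B$ a polar; complete closedness gives $B=\overline{K}$, whence $Skel(K)=Skel(\overline{K})=Skel(B)$, and then $\Psi(Skel(K))=Ker(Skel(B))\cap \mathscr{R}(x_1,\dots,x_n)=B\cap \mathscr{R}(x_1,\dots,x_n)=K$ by Theorem \ref{thm_polar_skeleton_corrsepondence}. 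For the other composition, $\Phi(\Psi(Z))=Skel(Ker(Z)\cap \mathscr{R}(x_1,\dots,x_n))=Skel(\overline{Ker(Z)\cap \mathscr{R}(x_1,\dots,x_n)})=Skel(Ker(Z))=Z$. The restricted bijection $\mathrm{PB}\leftrightarrow PSkl(\mathscr{R}^n)$ then falls out by feeding the principal half of Theorem \ref{thm_polar_skeleton_corrsepondence} through the same descent.

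The delicate point, which I would want to pin down carefully, is the identity $Skel(\overline{L})=Skel(L)$. Its subtlety is that elements of $\overline{L}\setminus L$ exist only as iterated suprema or infima of elements of $L$, and one needs these lattice operations to commute with pointwise evaluation on $\mathscr{R}^n$. This is secured by viewing $\overline{\mathscr{R}(x_1,\dots,x_n)}$ inside $Fun(\mathscr{R}^n,\mathscr{R})$, where by Remark \ref{rem_complete_semifield_of_functions} the operations $\bigvee$ and $\bigwedge$ really are pointwise, so that any $f\in\overline{L}$ evaluates to $1$ on $Skel(L)$. Once this is secured, the remainder of the argument is bookkeeping that assembles the pieces listed above.
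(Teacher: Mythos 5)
Your overall strategy matches the paper's: use Corollary \ref{cor_correspondence_of_kernels_with_the_completions} to descend the polar--skeleton bijection of Theorem \ref{thm_polar_skeleton_corrsepondence} to $\mathscr{R}(x_1,\dots,x_n)$ via $B\mapsto B\cap\mathscr{R}(x_1,\dots,x_n)$, using that every polar of the completion is completely closed and that $Skel(\overline{L})=Skel(L)$. Your verification of the two compositions $\Psi\circ\Phi=\mathrm{id}$ and $\Phi\circ\Psi=\mathrm{id}$ is sound, and your identification of the ``delicate point'' (pointwise evaluation via $Fun(\mathscr{R}^n,\mathscr{R})$) is exactly the right thing to flag.

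The one step that does not hold up is the closure of $\mathrm{B}$ under multiplication via Lemma \ref{lem_kernels_algebra}. That lemma requires all three of $A$, $B$, $K$ to be \emph{kernels} of the ambient semifield, with at least one also a subsemifield. But $\mathscr{R}(x_1,\dots,x_n)$ is a dense subsemifield of $\overline{\mathscr{R}(x_1,\dots,x_n)}$, not a kernel: it fails convexity, since for instance a fixed $\alpha>1$ in $\mathscr{R}$ dominates elements such as the pointwise-supremum function $f$ with $f(1)=\alpha$ and $f(x)=1$ elsewhere, which lies in $\overline{\mathscr{R}(x)}\setminus\mathscr{R}(x)$ yet satisfies $1\le f\le\alpha$. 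So the modular identity $(B_1\cap\mathscr{R}(x_1,\dots,x_n))(B_2\cap\mathscr{R}(x_1,\dots,x_n))=B_1B_2\cap\mathscr{R}(x_1,\dots,x_n)$ is not licensed by the cited lemma; only the inclusion $\subseteq$ is obvious. The corollary's claim that $\mathrm{B}$ is a lattice (and $\mathrm{PB}$ a sublattice) is more economically obtained by simply \emph{transporting} the lattice structure of $(\mathscr{B},\cdot,\cap)$ along the bijection of Corollary \ref{cor_correspondence_of_kernels_with_the_completions}: the meet transports to honest intersection, $(\overline{K_1}\cap\overline{K_2})\cap\mathscr{R}(x_1,\dots,x_n)=K_1\cap K_2$, while the join transports to $(\overline{K_1}\cdot\overline{K_2})\cap\mathscr{R}(x_1,\dots,x_n)$, which you should not try to identify with the kernel product $K_1\cdot K_2$ without further argument. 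Replacing your appeal to Lemma \ref{lem_kernels_algebra} with this transport-of-structure observation closes the gap and keeps the rest of your proof intact.
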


\ \\

\begin{exmp}
For the principal kernel $\langle x \rangle \in \PCon(\mathscr{R}(x))$ we have that \linebreak $Ker_{\mathscr{R}(x)}(Skel(\langle x \rangle)) = \overline{\langle x \rangle} \cap \mathscr{R}(x) = \langle x \rangle$. Analogously,  for the principal kernel \linebreak $\langle x \rangle \cap \langle \mathscr{R} \rangle = \langle |x| \wedge |\alpha| \rangle \in \PCon(\mathscr{R}(x))$, in view of Example \ref{exmp_completion_of_bounded_kernel},  we have that \linebreak $Ker_{\mathscr{R}(x)}\left(Skel\left(\langle |x| \wedge |\alpha| \rangle\right)\right) = \overline{\langle |x| \wedge |\alpha| \rangle} \cap \mathscr{R}(x) = \overline{\langle x \rangle} \cap \mathscr{R}(x) =  \langle x \rangle$.\\
Thus the $\mathcal{K}$-kernel corresponding to $\langle x \rangle$ is the same as the $\mathcal{K}$-kernel corresponding to the (bounded from above) kernel $\langle |x| \wedge |\alpha| \rangle$.
\end{exmp}

The following example illustrates the necessity of working in $\overline{\mathscr{R}(x_1,...,x_n)}$ instead of $\mathscr{R}(x_1,...,x_n)$.
\begin{exmp}
Consider the subset $X = \{ \alpha |x|^{-n} \wedge 1 : n \in \mathbb{N} \}$ in $\mathscr{R}(x)$ where $\alpha \in \mathscr{R}$, $\alpha > 1$. Then
$$\sup X = f(x) = \begin{cases}
\alpha  &  \  x=1; \\
1  & \ x \neq 1; \end{cases}.$$
Evidently $f \in \overline{\mathscr{R}(x)} \setminus \mathscr{R}(x)$.\\
While $x^{\bot \mathscr{R}(x)} = \{ 1 \}$ we have that $f \in x^{\bot \overline{\mathscr(x)}}$ thus we get that
$x^{\bot \mathscr{R}(x) \bot \mathscr{R}(x)} = \mathscr{R}(x)$ whereas $x^{\bot \overline{\mathscr{R}(x)} \bot \mathscr{R}(x)} \subset \mathscr{R}(x)$. For example, since $|\beta \cdot 1| \wedge |f(1)| = |\beta| \wedge |\alpha| > 1$ we get that $|\beta x| \wedge |f| \neq 1$ for any $\beta \neq 1$, so $|\beta x| \not \in x^{\bot \mathscr{R}(x) \bot \mathscr{R}(x)}$.
\end{exmp}
\textbf{I thank Prof. Kalle Karu for pointing out the example regarding the polar of $\langle x \rangle$.}

\newpage
\subsection{Appendix: The Boolean algebra of polars and the \\ \ \ \ \ Stone representation}

\ \\

As stated in Theorem \ref{thm_properties of polars} for any idempotent semifield $\mathbb{S}$, the collection of polars of $\mathbb{S}$, $\mathscr{B}(\mathbb{S})$ forms a complete Boolean algebra with respect to intersection multiplication (corresponding to union) and ${\bot}$ (corresponding to negation). There is a representation for Boolean algebras called the Stone representation. We will now provide a brief overview of this representation applied to the complete Boolean algebra of polars of $\mathbb{S}$. \\
The complete construction can be found in \cite{OrderedGroups3}, (Section (3.2)).
\ \\

\begin{defn}
Let $X$ be a topological space. The collection
of all clopen (closed which are also open) subsets of $X$ is a Boolean subalgebra of the Boolean algebra of all
subsets of $X$. This collection is called the \emph{dual algebra} of X.
\end{defn}

\begin{defn}
A compact Hausdorff space whose clopen sets form a base is called a \emph{Boolean space}.
\end{defn}

\begin{rem}
A closed subset $Y$ of a Boolean space $X$ is also a Boolean space (with respect to the induced subspace topology) and each clopen set in $Y$ is the intersection with $Y$ of a clopen set in $X$.
\end{rem}

\begin{defn}
A \emph{Stone space} is a totally disconnected Boolean space which means that every open set is the union
of clopen sets.
\end{defn}

\begin{defn}
A \emph{Boolean homomorphism} between two Boolean algebras is a lattice homomorphism
that preserves complements.
\end{defn}

\begin{defn}
 The set
$\mathbf{2} = \{ 0, 1\}$  is the totally ordered Boolean algebra which will also be considered as a topological
space, giving it the discrete topology. Given the Boolean algebra $B$, the set
$Bool[B,2]$ of all 2-valued homomorphisms on $B$ is a subspace of the product
space $\mathbf{2}^{B}$. $Bool[B,2]$ is called the \emph{dual space} of $B$.
\end{defn}

\begin{rem}
\begin{itemize}
  \item For a Boolean algebra $B$, the dual space $Bool[B,2]$ is a Boolean space. If $B$ is complete then $Bool[B,2]$ is a Stone space.
  \item Each Boolean algebra $B$ is isomorphic to its second
    dual, i.e., the dual algebra $\mathcal{B}$ of the Boolean space $Bool[B,2]$. The isomorphism $\alpha : B \rightarrow \mathcal{B}$ is given by $$ \alpha(b) = \{ f \in Bool[B,2] : f(b) = 1 \}$$

  \item An element of $Bool[B,2]$ is completely determined
    by its kernel which is a maximal ideal of $B$. Thus $Bool[B,2]$
    may be replaced by the set $Spec(B)$ which consists of all of the maximal ideals of
    $B$. The basic clopen sets of $Spec(B)$ are of the form $\{ V(b) : M \in Spec(B) : b \not \in M \}$ where $b \in B$ , $V$ is an isomorphism between $B$ and the algebra of clopen sets in $Spec(B)$. This topology on $Spec(B)$ is called the Zariski topology of $Spec(B)$.
  \item Each Boolean space is isomorphic to its second dual.
    Namely, let $A$ be the dual algebra of a Boolean space $X$, and
    let $Y = Bool[A,2]$ be the dual space of A. Then the function $\beta : X \rightarrow Y$ given by
    $$\beta(x)(P) = 1  \ \text{if} \  x \in P  \ \text{and} \ \beta(x)(P) = 0  \ \text{if} \ x \not \in P$$
    is a homeomorphism.

\end{itemize}
\end{rem}

In view of the above, $Spec(\mathscr{B}(\mathbb{S}))$ is a Stone space.
\ \\

\begin{defn}
Let $X$ be a topological space. Define
$$E(X) = \{f : X \rightarrow \bar{\mathbb{R}} \ : \ f \ \text{is continuous} \}$$
where $\bar{\mathbb{R}} = \mathbb{R} \cup \{-\infty, \infty\}$. Also define
$$D(X) =  \{f \in E(X) \ : \ f^{-1}(\mathbb{R}) \ \text{is dense in } X \}$$
the set of those continuous functions which are real-valued on a dense (and open)
subset of $X$.
\end{defn}

\begin{rem}
$E(X)$ is a poset with respect to the coordinate-wise partial order: \linebreak $f \leq g$ if
$f (x) \leq g(x)$ for each $x \in X$, and, in fact, $E(X)$ is a sublattice of the product $(\bar{\mathbb{R}})^X$. Note that $f \leq g$ provided that $f (x) \leq g(x)$ for each $x$ in some dense
subset of $X$.
The subset $D(X)$ is a sublattice of $E(X)$ since the intersection of two
dense open subsets is dense.
\end{rem}

\begin{defn}
A monomorphism $\phi : H \rightarrow U$ of idempotent semifields is a \linebreak \emph{cl-essential} monomorphism if
$\phi(H) \cap K \neq \{1\}$ for every kernel $K \in \Con(U)$.
\end{defn}

\begin{thm}\cite[Theorem (2.3.23)]{OrderedGroups3}
Let $\mathbb{S}$ be an archimedean idempotent semifield, and let $X = Spec(\mathscr{B}(\mathbb{S}))$ be the
Stone space of the Boolean algebra $\mathscr{B}(\mathbb{S})$ of polars of $\mathbb{S}$. Then there is a (complete)
cl-essential monomorphism from $\mathbb{S}$ into $D(X)$.
\end{thm}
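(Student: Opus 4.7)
The construction follows the Yosida-Stone style representation for archimedean $\ell$-groups, translated to the bipotent/idempotent semifield language of this paper. For each point $M \in X = \mathrm{Spec}(\mathscr{B}(\mathbb{S}))$, one builds a totally ordered archimedean quotient of $\mathbb{S}$ whose values supply $\hat{s}(M) \in \bar{\mathbb{R}}$; the map $s \mapsto \hat{s}$ will be the desired monomorphism $\Phi : \mathbb{S} \to D(X)$.

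\textbf{Step 1 (Prime kernel attached to $M$).} I would associate to each maximal ideal $M$ of $\mathscr{B}(\mathbb{S})$ the kernel $P_M = \prod_{B \in M} B$ of $\mathbb{S}$ (the join in $\Con(\mathbb{S})$, using Proposition \ref{kernel_as_a_product} and Proposition \ref{prop_distributive_lattice_of_kernels}). Since $M$ is a maximal ideal of $\mathscr{B}(\mathbb{S})$, for every $B \in \mathscr{B}(\mathbb{S})$ exactly one of $B,B^{\bot}$ lies in $M$; this dichotomy, combined with Proposition \ref{prop_principal_polars_operations}, forces $\mathbb{S}/P_M$ to be totally ordered (given $a,b \in \mathbb{S}$, apply the dichotomy to $a^{\bot\bot}, b^{\bot\bot}$ and use that $(|a|\wedge|b|)^{\bot\bot} = |a|^{\bot\bot}\cap |b|^{\bot\bot}$). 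By Proposition \ref{prop_semifield_of_fractions_is_archimedean}'s style argument (quotients of archimedean semifields by kernels built from polars remain archimedean), $\mathbb{S}/P_M$ is archimedean. Hölder's theorem (Theorem \ref{thm_holder_translated}) then gives an embedding $\iota_M : \mathbb{S}/P_M \hookrightarrow (\mathbb{R}^{+}, \dotplus,\cdot)$.

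\textbf{Step 2 (Definition of $\hat{s}$ and continuity).} Set $\hat{s}(M) = \iota_M(sP_M) \in \mathbb{R}^{+} \subset \bar{\mathbb{R}}$. For continuity I would show that for $s \geq 1$, the preimage $\{M : \hat{s}(M) > \alpha\}$ is the basic clopen set $V(s\alpha^{-1})^{\bot\bot}) = \{M : (s\alpha^{-1})^{\bot\bot} \notin M\}$ for every $\alpha \in \mathbb{S}^{+}$; density of $\mathbb{S}^{+}$ in $\bar{\mathbb{R}}$ via $\iota_M$ gives a subbase, and the general case reduces by writing $s = |s|_{\geq}|s|_{\leq}^{-1}$ as in Definition \ref{defn_max_semifield_of_fractions_operations}. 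The potentially infinite values of $\hat{s}$ occur precisely on $\bigcap_{n} V(s^{-n})^c$, which is closed and has empty interior (any basic clopen $V(B)$ meeting it would, together with $s \in \mathbb{S}$, contradict archimedeanness of the corresponding quotients); thus $\hat{s}^{-1}(\mathbb{R})$ is open and dense, so $\hat{s} \in D(X)$.

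\textbf{Step 3 (Homomorphism, injectivity, completeness, cl-essentiality).} Multiplicativity and preservation of $\dotplus$ and $\wedge$ follow pointwise from the fact that each $\iota_M$ is a semifield embedding (Proposition \ref{prop_l-group_idempotent_semifield_corr_1}). For injectivity I need $\bigcap_M P_M = \{1\}$: if $s \neq 1$ then $s^{\bot\bot} \neq \{1\}$, so $s^{\bot\bot}$ lies in the complement of some maximal ideal $M$; then $s \notin P_M$ (using Theorem \ref{thm_properties of polars}(2), since any $B \in M$ satisfies $B \cap s^{\bot\bot} = \{1\}$, hence $s \notin B$). Completeness — that existing suprema in $\mathbb{S}$ are preserved — is checked pointwise using the explicit description of $\bigvee$ in $D(X)$ from the $\ell$-group embedding and the fact that the $\iota_M$ are order embeddings on the relevant positive cones. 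Finally for cl-essentiality, any kernel $K$ of $D(X)$ meeting $\Phi(\mathbb{S})$ trivially would force every $f \in K$ to vanish on the dense set where $\Phi(\mathbb{S})$ separates points, hence $K = \{1\}$.

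\textbf{Main obstacle.} The hardest step is Step 1: setting up the right correspondence between maximal ideals of $\mathscr{B}(\mathbb{S})$ and prime kernels $P_M$ of $\mathbb{S}$ that yield totally ordered archimedean quotients, and in particular verifying that the join $\prod_{B\in M} B$ is the ``correct'' prime object. This relies essentially on the infinite distributive law of $\Con(\mathbb{S})$ (Proposition \ref{prop_distributive_lattice_of_kernels}) and the Boolean algebra structure of $\mathscr{B}(\mathbb{S})$ (Theorem \ref{thm_properties of polars}). The continuity/density arguments in Step 2 are technical but routine once the basic clopens of $X$ are identified with the correct polars determined by $s$ and thresholds in $\mathbb{S}^{+}$.
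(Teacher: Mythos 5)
The paper does not actually supply a proof of this result; it is cited verbatim from Darnel's book. That said, your reconstruction has a genuine gap in Step~1 on which the whole argument hinges. You claim that $\mathbb{S}/P_M$ is archimedean by appealing to ``Proposition \ref{prop_semifield_of_fractions_is_archimedean}'s style argument'', but that proposition is about \emph{subsemifields} (kernels viewed as subsemifields of $\mathbb{S}$), for which archimedeanness is inherited trivially; it says nothing about \emph{quotients}. In fact the archimedean property is not preserved by quotients, even by the prime quotients $\mathbb{S}/P_M$. A concrete counterexample: take $\mathbb{S} = ((\mathbb{R}^{+})^{\mathbb{N}},\cdot,\dotplus)$, which is idempotent and archimedean, with $\mathscr{B}(\mathbb{S}) \cong \mathcal{P}(\mathbb{N})$. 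For a non-principal maximal ideal $M$ of $\mathcal{P}(\mathbb{N})$ (dual to a free ultrafilter $\mathcal{U}$), your $P_M = \prod_{B\in M}B$ is $\{g : \{n : g(n)=1\}\in\mathcal{U}\}$, and $\mathbb{S}/P_M$ is the ultrapower ${}^*\mathbb{R}$ --- totally ordered, but emphatically not archimedean. H\"older's theorem therefore does not give the embedding $\iota_M : \mathbb{S}/P_M \hookrightarrow (\mathbb{R}^{+},\dotplus,\cdot)$, and the definition $\hat{s}(M) = \iota_M(sP_M)$ in Step~2 is unavailable.

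The actual Bernau-style proof does not factor through H\"older on the quotient. Instead $\hat{s}(M)$ is defined directly by a Dedekind-cut construction relative to thresholds taken from $\mathbb{S}^{+}$ itself, and the archimedeanness of $\mathbb{S}$ (not of $\mathbb{S}/P_M$) is what guarantees that each such cut is proper --- equivalently, that $\hat{s}$ takes the value $\infty$ only on the nowhere-dense closed set $\bigcap_n V((s^n)^{\bot\bot})^{c}$, so that $\hat{s}\in D(X)$. Your Step~2 already contains the right geometric picture of where infinite values live, and Step~3's injectivity argument via $\bigcap_M P_M = \{1\}$ is sound; what is missing is the correct mechanism for producing the real value $\hat{s}(M)$ when the totally ordered quotient through $P_M$ is non-archimedean.
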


\newpage

\section{The principal bounded kernel - principal skeleton correspondence }

\ \\

Recall that the semifield $\mathscr{R}$ is defined to be a bipotent divisible archimedean \linebreak complete semifield. By Corollary \ref{cor_totaly_ordered_isomorphic_to_reals} one can regard $\mathscr{R}$ as being $(\mathbb{R}^{+}, \dotplus, \cdot)$.\\

Throughout this section, $\mathbb{H}$ is assumed to be a bipotent divisible semifield. Any \linebreak supplementary assumptions on $\mathbb{H}$ will be explicitly stated. \\

Although we have already established a correspondence between skeletons and polars which restricts to a correspondence
between principal skeletons and principal polars, we proceed to find a correspondence between principal skeletons to principal kernels of a very special kernel of $\mathscr{R}(x_1,...,x_n)$, namely, the kernel $\langle \mathscr{R} \rangle$ presented above which will now serve us as a semifield of its own right. It turns out that $\langle \mathscr{R} \rangle$ possesses just enough distinct bounded copies of the principal kernels of $\mathscr{R}(x_1,...,x_n)$ to represent the principal skeleton without any ambiguity.\\

\begin{rem}
The restriction of the image of the operator \\ $Ker : \mathscr{R}^n \rightarrow \mathbb{P}(\mathscr{R}(x_1,...,x_n))$ to  $\mathbb{P}(\langle \mathscr{R} \rangle)$ is
\begin{equation}
Ker_{\langle \mathscr{R} \rangle}(Z) = \{ f \in \langle \mathscr{R} \rangle \ : \ f(a_1,...,a_n) = 1,  \ \forall (a_1,...,a_n) \in Z \} = Ker(Z) \cap \langle \mathscr{R} \rangle.
\end{equation}
Additionally, let $Skel|_{\langle \mathscr{R} \rangle} : \mathbb{P}(\langle \mathscr{R} \rangle) \rightarrow \mathscr{R}^{n}$ be the restriction of \linebreak $Skel : \mathbb{P}(\mathscr{R}(x_1,...,x_n)) \rightarrow \mathscr{R}^{n}$  to  $\mathbb{P}(\langle \mathscr{R} \rangle)$. Then all the statements introduced in the section `Skeletons and kernels of skeletons' apply to  $Ker_{\langle \mathscr{R} \rangle}$ and  $Skel|_{\langle \mathscr{R} \rangle}$.
\end{rem}

\begin{note}
As $Ker_{\langle \mathscr{R} \rangle}$  and  $Skel|_{\langle \mathscr{R} \rangle}$ are our actual object of interest, we also denote them by $Ker$ and $Skel$. The distinction will be carried out through the context of discussion. If an ambiguity arises we will explicitly say to which $Ker$ or $Skel$ we refer.
\end{note}

We now show that restricting $Skel$ and $Ker$ does not affect the collection of resulting skeletons and that each $\mathcal{K}$-kernel of $\mathscr{R}(x_1,...,x_n)$ has a $\mathcal{K}$-kernel (with respect to the restriction) in $\langle \mathscr{R} \rangle$.
\ \\
\begin{prop}
If $K \in \Con(\mathscr{R}(x_1,...,x_n))$ is a $\mathcal{K}$-kernel, then there exists a unique kernel $K' \in \Con(\langle \mathscr{R} \rangle)$ such that $Skel(K) = Skel(K')$.
\end{prop}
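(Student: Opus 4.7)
The plan is to take $K' := K \cap \langle \mathscr{R} \rangle$, which by Remark \ref{rem_the_lattice_of_kernels_of_the_bounded_kernel} belongs to $\Con(\langle \mathscr{R} \rangle)$. First I would prove $Skel(K') = Skel(K)$. Since $K' \subseteq K$, Proposition \ref{prop_skel_property1}(1) gives $Skel(K') \supseteq Skel(K)$. For the reverse inclusion, fix $a \in Skel(K')$ and any $f \in K$. Pick a constant $\alpha \in \mathscr{R} \setminus \{1\}$ and set $g := |f| \wedge |\alpha|$; then $1 \leq g \leq |f|$, so by convexity of $K$ (Remark \ref{rem_kernel_is_convex}) together with the fact that $1, |f| \in K$, we obtain $g \in K$. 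Since also $g \leq |\alpha|$, Remark \ref{rem_up_bounded_is_in_H_kernel} yields $g \in \langle \mathscr{R} \rangle$, hence $g \in K'$. Therefore $|f(a)| \wedge |\alpha| = g(a) = 1$, and Remark \ref{rem_wedge_preserves_1} forces $f(a) = 1$. As $f \in K$ was arbitrary, $a \in Skel(K)$.

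For uniqueness, let $K''$ be any kernel of $\langle \mathscr{R} \rangle$ with $Skel(K'') = Skel(K)$. Every $h \in K''$ evaluates to $1$ on $Skel(K)$; because $K$ is a $\mathcal{K}$-kernel, $K = Ker(Skel(K))$, so $h \in K$, and since $h \in \langle \mathscr{R} \rangle$ we get $K'' \subseteq K \cap \langle \mathscr{R} \rangle = K'$. Note that $K' = Ker(Skel(K)) \cap \langle \mathscr{R} \rangle$ is exactly $Ker_{\langle \mathscr{R} \rangle}(Skel(K))$, so $K'$ is the \emph{maximal} kernel of $\langle \mathscr{R} \rangle$ with the prescribed skeleton; equivalently, it is the unique $\mathcal{K}$-kernel---in the sense of Subsection \ref{Subsection:Kernels_of_Skeletons} applied to the subsemifield $\langle \mathscr{R} \rangle$---with skeleton $Skel(K)$, which is the natural uniqueness statement matching the setup already in force for $\mathscr{R}(x_1,...,x_n)$.

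The main obstacle I expect is the precise reading of ``unique'' in the statement: literal set-theoretic uniqueness among \textbf{all} kernels of $\langle \mathscr{R} \rangle$ with the given skeleton does not follow from $K$ being a $\mathcal{K}$-kernel of $\mathscr{R}(x_1,...,x_n)$, because in principle a proper subkernel of $K'$ could still evaluate to $1$ on all of $Skel(K)$ without recovering all of $K'$. What the argument above genuinely produces, and what I would formally record, is that $K' = Ker_{\langle \mathscr{R} \rangle}(Skel(K))$ is canonically distinguished as the \emph{largest} kernel of $\langle \mathscr{R} \rangle$ realising the skeleton $Skel(K)$, and that this canonical choice is the unique $\mathcal{K}$-kernel of $\langle \mathscr{R} \rangle$ with that skeleton---matching the map $K \mapsto K \cap \langle \mathscr{R} \rangle$ used implicitly in Remark \ref{rem_the_lattice_of_kernels_of_the_bounded_kernel}.
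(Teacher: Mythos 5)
Your construction $K' := K \cap \langle\mathscr{R}\rangle$ and the proof that $Skel(K') = Skel(K)$ is essentially the paper's own argument; the paper reaches the skeleton equality by invoking Proposition \ref{prop_algebra_of_generators_of_kernels}(2) to rewrite $K'$ as $\langle\{|f|\wedge|\alpha| : f \in K\}\rangle$ and then observing $f(x)=1 \Leftrightarrow f(x)\wedge|\alpha|=1$, while you argue element-by-element via the convexity of $K$ to place $g = |f|\wedge|\alpha|$ directly in $K'$---substantively the same route. Your critique of the uniqueness clause is also on target: the paper's proof establishes only that $K'$ is a $\mathcal{K}$-kernel of $\langle\mathscr{R}\rangle$ and does not argue that it is the unique element of $\Con(\langle\mathscr{R}\rangle)$ with the prescribed skeleton, which indeed does not follow from the hypothesis as written. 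The honest content, exactly as you record it, is that $K' = Ker_{\langle\mathscr{R}\rangle}(Skel(K))$ is the unique $\mathcal{K}$-kernel of $\langle\mathscr{R}\rangle$ with that skeleton---equivalently, the largest kernel of $\langle\mathscr{R}\rangle$ realizing it---which is what the correspondence of Subsection \ref{Subsection:Kernels_of_Skeletons}, applied to $\langle\mathscr{R}\rangle$, requires.
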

\begin{proof}
Let $\alpha \in \mathscr{R} \setminus \{1\}$.
By assumption $K$ is a $\mathcal{K}$-kernel, thus $K = Ker(Skel(K))$. Define $K' = K \cap \langle \mathscr{R} \rangle$.
By Proposition \ref{prop_algebra_of_generators_of_kernels}(2) we have that  $K \cap \langle \mathscr{R} \rangle = \langle X \rangle \cap \langle \alpha \rangle = \langle \{|f| \wedge |\alpha| : f \in X \} \rangle$ where $X$ is any set generating $K$ as a kernel, in particular, one can take $X=K$. Now, for any $f \in \mathscr{R}(x_1,...,x_n)$, in particular in $K$ we have that $f(x)=1$ for some $x \in \mathscr{R}^n$ if and only if $f(x) \wedge |\alpha| = 1$ (since $|\alpha| > 1$) so $Skel(K') = Skel(K)$. Thus $K' = K \cap \langle \mathscr{R} \rangle = Ker(Skel(K')) \cap \langle \mathscr{R} \rangle = Ker_{\langle \mathscr{R} \rangle}(Skel(K'))$, and so  $K'$ is a  $\mathcal{K}$-kernel in $\Con(\langle \mathscr{R} \rangle)$.
\end{proof}

Note that these restricted kernels are exactly the kernels of the (domain) restriction to $\langle \mathscr{R} \rangle$ of the restriction homomorphism, $\mathscr{R}(x_1,...,x_n) \rightarrow \mathscr{R}(x_1,...,x_n)/\langle f \rangle$ defined by  $$g \mapsto g|_{Skel(f)}.$$

By Proposition \ref{prop_correspondence_between_K_kernels_and_skeletons} and the above discussion
we have

\begin{prop}
There is a $1:1$ order reversing correspondence
\begin{equation}
\{ \text{skeletons of } \ \mathscr{R}^{n}  \} \leftrightarrow \{ \mathcal{K}- \text{kernels of } \ \langle \mathscr{R} \rangle \},
\end{equation}
given by $Z \mapsto Ker(Z) \cap \langle \mathscr{R} \rangle$;  the reverse map is given by $K \mapsto Skel(K)$ with \linebreak $K \in \Con(\langle \mathscr{R} \rangle)$.
\end{prop}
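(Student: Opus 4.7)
The plan is to piece this together directly from the two ingredients just established: the unrestricted correspondence in Proposition~\ref{prop_correspondence_between_K_kernels_and_skeletons} between skeletons in $\mathscr{R}^n$ and $\mathcal{K}$-kernels of $\mathscr{R}(x_1,...,x_n)$, and the preceding proposition showing that intersecting with $\langle \mathscr{R} \rangle$ preserves both the skeleton and the $\mathcal{K}$-kernel property (when one interprets ``$\mathcal{K}$-kernel'' inside $\langle \mathscr{R} \rangle$ using $Ker_{\langle \mathscr{R} \rangle}$). So the task reduces to checking that the two maps
\[
Z \longmapsto Ker(Z) \cap \langle \mathscr{R} \rangle, \qquad K \longmapsto Skel(K)
\]
are well-defined and mutually inverse on the stated sets.

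First I would verify well-definedness. If $Z \subseteq \mathscr{R}^n$ is a skeleton, then $Ker(Z)$ is a $\mathcal{K}$-kernel of $\mathscr{R}(x_1,...,x_n)$ by Proposition~\ref{prop_correspondence_between_K_kernels_and_skeletons}, and the preceding proposition shows that $Ker(Z) \cap \langle \mathscr{R} \rangle$ is a $\mathcal{K}$-kernel of $\langle \mathscr{R} \rangle$ with the same skeleton as $Ker(Z)$. In the other direction, if $K \in \Con(\langle \mathscr{R} \rangle)$ is a $\mathcal{K}$-kernel, then by definition $K = Ker_{\langle \mathscr{R} \rangle}(Skel(K))$, and $Skel(K) \subseteq \mathscr{R}^n$ is by construction a skeleton (it is cut out by the elements of $K \subseteq \mathscr{R}(x_1,...,x_n)$).

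Next I would check that the compositions are the identity. For a skeleton $Z$, applying Proposition~\ref{prop_skel_ker_relations}(1) gives $Skel(Ker(Z)) = Z$, and since $Skel(Ker(Z)) = Skel(Ker(Z) \cap \langle \mathscr{R} \rangle)$ by the preceding proposition, we get $Skel(Ker(Z) \cap \langle \mathscr{R} \rangle) = Z$. Conversely, for a $\mathcal{K}$-kernel $K$ of $\langle \mathscr{R} \rangle$, the very definition of $\mathcal{K}$-kernel (in the restricted setting) reads $K = Ker_{\langle \mathscr{R} \rangle}(Skel(K)) = Ker(Skel(K)) \cap \langle \mathscr{R} \rangle$, which is precisely the statement that the round trip $K \mapsto Skel(K) \mapsto Ker(Skel(K)) \cap \langle \mathscr{R} \rangle$ recovers $K$.

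Finally, order reversal is immediate: $Z_1 \subseteq Z_2$ implies $Ker(Z_2) \subseteq Ker(Z_1)$ by Remark~\ref{rem_ker_properties_1}(2), and intersecting with $\langle \mathscr{R} \rangle$ preserves inclusion, so $Ker(Z_2) \cap \langle \mathscr{R} \rangle \subseteq Ker(Z_1) \cap \langle \mathscr{R} \rangle$; similarly $K_1 \subseteq K_2$ implies $Skel(K_2) \subseteq Skel(K_1)$ by Proposition~\ref{prop_skel_property1}(1). There is no real obstacle here — every nontrivial content has already been absorbed into the preceding proposition, which guarantees that passing to $\langle \mathscr{R} \rangle$ loses no geometric information, so this final statement is essentially a bookkeeping corollary.
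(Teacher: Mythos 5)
Your proposal is correct and follows the same route the paper intends: the paper offers no separate argument for this proposition, instead prefacing it with ``By Proposition~\ref{prop_correspondence_between_K_kernels_and_skeletons} and the above discussion we have,'' and you have simply unpacked that one-line citation into the well-definedness, round-trip, and order-reversal checks it implicitly delegates to the unrestricted correspondence and the immediately preceding proposition on $K \mapsto K \cap \langle \mathscr{R} \rangle$.
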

\ \\

Let $\mathbb{S}$ be an idempotent semifield. Recall that the positive cone of $\mathbb{S}$ is $$\mathbb{S}^{+} = \{ a \in \mathbb{S}~:~a~\geq~1~\}~= \{|a|  :  a \in \mathbb{S} \}.$$
By Theorems \ref{thm_cones_and_orderes} and \ref{thm_cones_as_generators} (applied to idempotent semifields),  $\mathbb{S}^{+}$ is a sub-semigroup of $\mathbb{S}$ which is a lattice (i.e., closed with respect to $\dotplus$ (i.e., $\vee$) and $\wedge$.\\

Let $f \in \mathbb{H}(x_1,...,x_n)$ and fix $\alpha \neq 1$ in $\mathbb{H}$. Then $$ \langle |f| \wedge |\alpha| \rangle  = \langle f \rangle \cap \langle \alpha \rangle \subseteq \langle \alpha \rangle = \langle \mathbb{H} \rangle.$$
Note that for any $f \in \mathbb{H}(x_1,...,x_n)$, $|f|$ is a generator of $\langle f \rangle$.  \\

We define the map
$$\omega : \mathbb{H}(x_1,...,x_n)^{+}  \rightarrow  \langle \mathbb{H} \rangle^{+}$$
by
$$\omega(|f|) = |f| \wedge |\alpha|.$$
Then $\omega$ is a lattice homomorphism. Indeed, $$\omega (|f| \dotplus |g|) = (|f| \dotplus |g|) \wedge |\alpha| = (|f| \wedge |\alpha|) \dotplus (|g| \wedge |\alpha|) = \omega(|f|) \dotplus \omega(|g|)$$ and $$\omega (|f| \wedge |g|) = (|f| \wedge |g|) \wedge |\alpha| = (|f| \wedge |\alpha|) \wedge (|g| \wedge |\alpha|) = \omega(|f|) \wedge \omega(|g|).$$
$\omega$ induces a map
$$\Omega : \PCon(\mathbb{H}(x_1,...,x_n)) \rightarrow \PCon(\mathbb{H}(x_1,...,x_n)) \cap \langle \mathbb{H} \rangle$$
such that $\Omega(\langle f \rangle) = \langle \omega(|f|) \rangle = \langle |f| \wedge |\alpha| \rangle = \langle f \rangle \cap \langle \mathbb{H} \rangle$.\\

Let us study the map $\Omega$ and its image.

\begin{rem}\label{rem_properties_of_H_kernels}
\begin{enumerate}
  \item By the above discussion we get that $\Omega$ respects both \linebreak intersection and product of kernels, and thus it maps the lattice of principal kernels of $\mathbb{H}(x_1,...,x_n)$, $\PCon(\mathbb{H}(x_1,...,x_n))$, onto the lattice of kernels $$\{ \langle f \rangle \cap \langle \mathbb{H} \rangle : f \in \PCon(\mathbb{H}(x_1,...,x_n)) \} = \PCon(\langle \mathbb{H} \rangle)$$ by Remark \ref{rem_the_lattice_of_kernels_of_the_bounded_kernel}.
  \item If $\langle f \rangle$ is a bounded from below kernel, then $\langle f \rangle \cap \langle \mathbb{H} \rangle = \langle \mathbb{H} \rangle$  by Remark \ref{rem_down_bounded_kernel_contains_H}. In fact, by the above discussion of bounded from below functions, we see that every principal kernel whose skeleton is the empty set is mapped to $\langle \mathbb{H} \rangle$.
  \item As $Skel(\langle \alpha \rangle) = \emptyset$ and since $Skel(\langle f \rangle \cap \langle g \rangle) =      Skel(\langle f \rangle) \cup Skel(\langle g \rangle)$, for any principal kernel $\langle f \rangle$ we have that
      $$Skel(\Omega(\langle f \rangle)) = Skel(\langle f \rangle \cap \langle H \rangle) = Skel(\langle f \rangle) \cup \emptyset = Skel(\langle f \rangle).$$ Thus $\Omega$ does not affect the skeleton of a kernel, the skeleton is fixed.
  \item As any $\alpha \neq 1$ generates $\langle \mathbb{H} \rangle$, any proper subkernel $K$ of $\langle \alpha \rangle$ must admit \linebreak $K \cap \mathbb{H} = \{ 1 \}$, for otherwise, if there exists $\alpha \in K \cap \mathbb{H}$ such that $\alpha \neq 1$, \linebreak  we get $\langle \mathbb{H} \rangle \supseteq K \supseteq \langle \alpha \rangle = \langle \mathbb{H} \rangle$.
  \item $\langle \mathbb{H} \rangle$ is an idempotent semifield of $\mathbb{H}(x_1,...,x_n)$ as the latter is idempotent (since $\mathbb{H}$ is idempotent). Now, by Remark \ref{rem_the_lattice_of_kernels_of_the_bounded_kernel}, we get that
      any kernel $K$ of $\mathbb{H}(x_1,...,x_n)$ such that  $K  \subseteq \langle \mathbb{H} \rangle$ is a kernel of $\langle \mathbb{H} \rangle$. In particular, $\langle \mathbb{H} \rangle$ is a semifield with a  generator (generated as a kernel over itself by a single element) with any $\alpha \neq 1$ as a generator.

   \item By Proposition \ref{prop_unbounded_generator} we have that  $$\Omega ( \PCon(\mathbb{H}(x_1,...,x_n)) \setminus \PCon(\langle \mathbb{H} \rangle)) = \PCon(\langle \mathbb{H} \rangle).$$

\end{enumerate}
\end{rem}

Summarizing the results introduced in Remark \ref{rem_properties_of_H_kernels} for the designated semifield $\mathscr{R}$ we have that
$$\Omega : \PCon(\mathscr{R}(x_1,...,x_n)) \rightarrow \PCon(\langle \mathscr{R} \rangle)$$
is a lattice homomorphism of $(\PCon(\mathscr{R}(x_1,...,x_n)), \cdot, \cap)$ onto $(\PCon(\langle \mathscr{R} \rangle), \cdot, \cap)$, such that $Skel(\langle f \rangle) = Skel(\Omega(\langle f \rangle))$.\\
Let $f \in \PCon(\langle \mathscr{R} \rangle)$ and let $A = \{ g \in \mathscr{R} : \Omega(\langle g \rangle) = f \}$.\\ Define  $K = \langle A \rangle \in \Con(\mathscr{R}(x_1,...,x_n))$. Then by Remark \ref{rem_properties_of_H_kernels}, if $g \in A$ then  \linebreak $Skel(g) = Skel(f)$.\\

As we shall shortly show, there exists a correspondence
$$ \langle f \rangle \in \PCon(\langle \mathscr{R} \rangle) \leftrightarrow Skel(f)$$
between the principal skeletons in $\mathscr{R}^n$ and the kernels in  $\PCon(\langle \mathscr{R} \rangle)$.\\
If $Skel(g) = Skel(f)$ then since $Skel(g) = Skel(\Omega(g))$ we have that $Skel(\Omega(g)) = Skel(f) \in \PCon(\langle \mathscr{R} \rangle)$. Thus in view of the above $\Omega(\langle g \rangle) = \langle f \rangle$.
Consequently we have that $Skel(K) = Skel(f)$ and $K$ is the maximal kernel of $\mathscr{R}(x_1,...,x_n)$ having this property.\\

The choice of $\langle \mathscr{R} \rangle$ for our algebraic infrastructure is a natural choice, as we have $\mathscr{R}$ as our basic semifield, with respect to which homomorphisms are taken. Taking the semifield in $\mathscr{R}(x_1,...,x_n)$ generated by $\mathscr{R}$ as a kernel, we get the only semifield homomorphism $ \langle \mathscr{R} \rangle \rightarrow \mathbb{H}$ (where $\mathbb{H}$  is a semifield containing $\mathscr{R}$)  sending $\alpha$ to $1$ is the trivial one $ \langle \mathscr{R} \rangle \rightarrow \{1\}$. All proper subkernels $K$ of  $\langle \mathscr{R} \rangle$ admit $K \cap \mathscr{R} = \{ 1 \}$ which is a necessary condition for a kernel of an $\mathscr{R}$-homomorphism (which is otherwise not well defined, as any $\alpha \neq 1$ is required to be mapped to itself).  One can view the elements of $\langle \mathscr{R} \rangle \setminus \{1 \}$ as playing the role of invertible elements in rings, in the sense that any ideal containing an invertible element is the ring itself.

%

\begin{note}
Since the principal kernels in $\PCon(\langle \mathscr{R} \rangle)$ are in particular principal \linebreak kernels in $\PCon(\mathscr{R}(x_1,...,x_n))$, in the remainder of this paper we generally study\\ $\PCon(\mathscr{R}(x_1,...,x_n))$ where the results are true in particular for $\PCon(\langle \mathscr{R} \rangle)$. In our subsequent discussions we develop the notion of reducibility, regularity and corner-integrality for a principal kernel in $\PCon(\mathscr{R}(x_1,...,x_n))$. We develop the theory of reducibility for  general sublattices of $\PCon(\mathscr{R}(x_1,...,x_n))$, thus in particular for sublattices of $\PCon(\langle \mathscr{R} \rangle)$. In the sections to follow, we introduce the notions of regularity and corner-integrality of kernels which are oriented to skeletons, in the sense that we consider points for which a generator $f$ (equivalently $|f|$) of the kernel attains the value $1$. Since passing to $|f| \wedge |\alpha|$ for some $\alpha \neq 1$ in $\mathscr{R}$ does not affect these points,  these notions stay intact restricting them to $\PCon(\langle \mathscr{R} \rangle)$, in the sense that $|f|$ is regular (corner-integral) if and only if $|f| \wedge |\alpha|$ is regular (corner-integral). In those places where it is necessary, we explicitly restrict ourselves to $\PCon(\langle \mathscr{R} \rangle)$.
\end{note}

\begin{prop}\label{generators_and_skeletons}
 If $\langle f \rangle$ is a principal kernel generated by $f \in \mathscr{R}(x_1,...,x_n)$, then $Skel(f) = Skel(\langle f \rangle)$.
Moreover, $Skel(f) = Skel(f')$ for any generator $f'$ of~$\langle f \rangle$.
\end{prop}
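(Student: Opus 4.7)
The proof is essentially immediate from material already established, so the plan is mostly a matter of pointing to the correct earlier result and tying the two assertions together cleanly.

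The plan is to reduce both claims to Proposition \ref{prop_skel_property1}(2), which states that for any subset $S \subseteq \mathbb{H}(x_1,\dots,x_n)$ one has $Skel(S) = Skel(\langle S \rangle)$. First, I would apply this with $S = \{f\}$. By our convention $Skel(f)$ denotes $Skel(\{f\})$, and $\langle \{f\} \rangle$ is by Definition \ref{defn_generation_of_kernels} the principal kernel $\langle f \rangle$. Thus Proposition \ref{prop_skel_property1}(2) gives immediately
\[
Skel(f) \;=\; Skel(\{f\}) \;=\; Skel(\langle f \rangle),
\]
which is the first assertion.

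For the second assertion, suppose $f'$ is any generator of $\langle f \rangle$, i.e.\ $\langle f' \rangle = \langle f \rangle$. Applying the first part of the proposition both to $f$ and to $f'$ yields
\[
Skel(f') \;=\; Skel(\langle f' \rangle) \;=\; Skel(\langle f \rangle) \;=\; Skel(f),
\]
so all generators of the same principal kernel produce the same skeleton.

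There is no real obstacle: the content of the proposition is already contained in Proposition \ref{prop_skel_property1}(2), whose proof in turn uses only the closure of $\mathbb{H}(x_1,\dots,x_n)^+$ under the semifield operations together with the convexity condition defining a kernel. The statement here serves mainly to record, in the notation $Skel(f)$ versus $Skel(\langle f \rangle)$, the natural independence of the skeleton from the choice of generator of a principal kernel, a fact that will be used freely in the sequel.
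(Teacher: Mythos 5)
Your argument is correct. It takes a genuinely different, and arguably simpler, route than the paper's. The paper itself notes that the first claim follows from Proposition~\ref{prop_skel_property1}(2) but chooses to give a second, ``more elegant'' proof via Corollary~\ref{cor_principal_ker_by_order}: any $g \in \langle f \rangle$ satisfies $|f|^{-n} \leq g \leq |f|^{n}$ for some $n$, and since $f = f^{-1} = 1$ on $Skel(f)$ this squeezes $g$ to $1$ there; the second claim is then obtained by a further use of the same bound. Your version skips the explicit order bound entirely and reduces both claims to the already-established equality $Skel(S) = Skel(\langle S \rangle)$, deriving the invariance under choice of generator as a one-line corollary of the first part: $Skel(f') = Skel(\langle f' \rangle) = Skel(\langle f \rangle) = Skel(f)$. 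This is shorter and relies on strictly earlier material; what the paper's version buys is a self-contained, concrete derivation that showcases the squeeze argument built into Corollary~\ref{cor_principal_ker_by_order}, which recurs in later proofs (e.g.\ Proposition~\ref{prop_generator_of_skel2}). Both are valid; yours is the more economical deduction.
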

\begin{proof}
Although the first statement was proved in Proposition \ref{prop_skel_property1}(2), we present here a somewhat more elegant proof. By Corollary \ref{cor_principal_ker_by_order} we have that $g \in \mathscr{R}(x_1,...,x_n)$ is in $\langle f \rangle$ if and only if there exists some $n \in \mathrm{N}$ such that $(f+f^{-1})^{-n} \leq g \leq (f + f^{-1})^{n}$, or by different notation, $|f|^{-n}  \leq g \leq  |f|^{n}$. Now, since over $Skel(f)$, $f = f^{-1} =1$, we have  \ $1 = 1^{-n} \leq g \leq 1^{n} = 1$ over $Skel(f)$ and thus $g=1$ for every $g \in \langle f \rangle$.\\
For the second assertion, let $f'$ be a generator of $\langle f \rangle$ then by  Corollary \ref{cor_principal_ker_by_order} for some $k \in \mathrm{N}$, $|f'|^{-k}  \leq f \leq  |f'|^{n}$, which yields, using the first statement that for any $x \in \mathscr{R}^n$, $f'(x) = 1$ if and only if $f(x) = 1$, so $Skel(f') = Skel(f)$.
\end{proof}

\pagebreak

\begin{prop}\label{prop_generator_of_skel1}
Let $h \in \mathscr{R}(x_1,...,x_n)$. If $h$ is a generator of $\langle f \rangle$, then \linebreak $Skel(h) = Skel(f)$.
\end{prop}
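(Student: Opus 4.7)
The plan is to observe that this proposition is essentially an immediate restatement of the second half of Proposition \ref{generators_and_skeletons} together with the identification of the skeleton of a generator with the skeleton of its kernel. Indeed, saying that $h$ is a generator of $\langle f \rangle$ is exactly the statement $\langle h \rangle = \langle f \rangle$, so the claim reduces to showing that the skeleton only depends on the principal kernel, not on the choice of generating element.

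First, I would invoke Proposition \ref{prop_skel_property1}(2), which states that $Skel(S) = Skel(\langle S \rangle)$ for any subset $S \subseteq \mathscr{R}(x_1,\dots,x_n)$. Applied to the singletons $\{h\}$ and $\{f\}$, this yields
\begin{equation*}
Skel(h) = Skel(\langle h \rangle) \quad \text{and} \quad Skel(f) = Skel(\langle f \rangle).
\end{equation*}
Combined with the hypothesis $\langle h \rangle = \langle f \rangle$, this immediately gives $Skel(h) = Skel(f)$.

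Alternatively, one can give a direct argument via the order-theoretic description of principal kernels: by Corollary \ref{cor_principal_ker_by_order}, the assumption $\langle h \rangle = \langle f \rangle$ is equivalent to the existence of some $k,m \in \mathbb{N}$ with
\begin{equation*}
|f|^{-k} \leq h \leq |f|^{k} \quad \text{and} \quad |h|^{-m} \leq f \leq |h|^{m}.
\end{equation*}
Evaluating these inequalities pointwise at any $x \in \mathscr{R}^n$, one gets $h(x) = 1$ whenever $f(x) = 1$ (from the first pair) and conversely (from the second), so $Skel(h) = Skel(f)$. There is no real obstacle here; the statement is recorded separately only to package the specific fact that one may freely switch between generators of a principal kernel when computing its skeleton, which will be invoked repeatedly in the sections that follow.
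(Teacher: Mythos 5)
Your proposal is correct and takes essentially the same route as the paper: the paper's own proof also reduces the claim to the fact that the skeleton depends only on the generated kernel (via Proposition \ref{prop_skel_property1}), and your second, order-theoretic argument is a verbatim parallel of the proof of Proposition \ref{generators_and_skeletons}, which the paper itself notes subsumes this statement.
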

\begin{proof}
First, note that by Proposition \ref{prop_skel_property1}(1), we have that $Skel(f) \subseteq Skel(h)$ since $\langle h \rangle \subseteq \langle f \rangle$. The `only if' part of the assertion follows from the fact that if $h$ is a generator then $\langle h \rangle = \langle f \rangle$ and thus their skeletons coincide.
\end{proof}

\begin{note}
In the following proposition we use the property of $\mathscr{R}$ being complete, in the sense that
the underlying lattice of $\mathscr{R}$ is conditionally complete (see Definition \ref{defn_divisible_bipotent_base_semifield}).
\end{note}

\begin{prop}\label{prop_generator_of_skel2}
Let $\langle f \rangle \subseteq \langle \mathscr{R} \rangle$. If $h \in \langle f \rangle$ is such that $Skel(h) = Skel(f)$, then $h$ is a generator of $\langle f \rangle$.
\end{prop}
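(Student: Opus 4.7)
The strategy is to reduce the statement to a pointwise inequality of bounded piecewise-monomial functions. Since $h \in \langle f \rangle$ already yields $\langle h \rangle \subseteq \langle f \rangle$, it suffices to verify the reverse containment $f \in \langle h \rangle$, which by Remark~\ref{rem_kernel_by_abs_value1} is equivalent to producing an integer $n \in \mathbb{N}$ with $|f| \leq |h|^{n}$ everywhere on $\mathscr{R}^{n}$. The hypothesis $\langle f \rangle \subseteq \langle \mathscr{R}\rangle$ together with Remark~\ref{rem_bounded_from_above_elements_kernel} supplies a constant $\beta \in \mathscr{R}$ with $1 \leq |f|,|h| \leq \beta$ globally.

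The plan is to exploit the piecewise-monomial structure of $\mathscr{R}(x_1,\ldots,x_n)$. In logarithmic coordinates both $\log|f|$ and $\log|h|$ are continuous piecewise-affine functions with only finitely many linearity regions, and a common refinement yields a finite polyhedral decomposition $\mathscr{R}^{n} = \bigcup_{i=1}^{N}\sigma_{i}$ on which both restrict to affine linear forms $\ell_{i} = \log|f|$ and $m_{i} = \log|h|$ satisfying $0 \leq \ell_{i}, m_{i} \leq \log\beta$ on $\sigma_{i}$. The hypothesis $Skel(f) = Skel(h)$ translates into the cell-wise identity $\{\ell_{i}=0\}\cap\sigma_{i} = \{m_{i}=0\}\cap\sigma_{i}$ for every $i$.

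The heart of the argument is to produce, on each $\sigma_{i}$, a finite constant $k_{i} \geq 0$ with $\ell_{i} \leq k_{i}\, m_{i}$. If either form vanishes identically on $\sigma_{i}$ the skeleton identity forces the other to vanish as well and $k_{i}=0$ suffices. Otherwise the continuous ratio $\ell_{i}/m_{i}$ is well defined on $\sigma_{i}\setminus Z_{i}$, where $Z_{i}=\{m_{i}=0\}\cap\sigma_{i}$, and I claim it is bounded there: at any point of $Z_{i}$ the skeleton identity forces the linear parts of $\ell_{i}$ and $m_{i}$ to have aligned vanishing directions (any tangent ray along which $m_{i}$ stays zero must also keep $\ell_{i}$ zero, for otherwise one would exit $Z_{i}$ in one form but not the other, violating the equality of skeletons), which gives local boundedness of the ratio near $Z_{i}$; away from $Z_{i}$ the ratio is bounded by continuity on compact subsets; and the uniform upper bound $\log\beta$ forces $\ell_{i}$ and $m_{i}$ to be constant along every recession direction of $\sigma_{i}$, reducing the possibly unbounded case to a compact quotient polytope. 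Taking $n = \max_{i}\lceil k_{i}\rceil$, which is finite because the decomposition has only finitely many cells, yields $\ell_{i}\leq n\,m_{i}$ on every $\sigma_{i}$ and therefore $|f|\leq|h|^{n}$ globally.

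The main obstacle is the cell-wise boundedness of $\ell_{i}/m_{i}$: one must translate the set-theoretic equality $Skel(f)=Skel(h)$ into the linear-algebraic fact that the two affine forms have compatible vanishing behavior along the tangent cones at their common zero locus, and combine this with the upper bound $\beta$ to rule out blow-up along the unbounded directions of a cell. Everything else is a direct unwinding of definitions and of the characterization of $\langle h\rangle$ furnished by Remark~\ref{rem_kernel_by_abs_value1}.
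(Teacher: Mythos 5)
Your strategy is sound and genuinely different from the paper's: the paper argues by contradiction, supposing for every $k$ there is a point $x_k$ with $|f(x_k)| > |h(x_k)|^k$, building the nested family of closed bounded sets $U_k = \{|f| > |h|^k\}$, and then invoking the completeness of $\mathscr{R}$ to extract a limit point $y \in \bigcap_k \overline{U_k}$ that can lie neither in nor out of $Skel(h)$. You instead argue directly, exploiting the piecewise-monomial (piecewise-affine in logarithmic coordinates) structure, decomposing $\mathscr{R}^n$ into finitely many polyhedral cells, bounding $\ell_i/m_i$ cell by cell, and taking the finite maximum. Your route avoids the topological machinery entirely and replaces it with polyhedral compactness; it is also more explicit, in that it would in principle produce the exponent $n$. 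The paper's route is shorter to state but leans on an abstract completeness argument.

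However, the central claim — cell-wise boundedness of $\ell_i/m_i$ near $Z_i$ — is not actually established by the parenthetical you give. Saying that ``any tangent ray along which $m_i$ stays zero must also keep $\ell_i$ zero'' is simply a restatement of $\{\ell_i = 0\} \cap \sigma_i = \{m_i = 0\} \cap \sigma_i$, which you already have by hypothesis; it does not by itself bound the ratio, since $\ell_i/m_i$ is an indeterminate $0/0$ precisely along $Z_i$. The correct argument is a bit different: translate so that $Z_i$ contains the origin and pass to the quotient by $\operatorname{aff}(Z_i)$, noting that both $\ell_i$ and $m_i$ vanish on $\operatorname{aff}(Z_i)$ and so descend to linear forms $\bar\ell_i, \bar m_i$ on the quotient polyhedron $\bar\sigma_i$, each strictly positive on $\bar\sigma_i \setminus \{0\}$ precisely because the zero loci coincide with $Z_i$. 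The ratio $\bar\ell_i/\bar m_i$ is then homogeneous of degree zero, hence determined by direction, and the set of directions $\bar\sigma_i \cap S^{n-1}$ is compact, with $\bar m_i$ bounded away from zero and $\bar\ell_i$ bounded above on it; this gives the finite bound $k_i$. Once you replace the ``aligned vanishing directions'' remark with this homogeneity-plus-compactness argument the proof is complete. You should also dispose first of the degenerate cases $\langle f \rangle = \{1\}$ and $Skel(f) = \emptyset$ (equivalently $\langle f \rangle = \langle \mathscr{R} \rangle$), as the paper does, since the cell-wise analysis implicitly assumes both $f$ and $h$ are nontrivial.
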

\begin{proof}
The assertion is obvious in the case where $\langle f \rangle = \langle 1 \rangle = \{1\}$. So, as \linebreak $Skel(h) = Skel(f)$ we can assume $f$ and $h$ to be not equal to $1$. If $\langle f  \rangle = \langle \alpha \rangle$ for some \linebreak $\alpha \neq 1$, then $Skel(h) = Skel(f) = \emptyset$ implies by Remark \ref{rem_properties_of_H_kernels} (3) that $\langle h \rangle = \langle \mathscr{R} \rangle = \langle f  \rangle$.


Before we continue, note that $\mathscr{R}$ is a totally ordered semifield, thus for any $a~\in~\mathscr{R}^n$, either $h(a) \leq f(a)$ or $f(a) \leq h(a)$, for any pair of functions $f,h$ in $\mathscr{R}(x_1,...,x_n)$. \linebreak Let $h \in \langle f \rangle$ such that $h$ is not a generator of $\langle f \rangle$ while $Skel(h) =Skel(f)$. By \linebreak Corollary~\ref{cor_principal_ker_by_order} we have that for each $k \in \mathbb{N}$ there exists some $x_k \in \mathscr{R}^{n}$ for which \linebreak $|f(x_k)| > |h(x_k)|^{k}$ (*). Note that $f$ and $h$ are rational polynomials thus continuous and so are $|f|^{s}$ and $|h|^{s}$ (by definition of $|\cdot|$) for any $s \in \mathbb{N}$.\\
For any $k \in \mathbb{N}$, define the set $U_k = \{x \ : \ |f(x)| > |h(x)|^k \}$.  As $\mathscr{R}$ is assumed to be (ordered) divisibly closed semifield, it is dense, so, for any $x \in U_k$  there exists a  neighborhood $B_x \subset U_k$ containing $x$ such that for all $x' \in B_x$,  $|f(x')| > |h(x')|^k $.
Now, since $h$ and $f$ are bounded from above rational functions both not equal to $1$, $U_k$ are bounded regions inside $\mathscr{R}^n$.  Taking the closure of $U_k$ we may assume it is closed.
Since $Skel(h) =Skel(f)$, $|f(x)| > |h(x)|^k $ implies that $|h(x)|,|f(x)| > 1$ , so, by the definition of $U_k$ we get the sequence of strict inclusions $U_1 \supset U_2 \supset \dots \supset U_k \supset \dots$ where by our assumption (*), $U_i \neq \emptyset$. Thus, since $\mathscr{R}$ is complete, there exists an element $y \in \mathscr{R}$ such that $y \in \mathrm{B} = \bigcap_{\mathbb{N}} B_k$. Now, for $x \not \in Skel(h)$,  $|h(x)| > 1$ thus there exists some $r = r(x) \in \mathbb{N}$ such that $|h(x)|^r > |f(x)|$ thus $x \not \in \mathrm{B}$ thus $y \not \in \mathscr{R}^n \setminus Skel(h)$ . On the other hand, if $y \in Skel(h)$ then $y \in Skel(f)$ so $ 1=|f(y)| \leq |h(y)|=1$. Thus $\bigcap_{\mathbb{N}} B_k = \emptyset$. A contradiction.
\end{proof}

\begin{note}
In the last proof we could analogously argue that since $y \not \in Skel(h)$, \linebreak $f(y) > h(y)^k$ for every natural number $k$ where $h(y) > 1$, which yields that \linebreak $f(y)~\not\in~\mathscr{R} = \langle h(y) \rangle$.
\end{note}

\begin{prop}\label{prop_generator_of_skel3}
Let $\langle f \rangle \subseteq \mathscr{R}(x_1,...,x_n)$. If $h \in \langle f \rangle$ is such that $Skel(h) = Skel(f)$ and $|h|$ has an essential expansion as $|h| = \sum_{i=1}^{k}s_i |f|^{d(i)}$ with $d(i) \in \mathbb{Z}$ and $s_1,...,s_k \in \mathscr{R}(x_1,...,x_n)$ such that $\sum_{i=1}^{k}s_i = 1$, then $h$ is a generator of $\langle f \rangle$.
\end{prop}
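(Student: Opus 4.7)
The conclusion $\langle h\rangle = \langle f\rangle$ reduces to showing $f\in\langle h\rangle$, since the inclusion $h\in\langle f\rangle$ is given. By Remark~\ref{rem_kernel_by_abs_value} this amounts to producing $n\in\mathbb{N}$ with $|f|\leq |h|^{n}$, and my plan is to show that $n = d_{\max} := \max_{i} d(i)$ suffices. Before starting, dispose of degeneracies: if $f=1$ then $Skel(f)=\mathscr{R}^{n}$, so $Skel(h)=\mathscr{R}^{n}$ forces $h=1$ and the claim is vacuous; and if every $d(i)\leq 0$ then $|f|^{d(i)}\leq 1$ pointwise (as $|f|\geq 1$), so $|h|\leq\sum_i s_i =1$ forces $h=1$ and we are back in the previous case. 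Hence assume $d_{\max}\geq 1$.

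Because $\mathscr{R}$ is bipotent, $|h(x)| = \max_i s_i(x)|f(x)|^{d(i)}$ and the constraint $\sum_i s_i = 1$ reads $\max_i s_i(x) = 1$ at every $x$, so in particular $s_i(x)\leq 1$ everywhere. For each index set
\[
R_i := \bigl\{\,x\in\mathscr{R}^{n} : s_i(x)|f(x)|^{d(i)} = |h(x)|\,\bigr\},
\]
and read the essentiality of the expansion as the hypothesis that each $R_i$ has nonempty interior (no term is redundant in the tropical $\max$) while $\bigcup_i R_i = \mathscr{R}^{n}$. The key pointwise observation is: for $x\notin Skel(f)$ one has $|f(x)|>1$ and, by $Skel(h)=Skel(f)$, also $|h(x)|>1$; hence any $i$ with $x\in R_i$ satisfies $s_i(x)|f(x)|^{d(i)}=|h(x)|>1$, which combined with $s_i(x)\leq 1$ forces $|f(x)|^{d(i)}>1$ and thus $d(i)\geq 1$. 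Moreover, picking any $j$ with $s_j(x)=1$ gives $|f(x)|^{d(j)}\leq |h(x)| \leq |f(x)|^{d(i)}$, so $d(j)\leq d(i)$.

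It remains to bound $|f|$ by a fixed power of $|h|$ globally. On the interior of each $R_i$ meeting $\mathscr{R}^{n}\setminus Skel(f)$, essentiality (the strict dominance of the $i$-th term) together with $\max_j s_j = 1$ pins down $s_i\equiv 1$ there: any competing index $j$ with $s_j=1$ and $d(j)\leq d(i)$ would produce a tied term, contradicting strict dominance unless already $s_i = 1$. Consequently $|h|=|f|^{d(i)}$ on these interiors, and since $|h|\geq 1$ and $d(i)\leq d_{\max}$ we get $|f|=|h|^{1/d(i)}\leq |h|^{d_{\max}}$. Passing to closures by continuity of $|f|,|h|$ as rational functions, and using that the $R_i$ cover $\mathscr{R}^{n}\setminus Skel(f)$, extends this inequality to all of $\mathscr{R}^{n}\setminus Skel(f)$; on $Skel(f)=Skel(h)$ both sides equal $1$. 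Thus $|f|\leq |h|^{d_{\max}}$ globally, so $f\in\langle h\rangle$ and $h$ generates $\langle f\rangle$.

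The main obstacle is the extraction of $s_i\equiv 1$ on the interior of $R_i$ from the essentiality hypothesis: if the clean geometric step fails in subtle cases (e.g.\ if $s_i$ tends to $1$ only in the limit), the backup is to imitate the compactness argument of Proposition~\ref{prop_generator_of_skel2}. Namely, assume for contradiction that $|f|\not\leq |h|^{k}$ for every $k$, pick $x_k$ with $|f(x_k)|>|h(x_k)|^{k}$, pass to a subsequence for which the active index $i^{\ast}\in M(x_k)$ in the max decomposition of $|h|$ is constant, and use the conditional completeness of $\mathscr{R}$ together with essentiality and $Skel(h)=Skel(f)$ to force $s_{i^{\ast}}(x_k)\to 0$ along with $|f(x_k)|\to\infty$ in a manner forbidden by the form $|h| = \sum_i s_i|f|^{d(i)}$.
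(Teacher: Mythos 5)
The target inequality $|f|\leq |h|^{N}$ is the right thing to aim for, and your preliminary observations are sound: the disposal of $f=1$ and of the case where all $d(i)\leq 0$; the fact that any dominant term $i$ at a point $x\notin Skel(f)$ has $d(i)\geq 1$; and that any argmax index $j$ of the $s_\bullet$ at such $x$ has $d(j)\leq d(i)$. The gap is in the step you yourself flag: the claim that $s_i\equiv 1$ on the interior of every $R_i$ meeting $Skel(f)^{c}$. The justification offered --- that a competing index $j$ with $s_j=1$ and $d(j)\leq d(i)$ would force a tied term unless already $s_i=1$ --- does not hold when $d(j)<d(i)$. A tie of the $i$-th and $j$-th terms at $x$ means $s_i(x)|f(x)|^{d(i)} = |f(x)|^{d(j)}$, i.e.\ $s_i(x) = |f(x)|^{-(d(i)-d(j))}$; for $|f(x)|>1$ and $d(i)>d(j)$ this is a specific value strictly below $1$, and strict dominance of the $i$-th term on an entire open set is perfectly compatible with $s_i<1$ there. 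So no contradiction is produced and $s_i\equiv 1$ does not follow. Your proposed fallback (the compactness/completeness argument of Proposition~\ref{prop_generator_of_skel2}) is also not available here: that proof deduces that the nested sets $U_k$ are bounded regions from the hypothesis $\langle f\rangle\subseteq\langle\mathscr{R}\rangle$, i.e.\ from $f$ and $h$ being bounded from above, and the present proposition replaces exactly that hypothesis by essentiality because $\langle f\rangle$ may be unbounded. Without boundedness the intersection argument does not run.

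The paper closes the gap by a different route. After normalizing $d(i)\geq 0$, it singles out an index $i_0$ with $d(i_0)=0$ and shows, using essentiality together with $|h|\geq 1$, that the $i_0$-th term can strictly dominate only on an open subset of $Skel(f)$; it then concludes that over $R = Cl(Skel(f)^{c})$ the expansion reduces to $|h| = \sum_{i\neq i_0} s_i|f|^{d(i)}$ and asserts that at each $x\in R$ some $j_0\neq i_0$ has $s_{j_0}(x)=1$, whence $|h(x)|\geq |f(x)|^{d}$ with $d = \min_{i\neq i_0} d(i) > 0$, giving $|f|\leq |h|$ globally and $N=1$. Thus the paper works with the \emph{minimum} positive exponent and an explicit removal of the $d=0$ term grounded in essentiality, rather than attempting to read $s_i\equiv 1$ off from strict dominance as you do; this removal argument is the ingredient your write-up is missing, and the tied-term heuristic cannot substitute for it.
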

\begin{proof}
As $|f|$ is a generator of $\langle f \rangle$ and $|h|$ a generator of $\langle h \rangle$, we may consider $|f|$ and $|h|$ instead of $f$ and $h$ and moreover, we may assume $|f|$ and $|h|$ to be in  essential form. By Proposition \ref{prop_ker_stracture_by group}, there exist some $s_1,...,s_k \in \mathscr{R}(x_1,...,x_n)$ such that $\sum_{i=1}^{k}s_i = 1$ and $|h| = \sum_{i=1}^{k}s_i |f|^{d(i)}$ with $d(i) \in \mathbb{Z}_{\geq 0}$ (since $|h| \geq 1$, $d(i) \geq 0$). As $|h|$ is in essential form, $k$ is minimal. Now, since $Skel(|f|) = Skel(|h|)$ we have that $1=|h(x)| = \sum_{i=1}^{k}s_i(x) |f(x)|^{d(i)} = \sum_{i=1}^{k}s_i(x)\cdot 1 = \sum_{i=1}^{k}s_i(x)$ for every $x \in Skel(h)$. Thus for every $x \in Skel(h)$ there exists $1 \leq j \leq k$ such that $h(x) = s_j(x) = 1$, so $h(x) = 1$ if and only if $h(x) = s_j(x) = 1$ for some $1 \leq j \leq k$. Let $i_0$ be such that $d(i_0) = 0$. Since $|h|$ is in essential form, and $s_{i_0} \leq \sum_{i=1}^{k}s_i = 1$, $s_{i_0}$ must be dominate at some point $x_0 \in Skel(h)$, in the sense that $s_{i_0} > s_i$ for every $i \neq i_{0}$. As $|h|$ is continuous there exists a neighborhood $\epsilon$ of $x_0$ such that  $s_{i_0}(y) \geq \sum_{i \neq i_0}s_i(y) |f(y)|^{d(i)}$ for every $y \in \epsilon$. But $\sum_{i=1}^{k}s_i = 1$ thus $s_{i_0}(y) =1$ for every $y \in \epsilon$.  Without loss of generality, take $i_0 = 1$. If $\epsilon \not \subseteq Skel(f)$ then, as $\mathscr{R}$ is divisibly closed, there exists a point $y_1 \in \epsilon \setminus Skel(f)$ such that $ |h(y_1)| = s_{1}(y_1) + \sum_{2}^k s_i(y_1) |f(y_1)|^{d(i)} = s_{1}(y_1) = 1$, which contradicts the assumption that $Skel(h) = Skel(f)$. So $\epsilon \subseteq Skel(f)$. Thus over $R = Cl(Skel(f)^c)$ we have that $|h(x)| = \sum_{i=2}^{k}s_i(x) |f(x)|^{d(i)}$ where $\sum_{i=1}^{k}s_i(x) = 1$ for every $x \in R$ (since $\epsilon \cap R =\emptyset$ there always exists some $2 \leq j \leq k$ for which $s_j(x) = 1$).  Now, take $x \in R$. Then there exists some $2 \leq j_0 \leq k$ such that $s_{j_0}(x) = 1$, thus we have that $|h(x)| = \sum_{i=2}^{k}s_i(x) |f(x)|^{d(i)} \geq  s_{j_0}(x)|f(x)|^{d(j_0)} = |f(x)|^{d(j_0)}$. Consequently, since $|f(x)| \geq 1$ we have that $|h(x)| \geq |f(x)|^{d}$ with $d = \min \{d(j) \ : \ 2 \leq j \leq k \}$, $d > 0$. As also $1= |h(x)| \geq |f(x)|^{d} = 1^{d} = 1$ over $Skel(f)$, we get that $|h(x)| \geq |f(x)|^{d} \geq |f(x)|$ over $R \cup Skel(f) = \mathscr{R}^n$, i.e., $|h| \geq |f|$. Thus, by Remark \ref{rem_kernel_by_abs_value} $|f| \in \langle |h| \rangle$, so \linebreak $\langle f \rangle  = \langle |f| \rangle \subseteq \langle |h| \rangle = \langle h \rangle$. Finally, as $h \in \langle f \rangle$, we have that $\langle h \rangle =\langle f \rangle$, i.e., $h$ is a generator of $\langle f \rangle$ as desired.
\end{proof}

The proof of Proposition \ref{prop_generator_of_skel3} does not apply to any element of $\langle f \rangle$, just to the element which can be written \textbf{essentially} in the form $a= \sum_{i=1}^{k}s_i |f|^{d(i)}$ with $\sum_{i=1}^{k}s_i =1$. For example, the element $|x| \wedge \alpha \in \langle x \rangle$ with $ \alpha > 1$ can be written as $(\frac{|x|}{\alpha + |x|})\cdot 1 + (\frac{\alpha}{\alpha + |x|})\cdot |x|$ with $a_1(x) = \frac{|x|}{\alpha + |x|}$ and $a_2(x) = \frac{\alpha}{\alpha + |x|}$, but this is not in an essential form since the first term never dominates.



\begin{rem}
By Proposition \ref{prop_skel_property1}, we have that a skeleton $\mathcal{S}$ is a principal skeleton, i.e., $\mathcal{S} = Skel({f})$ for some $f \in \langle \mathscr{R} \rangle$, if and only if $\mathcal{S} = Skel(\langle f \rangle)$.
\end{rem}

\begin{prop}\label{prop_kernel_is_k_kernel}
Let   $\langle f \rangle$ be a principal kernel in $\PCon(\langle \mathscr{R} \rangle)$. Then $\langle f \rangle$ is \linebreak a $\mathcal{K}$-kernel.
\end{prop}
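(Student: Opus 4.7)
The plan is to show that if $h \in \langle \mathscr{R} \rangle$ satisfies $h(a) = 1$ for every $a \in Skel(\langle f \rangle)$, then $h \in \langle f \rangle$; the reverse inclusion $\langle f \rangle \subseteq Ker(Skel(\langle f \rangle)) \cap \langle \mathscr{R} \rangle$ is automatic from Remark \ref{rem_ker_properties_1}(4). So fix such an $h$. The condition on $h$ is precisely that $Skel(\langle h \rangle) \supseteq Skel(\langle f \rangle)$.

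The key trick is to move to a kernel containing both $h$ and $f$ where I can apply Proposition \ref{prop_generator_of_skel2}. First I would set $g = |h| \dotplus |f|$. Since $\langle \mathscr{R} \rangle$ is a (sub)semifield of $\mathscr{R}(x_1,\dots,x_n)$ and $h,f \in \langle \mathscr{R} \rangle$, also $g \in \langle \mathscr{R} \rangle$, so $\langle g \rangle \in \PCon(\langle \mathscr{R} \rangle)$. By Corollary \ref{cor_max_semifield_principal_kernels_operations} (or the computation in Remark \ref{rem_sum_prod_absolute_values2}) we have $\langle g \rangle = \langle h \rangle \cdot \langle f \rangle$, and in particular $f \in \langle g \rangle$. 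Using Corollary \ref{cor_principal_skel_correspondence} together with the hypothesis $Skel(h) \supseteq Skel(f)$, I get
\begin{equation*}
Skel(\langle g \rangle) = Skel(\langle h \rangle \cdot \langle f \rangle) = Skel(h) \cap Skel(f) = Skel(f).
\end{equation*}

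Now I invoke Proposition \ref{prop_generator_of_skel2} on the kernel $\langle g \rangle \subseteq \langle \mathscr{R} \rangle$ with the element $f \in \langle g \rangle$ satisfying $Skel(f) = Skel(\langle g \rangle)$: this forces $f$ to be a generator of $\langle g \rangle$, i.e.\ $\langle f \rangle = \langle g \rangle$. Since $h \in \langle g \rangle = \langle f \rangle$, this closes the argument and establishes $Ker(Skel(\langle f \rangle)) \cap \langle \mathscr{R} \rangle \subseteq \langle f \rangle$.

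The main obstacle is really already handled by Proposition \ref{prop_generator_of_skel2}, whose proof relied essentially on the completeness and archimedeanity of $\mathscr{R}$; the work here is simply finding the correct reduction, namely passing from $h$ to the auxiliary generator $g = |h| \dotplus |f|$ so that $f$ plays the role of the ``test element with the same skeleton as its ambient kernel.'' The hypothesis $\langle f \rangle \in \PCon(\langle \mathscr{R} \rangle)$ (rather than merely in $\PCon(\mathscr{R}(x_1,\dots,x_n))$) is used exactly at the point where Proposition \ref{prop_generator_of_skel2} requires bounded-from-above kernels, reflecting the fact that outside $\langle \mathscr{R} \rangle$ one can add unbounded generators (like the element $f'$ produced in Proposition \ref{prop_unbounded_generator}) without altering the skeleton.
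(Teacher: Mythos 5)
Your proof is correct, and it is the order-theoretic dual of the paper's argument rather than a paraphrase of it. The paper fixes $h \in \langle \mathscr{R} \rangle$ with $Skel(h) \supseteq Skel(f)$, forms the \emph{intersection} $K = \langle f \rangle \cap \langle h \rangle = \langle\, |f| \wedge |h| \,\rangle$, observes via Corollary~\ref{cor_principal_skel_correspondence} that $Skel(K) = Skel(f) \cup Skel(h) = Skel(h)$, and then applies Proposition~\ref{prop_generator_of_skel2} to the pair $(\langle h \rangle, |f| \wedge |h|)$ to conclude $\langle h \rangle = K \subseteq \langle f \rangle$. You instead form the \emph{product} $\langle g \rangle = \langle h \rangle \cdot \langle f \rangle = \langle\, |h| \dotplus |f|\,\rangle$, observe that $Skel(\langle g \rangle) = Skel(h) \cap Skel(f) = Skel(f)$, and apply the same Proposition~\ref{prop_generator_of_skel2} to the pair $(\langle g \rangle, f)$ to conclude $\langle f \rangle = \langle g \rangle \ni h$. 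Both arguments funnel all the real work into Proposition~\ref{prop_generator_of_skel2} (completeness/archimedeanity of $\mathscr{R}$), and both need $\langle \mathscr{R} \rangle$-membership at exactly that step. The one appreciable difference is that your route bypasses the paper's explicit nontriviality bookkeeping (the reduction to $h \neq 1$ and the observation that $\langle f \rangle \cap \langle h \rangle \neq \{1\}$): since you apply the proposition to $\langle g \rangle$, which contains $\langle f \rangle$, the trivial and empty-skeleton cases are absorbed directly by the case analysis already present in the proof of Proposition~\ref{prop_generator_of_skel2}, giving a marginally tighter write-up.
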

\begin{proof}
We need to show that ($Ker(Skel(f))=$) $Ker(Skel(\langle f \rangle)) \subseteq \langle f \rangle$.\\
Let $h \in \langle \mathscr{R} \rangle $ such that $h \in Ker(Skel(f))$. Then $h(x) = 1$  for every $x \in Skel(f)$ and so $Skel(f) \subseteq Skel(h)$. If $|h| \leq |f|^{k}$ for some $k \in \mathbb{N}$ then $h \in \langle f \rangle$. Thus in particular we may assume that $h \neq 1$. Now, by Corollary \ref{cor_principal_skel_correspondence} we have that $Skel(\langle f \rangle \cap \langle h \rangle) = Skel(f) \cup Skel(h) = Skel(h)$. Since $h \neq 1$, $Skel(h) \neq \mathscr{R}^n$ and thus $\langle f \rangle \cap \langle h \rangle \neq \{1\}$. Again by Corollary \ref{cor_principal_skel_correspondence} we have that $Skel(\langle f \rangle \cdot \langle h \rangle) = Skel(f) \cap Skel(h) = Skel(f)$. Thus $\langle f,h \rangle = \langle f \rangle \cdot \langle h \rangle \neq \langle \mathscr{R} \rangle$ for otherwise $Skel(f) = \emptyset$ . Consequently the kernel $K = \langle g \rangle = \langle f \rangle \cap \langle h \rangle$,  where $g~=~|f| \wedge |h|$, admits $\{ 1 \} \neq K \subseteq \langle f \rangle$. So, we have that $g \in \langle f \rangle$ and $Skel(g)= Skel(h)$. Thus By Proposition \ref{prop_generator_of_skel2}, $g$ is a generator of $ \langle h \rangle$, so, we have that  $\langle h \rangle = K \subseteq \langle f \rangle$ \ as desired.
\end{proof}

\begin{cor}\label{cor_correspondence_bounded_pkernels_pskeletons}
There is a $1:1$ order reversing correspondence
\begin{equation}
\{ \text{principal skeletons of } \ \mathscr{R}^{n}  \} \leftrightarrow \{ \text{principal kernels of } \ \langle \mathscr{R} \rangle \},
\end{equation}
given by $Z \mapsto Ker_{\langle \mathscr{R} \rangle}(Z)$;  the reverse map is given by $K \mapsto Skel(K)$.
\end{cor}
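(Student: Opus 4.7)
The plan is to assemble the statement from two ingredients already in hand: Proposition \ref{prop_kernel_is_k_kernel}, which says every $\langle f\rangle\in\PCon(\langle\mathscr{R}\rangle)$ is a $\mathcal{K}$-kernel, and the $\Omega$-construction from Remark \ref{rem_properties_of_H_kernels}, which produces a principal $\langle\mathscr{R}\rangle$-kernel for every principal kernel of $\mathscr{R}(x_1,\dots,x_n)$ without altering the skeleton. The whole argument then amounts to verifying that the two assignments $K\mapsto Skel(K)$ and $Z\mapsto Ker_{\langle\mathscr{R}\rangle}(Z)$ are mutually inverse between these two sets, plus a one-line order check.

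First I would show well-definedness of the two maps. If $\langle f\rangle\in\PCon(\langle\mathscr{R}\rangle)$, then $Skel(\langle f\rangle)=Skel(f)$ is by definition a principal skeleton. Conversely, if $Z\in PSkl(\mathscr{R}^n)$, pick any $g\in\mathscr{R}(x_1,\dots,x_n)$ with $Z=Skel(g)$ and fix $\alpha\in\mathscr{R}\setminus\{1\}$; then by Remark \ref{rem_properties_of_H_kernels}(1),(3) the kernel $\Omega(\langle g\rangle)=\langle g\rangle\cap\langle\mathscr{R}\rangle=\langle |g|\wedge|\alpha|\rangle$ lies in $\PCon(\langle\mathscr{R}\rangle)$ and still satisfies $Skel(\Omega(\langle g\rangle))=Z$. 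This provides a principal $\langle\mathscr{R}\rangle$-kernel whose skeleton is $Z$, which is exactly the witness needed to prove that $Ker_{\langle\mathscr{R}\rangle}(Z)$ is principal once the two maps are shown to be mutual inverses.

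For the inverse relations, I would argue in both directions using Proposition \ref{prop_kernel_is_k_kernel}. Given $\langle f\rangle\in\PCon(\langle\mathscr{R}\rangle)$, that proposition gives $Ker(Skel(\langle f\rangle))\subseteq\langle f\rangle$, while the reverse inclusion is Remark \ref{rem_ker_properties_1}(4); intersecting with $\langle\mathscr{R}\rangle$ (which contains $\langle f\rangle$) yields $Ker_{\langle\mathscr{R}\rangle}(Skel(\langle f\rangle))=\langle f\rangle$. In the other direction, given a principal skeleton $Z=Skel(g)$, set $K=\Omega(\langle g\rangle)=\langle|g|\wedge|\alpha|\rangle$. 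The previous step gives $Ker_{\langle\mathscr{R}\rangle}(Skel(K))=K$, and since $Skel(K)=Z$, this shows $Ker_{\langle\mathscr{R}\rangle}(Z)=K\in\PCon(\langle\mathscr{R}\rangle)$, and moreover $Skel(Ker_{\langle\mathscr{R}\rangle}(Z))=Skel(K)=Z$.

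Finally, the order-reversing property follows at once from the monotonicity statements already proved: $Z_1\subseteq Z_2$ implies $Ker_{\langle\mathscr{R}\rangle}(Z_2)\subseteq Ker_{\langle\mathscr{R}\rangle}(Z_1)$ by Remark \ref{rem_ker_properties_1}(2), and $\langle f\rangle\subseteq\langle g\rangle$ implies $Skel(\langle g\rangle)\subseteq Skel(\langle f\rangle)$ by Proposition \ref{prop_skel_property1}(1). I do not expect a serious obstacle here; the only subtle point is to use $\Omega$ to verify that the image of $Z\mapsto Ker_{\langle\mathscr{R}\rangle}(Z)$ is \emph{principal} rather than merely a $\mathcal{K}$-kernel, but this is handled cleanly because $\Omega$ sends principal kernels to principal kernels and preserves skeletons, so the principal representative furnished by $\Omega$ must coincide with $Ker_{\langle\mathscr{R}\rangle}(Z)$ by the $\mathcal{K}$-kernel property.
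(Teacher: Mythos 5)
Your proposal is correct and takes essentially the same route as the paper: both directions rest on the fact that every principal kernel of $\langle\mathscr{R}\rangle$ is a $\mathcal{K}$-kernel (Proposition \ref{prop_kernel_is_k_kernel}, which the paper cites indirectly through Proposition \ref{prop_generator_of_skel2}). Your write-up is more explicit than the paper's two-sentence proof — in particular about using $\Omega$ to furnish a principal witness for $Ker_{\langle\mathscr{R}\rangle}(Z)$ — but the underlying argument is the same.
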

\begin{proof}
Every principal kernel gives rise to a principal skeleton by the definition of $Skel$. The reverse direction follows Proposition \ref{prop_generator_of_skel2} as every principal kernel which produces a principal skeleton using $Skel$ is in fact a $\mathcal{K}$-kernel.
\end{proof}

In Proposition \ref{prop_maximal_kernels_in_semifield_of_fractions_part1},
we have shown using a substitution homomorphism $\psi$ that any point $a = (\alpha_1,...,\alpha_n) \in \mathscr{R}^n$ corresponds to the maximal kernel
$$\left\langle \frac{x_1}{\alpha_1} , ... , \frac{x_n}{\alpha_n} \right\rangle = \left\langle \left|\frac{x_1}{\alpha_1}\right| + ....+ \left|\frac{x_n}{\alpha_n}\right| \right\rangle = \left\langle \left|\frac{x_1}{\alpha_1}\right| \cdot .... \cdot \left|\frac{x_n}{\alpha_n}\right| \right\rangle = \left\langle \frac{x_1}{\alpha_1} \right\rangle \cdot \dots \cdot \left\langle \frac{x_n}{\alpha_n} \right\rangle.$$
Let $\psi : \mathbb{H}(x) \rightarrow \mathbb{H}$ be defined by sending $x \mapsto 1$.
Consider the restriction homomorphism $\psi|_{\langle \mathscr{R} \rangle} :\langle  \mathscr{R} \rangle  \rightarrow \psi(\langle  \mathscr{R} \rangle) = \mathscr{R}$. Then by Theorem \ref{thm_kernels_hom_relations}, we have that \linebreak $Ker\psi|_{\langle \mathscr{R} \rangle} = Ker \psi \cap \langle  \mathscr{R} \rangle = \langle x \rangle \cap \langle  \mathscr{R} \rangle$. Thus, the result applies to $\langle \mathscr{R} \rangle$  where the maximal kernel is $\langle x \rangle \cap \langle  \mathscr{R} \rangle$.

We will now show that any maximal kernel of $\langle \mathscr{R} \rangle $ is of that form.

\begin{prop}\label{rem_maximal_kernels_in_semifield_of_fractions_part2}
If $K$ is a maximal kernel in $\Con(\langle H \rangle)$, then $K = \Omega(\langle \frac{x_1}{\alpha_1}, ... , \frac{x_n}{\alpha_n} \rangle)$ for some $\alpha_1,...,\alpha_n \in \mathscr{R}$.
\end{prop}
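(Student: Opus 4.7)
The plan is to exhibit a point $a = (\alpha_1,\ldots,\alpha_n) \in \mathscr{R}^n$ whose associated kernel $\Omega(\langle \tfrac{x_1}{\alpha_1},\ldots,\tfrac{x_n}{\alpha_n}\rangle)$ equals $K$, by identifying $K$ as the kernel of a surjective semifield homomorphism $\psi : \langle \mathscr{R} \rangle \to \mathscr{R}$ that coincides with the restriction to $\langle \mathscr{R} \rangle$ of an evaluation $\phi_a$.

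First, since $K$ is maximal, the quotient $\langle \mathscr{R} \rangle / K$ is simple (Corollary \ref{cor_max_ker_simple_corr}), and by H\"older's theorem (Theorem \ref{thm_holder_translated}) it embeds into $\mathscr{R} \cong (\mathbb{R}^+,\cdot,\max)$. Composing with the quotient map produces a homomorphism $\psi : \langle \mathscr{R} \rangle \to \mathscr{R}$ with $\ker \psi = K$. Because $K$ is proper and every $\alpha \in \mathscr{R} \setminus \{1\}$ generates $\langle \mathscr{R} \rangle$ as a kernel (Remark \ref{rem_the contant_generated_kernel}), one has $K \cap \mathscr{R} = \{1\}$, so $\psi|_\mathscr{R}$ is an injective endomorphism of $\mathscr{R}$, hence (via the identification $\mathscr{R} \cong \mathbb{R}^+$) a power automorphism; post-composing $\psi$ with the inverse automorphism, we may arrange $\psi|_\mathscr{R} = \mathrm{id}$ without altering $\ker \psi$. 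It then follows that $K$ is a $\mathcal{K}$-kernel, so $K = \mathrm{Ker}(\mathrm{Skel}(K))$, and $\mathrm{Skel}(K) \neq \emptyset$ (otherwise $K = \mathrm{Ker}(\emptyset) = \langle \mathscr{R} \rangle$, contradicting properness).

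For uniqueness of the point, suppose $a \neq b$ both lie in $\mathrm{Skel}(K)$. Then Proposition \ref{prop_maximal_kernels_in_semifield_of_fractions_part1} together with Theorem \ref{thm_kernels_hom_relations}(1) and Corollary \ref{cor_max_ker_simple_corr} show that $K_a := \Omega(\langle \tfrac{x_i}{\alpha_i}\rangle_i)$ and the corresponding $K_b$ are distinct \emph{maximal} kernels of $\langle \mathscr{R} \rangle$ both containing $K$, contradicting the maximality of $K$. Hence $\mathrm{Skel}(K) = \{a\}$ for a unique $a = (\alpha_1,\ldots,\alpha_n) \in \mathscr{R}^n$, and $K = \mathrm{Ker}(\{a\}) \cap \langle \mathscr{R} \rangle = \Omega(\langle \tfrac{x_1}{\alpha_1},\ldots,\tfrac{x_n}{\alpha_n}\rangle)$, as required.

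The main technical obstacle is that Proposition \ref{prop_kernel_is_k_kernel} establishes the $\mathcal{K}$-kernel property only for \emph{principal} kernels, whereas $K$ is a priori only maximal; concretely, the ``evaluation at infinity'' homomorphism $f \mapsto \lim_{x_i \to \infty} f$ furnishes a maximal kernel of $\langle \mathscr{R} \rangle$ corresponding to no finite point of $\mathscr{R}^n$, so the argument must rule this out. The expected resolution is to pass to the completion $\overline{\langle \mathscr{R} \rangle}$, where every $\mathcal{K}$-kernel is a polar (Corollary \ref{cor_mathcal_K_kernel_is_a_polar}), and invoke the polar--skeleton correspondence (Theorem \ref{thm_polar_skeleton_corrsepondence}) together with a finiteness argument showing that the monotone suprema $\sup_\alpha \psi(|x_i| \wedge \alpha)$ stabilize at finite values $\alpha_i \in \mathscr{R}$; absent such a refinement, the claim should be understood as applying to maximal kernels arising from $\mathscr{R}^n$-valued (rather than ``boundary-valued'') evaluations.
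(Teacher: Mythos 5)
You have put your finger on a genuine gap, and the ``evaluation at infinity'' example you raise is not merely a potential obstacle but an actual counterexample to the proposition as stated. Fix a coordinate direction and define $\phi_\infty : \langle \mathscr{R} \rangle \to \mathscr{R}$ by $\phi_\infty(f) = \lim_{x_1\to\infty} f(x_1,1,\ldots,1)$. Because every $f \in \langle \mathscr{R} \rangle$ is bounded from above, $f(x_1,1,\ldots,1)$ is eventually a \emph{constant} Laurent monomial of $x_1$, so the limit exists; one checks directly that $\phi_\infty$ is a surjective semifield homomorphism. Hence $M := \ker\phi_\infty$ is a maximal kernel of $\langle\mathscr{R}\rangle$ with $\langle\mathscr{R}\rangle/M \cong \mathscr{R}$, yet $M$ contains each $f_\alpha = \alpha^{-1}(|x_1|\wedge\alpha)$ for $\alpha>1$, so
\[
Skel(M) \;\subseteq\; \bigcap_{\alpha>1} Skel(f_\alpha) \;=\; \bigcap_{\alpha>1} \{a\in\mathscr{R}^n : |a_1|\geq\alpha\} \;=\; \emptyset
\]
by archimedeanity. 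Since $\Omega(\langle x_1/\alpha_1, \ldots, x_n/\alpha_n \rangle)$ always has skeleton $\{(\alpha_1,\ldots,\alpha_n)\}\neq\emptyset$, $M$ cannot be of the claimed form.

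The paper's own proof contains exactly this gap: it asserts ``we may assume $Skel(K)\neq\emptyset$, since the only kernel corresponding to the empty set is $\langle\mathscr{R}\rangle$ itself,'' citing Remark~\ref{rem_properties_of_H_kernels}; but that remark and Corollary~\ref{cor_empty_kernels_correspond_to_bfb_kernels} on which it rests are proved only for \emph{principal} kernels, and the $M$ above is maximal but not principal. Once $Skel(K)\neq\emptyset$ is granted, the paper's argument is shorter and more direct than your route through the simple quotient, the H\"older embedding, and the polar machinery: any $a\in Skel(K)$ gives $Skel(L_a)=\{a\}\subseteq Skel(K)$, hence $K\subseteq L_a$, hence $K=L_a$ by maximality (here $L_a=\Omega(\langle x_i/\alpha_i\rangle_i)$, which is maximal by Proposition~\ref{prop_maximal_kernels_in_semifield_of_fractions_part1}). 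Your uniqueness step is correct but superfluous, since a single point of the skeleton already forces $K=L_a$. The real content is exactly what you flagged: the proposition should carry the additional hypothesis that $K$ is principal (in which case $Skel(K)=\emptyset$ would force $K=\langle\mathscr{R}\rangle$, contradicting properness), or be restricted, as you suggest, to maximal kernels arising as kernels of evaluation at finite points of $\mathscr{R}^n$.
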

\begin{proof}
Denote $L_a = (|\frac{x_1}{\alpha_1}| + ....+ |\frac{x_n}{\alpha_n}|) \wedge |\alpha|$ with $\alpha \neq 1$, for $a = (\alpha_1, ..., \alpha_n)$. \linebreak
By Remark \ref{rem_properties_of_H_kernels} we may assume $Skel(K) \neq \emptyset$, since the only kernel corresponding to the empty set is $\langle \mathscr{R} \rangle$ itself. If $a \in Skel(K)$, then as $Skel(L_a) = \{a\} \subseteq Skel(K)$, we have that  $\langle L_a \rangle \supseteq K$. Thus, the maximality of $K$ implies that $K = \langle L_a \rangle$.
\end{proof}

\newpage

\section{The coordinate semifield of a skeleton}

\ \\

In this section we define the coordinate semifield corresponding to a skeleton. Being the most relevant to the development achieved in the reminder of this work, we first perform the construction for principal skeletons, using the principal kernels  of $\langle \mathscr{R} \rangle$. Later on we introduce a more general construction of the coordinate semifield of a (principal) skeleton using its corresponding (principal) polar.

\ \\

\begin{defn}
Let $V = Skel(\langle f \rangle)$ be a (principal) skeleton in $\mathscr{R}^n$. Define
\begin{equation}
\mathscr{R}[V] = \{ f|_{V}(x_1,...,x_n) \ : \ f \in \langle \mathscr{R} \rangle  \}.
\end{equation}
We call $\mathscr{R}[V]$ the \emph{coordinate semifield} of $V$.
\end{defn}

\begin{prop}\label{prop_coord_semifield_1}
For any (principal) skeleton $V = Skel(\langle f \rangle) \subseteq \mathscr{R}^{n}$, define
$$\phi_{V}~:~\langle \mathscr{R} \rangle~\rightarrow~\mathscr{R}[V]$$
to be the restriction map $f \mapsto f|_{V}$. Then $\phi_{V}$ is a homomorphism and
\begin{equation}
\langle \mathscr{R} \rangle /\langle f \rangle \cong \mathscr{R}[V].
\end{equation}
\end{prop}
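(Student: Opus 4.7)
The plan is to verify the two assertions in turn: that $\phi_V$ is a semifield homomorphism and that its kernel is exactly $\langle f \rangle$, after which the first isomorphism theorem yields the isomorphism.

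First I would check that $\phi_V$ is a well-defined semifield homomorphism. Since $\langle \mathscr{R} \rangle$ is a subsemifield of $\mathscr{R}(x_1,\dots,x_n)$ whose elements are bounded rational functions on $\mathscr{R}^n$, restriction to $V$ is defined pointwise, and by Definition \ref{def_semiring_of_functions} the operations in $Fun(V,\mathscr{R})$ are also pointwise. Hence $\phi_V(gh) = \phi_V(g)\phi_V(h)$ and $\phi_V(g \dotplus h) = \phi_V(g) \dotplus \phi_V(h)$, and $\phi_V$ sends $1$ to the constant function $1$. Surjectivity onto $\mathscr{R}[V]$ is immediate from the definition of $\mathscr{R}[V]$.

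Next I would compute $Ker(\phi_V)$. By definition, $g \in Ker(\phi_V)$ iff $g(a) = 1$ for every $a \in V$, i.e., iff $g \in Ker(V) \cap \langle \mathscr{R} \rangle = Ker_{\langle \mathscr{R} \rangle}(V)$, where here $Ker_{\langle \mathscr{R} \rangle}$ is the restricted operator introduced at the beginning of this section. Since $V = Skel(\langle f \rangle)$ with $\langle f \rangle \in \PCon(\langle \mathscr{R} \rangle)$, Proposition \ref{prop_kernel_is_k_kernel} states that $\langle f \rangle$ is a $\mathcal{K}$-kernel, which means exactly that $\langle f \rangle = Ker_{\langle \mathscr{R} \rangle}(Skel(\langle f \rangle)) = Ker_{\langle \mathscr{R} \rangle}(V)$. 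Therefore $Ker(\phi_V) = \langle f \rangle$.

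Finally I would apply the first isomorphism theorem for semifields (which holds by Theorem \ref{thm_nother_1_and_3} and Remark \ref{rem_iso_thms_for_PCon}) to the epimorphism $\phi_V : \langle \mathscr{R} \rangle \twoheadrightarrow \mathscr{R}[V]$ with kernel $\langle f \rangle$, concluding that $\langle \mathscr{R} \rangle/\langle f \rangle \cong \mathscr{R}[V]$.

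The only nontrivial ingredient is the identification $Ker(\phi_V) = \langle f \rangle$, which is not a routine fact but is precisely the content of Proposition \ref{prop_kernel_is_k_kernel}; everything else is formal. So the proof is essentially an application of the already-established fact that every principal kernel inside $\langle \mathscr{R} \rangle$ is its own preimage of its skeleton.
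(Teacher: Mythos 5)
Your proposal is correct and matches the paper's argument: both verify that $\phi_V$ is a pointwise restriction (hence a homomorphism onto $\mathscr{R}[V]$), identify $Ker(\phi_V)$ with $\langle f \rangle$ via Proposition \ref{prop_kernel_is_k_kernel} (the $\mathcal{K}$-kernel property of principal kernels in $\langle \mathscr{R} \rangle$), and then invoke the first isomorphism theorem. No meaningful difference.
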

\begin{proof}
For any $g,h \in \mathscr{R}(x_1,...,x_n)$, since $\phi_{V}$ is a restriction map, we have that $\phi_V(g + h) = (g+h)|_V = g|_V + h|_V = \phi_V(g) + \phi_V(h)$ and $\phi_V(g \cdot h) = (g \cdot h)|_V = g|_V \cdot h|_V = \phi_V(g) \cdot \phi_V(h)$ so $\phi_V$ is a semiring homomorphism. It is trivially onto, by the definition of $\mathscr{R}[V]$.
Now, By Proposition \ref{prop_kernel_is_k_kernel} we have that  $Ker(\phi_V) = \{  g \in \langle \mathscr{R} \rangle \ : \ g|_V = 1 \} = \{  g \in \langle \mathscr{R} \rangle : g \in \langle f \rangle \} = \langle f \rangle$.
Thus by the isomorphism theorem \ref{thm_nother_1_and_3} we have that $\langle \mathscr{R} \rangle/\langle f \rangle \cong \Im(\phi_{V}) = \mathscr{R}[V]$, as desired.
\end{proof}

\begin{prop}\label{prop_direct_prod_of_coord_semifields}
Let $K_1,K_2$ be kernels of the semifield $\langle \mathscr{R} \rangle$ such that $\langle \mathscr{R} \rangle = K_1 \cdot K_2$. Then $$\langle \mathscr{R} \rangle / (K_1 \cap  K_2) \cong \langle \mathscr{R} \rangle/K_1 \times \langle \mathscr{R} \rangle/K_2$$ as groups.
\end{prop}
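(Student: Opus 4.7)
The statement is the lattice/semifield analogue of the classical Chinese Remainder Theorem, and I would prove it via the standard diagonal map argument used there, combined with the first isomorphism theorem for groups (Theorem \ref{thm_nother_1_and_3}(2)), which is why the conclusion is stated only as a group isomorphism.

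First, define
\begin{equation*}
\phi : \langle \mathscr{R} \rangle \longrightarrow \langle \mathscr{R} \rangle/K_1 \times \langle \mathscr{R} \rangle/K_2, \qquad f \longmapsto (fK_1,\, fK_2).
\end{equation*}
This is a group homomorphism (multiplicatively), and its kernel consists of those $f$ with $fK_1 = K_1$ and $fK_2 = K_2$, i.e.\ $f \in K_1 \cap K_2$. Hence $\Ker \phi = K_1 \cap K_2$.

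The main step, and the only place the hypothesis $\langle \mathscr{R} \rangle = K_1 \cdot K_2$ enters, is surjectivity. Given an arbitrary element $(aK_1,\, bK_2)$ of the target, I would use the hypothesis to write $ab^{-1} \in \langle \mathscr{R} \rangle = K_1 \cdot K_2$ as $ab^{-1} = k_1 k_2$ with $k_i \in K_i$, and then set $c = a k_1^{-1} = b k_2$. A direct check gives $cK_1 = aK_1$ and $cK_2 = bK_2$, so $\phi(c) = (aK_1, bK_2)$, proving $\phi$ is onto.

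With $\phi$ exhibited as a surjective group homomorphism with kernel $K_1 \cap K_2$, the first isomorphism theorem for groups yields the desired isomorphism $\langle \mathscr{R} \rangle/(K_1 \cap K_2) \cong \langle \mathscr{R} \rangle/K_1 \times \langle \mathscr{R} \rangle/K_2$. I do not expect any real obstacle here; the one subtlety to flag is precisely why the statement is only ``as groups'': Theorem \ref{thm_nother_1_and_3}(2) gives a semifield isomorphism only under an additional subsemifield hypothesis on the kernel involved, which is not assumed for $K_1 \cap K_2$, so one cannot automatically upgrade the conclusion to a semifield isomorphism.
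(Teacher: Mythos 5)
Your proof is correct but follows a genuinely different route from the paper's. The paper argues via the internal direct product: setting $\bar{K_i} = K_i/(K_1 \cap K_2)$, it first writes $\langle \mathscr{R} \rangle/(K_1 \cap K_2) = \bar{K_1}\cdot\bar{K_2}$, uses $\bar{K_1}\cap\bar{K_2}=\{1\}$ to pass to the external product $\bar{K_1}\times\bar{K_2}$ as groups, and then identifies each $\bar{K_i}$ with the opposite quotient by Theorem \ref{thm_nother_1_and_3}(2). You instead build the explicit diagonal map and prove surjectivity by the classical CRT trick; this is more direct and avoids the diamond isomorphism theorem entirely, invoking only the first isomorphism theorem.

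One point to flag: your closing remark about why the conclusion is stated only ``as groups'' does not actually track the reason. You appeal to the caveat in Theorem \ref{thm_nother_1_and_3}(2), but (a) you do not use that theorem, and (b) in the idempotent setting every kernel is itself a subsemifield, so that caveat is vacuous here. In fact your map $\phi$ preserves addition as well as multiplication, since $(f+g)K_i = fK_i + gK_i$ in each quotient and addition in the direct product is componentwise; so $\phi$ is a semifield homomorphism, its kernel is $K_1\cap K_2$, and it is onto, whence the first isomorphism theorem (which the paper applies at the semifield level elsewhere, e.g.\ in Remark \ref{rem_affine_semifields_as_images}) would yield a \emph{semifield} isomorphism. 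The ``as groups'' qualifier in the paper is an artifact of the paper's chosen route: the passage from the internal product $\bar{K_1}\cdot\bar{K_2}$ to the external product $\bar{K_1}\times\bar{K_2}$ via $(a,b)\mapsto ab$ is only a group isomorphism in general, because the ambient semifield addition on $\bar{K_1}\cdot\bar{K_2}$ need not factor componentwise under that bijection. So your approach in fact proves something slightly stronger than what is claimed, and the justification you offered for the weaker phrasing should be dropped.
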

\begin{proof}
$\langle \mathscr{R} \rangle /(K_1 \cap K_2) = (K_1/(K_1 \cap K_2))\cdot (K_2/(K_1 \cap K_2)) = \bar{K_1} \cdot \bar{K_2}$ where $\bar{K_i}$ is the homomorphic image of $K_i$ under them quotient map $\langle \mathscr{R} \rangle \rightarrow \langle \mathscr{R} \rangle/(K_1 \cap K_2)$ .\\ Since $\bar{K_1} \cap \bar{K_2} = \{1\}$ we have that as groups $\bar{K_1} \cdot \bar{K_2} \cong \bar{K_1} \times \bar{K_2}$. Now, by the second isomorphism theorem for kernels $\bar{K_1} = (K_1/(K_1 \cap K_2)) \cong (K_1 \cdot K_2)/K_2 = \langle \mathscr{R} \rangle/K_2$ and similarly $\bar{K_2} = \langle \mathscr{R} \rangle/K_1$. Thus we have that $G/(K_1 \cap K_2) \cong G/K_1 \times G/K_2$ as groups.
\end{proof}

\begin{cor}
If $V_1,V_2$ are principal skeletons in $\mathscr{R}^n$ such that $V_1 \cap V_2 = \emptyset$, then
$$\mathscr{R}[V_1 \cup V_2] \cong \mathscr{R}[V_1] \times \mathscr{R}[V_2]$$
as groups.
\end{cor}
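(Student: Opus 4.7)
The plan is to reduce the statement to Proposition \ref{prop_direct_prod_of_coord_semifields} by producing two principal kernels of $\langle \mathscr{R} \rangle$ whose intersection corresponds to $V_1 \cup V_2$ and whose product is the full semifield $\langle \mathscr{R} \rangle$. Since $V_1$ and $V_2$ are principal skeletons, by the principal Zariski-type correspondence of Corollary \ref{cor_correspondence_bounded_pkernels_pskeletons} we may write $V_1 = Skel(\langle f_1 \rangle)$ and $V_2 = Skel(\langle f_2 \rangle)$ with $\langle f_1 \rangle, \langle f_2 \rangle \in \PCon(\langle \mathscr{R} \rangle)$.

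First I would translate the set-theoretic data into kernel-theoretic data using Corollary \ref{cor_principal_skel_correspondence}. Specifically,
\begin{equation*}
Skel(\langle f_1 \rangle \cap \langle f_2 \rangle) = V_1 \cup V_2, \qquad Skel(\langle f_1 \rangle \cdot \langle f_2 \rangle) = V_1 \cap V_2 = \emptyset.
\end{equation*}
The crucial observation (which I expect to be the main obstacle to phrase cleanly) is that the only principal kernel of $\langle \mathscr{R} \rangle$ with empty skeleton is $\langle \mathscr{R} \rangle$ itself: indeed, under the $1{:}1$ correspondence of Corollary \ref{cor_correspondence_bounded_pkernels_pskeletons} the empty skeleton corresponds to a unique principal kernel of $\langle \mathscr{R} \rangle$, and since $\langle \mathscr{R} \rangle$ is itself principal (generated by any $\alpha \in \mathscr{R} \setminus \{1\}$, whose skeleton is $\emptyset$), this kernel must be all of $\langle \mathscr{R} \rangle$. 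Consequently $\langle f_1 \rangle \cdot \langle f_2 \rangle = \langle \mathscr{R} \rangle$, which is exactly the coprimality hypothesis required by Proposition \ref{prop_direct_prod_of_coord_semifields}.

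Once this is in hand, the proof closes mechanically. Applying Proposition \ref{prop_direct_prod_of_coord_semifields} with $K_1 = \langle f_1 \rangle$ and $K_2 = \langle f_2 \rangle$ yields the group isomorphism
\begin{equation*}
\langle \mathscr{R} \rangle/(\langle f_1 \rangle \cap \langle f_2 \rangle) \;\cong\; \langle \mathscr{R} \rangle/\langle f_1 \rangle \;\times\; \langle \mathscr{R} \rangle/\langle f_2 \rangle.
\end{equation*}
Identifying each side via Proposition \ref{prop_coord_semifield_1}, the left-hand side becomes $\mathscr{R}[V_1 \cup V_2]$ (since the kernel of the restriction map $\phi_{V_1 \cup V_2}$ is precisely the principal kernel whose skeleton is $V_1 \cup V_2$, namely $\langle f_1 \rangle \cap \langle f_2 \rangle$), while the two factors on the right become $\mathscr{R}[V_1]$ and $\mathscr{R}[V_2]$ respectively. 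This gives the desired isomorphism $\mathscr{R}[V_1 \cup V_2] \cong \mathscr{R}[V_1] \times \mathscr{R}[V_2]$ as groups, and completes the plan.
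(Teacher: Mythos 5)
Your proposal is correct and follows the same route as the paper's proof: it passes from $V_1 \cap V_2 = \emptyset$ to $\langle f_1 \rangle \cdot \langle f_2 \rangle = \langle \mathscr{R} \rangle$ via the $Skel$ correspondence, then applies Proposition \ref{prop_direct_prod_of_coord_semifields} together with Proposition \ref{prop_coord_semifield_1}. Your explanation of why the empty skeleton forces $\langle f_1 \rangle \cdot \langle f_2 \rangle = \langle \mathscr{R} \rangle$ (the uniqueness in Corollary \ref{cor_correspondence_bounded_pkernels_pskeletons}) is merely spelled out more explicitly than in the paper, which states that step without comment.
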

\begin{proof}
Let $\langle f_1 \rangle$ and $\langle f_2 \rangle$ be the kernels in $\PCon(\langle \mathscr{R} \rangle)$ such that $V_1 = Skel(\langle f_1 \rangle)$ and $V_2 = Skel(\langle f_2 \rangle)$. Then $Skel( \langle f_1 \rangle \cdot \langle f_2 \rangle) = Skel(\langle f_1 \rangle) \cap Skel(\langle f_2 \rangle) = V_1 \cap V_2 = \emptyset$ thus $\langle f_1 \rangle \cdot \langle f_2 \rangle = \langle \mathscr{R} \rangle$. Since $V_1 \cup V_2 = Skel( \langle f_1 \rangle \cap \langle f_2 \rangle)$ we have by Proposition \ref{prop_direct_prod_of_coord_semifields} that $\mathscr{R}[V_1 \cup V_2] \cong \langle \mathscr{R} \rangle /(\langle f_1 \rangle \cap \langle f_2 \rangle) \cong \langle \mathscr{R} \rangle/\langle f_1 \rangle \times \langle \mathscr{R} \rangle/\langle f_1 \rangle \cong \mathscr{R}[V_1] \times \mathscr{R}[V_2]$.
\end{proof}

The following result is analogue to Proposition \ref{prop_coord_semifield_1} using $\mathrm{R}(x_1,...,x_n)$ and $\mathcal{B}$ defined in Corollary \ref{cor_polar_skeleton_corrsepondence} instead of $\langle \mathscr{R} \rangle$ and $Con(\langle \mathscr{R} \rangle)$, respectively.
\begin{prop}\label{prop_coord_semifield_2}
For any  skeleton $V = Skel(\mathcal{S}) \subseteq \mathscr{R}^{n}$ with $\mathcal{S} \subseteq \mathscr{R}(x_1,...,x_n)$  define
$$\phi_{V}~:~\mathscr{R}(x_1,...,x_n)~\rightarrow~\mathscr{R}[V]$$
to be the restriction map $f \mapsto f|_{V}$. Then $\phi_{V}$ is a homomorphism and
\begin{equation}
\mathscr{R}(x_1,...,x_n)  / K_{\mathcal{S}} \cong \mathscr{R}[V].
\end{equation}
where $K_{\mathcal{S}} = \mathcal{S}^{\bot \bot} \cap \mathscr{R}(x_1,...,x_n)$ with $\mathcal{S}^{\bot \bot}$ is taken in the completion  $\overline{\mathscr{R}(x_1,...,x_n)}$ of $\mathscr{R}(x_1,...,x_n)$ in $Fun(\mathscr{R}^n, \mathscr{R})$.
\end{prop}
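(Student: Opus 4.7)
The plan is to follow the blueprint of Proposition \ref{prop_coord_semifield_1}, replacing the kernel $\langle f \rangle$ of $\langle \mathscr{R} \rangle$ by the polar-based kernel $K_{\mathcal{S}} = \mathcal{S}^{\bot\bot} \cap \mathscr{R}(x_1,\ldots,x_n)$. The argument decomposes into three tasks: verify that $\phi_V$ is a semiring epimorphism onto $\mathscr{R}[V]$, identify $Ker(\phi_V)$ with $K_{\mathcal{S}}$, and then invoke the first isomorphism theorem (Theorem \ref{thm_nother_1_and_3}).

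For the first task I would argue exactly as in Proposition \ref{prop_coord_semifield_1}: restriction of functions preserves pointwise addition and multiplication, so $\phi_V(g+h) = (g+h)|_V = g|_V + h|_V = \phi_V(g) + \phi_V(h)$, and analogously for products. Surjectivity onto the (suitably extended) coordinate semifield $\mathscr{R}[V] = \{f|_V : f \in \mathscr{R}(x_1,\ldots,x_n)\}$ holds by definition.

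The main technical step is to show
\[
Ker(\phi_V) = \{g \in \mathscr{R}(x_1,\ldots,x_n) : g|_V = 1\} = \mathcal{S}^{\bot\bot} \cap \mathscr{R}(x_1,\ldots,x_n).
\]
The left-hand equality is the definition of the operator $Ker(V)$ restricted to $\mathscr{R}(x_1,\ldots,x_n)$, i.e., $Ker_{\mathscr{R}(x_1,\ldots,x_n)}(V)$. For the right-hand equality I would invoke the polar-skeleton machinery of Theorem \ref{thm_polar_skeleton_corrsepondence} and Corollary \ref{cor_polar_skeleton_corrsepondence}: the polar $\mathcal{S}^{\bot\bot}$ computed inside the completion $\overline{\mathscr{R}(x_1,\ldots,x_n)}$ is the unique $\mathcal{K}$-kernel there whose skeleton is $Skel(\mathcal{S}^{\bot\bot}) = Skel(\mathcal{S}) = V$ (using Proposition \ref{prop_principal_polars_and_principal_skeletons} together with $Skel(S) = Skel(\langle S \rangle) = Skel(S^{\bot\bot})$ obtained from Remark \ref{rem_basic_properites_of_polar} and the fact that polars are $\mathcal{K}$-kernels by Corollary \ref{cor_mathcal_K_kernel_is_a_polar}). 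Hence $\mathcal{S}^{\bot\bot} = Ker_{\overline{\mathscr{R}(x_1,\ldots,x_n)}}(V)$, and intersecting with $\mathscr{R}(x_1,\ldots,x_n)$ yields precisely $Ker_{\mathscr{R}(x_1,\ldots,x_n)}(V) = Ker(\phi_V)$, as stipulated by the displayed correspondence at the end of Corollary \ref{cor_polar_skeleton_corrsepondence}.

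The expected obstacle is the bookkeeping between three ambient semifields ($\mathscr{R}(x_1,\ldots,x_n)$, its completion $\overline{\mathscr{R}(x_1,\ldots,x_n)}$, and the function semifield $Fun(\mathscr{R}^n,\mathscr{R})$), together with verifying that taking polars commutes with intersection with $\mathscr{R}(x_1,\ldots,x_n)$ in the sense required here. Everything rests on the density of $\mathscr{R}(x_1,\ldots,x_n)$ in its completion (Theorem \ref{thm_completion_of_a_semifield}) and on Remark \ref{rem_carachterization_of_completions}, which guarantee that a completely closed kernel of the completion is recovered as the completion of its intersection with $\mathscr{R}(x_1,\ldots,x_n)$. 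Once the identification $Ker(\phi_V) = K_{\mathcal{S}}$ is in place, the first isomorphism theorem delivers $\mathscr{R}(x_1,\ldots,x_n)/K_{\mathcal{S}} \cong \Im(\phi_V) = \mathscr{R}[V]$, completing the proof.
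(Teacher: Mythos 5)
Your proposal is correct and follows essentially the same route as the paper: both identify $Ker(\phi_V) = \{g \in \mathscr{R}(x_1,\ldots,x_n) : g|_V = 1\}$ with $\mathcal{S}^{\bot\bot}\cap\mathscr{R}(x_1,\ldots,x_n)$ via the polar–skeleton correspondence (Corollary \ref{cor_polar_skeleton_corrsepondence}), then invoke the homomorphism verification and first isomorphism theorem as in Proposition \ref{prop_coord_semifield_1}. You simply unwind the invocation of Corollary \ref{cor_polar_skeleton_corrsepondence} into its constituent lemmas, which the paper leaves implicit.
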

\begin{proof}
Note that by Corollary \ref{cor_polar_skeleton_corrsepondence} we have that  $Ker(\phi_V) = \{  g \in \mathscr{R}(x_1,...,x_n) \ : \ g|_V = 1 \} = \{  g \in \mathscr{R}(x_1,...,x_n) : g \in \mathcal{S}^{\bot \bot} \} = K_\mathcal{S}$. The rest of the proof is as in Proposition~\ref{prop_coord_semifield_1}.
\end{proof}

\newpage

\section{Basic notions : Essentiality, reducibility, regularity \\ \ \ \ \ and corner-integrality}

\ \\

In the previous sections we have proved some correspondences between skeletons and kernels. In particular we have shown a correspondence between principal skeletons and principal kernels.  We now turn to find the connection between tropical varieties (which we call corner loci) and skeletons. \\

Before diving into the core of the theory we develop some tools and notion to facilitate our construction. \\

Throughout this section, $\mathbb{H}$ is assumed to be a bipotent divisible semifield. Any supplementary assumptions on $\mathbb{H}$ will be explicitly stated. We also recall that our designated semifield $\mathscr{R}$ is defined to be bipotent, divisible, archimedean and complete the prototype being  $(\mathbb{R}^{+}, \dotplus, \cdot)$ by Corollary \ref{cor_totaly_ordered_isomorphic_to_reals}.

\ \\
\subsection{Essentiality of elements in the semifield of fractions}
\ \\

%
%

In the theory of tropical geometry, there exists a notion of essentiality of monomials in a given polynomial.

\begin{defn}\label{defn_tropical_essentiality}
A monomial $m_1(x_1,...,x_n) \in \mathbb{H}[x_1,...,x_n]$ is said to be inessential in a polynomial $p(x_1,...,x_n) = \sum_{i=1}^{k}m_i(x_1,...,x_n) \in \mathbb{H}[x_1,...,x_n]$ over a domain $D \subset \mathbb{H}^n$,  if at any $x \in D$ there exists some $j \neq 1$ such that $m_j(x) \geq m_1(x)$, i.e., $m_1$ never solely dominates $p$ over $D$.
\end{defn}

\begin{note}
In the following section, we introduce a notion of essentiality for elements of $\mathbb{H}(x_1,...,x_n)$. This notion differs from the tropical one, in fact it generalizes it in some sense, as we will show shortly. Generally, the context will imply the relevant notion among the two. In case ambiguity arises, we will explicitly indicate the one we refer to.
\end{note}

Let $f \in \mathbb{H}(x_1,...,x_n)$, consider the skeleton defined by $f$, $Skel(f)$. We will now characterize for which $g \in \mathbb{H}(x_1,...,x_n)$, $Skel(f + g) = Skel(f)$.\\
For this purpose, we introduce the following definition.

\begin{defn}
Let $g \in \mathbb{H}(x_1,...,x_n)$. Define the following sets
\begin{equation}
Skel_{-}(g) = \{ x \in \mathbb{H}^n \ : \ g(x) < 1 \}, \ \ \ Skel_{+}(g) = \{ x \in \mathbb{H}^n \ : \ g(x) > 1 \}.
\end{equation}
Notice that $\mathbb{H}^n = Skel_{-}(g) \cup Skel_{+}(g) \cup Skel(g)$.
We call $Skel_{+}(g)$ and $Skel_{-}(g)$ the \emph{positive} and \emph{negative} regions of $g$, respectively.
\end{defn}

\begin{rem}
If $f,g \in \mathbb{H}(x_1,...,x_n)$, then $Skel(f + g) = Skel(f)$ if and only if the following holds:
\begin{equation}\label{cond_additive_invariance}
Skel(f) \setminus Skel(g) \subset Skel_{-}(g) \ \text{and} \  Skel(g) \setminus Skel(f) \subset Skel_{+}(f).
\end{equation}
Note that when $Skel(f) \subseteq Skel(g)$, i.e., $\langle g \rangle \subseteq \langle f \rangle$, condition \ref{cond_additive_invariance} takes the form $Skel(g) \setminus Skel(f) \subset Skel_{+}(f)$. When $Skel(g) \subseteq Skel(f)$, i.e., $\langle f \rangle \subseteq \langle g \rangle$ condition \ref{cond_additive_invariance} takes the form $Skel(f) \setminus Skel(g) \subset Skel_{-}(g)$ and when $Skel(f) \cap Skel(g) = \emptyset$ then $Skel(g) \subset Skel_{+}(f)$ and $Skel(f) \subset Skel_{-}(g)$.
\end{rem}
\begin{proof}
This statement is a direct consequence of the definitions.
\end{proof}

\ \\

\begin{defn}\label{def_inessentiality}
Let $f,g \in \mathbb{H}(x_1,...,x_n)$. $g$ is said to be \emph{inessential} for $f$ if  $$Skel(f + g) = Skel(f);$$
otherwise $g$ is \emph{essential} for $f$. Let $f = \sum_{i=1}^{k}f_i \in \mathbb{H}(x_1,...,x_n)$ and let $j \in \{1,...,n \}$. Then $f_j$ is said to be inessential in $f$ if $f_j$ is inessential for $\sum_{i \neq j}f_i$. Otherwise $f_j$ is \emph{essential} in $f$.
\end{defn}

\begin{note}
Note that inessentiality defined in Definition \ref{def_inessentiality} differs from  the notion of inessentiality in tropical geometry.
In tropical geometry, a monomial of a polynomial is considered inessential if it is not dominant anywhere, in the sense that it does not attain \emph{solely} the maximal value of the polynomial.
\end{note}

\ \\

\begin{defn}\label{defn_essensiality_condition}
Let $f \in \mathbb{H}(x_1,...,x_n)$. Then we say $f$ has the \emph{essentiality property} if for any additive decomposition of $f$, \ $f = \sum_{i=1}^{k}f_i$ with $f_i \in \mathbb{H}(x_1,...,x_n)$, the following condition holds:
$$\text{For any} \ 1 \leq j \leq k,  \ \  Skel(f) \neq Skel(h_{j}) \  \ \text{where} \ \ h_{j} = \sum_{i=1; i \neq j}^{k}f_i.$$
In words each $f_i$ is essential in $f$.
\end{defn}

\begin{defn}\label{defn_reduced_fractional_elements}
Let $f \in \mathbb{H}(x_1,...,x_n)$. Write $f= \frac{h}{g} = \frac{\sum_{i=1}^{k}h_i}{\sum_{j=1}^{m}g_j}$ where $g_j$ and $h_i$ are monomials in $\mathbb{H}[x_1,...,x_n]$. Then $f$  is said to be of \emph{reduced form} or \emph{essential form} if for any $I \subseteq \{1,...,k\}$ and  $J \subseteq \{1,...,m \}$ where $I$ and/or $J$ is a proper subset,
$$Skel(f) \neq Skel(\tilde{f})  \ \ \text{ where } \ \ \tilde{f} = \frac{\sum_{I}h_i}{\sum_{J}g_j}.$$
\end{defn}

\begin{rem}
Note that in order for a monomial $h_i$ to be essential, there is no need for the occurrence of some $g_j$ such that
$h_i(x) = g_j(x)$ for some $x \in \mathbb{H}^n$ and viceversa. The reason for this is that $h_i$ can affect the skeleton by preventing another monomial $h'$ of the numerator to dominate $h$ in a point $x \in \mathbb{H}^n$ where $h'(x)=g(x)$.\\
The inessential monomials $h_i$ and $g_j$ in $f = \frac{h}{g}$ are characterized as follows:\\
A monomial $h'$ of  $h$ is inessential in $f$ if one of the following two \linebreak conditions holds for all $x \in \mathbb{H}^n$:
\begin{enumerate}
  \item $h'(x) < h(x)$.
  \item $h'(x) = h(x)$ and $h'(x) \neq g(x)$.
\end{enumerate}
Analogously, a monomial $g'$ of $g$ is inessential in $f$ if one of the following two conditions holds for all $x \in \mathbb{H}^n$:
\begin{enumerate}
  \item $g'(x) < g(x)$.
  \item $g'(x) = g(x)$ and $g'(x) \neq h(x)$.
\end{enumerate}
It can easily be seen that $h'$ and $g'$ admitting the above criterion do not affect the \linebreak skeleton of $f$.
Moreover, if a monomial $g'$ is inessential in $g$ then taking $\tilde{f} = \frac{h}{\tilde{g}}$ where $\tilde{g}$ is defined to be $g$ with $g'$ omitted, we have that any monomial $g'' \neq g'$ of $g$ and any monomial  $h'$ of $h$ are essential in $\tilde{f}$ if and only if they are essential in $f$. \\
In view of the above, we can define $f_e \in \mathbb{H}(x_1,...,x_n)$ to be the rational function obtained from $f = \frac{g}{h}$ by omitting all inessential monomials in $f$. By the above, $f_e$ is well-defined regardless of the order with respect to which the monomials are omitted.
\end{rem}

\ \\

In view of the above discussion the following observations hold:
\begin{rem}
$f \in \mathbb{H}(x_1,...,x_n)$ is of essential form if and only if both $f$ and $f^{-1}$ admit the essentiality property.
\end{rem}

\begin{rem}
Let $f = \frac{h}{g} \in \mathbb{H}(x_1,...,x_n)$ where $h,g \in \mathbb{H}[x_1,...,x_n]$ are \linebreak polynomials.
If $h$ or  $g$ are inessential (in the tropical sense) then $f$ is not of essential form.
\end{rem}
\begin{proof}
Indeed,  if $h$ is inessential one of the composing monomials of $h$, say $h'$, does not affect the values $h$ obtains and thus does not affect the skeleton of $f$ and the summand $\frac{h'}{g}$ can be omitted from $f$ without changing $Skel(f)$. If $g$ is inessential, then since $Skel(f) = Skel(f^{-1})$ we can consider $\frac{g}{h}$ what brings us back to the previous case considered. Namely, if $g'$ is an inessential monomial of $g$, $\frac{g'}{h}$ can be omitted from $\frac{g}{h}$. Now, taking the inverse of the resulting fraction brings us back to $f$ with the monomial $g'$ omitted.
\end{proof}


\ \\

\subsection{Reducibility of principal kernels and skeletons}\label{Subsection:Reducibility_of_principal_kernels_and_skeletons}

\ \\

In this section we consider the notion of  reducibility with respect to a sublattice of kernels of the semifield $\mathbb{H}(x_1,...,x_n)$. The sublattice of kernels that is of interest to us are actually contained inside the lattice $\PCon(\langle \mathscr{R} \rangle)$. Note that $\PCon(\langle \mathscr{R} \rangle)$  is both a sublattice of $\Con(\langle \mathscr{R} \rangle)$ and of $\Con(\mathscr{R}(x_1,...,x_n))$.

\begin{defn}\label{defn_sublattice_of_kernels}
Let $\mathbb{S}$ be a semifield. A subset $\Theta$ of $\Con(\mathbb{S})$ is said to be a \emph{sublattice of kernels} if for every pair of kernels $K_1, K_2 \in \Theta$,
$$K_1 \cap K_2 \in \Theta \ \ \text{and} \ \ K_1 \cdot K_2 \in \Theta.$$
\end{defn}

\begin{exmp}
$\Con(\mathscr{R}(x_1,...,x_n)), \PCon(\mathscr{R}(x_1,...,x_n))$ are sublattices of kernels of $\mathscr{R}(x_1,...,x_n)$.
$\Con(\langle \mathscr{R} \rangle), \PCon(\langle \mathscr{R} \rangle)$ are sublattices of kernels of $\mathscr{R}(x_1,...,x_n)$ and of~$\langle \mathscr{R} \rangle$.
\end{exmp}

\begin{defn}\label{defn_theta_irreducible_maximal_kernels}
Let $\Theta$ be a sublattice of kernels of a semifield $\mathbb{S}$.
A proper (non-trivial) kernel $K \in \Theta$ is called \emph{$\Theta$-irreducible} if for any pair of kernels $A,B \in \Theta$
\begin{equation}
A \cap B \subseteq K \Rightarrow A \subseteq K \ \text{or} \ B \subseteq K.
\end{equation}
A kernel $K$ is called \emph{weakly $\Theta$-irreducible}  if for any pair of kernels $A,B$ of $\mathbb{S}$
\begin{equation}
A \cap B = K \Rightarrow A = K \ \text{or} \ B = K.
\end{equation}
$K$ is called \emph{$\Theta$-maximal} if for any kernel $A \in \Theta$
\begin{equation}
K \subseteq A \Rightarrow K=A \ \text{or} \ A = K.
\end{equation}
\end{defn}
Note that if a kernel $K \in \Theta$ is $\Theta$-irreducible then $K$ is weakly $\Theta$-irreducible.

\begin{defn}
Let $\mathbb{S}$ be a semifield and let $\Theta$ be a sublattice of kernels. Then $\mathbb{S}$ is said to be \emph{$\Theta$-irreducible} if for any pair of kernels $K_1 \in \Theta$ and $K_2 \in \Theta$ such that $K_1 \cap K_2 = \{1\}$, either $K_1 = \{ 1 \}$ or $K_2 = \{1\}$.
\end{defn}

\begin{rem}
 If $K$ is an $\Theta$-irreducible kernel of $\mathbb{S}$, then the quotient semifield $\mathbb{U}~=~\mathbb{S}/K$ is $\Theta$-irreducible.
\end{rem}
\begin{proof}
Let $K_1$ and $K_2$ be two kernels of $\mathbb{U}$. Then $\phi^{-1}(K_1) = K \cdot K_1$ and $\phi^{-1}(K_2) = K \cdot K_2$  are in $\Con(\mathbb{S})$, where $\phi :\mathbb{S} \rightarrow \mathbb{U}$ is the quotient map. Assume $K_1 \neq \{1\}$ and $K_2 \neq \{1\}$ are distinct kernels such that $K_1 \cap K_2 = \{ 1 \}$. Then $L_1 = K \cdot K_1$ and $L_2 = K \cdot K_2$ are two distinct kernels in $\Theta$ properly containing $K$ and
$$L_1 \cap L_2 =  \phi^{-1}(K_1) \cap \phi^{-1}(K_2) \subseteq \phi^{-1}(K_1 \cap K_2) = \phi^{-1}(1) = K;$$ contradicting the irreducibility of $K$.
\end{proof}

\begin{note}
In particular, taking $\mathbb{S}$ to be an idempotent semifield one can take $\theta$ to be the sublattice $\PCon(\mathbb{S})$, i.e., restrict the notion of reducibility to the principal kernels of $\mathbb{S}$.
\end{note}

Having definitions for reducibility and irreducibility of (principal) kernels, we now turn to define the analogous geometric notion for principal skeletons (for the case $\mathbb{S}~=~\mathbb{H}(x_1,...,x_n)$).

\begin{defn}\label{defn_theta_skeleton}
Let $\Theta$ be a sublattice of kernels in $\mathbb{H}(x_1,...,x_n)$. A skeleton $S$ is said to be a \emph{$\Theta$-skeleton} if there exists some kernel $K \in \Theta$ such that $S = Skel(K)$.
\end{defn}

\begin{defn}\label{defn_irreducible_skel}
Let $\Theta$ be a sublattice of kernels in $\mathbb{H}(x_1,...,x_n)$.
A $\Theta$-skeleton $S$ is said to be \emph{$\Theta$-reducible} if there exist some $\Theta$-skeletons $S_1$ and $S_2$ such that $S = S_1 \cup S_2$ and $S \neq S_1$ and $S \neq S_2$; otherwise $S$ is \emph{$\Theta$-irreducible}.\\
Let $\Theta \subseteq \PCon(\mathbb{H}(x_1,...,x_n))$ be a sublattice of kernels.
A principal (finitely generated) skeleton $Skel(f)$ is said to be \emph{$\Theta$-reducible} if there exist principal $\Theta$-skeletons $Skel(g)$ and $Skel(h)$ such that $Skel(f) = Skel(g) \cup Skel(h)$ and  $Skel(f) \neq Skel(h)$ and $Skel(f) \neq Skel(h)$; otherwise $Skel(f)$ is \emph{$\Theta$-irreducible}.
\end{defn}

\begin{rem}
By Definition \ref{defn_irreducible_skel}, $Skel(f)$ is $\Theta$-irreducible if for any pair of \linebreak $\Theta$-skeletons $Skel(g)$ and $Skel(h)$ such that $Skel(f) = Skel(g) \cup Skel(h)$, either \linebreak $Skel(f) = Skel(g)$ or $Skel(f) = Skel(h)$. Translating this last statement to the kernels \linebreak $\langle f \rangle$,  $\langle g \rangle$ and $\langle h \rangle$ in $Pcon(\langle \mathscr{R} \rangle)$,  and using the principal kernels - principal skeletons correspondence, we get the condition stated in Definition \ref{defn_theta_irreducible_maximal_kernels} of $\Theta$-irreducible \linebreak kernels.
\end{rem}

By the last remark we have
\begin{cor}
For $\Theta \subseteq \PCon(\langle \mathscr{R} \rangle)$ a sublattice of kernels of $\langle \mathscr{R} \rangle$.
$\langle f \rangle$ is \\ $\Theta$-irreducible if and only if $Skel(f)$ is $\Theta$-irreducible.
\end{cor}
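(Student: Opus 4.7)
\medskip

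\noindent\textbf{Plan of proof.} The strategy is to move between the two statements via the order-reversing correspondence between principal kernels of $\langle \mathscr{R} \rangle$ and principal skeletons in $\mathscr{R}^n$ (Corollary \ref{cor_correspondence_bounded_pkernels_pskeletons}), together with the two identities from Corollary \ref{cor_principal_skel_correspondence}, namely $Skel(\langle g \rangle \cap \langle h \rangle) = Skel(g) \cup Skel(h)$ and $Skel(\langle g \rangle \cdot \langle h \rangle) = Skel(g) \cap Skel(h)$.

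\noindent For the direction ``$\langle f \rangle$ is $\Theta$-irreducible $\Rightarrow$ $Skel(f)$ is $\Theta$-irreducible'', assume that $Skel(f) = Skel(g) \cup Skel(h)$ is a decomposition into principal $\Theta$-skeletons, with $\langle g \rangle, \langle h \rangle \in \Theta$. Rewriting the right-hand side as $Skel(\langle g \rangle \cap \langle h \rangle)$ and applying the $1{:}1$ correspondence gives $\langle f \rangle = \langle g \rangle \cap \langle h \rangle$; in particular $\langle g \rangle \cap \langle h \rangle \subseteq \langle f \rangle$. The $\Theta$-irreducibility of $\langle f \rangle$ yields $\langle g \rangle \subseteq \langle f \rangle$ or $\langle h \rangle \subseteq \langle f \rangle$, which by order-reversal translates back to $Skel(f) \subseteq Skel(g)$ or $Skel(f) \subseteq Skel(h)$. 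Combined with the reverse inclusion coming from $Skel(f) = Skel(g) \cup Skel(h)$, this forces $Skel(f) = Skel(g)$ or $Skel(f) = Skel(h)$.

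\noindent For the converse direction, suppose $\langle g \rangle, \langle h \rangle \in \Theta$ with $\langle g \rangle \cap \langle h \rangle \subseteq \langle f \rangle$. The natural temptation is to pass directly to skeletons, but $\subseteq$ on the kernel side only produces $\supseteq$ on the skeleton side, so one must first manufacture an actual union decomposition of $Skel(f)$. This is the only delicate point, and it is handled by replacing $\langle g \rangle, \langle h \rangle$ with $\langle f \rangle \cdot \langle g \rangle$ and $\langle f \rangle \cdot \langle h \rangle$; both lie in $\Theta$ because $\Theta$ is a sublattice of kernels and contains $\langle f \rangle$. Their skeletons are $Skel(f) \cap Skel(g)$ and $Skel(f) \cap Skel(h)$, and their union equals $Skel(f) \cap (Skel(g) \cup Skel(h))$, which is just $Skel(f)$ since $Skel(f) \subseteq Skel(\langle g \rangle \cap \langle h \rangle) = Skel(g) \cup Skel(h)$.

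\noindent The $\Theta$-irreducibility of $Skel(f)$ then implies $Skel(f) = Skel(f) \cap Skel(g)$ or $Skel(f) = Skel(f) \cap Skel(h)$, i.e.\ $Skel(f) \subseteq Skel(g)$ or $Skel(f) \subseteq Skel(h)$. Applying the correspondence in the reverse direction gives $\langle g \rangle \subseteq \langle f \rangle$ or $\langle h \rangle \subseteq \langle f \rangle$, as required. I expect the main (mild) obstacle to be precisely the step in the second direction where one has to enlarge $\langle g \rangle$ and $\langle h \rangle$ by multiplying with $\langle f \rangle$ in order to obtain an exact union decomposition from a mere containment; once this idea is in place, everything else is a mechanical application of the correspondence and the two identities of Corollary \ref{cor_principal_skel_correspondence}.
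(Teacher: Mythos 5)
Your proof is correct and uses the same underlying tool as the paper (the order-reversing bijection between $\PCon(\langle \mathscr{R} \rangle)$ and principal skeletons, together with $Skel(\langle g \rangle \cap \langle h \rangle) = Skel(g) \cup Skel(h)$ and $Skel(\langle g \rangle \cdot \langle h \rangle) = Skel(g) \cap Skel(h)$), but it is considerably more careful than the paper's argument, which is reduced to a one-line remark asserting that ``translating'' the skeleton condition through the correspondence yields Definition~\ref{defn_theta_irreducible_maximal_kernels}. That remark quietly conflates two inequivalent-looking conditions: the skeleton side is stated with an \emph{equality} $Skel(f) = Skel(g) \cup Skel(h)$, while the kernel side uses a \emph{containment} $A \cap B \subseteq K$. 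Translating literally only reproduces the paper's \emph{weakly} $\Theta$-irreducible condition. You identify this mismatch and resolve it exactly where it matters, in the direction ``$Skel(f)$ irreducible $\Rightarrow$ $\langle f \rangle$ irreducible'': from a containment $\langle g \rangle \cap \langle h \rangle \subseteq \langle f \rangle$ you manufacture a genuine union decomposition by replacing $\langle g \rangle$, $\langle h \rangle$ with $\langle f \rangle \cdot \langle g \rangle$, $\langle f \rangle \cdot \langle h \rangle$ (legitimate because $\Theta$ is a sublattice containing $\langle f \rangle$), so that $Skel(f) = \bigl(Skel(f) \cap Skel(g)\bigr) \cup \bigl(Skel(f) \cap Skel(h)\bigr)$. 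This is precisely the lattice-theoretic argument that ``weakly irreducible $\Rightarrow$ irreducible'' in a distributive lattice, phrased on the skeleton side, and it is the content the paper implicitly appeals to (via Proposition~\ref{prop_dist_semifield1}(1)) without saying so. In short: your proof and the paper's are the same in spirit, but yours actually supplies the missing step.
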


\ \\

We now turn to study more closely the notion of reducibility of a kernel, and \linebreak introduce adequate geometric interpretations for reducibility of its skeleton. We \linebreak begin our discussion by introducing the notion of a reducible element of $\mathscr{R}(x_1,...,x_n)$ corresponding to  reducibility of the principal kernel it defines.\\

In the theory of commutative rings a generator of a principal ideal is unique up to multiplication by an invertible element, i.e., the association class of a generator of an ideal is unique. Recall that for two elements $a$ and $b$ of a commutative ring $R$, $a$ and $b$ are associates if and only if $a|b$ and $b|a$. In our setting things are slightly more complicated. We consider $\sim_K$ equivalence classes of generators of kernels. The equivalence $\sim_K$ will be shown to be  induced by a certain order relation, $ \succeq $, defined on the elements of the semifield and plays analogous role to that of $|$.

\begin{defn}\label{defn_theta_elements}
Let $\mathbb{S}$ be a semifield and let $\Theta$ be a sublattice of kernels in $\mathbb{S}$.\linebreak An element $a \in \mathbb{S}$ is said to be a \emph{$\Theta$-element} if $\langle a \rangle \in \Theta$.
\end{defn}

\begin{rem}
Following Definition \ref{defn_theta_elements}, any generator $b \in \mathbb{S}$ of $\langle a \rangle$ is \linebreak a $\Theta$-element.
\end{rem}

\begin{rem}
Let $\Theta \subseteq \PCon(\mathbb{H}(x_1,...,x_n))$ be a sublattice of kernels of $\mathbb{H}(x_1,...,x_n)$.
If $f, g \in \mathbb{H}(x_1,...,x_n)$ are $\Theta$-elements then so are
$$|f| \dotplus |g|, \ \ |f||g| \ \ \text{and} \ \ |f| \wedge |g|.$$
\end{rem}
\begin{proof}
Indeed, $\langle |f| \dotplus |g| \rangle = \langle |f||g| \rangle = \langle |f| \rangle \cdot \langle |g| \rangle  = \langle f \rangle \cdot \langle g \rangle \in \Theta$
and $\langle |f| \wedge |g| \rangle = \langle |f| \rangle \cap \langle |g| \rangle = \langle f \rangle \cap \langle g \rangle \in \Theta$.
\end{proof}

We proceed in developing a relation on elements of $\mathbb{H}(x_1,...,x_n)$, using \linebreak $\PCon(\mathbb{H}(x_1,...,x_n))$, which naturally induces a relation on $\Theta$-element for any sublattice of kernels $\Theta \subseteq~\PCon(\mathbb{H}(x_1,...,x_n))$.

\begin{nota}
Throughout the rest of this subsection we continue taking \linebreak $\Theta \subseteq~\Con(\mathbb{H}(x_1,...,x_n))$ to be a sublattice of kernels.
\end{nota}

\begin{defn}\label{defn_abstract_similarity_of_generators}
Let $\mathbb{S}$ be a semifield and let $a,b \in \mathbb{S}$. Define the following relation on $\mathbb{S}$
\begin{equation}
a \sim_K b  \Leftrightarrow \langle a \rangle = \langle b \rangle.
\end{equation}
This is clearly an equivalence relation, the classes of which are $$[a] = \{ a' \ : \ a' \ \text{is a generator of} \ \langle a \rangle \}.$$
Define the partial relation $ \succeq $ on $\mathbb{S}$ as follows:
\begin{equation}
 a \succeq b  \Leftrightarrow \exists a' \in [a]  \  \exists b' \in [b]  \ \ \text{such that} \ \ |a'| \geq |b'|.
\end{equation}
\end{defn}

\ \\

\begin{defn}
Let $\mathbb{S}$ be a semifield and let $a,b \in \mathbb{S}$. We say that $a$ and $b$ are \linebreak \emph{k-comparable} if $a \succeq b $ or $b \succeq a$, i.e., if there exist some $a' \sim_K a$ and $b' \sim_K b$ such that $|a'|$ and $|b'|$ are comparable  $|a'| \leq | b'| $ or $|b'| \leq |a'|$.
\end{defn}

We introduce explicitly the translation of Definition \ref{defn_abstract_similarity_of_generators} for the case where the semifield $\mathbb{S}$ is $\mathbb{H}(x_1,...,x_n)$ with $\mathbb{H}$ a bipotent divisible semifield.

\begin{rem}\label{rem_leq_wedge_connection}
For every $h,g \in \mathbb{H}(x_1,...,x_n)$ such that $g,h \geq 1$,
$$g \geq h \Leftrightarrow h = g \wedge w$$
for some $1 \leq w \in \mathbb{H}(x_1,...,x_n)$.
\end{rem}
\begin{proof}
If $g \geq h$ then taking $w = h \geq 1$ we have $h = g \wedge h$. Conversely, if $h = g \wedge w$ then $g \geq g \wedge w = h$. Moreover $w$ must admit $w \geq 1$ for otherwise if $w(a) < 1$ for some $a \in \mathbb{H}$ then $h(a) = g(a) \wedge w(a) \leq w(a) < 1$ contradicting the assumption that $h \geq 1$.
\end{proof}

\begin{defn}\label{defn_similarity_of_generators}
Let $f, g \in \mathbb{H}(x_1,...,x_n)$. Then
\begin{equation}
f \sim_K g  \Leftrightarrow \langle f \rangle = \langle g \rangle,
\end{equation}
the classes with respect to $\sim_K$ are $[g] = \{ g' \ : \ g' \ \text{is a generator of} \ \langle g \rangle$ \}.\\
Since $h \sim_K |h| \geq 1$ for any  $h \in \mathbb{H}(x_1,...,x_n)$, by Remark \ref{rem_leq_wedge_connection} the relation $\succeq $ on $\mathbb{H}(x_1,...,x_n)$ can be stated as follows:
\begin{equation}\label{similarity_of_generators_cond_1}
f \succeq g  \Leftrightarrow \exists w \in \mathbb{H}(x_1,...,x_n) \ \exists  f' \in [f] \ \text{such that} \  w,f' \geq 1 , \  \ g \sim_K f' \wedge w.
\end{equation}
\end{defn}

\begin{rem}
In view of Corollary \ref{cor_principal_ker_by_order} the partial relation \eqref{similarity_of_generators_cond_1} of Definition~\ref{defn_similarity_of_generators} can be rephrased as
\begin{equation}\label{similarity_of_generators_cond_2}
f \succeq g  \Leftrightarrow \exists w \in \mathbb{H}(x_1,...,x_n) \ \exists k \in \mathbb{N} \ \text{such that} \  w \geq 1, \ g \sim_K |f|^{k} \wedge w.
\end{equation}
Indeed, if  $f' \in [f]$ then there exists $k \in \mathbb{N}$ such that $ |f'|  \leq |f|^{k}$, and thus $|f'| = |f|^k \wedge  v$ for some $v \in \mathbb{H}(x_1,...,x_n)$ where $v \geq 1$ .  Assume $f \succeq g$, then, by \eqref{similarity_of_generators_cond_1},  $g \sim_K |f'| \wedge w$ for some  $w \geq 1$. So,  $g \sim_K |f'| \wedge w = (|f|^k \wedge  v) \wedge w = |f|^k \wedge (v \wedge w)$,  since $v,w \geq 1$ we have that $v \wedge w \geq 1$ which means that \eqref{similarity_of_generators_cond_2} holds for $g$. The converse direction is obvious, since $|f|^k \in [f]$ and $|f|^k \geq 1$ for any $k \in \mathbb{N}$.
\end{rem}

%
%
%
%

\begin{prop}\label{prop_wedge_inclusion}
For any $f,g \in \mathbb{H}(x_1,...,x_n)$,
\begin{equation}
g \succeq f \Leftrightarrow \langle g \rangle \supseteq \langle f \rangle.
\end{equation}
\end{prop}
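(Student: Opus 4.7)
The plan is to prove both implications directly, using two tools already established: the characterization of membership in a principal kernel via absolute value (Remark \ref{rem_kernel_by_abs_value}), and the formula $\langle a \rangle \cap \langle b \rangle = \langle |a| \wedge |b| \rangle$ for intersection of principal kernels (Corollary \ref{cor_max_semifield_principal_kernels_operations}). I will use the reformulation \eqref{similarity_of_generators_cond_2} of $\succeq$, namely that $g \succeq f$ iff there exist $w \geq 1$ in $\mathbb{H}(x_1,\dots,x_n)$ and $k \in \mathbb{N}$ with $f \sim_K |g|^k \wedge w$.

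For the forward direction, suppose $g \succeq f$. By \eqref{similarity_of_generators_cond_2}, choose $w \geq 1$ and $k \in \mathbb{N}$ with $\langle f \rangle = \langle |g|^k \wedge w \rangle$. Since $|g|^k \geq 1$ and $w \geq 1$, we have $||g|^k| = |g|^k$ and $|w| = w$, so the intersection formula gives
\begin{equation*}
\langle |g|^k \wedge w \rangle = \langle |g|^k \rangle \cap \langle w \rangle = \langle g \rangle \cap \langle w \rangle,
\end{equation*}
where the last equality uses power-radicality (Remark \ref{rem_radicality_of_ker}), which forces $\langle g^k \rangle = \langle g \rangle$ (equivalently, $|g|^k \in \langle g \rangle$ trivially, and $g \in \langle g^k \rangle$ because $g^k \in K$ implies $g \in K$ for any kernel). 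Thus $\langle f \rangle = \langle g \rangle \cap \langle w \rangle \subseteq \langle g \rangle$, as required.

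For the backward direction, assume $\langle f \rangle \subseteq \langle g \rangle$, so $f \in \langle g \rangle$. By Remark \ref{rem_kernel_by_abs_value} there exists $k \in \mathbb{N}$ with $|f| \leq |g|^k$. Set $w = |f|$; then $w \geq 1$, and
\begin{equation*}
|g|^k \wedge w = |g|^k \wedge |f| = |f|,
\end{equation*}
because $|f| \leq |g|^k$. Since $|f|$ generates $\langle f \rangle$ by Proposition \ref{prop_absolute_generator}, we obtain $f \sim_K |f| = |g|^k \wedge w$, which is precisely the condition \eqref{similarity_of_generators_cond_2} witnessing $g \succeq f$.

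There is no real obstacle: both directions are one-line computations once the definition of $\succeq$ is unpacked via \eqref{similarity_of_generators_cond_2}. The only subtlety to be careful about is invoking power-radicality to collapse $\langle g^k \rangle$ to $\langle g \rangle$ in the forward direction, and remembering that $|f|$ (not $f$ itself) is the canonical representative of $[f]$ that is $\geq 1$, which is what makes the choice $w = |f|$ work in the backward direction.
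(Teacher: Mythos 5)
Your proof is correct and takes essentially the same approach as the paper: both unfold condition \eqref{similarity_of_generators_cond_2} and exploit Remark \ref{rem_kernel_by_abs_value} to pass between $|f| \leq |g|^k$ and $f \in \langle g \rangle$. The only cosmetic difference is that in the forward direction you route through the intersection formula $\langle |g|^k \wedge w \rangle = \langle g \rangle \cap \langle w \rangle$, whereas the paper argues via the equivalence $f' = |g|^k \wedge w$ for some $w \geq 1$ iff $f' \leq |g|^k$.
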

\begin{proof}
Let $f, g \in \mathbb{H}(x_1,...,x_n)$ such that   $g \succeq f$. Then, by \eqref{similarity_of_generators_cond_2} we have that $g \succeq f$ if and only if  $f \sim_K |g|^{k} \wedge w$ for some $w \geq 1$ and $k \in \mathbb{N}$, if and only if there exists some $f' \sim f$ such that $f' = |g|^{k} \wedge w$. Note that $|g|^{k} \geq 1$ and $w \geq 1$ so $f' = |g|^{k} \wedge w \geq 1$ and thus $|f'| = f'$. Finally,  $|f'| = |g|^{k} \wedge w \Leftrightarrow |f'| \leq |g|^{k}  \Leftrightarrow |f'| \in \langle |g| \rangle \Leftrightarrow f' \in \langle g \rangle \Leftrightarrow$ $f \in \langle g \rangle$.
\end{proof}

\begin{cor}
For any $f,g \in \mathbb{H}(x_1,...,x_n)$
\begin{equation}
|g| \succeq |f| \Leftrightarrow \langle g \rangle \subseteq \langle f \rangle.
\end{equation}
\end{cor}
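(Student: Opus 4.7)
The plan is to obtain this corollary as a one-line specialization of Proposition \ref{prop_wedge_inclusion}. That proposition is proved for arbitrary elements of $\mathbb{H}(x_1,\dots,x_n)$, so nothing prevents substituting $|g|$ in place of $g$ and $|f|$ in place of $f$. Doing so yields directly
\[
|g| \succeq |f| \;\Longleftrightarrow\; \langle |g| \rangle \supseteq \langle |f| \rangle.
\]
It then remains to rewrite the two kernels $\langle |g| \rangle$ and $\langle |f| \rangle$ in the equivalent form $\langle g \rangle$ and $\langle f \rangle$, which is exactly the content of Remark \ref{rem_absolute_operators_relations}(6) (the identity $\langle h \rangle = \langle |h| \rangle$, itself a consequence of Proposition \ref{prop_absolute_generator}). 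Combining these two steps delivers the stated biconditional.

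Because the derivation is a pure substitution followed by a notational rewrite, there is no technical obstacle at all; in particular, no new argument about $\wedge$-decompositions, the $\sim_K$-relation, or the natural order is required, since everything delicate was already absorbed into the proof of Proposition \ref{prop_wedge_inclusion}. The only point deserving attention is checking the direction of the containment appearing on the right-hand side: the substitution into Proposition \ref{prop_wedge_inclusion} naturally produces $\langle g \rangle \supseteq \langle f \rangle$, so before invoking the corollary downstream one should verify that the containment symbol in its statement is oriented consistently with the orientation produced by the proposition. Once that bookkeeping is reconciled, the proof is complete in essentially two lines: apply Proposition \ref{prop_wedge_inclusion} to $|f|,|g|$, then apply $\langle |h| \rangle = \langle h \rangle$.
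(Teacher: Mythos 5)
Your approach matches the paper's own proof exactly: specialize Proposition \ref{prop_wedge_inclusion} to $|f|$ and $|g|$, then apply $\langle |h| \rangle = \langle h \rangle$ (Remark \ref{rem_absolute_operators_relations}(6) / Proposition \ref{prop_absolute_generator}). The discrepancy you flag at the end is not a bookkeeping subtlety on your side but an inverted inclusion symbol in the printed corollary: the substitution can only deliver $\langle g \rangle \supseteq \langle f \rangle$, which is also what is required for consistency with the remark immediately preceding the corollary (there $f \succeq w$ together with $w \not\succeq f$ is equated with $\langle w \rangle \subset \langle f \rangle$), so the corollary's right-hand side should read $\langle g \rangle \supseteq \langle f \rangle$, not $\langle g \rangle \subseteq \langle f \rangle$; your two-line derivation proves the corrected statement.
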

\begin{proof}
Follows from Remark \ref{prop_wedge_inclusion} along with the property that $|f| \in [f]$ or equivalently  $\langle f \rangle =\langle |f| \rangle$.
\end{proof}

\begin{rem}
For  $f, w \in \mathbb{H}(x_1,...,x_n)$, the following conditions are equivalent
\begin{enumerate}
  \item $f \succeq w  \ \text{and}  \ w \not \succeq f$.
  \item $\langle w \rangle \subset \langle f \rangle$ (with strict inclusion).
\end{enumerate}
\end{rem}
\begin{proof}
By condition \eqref{similarity_of_generators_cond_2} and Proposition \ref{prop_wedge_inclusion}.
\end{proof}

\begin{cor}
For any $f,g \in \mathbb{H}(x_1,...,x_n)$
\begin{equation}
f \sim_K g \Leftrightarrow f \succeq g  \  \  \text{and} \  \  g \succeq f.
\end{equation}
\end{cor}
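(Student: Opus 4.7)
The plan is to derive this corollary directly from Proposition \ref{prop_wedge_inclusion}, which gives the equivalence $g \succeq f \Leftrightarrow \langle g \rangle \supseteq \langle f \rangle$. Since the relation $\sim_K$ was defined by $f \sim_K g \Leftrightarrow \langle f \rangle = \langle g \rangle$, and equality of kernels is equivalent to mutual inclusion, the corollary reduces to a one-line chain of equivalences.

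More explicitly, I would argue: $f \sim_K g$ is by definition $\langle f \rangle = \langle g \rangle$, which is equivalent to the conjunction $\langle f \rangle \supseteq \langle g \rangle$ and $\langle g \rangle \supseteq \langle f \rangle$. Applying Proposition \ref{prop_wedge_inclusion} to each of these two inclusions separately yields $f \succeq g$ and $g \succeq f$ respectively. Reading the chain in reverse gives the opposite implication.

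There is essentially no obstacle here since all the work has been done in the preceding proposition; the corollary is really an antisymmetry statement for the preorder $\succeq$ induced on equivalence classes, stating that $\succeq$ descends to a partial order on $\mathbb{H}(x_1,\dots,x_n)/\sim_K$ whose associated equivalence relation is precisely $\sim_K$. The only thing worth being careful about is verifying that the existential quantifiers inside the definition of $\succeq$ do not create an asymmetry: but this is already accounted for in Proposition \ref{prop_wedge_inclusion}, whose proof uses the fact that $|f| \in [f]$ with $|f| \geq 1$, allowing one to always choose canonical representatives from the $\sim_K$-classes.
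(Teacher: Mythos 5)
Your proof is correct and follows exactly the same route as the paper: unpack $f \sim_K g$ as $\langle f \rangle = \langle g \rangle$, split equality into mutual inclusion, and apply Proposition \ref{prop_wedge_inclusion} to each inclusion. The extra commentary about canonical representatives and descent to a partial order is accurate but beyond what the paper records.
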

\begin{proof}
$f \sim_K g \Leftrightarrow \langle f \rangle = \langle g \rangle$, which, by Proposition \ref{prop_wedge_inclusion} is equivalent to  $f \succeq g$ and $g \succeq f$.
\end{proof}

As stated above, the set of principal kernels of $\mathscr{R}(x_1,...,x_n)$, $\PCon(\mathbb{H}(x_1,...,x_n))$,  forms a lattice with respect to multiplication and intersection. Irreducibility of a kernel is translated there as follows:
\begin{defn}\label{defn_irreducible_ker}
A principal kernel $\langle f \rangle$ of the semifield $\mathbb{H}(x_1,...,x_n)$ is \emph{$\Theta$-irreducible} if
for $\Theta$ kernels $\langle g \rangle, \langle h \rangle \supseteq \langle f \rangle$
\begin{equation}
\langle f \rangle = \langle g \rangle \cap \langle h \rangle \Rightarrow \langle f \rangle = \langle g \rangle  \ \text{or} \ \langle f \rangle = \langle h \rangle;
\end{equation}
otherwise, it is \emph{$\Theta$-reducible}.
When $\Theta = \PCon(\mathbb{H}(x_1,...,x_n))$ we simply say reducible (irreducible).
\end{defn}

\begin{rem}
Let $f,g,h \in \mathbb{H}(x_1,...,x_n)$ such that $g \succeq f$ and $h \succeq f$. Then  $$f \succeq  |g| \wedge |h|  \Rightarrow f \sim |g| \wedge |h|.$$
\end{rem}
\begin{proof}
A consequence of Remark \ref{rem_wedge_is gcd}.
\end{proof}

\begin{defn}
Let $f \in \mathbb{H}(x_1,...,x_n)$. $f$ is said to be \emph{irreducible} if
\begin{equation}\label{cond_ereducibility_of_principals}
f \succeq |g| \wedge |h| \Rightarrow f \succeq |g| \ \text{or} \ f \succeq |h|.
\end{equation}
Otherwise $f$ is \emph{reducible}. A $\Theta$-element $f$ is said to be \emph{$\Theta$-irreducible} if both $g$ and $h$ are restricted to being $\Theta$ elements of
$\mathbb{H}(x_1,...,x_n)$. Otherwise $f$ is \emph{$\Theta$-reducible}.
\end{defn}

\begin{rem}
Since $\mathbb{H}(x_1,...,x_n)$ is an idempotent semifield it is distributive, thus
Proposition \ref{prop_dist_semifield1}(1) holds for $\mathbb{H}(x_1,...,x_n)$ implying that the following condition is equivalent to condition \ref{cond_ereducibility_of_principals}:
\begin{equation}
f \sim_K g \wedge h \Rightarrow f \sim_K g \ \text{or} \ f \sim_K h.
\end{equation}
\end{rem}

\begin{rem}
By transitivity of $\succeq$, if $f_1 \succeq g_1$, $f_1 \sim_K f_2$ and $g_1 \sim_K g_2$, then  $f_2 \succeq g_2$
\end{rem}

\begin{rem}\label{rem_invariance_of_irreducibility_wrt_representor}
For $\Theta$-elements $f$ and $f'$, such that $f' \sim_K f$, $f'$ is $\Theta$-irreducible if and only if $f$ is $\Theta$-irreducible.
\end{rem}
\begin{proof}
$f' \in [f]$ (or equivalently $f' \sim_K f$) if and only if $f \succeq f'$ and $f' \succeq f$. Assume $f$ is $\Theta$-irreducible. If $g$ and $h$ are $\Theta$-elements such that $g \wedge h \succeq f'$, then $g \succeq f'$ and $h \succeq f'$. Since  $f' \succeq f$  we get that $g \succeq f$ and $h \succeq f$. By $\Theta$-irreducibility of $f$ we have that $f \succeq g$ or $f \succeq h$. Now, as  $f' \succeq f$ we get that $f' \succeq g$ or $f' \succeq h$ as desired. The arguments of the proof in the opposite direction of the assertion are symmetric.
\end{proof}

\begin{note}
Remark \ref{rem_invariance_of_irreducibility_wrt_representor} actually follows irreducibility being defined by $\succeq$ (while $\succeq$ respects $\sim_K$). Nevertheless, we prove it explicitly.
\end{note}

\ \\

The following Proposition establishes the connection between irreducible principal kernels and irreducible elements of $\mathbb{H}(x_1,...,x_n)$.
\begin{prop}
For any principal kernel $\langle f \rangle$, with  $f \in \mathbb{H}(x_1,...,x_n)$,
$\langle f \rangle$ is irreducible if and only if $f$ is irreducible.
\end{prop}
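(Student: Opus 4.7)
The plan is to translate between the two notions of irreducibility via the order $\succeq$ introduced in Definition \ref{defn_similarity_of_generators}, and then close a small gap using distributivity of the lattice of kernels.

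First, I would translate the condition that $f$ is irreducible into a purely lattice-theoretic statement about $\langle f \rangle$. By Proposition \ref{prop_wedge_inclusion} we have $f \succeq u \Leftrightarrow \langle u \rangle \subseteq \langle f \rangle$, and since $\langle |g| \wedge |h| \rangle = \langle g \rangle \cap \langle h \rangle$ by Corollary \ref{cor_max_semifield_principal_kernels_operations}, the irreducibility of $f$ is equivalent to
\begin{equation*}
\langle g \rangle \cap \langle h \rangle \subseteq \langle f \rangle \ \Rightarrow \ \langle g \rangle \subseteq \langle f \rangle \ \text{or} \ \langle h \rangle \subseteq \langle f \rangle \qquad (\ast)
\end{equation*}
for all $g,h \in \mathbb{H}(x_1,\dots,x_n)$; i.e., $\langle f \rangle$ is irreducible in the lattice $\PCon(\mathbb{H}(x_1,\dots,x_n))$ in the strong sense of Definition \ref{defn_irreducible_maximal_kernels}. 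On the other hand, Definition \ref{defn_irreducible_ker} only asserts the weak form: $\langle f \rangle = \langle g \rangle \cap \langle h \rangle$ with $\langle g \rangle,\langle h \rangle \supseteq \langle f \rangle$ forces $\langle f \rangle = \langle g \rangle$ or $\langle f \rangle = \langle h \rangle$.

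The forward direction ($(\ast) \Rightarrow$ weak irreducibility) is immediate, since if $\langle f \rangle = \langle g \rangle \cap \langle h \rangle$ with both $\langle g \rangle,\langle h \rangle \supseteq \langle f \rangle$, then $\langle g \rangle \cap \langle h \rangle \subseteq \langle f \rangle$, so $(\ast)$ gives the desired conclusion.

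The reverse direction is the key step and is where distributivity of $\PCon(\mathbb{H}(x_1,\dots,x_n))$ enters. Assuming $\langle f \rangle$ is weakly irreducible, given arbitrary principal kernels $\langle g \rangle, \langle h \rangle$ with $\langle g \rangle \cap \langle h \rangle \subseteq \langle f \rangle$, I would replace them by the principal kernels $\langle g' \rangle = \langle g \rangle \cdot \langle f \rangle$ and $\langle h' \rangle = \langle h \rangle \cdot \langle f \rangle$, which satisfy $\langle g' \rangle, \langle h' \rangle \supseteq \langle f \rangle$ and are principal by Corollary \ref{cor_max_semifield_principal_kernels_operations}. Using the distributive identity from Lemma \ref{lem_kernels_algebra} together with the assumption $\langle g \rangle \cap \langle h \rangle \subseteq \langle f \rangle$, compute
\begin{equation*}
\langle g' \rangle \cap \langle h' \rangle \;=\; (\langle g \rangle \cdot \langle f \rangle) \cap (\langle h \rangle \cdot \langle f \rangle) \;=\; (\langle g \rangle \cap \langle h \rangle) \cdot \langle f \rangle \;=\; \langle f \rangle.
\end{equation*}
Weak irreducibility then gives $\langle f \rangle = \langle g' \rangle$ or $\langle f \rangle = \langle h' \rangle$, that is $\langle g \rangle \subseteq \langle f \rangle$ or $\langle h \rangle \subseteq \langle f \rangle$, establishing $(\ast)$. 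The main obstacle, modest as it is, is precisely the passage from weak to strong irreducibility; once distributivity is invoked via Lemma \ref{lem_kernels_algebra}, the argument closes up cleanly.
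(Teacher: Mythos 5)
Your proof is correct, and it is a genuinely different (and more self-contained) route than the one the paper gives. The paper's proof is a direct translation at the level of $\sim_K$: it observes that $f \sim_K |g| \wedge |h| \Leftrightarrow \langle f \rangle = \langle g \rangle \cap \langle h \rangle$ and matches the $\sim_K$ form of irreducibility of $f$ directly against the weak form in Definition~\ref{defn_irreducible_ker}, while delegating the equivalence between the definitional $\succeq$ form (strong irreducibility for $\langle f \rangle$) and the $\sim_K$ form (weak irreducibility) to the remark that precedes the proposition, which in turn cites Proposition~\ref{prop_dist_semifield1}(1) — a statement formulated for the full lattice $\Con(\mathbb{H})$. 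You instead work with the $\succeq$ definition throughout, identify it as strong irreducibility in $\PCon$, and supply the weak $\Rightarrow$ strong bridge yourself by passing to $\langle g \rangle \cdot \langle f \rangle$ and $\langle h \rangle \cdot \langle f \rangle$ and invoking the distributive identity from Lemma~\ref{lem_kernels_algebra}. Both arguments rest on distributivity, but yours keeps the entire computation inside $\PCon(\mathbb{H}(x_1,\dots,x_n))$ and makes the distributive step explicit — which is cleaner than citing a result stated for $\Con$ and tacitly assuming the weak-implies-strong implication survives restriction to the sublattice of principal kernels. In short, you prove the lemma that the paper only cites, and that lemma is the real content of the proposition.
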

\begin{proof}
If $f \sim_K |g| \wedge |h|$ then  $\langle f \rangle = \langle |g| \wedge |h| \rangle = \langle g \rangle \cap \langle h \rangle$. Since $\langle f \rangle$ is irreducible we have that either $\langle g \rangle \subseteq \langle f \rangle$ or $\langle h \rangle \subseteq \langle f \rangle$, which is equivalent, by Proposition \ref{prop_wedge_inclusion} to $f \succeq |g|$ or $f \succeq |h|$. Now, as $|g| \succeq |g| \wedge |h|$ and $|h| \succeq |g| \wedge |h|$ we get that $f \sim_K |g|$ or $f \sim_K |h|$ thus $f$ is irreducible. Conversely, assume $f$ is irreducible. Let $\langle f \rangle = \langle g \rangle \cap \langle h \rangle$. Then $\langle f \rangle = \langle |g| \wedge |h| \rangle$, and thus $f \sim_K |g| \wedge |h|$. As $f$ is irreducible, we have that $f \sim_K |g|$ or $f \sim_K |h|$ and so $\langle f \rangle = \langle |g| \rangle = \langle g \rangle$ or $\langle f \rangle = \langle |h| \rangle = \langle h \rangle$ as desired.
\end{proof}

The proof of the following Corollary is completely analogous.
\begin{cor}
For any principal kernel $\langle f \rangle \in \Theta$, with  $f \in \mathbb{H}(x_1,...,x_n)$\linebreak (a $\Theta$-element) ,
$\langle f \rangle$ is $\Theta$-irreducible if and only if $f$ is $\Theta$-irreducible.
\end{cor}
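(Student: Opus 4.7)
The plan is to mimic the proof of the preceding (non-$\Theta$) proposition, but to carry the $\Theta$-membership along at every step. The key tool is the earlier remark stating that if $g$ and $h$ are $\Theta$-elements, then so are $|g|\wedge|h|$ and $|g|\dotplus|g|$, i.e., $\Theta$ is closed under the meet and join of principal generators. This ensures the translation between the kernel-language condition (Definition \ref{defn_theta_irreducible_maximal_kernels} restricted to $\Theta$) and the element-language condition (the $\Theta$-irreducibility of $f$) stays inside $\Theta$ throughout.

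For the forward direction, assume $\langle f\rangle$ is $\Theta$-irreducible, and suppose $f\succeq |g|\wedge|h|$ with $g,h$ being $\Theta$-elements. Then $\langle f\rangle\supseteq \langle |g|\wedge|h|\rangle = \langle g\rangle\cap\langle h\rangle$. Since $g$ and $h$ are $\Theta$-elements, both $\langle g\rangle$ and $\langle h\rangle$ belong to $\Theta$, so their intersection does too, and $\Theta$-irreducibility of $\langle f\rangle$ gives $\langle g\rangle\subseteq\langle f\rangle$ or $\langle h\rangle\subseteq\langle f\rangle$. By Proposition \ref{prop_wedge_inclusion}, this is exactly $f\succeq|g|$ or $f\succeq|h|$, which is $\Theta$-irreducibility of $f$.

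For the reverse direction, assume $f$ is $\Theta$-irreducible, and suppose $\langle f\rangle = \langle g\rangle\cap\langle h\rangle$ with $\langle g\rangle,\langle h\rangle\in\Theta$. By Corollary \ref{cor_max_semifield_principal_kernels_operations}, $\langle g\rangle\cap\langle h\rangle = \langle |g|\wedge|h|\rangle$, so $f\sim_K |g|\wedge|h|$, and in particular $f\succeq|g|\wedge|h|$. Since $g,h$ are $\Theta$-elements, the $\Theta$-irreducibility of $f$ yields $f\succeq|g|$ or $f\succeq|h|$; combined with the trivial inequalities $|g|\succeq|g|\wedge|h|\sim_K f$ and $|h|\succeq|g|\wedge|h|\sim_K f$, this gives $f\sim_K|g|$ or $f\sim_K|h|$, whence $\langle f\rangle = \langle g\rangle$ or $\langle f\rangle=\langle h\rangle$.

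There is no real obstacle here; the proof is a routine transcription of the preceding proposition, and the only content beyond that is checking that every principal kernel appearing in the argument is in $\Theta$, which follows immediately from the fact that $\Theta$ is a sublattice of $\PCon(\mathbb{H}(x_1,\ldots,x_n))$ together with the closure of $\Theta$-elements under $\wedge$ and $\dotplus$ established earlier in the subsection.
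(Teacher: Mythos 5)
Your proposal is correct and follows the paper's intent exactly: the paper's own ``proof'' is simply the statement that the preceding proposition's argument carries over verbatim once one checks $\Theta$-closure at each step, which is precisely what you do, and the $\Theta$-closure checks (via $\Theta$ being a sublattice and the closure of $\Theta$-elements under $|\cdot|\wedge|\cdot|$ and $|\cdot|\dotplus|\cdot|$) are the ones the paper implicitly relies on. The only cosmetic divergence is that in the forward direction you verify the $\succeq$-form of $\Theta$-irreducibility of $f$ directly using the $A\cap B\subseteq K$ form of kernel $\Theta$-irreducibility, whereas the paper's model proof verifies the equivalent $\sim_K$-form starting from the equality case; since $\Theta$ is a sublattice of a distributive lattice these two forms agree, so both routes are valid and equally faithful to the paper.
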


\begin{exmp}
Let $a = (\alpha_1,....,\alpha_n) \in \mathbb{H}^n$ and let $m_1,m_2 \in \PCon(\mathbb{H}(x_1,...,x_n)$ be  $m_1(x_1,...,x_n) = |\alpha_1^{-1}x_1| \dotplus \dots  \dotplus |\alpha_n^{-1}x_n|$ and $m_2(x_1,...,x_n) =  |\alpha_1^{-1}x_1| \cdot \dots  \cdot |\alpha_n^{-1}x_n|$
both admit $m_1(a) = 1$ and $m_2(a) = 1$. Since $|\alpha_i^{-1}x_i| \geq 1$ for each $1 \leq i \leq n$ and due to the property that for every $w_1,w_2 \geq 1$, \ $w_1 w_2 \leq (w_1 \dotplus w_2)^2$  and $w_1 \dotplus w_2 \leq w_1 w_2$, we get that $m_1 \sim_K m_2$.
\end{exmp}

\begin{rem}
$f \in \mathbb{H}(x_1,...,x_n)$ is reducible if and only if there exist $g,h \in \mathbb{H}(x_1,...,x_n)$, $g,h \leq 1$, such that $f \sim_K g \dotplus h$ where $f \not \sim_K g$ and $f \not \sim_K h$. Equivalently, $f$ is irreducible if for any such $g$ and $h$, $f \sim_K g$ or $f \sim_K h$
\end{rem}
\begin{proof}
The definition of $\wedge$ implies that  $g^{-1} \wedge h^{-1} = (g \dotplus h)^{-1} \sim_K  g + h$. Now, since $s^{-1} \sim_K s$ for every $s \in \mathbb{H}$, by definition of irreducibility we have that $f \sim_K g^{-1}$ or $f \sim_K h{-1}$ if and only if  $f \sim_K g$ or $f \sim_K h$ which yields the stated conclusion.
\end{proof}

\begin{lem}\label{lem_absolute_invariance_condition}
Let $f \in \mathbb{H}(x_1,...,x_n)$ be a rational function. We can write $f~=~\sum_{i=1}^{k}f_i$ where each $f_i$, with  $i=1,...,k$, is of the form $\frac{g_i}{h_i}$ where \linebreak $g_i,h_i \in \mathbb{H}[x_1,...,x_n]$ and $g_i$ is a monomial.
Then $Skel(f) = Skel(\wedge_{i=1}^{k}|f_i|)$ if and only if  for every $1 \leq i \leq k$ the following condition holds:
\begin{equation}\label{cond_domination}
f_i(x) = 1  \Rightarrow   \ \ f_j(x) \leq 1, \ \forall j \neq i.
\end{equation}
\end{lem}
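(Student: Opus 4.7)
The key observation is that because $\mathbb{H}$ is bipotent, the semifield addition in $\mathbb{H}(x_1,\ldots,x_n)$ is computed pointwise as maximum, and because $|f_i(x)| \geq 1$ always, the value $(\wedge_{i=1}^k |f_i|)(x)$ equals $\min_i |f_i(x)|$, which equals $1$ precisely when at least one $|f_i(x)| = 1$, i.e., at least one $f_i(x) = 1$. Thus the plan is first to rewrite both sides of the proposed equality in pointwise form:
\begin{equation*}
Skel(f) = \{x \in \mathbb{H}^n : \max_i f_i(x) = 1\}, \qquad Skel\bigl(\wedge_{i=1}^k |f_i|\bigr) = \bigcup_{i=1}^k Skel(f_i),
\end{equation*}
where the second equality uses Remark \ref{rem_max_semifield_principal_skeletons}(4) (to replace $|f_i|$ by $f_i$) together with Corollary \ref{cor_principal_skel_correspondence} applied iteratively (since $\langle f_i \rangle \cap \langle f_j \rangle = \langle |f_i| \wedge |f_j| \rangle$).

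Next I would translate the set-theoretic characterizations into the desired equivalence. For any $x \in \mathbb{H}^n$, the condition $\max_i f_i(x) = 1$ is equivalent to the conjunction: there exists $i$ with $f_i(x) = 1$, and for every $j$, $f_j(x) \leq 1$. This is because the finite maximum is attained, so if $\max_i f_i(x) = 1$ then some $f_i(x)$ equals $1$ and no $f_j(x)$ exceeds $1$; conversely if one $f_i(x) = 1$ and all others are $\leq 1$, the maximum is exactly $1$.

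Given these reformulations, the two implications are essentially immediate. For the ``if'' direction, assume condition \eqref{cond_domination}. Take $x \in Skel(\wedge_i |f_i|)$; then some $f_i(x) = 1$, and \eqref{cond_domination} forces $f_j(x) \leq 1$ for $j \neq i$, whence $\max_j f_j(x) = 1$ and $x \in Skel(f)$. The reverse containment $Skel(f) \subseteq Skel(\wedge_i |f_i|)$ holds without any hypothesis: if $\max_j f_j(x) = 1$ then some $f_i(x) = 1$, placing $x$ in $Skel(f_i) \subseteq \bigcup_i Skel(f_i) = Skel(\wedge_i |f_i|)$. For the ``only if'' direction, suppose the equality of skeletons holds and fix $i$ and $x$ with $f_i(x) = 1$. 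Then $x \in Skel(f_i) \subseteq Skel(\wedge_i |f_i|) = Skel(f)$, so $\max_j f_j(x) = 1$, which forces $f_j(x) \leq 1$ for all $j \neq i$, yielding \eqref{cond_domination}.

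There is no real obstacle here; the only subtle point worth verifying carefully is the computation $Skel(\wedge_i |f_i|) = \bigcup_i Skel(f_i)$, which I would justify by induction on $k$ using Corollary \ref{cor_principal_skel_correspondence} and the identities in Remark \ref{rem_absolute_operators_relations} that relate $|f_i|$ and $\langle f_i \rangle$. The hypothesis that $g_i$ is a monomial plays no role in the proof itself and is presumably included to make the decomposition canonical for subsequent applications of the lemma.
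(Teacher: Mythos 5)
Your proof is correct and follows the same pointwise minimum/maximum reasoning as the paper. What you add, though, is the ``only if'' direction of the equivalence, which the paper's own proof silently omits: the paper's argument shows (a) $Skel(f) \subseteq Skel(\tilde{f})$ unconditionally, where $\tilde{f} = \wedge_{i=1}^{k}|f_i|$, and (b) $Skel(\tilde{f}) \subseteq Skel(f)$ assuming condition \eqref{cond_domination}, establishing that \eqref{cond_domination} is \emph{sufficient} for the skeleton equality, but it never argues necessity. Your final paragraph supplies the missing half cleanly: from $f_i(x) = 1$ and the hypothesized equality $Skel(f) = Skel(\tilde{f}) = \bigcup_j Skel(f_j)$ you deduce $x \in Skel(f)$, hence $\max_j f_j(x) = 1$, hence $f_j(x) \leq 1$ for every $j$. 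Your explicit identification $Skel(\tilde{f}) = \bigcup_i Skel(f_i)$ via Corollary \ref{cor_principal_skel_correspondence} also makes the logical structure more transparent than the paper's purely pointwise treatment, which never states that identity. One small stylistic note: you are right that the monomial hypothesis on $g_i$ plays no role in this proof; it ensures each summand $f_i$ is sign-definite over the regions where the partition is canonical in later uses.
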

\ \\
\begin{proof}
Denote $\tilde{f} = \wedge_{i=1}^{k}|f_i|$.
If $x \in \mathbb{H}^n$ such that $f(x)=1$, then there exists some \linebreak $i \in \{ 1,...,n \}$ such that $f_i(x) = 1$ and $f_j(x) \leq 1$ for every $j \neq i$. Thus $|f_i|(x)=|f_i(x)| = 1$ and $|f_j|(x)=|f_j(x)| \geq 1$, yielding that  $$\tilde{f}(x) = \inf\left\{|f_1(x)|,...,|f_k(x)|\right\} = \min\left\{|f_1(x)|,...,|f_k(x)|\right\} = |f_i(x)| = 1.$$ Conversely, if  $\tilde{f}(x) =1$ then there exists some $i \in \{ 1,...,n \}$ such that $|f_i(x)| = 1$ and $|f_j(x)| \geq 1$ for every $j \neq i$. Now, $|f_i(x)| = 1$ if and only if $f_i(x) = 1$ and as condition \eqref{cond_domination} holds, we get that $f_j(x) \leq 1$ for all $j \neq i$. Thus $$f(x) = \sup \{f_1(x),...,f_k(x)\} = \max\{f_1(x),...,f_k(x)\} = f_i(x) = 1.$$
\end{proof}

\subsection{Decompositions}

\ \\

Lemma \ref{lem_absolute_invariance_condition} provides us with some insight about reducible kernels.
Let $f$  be a rational function in $\mathbb{H}(x_1,...,x_n)$. We can write $f = \sum_{i=1}^{k}f_i$ where each $f_i$ is of the form $\frac{g_i}{h_i}$ with $g_i,h_i \in \mathbb{H}[x_1,...,x_n]$ and $g_i$ is a monomial.
If each time the value $1$ is attained by one of the terms $f_i$ in this expansion and all other terms attain values smaller or equal to $1$, then $\tilde{f} = \bigwedge_{i=1}^{k}|f_i|$  defines the same skeleton as $f$. Moreover, if $f \in \langle \mathbb{H} \rangle$ then $\tilde{f} \wedge |\alpha| \in \langle \mathbb{H} \rangle$, for $\alpha \in \mathbb{H} \setminus \{1\}$  is also a generator of $\langle f \rangle$. The reason we take $\tilde{f} \wedge |\alpha|$ is that we have no guarantee that each of the $f_i$'s in the above expansion is bounded.\\
We can generalize this idea as follows:\\
Let $f \in \mathbb{H}(x_1,...,x_n)$ be essential. Then $f$ is reducible if and only if there exists an additive expansion of $f$ of the form $f = \sum_{i=1}^{k}f_i$, where each $f_i= \frac{g_i}{h}$,  $g_i,h~\in~\mathscr{R}[x_1,...,x_n]$, ($h$ being the common denominator derived from the $h_i$'s above),  such that
for every $1 \leq i \leq k$ the following condition holds:
\begin{equation*}
f_i(x) = 1  \Rightarrow  \ \ f_j(x) \leq 1 \ \forall j \neq i.
\end{equation*}

\pagebreak

\begin{defn}\label{defn_decomposition}
Let $f \in \mathbb{H}(x_1,...,x_n)$. A \emph{decomposition} of $f$ is an equality of the form
\begin{equation}\label{eq_decomposition}
|f| = |u| \wedge |v|
\end{equation}
with $u,v \in \mathbb{H}(x_1,...,x_n)$. \\
The decomposition \eqref{eq_decomposition} is said to be \emph{trivial} if  \ $f \sim_K u$ \ or \ $f \sim_K v$ (equivalently $|f|~\sim_K~|u|$ or $|f|~\sim_K~|v|$). Otherwise, if  $f \not \sim_K u, v$ (equivalently $|f| \not \sim_K |u|,|v|$), \eqref{eq_decomposition}, is said to be \emph{non-trivial}.\\
A decomposition \eqref{eq_decomposition} is said to be a \emph{$\Theta$-decomposition} if both $u$ and $v$ (equivalently $|u|$ and $|v|$) are $\Theta$-elements (and thus, so is $f$) .
\end{defn}
%
%
%

\begin{lem}\label{lem_reducibility_and_decomposition}
If $f \in \mathbb{H}(x_1,...,x_n)$ is a $\Theta$-element, then $\langle f \rangle$ is $\Theta$-reducible if and only if there exists some generator $f'$ of $\langle f \rangle$ such that $f'$ has a nontrivial  $\Theta$-decomposition.
\end{lem}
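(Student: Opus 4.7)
The plan is to establish both directions as essentially immediate consequences of Corollary \ref{cor_max_semifield_principal_kernels_operations} (which says $\langle g \rangle \cap \langle h \rangle = \langle |g| \wedge |h| \rangle$ for principal kernels in an idempotent semifield) together with the definition of the equivalence $\sim_K$ on generators.

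For the forward direction, I would assume $\langle f \rangle$ is $\Theta$-reducible, so by Definition \ref{defn_irreducible_ker} there exist $\Theta$-elements $g,h$ with $\langle g \rangle, \langle h \rangle \supsetneq \langle f \rangle$ and $\langle f \rangle = \langle g \rangle \cap \langle h \rangle$. Applying Corollary \ref{cor_max_semifield_principal_kernels_operations}, this intersection equals $\langle |g| \wedge |h| \rangle$, so $f \sim_K |g| \wedge |h|$. Setting $f' = |g| \wedge |h|$, we have $f' \geq 1$ hence $|f'| = f' = |g| \wedge |h|$, which is a $\Theta$-decomposition (both $|g|$ and $|h|$ generate the same $\Theta$-kernels as $g$ and $h$). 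Nontriviality follows because $\langle f \rangle \ne \langle g \rangle, \langle h \rangle$ translates directly into $f \not\sim_K g, h$, i.e. $f' \not\sim_K g, h$.

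For the reverse direction, suppose some generator $f'$ of $\langle f \rangle$ admits a nontrivial $\Theta$-decomposition $|f'| = |u| \wedge |v|$. Then
\begin{equation*}
\langle f \rangle \;=\; \langle f' \rangle \;=\; \langle |f'| \rangle \;=\; \langle |u| \wedge |v| \rangle \;=\; \langle u \rangle \cap \langle v \rangle,
\end{equation*}
again by Corollary \ref{cor_max_semifield_principal_kernels_operations}. Since $u$ and $v$ are $\Theta$-elements, $\langle u \rangle, \langle v \rangle \in \Theta$, and the nontriviality condition $f' \not\sim_K u, v$ exactly says that $\langle f \rangle \neq \langle u \rangle$ and $\langle f \rangle \neq \langle v \rangle$. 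Both $\langle u \rangle$ and $\langle v \rangle$ contain $\langle f \rangle$, since $|u|, |v| \geq |u| \wedge |v| = |f'|$ and hence $f' \in \langle u \rangle \cap \langle v \rangle$ by Remark \ref{rem_kernel_by_abs_value}. This is exactly the definition of $\Theta$-reducibility of $\langle f \rangle$.

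There is no genuine obstacle here: the proof is a translation lemma between the kernel-theoretic formulation of reducibility and the element-level formulation via decompositions, and everything reduces to the identity $\langle u \rangle \cap \langle v \rangle = \langle |u| \wedge |v| \rangle$ together with Remark \ref{rem_invariance_of_irreducibility_wrt_representor}'s style of switching between a kernel and any of its generators. The only mild care needed is to pass freely between $f$ and its absolute value $|f|$ (both are generators of $\langle f \rangle$ by Remark \ref{rem_absolute_operators_relations}(6)), which is why the statement produces a particular generator $f'$ rather than asserting a decomposition of $f$ itself.
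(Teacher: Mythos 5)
Your proof is correct and takes essentially the same route as the paper's: both directions hinge on the identity $\langle u \rangle \cap \langle v \rangle = \langle |u| \wedge |v| \rangle$ (Corollary \ref{cor_max_semifield_principal_kernels_operations}), and the translation between $\langle f \rangle \neq \langle u \rangle, \langle v \rangle$ and $f' \not\sim_K u, v$ is exactly what makes the decomposition nontrivial. You merely spell out a couple of steps the paper leaves implicit (e.g. that $|u|, |v| \geq |f'|$ forces $\langle f \rangle \subseteq \langle u \rangle, \langle v \rangle$), which is fine.
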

\begin{proof}
If $\langle f \rangle$ is $\Theta$-reducible then there exist some kernels $\langle u \rangle$ and $\langle v \rangle$ in $\Theta$ such that $\langle f \rangle = \langle u \rangle \cap \langle v \rangle$ where $\langle f \rangle \neq \langle u \rangle$ and $\langle f \rangle \neq \langle v \rangle$. Since $\langle u \rangle \cap \langle v \rangle = \langle |u|~\wedge~|v|\rangle$ we have that $f' = |u| \wedge |v|$ is a generator of $\langle f \rangle$ which by the above is a nontrivial $\Theta$-decomposition of $f'$. Conversely, assume $f' = |u| \wedge |v|$ is a nontrivial $\Theta$- decomposition for some $f' \sim_K f$. Then $\langle f \rangle = \langle f' \rangle = \langle |u| \wedge |v| \rangle = \langle u \rangle \cap \langle v \rangle$.  Since $f' = |u| \wedge |v|$ is nontrivial, we have that $u \not \sim_K f'$ and $v \not \sim_K f'$ thus $\langle |u| \rangle = \langle u \rangle \neq \langle f' \rangle = \langle f \rangle$ and similarly $\langle v \rangle \neq \langle f \rangle$. Thus, by definition, $\langle f \rangle$ is $\Theta$-reducible.
\end{proof}

\begin{flushleft} We can equivalently rephrase Lemma \ref{lem_reducibility_and_decomposition} as follows: \end{flushleft}
\begin{rem}
$f$ is $\Theta$-reducible if and only if there exists some $f' \sim_K f$ such that $f'$ has a $\Theta$-decomposition.
\end{rem}

There is an immediate question arising from Definition \ref{defn_decomposition} and Lemma \ref{lem_reducibility_and_decomposition}:\\
If $f \in \mathbb{H}(x_1,...,x_n)$ has a non-trivial $\Theta$-decomposition and $g \sim_K f$, does $g$ have a non-trivial $\Theta$-decomposition too, and if so, what is the relation between this pair of decompositions?\\

In the following few paragraphs we will give an answer to both of these questions in the case $\theta = \PCon(\langle \mathscr{R} \rangle)$.

\pagebreak

\begin{rem}
By definition $a \wedge b = \inf(a,b)$  and $a \dotplus b = \sup(a,b)$ for any \\ $a,b~\in~\mathbb{H}(x_1,...,x_n)$. The following properties are immediate:
\begin{enumerate}
  \item $(a \wedge b)^k = a^k \wedge b^k$ for any $k \in \mathbb{Z}_{\geq 0}$.
  \item For any $s_1, ..., s_k , a_1,....,a_k  , b_1,....,b_k \in \mathbb{H}(x_1,...,x_n)$, The following holds: $$\sum_{i=1}^{k}s_i(a_i \wedge b_i) = (\sum_{i=1}^{k}s_i a_i ) \wedge (\sum_{i=1}^{k}s_i b_i).$$
  \item For any $s_1, ..., s_k \in \mathbb{H}(x_1,...,x_n)$ and  $d(i) \in \mathbb{Z}_{\geq 0}$,  $$\sum_{i=1}^{k}s_i(a \wedge b)^{d(i)} = \left(\sum_{i=1}^{k}s_i a^{d(i)} \right) \wedge \left(\sum_{i=1}^{k}s_i b^{d(i)}\right).$$
\end{enumerate}

Note that (3) is a consequence of (1) and (2). Indeed, by (1) we have that
$$\sum_{i=1}^{k}s_i(a \wedge b)^{d(i)} = \sum_{i=1}^{k}s_i((a^{d(i)}) \wedge (b^{d(i)}))$$
and by (2), $$\sum_{i=1}^{k}s_i((a^{d(i)}) \wedge (b^{d(i)})) = \left(\sum_{i=1}^{k}s_i a^{d(i)} \right) \wedge \left(\sum_{i=1}^{k}s_i b^{d(i)}\right).$$
\end{rem}

\begin{rem}
Let $h_1,...,h_k \in \mathbb{H}(x_1,...,x_n)$ be such that $h_i \geq 1$. Then $\sum_{i=1}^{k}s_ih_i \geq 1 $ for every $s_1,...,s_k \in \mathbb{H}(x_1,...,x_n)$ such that $\sum_{i=1}^{k}s_i = 1$.
Indeed, the statement holds since we have that $\sum_{i=1}^{k}s_ih_i \geq (\bigwedge_{i = 1}^{k}h_i)(\sum_{i=1}^{k}s_i) \geq \sum_{i=1}^{k}s_i = 1$.
\end{rem}

\ \\

\begin{thm}\label{thm_every_gen_is_reducible}
Let $\theta = \PCon(\langle \mathscr{R} \rangle)$.
If $\langle f \rangle$ is a (principal) $\Theta$-reducible kernel, then there exists a pair of $\Theta$-elements $g,h \in \mathbb{H}(x_1,...,x_n)$ such that $|f| = |g| \wedge |h|$ and \linebreak $|f| \not \sim_K |g|,|h|$.
\end{thm}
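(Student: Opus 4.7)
The plan is as follows. By Lemma \ref{lem_reducibility_and_decomposition} applied to the $\Theta$-reducibility of $\langle f \rangle$, there exist $\Theta$-elements $u,v \in \mathbb{H}(x_1,\ldots,x_n)$ with $\langle f \rangle = \langle u \rangle \cap \langle v \rangle = \langle\,|u| \wedge |v|\,\rangle$, where both inclusions $\langle f \rangle \subseteq \langle u \rangle$ and $\langle f \rangle \subseteq \langle v \rangle$ are strict. The difficulty is that $|u| \wedge |v|$ is only \emph{some} generator of $\langle f \rangle$ (equal to $|f'|$ for some $f' \sim_K f$), whereas the theorem requires $|f|$ itself to admit a nontrivial $\Theta$-decomposition. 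Rather than attempting to pass the decomposition of $|f'|$ through to $|f|$ via the essential expansion of Proposition \ref{prop_generator_of_skel3} (which would require delicate control of the exponents that appear in the expansion $|f|=\sum_i s_i|f'|^{d(i)}$ and of the resulting skeletons), I will construct $g$ and $h$ directly from $f,u,v$.

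The construction I propose is
$$g \;=\; |f| \,\dotplus\, |u|\cdot|v|^{-1}, \qquad h \;=\; |f| \,\dotplus\, |v|\cdot|u|^{-1}.$$
Both $g$ and $h$ are $\geq 1$, so $|g|=g$ and $|h|=h$. Using Lemma \ref{lem_plus_wedge_distributive_relation} (distributivity of $\dotplus$ over $\wedge$),
$$g \wedge h \;=\; |f| \,\dotplus\, \bigl( (|u|\cdot|v|^{-1}) \wedge (|v|\cdot|u|^{-1}) \bigr).$$
The key bipotency input is that at each $x \in \mathbb{H}^n$ one of $|u(x)|,|v(x)|$ dominates the other, so whichever ratio has the larger term in the denominator is $\leq 1$; hence $(|u|/|v|) \wedge (|v|/|u|) \leq 1$ as a function on $\mathbb{H}^n$. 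Since $|f|\geq 1$, we conclude $g \wedge h = |f|$.

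It remains to check that $g,h$ are $\Theta$-elements, i.e., lie in $\langle \mathscr{R} \rangle$. This is immediate because $|u|^{-1},|v|^{-1}\leq 1$ give $|u|/|v|\leq |u|$ and $|v|/|u|\leq |v|$, while $|f|,|u|,|v|$ are all bounded from above by hypothesis, and $\langle \mathscr{R}\rangle$ is closed under $\dotplus$ and products. For the nontriviality $\langle g \rangle \neq \langle f \rangle$, I use that both $\langle u \rangle$ and $\langle f \rangle$ are $\mathcal{K}$-kernels by Proposition \ref{prop_kernel_is_k_kernel}, so $\langle f \rangle \subsetneq \langle u \rangle$ forces $Skel(f) \supsetneq Skel(u)$; combined with $Skel(f)=Skel(u)\cup Skel(v)$ this produces a point $x \in Skel(v)\setminus Skel(u)$. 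There $|f(x)|=1$ but $(|u|/|v|)(x)=|u(x)|>1$, so $|u|\cdot|v|^{-1} \not\leq |f|^r$ for any $r\in\mathbb{N}$, whence $|u|\cdot|v|^{-1} \notin \langle f\rangle$ and therefore $\langle g \rangle = \langle f \rangle \cdot \langle |u|/|v|\rangle \supsetneq \langle f \rangle$. The symmetric argument yields $\langle h \rangle \supsetneq \langle f \rangle$.

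The conceptual hurdle is finding the right formulas for $g$ and $h$: one needs correction terms that aggregate to $\leq 1$ under $\wedge$ (so as not to perturb $|f|$ when added inside the meet) while each individually fails to be dominated by any power of $|f|$ on a nonempty part of the skeleton. Once these ratios $|u|/|v|$ and $|v|/|u|$ are in hand, the verification reduces to Lemma \ref{lem_plus_wedge_distributive_relation} together with the pointwise bipotency identity $(a/b)\wedge(b/a)\leq 1$ valid for $a,b\geq 1$.
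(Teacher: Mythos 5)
Your proof is correct, and it takes a genuinely different route from the paper's. The paper expands $|f|$ as a convex combination $|f|=\sum_i s_i(f')^{d(i)}$ with $f'=|u|\wedge|v|$, distributes the $\wedge$ through to get $|f|=|g|\wedge|h|$, and then argues at length that the resulting $g,h$ satisfy $g\sim_K|u|$, $h\sim_K|v|$. Your construction $g=|f|\dotplus|u|\cdot|v|^{-1}$, $h=|f|\dotplus|v|\cdot|u|^{-1}$ is cleaner and more elementary: the identity $g\wedge h=|f|$ follows in one line from Lemma \ref{lem_plus_wedge_distributive_relation} and the pointwise fact $(a\wedge a^{-1})\leq 1$, boundedness of $g,h$ is immediate, and the point $x\in Skel(v)\setminus Skel(u)$ gives nontriviality directly. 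In fact, a short calculation shows $Skel(g)=\{x\in Skel(f): |u(x)|\leq|v(x)|\}=Skel(u)$ and symmetrically $Skel(h)=Skel(v)$, so your $g,h$ are generators of $\langle u\rangle,\langle v\rangle$ and your proof recovers the paper's structural conclusion $g\sim_K|u|$, $h\sim_K|v|$ as a bonus, which is what Corollaries \ref{cor_generator_structure} and \ref{cor_norm_decomposition} actually use.

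One technical slip to fix: the assertion $\langle g\rangle=\langle f\rangle\cdot\langle |u|/|v|\rangle$ is false. We have $\langle f\rangle\cdot\langle|u|/|v|\rangle=\langle\,|f|\dotplus\big||u|/|v|\big|\,\rangle=\langle\,|f|\dotplus|u|/|v|\dotplus|v|/|u|\,\rangle$, and at any $y\in Skel(u)\setminus Skel(v)$ (which exists since $\langle f\rangle\subsetneq\langle v\rangle$) one has $g^r(y)=|f(y)|^r\dotplus(|u(y)|/|v(y)|)^r=1$ while $(|v|/|u|)(y)=|v(y)|>1$, so $|v|/|u|\notin\langle g\rangle$ and the containment $\langle g\rangle\subseteq\langle f\rangle\cdot\langle|u|/|v|\rangle$ is strict. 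Fortunately you do not need this identification: the point $x\in Skel(f)\setminus Skel(u)$ you constructed already gives $g(x)=\max(1,|u(x)|)=|u(x)|>1$, hence $g\notin\langle f\rangle$ by Remark \ref{rem_kernel_by_abs_value}, which together with $|f|\leq g$ yields $\langle f\rangle\subsetneq\langle g\rangle$ directly. With that sentence replaced, the argument is complete.
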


\begin{proof}
If $\langle f \rangle$ is a principal reducible kernel, then there exists some $f' \sim_K f$ such that $f' = |u| \wedge |v| = \min(|u|,|v|)$ for some $\Theta$-elements $u,v \in \langle \mathscr{R} \rangle $ where $f'~\not\sim_K~|u| , |v|$. Since $f'$ is a generator of $\langle f \rangle$, we have that  $|f| \in \langle f' \rangle$, so there exists some $s_1,...,s_k \in \mathbb{H}(x_1,...,x_n)$ such that $\sum_{i=1}^{k}s_i = 1$ and $|f| = \sum_{i=1}^{k}s_i (f')^{d(i)}$ with $d(i) \in \mathbb{Z}_{\geq 0}$ ($d(i) \geq 0$ since $|f| \geq 1$). \\
Thus $$f = \sum_{i=1}^{k}s_i (|u| \wedge |v|)^{d(i)} = \sum_{i=1}^{k}s_i (\min(|u|,|v|))^{d(i)} = \min\left(\sum_{i=1}^{k}s_i|u|^{d(i)},\sum_{i=1}^{k}s_i|v|^{d(i)}\right)$$  $$=|g| \wedge |h|$$ where $g = |g| = \sum_{i=1}^{k}s_i|u|^{d(i)}, h = |h| = \sum_{i=1}^{k}s_i|v|^{d(i)}$.
Now, by the above we have that  $\langle |f| \rangle \subseteq \langle |g| \rangle \subseteq \langle |u| \rangle$  and $\langle |f| \rangle \subseteq  \langle |h| \rangle \subseteq \langle |v| \rangle$, thus $Skel(f) \supseteq Skel(g) \supseteq Skel(u)$ and $Skel(f) \supseteq Skel(h) \supseteq Skel(v)$. We claim that $|g|$ and $|h|$ generate $\langle |u| \rangle $ and $\langle |v| \rangle$, respectively. Since $f' \sim_K |f|$ we have that $Skel(f') =Skel(|f|)$, thus for any $x \in \mathbb{H}^n$, $f'(x)=1 \Leftrightarrow |f|(x)=1$. Let $s_j (f')^{d(j)}$ be a dominant term of $|f|$ at $x$, i.e.,
$$|f| = \sum_{i=1}^{k}s_i(x) (f'(x))^{d(i)} = s_j(x) (f'(x))^{d(j)}.$$
Then we have that $f'(x) = 1 \Leftrightarrow s_j(x) (f'(x))^{d(j)}=1$, thus $f'(x) = 1 \Leftrightarrow s_j(x) = 1$ (since by Remark \ref{rem_torsion_free}, $(f'(x))^{d(j)}=1$ if and only if $f'(x)=1$). Now, consider $x \in Skel(g)$.  Then we have that $g(x) = 1$, i.e., $\sum_{i=1}^{k}s_i|u|^{d(i)} = 1$. Let $s_t|u|^{d(t)}$ be a dominant term of $g$ at $x$. If $s_t(x) =1$ then $|u|^{d(t)} = 1$ and thus $u = 1$ and $x \in Skel(u)$ otherwise $s_t(x) < 1$ (since $\sum_{i=1}^{k}s_i = 1$) and so, by the above $s_t (f')^{d(t)}$ is not a dominant term of $|f|$ at $x$. Since $s_t(x) < 1$ we have that
$$|u(x)|^{d(j)} = s_j(x)|u(x)|^{d(j)} < s_t(x)|u(x)|^{d(t)} = g(x) = 1,$$
for any dominant term of $|f|$ at $x$. Thus
\begin{equation}\label{eq_1_thm_every_gen_is_reducible}
s_j(x)(f'(x))^{d(j)}= s_j(x) (|u|(x) \wedge |v|(x))^{d(j)} \leq s_j(x)|u(x)|^{d(j)} < 1.
\end{equation}
On the other hand, as $Skel(f) \supseteq Skel(g)$, we have that $f'(x)=1$  and thus \newline $s_j(x)(f'(x))^{d(j)}=1$, contradicting \eqref{eq_1_thm_every_gen_is_reducible}. So, we have that $Skel(g) \subseteq Skel(u)$, so, by the above $Skel(g) = Skel(u)$ which in turn yields that $g$ is a generator of $\langle |u| \rangle = \langle u \rangle$. The proof for $h$ and $|v|$ is analogous. Consequently, we have that $g \sim_K |g| \sim_K |u|$ and $h \sim_K |h| \sim_K |v|$, so, as $|f|~\sim_K~f' \not \sim_K |u| , |v|$ we have that $|f| \not \sim_K |g|,|h|$.
\end{proof}

\ \\

\begin{cor}\label{cor_generator_structure}
If $f \in \langle \mathscr{R} \rangle$ and if $|f| = \bigwedge_{i=1}^{s} |f_i|$  for some $f_i \in \langle \mathscr{R} \rangle$, then for any $g$ such that $g \sim_K f$, $|g| =  \bigwedge_{i=1}^{s} |g_i|$ with $g_i \sim_K  f_i$ for $i = 1,...,s$.
\end{cor}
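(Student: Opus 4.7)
The plan is to run the proof of Theorem \ref{thm_every_gen_is_reducible} with $s$ factors in place of $2$, leveraging the fact that in the idempotent bipotent setting, $\wedge$ is min and therefore distributes over sums with coefficients that sum to $1$ and over non-negative integer powers. Concretely, since $g \sim_K f$ means $\langle g \rangle = \langle f \rangle$, one has $|g| \in \langle |f| \rangle$. By Proposition \ref{prop_ker_stracture_by group} together with $|g| \geq 1$, I would write
\begin{equation*}
|g| = \sum_{j=1}^{k} s_j \, |f|^{d(j)}, \qquad s_j \in \mathscr{R}(x_1,\dots,x_n), \ \sum_{j=1}^{k} s_j = 1, \ d(j) \in \mathbb{Z}_{\geq 0}.
\end{equation*}

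Next I would substitute $|f| = \bigwedge_{i=1}^{s} |f_i|$ and push the meet outside, using $(a \wedge b)^d = a^d \wedge b^d$ for $d \geq 0$, the identity $c(a \wedge b) = (ca) \wedge (cb)$ for $c \geq 0$, and the infinite distributive law of addition over $\wedge$ in a bipotent setting (already recorded in the three bullet points preceding Theorem \ref{thm_every_gen_is_reducible}, iterated over $s$ terms). This yields
\begin{equation*}
|g| = \sum_{j=1}^{k} s_j \left( \bigwedge_{i=1}^{s} |f_i| \right)^{d(j)} = \bigwedge_{i=1}^{s} \sum_{j=1}^{k} s_j \, |f_i|^{d(j)} = \bigwedge_{i=1}^{s} g_i,
\end{equation*}
where $g_i := \sum_{j=1}^{k} s_j \, |f_i|^{d(j)}$. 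By construction each $g_i \geq 1$ lies in $\langle |f_i| \rangle = \langle f_i \rangle$, so $\langle g_i \rangle \subseteq \langle f_i \rangle$ and consequently $Skel(f_i) \subseteq Skel(g_i)$.

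The remaining task, which is the core of the argument, is to show $\langle g_i \rangle = \langle f_i \rangle$, equivalently $g_i \sim_K f_i$. Here I would adapt verbatim the dominance argument used in the proof of Theorem \ref{thm_every_gen_is_reducible}: fix $i$ and pick $x \in Skel(g_i)$; let $s_t |f_i|^{d(t)}$ be a dominant summand at $x$. If $s_t(x) = 1$, then $|f_i(x)|^{d(t)} = 1$ so (by power-radicality, Remark \ref{rem_radicality_of_ker}) $|f_i(x)| = 1$ and $x \in Skel(f_i)$. If $s_t(x) < 1$, then for every dominant index $j$ of $|f|$ at $x$ we get $s_j(x) \le s_t(x) < 1$, and since $Skel(g_i) \subseteq Skel(g) = Skel(f)$ forces $|f(x)| = 1$ and hence $s_j(x)(|f(x)|)^{d(j)} = 1$ at a dominant $j$, the inequality $s_j(x)|f_i(x)|^{d(j)} \leq s_j(x)|f(x)|^{d(j)} \cdot (|f_i(x)|/|f(x)|)^{d(j)}$ combined with $|f_i| \geq |f|$ produces the contradiction $1 = s_j(x)|f(x)|^{d(j)} < s_t(x)|f_i(x)|^{d(t)} = g_i(x) = 1$. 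Thus $Skel(g_i) \subseteq Skel(f_i)$, giving equality, and then Proposition \ref{prop_generator_of_skel2} (applied to $\langle f_i \rangle \subseteq \langle \mathscr{R} \rangle$) promotes equality of skeletons to $g_i \sim_K f_i$.

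The main obstacle I expect is precisely the dominance bookkeeping in the last paragraph: one must track the dominant summand of $|g|$ at points where the individual $|f_i|$ can differ from $|f| = \bigwedge_j |f_j|$, and conclude that $s_j(x) < 1$ for the dominant $j$ is incompatible with $x \in Skel(g_i)$. The generalization from $s = 2$ to arbitrary $s$ is essentially cosmetic provided one is careful that the ordering $|f| \leq |f_i|$ holds pointwise, which is immediate from $|f| = \bigwedge_i |f_i|$, so the same bound $s_j(x)|f_i(x)|^{d(j)} \leq s_t(x)|f_i(x)|^{d(t)}$ goes through unchanged.
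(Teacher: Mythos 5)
Your plan is the paper's own: the paper proves this corollary by ``successive application of Theorem~\ref{thm_every_gen_is_reducible},'' and you unroll that theorem's proof in one pass for $s$ factors rather than inducting on $s$. The convex expansion $|g| = \sum_j s_j |f|^{d(j)}$ with $\sum_j s_j = 1$ and $d(j)\ge 0$, the distribution of $\wedge$ through such sums of powers (the third identity in the remark preceding the theorem, valid because $|f| = \bigwedge_i |f_i|$ is evaluated pointwise and exponentiation by $d(j)\ge 0$ is monotone), and the identification $g_i \sim_K f_i$ are exactly the ingredients of that theorem's proof, just run once with $s$ wedge factors.

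However, the dominance bookkeeping as you've written it does not go through. For a dominant index $j$ of the expansion of $|g|$ at a point $x \in Skel(g_i) \subseteq Skel(g) = Skel(f)$, the fact $|f(x)| = 1$ gives $s_j(x)|f(x)|^{d(j)} = |g(x)| = 1$, hence $s_j(x) = 1$ --- which is the \emph{opposite} of your asserted $s_j(x) \le s_t(x) < 1$, and the subsequent ``contradiction'' chain therefore does not follow as stated. The correct continuation, which is what the paper's Theorem~\ref{thm_every_gen_is_reducible} actually does, is simpler: having found a dominant $j$ with $s_j(x)=1$, the corresponding term $s_j|f_i|^{d(j)}$ of $g_i$ evaluates at $x$ to $|f_i(x)|^{d(j)} \le g_i(x) = 1$; since $|f_i| \ge 1$ this forces $|f_i(x)|^{d(j)} = 1$, and then torsion-freeness (Remark~\ref{rem_torsion_free}) gives $|f_i(x)|=1$ provided $d(j)>0$. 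No auxiliary inequality between $s_j(x)$ and $s_t(x)$ is needed. Once that paragraph is repaired, your argument coincides with the paper's, and the appeal to Proposition~\ref{prop_generator_of_skel2} to upgrade $Skel(g_i)=Skel(f_i)$ together with $g_i \in \langle f_i\rangle$ into $g_i \sim_K f_i$ is the right closing move.
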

\begin{proof}
Follows successive application of Theorem \ref{thm_every_gen_is_reducible}.
\end{proof}
%

\begin{rem}\label{rem_expansion_coef_criterion}
Let $\mathbb{H}$ be a bipotent divisible semifield. Let $f \in \mathbb{H}(x_1,...,x_n)$, and let $g \in \langle f \rangle$ such that $g \sim_K f$. As $g \in \langle f \rangle$, $g$ can be written as $g = \sum_{i=1}^{k}s_i  f^{d(i)}$ for some $s_1,...,s_k \in \mathbb{H}(x_1,...,x_n)$ such that $\sum_{i=1}^{k}s_i = 1$ with $d(i) \in \mathbb{Z}$. We call $\sum_{i=1}^{k}s_i  f^{d(i)}$ a \emph{convex expansion} of $g$ with respect to $f$. Then for any $x \in \mathbb{H}^n$, $g(x) = 1$ if and only if $s_j(x) = 1$ for any leading term $s_j f^{d(j)}$ of $g$ at $x$.
\end{rem}

\ \\

\begin{cor}\label{cor_norm_decomposition}
If $\langle f \rangle$ is a kernel in $\Theta$, then $\langle f \rangle$ has a nontrivial $\Theta$-decomposition $\langle f \rangle = \langle g \rangle \cap \langle h \rangle$ if and only if $|f|$ has a non-trivial decomposition $|f|~=~|g'|~\wedge~|h'|$ with $|g'| \sim_K g$ and $|h'| \sim_K h$.
\end{cor}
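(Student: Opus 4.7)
The plan is to derive both directions by invoking the existing machinery on generators: Theorem \ref{thm_every_gen_is_reducible} (or more conveniently its Corollary \ref{cor_generator_structure}) for the forward direction, and elementary computation with the kernel operation for the reverse.

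For the forward direction, I would start from a nontrivial $\Theta$-decomposition $\langle f \rangle = \langle g \rangle \cap \langle h \rangle$. By Corollary \ref{cor_max_semifield_principal_kernels_operations} applied to principal kernels we have $\langle g \rangle \cap \langle h \rangle = \langle |g| \wedge |h| \rangle$, so $f' := |g| \wedge |h|$ is a generator of $\langle f \rangle$, i.e.\ $f' \sim_K f$ and hence $f' \sim_K |f|$. Since both $|f|$ and $f'$ are generators of the same kernel, Corollary \ref{cor_generator_structure} applied to the presentation $f' = |g| \wedge |h|$ produces a presentation $|f| = |g'| \wedge |h'|$ in which $|g'| \sim_K |g|$ and $|h'| \sim_K |h|$, and in particular $g' \sim_K g$ and $h' \sim_K h$ (using Remark \ref{rem_absolute_operators_relations}(6), $\langle a \rangle = \langle |a| \rangle$). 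Nontriviality is preserved because $|g'| \sim_K g$ and $|h'| \sim_K h$ yield $\langle g' \rangle = \langle g \rangle \neq \langle f \rangle \neq \langle h \rangle = \langle h' \rangle$, so $|f| \not\sim_K |g'|, |h'|$.

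The reverse direction is direct. Assuming $|f| = |g'| \wedge |h'|$ nontrivially with $g' \sim_K g$ and $h' \sim_K h$, I compute
\begin{equation*}
\langle f \rangle \;=\; \langle |f| \rangle \;=\; \langle |g'| \wedge |h'| \rangle \;=\; \langle g' \rangle \cap \langle h' \rangle \;=\; \langle g \rangle \cap \langle h \rangle,
\end{equation*}
again via Corollary \ref{cor_max_semifield_principal_kernels_operations} and the fact that $\sim_K$ means generating the same principal kernel. The nontriviality of the decomposition of $|f|$, namely $|f| \not\sim_K |g'|$ and $|f| \not\sim_K |h'|$, immediately gives $\langle f \rangle \neq \langle g \rangle, \langle h \rangle$. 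Finally, because $g$ and $h$ are $\Theta$-elements by hypothesis, the kernels $\langle g \rangle$ and $\langle h \rangle$ lie in $\Theta$, so this is a $\Theta$-decomposition.

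The main obstacle, such as it is, lies in verifying that Corollary \ref{cor_generator_structure} (which was deduced from Theorem \ref{thm_every_gen_is_reducible}) really delivers the matched equivalences $g' \sim_K g$ and $h' \sim_K h$ simultaneously on the same representation of $|f|$; but since that corollary is stated precisely for finite $\wedge$-representations of a generator, a single application to $f' = |g| \wedge |h|$ does the job, so no further technical work is needed beyond bookkeeping with $\sim_K$.
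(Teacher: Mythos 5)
Your proof is correct and follows essentially the same route as the paper's. The reverse direction is identical; for the forward direction, the paper says the claim ``follows the proof of Theorem~\ref{thm_every_gen_is_reducible},'' and you instead invoke Corollary~\ref{cor_generator_structure}, which is precisely the packaging of that proof as a stated result about $\wedge$-presentations transported between generators of the same kernel — a slightly cleaner way of making the same argument explicit, with the observation that $f' = |g|\wedge|h| \geq 1$ so $|f'| = f'$ and the corollary applies directly.
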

\begin{proof}
If $|f| = |g'| \wedge |h'|$  then, as $|g'| \sim_K g$ and $|h'| \sim_K h$ we have that\linebreak $\langle f \rangle  = \langle |f| \rangle =  \langle |g'| \wedge |h'| \rangle = \langle |g'| \rangle \cap \langle  |h'| \rangle = \langle g \rangle \cap \langle h \rangle$. The converse follows the proof of Theorem~\ref{thm_every_gen_is_reducible}.
\end{proof}

Corollary \ref{cor_norm_decomposition} ensures us that there is a $\Theta$-decomposition of $|f|$ for every generator $f$ of a $\Theta$-reducible kernel in $\Theta$.

\begin{rem}\label{rem_wedge_equivalence}
By Corollary \ref{cor_max_semifield_principal_kernels_operations} we have that
$$\langle f \rangle \cap \langle g \rangle = \langle (f \dotplus f^{-1}) \wedge (g \dotplus g^{-1}) = \langle |f| \wedge |g| \rangle.$$
But, in fact, as for any $g' \sim_K g$ and $h' \sim_K h$, $\langle f \rangle \cap \langle g \rangle = \langle f' \rangle \cap \langle g' \rangle$,  we could have taken $|g'| \wedge |f'|$ instead of $|g| \wedge |f|$ on the righthand side of the equality, e.g., $\langle |f^k| \wedge |g^m| \rangle$ for any $m,k \in \mathbb{Z} \setminus \{ 0 \}$.
\end{rem}

\pagebreak

Since $|g| \wedge |h| = \min(|g|,|h|)$ we can utilize Remark \ref{rem_wedge_equivalence} to get the following observation:

\begin{prop}
A $\Theta$-element $f  \sim_K |g| \wedge |h| \in \mathbb{H}(x_1,...,x_n)$ is $\Theta$-reducible if the $\Theta$-elements $g$ and $h$ are not K-comparable.
\end{prop}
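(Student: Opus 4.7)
The plan is to unwind the definition of $\Theta$-reducibility and express the needed conditions purely in terms of the relation $\succeq$ on generators, which is exactly what k-comparability controls.

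First I would observe that the hypothesis $f \sim_K |g| \wedge |h|$ immediately yields the equality of kernels
\[
\langle f \rangle \;=\; \langle |g| \wedge |h| \rangle \;=\; \langle g \rangle \cap \langle h \rangle,
\]
using Corollary \ref{cor_max_semifield_principal_kernels_operations}. Since $g$ and $h$ are assumed to be $\Theta$-elements, the kernels $\langle g \rangle$ and $\langle h \rangle$ lie in $\Theta$, so this is already a candidate $\Theta$-decomposition of $\langle f \rangle$ in the sense of Lemma \ref{lem_reducibility_and_decomposition}. What remains is to verify that the decomposition is non-trivial, i.e., that $\langle f \rangle \neq \langle g \rangle$ and $\langle f \rangle \neq \langle h \rangle$.

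Next I would translate these two inequalities via Proposition \ref{prop_wedge_inclusion}. The equality $\langle f \rangle = \langle g \rangle$ would force $\langle g \rangle \cap \langle h \rangle = \langle g \rangle$, i.e., $\langle g \rangle \subseteq \langle h \rangle$, which by Proposition \ref{prop_wedge_inclusion} is equivalent to $h \succeq g$. Symmetrically, $\langle f \rangle = \langle h \rangle$ is equivalent to $g \succeq h$. Hence the non-triviality of the decomposition is exactly the statement that neither $g \succeq h$ nor $h \succeq g$, which is precisely the assumption that $g$ and $h$ are not k-comparable.

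Putting these two steps together gives a non-trivial $\Theta$-decomposition of a generator of $\langle f \rangle$, so by Lemma \ref{lem_reducibility_and_decomposition} the kernel $\langle f \rangle$ is $\Theta$-reducible, whence $f$ is a $\Theta$-reducible $\Theta$-element by definition. There is no real obstacle here — the entire argument is a direct translation between $\sim_K$/$\succeq$ on elements and $\subseteq$/$=$ on principal kernels, with Proposition \ref{prop_wedge_inclusion} doing all the work. The only thing to be slightly careful about is that $g$ and $h$ being $\Theta$-elements is genuinely used in order to qualify $\langle g \rangle \cap \langle h \rangle$ as a \emph{$\Theta$-decomposition} (otherwise we would merely get reducibility in $\PCon(\mathbb{H}(x_1,\dots,x_n))$), but this is granted by hypothesis.
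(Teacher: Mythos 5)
Your argument is correct and follows the same route as the paper's proof of the stated direction: both reduce to $\langle f\rangle=\langle g\rangle\cap\langle h\rangle$, invoke Proposition \ref{prop_wedge_inclusion} to translate non-k-comparability of $g,h$ into the strict inequalities $\langle f\rangle\neq\langle g\rangle,\langle h\rangle$, and then conclude $\Theta$-reducibility via Lemma \ref{lem_reducibility_and_decomposition}. The only cosmetic difference is that you phrase the translation through the contrapositive ($\langle f\rangle=\langle g\rangle\Leftrightarrow h\succeq g$), whereas the paper negates directly.
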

\begin{proof}
If $g$ and $h$ are k-comparable, then there exist some $g' \sim_K g$ and $h' \sim_K h$ such that $|g'| \geq |h'|$ or $|h'| \geq |g'|$. Without loss of generality, assume $|g'| \geq |h'|$. Then $\langle  |g| \wedge |h| \rangle  = \langle |g| \rangle \cap \langle |h| \rangle = \langle |g'| \rangle \cap \langle |h'| \rangle = \langle  |g'| \wedge |h'| \rangle = \langle  \min(|g'|,|h')| \rangle = \langle  |g'| \rangle = \langle g' \rangle = \langle  g \rangle$. Thus $\langle f \rangle = \langle  g \rangle$ so $f \sim_K g$ yielding that $f$ is $\Theta$-irreducible. Conversely, if  $g$ and $h$ are not k-comparable then $g \not \succeq h$ and $h \not \succeq g$, so by Proposition \ref{prop_wedge_inclusion}, $\langle h \rangle \not \subseteq \langle g \rangle$ and $\langle g \rangle \not \subseteq \langle h \rangle$ respectively. Then $\langle f \rangle = \langle g \rangle \cap \langle h \rangle \neq \langle g \rangle, \langle h \rangle$, which yields that  $f \not \sim_K g$ and $f \not \sim_K h$, and so $f$ is $\Theta$-reducible.
\end{proof}

\ \\

\ \\

\subsection{Regularity}
\ \\

Recall that our designated semifield $\mathscr{R}$ is defined to be bipotent, divisible, \linebreak archimedean and complete, the prototype being  $(\mathbb{R}^{+}, \dotplus, \cdot)$  by Corollary \ref{cor_totaly_ordered_isomorphic_to_reals}.\linebreak Thus $\mathscr{R}$ ( respectively $\mathscr{R}^n$) can be thought of as a topological metric subspace of $\mathbb{R}$ (respectively $\mathbb{R}^n$) with the induced topology derived from the usual (Euclidian) topology of $\mathbb{R}$ (respectively $\mathbb{R}^n$).\ \\


There are two general types of nontrivial principal skeletons in $\mathscr{R}^n$: Skeletons not containing a region of dimension $n$ and skeletons that do contain a region of dimension $n$.
These two types of principal skeletons emerge from two distinct types of kernels, characterized by their generators. The first type of skeletons correspond to principal kernels generated by an element of $\mathscr{R}(x_1,...,x_n)$ which we call \emph{regular} while the second type of skeletons correspond to principal kernels generated by an irregular element of $\mathscr{R}(x_1,...,x_n)$.\\
Principal kernels encapsulate a relation of the form $f = 1$ for some $f \in \mathscr{R}(x_1,...,x_n)$, which is induced on the quotient semifield. The relation $f = \frac{\sum g_i}{\sum h_j} = 1$ is local by nature in the following sense:
Let $x \in \mathscr{R}^n$ be any point. Then there is at least one monomial $f_{i0}$ of the numerator and at least one monomial $g_{j0}$ of the denominator which are dominant at $x$. If more than one monomial is dominant in each case, say $\{f_{ik}\}_{k=1}^{s}$ and $\{g_{jm}\}_{m=1}^{t}$, then we have some set of additional relations of the form $f_{i0} = f_{ik}$ and $g_{j0} = g_{jm}$. Now, the non-dominant monomials of both numerator and denominator define some order relations on the variables, which in turn define a region of $\mathscr{R}^n$ over which the relations $\{ f_{ik} = g_{jm} \ : \ 0 \leq k \leq s, \ 0 \leq m \leq t \}$ hold. Every such relation translates by multiplying by inverses of variables to a relation of the form $1 = \phi(x_1,...,x_n)$ with $\phi \in \mathscr{R}(x_1,...,x_n) \setminus \mathscr{R}$ a Laurent monomial, and thus reduces the dimension. Note that in the special case in which $f_{i0}$ and $g_{j0}$ singly dominate and are the same monomial, no relation is imposed on the region described above, so we are left only with order relations defining the region. In essence,  leaving only dominant monomials at a neighborhood of a point, such local relations fall into two distinct cases:
\begin{itemize}
  \item An order relation of the form $1 \dotplus g = 1$ with $g \in \mathscr{R}(x_1,...,x_n)$, which in fact describes a relation of the form $s \dotplus t = s$ with $s,t \in \mathscr{R}[x_1,...,x_n]$. The resulting quotient semifield $\mathscr{R}(x_1,...,x_n)/\langle 1 \dotplus g \rangle$ does not reduce the dimension of $\mathscr{R}(x_1,...,x_n)$, but only imposes new order relations on the variables.
  \item A `regular' relation, in the sense that it is not an order relation. Such a relation reduces the dimensionality of the image of $\mathscr{R}(x_1,...,x_n)$ in the quotient semifield.
\end{itemize}
Both of these categories are illustrated below and will be studied in Section \ref{Section:HR_decomp_HyperDimension} concerning dimensionality of kernels and skeletons.
We define a regular element of $\mathscr{R}(x_1,...,x_n)$ to be such that does not translate (locally) to order relations but only to regular relations (locally).  This observation will allow us to characterize those relations which correspond to corner loci (tropical varieties in tropical geometry). The kernels corresponding to these relations will be shown to form a sublattice of the lattice of principal kernels (which is itself a sublattice of the lattice of kernels). \\

In the following, we will characterize the generators of principal kernels of \\ $\mathscr{R}(x_1,...,x_n)$ which correspond to corner loci, which we call corner integral rational functions. In this subset of elements of $\mathscr{R}(x_1,...,x_n)$, the regular elements correspond to the traditional tropical varieties considered in tropical geometry, which are precisely the supertropical varieties defined by tangible polynomials (see subsection \ref{section:Cornerloci} and  \cite{SuperTropicalAlg}), while the irregular correspond to supertropical varieties defined by supertropical nontangible polynomials. Evidently `tangible' polynomials form a multiplicative subset in the domain of supertropical polynomials.

\ \\

\begin{exmp}
Consider the quotient map $\phi : \mathscr{R}(x) \rightarrow \mathscr{R}(x)/\langle x \dotplus 1 \rangle$. This map imposes the relation $x \dotplus 1 = 1$ on $\mathscr{R}(x)$, which is just the order relation $x \leq 1$. Under the map $\phi$,  $x$ is sent to $\bar{x} = x \langle x \dotplus 1 \rangle$, where now, in $\Im(\phi) = \mathscr{R}(\bar{x})$, $\bar{x}$ and $\bar{1}$ are comparable as opposed to the situation in $\mathscr{R}$, where $x$ and $1$ are not comparable, i.e., do not admit any order relation.  If instead of considering $x \dotplus 1$ we consider $|x| \dotplus 1 = x \dotplus x^{-1} \dotplus 1$, then as $|x| \geq 1$ the relation $|x| \dotplus 1 = 1$ is in fact $|x| = 1$ which yields the substitution map sending $x$ to $1$. Note that $|x|$ and $1$ are comparable in $\mathscr{R}(x)$, as mentioned above, $|x| \geq 1$, which is equivalent to the relation imposed by the equality $|x| \dotplus 1 = |x|$ or equivalently by $|x|^{-1} \dotplus 1 = 1$. The kernel $ \langle |x|^{-1} \dotplus 1 \rangle$, as $|x|^{-1} \dotplus 1 = 1$, is just the trivial kernel $\langle 1 \rangle = \{1 \}$.  \\
As seen in Proposition \ref{prop_principal_ker} and Corollary \ref{cor_principal_ker_by_order}, order relations affect the structures of kernels in a semifield. For instance, consider a principal kernel in a semifield $\mathscr{R}$ generated by an element $a \in \mathscr{R}$. Then $b \not \in \langle a \rangle$ for any element $b \in \mathscr{R}$ such that $b$ is not comparable to $a$.
\end{exmp}

\ \\

\begin{defn}
Let $\mathbb{S}$ be a bipotent semifield. Let $f = \frac{h}{g} = \frac{\sum_{i=1}^{k} h_i}{\sum_{j=1}^{m} g_j} \in \mathbb{S}(x_1,...,x_n)$ such that $h_i$ and $g_j$ are monomials for all $1 \leq i \leq k$ and $1 \leq j \leq m$. Then $f$ is said to be \emph{regular} if for each $x \in \mathbb{S}^{n}$, there exist $h_i$ and $g_j$ such that $h(x) = h_i(x)$, $g(x) = g_j(x)$, and $h_i \neq g_j$. Otherwise $f$ is called \emph{irregular}.
\end{defn}

\begin{nota}
Let $Reg(\mathscr{R}(x_1,...,x_n))$ denote the set of regular elements in $\mathscr{R}(x_1,...,x_n)$.
\end{nota}

\begin{rem}\label{rem_regular_closed_operations}
If $f,g \in Reg(\mathscr{R}(x_1,...,x_n))$ such that $f \neq 1$ and $g \neq 1$, then the following elements are also in $Reg(\mathscr{R}(x_1,...,x_n))$:
$$f^{-1}, \ \ f^{k} \ \text{with} \  k \in \mathbb{Z}, \  f \dotplus g, \  |f|= f \dotplus f^{-1}, \ f \wedge g.$$
\end{rem}
\begin{proof}
$f^{-1}$ is regular since the definition of regularity is invariant taking inverses.\linebreak
$f^k$ is regular follows easily from the regularity of $f$.
We will now prove that $f \dotplus g$ is regular.
Let $f,g \in Reg(\mathscr{R}(x_1,...,x_n))$. Write $f = \frac{h}{w} =  \frac{\sum_{i=1}^{k} h_i}{\sum_{j=1}^{m} w_j}$ and \linebreak $g = \frac{t}{s} = \frac{\sum_{i=1}^{l} t_i}{\sum_{j=1}^{r} s_j}$. Then
$$f \dotplus g = \frac{u}{v}= \frac{hs \dotplus tw}{ws} =  \frac{\sum_{i=1}^{k}\sum_{j=1}^{r} h_is_j \dotplus \sum_{i=1}^{l}\sum_{j=1}^{m} t_iw_j }{\sum_{i=1}^{m}\sum_{j=1}^{r} w_is_j}.$$ Let $x \in \mathscr{R}^n$.
If $u(x) = h_i(x)s_j(x)$ for some $1 \leq i \leq k$ and $1 \leq j \leq r$. Then in particular $s_j$ dominates $s$ at $x$ and since $f \neq 1$ there exists some $w_{i'} \neq h_i$ such that $w_{i'}(x) = w(x)$ thus $w_{i'}(x)s_j(x) = ws(x) = v(x)$. Since we have multiplicative cancellation (invertibility) in $\mathscr{R}(x_1,...,x_n)$ , $h_is_j \neq w_{i'}s_j$. If $u(x) = t_i(x)w_j(x)$ for some $1 \leq j \leq r$ then in particular $w_j$ dominates $w$ at $x$ and since $f \neq 1$ there exists some $s_{i'} \neq t_{i}$  such that $s_{j'}(x) = s(x)$ thus $s_{i'}(x)w_j(x) = ws(x) = v(x)$. Again, as noted above we have that $t_iw_j \neq s_{i'}w_j$ as desired.
Finally, $f \dotplus f^{-1}$ and  $f \wedge g = (f^{-1} \dotplus g^{-1})^{-1}$ which yields that they are regular.
\end{proof}


%
%

\begin{defn}
Let $\mathbb{S}$ be a bipotent semifield. A principal kernel $K = \langle f \rangle$ of $\mathbb{S}(x_1,...,x_n)$ is said to be \emph{regular} if $f \in \mathbb{S}(x_1,...,x_n)$ is regular. The Skeleton $Skel(f)$ corresponding to $K$ is said to be \emph{regular} also.
We denote the family of regular skeletons by $RegSkl(\mathbb{S}^n)$ and the family regular principal skeletons by $RegPSkl(\mathbb{S}^{n})$.
\end{defn}

We need to show that regularity of a principal kernel is well-defined, i.e., that it is independent of the choice of the generator of the kernel.

\begin{rem}\label{rem_every_generator_is_regular}
Let $f' \in \langle f \rangle$ be a generator of $\langle f \rangle$. Then as we have previously shown $Skel(f') = Skel(f)$. Let $f' = \frac{h}{g} = \frac{\sum_{i=1}^{k} h_i}{\sum_{j=1}^{m} g_j}$. Assume that $f'$ is irregular. Then there exists some $a \in \mathscr{R}^n$ such that $h(a) = h_{i_0}(a) = g_{j_0}(a) = g(a)$ and $h_{i_0}~=~g_{j_0}$ where for every $i \neq i_0$ and $j \neq j_0$ \ $h_i(a) < h_{i_0}(a)$ and $g_j(a)~<~g_{j_0}(a)$, respectively. Now, since $f'$ is continuous and by the definition of $\mathscr{R}$ (being isomorphic to $\mathbb{R}^{+}$) there is a neighborhood $\varepsilon(a)$ of $a$, $\{ a \} \subset \varepsilon(a) \subseteq \mathscr{R}^n$ for which the above holds implying that $Skel(f') \neq Skel(f)$ since by the definition of regularity no such neighborhood exists.
\end{rem}

\begin{exmp}
If $f \in \mathscr{R}(x_1,...,x_n)$ such that $f \neq f \dotplus 1$ (i.e., $1$ is essential in $f \dotplus 1$ ), then $f \dotplus 1 = \frac{f \dotplus 1}{1}$ is not regular since $1$ being essential in the numerator coincides with the denominator over some nonempty region.
\end{exmp}

\begin{note}
Note that there may exist a mutual monomial of $h$ and $g$, but in such a case, it can not dominate both $h$ and $g$ at the same point.
\end{note}

\begin{cor}\label{cor_regular_lattice}
The set of regular principal kernels forms a sublattice of \linebreak
$\PCon(\mathscr{R}(x_1,...,x_n))$.
\end{cor}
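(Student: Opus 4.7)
The plan is to verify closure of the set of regular principal kernels under the two lattice operations of $\PCon(\mathscr{R}(x_1,\dots,x_n))$, namely intersection and multiplication, directly from Remark \ref{rem_regular_closed_operations}. By Corollary \ref{cor_max_semifield_principal_kernels_operations}, for any principal kernels $\langle f \rangle, \langle g \rangle \in \PCon(\mathscr{R}(x_1,\dots,x_n))$ one has
$$\langle f \rangle \cap \langle g \rangle = \langle |f| \wedge |g| \rangle, \qquad \langle f \rangle \cdot \langle g \rangle = \langle |f| \dotplus |g| \rangle.$$
Moreover, by Remark \ref{rem_every_generator_is_regular}, regularity is a property of the principal kernel itself, not of the particular generator chosen; in particular, if $\langle f \rangle$ is regular then any generator, including the canonical generator $|f|$, is regular.

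Thus, given two nontrivial regular principal kernels $\langle f \rangle$ and $\langle g \rangle$, I would replace $f,g$ by their (regular) absolute values $|f|,|g|$ and then invoke Remark \ref{rem_regular_closed_operations}, which states precisely that the class $Reg(\mathscr{R}(x_1,\dots,x_n))$ of regular elements is closed under both $\wedge$ and $\dotplus$ (on nontrivial inputs). Therefore $|f| \wedge |g|$ and $|f| \dotplus |g|$ are regular, and consequently the kernels they generate are regular. The trivial kernels $\langle 1 \rangle = \{1\}$ and $\mathscr{R}(x_1,\dots,x_n)$ are handled separately as the bottom and top of the lattice: for any principal kernel $K$ one has $\langle 1 \rangle \cdot K = K$, $\langle 1 \rangle \cap K = \langle 1 \rangle$, $\mathscr{R}(x_1,\dots,x_n) \cdot K = \mathscr{R}(x_1,\dots,x_n)$, and $\mathscr{R}(x_1,\dots,x_n) \cap K = K$, so adjoining them does not spoil closure.

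Since this corollary is essentially a repackaging of Remark \ref{rem_regular_closed_operations} into the lattice-theoretic language of $\PCon$, there is no real obstacle beyond the bookkeeping step of passing from a generator $f$ to the canonical generator $|f|$ in order to apply the remark. The only subtle point worth mentioning explicitly is that Remark \ref{rem_regular_closed_operations} excludes the case where one of the inputs equals $1$; this is why the trivial kernel must be treated as a degenerate edge case rather than as a direct instance of the remark.
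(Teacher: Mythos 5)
Your proposal is correct and takes essentially the same route as the paper: the paper's proof just cites Corollary \ref{cor_max_semifield_principal_kernels_operations} and Remark \ref{rem_regular_closed_operations}, which is precisely the combination you spell out. Your extra bookkeeping (passing to $|f|,|g|$ via Remark \ref{rem_every_generator_is_regular} and handling the trivial kernel separately, since Remark \ref{rem_regular_closed_operations} excludes the $f=1$ case) fills in details the paper leaves implicit but does not change the argument.
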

\begin{proof}
This follows directly from Corollary \ref{cor_max_semifield_principal_kernels_operations} and Remark \ref{rem_regular_closed_operations}.
\end{proof}

\begin{defn}
We denote by $Reg\PCon(\mathscr{R}(x_1,...,x_n))$ the sublattice of regular principal kernels in $\PCon(\mathscr{R}(x_1,...,x_n))$ .
\end{defn}

\subsection{Corner-Integrality}\label{subsection:corner_integrality}

\ \\

\begin{defn}\label{defn_corner_integral}
Let $\mathbb{S}$ be a bipotent semifield and let $f$ be an element of $\mathbb{S}(x_1,...,x_n)$. Write $f = \frac{h}{g}= \frac{\sum_{i=1}^{k}h_i}{\sum_{j=1}^{m}g_j}$ where $h_i$ and~$g_j$ for $i=1,...,k$ and $j=1,...,m$ are the component monomials in $\mathbb{S}[x_1,...,x_n]$ of the numerator and denominator of $f$, respectively. We say $f$ is \emph{corner-integral} if the following pair of conditions holds for every $x \in \mathbb{S}^n$:
\begin{equation}\label{corner_int_cond_1}
 \exists i \neq j \in \{1,...,k\} \ \text{such that} \ h_i(x) = h_j(x) \Rightarrow
\end{equation}
$$ \exists \ t \in \{1,...,m\} \ \text{s.t} \ h_i(x) \leq g_t(x) \ \text{or} \ \exists s \in \{1,...,k\} \setminus \{i,j\} \ \text{s.t} \ h_s(x) > h_i(x).$$

\begin{equation}\label{corner_int_cond_2}
\exists i \neq j \in \{1,...,m\} \ \text{such that}  \ g_i(x) = g_j(x) \Rightarrow
\end{equation}
$$\exists \ t \in \{1,...,k\} \ \text{s.t} \ g_i(x) \leq h_t(x)  \ \text{or} \ \exists s \in \{1,...,m\} \setminus \{i,j\} \ \text{s.t} \ g_s(x) > g_i(x).$$

In other words, $f \in \mathbb{S}(x_1,...,x_n)$ is corner integral if for any $x \in \mathbb{S}^n$, if $x$ is a corner root of $h$ then $g(x) \geq h(x)$ (i.e., $g$ surpasses $h$ at $x$) and if $x$ is a corner root of $g$ then $h(x) \geq g(x)$ (i.e., $h$ surpasses $g$ at $x$).
\end{defn}

\begin{defn}\label{defn_corner_integrality}
Let $\mathbb{S}$ be a bipotent semifield. A principal kernel $K= \langle f \rangle$ of $\mathbb{S}(x_1,...,x_n)$  is said to be \emph{corner-integral} if $f \in \mathbb{S}(x_1,...,x_n)$ is corner-integral. In other words, a principal kernel is corner-integral if it has a corner-integral generator.
The skeleton $Skel(K)$ corresponding to~$K$ is said to be a corner-integral skeleton.
\end{defn}

\begin{rem}
When $f \in \mathbb{S}(x_1,...,x_n)$ is said to be corner integral we always take $f$ to be a reduced fraction.
For example, $x \in \mathscr{R}(x)$ is trivially corner-integral, though $\frac{(x+ \alpha)x}{x+\alpha}$ for $\alpha > 1$ is not - since substituting $\alpha$ for $x$ we get $\frac{\alpha^2 + \alpha^2}{\alpha + \alpha}$. Thus $\alpha$ is a corner root of the numerator which is not surpassed by the denominator since $\alpha^2 > \alpha$.
\end{rem}

%

\begin{nota}
We denote by $CI(\mathbb{S}(x_1,...,x_n))$ the set of corner-integral elements of $\mathbb{S}(x_1,...,x_n)$.
\end{nota}

\begin{rem}\label{rem_corner_integrality_of_an_expansion}
Let $f \in \mathscr{R}(x_1,...,x_n)$ be corner-integral. Then $f^{-1}$,  $f^{k}$, for any $k \in \mathbb{N}$, and $\sum_{i=1}^{m}f^{d(i)}$ with $d(i) \in \mathbb{Z}$, also are corner integrals.
\end{rem}
\begin{proof}
First, $f^{-1}$ is corner integral as the definition of corner integrality is invariant w.r.t taking inverses.
Write $f = \frac{h}{g}= \frac{\sum_{i=1}^{k}h_i}{\sum_{j=1}^{m}g_j}$ where $h_i, g_j \in \mathscr{R}[x_1,..., x_n]$ are monomials composing $f$'s numerator and denominator. Then $f^{k} = \frac{h^{k}}{g^{k}}$. Since the corner roots of $h$ and $g$ coincide with the corner roots of $h^k$ and $g^k$, respectively, corner integrality is preserved. For the last assertion, we consider two different cases: (1) If $x \in \mathscr{R}^n$ then $ f'(x) =\sum_{i=1}^{m}f(x)^{d(i)} = (\frac{h(x)}{g(x)})^{d(j)}$ for some $j \in \{1,...,m\}$. If $x$ is such that $f' =\sum_{i \in I}f(x)^{d(i)}$ for some subset $I \subseteq \{1,...,m\}$ where $|I| \geq 2$ where for any $s,t \in I$, $f(x)^{d(s)} = f(x)^{d(t)}$ and\linebreak $d(s) \neq d(t)$. W.l.o.g., assume $d(t) > d(s)$, thus $f(x)^{d(t)-d(s)} =1$ which by \linebreak Remark \ref{rem_torsion_free}, yields that $f(x) = 1$, so $h(x) = g(x)$. Now, write $d = \max_{i \in I}\{d(i)\}$, then $f' =\frac{\sum_{i \in I}h(x)^{d(i)}g(x)^{d-d(i)}}{g(x)^{d}}$ and since $h(x) = g(x)$, $f' =(\frac{\sum_{i \in I}g(x)}{g(x)})^{d} $ and integrality obviously holds. (2) If $x$ is not as in (1) then $f'(x) =\sum_{i=1}^{m}f(x)^{d(i)} = f(x)^{d(j)}$ for exactly one  $j \in \{1,...,m\}$. If $d(j) = 0$ corner-integrality at $x$ is trivial, thus we can assume $d(j) \geq 1$ for otherwise just take $f^{-1}$, and by the above $f'$ is corner integral at $x$ as $f^{d(j)}$ is corner-integral (at any point). Since one of the above options is true for any $x \in \mathscr{R}^n$ we get that $f'$ is corner-integral.
\end{proof}

\begin{rem}
It can be shown that if  $f,g \in \mathscr{R}(x_1,...,x_n)$ are corner-integral then \linebreak $|f| + |g|$ may not be corner-integral. Thus the collection of corner-integral principal \linebreak kernels is not a lattice. In our study we thus take the lattice \emph{generated} by principal corner-integral kernels which contains elements which are not corner-integral. These elements will be shown to correspond to finitely generated corner loci (to be introduced shortly) which are not principal.
\end{rem}

\ \\

\subsection{Appendix :  Limits of skeletons}

\ \\

The following discussion gives some of the flavor of the structure and behavior of irregular principal kernels of an archimedean and bipotent semifield $\mathbb{H}$.
We show that every principal kernel is a certain kind of limit of irregular principal kernels.

\begin{rem}\label{rem_limit_of_skeletons}

Consider the polynomial $f(x) = x$. As $|x| = x \dotplus x^{-1}$ generates the kernel $\langle x \rangle$, the skeleton corresponding to $f$ is
$$Skel(f) = Skel(|f|)=\{ x \in \mathbb{H}^{n} \ : \ |f(x)| = 1 \},$$
i.e., the vertical line for which $x=1$. We will now consider a pair of skeletons related to $Skel(f)$. Let $\alpha, \beta \in \mathbb{H}$ such that $\alpha, \beta \geq 1$. Define the following rational functions :
$$ f_{\alpha}(x) = \frac{1}{\alpha} f(x) \dotplus 1 = \begin{cases}
1  &  \  f(x)  \leq  \alpha; \\
\frac{f(x)}{\alpha}  & \ f(x) \geq \alpha;
\end{cases}$$

$$ _{\beta}f(x) = \frac{1}{\beta} f^{-1}(x) \dotplus 1 = \begin{cases}
\frac{f(x)^{-1}}{\beta}  &  \  f(x)  \leq  \frac{1}{\beta} ; \\
1  & \ f(x) \geq \frac{1}{\beta}.
\end{cases}$$

Define the rational function $f_{\alpha,\beta}(x) = f_{\alpha}(x) \dotplus  _{\beta}f(x)$, i.e.,
$$f_{\alpha,\beta}(x) = \begin{cases}
\frac{f(x)^{-1}}{\beta}  &  \  f(x)  \leq  \frac{1}{\beta} ; \\
1  & \ \frac{1}{\beta} \leq f(x)  \leq \alpha ; \\
\frac{f(x)}{\alpha}  & \ f(x) \geq \alpha.
\end{cases} $$
Then $Skel(f_{\alpha,\beta})$ is the stripe $\{ x  \ : \ \frac{1}{\beta} \leq f(x)  \leq \alpha\}$ containing $Skel(f)$.
Taking $\alpha = \beta$, we get that
$$f_{\alpha,\alpha}(x) = f_{\alpha}(x) \dotplus  _{\alpha}f(x) = \left(\frac{1}{\alpha} f(x) \dotplus 1\right) \dotplus \left(\frac{1}{\alpha} f^{-1}(x) \dotplus 1\right)$$ $$= \frac{1}{\alpha}( f(x) \dotplus f^{-1}(x)) \dotplus 1 = \frac{1}{\alpha}|f(x)| \dotplus 1.$$
Note that  $f_{\alpha}$, \ $ _{\beta}f$ are irregular functions, and so is $f_{\alpha,\beta}$ if either $\alpha \neq 1$ or $\beta \neq 1$. \linebreak Moreover, when $\alpha = \beta = 1$, we have that $f_{\alpha,\beta} = |f|$.\\
Assume $\mathbb{H}$ is divisible then $\mathbb{H}$ is dense. In such a case for \newline $\alpha_i, \beta_i \in \mathbb{H}$ such that $\alpha_i, \beta_i \geq 1, \ i=1,2$, $\alpha_1 > \alpha_2, \beta_1 > \beta_2$  we have that \linebreak  $Skel(f_{\alpha_1,\beta_1}) \supset Skel(f_{\alpha_2,\beta_2})$ (proper containment).
\end{rem}

\begin{lem}\label{lem_formal_limits}
If $f \in \mathbb{H}(x_1,...,x_n)$, then
\begin{equation}
|f| = \lim_{\alpha \rightarrow_+ 1, \beta \rightarrow_+ 1} f_{\alpha,\beta}.
\end{equation}
(Note that $f_{\alpha,\beta}$ converge uniformly to $|f|$ and $\alpha \rightarrow_+ 1$ means $\alpha \geq 1$, i.e., one sided (`positive') limit.) Consequently,
\begin{equation}
\langle f \rangle = \bigcup_{\substack{\alpha > 1 \\ \beta > 1}} \langle f_{\alpha,\beta} \rangle,
\end{equation}
and
\begin{equation}
Skel(f) = \bigcap_{\substack{\alpha > 1 \\ \beta > 1}} Skel(f_{\alpha,\beta}).
\end{equation}
Moreover, if \ $\mathbb{H}$ is divisible, we have that for $\alpha_1, \alpha_2, \beta_1, \beta_2 \in \mathbb{H}$ such that \linebreak $\alpha_i, \beta_i~\geq~1, \ i=1,2$, $\alpha_1 > \alpha_2, \beta_1 > \beta_2$  we have that
\begin{equation}
Skel(f_{\alpha_1,\beta_1}) \supset Skel(f_{\alpha_2,\beta_2}).
\end{equation}
\begin{flushleft}and thus\end{flushleft}
\begin{equation}
Skel(f) \subset Int(Skel(f_{\alpha,\beta})) \ \ \forall \alpha, \beta > 1 .
\end{equation}
Here $Int(A)$ is the interior of $A$ for a set $A$.
\end{lem}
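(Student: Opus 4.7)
My plan is to first establish uniform convergence of $f_{\alpha,\beta}$ to $|f|$ by direct case analysis, and then derive the skeleton intersection, kernel union, and the monotonicity statements as consequences. For the limit, split into cases based on where $f(x)$ lies relative to $1/\beta$, $1$, and $\alpha$. Using the piecewise description of $f_{\alpha,\beta}$ from Remark \ref{rem_limit_of_skeletons}, in each case one verifies $1 \leq f_{\alpha,\beta}(x) \leq |f(x)|$ and $|f(x)|/f_{\alpha,\beta}(x) \leq \max(\alpha,\beta)$: for instance, when $f(x) \geq \alpha$ one has $f_{\alpha,\beta}(x) = f(x)/\alpha$ and $|f(x)| = f(x)$; when $1 \leq f(x) \leq \alpha$ one has $f_{\alpha,\beta}(x) = 1$ and $|f(x)| = f(x) \leq \alpha$; the two cases with $f(x) \leq 1$ are symmetric. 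Since $\max(\alpha,\beta) \to 1$ as $\alpha,\beta \to 1^+$, the uniform bound $|f|/f_{\alpha,\beta} \leq \max(\alpha,\beta)$ yields uniform convergence of $f_{\alpha,\beta}$ to $|f|$.

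For $Skel(f) = \bigcap Skel(f_{\alpha,\beta})$, the inclusion $\subseteq$ follows from inclusion-reversal of $Skel$ applied to $\langle f_{\alpha,\beta}\rangle \subseteq \langle f\rangle$, where the kernel containment itself comes from $f_{\alpha,\beta} \leq |f|$ via Remark \ref{rem_kernel_by_abs_value}. For $\supseteq$, any $x$ with $|f(x)| > 1$ admits $\alpha,\beta > 1$ in $\mathbb{H}$ with $\max(\alpha,\beta) < |f(x)|$, where I use that divisibility of $\mathbb{H}$ forces density between $1$ and $|f(x)|$; the case analysis above then shows $f_{\alpha,\beta}(x) > 1$, so $x \notin Skel(f_{\alpha,\beta})$.

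The kernel union $\langle f\rangle = \bigcup \langle f_{\alpha,\beta}\rangle$ will be the main obstacle. The inclusion $\supseteq$ is $f_{\alpha,\beta} \in \langle f\rangle$ from $f_{\alpha,\beta} \leq |f|$ and Remark \ref{rem_kernel_by_abs_value}. For the reverse inclusion, given $g \in \langle f\rangle$ with $|g| \leq |f|^k$, the strategy is to chain the uniform bound $|f| \leq \max(\alpha,\beta)\cdot f_{\alpha,\beta}$ with $|g| \leq |f|^k$ to obtain $|g| \leq \max(\alpha,\beta)^k f_{\alpha,\beta}^k$, and then absorb the constant factor into a higher power of $f_{\alpha,\beta}$ on the region where $f_{\alpha,\beta} > 1$; on the flat region $\{1/\beta \leq f \leq \alpha\}$ where $f_{\alpha,\beta}=1$, one uses that $|f|$ itself is bounded by $\max(\alpha,\beta)$ to control $|g|$ directly. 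The delicate point is unifying these two regimes into a single inequality of the form $|g| \leq f_{\alpha,\beta}^m$; this is where density of $\mathbb{H}$ and the ability to sharpen the constants by shrinking $\alpha$ and $\beta$ become essential.

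Finally, the monotonicity and interior claims in the divisible case rest on density. For strict containment $Skel(f_{\alpha_1,\beta_1}) \supset Skel(f_{\alpha_2,\beta_2})$ when, say, $\alpha_1 > \alpha_2$, pick $\alpha' \in \mathbb{H}$ strictly between them (possible by divisibility, hence density), choose $x$ with $f(x) = \alpha'$, and observe that $x \in Skel(f_{\alpha_1,\beta_1}) \setminus Skel(f_{\alpha_2,\beta_2})$; the argument for the $\beta$ side is analogous. Since $Skel(f_{\alpha,\beta})$ is the closed strip $\{1/\beta \leq f \leq \alpha\}$, its topological interior is $\{1/\beta < f < \alpha\}$, which already contains $Skel(f) = \{f=1\}$ whenever $\alpha,\beta > 1$, giving the final claim $Skel(f) \subset Int(Skel(f_{\alpha,\beta}))$.
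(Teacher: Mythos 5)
Your handling of the uniform convergence, the skeleton intersection, the strict nesting of skeletons under $\alpha_1 > \alpha_2$, $\beta_1 > \beta_2$, and the interior claim is sound: the piecewise case analysis giving $1 \le f_{\alpha,\beta}(x) \le |f(x)|$ and $|f(x)| \le \max(\alpha,\beta)\,f_{\alpha,\beta}(x)$ is correct, and both the squeeze and the density argument (divisibility supplies, via $k$-th roots, elements of $\mathbb{H}$ strictly between $1$ and $|f(x)|$) go through as you say. The paper's own proof just cites Remark \ref{rem_limit_of_skeletons} and Corollary \ref{cor_max_semifield_principal_kernels_operations} and records the monotonicity of the family of kernels, so your explicit verification of those four claims is more detailed but compatible.

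The real issue is exactly where you flag it, but it is not a ``delicate point'' that more careful estimation can close: the equality $\langle f\rangle = \bigcup_{\alpha,\beta>1}\langle f_{\alpha,\beta}\rangle$ is false whenever $f$ is nonconstant with $Skel(f)\ne\emptyset$. By Remark \ref{rem_kernel_by_abs_value}, $g\in\langle f_{\alpha,\beta}\rangle$ forces $|g|\le f_{\alpha,\beta}^{\,m}$ globally for some $m$; since $f_{\alpha,\beta}$ is identically $1$ on the strip $S_{\alpha,\beta}=\{1/\beta\le f\le\alpha\}$, this requires $g\equiv 1$ on $S_{\alpha,\beta}$. But $S_{\alpha,\beta}$ strictly contains $Skel(f)$ for every $\alpha,\beta>1$ (over the dense $\mathbb{H}$, a nonconstant $f$ with nonempty skeleton attains values strictly between $1$ and $\alpha$), and on $S_{\alpha,\beta}\setminus Skel(f)$ one has $|f|>1=f_{\alpha,\beta}^{\,m}$. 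Taking $g=f$ therefore shows $f\notin\langle f_{\alpha,\beta}\rangle$ for every $\alpha,\beta>1$; concretely, for $f=|x|\in\mathscr{R}(x)$ one has $f_{\alpha,\beta}={}_{\alpha}f_{\alpha}$ and any $x$ with $1<x<\alpha$ witnesses $|f(x)|>1=f_{\alpha,\beta}(x)$. Hence $\bigcup_{\alpha,\beta>1}\langle f_{\alpha,\beta}\rangle$ is an increasing union of kernels, all with skeleton $Skel(f)$, yet a proper subkernel of $\langle f\rangle$ --- precisely the lack of injectivity of the kernel--skeleton map the paper works around elsewhere with polars. Your strategy of chaining $|g|\le|f|^k\le\max(\alpha,\beta)^k f_{\alpha,\beta}^{\,k}$ cannot absorb the constant on the flat region, because there $f_{\alpha,\beta}^{\,m}=1$ for every $m$ while $|f|>1$. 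The paper's proof does not address this either, so the obstruction lies in the statement of the lemma rather than in your approach.
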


\begin{proof}
The assertions follow directly from the construction of Remark \ref{rem_limit_of_skeletons} and \linebreak Corollary \ref{cor_max_semifield_principal_kernels_operations}. We note that, for $\alpha, \beta \in \mathbb{H}$ such that $\alpha > \beta$, since $\alpha^{-1}\beta < 1$ we have that $(\alpha^{-1}\beta) \dotplus 1 = 1$ and so $\beta f \dotplus 1 = (\alpha^{-1}\beta) \alpha f \dotplus 1 \in \langle f \rangle$, thus $\langle f_{\beta,\beta} \rangle \subseteq \langle f_{\alpha,\alpha} \rangle$.
\end{proof}

\newpage
We conclude with the following consequences for $\mathscr{R}(x_1,...,x_n)$:

\begin{cor}\label{cor_formal_limit}
Every principal kernel in $\mathscr{R}(x_1,...,x_n)$ is a limit of irregular principal kernels.
Every skeleton in $\mathscr{R}^{n}$ is a limit (with respect to inclusion) of irregular skeletons.
\end{cor}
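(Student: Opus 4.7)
The plan is to apply Lemma~\ref{lem_formal_limits} directly. Given any principal kernel $\langle f \rangle$ in $\mathscr{R}(x_1,\dots,x_n)$, I form the family $\{\langle f_{\alpha,\beta}\rangle : \alpha,\beta > 1\}$ using the construction from Remark~\ref{rem_limit_of_skeletons}, namely $f_{\alpha,\beta} = (\tfrac{1}{\alpha}f \dotplus 1) \dotplus (\tfrac{1}{\beta}f^{-1} \dotplus 1)$. Lemma~\ref{lem_formal_limits} immediately yields $\langle f \rangle = \bigcup_{\alpha,\beta > 1}\langle f_{\alpha,\beta}\rangle$ (a directed union) and $Skel(f) = \bigcap_{\alpha,\beta > 1}Skel(f_{\alpha,\beta})$ (a decreasing intersection, each term properly containing $Skel(f)$ in its interior). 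It therefore remains only to verify that each $f_{\alpha,\beta}$ with $\alpha,\beta > 1$ is irregular, so that each $\langle f_{\alpha,\beta}\rangle$ and each $Skel(f_{\alpha,\beta})$ is irregular in the sense of the paper.

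For this verification I would write $f = h/g$ with $h, g \in \mathscr{R}[x_1,\dots,x_n]$ and compute
$$f_{\alpha,\beta} = \frac{\beta h^2 \dotplus \alpha g^2 \dotplus \alpha\beta gh}{\alpha\beta gh},$$
using idempotency of addition to collapse the duplicated $\alpha\beta gh$ term. Over the stripe $\{x : \beta^{-1} \leq f(x) \leq \alpha\}$, which is nonempty and has nonempty interior by density of $\mathscr{R}$ whenever $\alpha,\beta > 1$, one has $\beta h^2 \leq \alpha\beta gh$ and $\alpha g^2 \leq \alpha\beta gh$. Hence on this stripe the dominant monomial of the numerator is one of the monomials of $\alpha\beta gh$ and it coincides with the dominant monomial of the denominator, violating the regularity condition. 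Thus $f_{\alpha,\beta}$ is irregular.

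Combining, $\langle f \rangle$ is realized as the directed union of the irregular principal kernels $\langle f_{\alpha,\beta}\rangle$, and $Skel(f)$ as the decreasing intersection of the irregular skeletons $Skel(f_{\alpha,\beta})$; this is the sense of \emph{limit with respect to inclusion} meant in the statement. No serious obstacle arises: Lemma~\ref{lem_formal_limits} carries essentially all of the weight, and the only point requiring care is the uniform irregularity check, which the computation above settles for arbitrary $f \in \mathscr{R}(x_1,\dots,x_n)$ rather than only for $f(x)=x$ as treated in Remark~\ref{rem_limit_of_skeletons}.
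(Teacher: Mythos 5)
Your approach is the same as the paper's: both rely on Lemma~\ref{lem_formal_limits}, which in turn appeals to the assertion in Remark~\ref{rem_limit_of_skeletons} that $f_{\alpha,\beta}$ is irregular whenever $\alpha\ne 1$ or $\beta\ne 1$. What you add is an explicit verification of that assertion, which the paper states without proof, and for the main case $Skel(f) \neq \emptyset$ your computation is essentially right.

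There are, however, two soft spots in that verification. First, the claim that the stripe $\{x : \beta^{-1}\leq f(x)\leq\alpha\}$ is nonempty with nonempty interior for every $f$ and every $\alpha,\beta>1$ is false when $Skel(f)=\emptyset$: by Proposition~\ref{prop_emptyset_kernels} such $f$ is bounded from below by some $\gamma>1$, and for $1<\alpha,\beta<\gamma$ the stripe is empty. In the simplest instance, $f=\alpha_0>1$ a constant and $1<\alpha<\alpha_0$, one gets $f_{\alpha,\beta}=\alpha_0/\alpha$, a constant fraction whose numerator monomial and denominator monomial are distinct and which is therefore \emph{regular} under the definition of regularity, so the irregularity claim fails outright for this $f$. (The paper's Remark~\ref{rem_limit_of_skeletons} has the same unflagged gap.) Second, even when $Skel(f)\neq\emptyset$, the witness point $x$ must be taken in the interior of the stripe \emph{and} off the corner locus of $gh$, so that $\alpha\beta gh$ has a \emph{unique} dominant monomial at $x$; otherwise two distinct dominant monomials $m_1\neq m_2$ of $gh$ would furnish a pair $(h_i,g_j)=(m_1,m_2)$ satisfying the regularity condition at $x$. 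Such $x$ exist since the corner locus of $gh$ has empty interior while the stripe has nonempty interior, but this selection should be stated.
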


\begin{cor}\label{cor_irregular_neigbourhood}
Let $\langle f \rangle$ be a principal regular kernel in $\mathscr{R}(x_1,...,x_n)$. Then there exists an irregular principal kernel $\langle g \rangle$ such that $\langle g \rangle \subset \langle f \rangle$ and \linebreak $Int(Skel(\langle g \rangle)) \supset Skel(\langle f \rangle)$.
\end{cor}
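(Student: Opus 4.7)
The plan is to use the explicit construction from Remark \ref{rem_limit_of_skeletons} and Lemma \ref{lem_formal_limits}. Concretely, choose any $\alpha \in \mathscr{R}$ with $\alpha > 1$ and set
$$g \ := \ f_{\alpha,\alpha} \ = \ \alpha^{-1}|f| \dotplus 1 \ = \ \alpha^{-1}(f \dotplus f^{-1}) \dotplus 1,$$
using the notation of Remark \ref{rem_limit_of_skeletons}. I propose to verify the three required properties in the following order.

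First, I will invoke Lemma \ref{lem_formal_limits} directly. It yields the inclusion $\langle g \rangle \subseteq \langle f \rangle$ together with $Skel(f) \subset Int(Skel(g))$, which is precisely the topological containment demanded by the statement. Strictness of $\langle g \rangle \subsetneq \langle f \rangle$ will then be deduced by contradiction: if the two principal kernels coincided, Proposition \ref{generators_and_skeletons} would force $Skel(g) = Skel(f)$, but Lemma \ref{lem_formal_limits} (using divisibility of $\mathscr{R}$, hence density) guarantees $Skel(g) \supsetneq Skel(f)$, so the inclusion of kernels must be strict.

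The main remaining task, and the step I expect to require most care, is to verify that $g$ is irregular. Writing $|f| = p/q$ with $p,q \in \mathscr{R}[x_1,\dots,x_n]$, one has
$$g \ = \ \frac{\alpha^{-1}p \dotplus q}{q} \ = \ \frac{\alpha^{-1}\sum_i p_i \dotplus \sum_j q_j}{\sum_j q_j},$$
where $p_i$ and $q_j$ are monomials. The open region $U = \{x \in \mathscr{R}^n : |f(x)| < \alpha\}$ is nonempty (it contains $Skel(f)$, which by assumption is nonempty, and if $f = 1$ the statement is vacuous) and open by continuity of $|f|$. Over $U$, the term $q$ strictly dominates $\alpha^{-1}p$ in the numerator, so the dominant monomial of the numerator at any $x \in U$ is some $q_{j_0}$ which also dominates the denominator. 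Hence at every $x \in U$, the only candidate dominant monomials in numerator and denominator coincide, violating the regularity condition. I will need to check that this conclusion is independent of the chosen representation of $g$ as a reduced fraction of sums of monomials; this follows from Remark \ref{rem_every_generator_is_regular}, since irregularity is a property of $\langle g \rangle$ (and, by Proposition \ref{generators_and_skeletons} together with the regularity-via-neighborhood argument of Remark \ref{rem_every_generator_is_regular}, it is detected by the existence of an open region where numerator and denominator have coincident dominant monomials).

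Assembling the three steps gives the corollary. The only subtle point is the last one: making rigorous that the presence of an open region where $g \equiv 1$ with a shared dominant monomial in numerator and denominator really certifies irregularity in the sense of the definition, independently of how one chooses to expand the rational function; this is handled by reducing to the canonical function-theoretic content via Note \ref{note_semiring_of_functions} and Remark \ref{rem_every_generator_is_regular}.
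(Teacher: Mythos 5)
Your proposal is correct and follows exactly the route the paper intends: the paper's own ``proof'' is the one-line remark ``the existence of $\langle g\rangle$ follows from the above discussion,'' referring to Remark~\ref{rem_limit_of_skeletons} and Lemma~\ref{lem_formal_limits}, where $g = f_{\alpha,\alpha} = \alpha^{-1}|f|\dotplus 1$ is precisely the element you pick. You have simply filled in the details the paper leaves implicit --- most usefully the verification that $f_{\alpha,\alpha}$ is irregular, which the paper only asserts without proof in Remark~\ref{rem_limit_of_skeletons}; your argument (on the nonempty open region $\{|f|<\alpha\}$ the numerator $\alpha^{-1}p\dotplus q$ and denominator $q$ share all their dominant monomials) is a correct justification of that assertion. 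One small inaccuracy: you call the case $f=1$ ``vacuous,'' but in fact $f=1=\frac{1}{1}$ is irregular by the paper's definition, so it is already excluded by the hypothesis that $\langle f\rangle$ is regular --- the corollary's conclusion ($\langle g\rangle\subsetneq\{1\}$) genuinely cannot hold for $f=1$, so this exclusion matters.
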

\begin{proof}
The existence of $\langle g \rangle$ follows from the above discussion.
\end{proof}

\newpage
\section{The corner loci - principal skeletons correspondence}\label{section:Cornerloci}

\ \\

In this section we will establish a connection between a geometric object, which is a subset of $\mathscr{R}^n$, called `corner-locus' and a certain kind of principal skeletons. In fact, corner locus and what will be shown to be its corresponding principal kernel are two distinct ways to define the exact same subset of $\mathscr{R}^n$.

\ \\


\subsection{Corner loci}

\ \\

\begin{defn}\label{defn_supertropical_polynomial_semiring}
Let the \emph{supertropical semiring of polynomials} $\mathcal{F}(\mathscr{R}[x_1,...,x_n])$  be the \emph{supertropical} polynomial semiring (defined in \cite{SuperTropicalAlg}, Definition (4.1))
$$(R[x_1,...,x_n] , \mathcal{G}[x_1,...,x_n] ,  \nu)$$
where $ R = \mathscr{R} \cup \nu(\mathscr{R})$, $\mathcal{G} = \nu(\mathscr{R})$ is called the \emph{ghost ideal} with  $\nu : R \rightarrow \mathcal{G}$ an idempotent endomorphism of semirings such that $\nu|_\mathcal{G} = id_\mathcal{G}$. $\nu$ is called the \emph{ghost map}.
The elements of $\mathcal{G}$ are called \emph{ghosts} while the elements of $\mathscr{R} = R \setminus \mathcal{G}$ are called \emph{tangibles}.
For any monomial $f \in \mathcal{F}(\mathscr{R}[x_1,...,x_n])$, $f^{\nu} = \nu(f) = f \dotplus f \in \mathcal{G}[x_1,...,x_n]$ is called a ghost monomial. A monomial $f \in \mathcal{F}(\mathscr{R}[x_1,...,x_n])$ is called \emph{tangible} if  $f \in \mathscr{R}[x_1,...,x_n]$ (equivalently, $f \not \in \mathcal{G}[x_1,...,x_n]$). For monomials $f \in \mathcal{F}(\mathscr{R}[x_1,...,x_n])$ and $g \in \mathcal{G}[x_1,...,x_n]$ one has that  $$f^{\nu} = f^{\nu} \dotplus f \ \text{and} \  f \cdot g \in \mathcal{G}[x_1,...,x_n].$$
A polynomial $f \in \mathcal{F}(\mathscr{R}[x_1,...,x_n])$ is called \emph{tangible} if none of its component \linebreak monomials is ghost; otherwise we say $f$ is \emph{non-tangible}.
\end{defn}

\begin{note}
In our study we consider the evaluations of a supertropical polynomial $f \in \mathcal{F}(\mathscr{R}[x_1,...,x_n])$ on the tangibles, i.e., over $\mathscr{R}^n$, i.e., we consider $\mathcal{F}(\mathscr{R}[x_1,...,x_n])$ as mapped to the semiring of functions $Fun(\mathscr{R}^n,R)$ where \linebreak $ R = \mathscr{R} \cup \nu(\mathscr{R})$ as in Definition \ref{defn_supertropical_polynomial_semiring}.
\end{note}

\ \\

\begin{rem}
Let $f \in \mathcal{F}(\mathscr{R}[x_1,...,x_n])$ be a supertropical polynomial. Then $f$ can be written uniquely in the form
\begin{equation}\label{eq_goast_tangible_decomposition}
f = \sum_{i=1}^{t} h_i \dotplus \sum_{j=1}^{s} g_j^{\nu}
\end{equation}
where $h_i, g_j \in \mathscr{R}[x_1,...,x_n]$ are distinct monomials for $1 \leq i \leq t$ and $1 \leq j \leq s$. The polynomial $h = \sum_{i=1}^{t} h_i \in \mathscr{R}[x_1,...,x_n]$ is called the \emph{tangible part} of $f$ and the polynomial  $g^{\nu} = g \dotplus g \in \mathcal{G}[x_1,...,x_n]$ for $g = \sum_{j=1}^{s} g_j$ is called the \emph{ghost part} of $f$.\linebreak Thus $f$ can be expressed uniquely in the form:
$$f = h \dotplus g^{\nu} = h \dotplus (g \dotplus g) = \sum_{i=1}^{t} h_i \dotplus \sum_{j=1}^{s} (g_j \dotplus g_j).$$
In view of this  we can consider an element of $\mathcal{F}(\mathscr{R}[x_1,...,x_n])$ as a polynomial each of whose component monomials occurs either once, if it is a component monomial of the tangible part, or twice, if it is a component monomial of the ghost part.
\end{rem}

\begin{note}\label{note_super_tropical_representation}
In what follows we consider a supertropical polynomial $f$ as a sum of tangible monomials in $\mathscr{R}[x_1,...,x_n]$ where a component monomial occurs once if it belongs to the tangible part of $f$ and twice if it belongs to the ghost part of $f$.
\end{note}

\begin{exmp}
The supertropical polynomial $f(x,y) = y^{\nu} \dotplus x \dotplus 1^{\nu}$ is considered as $x \dotplus y \dotplus y  \dotplus 1 \dotplus 1$ where $h(x,y) = x$ is its tangible part and $g(x,y) = y \dotplus y  \dotplus 1 \dotplus 1$ is its ghost part.
\end{exmp}

We begin by introducing the well known notion of tropical geometry called \linebreak `corner root'.

\begin{defn}\label{def_corner_root}
Let $f \in \mathscr{R}[x_1,...,x_n]$ be a polynomial. Then  $f = \sum_{i=1}^{k}f_i$ where each $f_i$ is a monomial.  A point $a \in \mathscr{R}^n$ is said to be a \emph{corner-root} of $f$ if there exist two distinct monomials $f_t$ and $f_s$ of $f$ such that $f(a) = f_s(a) = f_t(a)$.
\end{defn}

In \cite[Section (5.2)]{SuperTropicalAlg} Izhakian and Rowen have generalized the notion of (tangible) corner-root to $\mathcal{F}(\mathscr{R}[x_1,...,x_n])$ as follows:

\begin{defn}\label{def_st_corner_root}
Let $f \in \mathcal{F}(\mathscr{R}[x_1,...,x_n])$ be a supertropical polynomial. Write \linebreak $f=\sum_{i=1}^{k}f_i$ where each $f_i$ is a monomial.  A point $a \in \mathscr{R}^n$ is said to be a \linebreak \emph{corner-root} of $f$ if $f(a) \in \mathcal{G}$, i.e., if $f$ obtains a ghost value at $a$. This happens in one of the  following cases:
\begin{enumerate}
  \item There exist two distinct monomials $f_t$ and $f_s$ of $f$ such that $f(a) = f_s(a) = f_t(a)$.
  \item There exists a ghost monomial $f_t$ of $f$ such that $f(a) = f_t(a)$.
\end{enumerate}

\end{defn}

\begin{defn}\label{def_cor_locus}
A set $A \subseteq \mathscr{R}^n$ is said to be a \emph{generalized corner-locus} if $A$ is a set of the form
\begin{equation}
A  = \{ x \in \mathscr{R}^{n} \ : \ \forall f \in S , \ x \ \text{is a corner root of} \ f \}
\end{equation}
for some $S \subset \mathcal{F}(\mathscr{R}[x_1,...,x_n])$.\\
We write $Cor(S)$ for the corner locus defined by $S$.
In the case where $S = \{ f_1,...,f_r \}$ is finite, we write $A = Cor(f_1,...,f_n)$ to indicate that $A$ is a corner locus defined by the mutual corner roots of  $f_1,...,f_r \in \mathcal{F}(\mathscr{R}[x_1,...,x_n])$, and say that $A$ is a \emph{finitely generated} corner locus.\\
A corner locus $A \subseteq \mathscr{R}^n$ is called \emph{principal} if there exists a supertropical polynomial $f~\in~\mathcal{F}(\mathscr{R}[x_1,...,x_n])$ such that $A = Cor(f)$.
A corner locus $A \subseteq \mathscr{R}^n$ is called \emph{regular} if $A = Cor(S)$ for $S \subset \mathscr{R}[x_1,...,x_n]$ (i.e., $S$ contains only tangible polynomials). A corner locus not indicated to be regular is assumed to be generalized.
\end{defn}

In view of  Definition \ref{def_cor_locus}, we define an operator $Cor : \mathbb{P}(\mathcal{F}(\mathscr{R}[x_1,...,x_n])) \rightarrow \mathscr{R}^n$ (where $\mathbb{P}(\mathcal{F}(\mathscr{R}[x_1,...,x_n]))$ is the power set of $\mathcal{F}(\mathscr{R}[x_1,...,x_n])$)
\begin{equation}
Cor : S \subset \mathcal{F}(\mathscr{R}[x_1,...,x_n]) \mapsto Cor(S).
\end{equation}
%

We now proceed to study the behavior of the $Cor$ operator.

As a trivial consequence of Definition \ref{def_cor_locus} we have the following
\begin{rem}\label{rem_cor_roots_of_prod}
If $a,b \in \mathscr{R}^{n}$ are corner roots of $f,g \in \mathcal{F}(\mathscr{R}[x_1,...,x_n])$ respectively, then both $a$ and $b$ are corner roots of $f \cdot g$.
\end{rem}

\begin{rem}\label{rem_intersection_union_corner_loci}
If $S_A,S_B \subseteq \mathcal{F}(\mathscr{R}[x_1,...,x_n])$ then
\begin{equation}\label{cor_loci_prop1}
S_A \subseteq S_B \Rightarrow Cor(S_B) \subseteq Cor(S_A).
\end{equation}

Let $\{ S_i \}_{i \in I}$ be a family of subsets of $\mathcal{F}(\mathscr{R}[x_1,...,x_n])$ for some index set $I$. Then $\bigcap_{i \in I}Cor(S_i)$ is a corner locus and
\begin{equation}\label{cor_loci_prop2}
\bigcap_{i \in I}Cor(S_i) = Cor\left(\bigcup_{i \in I}S_i\right) \ ; \ \bigcup_{i \in I}Cor(S_i) \subseteq Cor\left(\bigcap_{i \in I}S_i\right).
\end{equation}
In particular, $Cor(S)= \bigcap_{f \in S}Cor(f)$.
\end{rem}
\begin{proof}
First, equality \eqref{cor_loci_prop1} is a set-theoretical direct consequence of the definition of corner loci.
In turn this implies that $Cor(\bigcup_{i \in I}S_i) \subseteq Cor(S_i)$ for each $i \in I$ and thus $Cor(\bigcup_{i \in I}S_i) \subseteq \bigcap_{i \in I}Cor(S_i)$. Conversely, if $x \in \mathscr{R}^n$ is in $\bigcap_{i \in I}Cor(S_i)$ then $x \in Cor(S_i)$ for every $i \in I$, which means that $x$ is a common corner root of $\{f \ : \ f \in S_i \}$. Thus $x$ is a common corner root of $\{f \ : \ f \in \bigcup_{i \in I} S_i \}$ which yields that  $x \in Cor(\bigcup_{i \in I}S_i)$. For the second equation (inclusion) in \eqref{cor_loci_prop2}, for each $j \in I$, $\bigcap_{i \in I}S_i \subseteq S_j$. Thus, by \eqref{cor_loci_prop1}, $ Cor(S_j) \subseteq Cor(\bigcap_{i \in I}S_i)$, and so, $\bigcup_{i \in I}Cor(S_i) \subseteq Cor(\bigcap_{i \in I}S_i)$.
\end{proof}

\begin{lem}\label{lem_intersection_union_finitely_generated_corner_loci}
For the case where $A$ and $B$ are finitely generated. If \linebreak $A=Cor(f_1,...,f_s)$ and $B = Cor(g_1,...,g_t)$, then $A \cap B = Cor(f_1,...,f_s,g_1,...,g_t)$, and $A \cup B = Cor(\{f_i \cdot g_j\}_{i=1,j =1}^{s,t})$.
\end{lem}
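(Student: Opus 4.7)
The plan is to prove the two equalities separately, in both cases just by unwinding the definitions and invoking the preceding remarks.

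For the first equality $A \cap B = Cor(f_1,\ldots,f_s,g_1,\ldots,g_t)$, I would apply the identity $\bigcap_{i \in I} Cor(S_i) = Cor\!\left(\bigcup_{i \in I} S_i\right)$ from Remark \ref{rem_intersection_union_corner_loci} to the two-element family $S_1 = \{f_1,\ldots,f_s\}$, $S_2 = \{g_1,\ldots,g_t\}$, giving at once $A \cap B = Cor(S_1) \cap Cor(S_2) = Cor(S_1 \cup S_2) = Cor(f_1,\ldots,f_s,g_1,\ldots,g_t)$.

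For the second equality $A \cup B = Cor(\{f_i \cdot g_j\}_{i,j})$, I prove two inclusions. The forward inclusion $A \cup B \subseteq Cor(\{f_i g_j\})$ follows from Remark \ref{rem_cor_roots_of_prod}: if $x \in A$, then $x$ is a corner root of every $f_i$, hence a corner root of every product $f_i \cdot g_j$, so $x \in Cor(\{f_i g_j\})$; the case $x \in B$ is symmetric. For the reverse inclusion I would argue by contrapositive. Suppose $x \notin A \cup B$; then there exist indices $i_0$ and $j_0$ such that $x$ is neither a corner root of $f_{i_0}$ nor of $g_{j_0}$. By Definition \ref{def_st_corner_root}, this forces $f_{i_0}(x), g_{j_0}(x) \in \mathscr{R}$, i.e.\ both values are tangible. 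Since $\mathcal{G}$ is an ideal of the supertropical semiring $R = \mathscr{R} \cup \nu(\mathscr{R})$, its complement $\mathscr{R}$ is multiplicatively closed, and therefore $(f_{i_0} \cdot g_{j_0})(x) = f_{i_0}(x) \cdot g_{j_0}(x) \in \mathscr{R}$. Hence $x$ is not a corner root of $f_{i_0} \cdot g_{j_0}$, so $x \notin Cor(\{f_i g_j\})$.

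There is essentially no obstacle here; the argument rests entirely on two ingredients already in the paper: the intersection-of-corner-loci formula in Remark \ref{rem_intersection_union_corner_loci}, and the product-preservation of corner roots in Remark \ref{rem_cor_roots_of_prod}, together with the elementary supertropical fact that the tangible part $\mathscr{R}$ is closed under multiplication. The only place that warrants care is making sure the uniqueness of ``non-corner-root'' is correctly interpreted as tangibility of the evaluated polynomial, which is exactly what Definition \ref{def_st_corner_root} provides.
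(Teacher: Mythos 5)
Your proof is correct and follows essentially the same route as the paper: the first equality via Remark~\ref{rem_intersection_union_corner_loci} applied to a two-element family, and the second by proving the two inclusions separately with Remark~\ref{rem_cor_roots_of_prod} handling the forward direction and a contrapositive argument for the reverse. The one place you are actually more careful than the paper is the reverse inclusion: the paper silently invokes the equality $Cor(f_{i_0})\cup Cor(g_{j_0}) = Cor(f_{i_0}\cdot g_{j_0})$ although Remark~\ref{rem_cor_roots_of_prod} only gives $\subseteq$, whereas you supply the missing direction explicitly by noting that evaluation is a semiring homomorphism and the tangible part $\mathscr{R}\subset R$ is multiplicatively closed, so $f_{i_0}(x),g_{j_0}(x)\in\mathscr{R}$ forces $(f_{i_0}g_{j_0})(x)\in\mathscr{R}$.
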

\begin{proof}
This is a consequence of Definition \ref{def_cor_locus} and Remarks \ref{rem_cor_roots_of_prod} and \ref{rem_intersection_union_corner_loci}. \linebreak As the first equality is obvious, we only prove $A \cup B = Cor\left(\{f_i \cdot g_j\}_{i=1,j =1}^{s,t}\right)$.\linebreak
Indeed, for each  $1 \leq i \leq s$ and each $1 \leq j \leq t$, \ $A \subseteq Cor(f_i)$ and $B \subseteq Cor(g_j)$. \linebreak Thus $A \cup B \subseteq Cor(f_i) \cup Cor(g_j) = Cor\left(\{f_i \cdot g_j\}\right)$ and so
$$A \cup B \subseteq \bigcap_{i=1,j=1}^{s,t}Cor\left(\{f_i \cdot g_j\}\right) = Cor\left(\{f_i \cdot g_j\}_{i=1,j =1}^{s,t}\right).$$ On the other hand, if $a \not \in A \cup B$ then there exist some $i_0$ and $j_0$ such that \linebreak $a \not \in Cor(f_{i_0})$ and $a \not \in Cor(g_{j_0})$. Thus $a \not \in Cor(f_{i_0}) \cup Cor(g_{j_0}) = Cor(f_{i_0} \cdot g_{j_0})$.\linebreak So   $a \not \in \bigcap_{i=1,j =1}^{s,t}Cor\left(\{f_i \cdot g_j\}\right) = Cor\left(\{f_i \cdot g_j\}_{i=1,j =1}^{s,t}\right)$, proving the opposite inclusion.
\end{proof}

\begin{rem}\label{rem_finite_unions_and_intersections_of_reg_loci}
In view of Lemma \ref{lem_intersection_union_finitely_generated_corner_loci}, it is obvious that if  $A=Cor(f_1,...,f_s)$ and $B = Cor(g_1,...,g_t)$  are regular then $A \cap B$ and $A \cup B$ are regular (i.e., defined by tangible polynomials).
\end{rem}


\begin{defn}
Denote the collection of corner loci in $\mathscr{R}^n$ by $CL(\mathscr{R}^n)$ and by $RCL(\mathscr{R}^n)$ the family of regular corner loci.  Denote the collection of finitely generated corner loci by $FCL(\mathscr{R}^n) \subset CL(\mathscr{R}^n)$ and the collection of principal corner loci by $PCL(\mathscr{R}^n) \subseteq FCL(\mathscr{R}^n)$.
Analogously we denote the collection of finitely generated regular corner loci by $FRCL(\mathscr{R}^n) \subset RCL(\mathscr{R}^n)$ and the collection of principal regular corner loci by $PRCL(\mathscr{R}^n) \subseteq FRCL(\mathscr{R}^n)$.
\end{defn}

\begin{rem}
By Remark \ref{rem_intersection_union_corner_loci} and Lemma \ref{lem_intersection_union_finitely_generated_corner_loci}, $CL(\mathscr{R}^n)$ is closed under intersections, while $FCL(\mathscr{R}^n)$ is also closed under finite unions.\\
Taking $f=1+1 = 1^{\nu}$ and  $g= \alpha$ with $\alpha \neq 1$, we get that $\mathscr{R}^n = Cor(f)$ and $\emptyset = Cor(g)$ are in $FCL(\mathscr{R}^n)$.
By Remark \ref{rem_finite_unions_and_intersections_of_reg_loci} $RFCL(\mathscr{R}^n)$ is a sublattice of $FCL(\mathscr{R}^n)$.
\end{rem}

\ \\

\subsection{Corner loci and principal skeletons}

\ \\

Having defined a corner locus, we proceed to construct the connection between corner loci and principal skeletons.
We begin by constructing it for the special case of principal corner locus.


\begin{prop}\label{prop_ker_corner}
Any principal corner locus of $\mathscr{R}^{n}$ (a set of corner roots of a \linebreak supertropical polynomial) is a principal skeleton. In fact, there is a map $f \mapsto \widehat{f}$ sending a supertropical polynomial $f$ to a rational function $\widehat{f}$ such that $x \in \mathscr{R}^n$ is a corner root of $f$ if and only if $\widehat{f}(x) = 1$.
\end{prop}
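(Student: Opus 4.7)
The plan is to give an explicit construction of $\widehat{f}$. Write the supertropical polynomial uniquely as $f = h \dotplus g^{\nu}$, where $h = \sum_{i=1}^{t} h_i$ is the tangible part and $g^{\nu} = \sum_{j=1}^{s}(g_j \dotplus g_j)$ is the ghost part, with all $h_i, g_j \in \mathscr{R}[x_1,\dots,x_n]$ monomials. Following Note \ref{note_super_tropical_representation}, I encode $f$ as the multiset $M = \{h_1,\dots,h_t,\, g_1, g_1,\, \dots,\, g_s, g_s\}$, in which each ghost monomial occurs with multiplicity two. Unwinding Definition \ref{def_st_corner_root}, a point $a \in \mathscr{R}^n$ is a corner root of $f$ iff the maximum of $\{m(a) : m \in M\}$ is attained by at least two distinct (counted with multiplicity) members of $M$: a tangible tie, a tangible-ghost tie, a ghost-ghost tie, or a single ghost monomial attaining the max (captured by the two copies of that $g_j$ in $M$).

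Set $F = \sum_{i}h_i \dotplus \sum_{j}g_j \in \mathscr{R}[x_1,\dots,x_n]$, and let $P$ be the sum of products over all unordered pairs of distinct entries of $M$:
$$P = \sum_{1 \leq i < j \leq t}h_i h_j \ \dotplus \ \sum_{i,j}h_i g_j \ \dotplus \ \sum_{1 \leq i < j \leq s}g_i g_j \ \dotplus \ \sum_{j=1}^{s}g_j^{\,2}.$$
Define
$$\widehat{f} \ = \ \frac{F^2}{P} \ \in \ \mathscr{R}(x_1,\dots,x_n).$$
(In the degenerate case where $f$ consists of a single tangible monomial, so $Cor(f) = \emptyset$, take $\widehat{f}$ to be any constant in $\mathscr{R} \setminus \{1\}$; its skeleton is empty by the discussion on bounded-from-below kernels.)

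The core verification is that $\widehat{f}(a) \geq 1$ for every $a \in \mathscr{R}^n$, with equality iff $a \in Cor(f)$. Since $\mathscr{R}$ is bipotent, summation is supremum, so $F(a) = \max_{m \in M} m(a)$ and $P(a) = \max_{\{m,m'\} \subset M,\ m \neq m'} m(a) m'(a)$; each such product is bounded by $F(a)^2$, giving $P(a) \leq F(a)^2$ and hence $\widehat{f}(a) \geq 1$. Equality $P(a) = F(a)^2$ forces the existence of two distinct multiset entries $m, m'$ with $m(a) = m'(a) = F(a)$. A case analysis covers all four resulting configurations (two distinct tangibles; two distinct ghosts; a tangible and a ghost; or the two copies of a single ghost $g_j$, which yield $g_j(a)^2 = F(a)^2$), and in each case one checks against Definition \ref{def_st_corner_root} that $f(a) \in \mathcal{G}$, i.e., $a \in Cor(f)$. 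The converse is equally immediate: a corner root produces one of these tie/multiplicity configurations and hence a pair product reaching $F(a)^2$. Thus $Cor(f) = \{a : \widehat{f}(a) = 1\} = Skel(\widehat{f})$.

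The key technical point, which is also the main obstacle, is correctly accounting for the multiplicity of ghost monomials: including the self-pairs $g_j^{\,2}$ in $P$ is exactly what captures case (2) of Definition \ref{def_st_corner_root}, where a single ghost monomial alone attains the maximum, while omitting the tangible diagonal terms $h_i^{\,2}$ is exactly what prevents spurious $\widehat{f}(a) = 1$ when one tangible monomial strictly dominates. By the Frobenius property, $F^2 = \sum_i h_i^{\,2} \dotplus \sum_j g_j^{\,2}$, so the quotient $F^2/P$ collapses to $1$ precisely when the "missing" tangible diagonals $h_i^{\,2}$ are compensated by some pair product already present in $P$, i.e., precisely at corner roots. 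No deeper new idea is required beyond the multiset-with-multiplicity encoding of the ghost part.
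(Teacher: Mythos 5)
Your proof is correct, but it takes a genuinely different route from the paper. The paper builds $\widehat{f}$ as a sum of ``score'' fractions, one per (multiset) monomial: $\widehat{f} = \sum_{i=1}^{k} \frac{f_i}{\sum_{j \neq i} f_j}$, where $f = \sum_{i=1}^{k} f_i$ lists each ghost monomial twice. The argument then tracks which summands attain the value $1$ at a given point and reads off corner roots (and their multiplicities) from the count of such summands. Your construction $\widehat{f} = F^2/P$ is a single symmetric fraction, where $F$ is the underlying tropical polynomial with each monomial once and $P$ is the tropical elementary symmetric ``degree-two'' polynomial over the multiset $M$ that repeats each ghost monomial. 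Your argument is a clean inequality $P(a) \le F(a)^2$ with equality iff two distinct multiset entries tie for the maximum, which you correctly identify with the supertropical corner-root condition (including the self-pair $g_j^{\,2}$ that captures a single dominant ghost). This is arguably more transparent and avoids the case analysis in the paper's proof. What the paper's construction buys that yours does not is the multiplicity refinement ($\widehat{f}(x) = {}^{[m+1]}1$ for a corner root of multiplicity $m$), and the specific per-monomial additive form $\sum_i \frac{f_i}{\sum_{j\neq i} f_j}$ is used as a structural handle in several follow-on results (Proposition~\ref{prop_absolute_invariance_condition_of_hatf}, Lemma~\ref{lem_ker_prop}, Remark~\ref{rem_wedge_corr_to_corner_loci}). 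Those later arguments would have to be reworked if one adopted $F^2/P$ throughout, even though both generate the same kernel since $Skel(F^2/P) = Skel(\widehat{f}_{\text{paper}}) = Cor(f)$.

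One small presentation note: you correctly flag the degenerate case of a single tangible monomial (where $P$ is an empty tropical sum) and patch it with a constant $\neq 1$; the paper handles the same degeneracy more informally in a later remark. Your treatment is the cleaner of the two.
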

\begin{proof}
Let $f = \sum_{i=1}^{k}f_i \in \mathcal{F}(\mathscr{R}[x_1,...,x_n])$
be a polynomial represented as the sum of its component monomials $f_i, \ i=1,...,k$, where some of them may not be distinct (in fact, appear exactly twice, see Note \ref{note_super_tropical_representation}). Define the following element of $\mathscr{R}(x_1,...,x_n)$:
\begin{equation}
\widehat{f} = \sum_{i=1}^{k} \frac{f_i}{\sum_{j \neq i}f_j}.
\end{equation}
Then $x \in \mathscr{R}^n$ is a corner root of $f$ if and only if $\widehat{f}(x) = 1$. Moreover, if $x$ is a corner root of multiplicity $m$ then $\widehat{f}(x)=^{[m+1]}1$ (the notation $^{[m+1]}1$ indicates that $1$ occurs $m+1$ times).  \\
If $x$ is a corner root of $f$, then there exists a subset of $2 \leq r \leq m$ monomials, say $f_1,...,f_r$ such that $f_1(x) = ... = f_r(x) = f(x)$ and $f_i(x) > f_j(x)$ for every $i=1,...,r$ and  $j = r+1,...,m$. Notice that, as $r > 1$, each denominator in $\widehat{f}$ must contain as a summand some $f_i$ with $1 \leq i \leq r$. Thus, every denominator equals $f(x)$. Now, the number of numerators in $\widehat{f}$ obtaining the value $f(x)$ at $x$ is exactly $r$, one for each monomial $f_i$ such that $1 \leq i \leq r$, so there are exactly $r$ summands of $\widehat{f}$ obtaining the value $f(x)/f(x) = 1$ at $x$. As every other summand is of the form $\frac{f_s(x)}{\sum_{j \neq s}f_j(x)} = \frac{f_s(x)}{f(x)}$ with $s \in \{ r+1,...,m \}$, since $f_s(x) < f(x)$, all these summands are inessential at $x$, yielding the first direction of our claim. Conversely, let $\widehat{f}(x) = ^{[m+1]}1$  for some $x \in \mathscr{R}$, then there are $m+1$ summands $g_{i} = \frac{f_i}{\sum_{j \neq i}f_j}$, say $g_1,...,g_{m+1}$ such that $g_1(x) = ... = g_{m+1}(x) = 1$. Then, for each $i = 1,...,m+1$, we have  $f_i(x) = \sum_{j \neq i}f_j(x)$ so there exists at least one essential $f_j$ with $j \neq i$ such that $f_j(x) = f_i(x)$. Notice that this last observation yields that $f_i(x)$ is essential in $\sum_{i=1}^{m}f_i(x)$ and that also $g_j(x) =1$. Consequently, there are exactly $m+1$ essential monomials at $x$, $f_1,...,f_{m+1}$ obtaining the same value at $x$, which in turn yields that $x$ is a corner root of multiplicity $m = (m+1)-1$, as desired.\\
Finally, by the above, $Skel(\langle \widehat{f} \rangle) = Cor(f)$, i.e., the skeleton defined by $\widehat{f}$ is exactly  the corner locus of $f$.
\end{proof}

\begin{cor}\label{cor_regularity_correspondence}
$f \in \mathcal{F}(\mathscr{R}[x_1,...,x_n])$ is non-tangible iff $\widehat{f} \in \mathscr{R}(x_1,...,x_n)$ is irregular.
\end{cor}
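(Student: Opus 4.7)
The plan is to factor the proof through the geometric correspondence of Proposition \ref{prop_ker_corner}, which already supplies $Skel(\widehat{f}) = Cor(f)$, and to reduce the corollary to two independent geometric equivalences:
\begin{itemize}
\item[(A)] $\widehat{f} \in \mathscr{R}(x_1,\dots,x_n)$ is irregular iff $Skel(\widehat{f})$ has nonempty interior in $\mathscr{R}^n$;
\item[(B)] $Cor(f)$ has nonempty interior iff $f$ is non-tangible.
\end{itemize}
Combining (A) with $Skel(\widehat f) = Cor(f)$ and then (B) gives the desired biconditional. The key algebraic fact I will use throughout is the rigidity of Laurent monomials: a Laurent monomial which is identically $1$ on an open subset of $\mathscr{R}^n$ must be the trivial monomial, since $\mathscr{R}\cong(\mathbb{R}^+,\dotplus,\cdot)$ places us in the usual tropical/piecewise-linear setting in log coordinates.

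For (A), I would write $\widehat{f} = H/G$ as a ratio of polynomials in which $H = \sum h_i$ and $G = \sum g_j$ are sums of pairwise distinct tangible monomials. At an irregular point $x$ the definition forces every dominant $h_i$ at $x$ to agree as a formal monomial with every dominant $g_j$ at $x$; since distinct summands in $H$ (respectively $G$) are formally distinct, this means there is a unique dominant monomial $m_H$ of $H$ and a unique dominant monomial $m_G$ of $G$ at $x$, and $m_H = m_G =: m$. By continuity of the monomials involved, $m$ remains strictly dominant in both $H$ and $G$ throughout an open neighborhood $U$ of $x$, so $H \equiv m \equiv G$ and $\widehat{f}\equiv 1$ on $U$; hence $U \subseteq Skel(\widehat{f})$. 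Conversely, if $Skel(\widehat{f})$ contains an open set, a standard polyhedral refinement yields a smaller open subset $V$ on which the dominant-monomial set of $H$ and that of $G$ are each constant. On $V$ every dominant $h_i$ agrees with every dominant $g_j$ as a function, and the rigidity fact above promotes this to equality of formal monomials, which is exactly the failure of regularity at any point of $V$.

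For (B), if $f$ is non-tangible then, by the convention of Note \ref{note_super_tropical_representation}, some component monomial $m$ appears twice in the decomposition $f = \sum f_i$. For every other component $m'$ the Laurent monomial $m/m'$ is nontrivial, so a suitable common scaling of the variables produces a point $x$ at which $m$ strictly dominates all other components simultaneously; by continuity this strict domination persists on an open neighborhood, where $f$ takes ghost values (because $m$ is counted twice), so that the whole neighborhood lies in $Cor(f)$. Conversely, if $f$ is tangible and $Cor(f)$ contained an open set, a cellular refinement would produce an open $V$ on which a fixed pair of distinct tangible component monomials $f_i \neq f_j$ are both dominant; then $f_i/f_j \equiv 1$ on $V$ would force $f_i = f_j$ as formal monomials by the rigidity fact, a contradiction. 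The main obstacle is the bookkeeping around the two polyhedral-refinement steps, and making precise that the dominance structure of a tropical polynomial stratifies $\mathscr{R}^n$ into closed polyhedra whose interiors have a single dominant monomial; once this is taken for granted the rest of the argument is pure continuity plus Laurent-monomial rigidity.
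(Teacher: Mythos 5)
You replace the paper's direct combinatorial argument with a geometric one that factors through two claims: (A) $\widehat{f}$ is irregular iff $Skel(\widehat{f})$ has nonempty interior, and (B) $Cor(f)$ has nonempty interior iff $f$ is non-tangible, tied together by $Skel(\widehat{f}) = Cor(f)$. This is a genuinely different route from the paper's, which examines the summands of $\widehat{f}$ directly: when a component monomial $g$ is doubled ($f_k = f_s = g$ with $k \neq s$), the term $\frac{f_k}{\sum_{j \neq k}f_j}$ has $g$ both as its numerator and as a summand of its denominator, hence is irregular, and the paper then appeals to the essentiality of $g$ in $f$ to conclude that one of these irregular terms survives in $\widehat{f}$. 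Your (A) is essentially Remark \ref{rem_every_generator_is_regular} made bidirectional, and the converse direction of your (B) is sound.

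The forward direction of your (B), however, has a concrete gap. The sentence ``a suitable common scaling of the variables produces a point $x$ at which $m$ strictly dominates all other components simultaneously'' does not follow from the nontriviality of each Laurent quotient $m/m'$: the strict inequalities $m > m'$ need not be simultaneously satisfiable. For example, with $f = x^\nu \dotplus x^2 \dotplus 1$ the ghost component is $m = x$; but $x > x^2$ forces $x < 1$ while $x > 1$ forces $x > 1$, so $m$ can never strictly dominate both tangible components at once. Here $Cor(f) = \{1\}$ has empty interior and $\widehat{f}$ reduces to $|x| = \frac{x^2 \dotplus 1}{x}$, which is regular, so $f$ is a counterexample to the corollary itself absent an essentiality hypothesis. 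What your argument actually needs is that the ghost monomial $m$ be \emph{essential} in the tropical sense, i.e., that there exist an open region of $\mathscr{R}^n$ on which $m$ strictly dominates, which amounts to its exponent vector being a vertex of the relevant Newton polytope; this is exactly the ``essentiality of $g$ in $f$'' that the paper's proof also invokes without establishing. So the underlying gap is shared with the paper, but where the paper names the needed ingredient and leaves it unproved, your proof asserts the dominance as a consequence of nontriviality of the $m/m'$ alone, which is simply false. An explicit essentiality hypothesis on the ghost monomials of $f$ would repair both proofs and sharpen the corollary's statement.
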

\begin{proof}
If we take $f \in \mathcal{F}(\mathscr{R}[x_1,...,x_n])$ to be non-tangible, say $f_k$ and $f_s$ are the same monomial $g$ for some $1 \leq s,k \leq m$, then the terms $\frac{f_k}{\sum_{j \neq k}f_j}$ and $\frac{f_s}{\sum_{j \neq s}f_j}$ of $\widehat{f}$ would be irregular since $g$ occurs as the monomial in the numerator and as one of the summands of the denominator. The essentiality of $g$ in $f$ implies that at least one of those terms is essential in $\widehat{f}$, making it an irregular element of $\mathscr{R}(x_1,...,x_n)$. Reversing the last arguments yields that if $\widehat{f}$ is irregular then $f$ is non-tangible, i.e., has a component monomial which is ghost.
\end{proof}

\pagebreak

\begin{prop}\label{prop_absolute_invariance_condition_of_hatf}
Write $\widehat{f} = \sum_{i=1}^{k}h_i$ where each $h_i = \frac{f_i}{\sum_{j \neq i}f_j}$ for $i=1,...,k$. \linebreak
Then, for every $1 \leq i \leq k$,
\begin{equation}
h_i(x) = 1  \Rightarrow  \ \ h_j(x) \leq 1, \ \forall j \neq i.
\end{equation}
\end{prop}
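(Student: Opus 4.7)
The plan is to unwind the definitions and use the bipotence of $\mathscr{R}$ to turn the hypothesis $h_i(x) = 1$ into a concrete max-statement about the monomials $f_1(x), \dots, f_k(x)$, after which the inequalities $h_j(x) \leq 1$ for $j \neq i$ will fall out by direct comparison of numerators and denominators.

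First I would fix $x \in \mathscr{R}^n$ and assume $h_i(x) = 1$ for some index $i$. By the definition $h_i = f_i / \sum_{j \neq i} f_j$ this is the equality
\[
f_i(x) \;=\; \sum_{j \neq i} f_j(x).
\]
Since $\mathscr{R}$ is bipotent, the sum on the right equals $\max_{j \neq i} f_j(x)$. Consequently $f_j(x) \leq f_i(x)$ for every $j \neq i$ (and equality is attained for at least one such $j$).

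Next, for any $j \neq i$, I would write
\[
h_j(x) \;=\; \frac{f_j(x)}{\sum_{s \neq j} f_s(x)}
\]
and note two things. The numerator satisfies $f_j(x) \leq f_i(x)$ by the previous step. The denominator, which ranges over all $s \neq j$, contains the index $s = i$ (because $i \neq j$), so by bipotence again
\[
\sum_{s \neq j} f_s(x) \;=\; \max_{s \neq j} f_s(x) \;\geq\; f_i(x).
\]
Combining these two inequalities gives $h_j(x) \leq f_i(x)/f_i(x) = 1$, which is the desired conclusion.

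There is no real obstacle here; the only subtlety is remembering that the bipotence of $\mathscr{R}$ turns every finite sum into a maximum, so a condition that looks additive becomes an order relation between the monomials $f_i$. Once that is in hand the argument is a two-line comparison, and it carries over verbatim whether the $f_i$'s arise from a tangible or a non-tangible supertropical polynomial in the sense of Note \ref{note_super_tropical_representation}.
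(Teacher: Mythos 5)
Your proof is correct and takes essentially the same approach as the paper's, though with a small streamlining: you bound the denominator of $h_j$ from below by $f_i(x)$ directly (available for every $j \neq i$), whereas the paper introduces a second witness $f_k$ with $f_k(x)=f_i(x)$ and splits off the case $t=k$. Both arguments hinge on the same observation that bipotence turns $h_i(x)=1$ into $f_i(x)=\max_{j\neq i}f_j(x)$.
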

\begin{proof}
If $x \in \mathscr{R}^n$ be such that $h_i(x) = 1$, then there exists $k \neq i$ such that $f_k(x)~=~f_i(x)$ and $f_k(x) \geq f_t(x)$ for every $t \in \{1,...,n\} \setminus \{i,k\}$. Thus \linebreak $h_k(x) = h_i(x) = 1$ and  $h_t(x) = \frac{f_t}{\sum_{j \neq t}f_j} \leq \frac{f_t}{f_k} \leq 1$. So  $h_j(x) \leq 1$ for every $j \in \{1,...,n\}$.
\end{proof}

\begin{rem}\label{rem_corner_to_skel}
Let $\Phi^{\star}: PCL(\mathscr{R}^n) \rightarrow PSkl(\mathscr{R}^n)$ denote the map
\begin{equation}
\Phi^{\star} : Cor(f) \mapsto Skel(\widehat{f})
\end{equation}
induced by the map $f \mapsto \widehat{f}$ given in Proposition \ref{prop_ker_corner}.
For $S \subset \mathcal{F}(\mathscr{R}[x_1,...,x_n])$,\linebreak let $\widehat{S} = \{ \widehat{f} = \Phi^{\star}(f) \ : \ f \in S \} \subseteq \mathscr{R}(x_1,...,x_n)$. Then by Proposition \ref{prop_skel_property1} and \linebreak Remark~\ref{rem_intersection_union_corner_loci} we have that
$$Cor(S) = \bigcap_{f \in S}Cor(f) = \bigcap_{g \in \widehat{S}}Skel(g) = Skel(\widehat{S}).$$
Thus, $\Phi^{\star}$ extends to a map
$$\Phi:~CL(\mathscr{R}^n)~\rightarrow~Skl(\mathscr{R}^n)$$
where $\Phi : CL(f) \mapsto Skl(\widehat{f})$. In particular, taking only finite generated corner loci, and recalling that finite intersections  and unions of principal skeletons are \linebreak principal skeletons, $\Phi$ restricts to the map $\Phi|_{FCL(\mathscr{R}^n)} : FCL(\mathscr{R}^n) \rightarrow PSkl(\mathscr{R}^n)$. \\
As our interest is in the latter map we will denote $\Phi|_{FCL(\mathscr{R}^n)}$ by $\Phi$.\\
Note that since the map $f \mapsto \widehat{f}$ sends tangible elements of $\mathcal{F}(\mathscr{R}[x_1,...,x_n])$ (i.e., \linebreak elements of $\mathscr{R}[x_1,...,x_n]$) to regular elements of $\mathscr{R}(x_1,...,x_n)$ we also have that \linebreak
$\Phi|_{FRCL(\mathscr{R}^n)} : FRCL(\mathscr{R}^n) \rightarrow RegPSkl(\mathscr{R}^n)$.
\end{rem}

%

\begin{lem}\label{lem_ker_prop}
Let $f = \sum_{i=1}^{k}f_i \in \mathcal{F}(\mathscr{R}[x_1,...,x_n])$
be a polynomial represented as the sum of its component monomials $f_i, \ i=1,...,k$. For any $i=1,...,k$, denote the $i$'th summand of $\widehat{f}$ defined in Proposition \ref{prop_ker_corner}, by $A_i = \frac{f_i}{\sum_{j \neq i}f_j} \in \mathscr{R}(x_1,...,x_n)$ .  Then for $1 \leq i,s \leq k$ such that $i \neq s$
\begin{equation}\label{eq_lem_ker_prop}
 A_i = A_s \ \Leftrightarrow A_i = A_s = 1 \  \ \text{or} \ A_i = A_s \ \text{are inessential in} \ \widehat{f}.
\end{equation}
\end{lem}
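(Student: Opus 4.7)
The content of the lemma is essentially one-directional: the $(\Leftarrow)$ direction is tautological (both clauses on the right-hand side already assert $A_i=A_s$), so the real work is the forward direction. My plan is to exploit bipotency of $\mathscr{R}$ (and hence of the semiring of functions into which $\mathscr{R}(x_1,\dots,x_n)$ is mapped), under which $\dotplus$ coincides with pointwise $\max$: a summand that is pointwise dominated by another summand can be dropped without changing the sum.

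Concretely, set $\widehat{f}_{\setminus i} = \sum_{j \neq i} A_j$, so that $\widehat{f} = A_i \dotplus \widehat{f}_{\setminus i}$. Because $s \neq i$ by hypothesis, $A_s$ is one of the summands of $\widehat{f}_{\setminus i}$, and therefore $\widehat{f}_{\setminus i} \geq A_s$ pointwise on $\mathscr{R}^n$. Assuming $A_i = A_s$ as functions, this yields
\begin{equation*}
\widehat{f}(x) = A_i(x) \dotplus \widehat{f}_{\setminus i}(x) = A_s(x) \dotplus \widehat{f}_{\setminus i}(x) = \widehat{f}_{\setminus i}(x),
\end{equation*}
for every $x \in \mathscr{R}^n$, where the last step uses bipotency together with $A_s(x) \leq \widehat{f}_{\setminus i}(x)$. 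Hence $\widehat{f}$ and $\widehat{f}_{\setminus i}$ coincide as functions and, a fortiori, $Skel(\widehat{f}) = Skel(\widehat{f}_{\setminus i})$, which is precisely the statement that $A_i$ is inessential in the decomposition $\widehat{f} = \sum_j A_j$ in the sense of Definition \ref{def_inessentiality}. By symmetry (interchanging the roles of $i$ and $s$), $A_s$ is also inessential in $\widehat{f}$. If in addition $A_i$ (hence $A_s$) is the constant function $1$, one lands in the first clause on the right-hand side; otherwise one is in the second clause, giving \eqref{eq_lem_ker_prop}.

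For the reverse direction I would simply observe that each of the two alternatives on the right of \eqref{eq_lem_ker_prop} explicitly asserts the functional equality $A_i = A_s$, so nothing further needs to be proved. There is no real obstacle; the only potential pitfall is purely notational, namely parsing the right-hand side of \eqref{eq_lem_ker_prop} as an ``or'' separating two cases (``$A_i$ and $A_s$ are identically $1$'' versus ``$A_i = A_s$ and both are inessential''), rather than as a single compound statement.
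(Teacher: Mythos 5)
Your argument is correct, and it takes a genuinely different (and simpler) route than the paper's. You observe that since $A_s$ is one of the summands of $\widehat{f}_{\setminus i}=\sum_{j\neq i}A_j$, one automatically has $\widehat{f}_{\setminus i}\geq A_s$ pointwise, so under the hypothesis $A_i=A_s$ bipotency gives $\widehat{f}=A_i\dotplus\widehat{f}_{\setminus i}=\widehat{f}_{\setminus i}$ as functions. That is strictly stronger than inessentiality (equality of skeletons), so $A_i$ is inessential, and by symmetry so is $A_s$. This is a clean, global argument. The paper instead cross-multiplies the equation $A_i=A_s$ to get $f_i(f_i\dotplus A)=f_s(f_s\dotplus A)$ with $A=\sum_{j\neq i,s}f_j$, and then partitions $\mathscr{R}^n$ into regions according to which monomials dominate, ruling out the regions where only $f_i$ (or only $f_s$) is essential and leaving the cases $A_i=A_s=1$ (both essential) and $A_i=A_s<1$ (both dominated). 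What the paper's longer route buys is precisely this pointwise dichotomy, which your argument collapses: it is the finer information used to deduce the strengthened form in the subsequent Corollary (that whenever $A_i=A_s=1$ at a point, $A_i$ and $A_s$ are essential there). Your proof establishes the Lemma as stated, but does not by itself recover that refinement.

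One small remark: at the end you write that if $A_i$ and $A_s$ are identically $1$ one lands in the first clause, and ``otherwise'' in the second. The two clauses of \eqref{eq_lem_ker_prop} are not mutually exclusive --- your own argument shows that whenever $A_i=A_s$ holds (including $A_i=A_s\equiv 1$) both are inessential --- so the ``or'' should be read inclusively, and the case split is unnecessary: it suffices to note that the second disjunct always holds. This is cosmetic and does not affect correctness.
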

\begin{proof}
Assume $A_i = A_s$, i.e., $\frac{f_i}{\sum_{j \neq i}f_j} = \frac{f_s}{\sum_{l \neq s}f_l}$. Since we have multiplicative cancellation we can multiply both sides by the product of the denominators without changing the equality. Thus we can rephrase this equation as
\begin{equation}\label{eq_lem_ker_prop}
f_i(f_i \dotplus A) = f_s(f_s \dotplus A) \ ; \  \ A = \sum_{j \neq i,s}f_j
\end{equation}
Now, we can partition $\mathscr{R}^n$ into the following distinct regions:
\begin{enumerate}
  \item $f_i$ is the only essential monomial, in which case \eqref{eq_lem_ker_prop} has the form $f_i^2 = f_s \cdot f_i$ and thus $f_i = f_s$ which cannot hold (since $f_i$ is the only essential monomial).
  \item $f_s$ is the only essential monomial, in which case \eqref{eq_lem_ker_prop} has the form $f_i \cdot f_s = f_s^2$ and thus $f_i = f_s$ which cannot hold.
  \item $f_i = f_s$ are both essential.
  \item Both $f_i$ and $f_s$ are non-essential, in which case \eqref{eq_lem_ker_prop} has the form $f_i \cdot A = f_s \cdot A$ and thus $f_i = f_s$.
\end{enumerate}
Thus, the only possible solutions are $f_i = f_s$ both essential and $f_i = f_j < A$. When $f_i = f_s$ are essential we have that $A_i = \frac{f_i}{f_s} = \frac{f_s}{f_i} = A_s = 1$. When $f_i = f_j < A$ we have that  $A_i = \frac{f_i}{A} = \frac{f_s}{A} = A_s < 1$. Since there is always some $1 \leq j \leq k$ such that $A_j \geq 1$ (see the proof of Proposition \ref{prop_ker_corner}) we have that both $A_i$ and $A_s$ are inessential in $\widehat{f}$. The converse direction of \eqref{eq_lem_ker_prop} is obvious.
\end{proof}

\begin{cor}
By Proposition \ref{prop_absolute_invariance_condition_of_hatf} we can strengthen \eqref{eq_lem_ker_prop} as follows:
\begin{equation}\label{eq_lem_ker_prop_strengthened}
A_i = A_s \ \text{iff} \ A_i = A_s = 1 \ \text{and are essential in} \ \widehat{f} \ \text{or} \ A_i = A_s \ \text{are inessential in} \ \widehat{f}.
\end{equation}

\end{cor}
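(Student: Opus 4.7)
The plan is to observe that the corollary is a cosmetic refinement of Lemma~\ref{lem_ker_prop}: only the first branch of its dichotomy needs sharpening, from ``$A_i = A_s = 1$'' to ``$A_i = A_s = 1$ and both $A_i, A_s$ are essential in $\widehat{f}$''. The second branch (inessentiality) is inherited verbatim, and the converse direction of the biconditional is already supplied by the lemma. The single new input required is Proposition~\ref{prop_absolute_invariance_condition_of_hatf}, applied to the additive decomposition $\widehat{f} = \sum_{j=1}^{k} A_j$.

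Concretely, I would fix a point $x \in \mathscr{R}^n$ at which $A_i(x) = A_s(x) = 1$ and apply Proposition~\ref{prop_absolute_invariance_condition_of_hatf} with $h_j := A_j$: from $A_i(x) = 1$ it follows that $A_j(x) \leq 1$ for every $j \neq i$. Hence $A_i(x) = 1$ realizes the maximum of the values $A_1(x),\ldots,A_k(x)$, so $A_i$ attains the value of $\widehat{f}(x)$ at $x$ and therefore contributes essentially to $\widehat{f}$ at $x$, in the sense used throughout the proof of Lemma~\ref{lem_ker_prop} (where a summand counts as essential at a region when it realizes the maximum there). The identical argument, with the roles of $i$ and $s$ swapped, gives essentiality of $A_s$. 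Thus the first case of \eqref{eq_lem_ker_prop} really does come paired with essentiality, and can legitimately be strengthened as in \eqref{eq_lem_ker_prop_strengthened}.

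For the converse of \eqref{eq_lem_ker_prop_strengthened}, the two scenarios on its right-hand side are precisely the two scenarios of \eqref{eq_lem_ker_prop} with extra information attached, so the implication is inherited directly from the $(\Leftarrow)$ direction of Lemma~\ref{lem_ker_prop}. No genuine obstacle is anticipated: the entire content of the corollary is the one-line observation that, by Proposition~\ref{prop_absolute_invariance_condition_of_hatf}, no summand of $\widehat{f}$ may exceed $1$ at a point where another summand already equals $1$, so attaining the value $1$ is automatically equivalent to being a maximal (hence essential) summand there.
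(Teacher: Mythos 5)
Your proof is correct and follows the route the paper intends: the corollary is stated without proof, and the cited hinge is exactly Proposition~\ref{prop_absolute_invariance_condition_of_hatf}, which you invoke to show that whenever $A_i(x)=1$ every other summand of $\widehat{f}$ is $\leq 1$ there, so $A_i$ (and likewise $A_s$) realizes the value $\widehat{f}(x)$ and is therefore essential at that point; the converse is inherited from Lemma~\ref{lem_ker_prop} as you note. This matches the paper's one-line justification, and your observation that the only new content is the upgrade of case ``$A_i=A_s=1$'' to ``$A_i=A_s=1$ and essential'' is exactly right.
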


We now establish the connection between tropical essentiality as described in \linebreak Definition \ref{defn_tropical_essentiality} and the essentiality introduced in Definition \ref{def_inessentiality}.
We show that the map $\Phi$ of Remark \ref{rem_corner_to_skel} sending a finitely generated corner locus to its corresponding principal skeleton, sends tropical inessential terms of a defining polynomial $f$ of a principal locus to inessential terms in the defining element $\widehat{f}$ of the skeleton, and can be generally be omitted from the definition of $\widehat{f}$ without changing the skeleton defined by $\widehat{f}$.

\begin{prop}
Let $f = \sum_{i=1}^{k}f_i \in \mathcal{F}(\mathscr{R}[x_1,...,x_n])$
be a polynomial represented as the sum of its component monomials $f_i, \ i=1,...,k$. Let
$\widehat{f} = \sum_{i=1}^{k} \frac{f_i}{\sum_{j \neq i}f_j}$.
If $f_m$ is inessential in $f$, then $\frac{f_m}{\sum_{j \neq m}f_j}$ is inessential in $\widehat{f}$.
\end{prop}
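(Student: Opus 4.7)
The plan is to write $\widehat{f} = A_m \dotplus h$ where $A_m = \frac{f_m}{\sum_{j \neq m} f_j}$ and $h = \sum_{i \neq m} A_i$, and then prove directly that $Skel(\widehat{f}) = Skel(h)$, which is precisely the definition of $A_m$ being inessential in $\widehat{f}$.

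The key numerical observation driving everything is that tropical inessentiality of $f_m$ in $f$ translates to $A_m(x) \leq 1$ globally on $\mathscr{R}^n$. Indeed, for every $x$ there exists $j \neq m$ with $f_j(x) \geq f_m(x)$, so $\sum_{j \neq m} f_j(x) \geq f_m(x)$, hence $A_m(x) \leq 1$. With this in hand, the inclusion $Skel(h) \subseteq Skel(\widehat{f})$ is immediate: if $h(x) = 1$, then $\widehat{f}(x) = \max(h(x), A_m(x)) = \max(1, A_m(x)) = 1$.

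For the reverse inclusion $Skel(\widehat{f}) \subseteq Skel(h)$, suppose $\widehat{f}(x) = 1$. Since $\widehat{f}(x) = \max(h(x), A_m(x))$, we immediately get $h(x) \leq 1$. It remains to show $h(x) \geq 1$. If $A_m(x) < 1$, then $\widehat{f}(x) = 1$ forces $h(x) = 1$ and we are done. The delicate case, which I expect to be the main obstacle, is when $A_m(x) = 1$: here the equality $f_m(x) = \sum_{j \neq m} f_j(x)$ says that $f_m$ attains the tropical maximum at $x$, and moreover (since the sum is bipotent) there exists some $j \neq m$ with $f_j(x) = f_m(x)$ equal to this maximum. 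I then compute $A_j(x) = f_j(x)/\sum_{l \neq j}f_l(x)$: the denominator contains $f_m$, and $f_m(x) = f_j(x)$ is the maximum over all $l$, so $\sum_{l \neq j}f_l(x) = f_m(x) = f_j(x)$, giving $A_j(x) = 1$. Hence $h(x) \geq A_j(x) = 1$, combined with $h(x) \leq 1$ this yields $h(x) = 1$.

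In summary, the forward and reverse skeleton inclusions both reduce to the bipotent identity $\max(a,b)=1$ with one of $a,b$ bounded by $1$, and the only subtlety is locating, in the boundary case $A_m(x) = 1$, an explicit index $j \neq m$ with $A_j(x) = 1$ using the tie produced by tropical inessentiality. This is essentially the same tie-analysis performed in Lemma \ref{lem_ker_prop}, so the proof is short once that correspondence is recognized.
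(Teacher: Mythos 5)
Your proof is correct and establishes exactly the skeleton equality $Skel(\widehat{f}) = Skel(h)$ demanded by Definition~\ref{def_inessentiality}, but it is leaner than the paper's argument and sidesteps one of its subtleties. The paper's proof also observes $A_m \leq 1$, but in addition introduces the auxiliary fractions $g'_i = f_i/\sum_{j\neq i,m} f_j$ (the summands of $\widehat{f}$ with $f_m$ stripped from the denominator), asserts $g'_i(x)=g_i(x)$ for all $x$, and then, in the boundary case $A_m(x)=1$ with tie partner $f_s$, argues via a perturbation contradiction for a \emph{third} index $l\notin\{s,m\}$ with $f_l(x)=f_s(x)$, so that $g'_l(x)=1$. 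You dispense with both devices: once the tie partner $j$ is located (it exists because $\sum_{j\neq m}f_j$ is a maximum over a finite index set), the denominator of $A_j$ already contains $f_m$, which is a global maximizer at $x$, so $A_j(x)=1$ is immediate --- no modified denominators and no third index. This is not merely shorter; the blanket claim $g'_i(x)=g_i(x)$ in the paper is in fact false in general (take $f=x^2\dotplus x\dotplus 1$, $m=2$, $i=3$: at $x=1/2$ one has $g_3=2$ but $g'_3=4$), and the third-tie-partner step implicitly assumes $f_m\neq f_s$ as monomials, which fails when $f_m$ is a ghost copy. Your direct computation with $A_j$ rather than $g'_j$ avoids both issues and handles the ghost case automatically, since the twin of $f_m$ serves as the tie partner $j$. (The paper's extra machinery is aimed at the stronger claim that $f_m$ can also be dropped from the remaining denominators, which the proposition as stated does not require.) One small wording note: in your last paragraph the analogy to Lemma~\ref{lem_ker_prop} is suggestive but not strictly needed --- your argument stands on its own from the bipotency of the sum.
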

\begin{proof}
 Denote $f'= \sum_{i=1, i \neq m}^{k} \frac{f_i}{\sum_{j \neq i}f_j}$, for each $1 \leq i \leq k$ denote $g_i = \frac{f_i}{\sum_{j \neq i}f_j}$ and for $i \neq m$ define $g'_i=\frac{f_i}{\sum_{j \neq t, m}f_j}$ ($g_i$ after omitting $f_m$ from the denominator).
If $f_m$ is inessential in $f$, then for every $x \in \mathscr{R}^n$ there exists a monomial $f_{t}$, \ $t=t(x) \neq m$ of $f$ such that $f_m(x) \leq f_t(x)$. Thus $g'_i(x) = g_i(x)$ for every $x \in \mathscr{R}^n$ and we can write $f' = \sum_{i=1; i \neq m}^{k}g'_i$.
Consequently $g_m = \frac{f_m}{\sum_{j \neq m}f_j} \leq \frac{f_m}{f_t} \leq 1$. So $g_m$ does not surpass any `roots' of $f'$. Now we have to show that $g_m$ does not contribute `roots' either. Indeed, if $g_m(x) = 1$ then there exists some $f_s$ in $f$ with $s \neq m$ such that $f_s(x) = f_m(x)$ and $f_s(x) \geq f_j(x)$ for all $j \in \{1,...,k\} \setminus \{m,s\}$. Since $f_m$ is inessential there must exists some $l \not \in \{s,m \}$ such that $f_l(x) \geq f_s(x), f_m(x)$ (and so, $f_l(x) = f_s(x)$), for otherwise $f_s$ and $f_m$ are the only monomials defining a corner root of $f$ at $x$ which yields that $f_m$ is essential in $f$, contradicting our assumption. Thus $g'_l(x) = \frac{f_l(x)}{\sum_{j \neq l, m}f_j(x)} = \frac{f_l(x)}{f_s(x)} = 1$. Note that we could also take $g'_s$ instead of $g'_l$. We have proved that $g_m$ and $f_m$ could be omitted from $\widehat{f}$ without affecting its skeleton and thus inessential by definition.
\end{proof}

\ \\
\begin{lem}
Any one of the summands $\frac{f_i}{\sum_{j \neq i}f_j}$ of the fractional function $$\widehat{f}~=~\sum_{i=1}^{k} \frac{f_i}{\sum_{j \neq i}f_j} \in \mathscr{R}(x_1,...,x_n)$$ of Proposition \ref{prop_ker_corner} can be omitted, without affecting the skeleton of $\widehat{f}$.
\end{lem}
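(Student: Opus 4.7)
The plan is to show that $Skel(\widehat{f}\,') = Skel(\widehat{f})$, where $\widehat{f}\,' = \sum_{i \neq i_0} A_i$ is obtained from $\widehat{f} = \sum_{i=1}^{k} A_i$ by deleting the summand $A_{i_0} = \frac{f_{i_0}}{\sum_{j \neq i_0} f_j}$. Since $\mathscr{R}$ is bipotent, evaluation of any sum at a point is the pointwise maximum, so $\widehat{f}(x) = \max_{i} A_i(x)$ and analogously for $\widehat{f}\,'$. From the analysis inside the proof of Proposition \ref{prop_ker_corner} I extract the key facts: (i) $A_i(x) \geq 1$ iff $f_i$ is dominant in $f$ at $x$ (i.e.\ $f_i(x) = f(x) = \max_j f_j(x)$); (ii) $A_i(x) > 1$ iff $f_i$ is the \emph{unique} dominant monomial at $x$; (iii) $A_i(x) = 1$ iff $f_i$ is dominant at $x$ together with at least one other $f_{i'}$, $i' \neq i$. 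Consequently $\widehat{f}(x) \geq 1$ always, with equality iff $x$ is a corner root of $f$.

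For the inclusion $Skel(\widehat{f}) \subseteq Skel(\widehat{f}\,')$: suppose $x \in Skel(\widehat{f})$, so $x$ is a corner root of $f$. Then at least two distinct indices $i_1 \neq i_2$ give $f_{i_1}(x) = f_{i_2}(x) = f(x)$, so at least one of them, say $i_1$, is different from $i_0$, and then $A_{i_1}(x) = 1$. Proposition \ref{prop_absolute_invariance_condition_of_hatf} guarantees that all remaining $A_i(x) \leq 1$, hence $\widehat{f}\,'(x) = \max_{i \neq i_0} A_i(x) = 1$, so $x \in Skel(\widehat{f}\,')$.

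For the inclusion $Skel(\widehat{f}\,') \subseteq Skel(\widehat{f})$: suppose $x \notin Skel(\widehat{f})$. Then $x$ is not a corner root of $f$, so there is a unique dominant monomial $f_{i^{*}}$, which gives $A_{i^{*}}(x) > 1$ and $A_i(x) < 1$ for every $i \neq i^{*}$. Now split on whether the deleted index equals $i^{*}$: if $i_0 \neq i^{*}$, the summand $A_{i^{*}}$ survives in $\widehat{f}\,'$ and so $\widehat{f}\,'(x) \geq A_{i^{*}}(x) > 1$; if $i_0 = i^{*}$, every surviving summand satisfies $A_i(x) < 1$, whence $\widehat{f}\,'(x) < 1$. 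In either case $\widehat{f}\,'(x) \neq 1$, so $x \notin Skel(\widehat{f}\,')$. Combining the two inclusions yields the equality of skeletons.

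The main subtlety to watch out for is the case $i_0 = i^{*}$ in the second inclusion, where removing the unique dominant summand drops the value below $1$ rather than above it; the bipotency of $\mathscr{R}$ is what lets me conclude strict inequality here rather than worry about cancellation of competing terms. No obstacle of substance beyond that, since everything else follows directly from the characterizations already established in Proposition \ref{prop_ker_corner} and Proposition \ref{prop_absolute_invariance_condition_of_hatf}.
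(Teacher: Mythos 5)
Your proof is correct and takes essentially the same route as the paper's: both proofs hinge on the characterization of $A_i(x)$ in terms of which $f_j$ is dominant at $x$, and on the fact that a corner root has at least two dominant monomials, so that deleting any single summand still leaves one that evaluates to $1$. The only cosmetic difference is that you argue the reverse inclusion by contrapositive (splitting on whether $i_0$ is the unique dominant index) and you explicitly invoke Proposition~\ref{prop_absolute_invariance_condition_of_hatf} to control the surviving summands, whereas the paper argues directly (splitting on whether the secondary dominant index $s$ equals $1$) and re-derives that bound inline; your organization is, if anything, slightly cleaner.
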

\begin{proof}
For each $1 \leq i \leq k$ denote $g_i =\frac{f_i}{\sum_{j \neq i}f_j}$. Without loss of generality, consider the term $g_1 = \frac{f_1}{\sum_{j \neq 1}f_j}$. Then if $g_1(x) = 1$ for $x \in \mathscr{R}^n$ , then there exists $s \neq 1$ such that $f_s(x) = f_1(x)$ and $f_s(x)$ is dominant in the denominator. Thus $f_i(x) \leq f_1(x) = f_s(x)$ for all $i \not \in \{1,s\}$. This last observation yields that also $g_s(x)=1$. So, we have shown that $Skel(g_1) \subseteq Skel(\sum_{i=2}^{k}g_i)$. Now, it remains to show that $g_1$ does not surpass any point of the skeleton defined by $\sum_{i=2}^{k}g_i$. Assume $g_j(x) = 1$ for some $j \neq 1$. Then as above there exists $s \neq j$ such that $f_s(x) = f_j(x)$ and $f_s(x)$ is dominant in the denominator and $f_i(x) \leq f_j(x) = f_s(x)$ for all $i \neq j,s$. If $s \neq 1$ then we get that $g_1(x) \leq 1$ and thus $g_j$ is not surpassed by $g_1$ at $x$ . Thus assume $s = 1$. In such a case we get $g_1(x) = \frac{f_1(x)}{f_j(x)} = 1$, and once again $g_j$ is not surpassed by $g_1$ at $x$. As the occurrence of $g_1$ does not add or delete any point of the skeleton of $\sum_{i=2}^{k}g_i$, it can be omitted. Since  $Skel(\sum_{i=2}^{k}g_i) = Skel(\widehat{f})$ we have that $\langle \sum_{i=2}^{k}g_i \rangle = \langle f \rangle$, thus $\sum_{i=2}^{k}g_i$ is a generator of $\langle \widehat{f} \rangle$.
\end{proof}

\begin{rem}\label{rem_wedge_corr_to_corner_loci}
Let $f = \sum_{i=1}^{k}f_i \in \mathcal{F}(\mathscr{R}[x_1,...,x_n])$
be a supertropical polynomial represented as the sum of its component monomials $f_i, \ i=1,...,k$. In Proposition \ref{prop_ker_corner}, we considered the fractional function $\widehat{f} = \sum_{i=1}^{k} \frac{f_i}{\sum_{j \neq i}f_j} \in \mathscr{R}(x_1,...,x_n)$ and showed that $Skel(\widehat{f}) = Cor(f)$. Now, consider the element  $\tilde{f}~=~\bigwedge_{i=1}^{k} \left|\frac{f_i}{\sum_{j \neq i}f_j}\right| = \min_{\ i=1}^{\ k} \left\{ \left|\frac{f_i}{\sum_{j \neq i}f_j}\right| \right\}$ of $\mathscr{R}(x_1,...,x_n)$. By definition $\tilde{f} \geq 1$. Moreover, by Proposition \ref{prop_absolute_invariance_condition_of_hatf}, $\widehat{f}$ fulfills the condition \eqref{cond_domination} of Lemma \ref{lem_absolute_invariance_condition}, so   $\tilde{f}(x) = 1 \Leftrightarrow \widehat{f}(x) = 1$. Thus $Skel(\widehat{f}) = Skel(\tilde{f})$. \\
In view of the above, we can say the following: the skeleton of $\tilde{f}$ is $Cor(f)$. By the correspondence between principal kernels and skeletons we have that $\langle \tilde{f} \rangle = \langle \widehat{f} \rangle$, or equivalently, $\tilde{f} \sim_K \widehat{f}$.
\end{rem}

As a consequence of Remark \ref{rem_wedge_corr_to_corner_loci}, we get the following corollary:
\begin{cor}
A corner locus of $f = \sum_{i=1}^{k}f_i \in \mathcal{F}(\mathscr{R}[x_1,...,x_n])$ such that $\widehat{f}$ \linebreak contains more than $2$ (distinct) essential terms $g_i = \frac{f_i}{\sum_{j \neq i}f_j}$,  is a reducible skeleton (with respect to principal skeletons),  i.e., the kernel $\langle \widehat{f} \rangle$ is reducible (with respect to the sublattice of principal kernels $\PCon(\mathscr{R}(x_1,...,x_n))$).
\end{cor}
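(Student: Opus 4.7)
\medskip

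\noindent\textbf{Proof plan.} The plan is to read off a nontrivial decomposition $\langle\widehat{f}\rangle = A\cap B$ with $A,B\in\PCon(\mathscr{R}(x_1,\ldots,x_n))$ strictly containing $\langle\widehat{f}\rangle$, by partitioning the set of essential indices. Let $J\subseteq\{1,\ldots,k\}$ index the essential summands $g_j=\frac{f_j}{\sum_{i\neq j}f_i}$ in $\widehat{f}$, so by hypothesis $|J|\geq 3$. Since the inessential $g_i$ do not affect $Skel(\widehat{f})$, Remark~\ref{rem_wedge_corr_to_corner_loci} together with Corollary~\ref{cor_max_semifield_principal_kernels_operations} gives
\[
\langle\widehat{f}\rangle \;=\; \Bigl\langle\,\bigwedge_{j\in J}|g_j|\,\Bigr\rangle \;=\; \bigcap_{j\in J}\langle g_j\rangle.
\]

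\noindent Next I would fix two distinct indices $j_1,j_2\in J$, set $J_1=J\setminus\{j_1\}$ and $J_2=J\setminus\{j_2\}$, and define
\[
A \;=\; \bigcap_{j\in J_1}\langle g_j\rangle,\qquad B \;=\; \bigcap_{j\in J_2}\langle g_j\rangle.
\]
Both $A$ and $B$ are principal kernels, by iterating Corollary~\ref{cor_max_semifield_principal_kernels_operations}, and since $J_1\cup J_2=J$ we have $A\cap B=\bigcap_{j\in J}\langle g_j\rangle=\langle\widehat{f}\rangle$. It remains to show $A\supsetneq\langle\widehat{f}\rangle$ and $B\supsetneq\langle\widehat{f}\rangle$.

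\noindent For this, I would use Proposition~\ref{prop_absolute_invariance_condition_of_hatf} to note that the family $\{g_j : j\in S\}$ satisfies the domination condition \eqref{cond_domination} of Lemma~\ref{lem_absolute_invariance_condition} for any subset $S\subseteq J$. Consequently, for any $S\subseteq J$,
\[
Skel\Bigl(\,\bigwedge_{j\in S}|g_j|\,\Bigr) \;=\; \bigcup_{j\in S}Skel(g_j),
\]
by Corollary~\ref{cor_principal_skel_correspondence} applied inductively. The essentiality of $g_{j_1}$ in $\widehat{f}$ means $Skel(\widehat{f})\neq Skel(\sum_{i\neq j_1}g_i)$; combined with the displayed identity this translates into $\bigcup_{j\in J_1}Skel(g_j)\subsetneq\bigcup_{j\in J}Skel(g_j)$, and hence $Skel(A)\subsetneq Skel(\langle\widehat{f}\rangle)$, forcing $A\supsetneq\langle\widehat{f}\rangle$. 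The same argument with $j_2$ in place of $j_1$ gives $B\supsetneq\langle\widehat{f}\rangle$. This exhibits the required nontrivial $\PCon$-decomposition and proves that $\langle\widehat{f}\rangle$ is $\PCon$-reducible; equivalently, $Cor(f)=Skel(\widehat{f})$ is a reducible principal skeleton.

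\noindent\textbf{Main obstacle.} The only delicate point is the clean translation of essentiality of a summand $g_{j_1}$ (an additive statement about $\widehat{f}$) into strict enlargement of the skeleton upon removal of $g_{j_1}$ from the wedge $\bigwedge_{j\in J}|g_j|$. This requires the domination property of Proposition~\ref{prop_absolute_invariance_condition_of_hatf} to identify $Skel$ of a sum with the union of the $Skel(g_j)$'s, so that ``essential in the sum'' coincides with ``contributes a point not already in $\bigcup_{j\neq j_1}Skel(g_j)$''. Once this identification is secured, the remainder of the argument is a routine application of the lattice structure on $\PCon$.
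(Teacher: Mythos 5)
Your approach is constructive — you directly exhibit a nontrivial decomposition $\langle\widehat{f}\rangle = A\cap B$ — whereas the paper argues by contradiction: it assumes $Skel(\widehat{f})$ is irreducible, notes that then $Skel(\widehat{f}) = Skel(g_t)$ for a single $t$, and deduces $\sum_{i\neq t}g_i$ is inessential, contradicting the hypothesis. Both routes hinge on the same two facts you isolate (the domination property of Proposition~\ref{prop_absolute_invariance_condition_of_hatf} giving $Skel(\widehat f)=\bigcup_i Skel(g_i)$, and essentiality of a term meaning its removal strictly shrinks the union), and both have comparable delicacy around the inessential summands; the paper smooths this over by declaring $\widehat f$ to be in reduced form at the outset.

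There is, however, a genuine gap in your opening reduction. You write $\langle\widehat f\rangle = \bigcap_{j\in J}\langle g_j\rangle$, justifying the discard of the inessential indices by ``they do not affect $Skel(\widehat f)$.'' But Remark~\ref{rem_wedge_corr_to_corner_loci} only gives $\langle\widehat f\rangle = \bigcap_{i=1}^{k}\langle g_i\rangle$, i.e.\ with \emph{all} $k$ indices, and skeleton invariance alone is not enough to drop a factor from an intersection of kernels: two principal kernels can have equal skeletons yet be distinct (e.g.\ $\langle x\rangle$ versus $\langle\, |x|\wedge|\alpha|\,\rangle$ for $\alpha\neq 1$), so $\bigcap_{i=1}^k\langle g_i\rangle$ may be strictly smaller than $\bigcap_{j\in J}\langle g_j\rangle$. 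If so, $A\cap B = \bigcap_{j\in J}\langle g_j\rangle \supsetneq\langle\widehat f\rangle$ and your decomposition does not witness reducibility of $\langle\widehat f\rangle$. The fix is small and keeps all of your downstream reasoning: take $A=\bigcap_{i\neq j_1}\langle g_i\rangle$ and $B=\bigcap_{i\neq j_2}\langle g_i\rangle$ over the \emph{full} range $i\in\{1,\dots,k\}$ rather than over $J$. Then $A\cap B = \bigcap_{i=1}^k\langle g_i\rangle = \langle\widehat f\rangle$ because $j_1\neq j_2$, and only the choice of the two deleted indices $j_1,j_2$ needs to be essential — which is precisely what your essentiality argument, via $Skel(A)=Skel(\sum_{i\neq j_1}g_i)\subsetneq Skel(\widehat f)$, already uses. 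With that adjustment your proof is correct and gives a cleaner, constructive alternative to the paper's argument by contradiction.
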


\begin{proof}
Let $\widehat{f}$ be in reduced form. $\tilde{f} = \bigwedge_{i=1}^{k} \left|\frac{f_i}{\sum_{j \neq i}f_j}\right|$ implies that $\langle \tilde{f} \rangle= \bigcap_{i=1}^{k} \left\langle \frac{f_i}{\sum_{j \neq i}f_j} \right\rangle$ and so $Skel(\widehat{f}) = Skel(\tilde{f}) =  \bigcup_{i=1}^{k} Skel(\frac{f_i}{\sum_{j \neq i}f_j})$. For each $1 \leq i \leq k$, denote $g_i = \frac{f_i}{\sum_{j \neq i}f_j}$. Assume that $\widehat{f}$ is irreducible, then there exists  $1 \leq t\leq k$ such that $Skel(\widehat{f})~=~Skel(g_t)$. Then $Skel(g_t \dotplus \sum_{i=1 ; i \neq t}^{k}g_i) = Skel(\widehat{f}) = Skel(g_t)$, and thus $\sum_{i=1 ; i \neq t}^{k}g_i$ is inessential, contradicting our assumption that $\widehat{f}$ contains more than two essential terms $g_i$.
\end{proof}

\begin{rem}
We now describe the kernels corresponding to regular corner loci composed of at most two monomials. Assume $f \in \mathscr{R}[x_1,...,x_n]$ is of the form \linebreak $f = f_1 \dotplus f_2$ where $f_1$ and $f_2$ are two distinct monomials. Then by the construction \linebreak introduced in Proposition \ref{prop_ker_corner}, $f$ translates to $\widehat{f} = \frac{f_1}{f_2} \dotplus \frac{f_2}{f_1} = (\frac{f_1}{f_2}) \dotplus (\frac{f_1}{f_2})^{-1} = |\frac{f_1}{f_2}|$.\linebreak Note that $|\frac{f_1}{f_2}| \not \sim_K |g| \wedge |h| = \frac{|h|\dotplus |g|}{|h|\cdot|g|}$ for any $|g|,|h| \in \mathscr{R}(x_1,...,x_n)$ such that \linebreak $|g| \wedge |h| \not \sim_K \{ |g|,|h| \}$ (i.e., $Skel(g) \not \subseteq Skel(h)$ and $Skel(h) \not \subseteq Skel(g)$), since the monomials $f_1$ and $f_2$ dominate the numerator and denominator, respectively, over all of $\mathscr{R}^n$ while $|g| \wedge |h|$ has at least two monomials dominating its numerator. Thus $\widehat{f}$ is irreducible and so corresponds to an irreducible kernel. In the case where $f = f_1$ is a single monomial, $f$ translates to $\widehat{f} = \frac{f_1}{0} \dotplus \frac{0}{f_1} = |0|$, and thus $\{ \widehat{f} = 1 \} = \emptyset$.
\end{rem}

We now give an example of applying our theory to the well-known tropical line $y \dotplus x \dotplus 1$.
\begin{exmp}\label{exmp_tropical_line_transformation}
Consider the polynomial $f = y \dotplus x \dotplus 1$ in $\mathscr{R}(x,y)$. Then $$\widehat{f} = \frac{y}{x \dotplus 1} \dotplus \frac{x}{y \dotplus 1} \dotplus \frac{1}{x \dotplus y} \ .$$ This form is not reduced, since each of the component fractions can be omitted from the sum without affecting the skeleton of $\widehat{f}$. Geometrically these three terms \linebreak correspond to the three angles formed by the tropical line, of which one can be evidently omitted. $\widehat{f}$ is corner integral and regular and thus $\tilde{f} = \left|\frac{y}{x \dotplus 1}\right| \wedge \left|\frac{x}{y \dotplus 1}\right| \wedge \left|\frac{1}{x \dotplus y}\right|$ is also a generator of $\langle \widehat{f} \rangle$, i.e, $\widehat{f} \sim \tilde{f}$. In reduced form, we can take $\widehat{f}= \frac{y}{x \dotplus 1} \dotplus \frac{x}{y \dotplus 1}$ \linebreak or $\widehat{f}= \frac{y}{x \dotplus 1} \dotplus \frac{1}{x \dotplus y}$ or $\widehat{f}= \frac{1}{x \dotplus y} \dotplus \frac{x}{y \dotplus 1}$. Say we take the former. Then $\tilde{f} = \left|\frac{y}{x \dotplus 1}\right| \wedge \left|\frac{x}{y \dotplus 1}\right|$. It can be easily seen that no more than one of the three terms introduced above can be omitted from $\widehat{f}$ without changing the skeleton.
\end{exmp}

\ \\

\begin{defn}
Let $f \in \mathscr{R}(x_1,...,x_n)$. Write $f= \sum_{i=1}^{k}f_i$, with each \\ $f_i \in \mathscr{R}(x_1,...,x_n)$ having monomial numerator for $i=1,...,k$. We say that $f$ admits the \emph{bound property}  if for any $x \in \mathscr{R}^n$ the following condition holds:
\begin{align*}
\forall\ 1 \leq  i,j \leq k  \ \  \text{such that}  \ \ i \neq j \ : \ f_i(x) = f_j(x) \Leftrightarrow   f_i(x) = f_j(x) = 1  \\  \text{or} \  \exists s \in \{1,...,k\} \setminus \{ i,j \} \ \text{such that} \ f_i(x), f_j(x) < f_s(x).
\end{align*}
The latter option means that $f_i$ and $f_j$ are not essential in $f$ at $x$.
\end{defn}

\begin{rem}\label{rem_eqiv_cond_cor_int}
Let $h \in \mathscr{R}(x_1,...,x_n)$. Then, if $h$ admits the bound property then $h$ admits condition \eqref{corner_int_cond_1} and $h^{-1}$ admits condition \eqref{corner_int_cond_2}.
\end{rem}
\begin{proof}
Clearly, the second assertion follows the first, by taking inverse elements. Write $h = \frac{f}{g}= \frac{\sum_{i=1}^{k}f_i}{\sum_{j=1}^{m}g_j}$ where $f_i, g_j \in \mathscr{R}[x_1,..., x_n]$ are monomials composing $h$'s numerator and denominator. Then $h = \sum_{i=1}^{k}(\frac{f_i}{g})$. Assume there exists some $x \in \mathscr{R}^n$ such that $f_i(x) = f_j(x)$, then we have that $\frac{f_i(x)}{g(x)}= \frac{f_j(x)}{g(x)}$. So by the boundary property this yields that either $\frac{f_i(x)}{g(x)} = 1 = \frac{f_j(x)}{g(x)}$ or $\frac{f_i(x)}{g(x)} = \frac{f_j(x)}{g(x)}$  are inessential in $h$. The former yields that $f_i(x) = g(x) = f_j(x)$ so there is some $t \in \{1,...,m\}$ such that $f_i(x) \leq g_t(x)$. The latter implies that $f_i(x)$ is inessential, as desired.
\end{proof}

\begin{prop}\label{prop_corner_integrality_of hatf}
Let $f \in \mathcal{F}(\mathscr{R}[x_1,...,x_n])$ be a supertropical polynomial. Then $\widehat{f}$ defined in Proposition \ref{prop_ker_corner} is corner integral. Moreover, if $f$ is tangible (no monomial occurs twice) then $\widehat{f}$ is regular.
\end{prop}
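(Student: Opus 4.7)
Fix $k \geq 2$ and write $f = \sum_{i=1}^{k} f_i$ with each $f_i$ a tangible monomial (repetitions allowed to encode ghost terms, per Note \ref{note_super_tropical_representation}). Set $D_i = \sum_{j \neq i} f_j$ and $h_i = f_i/D_i$, and put $\widehat{f}$ over the common denominator to obtain $\widehat{f} = N/D$ with $D = \prod_i D_i$ and $N = \sum_i f_i \prod_{j \neq i} D_j$. The plan is to read off the two halves of Definition \ref{defn_corner_integral} directly from this presentation, driven by the pointwise inequality $\widehat{f}(x) \geq 1$: at any $x$, picking $i_0$ with $f_{i_0}(x) = \max_j f_j(x)$ yields $D_{i_0}(x) \leq f_{i_0}(x)$, hence $h_{i_0}(x) \geq 1$. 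This alone delivers condition \eqref{corner_int_cond_2}, since $N \geq D$ pointwise and so at any corner root of $D$ the numerator surpasses the denominator.

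The real content is condition \eqref{corner_int_cond_1}, which in view of $\widehat{f} \geq 1$ reduces to showing that every corner root of $N$ lies in $Skel(\widehat{f}) = Cor(f)$ (which by Proposition \ref{prop_ker_corner} forces $N(x) = D(x)$ there). I would argue the contrapositive: if $\widehat{f}(x) > 1$, then $f$ has a unique strictly dominant index $i_0$ at $x$, for otherwise two distinct $h_{i_s}(x)$ equal $1$ while Proposition \ref{prop_absolute_invariance_condition_of_hatf} forces the rest to be $\leq 1$, making $\widehat{f}(x) = 1$. The monomial $f_{i_0}^{k}$ of $h_{i_0}$, obtained by selecting $f_{i_0}$ from each factor $D_j$ with $j \neq i_0$ (legitimate since $i_0 \neq j$), then has value $f_{i_0}(x)^{k}$; any other monomial of $N$ comes from an index choice $(i, l_j)$ with either $i \neq i_0$ or some $l_{j_0} \neq i_0$, and a short estimate bounds its value by $f_{i_0}(x)^{k-1} f_{i_1}(x) < f_{i_0}(x)^{k}$, where $f_{i_1}(x)$ is the second-largest value attained by $f$ at $x$. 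Hence $f_{i_0}^{k}$ is the unique maximum-attaining monomial of $N$ at $x$, so $x$ cannot be a corner root of $N$. The delicate point here is that distinct index choices can coincide as polynomial monomials after like-term combination, so uniqueness of $f_{i_0}^{k}$ as a \emph{monomial} of $N$ (and not merely as an index choice) has to be extracted from the intrinsic value bound: any monomial $M$ of $N$ has one well-defined value $M(x)$, shared by every index choice producing it, and the estimate places every non-canonical choice strictly below $f_{i_0}(x)^{k}$.

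For the regularity clause, assume $f$ is tangible, so $f_1, \ldots, f_k$ are pairwise distinct monomials of $\mathscr{R}[x_1,\dots,x_n]$. The step above still exhibits $f_{i_0}^{k}$ as a maximum-attaining monomial of $N$ at every $x$. The crucial combinatorial observation is that $f_{i_0}^{k}$ is never a monomial of $D$ at all: every monomial of $D = \prod_i D_i$ is of the form $\prod_i f_{l_i}$ with $l_i \neq i$, and demanding $l_i = i_0$ for every $i$ forces the forbidden $l_{i_0} = i_0$. Pairing $f_{i_0}^{k}$ with any maximum-attaining monomial of $D$ at $x$ therefore yields a witness pair with distinct monomials, establishing regularity. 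Tangibility is precisely what rules out the parallel worry that $f_{i_0}^{k}$ might coincide as a polynomial with some $\prod_i f_{l_i}$ satisfying $l_i \neq i$, which is the one place the distinctness of the $f_i$ is genuinely used.
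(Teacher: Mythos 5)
Your argument for corner-integrality is correct and takes a genuinely different route from the paper. The paper derives condition~\eqref{corner_int_cond_1} from the bound property of $\widehat{f}$ via Lemma~\ref{lem_ker_prop} and Remark~\ref{rem_eqiv_cond_cor_int}, and handles condition~\eqref{corner_int_cond_2} by showing the corner roots of $D=\prod_i D_i$ coincide with those of $\sum_i f_i$ and examining cases. Your approach is more self-contained: the pointwise inequality $\widehat{f}\geq 1$ immediately gives condition~\eqref{corner_int_cond_2}, and your ``extreme monomial'' analysis --- at a non-corner point the unique strict maximizer $i_0$ makes $f_{i_0}^k$ the unique value-maximizing monomial of $N$ --- cleanly disposes of condition~\eqref{corner_int_cond_1}, with the coefficient/like-term subtlety handled correctly by comparing values at a single $x$.

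The regularity argument, however, has a real gap. Your claim that tangibility rules out $f_{i_0}^k$ coinciding as a polynomial monomial with some $\prod_i f_{l_i}$, $l_i\neq i$, is false. Take $k=3$ with $f_1=xy$, $f_2=x^2$, $f_3=y^2$ (all with coefficient $1$ --- distinct monomials, so $f$ is tangible). The index choice $(l_1,l_2,l_3)=(2,3,1)$ satisfies $l_i\neq i$ for all $i$ and yields $f_2 f_3 f_1 = x^2\cdot y^2\cdot xy = x^3y^3 = f_1^3$, so $f_1^3$ \emph{is} a monomial of $D$. The index-choice count prevents the \emph{canonical} factorization $(i_0,\dots,i_0)$, but after collecting like terms a different derangement-type factorization can produce the same polynomial monomial, and distinctness of the $f_i$ does not prevent this (monomial factorization into a prescribed finite set is far from unique). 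Note that in the example $f_1$ is inessential, which is no accident: if you insist on choosing $i_0$ essential, then $f_{i_0}^k\in D$ would force $D\geq f_{i_0}^k$ pointwise, contradicting $N(x)=f_{i_0}(x)^k>D(x)$ on the open region where $f_{i_0}$ strictly dominates. So the repair is to add an essentiality hypothesis on $i_0$; alternatively and more simply, at a corner root of $f$ there are at least two indices $i_0\neq i_1$ attaining the max, so $f_{i_0}^k$ and $f_{i_1}^k$ are two \emph{distinct} (by tangibility) value-maximizing monomials of $N$, and at least one of them must differ from any fixed value-maximizing monomial of $D$; away from corner roots $N(x)>D(x)$ settles it. The paper sidesteps all this by invoking Corollary~\ref{cor_regularity_correspondence} directly.
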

\begin{proof}
As a consequence of Lemma \ref{lem_ker_prop}, we get that $\widehat{f}$ admits the bound property. Thus by Remark \ref{rem_eqiv_cond_cor_int} we have that condition \eqref{corner_int_cond_1} holds. For condition \eqref{corner_int_cond_2}, since $\widehat{f} = \sum_{i=1}^{k} \frac{f_i}{\sum_{j \neq i}f_j}$, writing $\widehat{f}=\frac{h}{g}$ we have that $h = \sum_{i=1}^{k}f_i(\sum_{j \neq i}(\sum_{l \neq j}f_j))$ and $g = \prod_{i=1}{k}(\sum_{j \neq i}f_j)$. We want to show that every corner root of $g$ is surpassed by some summand of $h$. First note that the corner roots of $g$ are exactly those of the polynomial $w=\sum_{i=1}^{k}f_i$. Indeed, if $x \in \mathscr{R}^n$ is a corner root of $w$ then is a corner root of precisely $k-2$ factors $(\sum_{j \neq i}f_j)$ with $i$ an index of any chosen pair of essential monomials. Conversely, if $x \in \mathscr{R}^n$ is a corner root of $g$ then it is a corner root of  $k-2$ of its factors not involving the essential monomials, say $f_i$ and $f_j$, which yields essentiality of $f_i$ and $f_j$ in $w$, proving our claim. In view of the last assertion, we can restrict our attention to the corner roots of $w$. Now, assume $f_i(x)~=~f_j(x)$ for some $i,j \in \{ 1,...,k\}$. If there exists $s \in \{ 1,...,k\} \setminus \{i,j\}$ such that $f_s(x) > f_i(x)$, take the one attaining the maximal value at $x$, the $f_s(\sum_{q \neq s}(\sum_{l \neq q}f_q(x))) \geq g(x)$. If no such $s$ exists, then $f_i(x)=f_j(x)$ attains maximal value in $f$ and $f_i(x)=f_j(x) = f_i(\sum_{q \neq i}(\sum_{l \neq q}f_q(x))) = f_j(\sum_{q \neq j}(\sum_{l \neq q}f_q(x)))$ as desired.\\
By the construction of $\widehat{f}$ and Corollary \ref{cor_regularity_correspondence}, irregularity of $\widehat{f}$ implies a multiple \linebreak occurrence of the same monomial in $f$, so $f$ is non-tangible thus the second assertion follows.
\end{proof}

\pagebreak

\begin{prop}\label{prop_formal_poly_loci_correspondence}
For $h = \frac{\sum_{i=1}^{k}f_i}{\sum_{j=1}^{m}g_j} \in \mathscr{R}(x_1,...,x_n)$ where $f_i, g_j \in \mathscr{R}[x_1,...,x_n]$, define the polynomial
\begin{equation}\label{eq_locus}
\underline{h} = \sum_{i=1}^{k}f_i \dotplus \sum_{j=1}^{m}g_j \in \mathcal{F}(\mathscr{R}[x_1,...,x_n]).
\end{equation}
Let $Z=Skel(h) \subseteq \mathscr{R}^{n}$ be a principal corner integral skeleton. Then $Z$ corresponds to a corner locus of the supertropical polynomial $\underline{h} \in \mathcal{F}(\mathscr{R}[x_1,...,x_n])$, in the sense that the generalized corner locus $Cor(\underline{h})$ coincides with $Z$  in $\mathscr{R}^n$.  If $h$ is also regular then $\underline{h}$ is tangible so $Cor(\underline{h}) \in PRCL(\mathscr{R}^n)$ is a regular corner locus.
\end{prop}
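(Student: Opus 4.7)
Write $f = \sum_{i=1}^{k} f_i$ and $g = \sum_{j=1}^{m} g_j$, so $h = f/g$. The goal is to show the set equality $Skel(h) = Cor(\underline{h})$, where by convention a monomial appearing simultaneously as some $f_i$ and some $g_j$ is counted with multiplicity $2$ in $\underline{h}$ and is therefore a ghost monomial of $\underline{h}$ (see Note~\ref{note_super_tropical_representation}). I will prove the two inclusions separately, and then treat the regular subcase.

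For $Skel(h) \subseteq Cor(\underline{h})$, take $a \in \mathscr{R}^n$ with $h(a)=1$, and set $M:=f(a)=g(a)$. By definition of addition in the bipotent semifield $\mathscr{R}$, there exist indices $i_0,j_0$ with $f_{i_0}(a)=g_{j_0}(a)=M$, and every monomial appearing in $\underline{h}$ has value $\leq M$ at $a$. If $f_{i_0}$ and $g_{j_0}$ are distinct as monomials, then $\underline{h}$ has two distinct tangible monomials both attaining its maximum value $M$ at $a$; otherwise $f_{i_0}=g_{j_0}$ occurs twice in $\underline{h}$ and is thus a ghost monomial attaining the maximum. In either case Definition~\ref{def_st_corner_root} gives $a \in Cor(\underline{h})$.

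For the reverse inclusion $Cor(\underline{h}) \subseteq Skel(h)$, let $a \in Cor(\underline{h})$ and let $M=\underline{h}(a)$ be the maximal value attained by the component monomials of $\underline{h}$ at $a$. There are two (non‑exclusive) cases.
\textbf{Case 1:} two distinct tangible monomials $p,q$ attain $M$. If one of $p,q$ is some $f_{i_0}$ and the other some $g_{j_0}$ then $f(a)\geq M$ and $g(a)\geq M$, and since all component monomials are bounded by $M$ we obtain $f(a)=g(a)=M$, so $h(a)=1$. If both $p,q$ belong to the $f_i$'s, then $a$ is a corner root of the numerator polynomial $f$; since $p(a)=q(a)=M$ is also the maximal value attained in $\underline{h}$, there is no $f_s$ with $f_s(a)>p(a)$, so corner‑integrality condition~\eqref{corner_int_cond_1} forces some $g_t$ with $g_t(a)\geq p(a)=M$. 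Combined with $g_t(a)\leq M$ this gives $g(a)=M=f(a)$, whence $h(a)=1$. The symmetric situation (both $p,q$ in the $g_j$'s) uses condition~\eqref{corner_int_cond_2}.
\textbf{Case 2:} a ghost monomial $p$ of $\underline{h}$ attains $M$ at $a$. By construction of $\underline{h}$ this forces $p=f_{i_0}=g_{j_0}$ as monomials for some $i_0,j_0$, and then $f(a)\geq p(a)=M$ and $g(a)\geq p(a)=M$, yielding again $f(a)=g(a)=M$ and $h(a)=1$.

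For the regular subcase, suppose $h$ is regular. At any point $a\in Skel(h)=Cor(\underline{h})$, regularity provides dominant monomials $f_{i_0}$ of the numerator and $g_{j_0}$ of the denominator at $a$ with $f_{i_0}\neq g_{j_0}$ as monomials; hence $f_{i_0}$ and $g_{j_0}$ are two distinct tangible component monomials of $\underline{h}$ both attaining the maximal value at $a$. This means every point of $Cor(\underline{h})$ is already a corner root of the tangible polynomial obtained from $\underline{h}$ by collapsing each repeated monomial to a single tangible occurrence; equivalently, $Cor(\underline{h})$ is defined by a tangible polynomial, so it lies in $PRCL(\mathscr{R}^n)$.

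\emph{Anticipated main obstacle.} The routine steps are the algebraic manipulations; the delicate point is the case in Direction 2 where the maximum $M$ is attained only by monomials all from the numerator (or all from the denominator), so that $f(a)=M$ but a priori nothing forces $g(a)=M$. This is precisely where the corner‑integrality hypothesis enters in an essential way via conditions~\eqref{corner_int_cond_1}/\eqref{corner_int_cond_2}, and one must carefully verify that the ``dominant tangible alternative'' in those conditions is ruled out by $M$ already being the global maximum of $\underline{h}$ at $a$.
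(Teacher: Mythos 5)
Your proof is correct and follows essentially the same route as the paper's: both identify the corner roots of $\underline{h}$ by how the supertropical maximum is achieved (a cross-tie between numerator and denominator, an internal tie within one side, or a ghost), and both invoke the corner-integrality conditions \eqref{corner_int_cond_1}/\eqref{corner_int_cond_2} precisely to force a matching dominant monomial on the opposite side when the tie is internal. The one imprecision is your repeated assertion that the dominant pair $f_{i_0},g_{j_0}$ are \emph{tangible} component monomials of $\underline{h}$ --- either could well be ghost in $\underline{h}$ if it happens to reappear on the other side; this does not affect your conclusions, since Definition~\ref{def_st_corner_root}(1) only needs two \emph{distinct} monomials, and in the regularity subcase the relevant statement is that they are two distinct monomials of the collapsed tangible polynomial, which is automatic once they are distinct as functions.
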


\begin{proof}
The corner locus  of \eqref{eq_locus} is

\begin{align*}
U = \ & \bigcup_{i=1,j=1}^{k,m} \{ f_i = g_j , \ f_i,g_j \geq f_s,g_t \ \forall s \neq i, \ t \neq j \}  \\
 \ \cup \ & \bigcup_{\substack{i=1,j=1 \\ i \neq j}}^{k} \{ f_i = f_j , \ f_i,f_j \geq g_t \ \forall \ t \in \{1,...,m \} \} \\
 \ \cup \ & \bigcup_{\substack{i=1,j=1 \\ i \neq j}}^{m} \{ g_i = g_j , \ g_i,g_j \geq f_s \ \forall \ s \in \{1,...,k \} \}.
\end{align*}

As $h$ is corner integral we have that
$$U = \bigcup_{i=1,j=1}^{k,m} \{ f_i = g_j , \ f_i,g_j \geq f_s,g_t \ \forall s \neq i, \ t \neq j \}$$
as all corner roots of the numerator and denominator of $h$ are surpassed.
Now, the skeleton defined by $h$ is
\small{\begin{align*}
\{(\alpha_1,...,\alpha_n)  :  h(\alpha_1,...,\alpha_n) = 1 \} = \ & \{(\alpha_1,...,\alpha_n)  :  \frac{\sum_{i=1}^{k}f_i(\alpha_1,...,\alpha_n)}{\sum_{j=1}^{m}g_j(\alpha_1,...,\alpha_n)} = 1 \} \\
 = \ & \{(\alpha_1,...,\alpha_n)  :  \sum_{i=1}^{k}f_i(\alpha_1,...,\alpha_n) \} = \sum_{j=1}^{m}g_j(\alpha_1,...,\alpha_n) \} \\
 = \ & \bigcup_{i=1,j=1}^{k,m}\{ f_i = g_j , \ f_i,g_j \geq f_s,g_t \ \forall s \neq i, \ t \neq j \} = U.
\end{align*}}

If $h$ is taken to be also regular, then all multiple occurrences of monomials in \linebreak $\underline{h}=\sum_{i=1}^{k}f_i \dotplus \sum_{j=1}^{m}g_j$ are inessential and can be omitted, and thus $\underline{h}$ is tangible.
\end{proof}

\begin{defn}
Let $f,g \in \mathcal{F}(\mathscr{R}[x_1,...,x_n])$ be a pair of supertropical polynomials. Define the following relation
\begin{equation}
f \sim_L g  \ \ \Leftrightarrow Cor(f) = Cor(g).
\end{equation}
The relation $\sim_L$ is obviously  reflexive, symmetric and transitive, thus is an equivalence relation on $\mathcal{F}(\mathscr{R}[x_1,...,x_n])$.
\end{defn}

\begin{prop}\label{prop_regular_ci_skeletons_corner_loci_correspondence}
There is a $1:1$ correspondence between principal corner-integral skeletons and principal corner-loci which restricts to a correspondence between \linebreak principal regular corner-integral skeletons and principal regular corner-loci. This correspondence induces a correspondence between principal (regular) corner-integral kernels and principal (regular) corner-loci.
\end{prop}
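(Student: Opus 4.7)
The plan is to build the correspondence out of the two explicit constructions already set up in the text: the map $f \mapsto \widehat{f}$ of Proposition \ref{prop_ker_corner} going from supertropical polynomials to rational functions, and the map $h \mapsto \underline{h}$ of Proposition \ref{prop_formal_poly_loci_correspondence} going in the opposite direction. On principal corner loci, $\Phi$ of Remark \ref{rem_corner_to_skel} is induced by the former; I will define $\Psi : PCL(\mathscr{R}^n) \leftarrow \{\text{principal corner-integral skeletons}\}$ by $Skel(h) \mapsto Cor(\underline{h})$ and check that $\Phi$ and $\Psi$ are mutually inverse bijections.

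First I would verify that both maps are well-defined on equivalence classes. For $\Phi$: by Proposition \ref{prop_ker_corner} we have $Skel(\widehat{f}) = Cor(f)$ pointwise, so $\Phi$ depends only on the set $Cor(f)$, not on the specific polynomial chosen to represent it; moreover $\widehat{f}$ is corner-integral by Proposition \ref{prop_corner_integrality_of hatf}, so the target lies in the family of principal corner-integral skeletons. For $\Psi$: given a principal corner-integral skeleton $Z = Skel(\langle h\rangle)$, Proposition \ref{prop_formal_poly_loci_correspondence} asserts $Z = Cor(\underline{h})$; since two corner-integral generators $h, h'$ of the same principal kernel have $Skel(h) = Skel(h')$, the assignment $Z \mapsto Cor(\underline{h})$ is independent of the generator (it only depends on the set $Z$, because $Cor(\underline{h})$ coincides with $Z$).

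Second I would check the two composition identities. The composition $\Psi \circ \Phi$ sends $Cor(f)$ to $Cor(\underline{\widehat{f}})$; since $\widehat{f}$ is corner-integral, Proposition \ref{prop_formal_poly_loci_correspondence} gives $Cor(\underline{\widehat{f}}) = Skel(\widehat{f}) = Cor(f)$. Conversely $\Phi \circ \Psi$ sends $Skel(h)$ (with $h$ corner-integral) to $Skel(\widehat{\underline{h}}) = Cor(\underline{h}) = Skel(h)$, again by Propositions \ref{prop_ker_corner} and \ref{prop_formal_poly_loci_correspondence}. For the regular restriction, Corollary \ref{cor_regularity_correspondence} shows $f$ tangible $\Leftrightarrow$ $\widehat{f}$ regular, and the last sentence of Proposition \ref{prop_formal_poly_loci_correspondence} shows $h$ regular implies $\underline{h}$ tangible; together these force $\Phi$ and $\Psi$ to exchange the regular sublattices of objects.

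Finally, the induced correspondence on kernels is immediate from Corollary \ref{cor_correspondence_bounded_pkernels_pskeletons}: a principal corner-integral skeleton $Skel(\langle h\rangle)$ determines $\langle h\rangle \in \PCon(\langle \mathscr{R}\rangle)$ uniquely (and vice versa), and the notion of corner-integrality (respectively regularity) of a kernel was defined in Definition \ref{defn_corner_integrality} (respectively via Corollary \ref{cor_regular_lattice}) so as to depend only on the kernel, not on a particular generator; composing the skeleton-kernel bijection with $\Psi$ yields the desired bijection between principal (regular) corner-integral kernels and principal (regular) corner-loci. The main subtlety I anticipate is the well-definedness issue: one must be careful that $\underline{h}$ depends on a choice of representation of $h$ as a quotient of polynomials, but since every such representation yields the same corner locus (namely $Skel(h)$) by Proposition \ref{prop_formal_poly_loci_correspondence}, the induced $\sim_L$-class of $\underline{h}$ is intrinsic to $Skel(h)$, which is exactly what is needed for the bijection.
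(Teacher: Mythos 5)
Your proposal is correct and follows essentially the same route as the paper: both sides build the bijection from the explicit maps $f \mapsto \widehat{f}$ and $h \mapsto \underline{h}$, verify the two composition identities $Cor(\underline{\widehat{g}}) = Cor(g)$ and $Skel(\widehat{\underline{f}}) = Skel(f)$ via Propositions \ref{prop_ker_corner}, \ref{prop_corner_integrality_of hatf} and \ref{prop_formal_poly_loci_correspondence}, and obtain the regular restriction from Corollary \ref{cor_regularity_correspondence} together with the tangibility clause of Proposition \ref{prop_formal_poly_loci_correspondence}. Your write-up is somewhat more careful about well-definedness on $\sim_K$- and $\sim_L$-classes (including the observation that $\underline{h}$ depends on the fractional representation of $h$ but its $\sim_L$-class does not), which the paper's proof handles implicitly, but the underlying argument is the same.
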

\begin{proof}
Let $f \in \mathscr{R}(x_1,...,x_n)$ be corner integral, and let $f' \sim_K f$. Then
$$Skel(f') = Skel(f) = Cor(\underline{f}) = Skel(\widehat{\underline{f}}).$$
So, $f' \sim_K \widehat{(\underline{f})}$.\\
Conversely, Let $g \in \mathcal{F}(\mathscr{R}[x_1,...,x_n])$ and let $g' \sim_L g$. Then
$$Cor(g') = Cor(g) = Skel(\widehat{g}) = Cor(\underline{\widehat{g}}),$$
where the last equality holds since we have shown $\widehat{g}$ to be corner integral.
So $g' \sim_L  \underline{\widehat{g}}$.\\
The restriction to regular skeletons, their corresponding kernels and corner loci follows
the propositions introduced above.
\end{proof}

\begin{defn}
Let $\Omega$  be the lattice generated by principal corner integral kernels with respect to (finite) multiplications and intersections.
\end{defn}

\pagebreak

\begin{rem}
Now, using the procedure introduced in Remark \ref{rem_corner_to_skel} and  the correspondence introduced in Corollary \ref{cor_correspondence_bounded_pkernels_pskeletons}, we get that
$\Omega$ corresponds to the lattice of finitely generated generalized corner loci. Intersecting $\Omega$ with the lattice of regular kernels yields a lattice   $\Theta \subset \PCon(\mathscr{R}(x_1,...,x_n))$ generated by regular corner-integral principal kernels which corresponds to the lattice of regular finitely generated corner loci  $FRCL(\mathscr{R}^n)$.
\end{rem}
\begin{note}
We are mainly interested in the sublattice $\Theta \subset \PCon(\langle \mathscr{R} \rangle)$ of corner-integral principal kernels in $\Con(\langle \mathscr{R} \rangle)$, for which the above holds too. $\Theta$ also is a sublattice of the lattice of  corner-integral principal kernels in $\PCon(\mathscr{R}(x_1,...,x_n))$.
\end{note}

\begin{rem}
$\Theta$ defined above is a sublattice of kernels of $\PCon(\mathscr{R}(x_1,...,x_n))$ (or of $\PCon(\langle \mathscr{R} \rangle)$). Thus all results of the section concerning reducibility is applicable \linebreak to it.
\end{rem}

\textbf{In the subsequent section concerning kernel dimension, we will show that the lattice generated by corner integral regular  kernels is in fact the lattice of regular kernels! Thus the lattice of corner loci corresponds to the lattice of regular kernels.}

\newpage\subsection{Example: The tropical line}


\ \\

\begin{note}
In the following example we consider the rational function \linebreak $\widehat{f} = \left|\frac{x}{y  \dotplus  1}  \dotplus  \frac{y}{x  \dotplus  1}  \dotplus  \frac{1}{x  \dotplus  y}\right| \wedge |\alpha| \in \langle \mathscr{R} \rangle$ for any $\alpha \in \mathscr{R} \setminus \{1 \}$. As taking $\wedge |\alpha|$  does not affect the computations below, we prefer omitting it, and work with $\frac{x}{y  \dotplus  1}  \dotplus  \frac{y}{x  \dotplus  1}  \dotplus  \frac{1}{x  \dotplus  y}$ instead of its `copy' in $\langle \mathscr{R} \rangle$.
\end{note}

\begin{exmp}
Let $f = x  \dotplus  y  \dotplus  1$ be the tropical line. Its corresponding skeleton is defined by the rational function $\widehat{f} = \frac{x}{y  \dotplus  1}  \dotplus  \frac{y}{x  \dotplus  1}  \dotplus  \frac{1}{x  \dotplus  y}$, and so, its corresponding kernel in $\mathscr{R}(x,y)$ is $\langle \widehat{f} \rangle$. As shown above $\widehat{f} \sim_K \left|\frac{x}{y  \dotplus  1}\right| \wedge \left|\frac{y}{x  \dotplus  1}\right| \wedge \left|\frac{1}{x  \dotplus  y}\right|$. Moreover, any of the three terms above can be omitted. Thus we have that
$$\langle \widehat{f} \rangle = \left\langle \frac{x}{y  \dotplus  1} \right\rangle \cap \left\langle \frac{y}{x  \dotplus  1} \right\rangle \cap \left\langle x  \dotplus  y \right\rangle,$$
where each of the kernels comprising the intersection is contained in both of the remaining kernels (in the last kernel we chose to take $x \dotplus y$ as a generator instead of its inverse). Now, taking logarithms, it can be seen that $Skel(\langle x  \dotplus  y \rangle)$ is exactly the union of the bounding rays of the third quadrant. As
$$x \dotplus y \sim_K |x \dotplus y| = |x \dotplus y| \dotplus  (|x| \wedge |y|) = \left(|x \dotplus y| \dotplus |x|\right) \wedge \left(|x \dotplus y|  \dotplus  |y|\right),$$
 we have that  $\langle x \dotplus y \rangle = \langle |x \dotplus y| \dotplus |x| \rangle \cap \langle  |x \dotplus y|  \dotplus  |y| \rangle$. Notice that  $\frac{1}{x  \dotplus  y}$ does not admit the corner integrality condition, as the corner roots of $x$ and $y$ are not surpassed by $1$, thus does not correspond to a tropical hypersurface. We proceed by considering $\frac{x}{y  \dotplus  1}  \dotplus  \frac{y}{x  \dotplus  1}$ omitting the last term of $\widehat{f}$. Computing its representation as a single fraction, we get $\frac{x^2  \dotplus  x  \dotplus  y^2  \dotplus  y}{(x \dotplus 1)(y \dotplus 1)} = \frac{(x \dotplus y)(x \dotplus  y  \dotplus  1)}{(x \dotplus 1)(y  \dotplus  1)}$. The corner roots of the numerator are $\{x=1,y=1\}$ and $\{x=y\}$, and the corner roots of the denominator are $\{x=1\}$ and $\{y=1\}$. The corner root $\{x=y\}$ is surpassed by the denominator as $(x \dotplus 1)(y \dotplus 1) = (x \dotplus y)  \dotplus  (xy  \dotplus  1) \geq (x \dotplus y)$ thus this is a regular skeleton (or equivalently a regular kernel) as expected. We urge the reader not to try to put negative values into the last equation, since they, of course, do not exist in a semifield. Finally, we discuss the above wedge decomposition of $|x  \dotplus  y|$. It is a quite natural one. The geometric locus of the equation $|x| \wedge |y| = \min(|x|,|y|) = 1$ in a logarithmic scale is exactly the axes, as a union of the $x$-axis corresponding to $|x|=1$ and the $y$-axis corresponding to $|y|=1$. Intersecting it with the geometric locus of $|x \dotplus y|$ leaves the latter untouched as the former locus contains it. In fact, using such methods of intersections we can define any segment and ray in $\mathscr{R}^2$ using principal kernels, so only points in the plane are irreducible skeletons. This of course is not a problem, since we are still free to consider lattices inside the lattice of principal kernels, generated by designated subsets, which will be considered prime or irreducible. In fact, principal kernels leave us with maximal `flexibility' in our hands.
\end{exmp}

\begin{figure}
\centering

\begin{tikzpicture}[scale=3,
    axis/.style={thin, ->},
    important line/.style={very thick, ->},
    gap line/.style={thin},
    dline/.style={thick,dashed}]


    \draw[axis] (0,0)  -- (1,0) node[right, color=black] {$x$} ;
    \draw[axis] (0,0) -- (0,1) node[above, color=black] {$y$};
    \draw[important line] (0,0)  -- (-1,0) ;
    \draw[important line] (0,0) -- (0,-1) ;
    \draw[important line] (0,0) -- (0.7,0.7) ;

    \draw[dline] (-0.1,-1.1)  -- (-0.1,-0.1) -- (-1.1,-0.1) ;
    \draw[dline] (0.1,-1.1)  -- (0.1,-0.07) -- (1,0.8) ;
    \draw[dline] (-1.1,0.1)  -- (-0.1, 0.1) -- (0.8,1)   ;

    \node[left] at (0.5,-0.25) {$|\frac{y}{x \dotplus 1}|$};
    \node[right] at (-0.5,0.25) {$|\frac{x}{y \dotplus 1}|$};
    \node[left] at (-0.25, -0.25) {$|\frac{1}{x \dotplus y}|$};

 \end{tikzpicture}
\caption{$\tilde{f} = |\frac{x}{y  \dotplus  1}| \wedge |\frac{y}{x  \dotplus  1}| \wedge |\frac{1}{x  \dotplus  y}|$} \label{fig:Ex1}
\end{figure}

\newpage

\section{Corner-integrality revisited}

\ \\

In this section we study the notion of corner-integrality introduced in sub section \ref{subsection:corner_integrality}. We specify a procedure for finding a corner-integral kernel containing a given kernel or equivalently a supertropical hypersurface containing a given skeleton. We show that under this procedure corner-integral kernels are left unchanged.

\ \\

Let $f \in \mathscr{R}(x_1,...,x_n)$ be a rational function. We start by expressing corner-integrality of $f$ and thus of $\langle f \rangle$, the kernel generated by $f$, by a condition involving kernels.
Write $f = \frac{h}{g}$, where $h = \sum_{i=1}^{k}h_i$ and $g = \sum_{j=1}^{m}g_j$ where $h_i$ and $g_j$ for $i=1,...,k$ and $j=1,...,m$ are the component monomials in $\mathscr{R}[x_1,...,x_n]$ of the numerator and denominator of $f$, respectively. For simplicity, we assume that $f$ is in essential form, so each monomial affects $Skel(f)$, the skeleton defined by $f$.
As noted in Subsection \ref{subsection:corner_integrality}, $f$ is corner-integral if the following two conditions hold for every $x \in \mathscr{R}^n$:
\begin{equation}\label{corner_int_cond_1}
\exists i \neq j \in \{1,...,k\} \ \text{such that} \ h_i(x) = h_j(x) \Rightarrow
\end{equation}
$$ \exists \ t \in \{1,...,m\} \ \text{s.t} \ h_i(x) \leq g_t(x) \ \text{or} \ \exists s \in \{1,...,k\} \setminus \{i,j\} \ \text{s.t} \ h_s(x) > h_i(x).$$

\begin{equation}\label{corner_int_cond_2}
\exists i \neq j \in \{1,...,m\} \ \text{such that}  \ g_i(x) = g_j(x) \Rightarrow
\end{equation}
$$\exists \ t \in \{1,...,k\} \ \text{s.t} \ g_i(x) \leq h_t(x)  \ \text{or} \ \exists s \in \{1,...,m\} \setminus \{i,j\} \ \text{s.t} \ g_s(x) > g_i(x).$$

Equivalently, $f$ is corner integral if condition \eqref{corner_int_cond_1} holds for both $f$ and $f^{-1}$ (exchanging the $g_i$'s and the $h_j$'s in the condition).

\begin{prop}\label{prop_absolute_corner_integral}
Let $f \in \mathscr{R}(x_1,...,x_n)$. Then $f$ is corner integral if and only if $|f|$ is corner-integral.
\end{prop}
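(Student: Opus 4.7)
The plan is to carry out a direct computation using the Frobenius property (which holds for $\mathscr{R}$ since it is bipotent) to rewrite $|f|$ in fractional form, then translate each side of the equivalence into the same geometric condition. First I would write $f = h/g$ with $h = \sum_{i=1}^{k} h_i$ and $g = \sum_{j=1}^{m} g_j$ in reduced form, so that
\[
|f| \;=\; f \dotplus f^{-1} \;=\; \frac{h^2 \dotplus g^2}{hg} \;=\; \frac{\sum_i h_i^{2} \dotplus \sum_j g_j^{2}}{\sum_{i,j} h_i g_j},
\]
where the Frobenius identity $(\sum h_i)^2 = \sum h_i^2$ collapses cross terms. Denote the numerator by $N$ and the denominator by $D$. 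The crucial pointwise inequality is $N(x) = \max(h(x),g(x))^2 \geq h(x)g(x) = D(x)$, with equality iff $h(x) = g(x)$.

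Next I would examine the two halves of the corner-integrality condition \eqref{corner_int_cond_1}--\eqref{corner_int_cond_2} applied to $|f| = N/D$. The requirement at corner roots of $D$ (namely $N \geq D$) is automatic by the inequality above, so the whole condition reduces to: at every corner root $x$ of $N$ one has $D(x) \geq N(x)$, which combined with $N(x) \geq D(x)$ forces $N(x) = D(x)$ and hence $h(x) = g(x)$. I would then enumerate corner roots of $N$ by cases on the sign of $h(x)$ versus $g(x)$. When $h(x) > g(x)$, the essential monomials of $N$ at $x$ are precisely $\{h_i^{2} : i \in E_h(x)\}$, so $x$ is a corner root of $N$ iff $x$ is a corner root of $h$; symmetrically when $g(x) > h(x)$, corner roots of $N$ at $x$ correspond to corner roots of $g$; and when $h(x) = g(x)$ the required equality $N(x) = D(x)$ is free of charge, so no constraint is imposed.

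Assembling these cases, the statement ``$|f|$ is corner-integral'' becomes: no corner root of $h$ occurs at a point with $h(x) > g(x)$, and no corner root of $g$ occurs at a point with $g(x) > h(x)$. Re-expressed, this is exactly ``every corner root of $h$ satisfies $h \leq g$ and every corner root of $g$ satisfies $g \leq h$'', which is the definition of corner-integrality of $f$. The main technical nuisance I anticipate is careful bookkeeping of the essential monomials of $N$ and $D$ in the presence of possible coincidences (for instance if some $h_i$ equals some $g_j$ as a monomial, or if products $h_i g_j$ collide); none of these coincidences obstructs the argument, but they must be tracked to confirm that the case $h(x) = g(x)$ truly imposes no constraint. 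Once that is in place the equivalence falls out immediately from the above case analysis.
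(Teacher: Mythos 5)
Your proposal is correct and follows the same route as the paper: write $|f| = (h^2 \dotplus g^2)/(hg)$, observe that $|f| \geq 1$ makes the denominator condition automatic, and analyze corner roots of the numerator by comparing $h(x)$ to $g(x)$. The paper's argument is essentially identical, just phrased a bit more tersely about the case $h(x)=g(x)$.
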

\begin{proof}
Write $f = \frac{h}{g}$ where $f,g \in \mathscr{R}[x_1,...,x_n]$. Then $|f| = \frac{h}{g} + \frac{g}{h} = \frac{h^2 + g^2}{gh}$. Since $|f| \geq 1$ we only need to check that the corner roots of the numerator of $|f|$ are surpassed by its denominator.  If $f$ is corner integral then in the numerator all corner roots of $g$ are surpassed by $h$ and vise versa so we are left with the scenario of a corner root $a$ such that $h^2(a) = g^2(a)$ which yields that $h(a) = g(a)$ (note that it is possible that $a$ will also be a corner root of $g^2$ or $h^2$). In such a case $h^2(a) = g^2(a) = gh(a)$ thus the value of the numerator of $|f|$ at $a$ is surpassed by the value of the denominator, proving that $|f|$ is corner-integral.
Conversely, assume $|f|$ is corner integral then $h^2 + g^2$ is surpassed by $gh$, in particular any corner root of $h^2$ is surpassed by $gh$ which yields that any corner root of $h$ is surpassed by $g$, and any corner root of $g^2$ is surpassed by $gh$ which yields that any corner root of $g$ is surpassed by $h$, thus $f$ is corner-integral.
\end{proof}

\begin{prop}\label{prop_wedge_corner_integrality}
If $f,g \in \mathscr{R}(x_1,...,x_n)$ are corner-integral, then $|f| \wedge |g|$ is corner-integral.
\end{prop}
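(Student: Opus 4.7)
The plan is first to reduce to the case $f, g \geq 1$. By Proposition \ref{prop_absolute_corner_integral}, $|f|$ and $|g|$ are both corner-integral, so I write $|f| = H_f/K_f$ and $|g| = H_g/K_g$ in reduced form, where $H_f \geq K_f$ and $H_g \geq K_g$ as functions on $\mathscr{R}^n$. Applying the identity $a \wedge b = ab/(a+b)$ from Remark \ref{rem_wedge_vee_operations_relations} gives
\[
|f| \wedge |g| \;=\; \frac{H_f H_g}{H_f K_g + H_g K_f} \;=:\; \frac{N}{D},
\]
and since $|f| \wedge |g| \geq 1$ one has $N \geq D$ as functions on $\mathscr{R}^n$. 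The strategy is then to verify conditions \eqref{corner_int_cond_1} and \eqref{corner_int_cond_2} directly for this representation $N/D$.

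For the denominator condition \eqref{corner_int_cond_2}, suppose two distinct monomials of $D$ agree at $x$. If their common value lies strictly below $D(x)$, then the second alternative in \eqref{corner_int_cond_2} is immediate via a dominating monomial of $D$. Otherwise the common value equals $D(x)$, and the global bound $N(x) \geq D(x)$ supplies the dominating monomial of $N$ demanded by the first alternative. For the numerator condition \eqref{corner_int_cond_1}, the only non-trivial case is when two distinct monomials $a_{i_1}c_{j_1}$ and $a_{i_2}c_{j_2}$ of $N = H_f H_g$ agree at $x$ with common value equal to $N(x) = H_f(x) H_g(x)$. Such a coincidence forces $a_{i_s}(x) = H_f(x)$ and $c_{j_s}(x) = H_g(x)$ for $s=1,2$, so either $i_1 \neq i_2$, giving a corner root of $H_f$ at $x$, or $j_1 \neq j_2$, giving a corner root of $H_g$. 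At a corner root of $H_f$, corner-integrality of $|f|$ yields $K_f(x) \geq H_f(x)$; combining this with the global bound $H_f \geq K_f$ gives $|f|(x) = 1$, whence $(|f|\wedge|g|)(x) = 1$ and therefore $N(x) = D(x)$, providing the dominating monomial of $D$ demanded by \eqref{corner_int_cond_1}. The case of a corner root of $H_g$ is symmetric, and agreements in $N$ that occur strictly below $N(x)$ are taken care of by the maximal monomial of $N$ as the second alternative.

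The main obstacle is the bookkeeping in this last paragraph: the identification that corner roots of the product polynomial $H_f H_g$ are inherited from corner roots of the factors $H_f$ or $H_g$, together with the upgrade of the one-sided inequality $K_f(x) \geq H_f(x)$ (which is all that corner-integrality of $|f|$ supplies at a corner root of $H_f$) into the equality $|f|(x) = 1$ via the global bound $|f| \geq 1$. It is precisely this equality that pins $N(x) = D(x)$ at corner roots of $N$ and closes the argument.
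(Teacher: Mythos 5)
Your proof is correct and follows essentially the same route as the paper's: you write $|f|\wedge|g| = H_fH_g/(H_fK_g + H_gK_f)$, dispose of condition \eqref{corner_int_cond_2} immediately from $N \geq D$, and for condition \eqref{corner_int_cond_1} reduce a dominant-level coincidence in $N = H_fH_g$ to a corner root of $H_f$ or $H_g$, where corner-integrality of the factor together with $H_f \geq K_f$ (resp. $H_g \geq K_g$) pins the value to $1$ and hence $N(x) = D(x)$. The only difference is organizational: you case on whether the coincidence occurs at the dominant level of $N$, while the paper cases on whether it originates in $H_f$, $H_g$, or a "mixed" pair, but both analyses cover the same cases and use the same inequalities.
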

\begin{proof}
Since $f$ and $g$ are corner integral we have that $|f|$ and $|g|$ is corner-integral. Write $|f| = \frac{f_1}{f_2}$ and $|g| = \frac{g_1}{g_2}$ where $f_1, f_2,g_1,g_2 \in \mathscr{R}[x_1,...,x_n]$. Now, $|f| \wedge  |g| = \frac{fg}{f+g} = \frac{f_1g_1}{f_1g_2 + g_1f_2}$. Since $f$ and $g$ are corner-integral the corner roots of $f_1$ are surpassed by $f_2$ and the corner roots of $g_1$ are surpassed by $g_2$. Thus the corner roots of $f_1g_1$ are surpassed by either $f_2g_1$ or $f_1g_2$. We note that there may be equalities of the form $f_{1,u}(x) g_{1,v}(x) = f_{1,t}(x) g_{1,s}(x)$ where $f_{1,u}(x) \neq f_{1,t}(x)$ and $g_{1,v}(x) \neq g_{1,s}(x)$ for some $x \in \mathscr{R}^n$, but these `corner roots' are surpassed by $f_1'(x)g_1'(x)$ where $f_1'$ and $g_1'$ are dominant monomials of $f_1$ and $g_1$ at $x$.
In the opposite direction, since $|f| \geq 1$ and $|g| \geq 1$ we have that $f_1 \geq f_2$ and $g_1 \geq g_2$, respectively. Thus $f_1g_1 \geq f_1g_2, f_2g_1$ so $f_1g_1 \geq f_1g_2 + f_2g_1$ and so in particular $f_1g_1$ surpasses all corner roots of $f_1g_2 + f_2g_1$.
\end{proof}

\begin{cor}\label{cor_intersection_of_CI}
An intersection of corner-integral kernels is a corner-integral kernel.
\end{cor}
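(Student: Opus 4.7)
The plan is to reduce the statement to the two-kernel case and then invoke Proposition \ref{prop_wedge_corner_integrality} directly, using the lattice formula for intersections of principal kernels. Since by Definition \ref{defn_corner_integrality} corner-integrality of a kernel means the existence of a corner-integral generator, and corner-integrality is by hypothesis only defined for principal kernels, I read the corollary as asserting that a finite intersection of (principal) corner-integral kernels is again principal and corner-integral.

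First, I would handle the base case of two corner-integral kernels $\langle f \rangle$ and $\langle g \rangle$. By Corollary \ref{cor_max_semifield_principal_kernels_operations} their intersection is the principal kernel
\begin{equation*}
\langle f \rangle \cap \langle g \rangle \;=\; \langle \, |f|\wedge|g|\, \rangle.
\end{equation*}
Since $\langle f \rangle$ and $\langle g \rangle$ are corner-integral they admit corner-integral generators; by Proposition \ref{prop_absolute_corner_integral} we may take these generators to be $|f|$ and $|g|$ themselves. Proposition \ref{prop_wedge_corner_integrality} then guarantees that $|f|\wedge|g|$ is corner-integral, and hence, by Definition \ref{defn_corner_integrality}, so is the principal kernel it generates.

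Next, I would extend to any finite intersection by an immediate induction on the number of factors: if $\langle f_1\rangle,\dots,\langle f_k\rangle$ are corner-integral, then by the induction hypothesis $K=\bigcap_{i=1}^{k-1}\langle f_i\rangle = \langle h\rangle$ for some corner-integral $h$, and the base case applied to $\langle h\rangle$ and $\langle f_k\rangle$ yields a corner-integral generator of $K\cap \langle f_k\rangle$.

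I do not anticipate a genuine obstacle here: once Proposition \ref{prop_wedge_corner_integrality} is in hand and the intersection formula $\langle f\rangle\cap\langle g\rangle=\langle|f|\wedge|g|\rangle$ is used, the proof is essentially a one-line invocation plus induction. The only subtlety worth stating carefully is that corner-integrality of a kernel is witnessed by \emph{some} generator; the content of Proposition \ref{prop_wedge_corner_integrality} is precisely that the distinguished generator $|f|\wedge|g|$ supplied by the lattice operation is itself corner-integral, so no further choice of generator is needed.
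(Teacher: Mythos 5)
Your proof is correct and follows essentially the same route as the paper: reduce to the two-kernel case via $\langle f\rangle\cap\langle g\rangle=\langle|f|\wedge|g|\rangle$, invoke Proposition \ref{prop_absolute_corner_integral} to work with the positive generators, and then apply Proposition \ref{prop_wedge_corner_integrality}. The only difference is that you spell out the induction to arbitrary finite intersections, which the paper leaves implicit; this is a harmless and reasonable addition.
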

\begin{proof}
Let $K_1$ and $K_2$ be corner-integral (principal) kernels and let $u_1$ and $u_2$ be a pair of corner-integral generators for $K_1$ and $K_2$, respectively. By Proposition \ref{prop_absolute_corner_integral} we may assume $u_1,u_2 \geq 1$. Take $f = u_1 \wedge u_2$, then by Proposition \ref{prop_wedge_corner_integrality} $f = u \wedge v = |u| \wedge |v|$ is corner-integral thus by definition $\langle f \rangle$  is corner-integral.
\end{proof}

\begin{cor}
Let $f \in \mathscr{R}(x_1,...,x_n)$. Then $f$ is corner-integral if and only if $|f| \wedge |\alpha|$ is corner integral for any $\alpha \neq 1$ in $\mathscr{R}$. \\
\end{cor}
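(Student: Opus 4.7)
\emph{Forward direction.} Assume $f$ is corner-integral. By Proposition \ref{prop_absolute_corner_integral}, $|f|$ is corner-integral as well. The constant $|\alpha|=\alpha\dotplus\alpha^{-1}\in\mathscr{R}$, written as the rational function $|\alpha|/1$, is trivially corner-integral since its numerator and denominator consist of a single monomial each and therefore have no corner roots. Invoking Proposition \ref{prop_wedge_corner_integrality} with $g=\alpha$, and noting that $||f||=|f|$ and $||\alpha||=|\alpha|$ since both are $\geq 1$, we conclude that $|f|\wedge|\alpha|$ is corner-integral.

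\emph{Converse direction.} Assume $|f|\wedge|\alpha|$ is corner-integral; by Proposition \ref{prop_absolute_corner_integral} it suffices to show $|f|$ is corner-integral. Write $|f|=H/G$ in essential reduced form with $H=\sum_i H_i$, $G=\sum_j G_j$ tangible polynomials; since $|f|\geq 1$, we have $H\geq G$ pointwise. A direct computation gives
\begin{equation*}
|f|\wedge|\alpha| \;=\; \frac{|f|\cdot|\alpha|}{|f|\dotplus|\alpha|} \;=\; \frac{|\alpha|H}{H+|\alpha|G},
\end{equation*}
so the numerator monomials are the $|\alpha|H_i$ and the denominator monomials are the $H_i$ together with the $|\alpha|G_j$.

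\emph{Main verification.} I will verify the two clauses of Definition \ref{defn_corner_integral} for $|f|=H/G$ directly from the corresponding clauses applied to $|f|\wedge|\alpha|$. Condition \eqref{corner_int_cond_2} for $|f|$ is automatic: at any point $x$ where two monomials $G_i(x)=G_j(x)$ dominate $G$, we have $H(x)\geq G(x)=G_i(x)$ because $|f|\geq 1$, so some $H_t$ surpasses $G_i$ at $x$. For condition \eqref{corner_int_cond_1}, suppose $H_i(x)=H_j(x)$ dominate $H$ with no $H_s(x)>H_i(x)$; then $|\alpha|H_i(x)=|\alpha|H_j(x)$ dominate the numerator of $|f|\wedge|\alpha|$. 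Because $|\alpha|>1$, the monomials $H_t$ in the denominator of $|f|\wedge|\alpha|$ satisfy $H_t(x)\leq H_i(x)<|\alpha|H_i(x)$, hence cannot surpass $|\alpha|H_i(x)$. Corner-integrality of $|f|\wedge|\alpha|$ therefore forces some $|\alpha|G_t(x)\geq|\alpha|H_i(x)$, i.e.\ $G_t(x)\geq H_i(x)$, which is precisely the surpassing requirement for condition \eqref{corner_int_cond_1} applied to $|f|$.

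\emph{Expected obstacle.} The only subtlety is keeping track of essentiality when passing between $|f|$ and $|f|\wedge|\alpha|$: one must check that an essential monomial of $H$ remains essential in the numerator $|\alpha|H$ of $|f|\wedge|\alpha|$ (clear, since multiplication by the constant $|\alpha|$ preserves dominance relations), and that monomials of $H$ appearing in the denominator $H+|\alpha|G$ never interfere with the corner-root analysis (clear from $|\alpha|>1$, as shown above). Once this bookkeeping is settled, the argument is purely a monomial-wise comparison, and the conclusion follows at once from Proposition \ref{prop_absolute_corner_integral}.
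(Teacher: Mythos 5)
Your proof is correct and takes essentially the same approach as the paper: both reduce to $|f|$ via Proposition~\ref{prop_absolute_corner_integral}, both use Proposition~\ref{prop_wedge_corner_integrality} for the forward direction, and both analyze the explicit fraction $\dfrac{|\alpha|H}{H+|\alpha|G}$ monomial-by-monomial for the converse, using $|\alpha|>1$ to see that the $H_t$ terms in the denominator cannot surpass a corner root of the numerator. The only difference is that the paper phrases the converse by contraposition (not corner-integral $\Rightarrow$ not corner-integral) while you argue it directly; this is a cosmetic distinction.
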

\begin{proof}
By Proposition \ref{prop_absolute_corner_integral} we may assume $f \geq 1$, i.e. $f = |f|$.
Since $|\alpha|$ is trivially corner-integral by Proposition \ref{prop_wedge_corner_integrality} the corner-integrality of $f$ implies the corner-integrality of  $f \wedge |\alpha|$. Conversely, if $f$ is not corner-integral then $f = \frac{h_1 + h_2}{g}$ such that $h_1(x_0) = h_2(x_0) > g(x_0)$ for some $x_0 \in \mathscr{R}^n$. Then $f \wedge |\alpha| = \frac{|\alpha|(h_1 + h_2)}{|\alpha|g + (h_1 + h_2)}$. Since $|\alpha| >1$ we have that $|\alpha|(h_1(x_0) + h_2(x_0)) > (h_1(x_0) + h_2(x_0))$ and by assumption $|\alpha|(h_1(x_0) + h_2(x_0)) > |\alpha|g(x_0)$ so $(f \wedge |\alpha|)$ is not corner-integral.
\end{proof}

Recall that a $k$-kernel is a kernel in $\Con(\mathscr{R}(x_1,...,x_n))$ which is a preimage of a skeleton $Z = Skel(S) \subseteq \mathscr{R}$ for some subset $S \subset \mathscr{R}(x_1,...,x_n)$ under the map $Ker : \mathbb{P}(\mathscr{R}^n) \rightarrow \mathscr{R}(x_1,...,x_n)$.
We have shown that $Skel(S) = Skel(\langle S \rangle)$ where $\langle S \rangle$ is the kernel generated by the elements of $S$.
We have found these $k$-kernels  for the  restriction $Ker|_{\langle \mathscr{R} \rangle} : \mathbb{P}(\mathscr{R}^n) \rightarrow \langle \mathscr{R} \rangle$.
We have shown that the preimage in $\langle \mathscr{R} \rangle$ of a principal skeleton to be $Ker(Skel(f)) = \langle f \rangle \cap \langle \mathscr{R} \rangle$ so that for $\alpha \neq 1$ \ $Ker(Skel(\langle |f| \wedge |\alpha| \rangle)) = \langle |f| \wedge |\alpha| \rangle =  \langle f \rangle \cap \langle \mathscr{R} \rangle$.
Since corner integrality of $f$  implies corner integrality of $|f| \wedge |\alpha|$ we  refer to $\langle f \rangle$ rather then to $\langle f \rangle \cap \langle \mathscr{R} \rangle$ in our computations.

We will now translate condition \eqref{corner_int_cond_1} introduced above to the language of kernels.\\
Let $C_i(h)$ be the set of corner roots of $h$ attained by the monomial $h_i$ (and some other monomials) and denote by $\underline{h_i} = \sum_{j \neq i}h_j$. Then
condition \eqref{corner_int_cond_1} is equivalent to saying that if $x \in \mathscr{R}^n$ is a corner root of $h$ then $h(x) \leq g(x)$ (i.e $g$ surpasses $h$ at $x$). So, for every $1 \leq i \leq k$ we can formulate the condition by
\begin{align}\label{eq_step1}
\ & \ \{ x \in \mathscr{R}^n : x \in C_i(h) \} \subseteq  \{ x \in \mathscr{R}^n : h(x) \leq  g(x) \} \nonumber \\
= & \ \{ x \in \mathscr{R}^n : h_i(x) = \underline{h_i}(x) \} \subseteq  \{ x \in \mathscr{R}^n : h(x) \leq  g(x) \}  \nonumber \\
= & \ \left\{ x \in \mathscr{R}^n : \frac{h_i}{\underline{h_i}}(x) = 1 \right\} \subseteq  \left\{ x \in \mathscr{R}^n : \frac{h(x)}{g(x)} + 1 = 1 \right\}  \nonumber \\
= & \ Skel\left(\frac{h_i}{\underline{h_i}}\right) \subseteq  Skel\left(\frac{h}{g} + 1\right) = Ske(f+1) \nonumber.\\
\end{align}
This means that $f$ is corner-integral if all the corner roots of its numerator $h$ are contained in the skeleton defined by $f+1$, i.e. in the region of $\mathscr{R}$ over which $f \leq 1$.
Since $Skel(K_1) \subseteq Skel(K_2) \Leftrightarrow K_2 \subset K_1$ for any pair of $K$-kernels $K_1, K_2$ we have
\begin{equation}\label{eq_inclusion_1}
Skel\left(\frac{h_i}{\underline{h_i}}\right) \subseteq  Skel(f + 1) \Leftrightarrow \langle f + 1 \rangle \subseteq \left\langle \frac{h_i}{\underline{h_i}} \right\rangle
\end{equation}

\pagebreak
Now, intersecting both sides of the expression obtained in \eqref{eq_step1} by \linebreak  $Skel(f^{-1} +1)$ we get
\begin{align*}
\ \ & \ Skel(\frac{h_i}{\underline{h_i}}) \subseteq Skel(f + 1)\\
\Leftrightarrow & \ Skel\left(\frac{h_i}{\underline{h_i}}\right) \cap Skel(f^{-1} + 1) \subseteq  Skel(f + 1) \cap Skel(f^{-1} + 1) \\
\Leftrightarrow & \ Skel\left(\frac{h_i}{\underline{h_i}}\right) \cap Skel(f^{-1} + 1) \subseteq  Skel(f). \\
\end{align*}
Note that the second transition is an equivalence rather then implication since $Skel(f^{-1} +1) \cup Skel(f+1) = \mathscr{R}^n$.

Again, translating the resulting skeletons expression to kernels yields
\begin{equation*}
\ Skel\left(\frac{h_i}{\underline{h_i}}\right) \cap Skel(f^{-1} + 1) \subseteq  Skel(f)  \Leftrightarrow \langle f \rangle \subseteq \left\langle \frac{h_i}{\underline{h_i}} \right\rangle \cdot \langle f^{-1} + 1 \rangle.
\end{equation*}

Moreover, since the above inclusion holds for every $i \in \{ 1,...,k \}$ we have that
\begin{equation*}
\left(\bigcup_{i=1}^{k}Skel\left(\frac{h_i}{\underline{h_i}}\right)\right) \cap  Skel(f^{-1} + 1) \subseteq Skel(f)
\end{equation*}
and so  $$ \langle f \rangle \subseteq \langle f^{-1} + 1 \rangle \cdot ( \bigcap_{i=1}^{k} \langle \frac{h_i}{\underline{h_i}} \rangle)$$

Interchanging the roles of $h$ and $g$ we get that the second condition \ref{corner_int_cond_2}  translates to
\begin{equation*}
Skel\left(\frac{g_j}{\underline{g_j}}\right) \subseteq  Skel(f^{-1} + 1)
\end{equation*}
which is equivalent to
\begin{equation}\label{eq_inclusion_in_skel2}
\ Skel(\frac{g_j}{\underline{g_j}}) \cap Skel(f + 1) \subseteq  Skel(f).
\end{equation}
and so since the inclusion in \eqref{eq_inclusion_in_skel2} holds for every $j \in \{ 1,...,m \}$ we have that
\begin{equation*}
\left(\bigcup_{j=1}^{m}Skel\left(\frac{g_j}{\underline{g_j}}\right)\right) \cap  Skel(f + 1) \subseteq Skel(f)
\end{equation*}

Using the notation introduced above we have the following necessary and \linebreak sufficient conditions hold:
\begin{prop}\label{prop_CI_conditions}
$f = \frac{h}{g} \in \mathscr{R}(x_1,...,x_n)$ where $g,h \in \mathscr{R}[x_1,...,x_n]$ is corner-integral if and only if the following conditions hold
\begin{equation}\label{eq_prop_CI_conditions_1}
\left(\bigcup_{i=1}^{k}Skel\left(\frac{h_i}{\underline{h_i}}\right)\right) \cap  Skel(f^{-1} + 1) \subseteq Skel(f)
\end{equation}
\begin{equation}\label{eq_prop_CI_conditions_2}
\left(\bigcup_{j=1}^{m}Skel\left(\frac{g_j}{\underline{g_j}}\right)\right) \cap Skel(f + 1) \subseteq  Skel(f).
\end{equation}
\end{prop}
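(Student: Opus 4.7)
The plan is to formalize the computations already sketched immediately before the proposition, splitting the argument into two symmetric halves (one for condition \eqref{corner_int_cond_1} on the numerator, one for \eqref{corner_int_cond_2} on the denominator) and then observing that the two conjoined halves are exactly the two inclusions asserted in \eqref{eq_prop_CI_conditions_1}--\eqref{eq_prop_CI_conditions_2}.

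First I would record the three geometric identities that do all the work. Writing $\underline{h_i}=\sum_{j\neq i}h_j$, the equality $h_i(x)=\underline{h_i}(x)$ forces $h_i(x)=\max_{j\neq i}h_j(x)$, so
\[
Skel\!\left(\tfrac{h_i}{\underline{h_i}}\right)=\{x\in\mathscr{R}^n:h_i(x)\geq h_j(x)\ \forall j\neq i,\ \exists j\neq i\text{ with }h_j(x)=h_i(x)\},
\]
which is precisely the locus of corner roots of $h$ at which $h_i$ is a dominant monomial. Second, since $\mathscr{R}$ is bipotent, $f(x)+1=1\Leftrightarrow f(x)\leq 1\Leftrightarrow h(x)\leq g(x)$, so $Skel(f+1)=\{x:h(x)\leq g(x)\}$, and symmetrically $Skel(f^{-1}+1)=\{x:g(x)\leq h(x)\}$. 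Third, $Skel(f+1)\cap Skel(f^{-1}+1)=Skel(f)$ and $Skel(f+1)\cup Skel(f^{-1}+1)=\mathscr{R}^n$.

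Next I would translate \eqref{corner_int_cond_1}. The key observation is that the two alternatives in the conclusion of \eqref{corner_int_cond_1} correspond exactly to (a)~$x\in Skel(f+1)$ (the $g_t$-surpassing clause) versus (b)~$x\notin Skel(h_i/\underline{h_i})$ (the existence of some $h_s(x)>h_i(x)$ forces $\underline{h_i}(x)>h_i(x)$). Hence \eqref{corner_int_cond_1} holds for all $x$ and all $i$ if and only if
\[
\bigcup_{i=1}^{k}Skel\!\left(\tfrac{h_i}{\underline{h_i}}\right)\subseteq Skel(f+1).
\]
Using the two boolean identities in the previous paragraph, an inclusion $A\subseteq Skel(f+1)$ is equivalent to $A\cap Skel(f^{-1}+1)\subseteq Skel(f)$: the forward direction is intersection with $Skel(f^{-1}+1)$, and the reverse direction uses $A=(A\cap Skel(f+1))\cup(A\cap Skel(f^{-1}+1))$ together with $Skel(f)\subseteq Skel(f+1)$. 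Applying this with $A=\bigcup_i Skel(h_i/\underline{h_i})$ gives exactly \eqref{eq_prop_CI_conditions_1}.

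Condition \eqref{corner_int_cond_2} is handled by the same argument with $h$ and $g$ interchanged and $f$ replaced by $f^{-1}$ (note $Skel(f)=Skel(f^{-1})$, so the right-hand side is unchanged); this yields \eqref{eq_prop_CI_conditions_2}. Since $f$ is corner-integral precisely when both \eqref{corner_int_cond_1} and \eqref{corner_int_cond_2} hold for every $x\in\mathscr{R}^n$, the conjunction of the two geometric inclusions is equivalent to corner-integrality of $f$, which is the desired equivalence.

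The only delicate point — and hence the "main obstacle" — is verifying that the disjunctive conclusion of \eqref{corner_int_cond_1} matches the set-theoretic decomposition $\mathscr{R}^n=Skel(f+1)\cup Skel(f^{-1}+1)$ in the precise way described, i.e.\ that the "$h_s$-dominates" alternative really does take one out of $Skel(h_i/\underline{h_i})$ rather than merely weakening the assertion. Once this is nailed down via the explicit characterization of $Skel(h_i/\underline{h_i})$, the rest is purely formal lattice manipulation on skeletons.
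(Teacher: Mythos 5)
Your proof is correct and follows essentially the same route as the paper's (which is the derivation carried out immediately before the proposition), rewriting the two corner-integrality conditions as the skeleton inclusions $\bigcup_i Skel(h_i/\underline{h_i}) \subseteq Skel(f+1)$ and its mirror image, and then using the decomposition $\mathscr{R}^n = Skel(f+1) \cup Skel(f^{-1}+1)$ together with $Skel(f+1) \cap Skel(f^{-1}+1) = Skel(f)$ to convert these into the stated form. You spell out the reverse direction of the equivalence $A \subseteq Skel(f+1) \Leftrightarrow A \cap Skel(f^{-1}+1) \subseteq Skel(f)$ a bit more explicitly (via $A=(A\cap Skel(f+1))\cup(A\cap Skel(f^{-1}+1))$ and $Skel(f)\subseteq Skel(f+1)$) than the paper does, which merely remarks that the transition is an equivalence because the two half-skeletons cover $\mathscr{R}^n$.
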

 
\begin{rem}\label{rem_alternative_expression}
Recall that for $f = \sum_{i=1}^{k}f_i \in \mathcal{F}(\mathscr{R}[x_1,...,x_n])$ where $\mathcal{F}(\mathscr{R}[x_1,...,x_n])$ is the supertropical semiring of polynomials, the map $f \mapsto \widehat{f} = \sum_{i=1}^{k} \frac{f_i}{\sum_{j \neq i}f_j}$ is sending a supertropical polynomial $f$ to a rational function $\widehat{f}$ such that $x \in \mathscr{R}^n$ is a corner root of $f$ if and only if $\widehat{f}(x) = 1$, i.e $Skel(\widehat{f}) = Cor(f)$.
Recall also that $Skel(\widehat{f}) =Skel(\tilde{f})$ where $\tilde{f} = \bigwedge_{i=1}^{k} \left|\frac{f_i}{\sum_{j \neq i}f_j}\right|$.
Notice that $$\bigcup_{i=1}^{k}Skel\left(\frac{h_i}{\underline{h_i}}\right) = Skel(\bigwedge_{i=1}^{k} \left|\frac{h_i}{\sum_{j \neq i}h_j}\right|) =  Skel(\tilde{h})$$ and similarly  $$\bigcup_{j=1}^{m}Skel\left(\frac{g_j}{\underline{g_j}}\right) = Skel(\tilde{g}).$$
Thus we can rewrite \eqref{eq_prop_CI_conditions_1} and \eqref{eq_prop_CI_conditions_2} in the form
$$Skel\left( \tilde{h} \right) \cap  Skel(f^{-1} + 1) \subseteq Skel(f) \ \text{and} \ Skel\left( \tilde{g} \right) \cap Skel(f + 1) \subseteq  Skel(f).$$
or in the form
$$Skel\left( \widehat{h} \right) \cap  Skel(f^{-1} + 1) \subseteq Skel(f) \ \text{and} \ Skel\left( \widehat{g} \right) \cap Skel(f + 1) \subseteq  Skel(f).$$
Finally, we have proved that $\widehat{f}$ is corner-integral for any supertropical polynomial $f \in \mathcal{F}(\mathscr{R}[x_1,...,x_n])$.
\end{rem}

\pagebreak

In view of Proposition \ref{prop_CI_conditions}, given $f = \frac{h}{g} \in \mathscr{R}(x_1,...,x_n)$, in order to obtain a corner-integral fraction whose skeleton contains $Skel(f)$ one must adjoin both $$\left(\bigcup_{i=1}^{k}Skel\left(\frac{h_i}{\underline{h_i}}\right)\right) \cap  Skel(f^{-1} + 1)$$ and
$$\left(\bigcup_{j=1}^{m}Skel\left(\frac{g_j}{\underline{g_j}}\right)\right) \cap Skel(f + 1)$$ to the skeleton of $f$.

Define the map $\Phi_{CI} : \mathscr{R}(x_1,...,x_n) \rightarrow \mathscr{R}(x_1,...,x_n)$ by taking $\Phi_{CI}(f)$, where $f = \frac{h}{g}$, to be the fraction whose skeleton is formed by adjoining all the necessary points required for $f$ to admit corner integrality to $Skel(f)$. Namely
\begin{equation}
\Phi_{CI}(f) = |f| \wedge \left(|f^{-1} + 1| + \tilde{h} \right) \wedge \left(|f + 1| + \tilde{g} \right).
\end{equation}
Then since $|f^{-1} + 1|, |f+1| \leq |f^{-1} + 1|+|f+1|= |f|$ we have that
$$ \Phi_{CI}(f) = \left(|f^{-1} + 1| + \left(|f| \wedge \tilde{h} \right)\right) \wedge \left(|f + 1| + \left(|f| \wedge \tilde{g}\right)\right).$$

By this definition we have that $$\langle \Phi_{CI}(f) \rangle =  \left(\langle f^{-1} + 1 \rangle \cdot \left(\langle f \rangle \cap \bigcap_{i=1}^{k} \left\langle \frac{h_i}{\underline{h_i}} \right\rangle\right)\right)  \cap \left(\left(\langle f + 1 \rangle \cdot \left(\langle f \rangle \cap \bigcap_{j=1}^{m} \left\langle \frac{g_j}{\underline{g_j}} \right\rangle\right)\right)\right).$$

\begin{prop}\label{prop_CI_functor}
Let $f = \frac{h}{g} \in \mathscr{R}(x_1,...,x_n)$ be a rational function, where \linebreak $h = \sum_{i=1}^{k}h_i$ and $g = \sum_{j=1}^{m}g_j$ where $h_i$ and $g_j$ for $i=1,...,k$ and $j=1,...,m$ are the component monomials in $\mathscr{R}[x_1,...,x_n]$ of the numerator and denominator of $f$, respectively. Then
\begin{align}\label{eq_prop_CI_functor}
Skel(\widehat{h+g}) \ = & \ Skel(f) \nonumber \\
\  \ \cup & \ \left(\left(\bigcup_{i=1}^{k}Skel\left(\frac{h_i}{\underline{h_i}}\right)\right)  \cap Skel(f^{-1} + 1)\right) \nonumber \\
\  \ \cup & \ \left( \left(\bigcup_{j=1}^{m}Skel\left(\frac{g_j}{\underline{g_j}}\right)\right) \cap Skel(f + 1) \right). \nonumber \\
\end{align}
Thus $Skel(\widehat{h+g}) = Skel(\Phi_{CI}(f))$.
\end{prop}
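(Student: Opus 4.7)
The plan is to interpret both sides of \eqref{eq_prop_CI_functor} as explicit subsets of $\mathscr{R}^n$ defined by dominance conditions on the monomials $h_1,\dots,h_k,g_1,\dots,g_m$, and then match the cases. Recall from Remark \ref{rem_alternative_expression} that $Skel(\widehat{p}) = Cor(p)$ for any supertropical polynomial $p$. Treating $h+g = \sum_{i=1}^{k} h_i \dotplus \sum_{j=1}^{m} g_j$ as a single polynomial, $Skel(\widehat{h+g})$ is precisely the set of $x\in\mathscr{R}^n$ at which at least two distinct monomials from the combined list $\{h_1,\dots,h_k,g_1,\dots,g_m\}$ simultaneously attain the value $(h+g)(x) = h(x) \dotplus g(x)$.

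First I would set up convenient notation for the three pieces on the right-hand side:
\begin{align*}
A \ & =\ \bigcup_{i=1}^{k} Skel\!\left(\tfrac{h_i}{\underline{h_i}}\right) \ =\ \{ x : h_i(x)=h_j(x)=h(x) \text{ for some } i\neq j\}, \\
B \ & =\ \bigcup_{j=1}^{m} Skel\!\left(\tfrac{g_j}{\underline{g_j}}\right) \ =\ \{ x : g_i(x)=g_j(x)=g(x) \text{ for some } i\neq j\}, \\
S_+ & =\ Skel(f^{-1}\dotplus 1) = \{ x : h(x) \geq g(x)\}, \quad S_- = Skel(f\dotplus 1) = \{ x : h(x)\leq g(x)\},
\end{align*}
so that $Skel(f) = S_+\cap S_-$. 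With these, the RHS of \eqref{eq_prop_CI_functor} is $Skel(f)\cup (A\cap S_+)\cup (B\cap S_-)$.

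Next I would prove the two inclusions by a straightforward three-case analysis driven by the identity $(h\dotplus g)(x)=\max(h(x),g(x))$. For $\subseteq$: given $x\in Cor(h+g)$, pick two distinct monomials of $h+g$ attaining $(h+g)(x)$; these are either (i) two $h_i$'s, (ii) two $g_j$'s, or (iii) one of each. In case (i) the two attain $h(x)$, forcing $h(x)\geq g(x)$ and placing $x$ in $A\cap S_+$; case (ii) symmetrically gives $x\in B\cap S_-$; case (iii) yields $h(x)=h_i(x)=g_j(x)=g(x)$, so $x\in Skel(f)$. For $\supseteq$: if $x\in Skel(f)$, take a dominant monomial $h_i$ of $h$ and a dominant monomial $g_j$ of $g$; then $h_i(x)=h(x)=g(x)=g_j(x)=(h+g)(x)$, so $x\in Cor(h+g)$. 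If $x\in A\cap S_+$, choose $i\neq j$ with $h_i(x)=h_j(x)=h(x)$; since $h(x)\geq g(x)$, both equal $(h+g)(x)$, so $x\in Cor(h+g)$. The case $x\in B\cap S_-$ is symmetric.

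For the second statement, I would translate the set-level union into the kernel-level description of $\langle\Phi_{CI}(f)\rangle$ already worked out before the proposition. The identity $Skel(K_1\cap K_2) = Skel(K_1)\cup Skel(K_2)$ and $Skel(K_1\cdot K_2) = Skel(K_1)\cap Skel(K_2)$ from Proposition \ref{prop_ker_skel_correspondence} (applied to principal kernels via Corollary \ref{cor_principal_skel_correspondence}), together with $\bigcup_i Skel(h_i/\underline{h_i}) = Skel(\tilde h)$ and the analogous statement for $g$ (Remark \ref{rem_alternative_expression}), immediately give
$$
Skel(\Phi_{CI}(f)) = Skel(f)\cup \bigl(Skel(\tilde h)\cap S_+\bigr)\cup \bigl(Skel(\tilde g)\cap S_-\bigr),
$$
which matches the expression for $Skel(\widehat{h+g})$ just established. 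I do not expect any real obstacle here: the main thing to be careful about is that in case (iii) of the forward inclusion, one must observe that $h_i(x) = (h+g)(x)$ already forces $h_i(x)=h(x)$ (and similarly for $g_j$), so that $x$ actually lies in $S_+\cap S_- = Skel(f)$ and not merely on the boundary of one of the half-spaces.
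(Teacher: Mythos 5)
Your proof is correct and follows essentially the same route as the paper: both decompose $Cor(h+g)$ into the three cases according to whether the two dominating monomials come from $h$, from $g$, or one from each, and identify each case with the corresponding piece of the right-hand side. Your version is a bit more explicit about the $\supseteq$ direction and the observation that $h_i(x)=(h+g)(x)$ forces $h_i(x)=h(x)$, but the underlying argument is the same.
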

\begin{proof}
Let $a \in \mathscr{R}^n$ be a corner roots of $h + g$ then $a$ admits one of the following disjoint characterizations:
\begin{enumerate}
  \item $h(a) = g(a)$ which is equivalent to saying that $a \in Skel(f)$.
  \item $a$ ia a corner root of $h$ and $g(a) \leq h(a)$ (i.e. $f^{-1}(a)+1 = 1$) which is equivalent to saying that $a \in Skel(\widehat{h}) \cap Skel(f^{-1} + 1) = Skel(|\widehat{h}| + |f^{-1} + 1|)$.
  \item $a$ is a corner root of $g$ and $h(a) \leq g(a)$ (i.e. $f(a)+1 = 1$) which is equivalent to saying that $a \in Skel(\widehat{g}) \cap Skel(f + 1) = Skel(|\widehat{h}| + |f + 1|)$.
\end{enumerate}
Consequently
$$Skel(\widehat{h+g}) = Skel(f) \cup \left(Skel(\widehat{h}) \cap Skel(f^{-1} + 1)\right) \cup \left(Skel(\widehat{h}) \cap Skel(f + 1)\right).$$
Thus by Remark \ref{rem_alternative_expression} the equality in \eqref{eq_prop_CI_functor} holds.
\end{proof}

\begin{cor}\label{cor_CI_map_properties_for_skeletons}
Let $f \in \mathscr{R}(x_1,...,x_n)$. Then  $\langle \Phi_{CI}(f) \rangle$ is corner-integral, \linebreak $Ske(\Phi_{CI}(f)) \supseteq Skel(f)$ and $Ske(\Phi_{CI}(f)) = Skel(f)$ if and only if $f$ is  \linebreak corner-integral.
\end{cor}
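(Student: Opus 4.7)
The strategy is to leverage Proposition~\ref{prop_CI_functor}, which identifies $Skel(\Phi_{CI}(f))$ with $Skel(\widehat{h+g})$, and to invoke the general fact that $\widehat{p}$ is corner-integral for every supertropical polynomial $p$ (Proposition~\ref{prop_corner_integrality_of hatf}).

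First I would dispatch the containment $Skel(\Phi_{CI}(f)) \supseteq Skel(f)$: since $\Phi_{CI}(f) = |f| \wedge (|f^{-1}+1|+\tilde{h}) \wedge (|f+1|+\tilde{g})$, we have $\langle \Phi_{CI}(f)\rangle \subseteq \langle f \rangle$ by Corollary~\ref{cor_max_semifield_principal_kernels_operations}, so the skeleton inclusion is automatic from Proposition~\ref{prop_skel_property1}(1). Alternatively, Proposition~\ref{prop_CI_functor} already exhibits $Skel(f)$ as one of the three terms in the union on the right-hand side of~\eqref{eq_prop_CI_functor}.

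Next I would establish that $\langle \Phi_{CI}(f)\rangle$ is corner-integral. By Proposition~\ref{prop_CI_functor}, $Skel(\Phi_{CI}(f)) = Skel(\widehat{h+g})$, and by Proposition~\ref{prop_corner_integrality_of hatf} the rational function $\widehat{h+g}$ is corner-integral. Now I pass to $\PCon(\langle \mathscr{R}\rangle)$ via the $1{:}1$ principal-skeleton/principal-kernel correspondence of Corollary~\ref{cor_correspondence_bounded_pkernels_pskeletons}: since $\Phi_{CI}(f)$ and $\widehat{h+g}$ determine the same principal skeleton, their intersections with $\langle \mathscr{R}\rangle$ coincide, namely
\[
\langle \Phi_{CI}(f)\rangle \cap \langle \mathscr{R}\rangle = \langle \widehat{h+g}\rangle \cap \langle \mathscr{R}\rangle = \langle\, |\widehat{h+g}| \wedge |\alpha|\,\rangle .
\]
Since $\widehat{h+g}$ is corner-integral, so is $|\widehat{h+g}|$ by Proposition~\ref{prop_absolute_corner_integral}, and hence so is $|\widehat{h+g}| \wedge |\alpha|$ by Proposition~\ref{prop_wedge_corner_integrality} (as $|\alpha|$ is trivially corner-integral). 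Thus, in the sense in which corner-integrality is defined for kernels of $\langle \mathscr{R}\rangle$ (see the note following Corollary~\ref{cor_principal_kernels_sublattice}), $\langle \Phi_{CI}(f)\rangle$ is corner-integral.

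Finally I would handle the equivalence. Write the union in~\eqref{eq_prop_CI_functor} as $Skel(\Phi_{CI}(f)) = Skel(f) \cup A \cup B$ with
\[
A = \Big(\bigcup_{i=1}^{k} Skel\big(\tfrac{h_i}{\underline{h_i}}\big)\Big) \cap Skel(f^{-1}+1), \qquad B = \Big(\bigcup_{j=1}^{m} Skel\big(\tfrac{g_j}{\underline{g_j}}\big)\Big) \cap Skel(f+1).
\]
Then $Skel(\Phi_{CI}(f)) = Skel(f)$ is equivalent to $A \subseteq Skel(f)$ and $B \subseteq Skel(f)$, which are exactly the inclusions~\eqref{eq_prop_CI_conditions_1} and \eqref{eq_prop_CI_conditions_2} of Proposition~\ref{prop_CI_conditions}, and therefore equivalent to $f$ being corner-integral. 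The main subtle point (but not a hard one) is the identification of $\langle \Phi_{CI}(f)\rangle$ as corner-integral: it requires transferring corner-integrality from the witness generator $\widehat{h+g}$ to $\Phi_{CI}(f)$ itself, which is done via the skeleton-kernel correspondence in $\langle\mathscr{R}\rangle$ rather than by manipulating the explicit wedge expression for $\Phi_{CI}(f)$.
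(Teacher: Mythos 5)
Your proof follows the same skeleton as the paper's: route the corner-integrality of $\langle\Phi_{CI}(f)\rangle$ through Proposition~\ref{prop_CI_functor} and the general fact (Proposition~\ref{prop_corner_integrality_of hatf}) that $\widehat{h+g}$ is corner-integral; get the containment from the union decomposition; and invoke Proposition~\ref{prop_CI_conditions} for the equivalence. There is one place where you are actually \emph{more} careful than the paper. The paper's proof asserts $\Phi_{CI}(f) \sim_K \widehat{h+g}$ directly, but Proposition~\ref{prop_CI_functor} only gives $Skel(\Phi_{CI}(f))=Skel(\widehat{h+g})$, and equality of skeletons does not by itself give equality of kernels in $\PCon(\mathscr{R}(x_1,\dots,x_n))$ --- it only does so inside $\PCon(\langle\mathscr{R}\rangle)$ via Proposition~\ref{prop_generator_of_skel2} / Corollary~\ref{cor_correspondence_bounded_pkernels_pskeletons}. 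You explicitly pass through $\langle\mathscr{R}\rangle$ and use that $|\widehat{h+g}|\wedge|\alpha|$ is corner-integral (Propositions~\ref{prop_absolute_corner_integral} and \ref{prop_wedge_corner_integrality}) to exhibit a corner-integral generator of $\langle\Phi_{CI}(f)\rangle\cap\langle\mathscr{R}\rangle$; this is exactly the missing step in the paper's one-line argument, and your appeal to the note preceding Proposition~\ref{generators_and_skeletons} is the right way to reconcile the statement with the $\langle\mathscr{R}\rangle$-restricted framework the paper actually works in. So: same route, but with the $\sim_K$ transfer done properly.
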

\begin{proof}
The first claim follows Proposition \ref{prop_CI_functor} from which we have that \linebreak $\Phi_{CI}(f) \sim_K \widehat{f}$ where $\widehat(f)$ is corner-integral by Proposition \ref{prop_corner_integrality_of hatf}. The second claim is straightforward from the definition of $\Phi_{CI}(f)$ since
\small{\begin{align*}
Skel\left(|f| \wedge \left(|f^{-1} + 1| + \tilde{h} \right) \wedge \left(|f + 1| + \tilde{g} \right)\right)  = & \ Skel(f) \\ \cup & \ Skel\left(|f^{-1} + 1| + \tilde{h} \right) \cup Skel\left(|f + 1| + \tilde{g} \right).
\end{align*}}
The last statement follows Proposition \ref{prop_CI_conditions}.
\end{proof}

We can rephrase Corollary \ref{cor_CI_map_properties_for_skeletons} as follows:
\begin{cor}\label{cor_CI_map_properties_for_kernels}
Let $f \in \mathscr{R}(x_1,...,x_n)$. Then $\langle \Phi_{CI}(f) \rangle$ is a corner-integral kernel contained in $\langle f \rangle$. Moreover, $f$ is corner-integral if and only if \linebreak $\langle \Phi_{CI}(f) \rangle = \langle f \rangle$. Equivalently $Skel(\Phi_{CI}(f))$ is a principal corner-integral skeleton containing $Skel(f)$ which yields that $Skel(\Phi_{CI}(f))$ is supertropical hypersurface containing $Skel(f)$.
\end{cor}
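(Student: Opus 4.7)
The statement is a direct reformulation of Corollary \ref{cor_CI_map_properties_for_skeletons} via the order-reversing correspondence between principal skeletons and principal kernels in $\PCon(\langle \mathscr{R} \rangle)$ established in Corollary \ref{cor_correspondence_bounded_pkernels_pskeletons}. The plan is therefore to translate each geometric assertion about $Skel(\Phi_{CI}(f))$ into the corresponding algebraic assertion about $\langle \Phi_{CI}(f) \rangle$.

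First, I would observe that by construction, $\Phi_{CI}(f)$ is an element of $\mathscr{R}(x_1,\dots,x_n)$, and by Corollary \ref{cor_CI_map_properties_for_skeletons} the kernel $\langle \Phi_{CI}(f) \rangle$ is corner-integral (since it is generated by a corner-integral element — indeed, Proposition \ref{prop_CI_functor} showed that $\Phi_{CI}(f) \sim_K \widehat{h+g}$, and $\widehat{h+g}$ was proved corner-integral in Proposition \ref{prop_corner_integrality_of hatf}). This is exactly the content of Definition \ref{defn_corner_integrality} applied to $\langle \Phi_{CI}(f) \rangle$, so the first assertion is immediate.

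Next, the inclusion $Skel(\Phi_{CI}(f)) \supseteq Skel(f)$ from Corollary \ref{cor_CI_map_properties_for_skeletons} translates, via the order-reversing bijection $K \mapsto Skel(K)$ of Corollary \ref{cor_correspondence_bounded_pkernels_pskeletons}, into the inclusion $\langle \Phi_{CI}(f) \rangle \subseteq \langle f \rangle$. Similarly, the equivalence ``$Skel(\Phi_{CI}(f)) = Skel(f)$ iff $f$ is corner-integral'' translates into ``$\langle \Phi_{CI}(f) \rangle = \langle f \rangle$ iff $f$ is corner-integral''. (One direction is trivial: if $\langle \Phi_{CI}(f) \rangle = \langle f \rangle$ then $\langle f \rangle$ is corner-integral, and since corner-integrality of a principal kernel is detected by some generator, $f$ itself may not be corner-integral a priori; but by Corollary \ref{cor_CI_map_properties_for_skeletons}, $Skel(\Phi_{CI}(f)) = Skel(f)$ is equivalent to $f$ being corner-integral, hence the equivalence passes to the kernel level via the bijection.)

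Finally, the equivalence with the supertropical hypersurface statement follows from Proposition \ref{prop_formal_poly_loci_correspondence} together with Proposition \ref{prop_regular_ci_skeletons_corner_loci_correspondence}: since $\langle \Phi_{CI}(f) \rangle$ is a principal corner-integral kernel, its skeleton $Skel(\Phi_{CI}(f))$ equals $Cor(\underline{\Phi_{CI}(f)})$ for the supertropical polynomial $\underline{\Phi_{CI}(f)} \in \mathcal{F}(\mathscr{R}[x_1,\dots,x_n])$ defined in \eqref{eq_locus}, i.e.\ it is a (supertropical) hypersurface containing $Skel(f)$. No step here presents a genuine obstacle, since all the analytic content has already been absorbed into Proposition \ref{prop_CI_functor} and Corollary \ref{cor_CI_map_properties_for_skeletons}; the only care required is verifying that the correspondences being invoked apply in $\PCon(\langle \mathscr{R} \rangle)$ and not merely in $\PCon(\mathscr{R}(x_1,\dots,x_n))$, which is handled by intersecting with $\langle \mathscr{R} \rangle$ as discussed earlier in this section.
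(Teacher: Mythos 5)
Your plan is in the spirit of what the paper intends (the paper itself supplies no proof and says only that the corollary "rephrases" Corollary \ref{cor_CI_map_properties_for_skeletons}), but one of your inferences runs in the wrong logical direction. You derive $\langle \Phi_{CI}(f)\rangle \subseteq \langle f\rangle$ from $Skel(\Phi_{CI}(f)) \supseteq Skel(f)$ by invoking the order-reversing bijection of Corollary \ref{cor_correspondence_bounded_pkernels_pskeletons}. That bijection is between principal skeletons and principal kernels of $\langle \mathscr{R}\rangle$, and the implication ``larger skeleton $\Rightarrow$ smaller kernel'' genuinely fails for arbitrary principal kernels of $\mathscr{R}(x_1,\dots,x_n)$: for instance $Skel(\alpha) = \emptyset \supseteq Skel(x)$ for any $\alpha \ne 1$ in $\mathscr{R}$, yet $\langle \alpha \rangle \not\subseteq \langle x \rangle$. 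You flag the $\langle\mathscr{R}\rangle$ caveat at the very end, but you never actually deploy it, so the step as written is unjustified.

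The robust route is the opposite direction: the inclusion $\langle \Phi_{CI}(f)\rangle \subseteq \langle f\rangle$ is immediate from the formula $\Phi_{CI}(f) = |f| \wedge \bigl(|f^{-1}+1| + \tilde h\bigr) \wedge \bigl(|f+1| + \tilde g\bigr) \le |f|$, which places $\Phi_{CI}(f)$ inside $\langle f\rangle$ directly (Remark \ref{rem_kernel_by_abs_value}); the skeleton containment is then a \emph{consequence} of this kernel containment via Proposition \ref{prop_skel_property1}, not the antecedent. For the equivalence ``$f$ corner-integral iff $\langle\Phi_{CI}(f)\rangle = \langle f\rangle$,'' the forward direction should appeal to $\Phi_{CI}(f) \sim_K \widehat{h \dotplus g}$ from Proposition \ref{prop_CI_functor} together with $\Phi_{CI}(f) \in \langle f\rangle$ and Proposition \ref{prop_generator_of_skel2} after restricting to $\langle\mathscr{R}\rangle$ (legitimate because corner-integrality is preserved under $|f| \mapsto |f| \wedge |\alpha|$), while the reverse direction is the trivial one, as equal kernels have equal skeletons and Corollary \ref{cor_CI_map_properties_for_skeletons} then applies. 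Your identification of the supertropical-hypersurface statement via Proposition \ref{prop_formal_poly_loci_correspondence} is fine.
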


By Remark \ref{rem_corner_integrality_of_an_expansion}, if $f$ is corner-integral then so is $f' = \sum_{i=1}^{m}f^{d(i)}$ with $d(i) \in \mathbb{Z}$ and thus $\langle \Phi_{CI}(f) \rangle =  \langle f \rangle =  \langle f' \rangle =  \langle \Phi_{CI}(f') \rangle$. In particular this applies to $f' = f^{-1} + f =|f|$. It turns out that this equality $\langle \Phi_{CI}(f) \rangle =  \langle \Phi_{CI}(|f|) \rangle$ holds for any $f$ as we prove in the following remark.
\begin{rem}\label{rem_corner_integrality_powers}
For any $f =\frac{h}{g} \in \mathscr{R}(x_1,...,x_n)$
\begin{equation}
\langle \Phi_{CI}(\sum_{i=1}^{k}f^{d(i)}) \rangle = \langle \Phi_{CI}(f) \rangle
\end{equation}
where $d(i) \in \mathbb{Z}$ is monotonically increasing for $i = 1,...,k$, $d(1) < 0, d(k) > 0$.
\begin{equation}
\langle \Phi_{CI}(f^k) \rangle = \langle \Phi_{CI}(f) \rangle
\end{equation}
for any $k \in \mathbb{Z} \setminus \{0\}$.
\end{rem}
\begin{proof}
Due to the Frobenious property   $\sum_{i=1}^{k}f^{d(i)} = \frac{\sum_{i=1}^{k}h^{s + d(i)}g^{t - d(i)}}{h^{s}g^{t}}= \frac{h^{s+t} + g^{s+t}}{h^{s}g^{t}}$ where $t=|d(k)|$ and $s=|d(1)|$. So 
$$\Phi_{CI}(\sum_{i=1}^{k}f^{d(i)}) = \Phi_{CI}(\frac{h^{s+t} + g^{s+t}}{h^{s}g^{t}}) = \widehat{h^{s+t} + g^{s+t} + h^{s}g^{t}} = \widehat{h^{s+t} + g^{s+t}} = \widehat{h + g}^{s+t}.$$ Since $s+t \neq 0$ we have that  $\widehat{h + g}^{s+t}$ is a generator of $\langle \widehat{h + g} \rangle$ thus \linebreak $\langle \widehat{h + g}^{s+t} \rangle = \langle \widehat{h + g} \rangle = \langle \Phi_{CI}(f) \rangle$.
The latter equality holds since \linebreak $\Phi_{CI}(\frac{h^{k}}{g^{k}}) =  \widehat{h^{k} + g^{k}}= \widehat{h + g}^{k}$. Since $k \neq 0$, $\widehat{h + g}^{k}$ is a generator of $\langle \widehat{h + g} \rangle$ thus \linebreak $\langle \widehat{h + g}^{k} \rangle = \langle \widehat{h + g} \rangle = \langle \Phi_{CI}(f) \rangle$.
\end{proof}

\begin{cor}
Let $f \in \mathscr{R}(x_1,...,x_n)$ be such that  $f = u_1 \wedge \dots \wedge u_k$ where $u_1,...,u_k \in \mathscr{R}(x_1,...,x_n)$ are each corner-integral. The $f$ is corner-integral and
\begin{equation}\label{eq_phi_wedge_similarity}
\Phi_{CI}(f) \sim_K \Phi_{CI}(u_1) \wedge \dots \wedge \Phi_{CI}(u_k)
\end{equation}
\end{cor}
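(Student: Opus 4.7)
The plan is to reduce everything to the identifications already established between the operator $\Phi_{CI}$ and kernel-theoretic data. The corollary is in essence a kernel-level statement dressed in the language of $\sim_K$, so the key observation will be that for corner-integral inputs, $\Phi_{CI}$ is an identity map at the level of kernels.

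First, I would normalize the generators. By Proposition~\ref{prop_absolute_corner_integral}, each $u_i$ is corner-integral iff $|u_i|$ is, and $\langle u_i\rangle=\langle |u_i|\rangle$ by Remark~\ref{rem_absolute_operators_relations}(6). So I may replace each $u_i$ by $|u_i|\geq 1$ without altering $\langle u_i\rangle$ or $\Phi_{CI}(u_i)$ up to $\sim_K$. Under this normalization, $f=u_1\wedge\cdots\wedge u_k = |u_1|\wedge\cdots\wedge|u_k|$, and iterating Proposition~\ref{prop_wedge_corner_integrality} shows that $f$ itself is corner-integral. This also gives the first half of the claim, and, equivalently, identifies $\langle f\rangle$ as a corner-integral kernel (cf.\ Corollary~\ref{cor_intersection_of_CI}).

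Next I would express both sides of $\Phi_{CI}(f)\sim_K \Phi_{CI}(u_1)\wedge\cdots\wedge\Phi_{CI}(u_k)$ at the kernel level. Since $f$ is corner-integral, Corollary~\ref{cor_CI_map_properties_for_kernels} gives
\begin{equation*}
\langle\Phi_{CI}(f)\rangle=\langle f\rangle.
\end{equation*}
By Corollary~\ref{cor_max_semifield_principal_kernels_operations} (iterated), together with the normalization $u_i=|u_i|$, this kernel is exactly $\bigcap_{i=1}^{k}\langle u_i\rangle$. Applying the same corollary for each $u_i$ separately, $\langle\Phi_{CI}(u_i)\rangle=\langle u_i\rangle$, and since every $\Phi_{CI}(u_i)$ is a wedge of nonnegative-exponent terms and therefore $\geq 1$, Corollary~\ref{cor_max_semifield_principal_kernels_operations} yields
\begin{equation*}
\bigl\langle\Phi_{CI}(u_1)\wedge\cdots\wedge\Phi_{CI}(u_k)\bigr\rangle = \bigcap_{i=1}^{k}\langle\Phi_{CI}(u_i)\rangle = \bigcap_{i=1}^{k}\langle u_i\rangle.
\end{equation*}
Chaining these equalities gives $\langle\Phi_{CI}(f)\rangle = \bigl\langle\Phi_{CI}(u_1)\wedge\cdots\wedge\Phi_{CI}(u_k)\bigr\rangle$, which by definition of $\sim_K$ is exactly the asserted equivalence \eqref{eq_phi_wedge_similarity}.

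The only point that requires care, and the one I would treat as the main obstacle, is the bookkeeping around absolute values: the identity $\langle a\wedge b\rangle=\langle a\rangle\cap\langle b\rangle$ is established in Corollary~\ref{cor_max_semifield_principal_kernels_operations} via $|a|\wedge|b|$, not $a\wedge b$ directly, and in general these need not generate the same kernel. This is handled transparently once we pass to the $|u_i|\geq 1$ normalization at the start, and observe that $\Phi_{CI}$ produces outputs that are already $\geq 1$ by construction; then every subsequent wedge lies within the regime covered by Corollary~\ref{cor_max_semifield_principal_kernels_operations}, and no further absolute-value manipulation is needed.
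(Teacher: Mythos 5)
Your proof is correct and follows essentially the same route as the paper's: reduce to the $\geq 1$ case via $|\cdot|$, use that $\Phi_{CI}$ acts as the identity (up to $\sim_K$) on corner-integral inputs, and translate the wedge into an intersection of kernels. The only cosmetic difference is that you spell out the $\langle\cdot\rangle$--level bookkeeping more explicitly, whereas the paper stays at the level of $\sim_K$, but the lemmas invoked and the logical structure are the same.
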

\begin{proof}
$f$ is corner-integral by induction on Proposition \ref{prop_wedge_corner_integrality}. First we assume $f \geq 1$ (thus so are the $u_i$'s). Since $u_i$ is corner-integral $\Phi_{CI}(u_i) \sim_K u_i$ for $i=1,...,k$, thus $\Phi_{CI}(u_1) \wedge \dots \wedge \Phi_{CI}(u_k) \sim_K u_1 \wedge \dots \wedge u_k = f$ (note that $a \sim_K b \Leftrightarrow \langle a \rangle  = \langle b \rangle$ thus $\langle a \rangle \cap \langle b \rangle = \langle |a'| \wedge |b'| \rangle$ for any $a' \sim_K a, b' \sim_K b$). Since $f$ is corner-integral  $f \sim \Phi_{CI}(f)$ so \eqref{eq_phi_wedge_similarity} holds. For any $g \in \mathscr{R}(x_1,...,x_n)$ taking $| \dot |$ does not change corner-integrality and $\Phi_{CI}(|g|) \sim_ K \Phi_{CI}(g)$. Thus for any $f$ we can consider $|f|$ instead and apply the first case.
\end{proof}

\begin{exmp}
Let $f = \sum_{i=1}^{k}f_i \in \mathcal{F}(\mathscr{R}[x_1,...,x_n])$
be a supertropical \linebreak polynomial represented as the sum of its component monomials $f_i, \ i=1,...,k$.
Then $\tilde{f} = \bigwedge_{i=1}^{k} \left|\frac{f_i}{\sum_{j \neq i}f_j}\right|$ is corner-integral.\\
First note that $\tilde{f} \geq 1$. Now, for any given $i \in \{1,...,k\}$ denote $D_i = \frac{f_i}{\sum_{j \neq i}f_j}$, then by Remark \ref{rem_corner_integrality_powers} we have that  $\Phi_{CI}(|D_i|) \sim_K \Phi_{CI}(D_i) = \widehat{f}$. Then \linebreak  $\Phi_{CI}(\tilde{f}) \sim_K \bigwedge_{i=1}^{k}\Phi_{CI}(|D_i|) = \bigwedge_{i=1}^{k}\widehat{f} = \widehat{f} \sim_K \tilde{f}$ so $\Phi_{CI}(\tilde{f}) \sim_K \tilde{f}$ and $\tilde{f}$ is \linebreak corner-integral.
\end{exmp}

\begin{rem}
An $\mathscr{R}$-homomorphic image of corner-integral element of \linebreak $\mathscr{R}(x_1,...,x_n)$ is corner-integral. Thus the $\mathscr{R}$-homomorphic image of a corner-integral kernel is corner-integral.
\end{rem}
\begin{proof}
Let $f = \frac{h}{g}$ and let $a \in \mathscr{R}^n$ a corner root of $h$ so that $h_1(a) = h_2(a)$ for some component monomials $h_1,h_2$ of $h$. Then for a semifield epimorphism $\phi$, $h_1(\phi(a)) + h_2(\phi(a)) = \phi(h_1(a)) + \phi(h_2(a)) = \phi(h_1(a) + h_2(a)) = \phi(h(a)) = h(\phi(a))$ thus $\phi(a)$ is a corner root of $\phi(h)$ and since $\phi$ is onto every corner root of $\phi(h)$ is of the form $\phi(a)$ for a corner root $a$ of $h$ . Since $f$ is corner-integral $h(a) \leq g(a)$, and as $\phi$ is order preserving $h(\phi(a)) = \phi(h(a)) \leq \phi(g(a)) = g(\phi(a))$ i.e., $\phi(g)$ surpasses $\phi(h)$. The symmetric argument switching $h$ and $g$ along with the assertions above yield the corner-integrality of $\phi(f)$.
\end{proof}

\newpage

\section{Composition series of kernels of an idempotent semifield}

\ \\
In this section, we restrict our discussion to idempotent semifields. We write an analogue to the theory of composition series of modules, just for kernels of an idempotent semifield. The kernels of an idempotent semifield are also subsemifields, thereby allowing us to utilize the isomorphism theorems to prove our assertions.
\ \\

\begin{rem}
Let $\mathbb{S}$ be an idempotent semifield and let $K_1, K_2$ be kernels of $\mathbb{S}$ such that $K_1 \subseteq K_2$. Then $K_1$ is a kernel of $K_2$. In such a case we say that $K_1$ is a \emph{subkernel} of $K_2$ and write $K_1 \leq K_2$. If $K_1$ is strictly contained in $K_2$, we write $K_1 < K_2$.
\end{rem}
\begin{proof}
$K_2$ is a subsemifield of $\mathbb{S}$, so, by Theorem \ref{thm_nother_1_and_3}.(1), $K_1~=~K_2 \cap K_1$ is a kernel of $K_2$.
\end{proof}

\begin{rem}
As it was previously shown, in Corollary \ref{cor_principal_kernels_sublattice}, the family of principal kernels of an idempotent semifield is a sublattice of kernels. Moreover, By remark \ref{rem_image_of_principal_kernel}, homomorphic images of principal kernel are principal kernels.
Thus the Isomorphism Theorems \ref{thm_nother_1_and_3} and \ref{thm_nother2} hold for principal kernels of an idempotent semifield. Consequently, all subsequent assertions hold for principal kernels too (considering idempotent semifields).
\end{rem}

\begin{defn}
Let $L$ be a kernel of an idempotent semifield $\mathbb{S}$.
A descending chain
\begin{equation}\label{eq_desc_chain}
L = K_{0}  \supset K_{1} \supset \dots  \supset K_{t}
\end{equation}
of subkernels $K_i$ of $L$ for $1 \leq i \leq t$ is said to have \emph{length} \ $t$. The \emph{factors} of the chain are the kernels $K_{i-1}/K_i$, for $1 \leq i \leq t$. The chain in \eqref{eq_desc_chain} is said to be a \emph{composition series} for $L$ if $K_{t} = \langle 1 \rangle$ and each factor is a simple kernel. \\
We say that two chains of kernels are equivalent if they have isomorphic factors.
\end{defn}

\begin{rem}\label{rem_composition_mutual_subkernel}
Let $L$ be a kernel of an idempotent semifield $\mathbb{S}$. \linebreak
If $L = K_0 \supset K_1 \supset \dots \supset K_t$ is a chain $\mathcal{C}$ of kernels and $K \leq K_t$, then the chain $\mathcal{C}'$ given by  $L/K = K_0/K \supset K_1/K \supset \dots \supset K_t/K$ is equivalent to $\mathcal{C}$.
\end{rem}
\begin{proof}
By Theorem \ref{thm_nother2} we have that $K_{i-1}/K_{i} \cong (K_{i-1}/K)/(K_{i}/K)$, and thus the factors of $\mathcal{C}$ and $\mathcal{C}'$ are isomorphic.
\end{proof}

\begin{rem}
If $K_{i-1} \supset K_{i}$ are kernels such that the semifield $K_{i-1}/K_{i}$ is not simple, then there exists some kernel $N$ between them, i.e., $K_{i-1} \supset N \supset K_{i}$. The process of inserting such an extra subkernel $N$ into the chain is called \emph{refining} the chain. \linebreak Consequently, any chain that is not a composition series can be refined.
\end{rem}

\begin{rem}
For any simple subkernel $S$ and any $K < L$, by Theorem \ref{thm_nother_1_and_3}.(2), we have that $$(K\cdot S)/K \cong  S/(K \cap S)$$ which is either isomorphic to $S$ or $\langle 1 \rangle = \{1 \}$. It follows that $L$ is a finite product of simple subkernels $\{ S_i \ : \ 1 \leq i \leq t \}$. Then letting $L_k = \prod_{i=1}^{t-k}S_i$ we get a composition series $$L = L_0 \supset L_1 \supset \dots \supset L_{t-1} \supset \langle 1 \rangle$$ (discarding duplications).
\end{rem}

\begin{rem}\label{rem_composition_chains}
Let $L$ be a kernel of an idempotent semifield $\mathbb{S}$.
Define a composition chain $\mathcal{C}(L,K)$ from $L$ to a subkernel $K$ to be a chain
$$ L = L_0 \supset L_1 \supset \dots \supset L_t = K$$
such that each factor is simple. By Remark \ref{rem_composition_mutual_subkernel}, $\mathcal{C}(L,K)$ is equivalent to the composition series
$$ L/K \supset L_1/K \supset \dots \supset L_t/K = 1$$
of $L/K$.
It follows that if $P$ is a subkernel of a kernel $N$ for which $N/P \cong L/K$, then there is a composition chain $\mathcal{C}(N,P)$ equivalent to $\mathcal{C}(L,K)$.
\end{rem}

The following is the well-known Jordan – H\"{o}lder theorem for kernels:

\begin{thm}\label{thm_Jordan–Holder}
Let $L$ be a kernel of an idempotent semifield $\mathbb{S}$.
Suppose $L$ has a composition series $$L = L_0 \supset L_1 \supset \dots \supset L_t = \langle 1 \rangle$$
which we denote by $\mathcal{C}$. Then
\begin{enumerate}
  \item Any arbitrary finite chain of subkernels $$L = K_0 \supset K_1 \supset \dots \supset K_s$$
  (denoted as $\mathcal{D}$), can be refined to a composition series equivalent to $\mathcal{C}$. In particular $s \leq t$.
  \item Any two composition series of $L$ are equivalent.
  \item $\ell(L)  = \ell(K) + \ell(L/K)$ for every subkernel $K$ of $L$. In particular, every subkernel and every homomorphic image of a kernel with composition series has a composition series.
\end{enumerate}
\end{thm}

\begin{proof}
We prove the theorem by induction on $t$. If $t=1$ then $L$ is simple and the theorem is trivial, so we assume the whole theorem is true for kernels having a composition series of length~$\leq~t-1$.\\
(1) Let $\mathcal{C}_1 = \mathcal{C}_1(L_1)$ denote the composition series $L_1 \supset \dots \supset L_t = \langle 1 \rangle$ of $L_1$; $\mathcal{C}_1$~has length $t-1$. If $K_1 \subseteq  L_1$, then by induction on $t$, the chain
$L_1~\supseteq~K_1~\supset~K_2 \supset \dots \supset K_s$ can be refined to a composition series of $L_1$ equivalent to $\mathcal{C}_1$, yielding (1) at once (by tagging on $L \supset L_1$).
Thus, we may assume $K_1 \not \subseteq  L_1$, so $L_1 \cap K_1 \subset K_1$. Also, by Remark~\ref{rem_maximal_kernel_property}, $L_1 \cdot K_1 = L$ since $L_1$ is maximal in $L$. Note that we have two ways of descending from $L$ to $L_1 \cap K_1$; either via $L_1$ or via $K_1$. But these two routes are equivalent, in the sense that
\begin{equation}\label{eq_composition_1}
L/K_1 = (M_1 \cdot K_1)/K_1 \cong L_1/(L_1 \cap K_1),
\end{equation}

\begin{equation}\label{eq_composition_2}
K_1/(L_1 \cap K_1) \cong (K_1 \cdot L_1)/L_1 = L/L_1.
\end{equation}

By induction on $t$, the chain $L_1 \supset L_1 \cap K_1 \supset \langle 1 \rangle$ refines the composition series $\mathcal{E}_1(L_1)$ equivalent to $\mathcal{C}_1$ (of length $t-1$). This is comprised of $\mathcal{E}'_1(L_1, L_1 \cap K_1)$, a composition series from $L_1$ to $L_1 \cap K_1$ of some length $t_1$ followed by a composition series $\mathcal{E}'_2(L \cap K_1)$ of some length $t_2$, where $t_1 + t_2 = t-1$.
Since $t_1 \geq 1$, we see $t_2 \leq t-2$. Furthermore, since $L_1$ is maximal in $L$, by Corollary \ref{cor_max_ker_simple_corr} we have that $L/L_1$ is simple, thus \eqref{eq_composition_2} shows that $K_1/(L_1 \cap K_1)$ is simple.  So the chain $K_1 \supset L_1 \cap K_1$ followed by $\mathcal{E}'_2$ is a composition series $\mathcal{F}_1$ of $K_1$ having length $t_2 +1 \leq t-1$. By induction, the chain $K_1 \supset K_2 \supset \dots \supset K_s$ refines to a composition series $\mathcal{D}_1(K_1)$ equivalent to $\mathcal{F}_1$.
The isomorphism  \eqref{eq_composition_1} enables us to transfer $\mathcal{E}'_1(L_1, L_1 \cap K_1)$ to an equivalent composition series $\mathcal{E}{''}_1(L, K_1)$ from $L$ to $K_1$, also of length $t_1$. Tracking this onto $\mathcal{D}_1(K_1)$ yields the desired composition series refining $\mathcal{D}$ which is equivalent to $\mathcal{C}$.
In conclusion, we have passed from our original composition series $\mathcal{C}_1$ through the following equivalent composition series:\\
1. $L \supset L_1$ followed by $\mathcal{E}'_1(L_1, L_1 \cap K_1)$ and $\mathcal{E}'_2$;\\
2. $\mathcal{E}{''}_1(L, K_1)$ followed by $K_1 \supset L_1 \cap K_1$ and $\mathcal{E}'_2$;\\
3. $\mathcal{E}{''}_1(L, K_1)$ followed by $\mathcal{D}_1$,
which defines $\mathcal{D}$, as desired.\\

\begin{flushleft}(2) is immediate from (1).\end{flushleft}
\begin{flushleft}(3) Refine the chain $L > K > \langle 1 \rangle$ to a composition series, and apply Remark \ref{rem_composition_chains}.\end{flushleft}
\end{proof}

%

\begin{defn}
Given a kernel $L$ of an idempotent semifield $\mathbb{S}$, we define its \emph{composition length} $\ell(L)$ to be the length of a composition series for $L$, if such exists.
\end{defn}

\begin{rem}
Let $\mathbb{S}$ be an idempotent semifield. All the results stated in this section hold taking any sublattice of kernels $\Theta$ of $\Con(\mathbb{S})$ in the sense of Definition \ref{defn_sublattice_of_kernels}, instead of $\Con(\mathbb{S})$. Note that, given $\Theta$, maximal kernels are taken to be maximal $\Theta$-kernels, i.e., maximal elements of $\Theta$. For example, one can consider the sublattice of principal kernels of $\mathbb{S}$, $\PCon(\mathbb{S})$.
\end{rem}

\ \\

\newpage
\section{The Hyperspace-Region decomposition and the Hyperdimension}\label{Section:HR_decomp_HyperDimension}

\ \\

\subsection{Hyperspace-kernels and region-kernels.}

\ \\

\begin{rem}
Though we consider the semifield of fractions $\mathscr{R}(x_1,...,x_n)$, most of the results introduced in this section are applicable  to any finitely generated semifield $\mathscr{R}(a_1,...,a_n)$ over $\mathscr{R}$, where $\{a_1,...,a_n\}$ are generators of $\mathscr{R}(a_1,...,a_n)$ as a semifield. We explicitly indicate whenever a condition needs to be imposed on $\{a_1,...,a_n\}$ to hold for the semifield $\mathscr{R}(a_1,...,a_n)$.
In particular, $\langle \mathscr{R} \rangle \subset \mathscr{R}(x_1,...,x_n)$ is just another case of a finitely generated semifield over $\mathscr{R}$, taking $a_i = x_i \wedge |\alpha|$ for $1 \leq i \leq n$ and $\alpha \in \mathscr{R} \setminus \{1\}$. In this case, the generators $a_i$ are bounded from above (or simply $|a_i|$ are bounded), and we specifically designate the results that are true only for unbounded generators.
\end{rem}

\begin{defn}
An element $f \in \mathscr{R}(x_1,...,x_n)$ is said to be a \emph{hyperplane-fraction}, or HP-fraction, if $f \sim_K \frac{h}{g}$ with  $h,g \in \mathscr{R}[x_1,...,x_n]$ distinct monomials and $\langle \frac{h}{g} \rangle~\cap \mathscr{R}~\subseteq~\{1\}$.
\end{defn}

\begin{rem}
The condition $\langle \frac{h}{g} \rangle \cap \mathscr{R} \subseteq \{1\}$ ensures us that $Skel(f) \neq \emptyset$.
Moreover, this condition can be rephrased as $\langle  \mathscr{R} \rangle \not \subseteq \langle \frac{h}{g} \rangle$. This is consistent with our interest in proper subkernels in $\langle  \mathscr{R} \rangle$. We could equivalently take $f$ to be an element of $\langle \mathscr{R} \rangle$.
\end{rem}

\begin{rem}
One can choose to view an HP-fraction simply as a nonconstant Laurent monomial in $\mathscr{R}(x_1,...,x_n)$.
\end{rem}

\begin{rem}\label{rem_HP_not_bounded}
HP-fractions in $\mathscr{R}(a_1,...,a_n)$ where $a_i$ are unbounded fractions, are not bounded; i.e., for any HP-fraction $f$ there exists no $\alpha \in \mathscr{R}$ such that $|f| \leq |\alpha|$.
\end{rem}

Analogously we can prove the following assertion:

\begin{rem}\label{rem_HP_fraction_not_bounded}
HP-fractions in $\mathscr{R}(x_1,...,x_n)$ are not bounded; i.e., for any HP-fraction $f$ there exists no $\alpha \in \mathscr{R}$ such that
$|f| \leq |\alpha|$.
\end{rem}
\begin{proof}
Let $f' = \frac{h}{g}$ where $h,g \in \mathscr{R}[x_1,...,x_n]$ are distinct monomials such that \\ $\langle \frac{h}{g} \rangle \cap \mathscr{R} \subseteq \{1\}$. Then $f'$ contains at least one free (unbounded) indeterminate. Thus $f' \neq 1$ and so $|f'| \not \leq 1$. So we may assume $\alpha \neq 1$. Now, since $\mathscr{R}$ is divisibly closed there exists some $a_k \in \mathscr{R}^n$ such that $f'(a_k) = |\alpha|^k$ for any natural $k$. As $\alpha \neq 1$ is a generator of $\mathscr{R}$ as a kernel, for any $|\beta|$ with $\beta \in \mathscr{R}$ there exists some $k$ for which $|f'(a_k)| = f'(a_k) = |\alpha|^k > |\beta|$. Thus $f'$ is not bounded. By Remark \ref{rem_generator_of_down_bounded_kernel}, for $f \sim_K f'$ we have that $f$ is bounded if and only if $f'$ is bounded, concluding our claim.
\end{proof}

\begin{cor}
Since the condition for a function $f$ to be bounded depends solely on $|f|$, if $f$ is not bounded then both $|f|$ and  $f^{-1}$ are not bounded since  $||f|| = |f^{-1}| = |f|$ (here $||f||$ denotes $| \cdot |$ applied to the function $|f|$).
\end{cor}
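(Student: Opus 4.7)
The plan is to unpack the definition of ``bounded from above'' in the statement and reduce everything to two elementary identities: $||f|| = |f|$ and $|f^{-1}| = |f|$. Once these are in hand, the conclusion is essentially a tautology, since the boundedness predicate for an element $g$ only references $|g|$.

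First, I would verify the identity $||f|| = |f|$. By definition $|f| = f \dotplus f^{-1}$, and for any point $x \in \mathscr{R}^n$, since $\mathscr{R}$ is totally ordered we have $f(x) \geq 1$ or $f(x)^{-1} \geq 1$, hence $|f|(x) \geq 1$. Thus $|f| \geq 1$, which forces $|f|^{-1} \leq 1 \leq |f|$, and therefore
\[
||f|| = |f| \dotplus |f|^{-1} = |f|.
\]
Next, $|f^{-1}| = f^{-1} \dotplus (f^{-1})^{-1} = f^{-1} \dotplus f = |f|$ directly from the definition.

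Second, I would apply the definition: $g \in \mathscr{R}(x_1,...,x_n)$ is bounded from above iff there exists $\alpha \in \mathscr{R}$ with $|g| \leq |\alpha|$. Substituting $g = |f|$ and $g = f^{-1}$ and using the two identities above, the predicate ``$g$ is bounded from above'' takes exactly the same form $|f| \leq |\alpha|$ in all three cases $g \in \{f, |f|, f^{-1}\}$. Hence these three boundedness conditions are logically equivalent.

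Contrapositively, if $f$ is not bounded then neither is $|f|$ nor $f^{-1}$, which is the claim. There is no real obstacle here; the corollary is a bookkeeping consequence of Remark~\ref{rem_absolute_operators_relations} (items 4 and 5, which already record $||f|| = |f|$ and related identities) combined with the defining property $|f^{-1}| = |f|$, and its only purpose is to record the symmetry of unboundedness under inversion and absolute value for use in subsequent sections.
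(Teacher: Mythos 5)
Your proof is correct and matches the paper's intent: the corollary is stated in the paper as an immediate observation with no separate proof, and the justification you give — $||f|| = |f|$ because $|f| \geq 1$, $|f^{-1}| = |f|$ by definition of $|\cdot|$, and boundedness depends only on $|\cdot|$ — is precisely the reasoning the paper is implicitly invoking. Nothing is missing.
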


\begin{defn}
An element $f \in \mathscr{R}(x_1,...,x_n)$ is said to be a \emph{hyperspace-fraction}, or HS-fraction, if $f \sim_K \sum_{i=1}^{t} |f_i|$ where for each $1 \leq i \leq t$,  $f_i = \frac{h_i}{g_i}$ with  $h_i,g_i \in \mathscr{R}[x_1,...,x_n]$ distinct monomials such that $\langle \frac{h_i}{g_i} \rangle \cap \mathscr{R} \subseteq \{1\}$.
\end{defn}

%

\begin{rem}\label{rem_HS_not_bounded}
HS-fractions in $\mathscr{R}(a_1,...,a_n)$ with $a_i \in \mathscr{R}(x_1,...,x_n)$ unbounded, are not bounded.
\end{rem}
We can analogously prove
\begin{rem}\label{rem_HS_fraction_not_bounded}
HS-fractions in $\mathscr{R}(x_1,...,x_n)$ are not bounded.
\end{rem}
\begin{proof}
The claim follows from the inequality  $|f_j| \leq \sum_{i=1}^{t} |f_i| = f'$ and the invariance of boundness under $\sim_K$.
\end{proof}

\begin{defn}
A principal kernel of $\mathscr{R}(x_1,...,x_n)$ is said to be a \emph{hyperplane-fraction kernel} (or shortly, \emph{HP-kernel}) if it is generated by a hyperplane fraction.
\end{defn}

\begin{rem}
An HP-Kernel is regular.\\
Indeed, as regularity is preserved under $\sim_K$ we may assume $f = \frac{h}{g}$ with $h$ and $g$ distinct monomials  as both numerator and denominator of a hyperplane fraction are monomials, the condition for regularity holds  trivially.
\end{rem}

\begin{defn}
A principal kernel of $\mathscr{R}(x_1,...,x_n)$ is said to be a \emph{hyperspace-fraction kernel} (or shortly, \emph{HS-kernel}) if it is generated by a hyperspace fraction.
\end{defn}

\pagebreak

\begin{rem}\label{rem_HS_prod_HP}
A principal kernel is an HS-kernel if and only if it is a product of HP-kernels.
\end{rem}
\begin{proof}
Every hyperplane fraction is a hyperspace fraction comprised of a single summand, thus every HP-kernel is an HS-kernel.
Conversely, if $\langle f \rangle$ is an HS-kernel, then $\langle f \rangle = \left\langle \sum_{i=1}^{t} |f_i| \right\rangle = \prod_{i=1}^{t} \langle f_i \rangle$ with  $f_i =\frac{h_i}{g_i}$ where $h_i,g_i \in \mathscr{R}[x_1,...,x_n]$ are distinct monomials for each $1 \leq i \leq t$. Thus, by definition $\langle f_i \rangle$ is an HP-kernel for each $i \in  \{1,...,t \}$ proving our claim.
\end{proof}

\begin{cor}
An HS-Kernel is regular.
\end{cor}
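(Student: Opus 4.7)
The plan is to deduce this directly from the two results that immediately precede the statement, namely Remark \ref{rem_HS_prod_HP} and the remark that every HP-kernel is regular, together with the lattice-closure property of regular principal kernels established in Corollary \ref{cor_regular_lattice}.

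First, I would invoke Remark \ref{rem_HS_prod_HP}: if $\langle f \rangle$ is an HS-kernel generated by a hyperspace fraction $f \sim_K \sum_{i=1}^{t} |f_i|$ with each $f_i = h_i/g_i$ a hyperplane fraction, then $\langle f \rangle = \prod_{i=1}^{t} \langle f_i \rangle$, i.e.\ $\langle f \rangle$ is a finite product of HP-kernels $\langle f_1 \rangle, \dots, \langle f_t \rangle$.

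Next, I would note that each factor $\langle f_i \rangle$ is an HP-kernel, and an HP-kernel is regular by the remark immediately preceding the corollary (the generator $h_i/g_i$ has a single-monomial numerator and a single-monomial denominator, so the defining conditions for regularity hold trivially — for every $x \in \mathscr{R}^n$, the unique monomial of the numerator and the unique monomial of the denominator are dominant and distinct).

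Finally, I would apply Corollary \ref{cor_regular_lattice}, which states that the regular principal kernels form a sublattice of $\PCon(\mathscr{R}(x_1,...,x_n))$ with respect to multiplication (and intersection). Closure under product, applied inductively to the finite product $\prod_{i=1}^{t}\langle f_i \rangle$, yields that $\langle f \rangle$ itself is regular, completing the proof. No step here is genuinely an obstacle — the whole argument is a one-line application of already-proved results; the only thing worth being careful about is citing the lattice closure for finite (as opposed to merely binary) products, which follows by a trivial induction from the binary case in Corollary \ref{cor_regular_lattice}.
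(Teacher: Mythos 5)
Your proof is correct and follows essentially the same route as the paper: decompose the HS-kernel as a finite product of HP-kernels via Remark \ref{rem_HS_prod_HP}, observe each HP-kernel is regular, and conclude by closure of regular principal kernels under products (Remark \ref{rem_regular_closed_operations} and Corollary \ref{cor_regular_lattice}). No gaps.
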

\begin{proof}
Since an HP-kernel is regular and since, by Remark \ref{rem_HS_prod_HP}, an HS-kernel is a product of HP-kernels, the assertion follows by Remark \ref{rem_regular_closed_operations} and Corollary~\ref{cor_regular_lattice}.
\end{proof}

\begin{defn}
A skeleton in $\mathscr{R}^n$ is said to be a \emph{hyperplane-fraction skeleton} (shortly HP-skeleton) if it is defined by a hyperplane fraction.
A skeleton in $\mathscr{R}^n$ is said to be a \emph{hyperspace-fraction skeleton} (shortly HS-skeleton) if it is defined by a hyperspace fraction.
\end{defn}

\begin{cor}
A skeleton is an HS-skeleton if and only if it is an intersection of HP-skeletons.
\end{cor}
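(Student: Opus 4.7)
The plan is to transport Remark \ref{rem_HS_prod_HP}, which characterizes HS-kernels as products of HP-kernels in $\PCon(\mathscr{R}(x_1,\dots,x_n))$, through the kernel--skeleton correspondence developed earlier. Since the definitions of HS-skeleton and HP-skeleton are phrased directly in terms of generators $f$ and $f_i$, I would first unpack the definitions so that the statement becomes a claim about $Skel(\langle f \rangle)$ for an HS-fraction $f \sim_K \sum_{i=1}^{t}|f_i|$ versus the $Skel(\langle f_i\rangle)$ for the underlying HP-fractions $f_i$.

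For the forward direction, I would start with an HS-skeleton $S = Skel(f)$ where $f$ is an HS-fraction, so by definition $\langle f \rangle = \prod_{i=1}^{t}\langle f_i \rangle$ with each $\langle f_i\rangle$ an HP-kernel. Applying Corollary \ref{cor_principal_skel_correspondence} (iterated, or equivalently the first displayed equation of Proposition \ref{prop_ker_skel_correspondence}), one obtains
\begin{equation*}
S = Skel\left(\prod_{i=1}^{t}\langle f_i\rangle\right) = \bigcap_{i=1}^{t}Skel(\langle f_i\rangle),
\end{equation*}
and each $Skel(\langle f_i\rangle)$ is by definition an HP-skeleton. For the reverse direction, suppose $S = \bigcap_{i \in I}S_i$ where each $S_i$ is an HP-skeleton, $S_i = Skel(\langle f_i\rangle)$ with $f_i$ an HP-fraction. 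Two subcases arise: if $I$ is finite, then by Corollary \ref{cor_principal_skel_correspondence} (iterated),
\begin{equation*}
S = \bigcap_{i \in I}Skel(\langle f_i\rangle) = Skel\left(\prod_{i \in I}\langle f_i\rangle\right),
\end{equation*}
and the kernel $\prod_{i\in I}\langle f_i\rangle$ is generated by $\sum_{i\in I}|f_i|$, which is by definition an HS-fraction; hence $S$ is an HS-skeleton.

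The main subtlety is that, as formulated, \emph{HP-skeletons need not be principal in an obvious way beyond their definition}, but an intersection of HP-skeletons indexed by an arbitrary set is still a skeleton (by Proposition \ref{prop_skel_property1}(3)). If $I$ is allowed to be infinite, then a priori the resulting $S$ need not be defined by a single HS-fraction in $\mathscr{R}(x_1,\dots,x_n)$, which is a finite object. I would therefore read the statement as concerning finite intersections (the operational content is finite, matching Remark \ref{rem_HS_prod_HP} and the notion of HS-fraction, which is a \emph{finite} sum). The main obstacle to watch for is just book-keeping: making sure that the product $\prod\langle f_i\rangle$ is indeed principal and generated by $\sum|f_i|$ (this is Corollary \ref{cor_max_semifield_principal_kernels_operations} applied iteratively, together with Remark \ref{rem_sum_prod_absolute_values}), and that the skeleton is preserved under passing from $\sum|f_i|$ to any $\sim_K$-equivalent generator (Proposition \ref{generators_and_skeletons}). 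No further nontrivial machinery should be needed; the whole proof is essentially a translation of Remark \ref{rem_HS_prod_HP} across the correspondence of Corollary \ref{cor_correspondence_bounded_pkernels_pskeletons}.
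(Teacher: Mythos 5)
Your proposal is correct and follows essentially the same route as the paper: the paper's proof simply invokes Remark \ref{rem_HS_prod_HP} (an HS-kernel is a finite product of HP-kernels) together with the identity $Skel(\langle f\rangle\cdot\langle g\rangle)=Skel(\langle f\rangle)\cap Skel(\langle g\rangle)$, which is exactly the translation you carry out. Your additional remarks about finite versus infinite index sets (the statement is implicitly about finite intersections, matching the finiteness of an HS-fraction and of the product in Remark \ref{rem_HS_prod_HP}) and the book-keeping via Corollary \ref{cor_max_semifield_principal_kernels_operations} are sound, just more explicit than the one-line argument the paper gives.
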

\begin{proof}
As $Skel(\langle f \rangle  \cdot  \langle g \rangle ) = Skel(\langle f \rangle)  \cap  Skel(\langle g \rangle )$, the assertion follows directly from Remark \ref{rem_HS_prod_HP}.
\end{proof}

\begin{prop}\label{prop_HP_element_is_a_generator}
Let $\langle f \rangle$ be an HP-kernel. Then $w \in \langle f \rangle$ is an HP-fraction if and only if $w^s = f^k$ for some $s,k \in \mathbb{Z} \setminus\{ 0 \}$.
\end{prop}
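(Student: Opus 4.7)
The plan is to reduce the problem to a statement in affine linear algebra over $\mathbb{R}^n$ via the logarithmic identification $\mathscr{R} \cong (\mathbb{R}^+,\cdot)$ provided by Corollary \ref{cor_totaly_ordered_isomorphic_to_reals}, and then to observe that the proportionality constant that arises is forced to be rational because the exponents appearing in Laurent monomials are integers. Throughout, I will replace $w$ and $f$ by Laurent monomial representatives (which by the definition of HP-fraction exist within their $\sim_K$-classes, so $\langle w\rangle$ and $\langle f\rangle$ are unchanged), and write $f=\beta\prod x_i^{b_i}$, $w=\gamma\prod x_i^{c_i}$ with $\beta,\gamma\in\mathscr{R}$ and $(b_i),(c_i)\in\mathbb{Z}^n$.

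For the $(\Leftarrow)$ direction, if $w^s=f^k$ for some $s,k\in\mathbb{Z}\setminus\{0\}$, then by Corollary \ref{cor_principal_ker_by_order} every nonzero integer power of an element generates the same principal kernel, so $\langle w\rangle=\langle w^s\rangle=\langle f^k\rangle=\langle f\rangle$. Hence $w\in\langle f\rangle$, and since $\langle f\rangle$ is an HP-kernel the element $w$ generates an HP-kernel, meaning $w$ is itself an HP-fraction.

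For the $(\Rightarrow)$ direction, I would use that $\langle w\rangle\subseteq\langle f\rangle$ together with Remark \ref{rem_kernel_by_abs_value} to get $|w|\leq|f|^{k_0}$ for some $k_0\in\mathbb{N}$. Applying $\log$ coordinatewise, the Laurent monomials $f,w$ become affine functions $F(y)=\log\beta+\sum b_i y_i$ and $W(y)=\log\gamma+\sum c_i y_i$ on $\mathbb{R}^n$, and the inequality translates to $|W(y)|\leq k_0|F(y)|$ for all $y\in\mathbb{R}^n$. In particular, $W$ vanishes on the zero set $\{F=0\}$. Since $f$ is HP, by Remark \ref{rem_HP_fraction_not_bounded} it is unbounded, so $(b_i)\neq 0$ and $\{F=0\}$ is a genuine affine hyperplane of codimension one. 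The standard affine linear algebra lemma (an affine function vanishing on an affine hyperplane is a scalar multiple of a defining affine equation of that hyperplane) then yields $W=\lambda F$ for some $\lambda\in\mathbb{R}$; and since $w$ is HP, $(c_i)\neq 0$ so $\lambda\neq 0$. Comparing coefficients gives $c_i=\lambda b_i$ for all $i$ and $\log\gamma=\lambda\log\beta$; picking any $j$ with $b_j\neq 0$ forces $\lambda=c_j/b_j\in\mathbb{Q}$. Writing $\lambda=k/s$ with $s,k\in\mathbb{Z}\setminus\{0\}$, the equality $sW=kF$ exponentiates back to $w^s=f^k$.

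The main obstacle is the passage from the inequality $|W|\leq k_0|F|$ to the exact proportionality $W=\lambda F$: it uses crucially that $F$ is \emph{affine} (not just piecewise affine, which is where the logarithmic reduction pays off) and that $\{F=0\}$ has codimension one in $\mathbb{R}^n$ (which is where the unboundedness coming from Remark \ref{rem_HP_fraction_not_bounded} enters). Once this is established, the rationality of $\lambda$, and hence the existence of integer $s,k$, is essentially automatic from integrality of the exponent vectors.
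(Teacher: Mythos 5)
Your proof is correct, and it takes a genuinely different route from the paper's. The paper proves the forward direction by contradiction, working directly with the exponent vectors $(p_1,\dots,p_n)$ of $f$ and $(q_1,\dots,q_n)$ of $w$: it normalizes their gcd's to $1$, isolates a variable occurring in one but not the other, and constructs an explicit point $b\in\mathscr{R}^n$ with $f(b)=1$ but $(w^{-1}f^k)(b)\neq 1$, contradicting $w^{-1}f^k\in\langle f\rangle$. You instead pass to logarithmic coordinates via Corollary \ref{cor_totaly_ordered_isomorphic_to_reals} and reduce to the elementary fact that an affine form vanishing on a codimension-one hyperplane is a scalar multiple of the defining form; the inequality $|w|\leq|f|^{k_0}$ from Remark \ref{rem_kernel_by_abs_value} supplies the vanishing, unboundedness (Remark \ref{rem_HP_fraction_not_bounded}) supplies the codimension-one, and rationality of the scalar then drops out of the integrality of the exponent vectors. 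Both arguments exploit the same underlying rigidity --- that skeleton containment for two Laurent monomials forces their exponent vectors to be proportional over $\mathbb{Q}$ --- but your argument concentrates the work in a single standard lemma whereas the paper builds a counterexample point by hand. One shared (and somewhat hidden) convention worth being aware of: both proofs tacitly replace $w$ and $f$ by their Laurent monomial $\sim_K$-representatives before establishing $w^s=f^k$, which is necessary since the literal equality fails for non-monomial generators such as $|f|$; the paper flags this with the phrase ``we can prove the claim for $w=w'$,'' and you do the same in your opening sentence, so no issue arises, but the statement as printed should really be read through that lens.
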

\begin{proof}
If \ $w^s = f^k$ \ then \ $w \in  \langle f \rangle$ \ and \  $w^s = f^k \sim_K (\frac{h}{g})^k = \frac{h^k}{g^k}$ where $h,g~\in~\mathscr{R}[x_1,...,x_n]$ are distinct monomials such that $\{ \frac{h}{g} \} \cap \mathscr{R} \subseteq \{1\}$. Since  $h,g \in \mathscr{R}[x_1,...,x_n]$ are distinct monomials such that $\{ \frac{h}{g} \} \cap \mathscr{R} \subseteq \{1\}$, we have that $h^k, g^k$ are distinct monomials and since $\mathscr{R}$ is divisibly closed we also have that $\{ \frac{h^k}{g^k}= (\frac{h}{g})^k \} \cap \mathscr{R} \subseteq \{1\}$ (for otherwise the condition will not hold for $\frac{h}{g}$ too). Thus $w^s$ and so also $w$ is an HP-fractions. Conversely, let $w \in \langle f \rangle$ be an HP-fraction, then $w \sim_K w' = \frac{r}{s}$ with  $r,s \in \mathscr{R}[x_1,...,x_n]$ are distinct monomials such that $\{ \frac{r}{s} \} \cap \mathscr{R} \subseteq \{1\}$. As $ w \sim_K w' $ we can prove the claim for $w =w'$. Similarly, we can assume $f = \frac{h}{g}$ where $h,g \in \mathscr{R}[x_1,...,x_n]$ are distinct monomials such that $\{ \frac{h}{g} \} \cap \mathscr{R} \subseteq \{1\}$.
By assumption $\langle w \rangle \subseteq \langle f \rangle$, thus $Skel(w) \supseteq Skel(f)$.
Assume $w^s \neq f^k$ for any $s,k \in \mathbb{Z} \setminus\{ 0 \}$. We will show that there exists some $a \in \mathscr{R}^n$ such that $a \in Skel(f) \setminus Skel(g)$ for some $g \in \langle f \rangle$.

Let $(p_1,...,p_n), (q_1,...,q_n) \in \mathbb{Z}^n$ be the vectors of powers of $x_1,...,x_n$ in the Laurent monomials $f$ and $w$. Since $w$ and $f$ are nonconstant, $(p_1,...,p_n) \neq (0)$,$(q_1,...,q_n)~\neq~(0)$. By Remark \ref{rem_radicality_of_ker} we may assume that $gcd(p_1,...,p_n) = gcd(q_1,...,q_n) = 1$ (since $\mathscr{R}$ is divisible, the constant terms of $f$ and $w$ can be adequately adjusted). Since $w \in \langle f \rangle$ we have that $|w| \leq |f|^m$ for some $m \in \mathbb{N}$, so if $x_i$ occurs in $w$ it must also occur in $f$. Finally, since $w^s \neq f^k$ for any $k \in \mathbb{Z} \setminus\{ 0 \}$ we can also assume that $(p_1,...,p_n) \neq (q_1,...,q_n)$, for otherwise $w = \alpha f$ for some $\alpha \neq 1$ and thus
$$Skel(f) \cap Skel(w) = Skel(f) \cap Skel(\alpha f) =\emptyset,$$
contradicting our assumption that $Skel(w) \supseteq Skel(f)$ and $Skel(w) \neq \emptyset$. Let ${x_j}^l$ occur in $w$ for some $l \in \mathbb{Z} \setminus \{ 0 \}$ such that $x_j$ is not identically $1$ over $Skel(w)$ (and thus also on $Skel(f)$). Thus there exists some  $k \in \mathbb{Z} \setminus \{ 0 \}$ such that ${x_j}^{l+k}$ occurs in $f$. Define the Laurent monomial $g = w^{-1}f^k \in \langle f \rangle$, then $x_j$ does not occur in $g$. Without loss of generality, assume $j = 1$ where the exponent of $x_1$ in $f$ is $p_1$. If $a = (\alpha_1,...,\alpha_n) \in Skel(f)$, then $g(a)= w(a)^{-1}f(a) = 1$. By our assumption that $w^s \neq f^k$ there exists $x_t$ occurring in $f$ with some power $p_t \in \mathbb{Z} \setminus \{ 0 \}$ and not in $w$ (for otherwise they both contain only $x_j$). Take $b = (1,\alpha_2,...,\beta,..., \alpha_n)$ with $\beta \in \mathscr{R}$ occurring in the $t$'th place, such that $\beta^{p_t}=\frac{\alpha_{t}^{p_t}}{\alpha_{1}^{p_1}}$ (there exists such $\beta$ since $\mathscr{R}$ is divisible). Then as $x_j$ is not identically $1$ over $Skel(f)$, we can choose $a \in Skel(w)$ such that  $f(b) = 1$ and $g(b) \neq 1$.

\end{proof}

\begin{cor}\label{cor_HP-element_is_a_generator}
Let $\langle f \rangle$ be an HP-kernel. Then $w \in \langle f \rangle$ is an HP-fraction if and only if $w$ is a generator of $\langle f \rangle$.
\end{cor}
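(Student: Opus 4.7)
The plan is to derive this corollary as a short consequence of Proposition \ref{prop_HP_element_is_a_generator} combined with the power-radicality of kernels (Remark \ref{rem_radicality_of_ker}), treating the two directions separately.

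For the forward direction, suppose $w \in \langle f \rangle$ is an HP-fraction. Proposition \ref{prop_HP_element_is_a_generator} immediately yields $w^s = f^k$ for some $s, k \in \mathbb{Z} \setminus \{0\}$, hence $\langle w^s \rangle = \langle f^k \rangle$. I will then invoke Remark \ref{rem_radicality_of_ker}: for any $a \in \mathscr{R}(x_1,\ldots,x_n)$ and any $n \in \mathbb{N}$ one has $\langle a^n \rangle = \langle a \rangle$ (the inclusion $\langle a^n \rangle \subseteq \langle a \rangle$ is trivial, while power-radicality gives $a \in \langle a^n \rangle$); for negative exponents this extends by group-inversion inside the kernel. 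Applied to both $w$ and $f$, this gives $\langle w \rangle = \langle w^s \rangle = \langle f^k \rangle = \langle f \rangle$, so $w$ is a generator of $\langle f \rangle$.

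For the backward direction, if $w$ is a generator of $\langle f \rangle$ then $\langle w \rangle = \langle f \rangle$, i.e.\ $w \sim_K f$. Since $\langle f \rangle$ is an HP-kernel, by definition $f \sim_K h/g$ for distinct monomials $h, g \in \mathscr{R}[x_1,\ldots,x_n]$ satisfying $\langle h/g \rangle \cap \mathscr{R} \subseteq \{1\}$. Transitivity of $\sim_K$ gives $w \sim_K h/g$, so $w$ fits the definition of an HP-fraction.

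No genuine obstacle arises here since the structural content is already packaged inside Proposition \ref{prop_HP_element_is_a_generator}; the only subtlety is the clean bookkeeping of the equivalence $\langle a^k \rangle = \langle a \rangle$ for arbitrary $k \in \mathbb{Z} \setminus \{0\}$, which is an easy bootstrap from Remark \ref{rem_radicality_of_ker} using the fact that kernels are multiplicative groups. The proof will therefore be just a few lines.
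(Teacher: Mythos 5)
Your proposal is correct and follows essentially the same route as the paper: both proofs reduce the forward direction to Proposition \ref{prop_HP_element_is_a_generator} (obtaining $w^s = f^k$) and then invoke the invariance $\langle g^k \rangle = \langle g \rangle$, which you rightly observe bootstraps from power-radicality (Remark \ref{rem_radicality_of_ker}) together with kernels being multiplicative groups. For the backward direction you bypass the Proposition and argue directly from the definition of an HP-fraction via transitivity of $\sim_K$, which is slightly cleaner than the paper's terse one-liner (the paper leaves that direction implicit), but it is not a genuinely different argument — it is the same observation the Proposition's own proof already uses when it passes from $w^s$ being an HP-fraction to $w$ being one.
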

\begin{proof}
The claim follows from Remark \ref{prop_HP_element_is_a_generator} and the property that $\langle g^k \rangle = \langle g \rangle$ for any principal kernel of a semifield.
\end{proof}

\begin{defn}
We define an \emph{order-fraction} $o$ in the semifield $\mathscr{R}(x_1,...,x_n)$ to be an element of the form $o=1  \dotplus  f$ for some HP-fraction $f \neq 1$. We say that $o$ is the order fraction defined by $f$.
\end{defn}

\begin{defn}
We define an \emph{order-Kernel} of the semifield $\mathscr{R}(x_1,...,x_n)$ to be a principal kernel of the form $\langle o \rangle$ for some order-fraction $o = 1  \dotplus  f$. We say that $\langle o \rangle$ is the order kernel defined by $f$.
\end{defn}

\begin{rem}\label{rem_complementary_order}
Let $O = \langle 1  \dotplus  f \rangle$ be an order kernel of $\mathscr{R}(x_1,...,x_n)$, where $f$ is its defining HP-fraction. Then there exists an order kernel $O^{c}$ of $\mathscr{R}(x_1,...,x_n)$ such that
$$O \cap O^{c} = \langle 1 \rangle \ \text{and} \  O \cdot O^{c} = \langle f \rangle.$$
\end{rem}
\begin{proof}
Take $O^{c} = 1  \dotplus  f^{-1}$ then since  $f$ is an HP-fraction so is $f^{-1}$ and thus $O^{c}$ is an order kernel. Now,
$O \cap O^{c} = \langle |1  \dotplus  f| \wedge |1  \dotplus  f^{-1}| \rangle = \langle \min (\max(1,f), \max(1,f^{-1})) \rangle = \langle 1 \rangle$ and $O \cdot O^{c}  = \langle |1  \dotplus  f|  \dotplus  |1  \dotplus  f^{-1}| \rangle = \langle 1  \dotplus  f  \dotplus  f^{-1} \rangle = \langle 1 \dotplus |f| \rangle = \langle |f| \rangle = \langle f \rangle$ (noting that $(1  \dotplus  f),(1  \dotplus  f^{-1}) \geq 1$ implies $|1  \dotplus  f| = 1  \dotplus  f$  and $|1  \dotplus  f^{-1}|= 1  \dotplus  f^{-1}$).
\end{proof}

\begin{defn}
In the notation of Remark \ref{rem_complementary_order}, $O^c$ is said to be the \emph{complementary order kernel} of $O$ and $ 1  \dotplus  f^{-1}$ the \emph{complementary order fraction} of $o = 1  \dotplus  f$, denoted $o^{c}$.
\end{defn}

\begin{rem}
By definition $(O^c)^c = O$.
\end{rem}

\begin{defn}
An element $r \in \mathscr{R}(x_1,...,x_n)$ is said to be a \emph{region-fraction},  if $r \sim_K \sum_{i=1}^{t} |o_i|$ where $o_i$ is an order-fraction for every $1 \leq i \leq t$ and $o_j \neq o_i^{c}$ for any $1 \leq i,j \leq t$.
\end{defn}

\begin{rem}
We will now clarify the reason behind the last condition posed on the order-fractions comprising the region fraction.
For $i=1,...,t$ let $o_i = 1  \dotplus g_i$ with $g_i$ the HP-fraction defining the order fraction $o_i$.\\ Then since $1 \dotplus  f_i \geq 1$ for every $i$, we have that
$$\sum_{i=1}^{t} |o_i| = \sum_{i=1}^{t} |1  \dotplus  f_i| = \sum_{i=1}^{t} (1  \dotplus  f_i) = 1  \dotplus  \sum_{i=1}^{t}f_i.$$
 Thus a region-fraction $r$ can be defined as $r \sim_K 1  \dotplus  \sum_{i=1}^{t}f_i$, so the last condition of the definition can be stated as $f_j \neq f_i^{-1}$ for any $1 \leq i,j \leq t$. One can see that if there exist $k$ and $m$ for which $f_m \neq f_k^{-1}$, we get that
 $$\sum_{i=1}^{t} |o_i| = |f_k|  \dotplus  (1   \dotplus  \sum_{i \neq k,m}f_i) = |f_k|  \dotplus  \Big|1   \dotplus  \sum_{i \neq k,m}f_i \Big|,$$
thus $Skel(r) = Skel(\sum_{i=1}^{t} |o_i|) = Skel(f_k) \cap Skel(1   \dotplus  \sum_{i \neq k,m}f_i) \subseteq Skel(f_k)$.\\
We aim for a region fraction to define a skeleton containing some neighborhood in $\mathscr{R}^n$, thus
the latter condition is required by the above discussion.
\end{rem}

\begin{defn}
A principal kernel $R \in \PCon(\mathscr{R}(x_1,...,x_n))$ is said to be a \linebreak \emph{region kernel} if it is generated by a region fraction.
\end{defn}

\begin{rem}\label{rem_Region_prod_Order}
$R$ is a region kernel if and only if it is of the form
$$R = \prod_{i=1}^{v}O_i$$
for some order kernels $O_1,...,O_v \in \PCon(\mathscr{R}(x_1,...,x_n))$.
\end{rem}
\begin{proof}
Let $r$ be a generating region fraction of $R$. So,
$$R = \langle r \rangle = \langle \sum_{i=1}^{t} |o_i| \rangle~=~\prod_{j=1}^{v}\langle o_i \rangle.$$
Since $o_i$ is an order fraction for each $ 1 \leq i \leq v$, we have by definition that the \linebreak $O_i = \langle o_i \rangle$ are order kernels. Conversely, if $R = \prod_{i=1}^{v}O_i$ then taking $o_i$ to be an \linebreak order fraction generating $O_i$, we get that $r = \sum_{i=1}^{t} |o_i|$ is a region fraction generating $R$, since $\left\langle \sum_{i=1}^{t} |o_i| \right\rangle  = \prod_{i=1}^{v}O_i = R$.
\end{proof}

%

\begin{lem}\label{lem_HP_and_Oreder_kernels_are_CI}
Any HP-kernel is corner-integral, and any order kernel is corner-integral.
\end{lem}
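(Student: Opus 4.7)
The plan is to exhibit, in each case, a single explicit generator that already satisfies the two conditions \eqref{corner_int_cond_1} and \eqref{corner_int_cond_2} in Definition~\ref{defn_corner_integral}. Since corner-integrality of a principal kernel is defined via the existence of a corner-integral generator, this suffices. In both cases the verification will be essentially a vacuity check, because the numerator or denominator will turn out to have only one monomial.

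For an HP-kernel $K$, by definition $K = \langle f \rangle$ where $f \sim_K h/g$ with $h,g \in \mathscr{R}[x_1,\dots,x_n]$ distinct monomials. Take the generator $f' = h/g$ itself. Then the numerator of $f'$ consists of the single monomial $h$ and the denominator consists of the single monomial $g$. The hypothesis in \eqref{corner_int_cond_1} asks for two distinct indices $i\neq j$ in the numerator with $h_i(x)=h_j(x)$, and the hypothesis in \eqref{corner_int_cond_2} asks for the analogous coincidence in the denominator; neither is possible, so both implications hold vacuously. Therefore $f'$ is corner-integral and $K$ is corner-integral.

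For an order kernel $O = \langle o \rangle$ with $o = 1 \dotplus f$ and $f = h/g$ an HP-fraction ($h,g$ distinct monomials), take the generator $o$ written as
\[
o \;=\; 1 \dotplus \tfrac{h}{g} \;=\; \tfrac{g \dotplus h}{g}.
\]
The denominator consists of the single monomial $g$, so \eqref{corner_int_cond_2} is again vacuously satisfied. In the numerator $g \dotplus h$ we have exactly two monomials, and the only possible coincidence is at a point $x \in \mathscr{R}^n$ where $g(x) = h(x)$. At such a point, $g$ is the unique denominator monomial and trivially $g(x) \leq g(x)$, which is precisely the first disjunct in the conclusion of \eqref{corner_int_cond_1}. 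Hence $o$ is corner-integral and $O$ is corner-integral.

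There is no real obstacle here: the only thing to notice is that one must choose the designated generator (namely $h/g$, respectively $(g\dotplus h)/g$) rather than an arbitrary generator of the kernel, since corner-integrality is not obviously preserved under $\sim_K$; the definition of corner-integrality for a principal kernel explicitly permits this choice.
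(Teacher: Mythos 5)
Your proof is correct and follows essentially the same route as the paper: exhibit the designated generator ($h/g$ for the HP-kernel, $(g\dotplus h)/g$ for the order kernel), observe that the monomial numerator/denominator make conditions \eqref{corner_int_cond_1} and \eqref{corner_int_cond_2} vacuous or trivially met, and conclude by the definition of a corner-integral kernel. Your closing remark is in fact slightly more careful than the paper's, which appeals to ``corner-integrality does not depend on the choice of generator''; you correctly note that only the existence of one corner-integral generator is needed, which is exactly what Definition~\ref{defn_corner_integrality} requires.
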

\begin{proof}
As an HP-fraction has a single monomial in its numerator and denominator, there are no corner roots to surpass, and thus it is trivially corner integral. For an order kernel, as it is generated by an element of the form $1  \dotplus  \frac{h}{g} = \frac{g  \dotplus  h}{g}$ for monomials $g$ and $h$, it has a single corner root at the numerator ($x$ such that $g(x) = h(x)$) which is surpassed by the denominator $g(x)$. Finally as corner integrality does not depend of the choice of the generator, our claim is proved.
\end{proof}

\begin{rem}\label{rem_regularity_of_HP_times_order_kernels}
If $\langle f \rangle \neq \langle 1 \rangle$ is an HP-kernel and $\langle g \rangle$ is an order kernel, then  $$\langle f \rangle \cdot \langle g \rangle = \left\langle |f| \dotplus |g| \right\rangle$$
is regular.
\end{rem}
\begin{proof}
As regularity does not depend on the choice of a generator of the kernel and since $\langle |f| \dotplus |g| \rangle = \langle f \rangle \cdot \langle g \rangle$, we may consider $|f|  \dotplus  |g|$. Write $f = \frac{u}{v}$ and $g = 1  \dotplus  \frac{a}{b}$ with $u,v,a$ and $b$ monomials in $\mathscr{R}[x_1,...,x_n]$. Then  $|f|  \dotplus  |g| = |\frac{u}{v}|  \dotplus  |1  \dotplus  \frac{a}{b}|$. Since  $1  \dotplus  \frac{a}{b} \geq 1$, we have that $|f|  \dotplus  |g| = |\frac{u}{v}|  \dotplus  1  \dotplus  \frac{a}{b}$ and since $|\frac{u}{v}| \geq 1$ we get that
$$|f|  \dotplus  |g| = \left(\left|\frac{u}{v}\right|  \dotplus  1\right)  \dotplus  \frac{a}{b} = \left|\frac{u}{v}\right|  \dotplus  \frac{a}{b}.$$ Now, $\left|\frac{u}{v}\right|  \dotplus  \frac{a}{b}  = \frac{u}{v}  \dotplus  \frac{v}{u}  \dotplus  \frac{a}{b} = \frac{(u^2  \dotplus  v^2)b  \dotplus  auv}{buv}$. Since $(u^2  \dotplus  v^2) = (u  \dotplus  v)^2 = u^2  \dotplus  uv  \dotplus  v^2 \geq uv$, we have that $(u^2  \dotplus  v^2)b \geq uvb$. Thus either $u(x)^{2}b(x) \geq u(x)v(x)b(x)$ or $v(x)^{2}b(x) \geq u(x)v(x)b(x)$ for any $x \in \mathscr{R}^n$. By assumption, $f \neq 1$ and thus $u \neq v$, so  $u^{2}b \neq uvb$ and $v^{2}b \neq uvb$. Thus for any $x \in \mathscr{R}^n$ there is always a monomial in the numerator distinct from the one in the denominator. Note that $auv$ dominates the numerator if $a(x)u(x)v(x) >  (u(x)^{2} \dotplus  v(x)^{2})b(x) \geq u(x)v(x)b(x)$, and thus essentiality of $auv$ implies $auv \neq buv$, i.e., $a \neq b$ concluding our proof.
\end{proof}

\begin{rem}\label{rem_regularity_of_HP_times_order_kernels_generalization}
Induction yields that
$$\langle f \rangle \cdot \langle g_1 \rangle \cdot \dots \cdot \langle g_k \rangle$$
is regular, for any HP-kernel $\langle f \rangle \neq 1$ and order kernels $\langle g_1 \rangle, ... , \langle g_k \rangle$.
\end{rem}

\subsection{Geometric interpretation of HS-kernels and region kernels and the use of logarithmic scale}\label{subsection:geometric_interpretation}
\ \\

\begin{defn}
Let $\gamma \in \mathbb{H}$, and let $k \in \mathbb{N}$. A \emph{$k$-th root} of $\gamma$, if exists, is an element $\beta \in \mathbb{H}$ such that $\beta^k = \alpha$.
\end{defn}
%

\begin{rem}\label{rem_uniqueness_of_root}
For any $k \in \mathbb{N}$ and $\gamma \in \mathbb{H}$ the  $k$-th root of $\gamma$ is unique.
\end{rem}
\begin{proof}
Let $\alpha, \beta \in \mathbb{H}$ such that $\alpha^k = \beta^k$ for some $k \in \mathbb{N}$. Then
multiplying both sides of the equality by $\beta^{-k}$, we get that $(\beta^{-1}\alpha)^{k} = 1$. By Remark \ref{rem_torsion_free}, we have $\beta^{-1}\alpha = 1$ and so $\alpha = \beta$.
\end{proof}

Let $(\mathbb{H}, \cdot,  \dotplus )$ be a divisible semifield.
By Remark \ref{rem_uniqueness_of_root}, we can uniquely define any rational power of the elements of $\mathbb{H}$. In such a way, $\mathbb{H}$ becomes a vector space over $\mathbb{Q}$, rewriting the multiplicative operation $\cdot$ on $\mathbb{H}$ as addition and defining
\begin{equation}\label{eq_vector_space_translation}
(m/n) \cdot \alpha = \alpha^{\frac{m}{n}}.
\end{equation}
In this way we can apply linear algebra techniques to $(\mathbb{H},\cdot)$. When considering $\mathbb{H}$ in such a way we will denote the original addition of $\mathbb{H}$ (in the idempotent case) by $\dotplus$ or $\vee$  in order to avoid ambiguity. We call the representation given by \eqref{eq_vector_space_translation} the logarithmic representation of $(\mathbb{H}, \cdot)$.
$\mathbb{H}$ viewed as just described, any HP-fraction may be considered as a linear functional over $\mathbb{Q}$ and thus an HP-kernel  may be considered as a kernel generated by a linear functional (notice that for an HP-fraction $f$, the equation $f = 1$ over $(\mathbb{H}, \cdot)^n$ translates to $F = 0$ over $(\mathbb{H}, +)^n$ where $F$ is the linear form obtained from $f$ by applying \eqref{eq_vector_space_translation}, i.e., $F$ is the logarithmic form of $f$.)

\begin{rem}
In the special case of a semifield $\mathscr{R}$, by Theorem \ref{thm_holder_translated}, the above interpretation allows us to consider $(\mathscr{R}, \cdot)$ as being $(\mathbb{R}, +)$ and $\mathscr{R}^n$ as being $\mathbb{R}^n$ with coordinate-wise addition and scalar multiplication over $\mathbb{Q}$.
\end{rem}

\ \\

As a consequence of the above discussion, viewing an HP-fraction as a linear \linebreak functional defining an $n$-dimensional affine subspace of $\mathscr{R}^{n+1}$, we have the following statement:
\begin{rem}
If $f$ is an HP-fraction in $\mathscr{R}(x_1,...,x_n)$, then $f$ is completely determined by the set $\{ p_0, ...,p_n \}$ for any $p_i = (\alpha_{i,1},...,\alpha_{i,n},f(\alpha_{i,1},...,\alpha_{i,n})) \in \mathscr{R}^{n+1}$ where \linebreak $\{ a_i = (\alpha_{i,1},...,\alpha_{i,n}) \} \subset \mathscr{R}^n$ such that $p_0,....,p_n$ are in general position (are not contained in an $(n-1)$-dimensional affine subspace of $\mathscr{R}^{n+1}$).
\end{rem}
\begin{proof}
Writing $f(x_1,...,x_n) = \alpha \prod_{i=1}^{n}x_i^{k_i}$ with $k_i \in \mathbb{Z}$, then $\alpha  = f(a_{0})\prod_{i=1}^{n}a_{0,i}^{-k_i}$. After $\alpha$ is determined, since $p_0,....,p_n$ are in general position the set $$\left\{a_1,...,a_n, b = \left(f(a_1),...,f(a_n)\right)\right\} \subset \mathscr{R}^n$$
define a linearly independent set of $n$ linear equations in the variables $k_i$, and thus determine them uniquely.
\end{proof}

Consider an HS-kernel of $\mathscr{R}(x_1,...,x_n)$ defined by the HS-fraction $f = \sum_{i=1}^{t}|f_i|$ where $f_1,...,f_t$ are HP-fractions. Then $f = 1$ if and only if $f_i = 1$ for each $i = 1,...,t$. Thus $f = 1$ gives rise to a homogenous system of rational linear equations of the form $F_i = 0$ where $F_i$ is the logarithmic form of $f_i$. This way $Skel(f) \subset \mathscr{R}^n \cong (\mathbb{R}^{+})^{n}$ is identified with an affine subspace of $\mathbb{R}^n$ which is just the intersection of the $t$ affine hyperplanes defined by $F_i = 0, \ 1 \leq i \leq t$.
Analogously, for an order kernel defined by $o = 1  \dotplus  g$ for some HP-fraction $g$, $o = 1$ if and only if $g \leq 1$ which gives rise to the rational half space of $\mathbb{R}$ defined by the weak inequality $g \leq 0$. Thus, the region kernel defined by $r = \sum_{i=1}^{t}|o_i| = 1  \dotplus  \sum_{i=1}^{t}g_i$ where $o_1 = 1  \dotplus  g_1 ,...,o_t = 1  \dotplus  g_t$ are order fractions, yields a nondegenerate polyhedron formed as an intersection of the affine half spaces each of which is defined by $G_i \leq 0$, where $G_i$ is the logarithmic form of the HP-fraction $g_i$ defining $o_i$ (i.e., $o_i = 1  \dotplus g_i$).

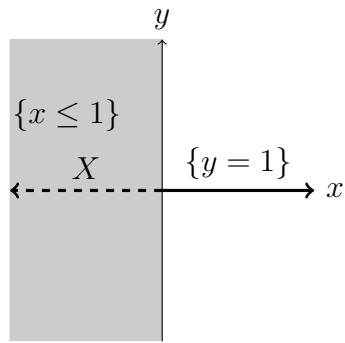
\begin{figure}
\centering

\begin{tikzpicture}[scale=2]

  \tikzstyle{axis}=[thin, ->]
  \tikzstyle{dline}=[very thick, ->]
  \tikzstyle{important line}=[very thick, dashed, ->]

    \colorlet{anglecolor}{green!50!black}

    \filldraw[fill=gray!40, draw = gray!40 ] (0,-1) -- (0,1) -- (-1,1) -- (-1,-1);
    \draw[dline] (0,0)  -- (0.5,0) node[above] {$\{y=1\}$} -- (1,0)  node[right] {$x$} ;
    \draw[axis] (0,-1) -- (0,1)   node[above] {$y$} ;
    \draw[important line]  (0,0)  -- (-0.5,0) node[above] {$X$} -- (-1,0);

    \node[left] at (-0.2,0.5) {$\{ x \leq 1 \}$};

 \end{tikzpicture}
\caption{Order relations} \label{fig:Ex2}
\end{figure}

\ \\

\newpage\subsection{A preliminary discussion}

\ \\

We introduce the following example to motivate our subsequent discussion.\\
Consider a point $a = ( \alpha_1,...,\alpha_n) \in \mathscr{R}^n$. Then $a = Skel(f_a)$ where $$f_a(x_1,...,x_n) = \left|\frac{x_1}{\alpha_1}\right|  \dotplus  \dots  \dotplus  \left|\frac{x_n}{\alpha_n}\right| \in \mathscr{R}(x_1,...,x_n).$$ We would like $\langle f_a \rangle$ to encapsulate the dimension reduction from $\mathscr{R}^n$ to $\{a \}$.
Consider the following  chain of principal HS-kernels
\small{\begin{equation}\label{eq_chain_of_origin}
\langle f_{a} \rangle = \left\langle \left|\frac{x_1}{\alpha_1}\right|  \dotplus  \dots  \dotplus  \left|\frac{x_n}{\alpha_n}\right| \right\rangle \supset \left\langle \left|\frac{x_1}{\alpha_1}\right|  \dotplus  \dots  \dotplus  \left|\frac{x_{n-1}}{\alpha_{n-1}}\right| \right\rangle  \supset \dots \supset \left\langle \left|\frac{x_1}{\alpha_1}\right| \right\rangle  \supset \langle 1 \rangle = \{1\} .
\end{equation}}
For each $1 \leq k \leq n$ denote $f_k = \left|\frac{x_1}{\alpha_1}\right|  \dotplus  \dots  \dotplus  \left|\frac{x_k}{\alpha_k}\right|$ and $f_0 = 1$.
Each of the HS-kernels $\langle f_{k-1} \rangle$ in the chain is also a semifield which is a subsemifield of the preceding kernel $\langle f_{k-1} \rangle$. The factors defined by the quotients $\langle f_{k} \rangle / \langle f_{k-1} \rangle$ are the quotient semifields
$$\left\langle \left|\frac{x_1}{\alpha_1}\right|  \dotplus  \dots  \dotplus  \left|\frac{x_k}{\alpha_k}\right| \right\rangle / \left\langle \left|\frac{x_1}{\alpha_1}\right|  \dotplus  \dots  \dotplus  \left|\frac{x_{k-1}}{\alpha_{k-1}}\right| \right\rangle = \prod_{j=1}^{k}\left\langle \frac{x_j}{\alpha_j} \right\rangle / \prod_{j=1}^{k-1}\left\langle \frac{x_j}{\alpha_j} \right\rangle$$  $$\cong \left\langle \frac{x_k}{\alpha_k} \right\rangle / \left(\left(\prod_{j=1}^{k-1}\left\langle \frac{x_j}{\alpha_j} \right\rangle \right) \cap \left\langle \frac{x_k}{\alpha_k} \right\rangle\right).$$
Note that on the right hand side of the equality we have a homomorphic nontrivial image of an HP-kernel, namely, an HP-kernel of the quotient semifield $$\mathscr{R}(x_1,...,x_n)/ \left(\left(\prod_{j=1}^{k-1}\langle \frac{x_j}{\alpha_j} \rangle \right) \cap \langle \frac{x_k}{\alpha_i} \rangle\right).$$
There are some questions arising from the above construction: \\

Can this chain of HS-kernels be refined to a longer descending chain of principal \linebreak kernels descending from $\langle f_a \rangle$? Are the lengths of descending chains of principal kernels beginning at $\langle f_{a} \rangle$ bounded, and if so, can any chain can be refined to a chain of maximal length?
The following example provides a positive answer to the first question: \\

Consider the kernels $\langle |x|  \dotplus  |y| \rangle$ and $\langle |x| \rangle = \langle x \rangle$. Both are semifields over the trivial semifield and $\langle |x| \rangle$ is a subkernel of $\langle |x|  \dotplus  |y| \rangle$. Consider the substitution map $\phi$ sending $x$ to $1$. Then $\Im(\phi) = \langle 1  \dotplus  |y| \rangle = \langle |y| \rangle_{\mathscr{R}(y)}$. The kernel $\langle |y| \rangle$ is not simple in the lattice  of principal kernels of the semifield  $\langle |y| \rangle_{\mathscr{R}(y)}$ as we have the chain  $\langle |y| \rangle \supset \langle |1  \dotplus  y| \rangle \supset \langle 1 \rangle$,  which is the image of the refinement  $$\langle |x|  \dotplus  |y| \rangle \supset \langle |x  \dotplus  y|  \dotplus  |x| \rangle \supset \langle |y| \rangle$$ (since  $\phi(|x  \dotplus  y|  \dotplus  |x|) = |\phi(x)  \dotplus  \phi(y) |  \dotplus  | \phi(x)| = |1  \dotplus  y|  \dotplus  |1|= |1 \dotplus y|$). One can notice that $\langle 1  \dotplus  y \rangle$ is an order kernel which induces the order $y \leq 1$ on the semifield $\langle |y| \rangle$. \\

In view of the above example we would like to restate the questions posed above as follows:
Can this chain of HS-kernels be refined to a longer descending chain of HS-kernels descending from $\langle f_a \rangle$?  Are the lengths of descending chains of HS-kernels beginning at $\langle f_{a} \rangle$ bounded, and if so, can any chain of  HS-kernels be refined to such a chain of maximal length? \\

In the next section we provide answers to these three questions by which the chain introduced above is of maximal unique length common to all chains of HS-kernels descending from $\langle f_a \rangle$.

\ \\

\subsection{The HO-decomposition}

\ \\

In the following we describe an explicit decomposition of a principal kernel $\langle f \rangle$ as an intersection of kernels of two types:
The first, to be named an HO-kernel, is a product of some HS-kernel and a region kernel. The second is a  product of a region kernel and a bounded from below kernel.\\
While the first type defines the skeleton of $\langle f \rangle$, the second type has no effect on it as it corresponds to the empty set.
This latter type is the source of ambiguity in relating a skeleton to a kernel, preventing the kernel corresponding to a skeleton from being principal. When intersecting with $\langle \mathscr{R} \rangle$, the kernels of the second type  in the decomposition are chopped off, in the sense that they all become equal to  $\langle \mathscr{R} \rangle$. This restriction to $\langle \mathscr{R} \rangle$ thus removes the ambiguity making each HO-kernel (intersected with $\langle \mathscr{R} \rangle$) in a $1:1$ correspondence with its skeleton (which is, in fact, a segment in the skeleton defined by $\langle f \rangle$). Subsequently, the `HO-part' is unique and independent of the choice of the kernel generating the skeleton.\\
Geometrically, the decomposition to be described below is just a fragmentation of a principal skeleton defined by $\langle f \rangle$ to the ``linear" fragments comprising it. Each \linebreak fragment is attained by bounding an affine subspace of $\mathscr{R}^n$ defined by an appropriate HS-fraction (which in turn generates an HS-kernel)  using a region fraction (generating a region kernel). Note that the HS-fraction may be $1$ and so may the region fraction. By the discussion above, although the HS-fraction and region fraction defining each segment may vary moving from one generator of the principal kernel to the other, the HS-kernels and region kernels they define stay intact as they correspond to the \linebreak fragments of the skeleton of $\langle f \rangle$. Thus we may form the next construction, though explicit, using any generator without affecting the resulting HO-kernels.\ \\

We now move forward to introduce the construction which will be illustrated by two subsequent examples.\\

\begin{constr}\label{HO_construction}
Consider an element $f \in \mathscr{R}(x_1,...,x_n)$, the kernel it generates, $\langle f \rangle$, and its corresponding skeleton $Skel(f)$. Taking $|f|$ we may assume $f \geq 1$. Write $f = \frac{h}{g} = \frac{\sum_{i=1}^k h_i}{\sum_{j=1}^m g_j}$ where $h_i$ and $g_j$ are monomials in $\mathscr{R}[x_1,...,x_n]$. Assume $Skel(f) \neq \emptyset$.
Let $a$ be a point of $Skel(f)$. Since $f(a) = 1$,  there exists a subset
$$H_a \subseteq H = \{ h_i \ : \ 1 \leq i \leq k \}$$
and a subset
$$G_a \subseteq G = \{ g_j \ : \ 1 \leq j \leq m \}$$
for which $g'(a) = h'(a)$ for any $h' \in H_a$ and $g' \in G_a$.  Denote by $H_{a}^{C}$ and $G_{a}^{C}$ the complementary subsets of monomials of $H$ and $G$ respectively, i.e., $H_{a}^{C} = H \setminus H_a$ and $G_{a}^{C} = G \setminus G_a$. Then, for any $h' \in H_a$ and $h'' \in H_{a}^{C}$ we have $$h'(a)  \dotplus  h''(a) = h'(a),$$ i.e., $h'(a) \geq h''(a)$. We can write these last relations equivalently as  $1  \dotplus  \frac{h''(a)}{h'(a)} = 1$ for all $h' \in H_a$ and all $h'' \in H_{a}^{C}$. Similarly, for any $g' \in G_a$ and $g'' \in G_{a}^{C}$ we have that $g'(a)  \dotplus  g''(a) = g'(a)$. We can write these last relations equivalently as  $1  \dotplus  \frac{g''(a)}{g'(a)} = 1$ for all $g' \in G_a$ and all $g'' \in G_{a}^{C}$.

Thus for any such $a$ we obtain the following relations:
\begin{equation}\label{eq_relations1}
\frac{h'}{g'} = 1, \ \ \ \ \ \forall h' \in  H_{a}, g' \in G_{a},
\end{equation}
\begin{equation}\label{eq_relations2}
1  \dotplus  \frac{h''}{h'} = 1 \ ; \ 1  \dotplus  \frac{g''}{g'} = 1, \ \ \forall h' \in H_{a}, h'' \in H_{a}^{C},  g' \in G_{a}, g'' \in G_{a}^{C}.
\end{equation}
Varying $a \in Skel(f)$, evidently there are only finitely many possibilities for relations in \eqref{eq_relations1} and \eqref{eq_relations2} \ as there are only finitely many monomials $h_i$  \ and $g_j$ \ comprising $f$.\\

Any set of relations comprised of the relations in \eqref{eq_relations1} and \eqref{eq_relations2}, which will hereby be denoted by $\theta_1$ and $\theta_2$, corresponds to a kernel generated by the corresponding elements
$$\frac{h'}{g'} \ , \left(1  \dotplus  \frac{h''}{h'}\right), \ \text{and} \  \left(1  \dotplus  \frac{g''}{g'}\right),$$ where $\{ \frac{h'}{g'} = 1 \}  \in \theta_1$ and $\{ g  \dotplus  \frac{g''}{g'} = 1\}, \{ 1  \dotplus  \frac{h''}{h'} = 1\} \in \theta_2$.  The finite collection of pairs $(\theta_1(i), \theta_2(i))$ for $i = 1,...,s$, is formed by considering all points $a$ in $Skel(f)$. Moreover, every point admitting the relations in  $(\theta_1(i), \theta_2(i))$ for any $i \in \{1,...,s\}$ is in $Skel(f)$. Thus this collection supplies a complete covering of $Skel(f)$. Denote by $K_i$, for $i = 1,...,s$, the kernel generated by the elements of $\theta_1(i)$ and $\theta_2(i)$.
Then since
$$Skel\left(\langle f \rangle \cap \langle \mathscr{R} \rangle \right)=Skel(f) = \bigcup_{i=1}^{s}Skel(K_i) = \bigcup_{i=1}^{s}Skel\left(K_i \cap \langle \mathscr{R} \rangle\right)$$ $$= Skel\left(\bigcap_{i=1}^{s}K_i \cap \langle \mathscr{R} \rangle\right),$$
where $\langle f \rangle \cap \langle \mathscr{R} \rangle$ and $\bigcap_{i=1}^{s}K_i \cap \langle \mathscr{R} \rangle$ are in $\PCon(\langle \mathscr{R} \rangle)$ we have that \linebreak $\langle f \rangle \cap \langle \mathscr{R} \rangle = \bigcap_{i=1}^{s}K_i \cap \langle \mathscr{R} \rangle$.\\
In essence $\bigcap_{i=1}^{s}K_i$ provides a local description of $f$ in a neighborhood of is skeleton.\\
We now proceed to supply an insight of the structure of the kernel $\langle f \rangle$ to put the construction above into a broader context.
\end{constr}

In Construction \ref{HO_construction}, we used the skeleton of $\langle f \rangle$ to construct $\bigcap_{i=1}^{s}K_i$. Considering all points $a$ in $\mathscr{R}^n$  might add some regions, complementary to the regions defined by \eqref{eq_relations2} in $\theta_2(i)$ for $i=1,...,s$, over which $\frac{h'}{g'} \neq 1, \ \ \ \forall h' \in  H_{a},\\ \forall g' \in G_{a}$, i.e., regions over which the dominating monomials never meet. Continuing the construction above using  $a \in \mathscr{R}^n \setminus Skel{f}$ similarly produces a finite collection of, say $t \in \mathbb{Z}_{\geq 0}$, kernels generated by elements from \eqref{eq_relations2} and their complementary order fractions and by elements of the form \eqref{eq_relations1} (where now $\frac{h'}{g'} \neq 1$ over the considered region) . A principal kernel $N_j = \langle q_j \rangle$  $1 \leq j \leq t$, of this complementary set of kernels has the property that $Skel(N_j) = \emptyset$, thus by Corollary \ref{cor_empty_kernels_correspond_to_bfb_kernels}, $N_j$ is bounded from below. As there are finitely many such kernels there exists $\gamma \in \mathscr{R}$, $\gamma > 1$ and small enough, such that $|q_j| \wedge \gamma = \gamma$ for $j = 1,...,t$. Thus $\bigcap_{j=1}^{t}N_j$ is bounded from below and thus by Remark \ref{rem_bounded_from_below_contain_H_kernel} we have that  $\bigcap_{j=1}^{t}N_j \supseteq \langle \mathscr{R} \rangle$. \\
As now the extension coincides with $f$ over all of $\mathscr{R}^n$ and as $\mathscr{R}$ is divisible, we have
\begin{equation}\label{full_expansion}
\langle f \rangle = \bigcap_{i=1}^{s}K_i \cap \bigcap_{j=1}^{t}N_j.
\end{equation}

So, $\langle f \rangle \cap \langle \mathscr{R} \rangle = \bigcap_{i=1}^{s}K_i \cap \bigcap_{j=1}^{t}N_j \cap \langle \mathscr{R} \rangle = \bigcap_{i=1}^{s}K_i \cap \langle \mathscr{R} \rangle$.\\

In view of the last discussion, we see that intersecting a principal kernel $\langle f \rangle$  with $\langle \mathscr{R} \rangle$ `chops off' all of its comprising bounded from below kernels (the $N_j$'s above). This way it eliminates ambiguity in the kernel corresponding to $Skel(f)$.\\
Finally we note that if $Skel(f) = \emptyset$ then $\langle f \rangle = \bigcap_{j=1}^{t}N_j$ for appropriate kernels $N_j$ and   $\langle f \rangle \cap \langle \mathscr{R} \rangle = \langle \mathscr{R} \rangle$.\\

\ \\

A few notes concerning the construction:
\begin{rem}
\begin{enumerate}
  \item If $K_1$ and $K_2$ are such that $K_1 \cdot K_2 \cap \mathscr{R} = \{1\}$ (i.e., $Skel(K_1) \cap Skel(K_2) \neq \emptyset$), then the sets of HP-fractions $\theta_1$ of $K_1$ and of $K_2$ are not equal (though one may contain the other), for otherwise they would be combined via the construction to form a single kernel.

  \item Let $\langle f \rangle \cap \langle \mathscr{R} \rangle = \bigcap_{i=1}^{s}(K_i \cap \langle \mathscr{R} \rangle)  = \bigcap_{i=1}^{s}\langle |k_i| \wedge |\alpha| \rangle = \bigwedge_{i=1}^{s} \langle |k_i| \wedge |\alpha| \rangle$ with $\alpha \in \mathscr{R} \setminus \{ 1 \}$.
By Corollary \ref{cor_norm_decomposition}, for any generator $f'$ of $\langle f \rangle \cap \langle \mathscr{R} \rangle$ we have that $|f'| = \bigwedge_{i=1}^{s} |k_i'|$ with $k_i' \sim_K |k_i| \wedge |\alpha|$ for every $i = 1,...,s$. In particular, $Skel(k_i') = Skel(|k_i| \wedge |\alpha|) = Skel(k_i)$. Thus the above construction of the $K_i$ kernels is independent of the choice of the generator as it is totally defined by the fragments $Skel(k_i)$ of the skeleton $Skel(f)$.
\end{enumerate}
\end{rem}

\ \\
\ \\

We now provide two examples for the construction introduced above. We make use of the notation above for the different types of kernels involved in the construction.
\begin{exmp}\label{exmp1_for_construction}
Let $f = |x| \wedge \alpha \in \mathscr{R}(x,y)$ for some $\alpha > 1$ in $\mathscr{R}$. Then $ f = \frac{\alpha|x|}{\alpha  \dotplus  |x|}$. The order relation $ \alpha \leq |x|$ translates to the relation $\alpha  \dotplus  |x| = |x|$  or equivalently to $\alpha|x|^{-1}  \dotplus  1 = 1$. Over the region defined by the last relation we have $f = \frac{\alpha |x|}{\alpha} = |x|$. Similarly, the complementary order relation $ \alpha \geq |x|$ translates to $\alpha^{-1}|x|  \dotplus  1 = 1$ (via $|x|  \dotplus  \alpha = \alpha$) over which region $f = \frac{\alpha |x|}{|x|} = \alpha$ . So
$$\langle f \rangle = K_1 \cap K_2 = (R_{1,1} \cdot L_{1,1}) \cap (R_{2,1} \cdot N_{2,1})$$
where $R_{1,1} = \langle \alpha|x|^{-1}  \dotplus  1 \rangle$, $L_{1,1} = \langle |x| \rangle$, $R_{2,1} = \langle \alpha^{-1}|x|  \dotplus  1 \rangle$ and $N_{2,1} = \langle \alpha \rangle$.\\ Geometrically $R_{1,1}$ is a strip containing the axis $x = 1$ and $R_{2,1}$ is its complementary region. The restriction of $f$ to $R_{1,1}$ gives it the form $|x|$ while restricting to $R_{2,1}$, $f$ equals $\alpha$. Furthermore, we see that over $R_{2,1}$, $f$ is bounded from below by $\alpha$. Omitting $N_{2,1}$ we still have $Skel(f) = Skel(R_{1,1} \cdot L_{1,1})$ though $ R_{1,1} \cdot L_{1,1}  \supset \langle f \rangle$ and equality do not hold. Intersecting $\langle f \rangle$ with $\langle \mathscr{R} \rangle$ leaves $\langle f \rangle$ intact while  $ (R_{1,1}~\cdot~ L_{1,1})~\cap~\langle \mathscr{R} \rangle =   (R_{1,1} \cdot L_{1,1} \cap R_{2,1} \cdot N_{2,1}) \cap \langle \mathscr{R} \rangle$.  Thus $\langle f \rangle = \langle f \rangle \cap \langle \mathscr{R} \rangle = (R_{1,1} \cdot L_{1,1}) \cap \langle \mathscr{R} \rangle$.
\end{exmp}

\begin{exmp}\label{exmp2_for_construction}
Let $f = |x  \dotplus  1| \wedge \alpha \in \mathscr{R}(x,y)$ for some $\alpha > 1$ in $\mathscr{R}$. First note that since $x \dotplus 1 \geq 1$ we have that $|x \dotplus 1| = x  \dotplus  1$, allowing us to rewrite $f$ as $(x \dotplus 1) \wedge \alpha$. Then $ f = \frac{\alpha (x  \dotplus  1) }{\alpha  \dotplus  (x  \dotplus  1)} = \frac{\alpha x  \dotplus  \alpha}{\alpha  \dotplus  x}$. The order relation $ \alpha \leq x$ translates to the relation $\alpha  \dotplus  x = x$  or equivalently to $\alpha x^{-1}  \dotplus  1 = 1$. Over the region defined by the last relation, we have $f = \frac{\alpha x  \dotplus  \alpha}{\alpha  \dotplus  x} = \frac{\alpha x  \dotplus  \alpha}{x} = \alpha  \dotplus  \frac{\alpha}{x} = \alpha$. Similarly, the complementary order relation $ \alpha \geq x$ translates to $\alpha^{-1}x  \dotplus  1 = 1$  over which $f = \frac{\alpha x  \dotplus  \alpha}{\alpha  \dotplus  x} = \frac{\alpha x  \dotplus  \alpha}{\alpha} =  x  \dotplus  1$. So
$$\langle f \rangle = K_1 \cap K_2 = (R_{1,1} \cdot R_{1,2}) \cap (R_{2,1} \cdot N_{2,1}) = R_{1,2} \cap R_{2,1} \cdot N_{2,1}$$
where $R_{1,1} = \langle \alpha^{-1} x  \dotplus  1 \rangle$, $R_{1,2} = \langle x  \dotplus  1 \rangle$, $R_{2,1} = \langle \alpha x^{-1}  \dotplus  1 \rangle$ and $N_{2,1} = \langle \alpha \rangle$.
Since $N_{2,1} \subset R_{2,1} \cdot N_{2,1}$ we have that $Skel(R_{2,1} \cdot N_{2,1}) \subset Skel(N_{2,1})= \emptyset$. So
$$Skel(f) = Skel(R_{1,2}) \cup Skel(R_{2,1} \cdot N_{2,1}) = Skel(x \dotplus 1) \cup \emptyset = Skel(x \dotplus 1).$$
\end{exmp}

As can be easily seen from examples \ref{exmp1_for_construction} and \ref{exmp2_for_construction} by substituting any \linebreak  HP-fraction for $x$ and any order fraction for $x \dotplus 1$ , the intersection of a kernel $K = L \cdot R = \prod L_i \cdot \prod {O}_j$ ($L_i$ and $O_j$ are HP-kernels and order kernels respectively) with $\langle \mathscr{R} \rangle$ yields
\begin{align*}
K' = \ & K \cap \langle \mathscr{R} \rangle  = \prod (L_i \cap \langle \mathscr{R} \rangle)  \cdot \prod (O_j \cap \langle \mathscr{R} \rangle) \\
= & \prod \left((L_i \cdot R_i) \cap (N_i \cdot R_i^{\ c})\right) \cdot \prod \left((O_j \cdot {R'}_j) \cap (M_j \cdot  {{R'}_j}^{\ c})\right) =  \left( \prod L_i \cdot \prod {O'}_j \right) \cap  N\\
= &( L \cdot R' ) \cap N
\end{align*}

where $R'$, $R_i$ and $R'_j$ are region kernels. ${R_i}^{c}$ and ${R'_j}^{c}$ are $R_i$'s and $R'_j$'s complementary region kernels respectively. $N_i, M_j$ and $N$ are bounded from below kernels. Note that the ${O'}_j$s involve the $R_i$s, the ${R'}_j$s and the $O_j$s, while $N$ is derived from the bounded from below kernels, namely the $N_i$s and $M_j$s. Also note that intersecting with $\langle \mathscr{R} \rangle$ keeps the HS-kernel unchanged in the new decomposition.

\pagebreak

As the bounded from below kernels, the $N_j$s in \eqref{full_expansion},  do not affect $Skel(f)$ we leave them aside for the time being and proceed to study the structure of the kernels $K_i$ and their corresponding skeletons.\\
\ \\
Let $K$ be one of the above kernels $K_i$. First note that every element of the set generating $K$ which is specified above, is either an HP-fraction (a nonconstant Laurent monomial) of the form $\frac{h'}{g'}$, or an order element of the form $1  \dotplus  \frac{h''}{h'}$ (or $1  \dotplus  \frac{g''}{g'}$). Let $L_i$ with $i =1,..,u$ and $O_j$ with $j = 1,...,v$, be the kernels generated by each of the HP-kernels and the order kernels respectively.
Then we can write
\begin{equation}\label{eq_L_O}
K = L \cdot R = \prod_{i=1}^{u}L_i \cdot \prod_{j=1}^{v}O_j
\end{equation}
where $L=\prod_{i=1}^{u}L_i$ is an HS-kernel and $R = \prod_{j=1}^{v}O_j$ is a region kernel.
note that by the assumption of the construction above $Skel(K) \neq \emptyset$ (as there is at least one point of the skeleton used for constructing it). Moreover, there are no distinct HS-kernels $M_1$ and $M_2$ such that $L = M_1 \cap M_2$ for otherwise the construction would have produced two distinct kernels, one with $M_1$ as its HS-kernel and the other with $M_2$ as its HS-kernel instead of producing $K$ in the first place.\\

\begin{defn}\label{defn_HO_fraction}
An element $f \in \mathscr{R}(x_1,...,x_n)$ is said to be an \emph{HO-fraction} if $$f~=~l(f)~\dotplus~o(f)$$ with $l(a)=\sum_{i=1}^{t}|l_i|$ an HS-fraction and $o(a) = \sum_{j=1}^{k}|o_j|$ a region fraction, where the $l_i$'s are HP-fractions and the $o_j$'s are order-fractions. We say that $l(f)$ and $o(f)$ are an HS-fraction and a region-fraction corresponding to $f$.
\end{defn}

\begin{defn}
A principal kernel $K \in \PCon(\mathscr{R}(x_1,...,x_n))$ is said to be an \linebreak \emph{HO-kernel} if it is generated by an HO-fraction.
\end{defn}

\begin{rem}
 A kernel $K \in \PCon(\mathscr{R}(x_1,...,x_n))$ is an HO-kernel if and only if \linebreak  $K = L \cdot R$ where $R$ is a region kernel and $L$ is an HS-kernel.
\end{rem}
\begin{proof}
If $K$ is an HO-kernel, then $K = \langle f \rangle$ where $f = l(f)  \dotplus  o(f)$ is an HO-fraction. Thus $K = \langle l(f)  \dotplus  o(f) \rangle = \langle l(f) \rangle \cdot \langle o(f) \rangle = L \cdot R$ where $L = \langle l(f) \rangle$ is an HS-kernel and  $R~=~\langle o(f) \rangle$ is a region kernel. Conversely, taking the HO-fraction $f = l  \dotplus  r$ where $l$ is an HS-fraction generating $L$ and $r$ is a region fraction generating $R$, we get that $\langle f \rangle = \langle l  \dotplus  r \rangle = \langle l \rangle \cdot \langle r \rangle  =  L \cdot R = K$.
\end{proof}

\begin{rem}
Let $K = L \cdot R$ be an HO-kernel with  $R$ a region kernel and $L$ an \linebreak HS-kernel.
By Remarks \ref{rem_HS_prod_HP} and \ref{rem_Region_prod_Order}, we have that $L = \prod_{i=1}^{u}L_i$ for some HP-kernels $L_1,...,L_u$ and $R = \prod_{j=1}^{v}O_j$ for some order kernels $O_1,...,O_v$. Thus $K$ is of the form
\begin{equation}
K = L \cdot R = \prod_{i=1}^{u}L_i \cdot \prod_{j=1}^{v}O_j
\end{equation}
where $u \in \mathbb{Z}_{\geq 0}$, \ $v \in \mathbb{N}$, and $O_1,...,O_v$ are order kernels.
Note that every region kernel and every HS-kernel are by definition an HO-kernel, taking $u = 0$ for a region-kernel and $v=1$ with $O_1 = \langle 1  \dotplus  a \rangle$ where $L = \langle a \rangle$ is the HS-kernel.
\end{rem}

\begin{rem}
Let $K_1$ and $K_2$ be region-kernels (respectively HS-kernels) such that  $K_1~\cdot~K_2~\cap~\mathscr{R}~=~\{ 1 \}$. Then $K_1 \cdot K_2$ is a region-kernel (respectively HS-kernel).
Consequently, if $K_1$ and $K_2$ are HO-kernels such that $K_1 \cdot K_2 \cap \mathscr{R} = \{ 1 \}$ , then \linebreak $K_1 \cdot K_2$ is an HO-kernel.
Indeed, the assertions follow from the decomposition \linebreak $K_s = L_s \cdot O_s = \prod_{i=1}^{u_s}L_{s,i} \cdot \prod_{j=1}^{v_s}O_{s,j}$ for $s =1,2$ so that
$$K_1 \cdot K_2 = (L_1 L_2) \cdot (O_1 O_2) =  \left(\prod_{i=1}^{u_1}L_{1,i} \prod_{i=1}^{u_2}L_{2,i}\right) \cdot \left(\prod_{j=1}^{v_1}O_{1,j} \prod_{j=1}^{v_2}O_{2,j}\right) = L \cdot O,$$ with the appropriate $u_s,v_s$ taken for $s =1,2$.
\end{rem}

By the above discussion we have
\begin{thm}\label{thm_HP_expansion}
Every principal kernel $\langle f \rangle$ of $\mathscr{R}(x_1,...,x_n)$ can be written as an intersection of finitely many principal kernels $$\{K_i : i=1,...,s\} \ \text{and} \ \{N_j : j = 1,...,m \},$$ where each $K_i$ is a product of an HS-kernel and a region kernel
\begin{equation}
K_i = L_i \cdot R_i =  \prod_{j=1}^{t}L_{j,i} \prod_{l=1}^{l}O_{l,i}
\end{equation}
while each $N_i$ is a product of bounded from below kernels and (complementary) region kernels.
For $\langle f \rangle \in \PCon(\langle \mathscr{R} \rangle)$, the $N_j$ can be replaced by $\langle \mathscr{R} \rangle$ without affecting the resulting kernel.
\begin{itemize}
  \item If $\langle f \rangle$ is an HS-kernel, then the decomposition degenerates to $\langle f \rangle = K_1$ with $K_1 = L_1 = \langle f \rangle$.
  \item If $\langle f \rangle$ is a region kernel, then $\langle f \rangle = K_1$ with $K_1 = R_1= \langle f \rangle$.
  \item $\langle f \rangle$ is an irregular kernel if and only if there exists some $i_0 \in \{ 1,...,s \}$ such that $K_{i_0} =  R_{i_0} = \prod_{l=1}^{l}O_{l,i_0}$.
  \item $\langle f \rangle$ is a regular kernel if and only if $K_i$ is comprised of at least one HP-kernel, for every $i = 1,...,s$.
\end{itemize}
\end{thm}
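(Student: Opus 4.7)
The plan is to formalise Construction \ref{HO_construction} and the discussion following it, which essentially lays out every ingredient of the theorem. I begin by reducing to the case $f = |f| \geq 1$ (legitimate because $\langle f \rangle = \langle |f| \rangle$ by Remark \ref{rem_absolute_operators_relations}) and writing $f = h/g = \frac{\sum_{i=1}^{k}h_i}{\sum_{j=1}^{m}g_j}$ with $h_i, g_j$ monomials in $\mathscr{R}[x_1,\dots,x_n]$. For every point $a \in \mathscr{R}^n$, I record the subset $H_a \subseteq \{h_1,\dots,h_k\}$ of monomials dominating the numerator at $a$, and similarly $G_a$; this produces the relations \eqref{eq_relations1} and \eqref{eq_relations2}, i.e.\ HP-fractions $\tfrac{h'}{g'}$ (with $h' \in H_a, g' \in G_a$) and order-fractions $1 \dotplus \tfrac{h''}{h'}$, $1 \dotplus \tfrac{g''}{g'}$. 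Since both $H$ and $G$ are finite, only finitely many pairs $(H_a, G_a)$ can occur, so only finitely many kernels arise.

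Next, partition the points according to whether they lie in $Skel(f)$ or in $\mathscr{R}^n \setminus Skel(f)$. For each $a \in Skel(f)$ the relation $\tfrac{h'}{g'} = 1$ genuinely holds, so the generated kernel is a product of HP-kernels (an HS-kernel $L_i$) times a product of order-kernels (a region-kernel $R_i$); these are the HO-kernels $K_i = L_i \cdot R_i$, and the subsequent structural decomposition into $\prod_{j} L_{j,i} \prod_{l} O_{l,i}$ follows from Remarks \ref{rem_HS_prod_HP} and \ref{rem_Region_prod_Order}. For each $a \notin Skel(f)$, the dominating monomials satisfy $\tfrac{h'}{g'} = c \neq 1$; adjoining order-fractions and the complementary order-fractions produces the kernels $N_j$, each of which has $Skel(N_j) = \emptyset$ and is therefore bounded from below by Corollary \ref{cor_empty_kernels_correspond_to_bfb_kernels}.

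The key identity $\langle f \rangle = \bigcap_{i=1}^{s} K_i \cap \bigcap_{j=1}^{t} N_j$ is the main technical point. Over each region carved out by a fixed system of order relations $\theta_2(i)$ or $\theta_2'(j)$, the function $f$ reduces (after cancelling subdominant monomials) to a single Laurent monomial or constant, which agrees with the generator of the corresponding $K_i$ or $N_j$. Since these regions cover $\mathscr{R}^n$ and $\mathscr{R}$ is divisible and dense, two elements of $\mathscr{R}(x_1,\dots,x_n)$ that agree on each region agree globally, yielding the identity. For the statement about $\langle \mathscr{R} \rangle$, each $N_j$ is bounded from below and thus contains $\langle \mathscr{R} \rangle$ by Remark \ref{rem_down_bounded_kernel_contains_H}, so $N_j \cap \langle \mathscr{R} \rangle \supseteq \langle \mathscr{R} \rangle$, and intersecting with $\langle \mathscr{R} \rangle$ collapses all the $N_j$ to $\langle \mathscr{R} \rangle$.

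The four bullet points then fall out directly: if $f$ is itself HS (respectively region), then the only pair $(H_a, G_a)$ that survives makes the order part (respectively HP part) trivial; irregularity of $\langle f \rangle$ means some generator has a purely order-type local expression at some point, which corresponds to an index $i_0$ with no HP-kernel factors in $K_{i_0}$, and the regular case is the contrapositive. The main obstacle I anticipate is the equality step in the previous paragraph: one must verify that after gluing the local pieces the resulting intersection is not strictly larger than $\langle f \rangle$, which requires checking that every $w$ belonging to all the $K_i$ and $N_j$ satisfies $|w| \leq |f|^n$ for some $n$, i.e.\ invoking Remark \ref{rem_kernel_by_abs_value} on each region and combining the local bounds into a global one.
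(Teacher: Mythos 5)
Your proposal reproduces the paper's own route: it formalises Construction \ref{HO_construction} and the discussion immediately preceding the theorem, partitions points by whether they lie on $Skel(f)$, obtains the $K_i$ and $N_j$ from the local dominating-monomial data, and derives the bullet points from that decomposition just as the paper does. You also correctly flag the identity $\langle f \rangle = \bigcap_i K_i \cap \bigcap_j N_j$ as the main point requiring justification, which mirrors the paper's reliance on density/divisibility of $\mathscr{R}$ in the passage around equation \eqref{full_expansion}.
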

\begin{proof}
The last three assertions are direct consequences of the construction above, namely, if $\langle f \rangle$ is either an HS-kernel or a region kernel, $\langle f \rangle$ is already in the form of its decomposition. The fourth is equivalent to the third.\\
If $\langle f \rangle$ is an HS-kernel
then by Remark \ref{rem_HS_prod_HP}, $\langle f \rangle = \prod_{j=1}^{t}L_j$ where $L_j$ is an \linebreak HP-kernel for each $j$.
Write $f = \frac{\sum h_{i}}{\sum g_{j}}$ with $h_i,g_j \in \mathscr{R}[x_1,...,x_n]$ monomials. If $\langle f \rangle$ is irregular, then by  definition of irregularity we have some $i_0$ and $j_0$ for which $h_{i_0} = g_{j_0}$ where $(g_{j_0}=) h_{i_0} > h_i , g_j$ for every $i \neq i_0$ and $j \neq j_0$, at some neighborhood of a point $a \in \mathscr{R}^n$. The kernel corresponding to (the closure) of this region
has its relation \eqref{eq_relations1} degenerating to $1 = 1$ as $\frac{h_{i_0}}{g_{j_0}} = 1$ over the region, thus is given only by its order relations of \eqref{eq_relations2}.
\end{proof}

\ \\

\begin{defn}
We call the decomposition given in Theorem \ref{thm_HP_expansion} of a principal kernel $\langle f \rangle \in \PCon(\mathscr{R}(x_1,...,x_n))$  the \emph{HO-decomposition} of $\langle f \rangle$.
In the special case where  $\langle f \rangle \in \PCon(\langle \mathscr{R} \rangle)$, all bounded from below terms of the intersection are equal to $\langle \mathscr{R} \rangle$.
\end{defn}


\begin{defn}
For a subset  $S \subseteq \mathscr{R}(x_1,...,x_n)$,
denote by  $\text{HO}(S)$ the family of \linebreak HO-fractions in $S$, by
$\text{HS}(S)$ the family of HS-fractions in $S$, and by $\text{HP}(S)$ the family of HP-fractions in $S$.
\end{defn}

\begin{rem}
Since every HP-fraction is an HS-fraction and every HS-fraction is an HO-fraction, we have that $$\text{HP}(S) \subset \text{HS}(S) \subset \text{HO}(S)$$ for any $S \subseteq \mathscr{R}(x_1,...,x_n)$.
\end{rem}

\begin{exmp}\label{exmp_HO_decomposition}
Consider the kernel $\langle f \rangle$ where $f = \frac{x}{y+1} \in \mathscr{R}(x_1,...,x_n)$. The points on the skeleton of $f$ define three distinct HS-kernels: $\langle \frac{x}{y} \rangle $ (corresponding to the equality $x=y$) over the region $\{y \geq 1\}$ which is defined by the region kernel $\langle 1 + y^{-1} \rangle$, $\langle x \rangle = \langle \frac{x}{1} \rangle$ (corresponding to $x=1$) over the region $\{y \leq 1\}$ which is defined by the region kernel $\langle 1 + y \rangle$, and $\langle |x| + |y| \rangle$ (corresponding to the point defined by $x=1$ and $x=y$). Thus by Construction \ref{HO_construction} $$\langle f \rangle =  \left(\left\langle \frac{x}{y} \right\rangle \cdot \langle 1 + y^{-1} \rangle\right) \cap \langle x \rangle \cdot \langle 1 + y \rangle \cap \langle |x| + |y| \rangle \cdot \langle 1 \rangle$$ and $$Skel(f) = \left(Skel\left(\frac{x}{y}\right) \cap Skel(1 + y^{-1})\right) \cup \left(Skel(x) \cap Skel(1 + y)\right) \cup \left(Skel(|x| + |y|) \cap \mathscr{R}^2 \right).$$
Note that the third component of the decomposition (i.e., the HS-kernel $\langle |x| + |y| \rangle$) can be omitted without effecting $Skel(f)$. \\
The decomposition is shown (in logarithmic scale) in Figure \ref{fig:Ex6} where the first two components are the rays beginning at the origin and the third component is the origin itself.

\end{exmp}

\ \\

\begin{figure}
\centering

\begin{tikzpicture}[scale=3,
    axis/.style={thin, color=gray, ->},
    important line/.style={very thick, ->}]

    \draw[axis] (-1,0)  -- (1,0) node[right, color=black] {$x$} ;
    \draw[axis] (0,-1) -- (0,1) node[above, color=black] {$y$} ;
    \draw[important line] (0,0)  -- (0,-1) node[above= 1.5cm,left=0.4cm] {$Skel(\langle x \rangle \cap \langle 1 + y \rangle)$};
    \draw [black, fill=black] (0,0) circle (1pt) node[below=0.5cm,right=0.3cm] {$Skel(\langle |x| + |y| \rangle)$};
    \draw[important line] (0,0) -- (0.7,0.7) node[below=0.2cm,right=0.2cm] {$Skel(\left\langle \frac{x}{y} \right\rangle  \cap \langle 1 + y^{-1} \rangle)$};

 \end{tikzpicture}

\caption{\small{$Skel(\frac{x}{y+1}) = Skel(\left\langle \frac{x}{y} \right\rangle  \cap \langle 1 + y^{-1} \rangle) \cup   Skel(\langle x \rangle \cap \langle 1 + y \rangle) \cup Skel(|x| + |y|)$}}
\label{fig:Ex6}

\end{figure}
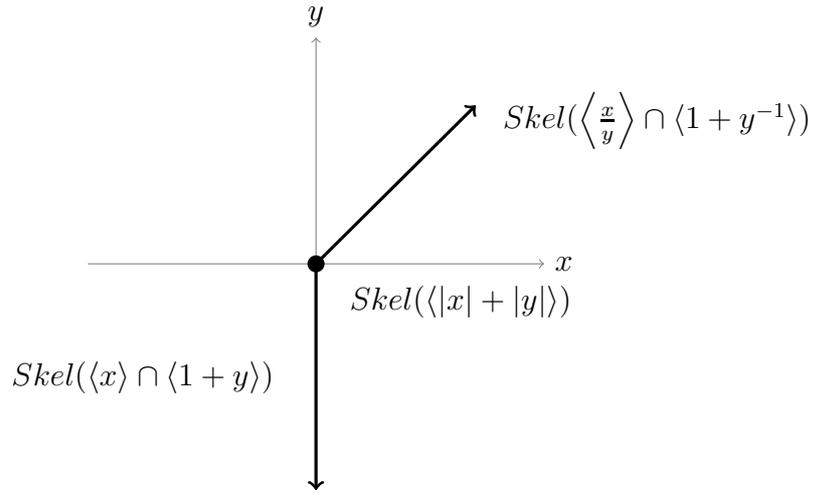
\newpage

\ \\

\subsection{The lattice generated by regular corner-integral principal \\ \ \ \ \ \ \  \ \ \ kernels}

Recall Remark \ref{rem_regularity_of_HP_times_order_kernels_generalization} which states that
the principal kernel
$$\langle f \rangle \cdot \langle g_1 \rangle \cdot \dots \cdot \langle g_k \rangle$$
is regular, for any HP-kernel $\langle f \rangle \neq 1$ and order kernels $\langle g_1 \rangle, ... , \langle g_k \rangle$.

\begin{cor}\label{cor_regularity_of_intersection_terms_of_HS_O_expansion}
Let $K \in \PCon(\langle \mathscr{R} \rangle)$ and
let $$K = (L \cdot R) \cap \langle \mathscr{R} \rangle = \left(\prod_{i=1}^{u}L_i \cdot \prod_{j=1}^{v}O_j\right) \ \cap \langle \mathscr{R} \rangle$$ be the decomposition of $K$, as given in \eqref{eq_L_O}, where $L_1,....,L_u$ are some HP-kernels and $O_1,...,O_v$ are some order kernels. If $u \neq 0$, i.e., $L \neq \langle 1 \rangle$ then $K$ is regular.
\end{cor}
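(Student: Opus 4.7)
My plan is to reduce the statement to a direct application of Remark \ref{rem_regularity_of_HP_times_order_kernels_generalization} together with Corollary \ref{cor_regular_lattice}. The key preliminary observation is that kernel multiplication is idempotent in the sense that $K \cdot K = K$ for any kernel $K$ (this follows since $K$ is closed under multiplication and contains $1$). Applying this to the product $\prod_{j=1}^{v} O_j$, I can rewrite
\begin{equation*}
L \cdot R \ = \ \prod_{i=1}^{u} L_i \cdot \prod_{j=1}^{v} O_j \ = \ \prod_{i=1}^{u} \left( L_i \cdot \prod_{j=1}^{v} O_j \right),
\end{equation*}
using commutativity of kernel multiplication along with the idempotence $\big(\prod_{j=1}^{v} O_j\big) \cdot \big(\prod_{j=1}^{v} O_j\big) = \prod_{j=1}^{v} O_j$.

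Next, since $u \neq 0$, each factor on the right is a product of a (nontrivial) HP-kernel $L_i$ with finitely many order kernels $O_1, \ldots, O_v$, so by Remark \ref{rem_regularity_of_HP_times_order_kernels_generalization} each such factor is a regular principal kernel. By Corollary \ref{cor_regular_lattice}, the regular principal kernels form a sublattice of $\PCon(\mathscr{R}(x_1,\ldots,x_n))$ with respect to multiplication and intersection; hence the (finite) product $L \cdot R = \prod_{i=1}^{u}\big(L_i \cdot \prod_{j=1}^{v}O_j\big)$ is again regular.

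Finally, I note that $\langle \mathscr{R} \rangle = \langle \alpha \rangle$ for any $\alpha \in \mathscr{R} \setminus \{1\}$, and the generator $\alpha = \tfrac{\alpha}{1}$ is a (trivially) regular element of $\mathscr{R}(x_1,\ldots,x_n)$, since its numerator and denominator each consist of a single monomial and these two monomials are distinct. Thus $\langle \mathscr{R} \rangle$ is itself a regular principal kernel, and applying Corollary \ref{cor_regular_lattice} once more yields that
\begin{equation*}
K \ = \ (L \cdot R) \cap \langle \mathscr{R} \rangle
\end{equation*}
is a regular principal kernel, as claimed.

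I do not expect a real obstacle here: the entire argument is an assembly of previously proved facts. The only point that requires a moment's thought is the idempotence trick that lets one distribute the order-kernel factor $\prod_{j=1}^{v} O_j$ across each of the $u$ HP-factors so that Remark \ref{rem_regularity_of_HP_times_order_kernels_generalization} can be invoked term by term; without it one would need to re-prove a ``multi-HP'' version of that remark directly.
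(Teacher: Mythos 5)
Your proof is correct and follows essentially the same route as the paper: both arguments rest on Remark~\ref{rem_regularity_of_HP_times_order_kernels_generalization} together with the closure of regular principal kernels under the lattice operations (Corollary~\ref{cor_regular_lattice}). The only cosmetic difference is how the order factor is packaged. The paper groups $\prod_j O_j$ with $L_1$ alone (writing $K = \prod_{i\ge 2}L_i \cdot (L_1\cdot \prod_j O_j) \cap \langle\mathscr{R}\rangle$), noting that $L_1 \cdot \prod_j O_j$ is regular by the Remark and the remaining $L_i$ are regular as HP-kernels, whereas you use idempotence of $\prod_j O_j$ to distribute it across all $u$ factors and then apply the Remark termwise; both achieve the same thing, and your version avoids having to say separately that bare HP-kernels are regular. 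You also spell out that $\langle\mathscr{R}\rangle = \langle\alpha\rangle$ is regular before invoking closure under intersection, where the paper compresses this into the assertion that ``intersection with $\langle\mathscr{R}\rangle$ does not affect regularity''; that extra step is correct and slightly more self-contained.
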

\begin{proof}
Indeed, $K = \prod_{i=2}^{u}L_i \cdot (L_1 \cdot \prod_{j=1}^{v}O_j) \cap \langle \mathscr{R} \rangle$. By Remark \ref{rem_regularity_of_HP_times_order_kernels_generalization}, we have that $(L_1 \cdot \prod_{j=1}^{v}O_j)$ is regular as $L_1$ is regular as an HP-kernel. Thus since a product of regular kernels is regular and since intersection with $\langle \mathscr{R} \rangle$ does not affect regularity, we have that $K$ is  a regular kernel.
\end{proof}

\begin{thm}\label{thm_ci_reg_kernels_generate_principal_lattice}
The lattice generated by principal corner integral kernels in $\PCon(\mathscr{R})$ is the lattice of principal kernels $\PCon(\mathscr{R})$ and the lattice generated by regular \linebreak principal corner integral kernels is the lattice of regular principal kernels.
\end{thm}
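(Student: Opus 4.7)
The natural strategy is to exploit the HO-decomposition from Theorem \ref{thm_HP_expansion}, which represents every principal kernel $\langle f\rangle\in\PCon(\mathscr{R}(x_1,\ldots,x_n))$ as
$$
\langle f\rangle \;=\; \bigcap_{i=1}^{s} K_i \;\cap\; \bigcap_{j=1}^{t} N_j,
$$
where each $K_i=L_i\cdot R_i=\prod_{l}L_{l,i}\prod_{m}O_{m,i}$ is a product of HP-kernels and order kernels, and each $N_j$ is a bounded-from-below principal kernel. The idea is that the building blocks of this decomposition are already corner-integral, so the lattice closure of the corner-integral kernels recovers the whole of $\PCon$; in the regular case one must work harder because order kernels are corner-integral but not regular.

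For the first assertion, by Lemma \ref{lem_HP_and_Oreder_kernels_are_CI} every HP-kernel and every order kernel is corner-integral, and $\langle\mathscr{R}\rangle=\langle\alpha\rangle$ for any $\alpha\in\mathscr{R}\setminus\{1\}$ is trivially corner-integral (its generator has a single monomial in each of numerator and denominator). Working in $\PCon(\langle\mathscr{R}\rangle)$ the $N_j$'s collapse to $\langle\mathscr{R}\rangle$, and each $K_i$ is a product of corner-integral kernels, hence lies in the sublattice generated by principal corner-integral kernels; intersecting these yields $\langle f\rangle$ itself. The opposite containment is trivial, so the lattice generated by principal corner-integral kernels equals $\PCon$.

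For the regular assertion, the inclusion of the lattice generated by regular corner-integral kernels into the lattice of regular principal kernels is immediate from Corollary \ref{cor_regular_lattice}. For the converse, let $\langle f\rangle$ be regular and use Theorem \ref{thm_HP_expansion}: each $K_i=L_i\cdot R_i$ in the HO-decomposition contains at least one HP-kernel in $L_i$, and by Corollary \ref{cor_regularity_of_intersection_terms_of_HS_O_expansion} each such $K_i$ is regular. The plan is to rewrite each $K_i$ as a lattice combination of regular corner-integral kernels by systematically replacing each order-kernel factor. Given an HP-kernel $\langle h/g\rangle\subseteq L_i$ and an order kernel $\langle 1+a/b\rangle\subseteq R_i$, I would construct a tangible supertropical polynomial $p\in\mathcal{F}(\mathscr{R}[x_1,\ldots,x_n])$ (of the form $h+g+(\text{monomial involving }a/b)$) whose corner locus, after intersecting with $Skel(L_i)$, coincides with the desired half-hyperplane $Skel(L_i)\cap\{a\leq b\}$; the kernel $\langle\widehat{p}\rangle$ produced from Proposition \ref{prop_ker_corner} is corner-integral by Proposition \ref{prop_corner_integrality_of hatf} and regular by Corollary \ref{cor_regularity_correspondence}. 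Iterating this substitution across all order-kernel factors of every $K_i$ expresses each $K_i$, and hence $\langle f\rangle$, in the sublattice generated by regular corner-integral principal kernels.

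The main obstacle is precisely step three: producing, for each regular $K_i=L_i\cdot R_i$, explicit tangible supertropical polynomials whose corner loci intersect $Skel(L_i)$ in the exact polyhedral piece prescribed by $R_i$. The example of the ray $\{x=y\leq 1\}=Cor(x+y)\cap Cor(x+y+xy)$ illustrates the mechanism, but a general construction must choose the auxiliary monomials so that the corner locus cuts off exactly the correct half of the tropical hyperplane — a combinatorial task requiring an induction on the number of order-kernel factors defining $R_i$ and verification (via Proposition \ref{prop_absolute_corner_integral} and Proposition \ref{prop_wedge_corner_integrality}) that the replacements do not introduce extraneous points. Completing this combinatorial bookkeeping is where the real work lies; everything else reduces to straightforward applications of the HO-decomposition and the preservation of regularity under products and intersections.
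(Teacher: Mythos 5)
Your plan coincides with the paper's own proof for the first assertion. For the second you correctly flag a gap that the paper's proof glosses over: it deduces ``by Corollary~\ref{cor_regularity_of_intersection_terms_of_HS_O_expansion}\ \dots\ each $K_i$ is a product of principal regular corner-integral kernels,'' but that Corollary establishes only that $K_i$ is \emph{regular}; it says nothing about corner-integrality, nor does it supply a factorization of $K_i$ whose factors are simultaneously regular and corner-integral. The obstruction is real: the order factors $O_{l,i}$ are irregular; the regrouped products $L_{1,i}\cdot O_{l,i}$ are regular (Remark~\ref{rem_regularity_of_HP_times_order_kernels}) but can fail corner-integrality---already $\langle x\rangle\cdot\langle 1\dotplus y\rangle$ has generator $|x|\dotplus y=(x^{2}\dotplus 1\dotplus xy)/x$, whose corner root $x=y>1$ of the numerator is not surpassed by the denominator $x$---and $K_i$ itself may not be corner-integral: in the HO-decomposition of the regular corner-integral kernel $\langle(x\dotplus y)/(xy\dotplus 1)\rangle$, the piece over the ray $\{y=1,\ x\geq 1\}$ is $K_i=\langle y\rangle\cdot\langle 1\dotplus(xy)^{-1}\rangle\cdot\langle 1\dotplus y/x\rangle$, with generator $(xy^{2}\dotplus x\dotplus 1\dotplus y^{2})/(xy)$ and unsurpassed corner root at $x=1$, $y>1$.

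Your proposed remedy is therefore exactly what is needed: for each regular $K_i$ one must exhibit tangible supertropical polynomials whose corner loci carve out precisely $Skel(K_i)$, e.g.\ $\{y=1,\ x\geq 1\}=Cor(y\dotplus 1)\cap Cor(x\dotplus xy\dotplus 1)$, so that $K_i\cap\langle\mathscr{R}\rangle$ becomes a product of regular corner-integral kernels in $\PCon(\langle\mathscr{R}\rangle)$. But, as you yourself anticipate, the general combinatorial construction and the check that it introduces no extraneous corner roots is the substantive step; the paper's proof does not carry it out, and neither does your proposal yet. That is where the remaining effort should concentrate.
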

\begin{proof}
Let $\langle f \rangle$ be a principal kernel and let
$$\langle f \rangle = \left(\bigcap_{i=1}^{s}K_i\right)  \cap \langle \mathscr{R} \rangle, \ \ K_i = \prod_{j=1}^{t}L_{j,i} \prod_{l=1}^{l}O_{l,i}$$
be its HO-decomposition. By Lemma \ref{lem_HP_and_Oreder_kernels_are_CI}, each HP-kernel $L_{j,i}$ and each order kernel $O_{l,i}$ are corner integral. Thus $\langle f \rangle$ as a finite product of principal corner integral kernels is in the lattice generated by principal corner-integral kernels. As any principal corner integral kernel is in particular principal, the lattice of principal kernels contains the lattice generated by principal corner-integral kernels proving the first assertion. As for the second assertion,  if $\langle f \rangle$ is regular, then by Theorem \ref{thm_HP_expansion}, for every $1 \leq i \leq s$, we have that $L_{1,i} \neq 1$. Thus by Corollary \ref{cor_regularity_of_intersection_terms_of_HS_O_expansion} we have that each $K_i$ is a product of principal regular corner-integral kernels. Thus $\langle f \rangle$ is in the lattice generated by principal regular corner-integral kernels. As any regular corner integral kernel is in particular regular, the lattice of principal regular kernels contains the lattice generated by principal regular corner-integral kernels. Thus the second assertion holds.
\end{proof}

\begin{cor}\label{cor_lattice_generated_by_CI_kernels}
By Theorem \ref{thm_ci_reg_kernels_generate_principal_lattice}, and the correspondence between principal \linebreak (regular) corner-loci and principal (regular) corner-integral kernels of $\langle \mathscr{R} \rangle$, defined by composing the correspondence between the kernels with their principal skeletons \linebreak $K \mapsto Skel(K)$ introduced in Corollary \ref{cor_correspondence_bounded_pkernels_pskeletons} with the correspondence between these principal skeletons and their corresponding corner loci introduced in \linebreak Proposition \ref{prop_regular_ci_skeletons_corner_loci_correspondence}, we have that the lattice of (regular) finitely generated corner loci corresponds to the lattices of principal (regular) kernels of $\langle \mathscr{R} \rangle$.
\end{cor}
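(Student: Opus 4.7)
The plan is to assemble the claimed correspondence by composing the two established bijections and then verify it extends to a lattice isomorphism. First, by Corollary \ref{cor_correspondence_bounded_pkernels_pskeletons}, the assignment $K \mapsto Skel(K)$ is an order-reversing bijection between $\PCon(\langle \mathscr{R} \rangle)$ and the principal skeletons $PSkl(\mathscr{R}^n)$, with inverse $Z \mapsto Ker_{\langle \mathscr{R} \rangle}(Z)$. Proposition \ref{prop_regular_ci_skeletons_corner_loci_correspondence} provides, via $f \mapsto \widehat{f}$ and $h \mapsto \underline{h}$, a bijection between principal corner-integral skeletons (resp.\ regular principal corner-integral skeletons) and principal corner loci (resp.\ principal regular corner loci). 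Composing the two, I obtain a bijection
\[
\Psi \colon \{\text{principal corner-integral kernels of } \langle\mathscr{R}\rangle\} \longleftrightarrow PCL(\mathscr{R}^n),
\]
which restricts to the same statement with ``regular'' inserted on both sides.

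Next, I would show that $\Psi$ extends to a lattice isomorphism between the lattice generated by principal corner-integral kernels inside $\PCon(\langle\mathscr{R}\rangle)$ and the lattice $FCL(\mathscr{R}^n)$. By Theorem \ref{thm_ci_reg_kernels_generate_principal_lattice} the left-hand side coincides with $\PCon(\langle\mathscr{R}\rangle)$ itself (and in the regular case with the sublattice of regular principal kernels). On the corner-locus side, $FCL(\mathscr{R}^n)$ is closed under finite intersections and, by Lemma \ref{lem_intersection_union_finitely_generated_corner_loci}, under finite unions, so it is precisely the sublattice generated by the principal corner loci. To define $\Psi$ on an arbitrary element, I write it as a finite lattice expression in principal corner-integral generators using the HO-decomposition supplied by Theorem \ref{thm_HP_expansion}, and apply $\Psi$ termwise, converting products of kernels to intersections of corner loci and intersections of kernels to unions, as dictated by the Skel/Ker duality in Proposition \ref{prop_ker_skel_correspondence} and by Lemma \ref{lem_intersection_union_finitely_generated_corner_loci}.

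I then have to verify that $\Psi$ is well-defined, i.e.\ that two lattice expressions yielding the same kernel produce the same corner locus. This follows because the composite $\Psi$ factors through $Skel$, so
\[
Cor(\Psi(K)) \;=\; Skel(K) \quad \text{for every } K,
\]
which depends only on $K$ and not on any chosen decomposition. Conversely, injectivity follows because the principal (regular) corner-integral kernel corresponding to a given (regular) principal corner locus is uniquely determined by its skeleton via Corollary \ref{cor_correspondence_bounded_pkernels_pskeletons}. Order-reversal and the exchange of $\wedge$/$\vee$ with $\cup$/$\cap$ of corner loci are then immediate from Proposition \ref{prop_ker_skel_correspondence} together with Remark \ref{rem_intersection_union_corner_loci} and Lemma \ref{lem_intersection_union_finitely_generated_corner_loci}. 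The regular case is obtained by restricting to the regular sublattice on both sides, using the ``regular'' halves of Theorem \ref{thm_ci_reg_kernels_generate_principal_lattice} and Proposition \ref{prop_regular_ci_skeletons_corner_loci_correspondence}.

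The main obstacle I expect is the well-definedness and compatibility check: a single kernel $K$ may be presented as many different finite products and intersections of principal corner-integral generators (this is exactly the content of the HO-decomposition and the ambiguity of generators up to $\sim_K$), while on the corner-locus side one must see that all such presentations are collapsed to the same element of $FCL(\mathscr{R}^n)$. Routing everything through $Skel(K)$, which is a genuine invariant of $K$, is the natural way to circumvent this, but one must also verify that the corner loci obtained from corner-integral generators of $K$ via $f \mapsto \underline{f}$ all determine the same element of $FCL(\mathscr{R}^n)$; this is precisely Proposition \ref{prop_formal_poly_loci_correspondence} combined with Proposition \ref{prop_regular_ci_skeletons_corner_loci_correspondence}, and is the key technical input that makes the composition go through.
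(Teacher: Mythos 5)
Your proposal is correct and follows the same route the paper intends: compose the kernel--skeleton bijection of Corollary \ref{cor_correspondence_bounded_pkernels_pskeletons} with the corner-integral-skeleton--corner-locus bijection of Proposition \ref{prop_regular_ci_skeletons_corner_loci_correspondence}, and then use Theorem \ref{thm_ci_reg_kernels_generate_principal_lattice} to pass from principal corner-integral kernels to the full lattice $\PCon(\langle\mathscr{R}\rangle)$, routing all well-definedness checks through $Skel(K)$, which is exactly what makes the decomposition-independence automatic. The paper states this corollary without a separate proof block, so your write-up simply makes explicit the composition and the lattice-compatibility verification that the paper leaves implicit.
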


\begin{cor}
By Corollary \ref{cor_lattice_generated_by_CI_kernels}, we have that supertropical varieties correspond to principal skeletons and kernels while tropical varieties correspond to regular principal skeletons and kernels.
\end{cor}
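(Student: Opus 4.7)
The plan is to derive this as a more or less immediate consequence of the constructions already in place, the main task being to match the terminology of \cite{SuperTropicalAlg} with the notion of a finitely generated corner locus developed here, and then to track how the $Cor \leftrightarrow Skel \leftrightarrow \langle \cdot \rangle$ bridges package together.

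First I would recall that by Definition~\ref{def_st_corner_root} together with Definition~\ref{def_cor_locus}, a supertropical variety in the sense of Izhakian--Knebusch--Rowen is precisely a finitely generated (generalized) corner locus $Cor(f_1,\dots,f_r)$ with each $f_i \in \mathcal{F}(\mathscr{R}[x_1,\dots,x_n])$, while a classical tropical variety is exactly a regular finitely generated corner locus, i.e.\ one that can be written as $Cor(f_1,\dots,f_r)$ with all $f_i$ tangible (so $f_i \in \mathscr{R}[x_1,\dots,x_n]$). This identification of vocabulary is the key interpretive step; once it is in place, the result is essentially already stated in Corollary~\ref{cor_lattice_generated_by_CI_kernels}.

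Next I would invoke Corollary~\ref{cor_lattice_generated_by_CI_kernels}, which establishes lattice isomorphisms between $FCL(\mathscr{R}^n)$ and the lattice of principal kernels of $\langle \mathscr{R} \rangle$, and between $FRCL(\mathscr{R}^n)$ and the lattice of regular principal kernels of $\langle \mathscr{R} \rangle$. Under the first identification a supertropical variety maps to a principal kernel, and under the second a tropical variety maps to a regular principal kernel. To pass from kernels to skeletons I would appeal to Corollary~\ref{cor_correspondence_bounded_pkernels_pskeletons}, which gives a $1{:}1$ order-reversing correspondence between principal skeletons of $\mathscr{R}^n$ and principal kernels of $\langle \mathscr{R} \rangle$ via $K \mapsto Skel(K)$ and $Z \mapsto Ker_{\langle \mathscr{R} \rangle}(Z)$. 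Composing with the corner-locus map $f \mapsto \widehat{f}$ from Proposition~\ref{prop_ker_corner} shows that the corner-locus defining a supertropical variety coincides, as a subset of $\mathscr{R}^n$, with the principal skeleton attached to the associated principal kernel.

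The only subtle point I anticipate is the regular case: I must check that if the generating supertropical polynomials are tangible (no ghost monomials), the resulting principal kernel is regular, and conversely. For the forward direction, Proposition~\ref{prop_corner_integrality_of hatf} gives that $\widehat{f}$ is regular whenever $f$ is tangible, and regularity is preserved under the lattice operations by Corollary~\ref{cor_regular_lattice}; for the converse, I would use the HO-decomposition (Theorem~\ref{thm_HP_expansion}) together with Corollary~\ref{cor_regularity_of_intersection_terms_of_HS_O_expansion} to recover, from a regular principal kernel, defining tangible polynomials whose corner locus is the given skeleton. Once these two directions are verified the corollary follows by directly combining the cited statements; there are no further computations to carry out, only the bookkeeping that matches corner loci, skeletons, and kernels at each level of generality.
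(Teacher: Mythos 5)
Your proposal is correct and follows essentially the same route the paper implicitly takes: the corollary is a terminological repackaging of Corollary~\ref{cor_lattice_generated_by_CI_kernels}, obtained by identifying supertropical (resp.\ tropical) varieties with finitely generated generalized (resp.\ regular) corner loci and then composing the already-established correspondences $Cor \leftrightarrow Skel \leftrightarrow \langle\cdot\rangle$. Your extra care in verifying both directions of the regular case is sound but is already packaged inside Theorem~\ref{thm_ci_reg_kernels_generate_principal_lattice} and Corollary~\ref{cor_lattice_generated_by_CI_kernels}, so it amounts to unfolding the cited results rather than a genuinely different argument.
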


\ \\
\ \\
\subsection{Convexity degree and hyperdimension}\label{section:Convexity degree and Hyperdimension}
 \ \\

In this section, $\mathbb{K}$ is a semifield which is an affine extension of the bipotent semifield $\mathscr{R}$, i.e., $\mathbb{K}$ is of the form $\mathscr{R}(x_1,...,x_n)/L$ for some kernel $L \in \Con(\mathscr{R}(x_1,...,x_n))$ (see Remark \ref{rem_affine_semifields_as_images}). In particular $\mathbb{K}$ is idempotent.

\begin{prop}\label{prop_subdirect_decomposition_for_idempotent_kernels}
Let $\mathbb{S}$ be an idempotent semifield. Let $M$ be a kernel of $\mathbb{S}$
such that $M = K_1 \cap K_2 \cap \dots  \cap K_t$ for distinct kernels $K_i$  of $\mathbb{S}$.
Then $\mathbb{S}/M$ is subdirectly reducible and
$$\mathbb{S}/M \xrightarrow[s.d]{} \prod_{i=1}^{t} \mathbb{S}/K_i.$$
\end{prop}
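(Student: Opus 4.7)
The plan is to invoke the general $\ell$-group construction from Remark \ref{rem_sub_direct_prod_construction}, translated via the correspondence between commutative $\ell$-groups and idempotent semifields established in Propositions \ref{prop_l-group_idempotent_semifield_corr_1} and \ref{prop_l-group_idempotent_semifield_corr_2}. First I would define the homomorphism
$$\phi : \mathbb{S} \longrightarrow \prod_{i=1}^{t} \mathbb{S}/K_i, \qquad \phi(s) = (sK_1,\ldots,sK_t),$$
where each coordinate is the canonical quotient map $\pi_i \circ \phi : \mathbb{S} \to \mathbb{S}/K_i$. Each coordinate is a semifield homomorphism, and by Remark \ref{rem_sub_direct_prod_construction} $\phi$ itself is an $\ell$-homomorphism (hence a semifield homomorphism). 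Its kernel is $Ker\,\phi = \bigcap_{i=1}^{t}Ker(\pi_i\circ\phi) = \bigcap_{i=1}^{t}K_i = M$.

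Next, by the first isomorphism theorem (Theorem \ref{thm_nother_1_and_3}) applied to $\phi$, we obtain an induced monomorphism
$$\bar{\phi} : \mathbb{S}/M \hookrightarrow \prod_{i=1}^{t}\mathbb{S}/K_i, \qquad sM \mapsto (sK_1,\ldots,sK_t),$$
which is well defined because $M \subseteq K_i$ for every $i$. For each $i$, the composite $\pi_i \circ \bar{\phi}$ equals the canonical quotient map $\mathbb{S}/M \twoheadrightarrow (\mathbb{S}/M)/(K_i/M) \cong \mathbb{S}/K_i$ (using Theorem \ref{thm_nother2}), and in particular it is surjective. Thus by Definition \ref{defn_subdirect_prod}, $\mathbb{S}/M$ is a subdirect product of the family $\{\mathbb{S}/K_i\}_{i=1}^{t}$, proving the displayed claim.

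Finally, for subdirect reducibility I would argue as follows. We may assume the decomposition is irredundant, i.e.\ that $K_i \supsetneq M$ for every $i$ (otherwise $M = K_{i_0}$ for some $i_0$, and since intersecting a family of kernels with $K_{i_0}$ leaves it unchanged whenever $K_{i_0}$ is contained in each, we could remove $K_{i_0}$ from the list and the conclusion would be trivial). Under this assumption $K_i/M$ is a nontrivial kernel of $\mathbb{S}/M$ for each $i$, so $Ker(\pi_i \circ \bar{\phi}) = K_i/M \neq \{1\}$, and consequently no projection $\pi_i \circ \bar{\phi}$ is an isomorphism. This exhibits a subdirect representation in which no projection is an isomorphism, which by definition means $\mathbb{S}/M$ is subdirectly reducible.

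The argument is largely a translation exercise rather than a difficult calculation; the one pitfall worth watching is the reducibility claim, which genuinely needs the strict inclusions $K_i \supsetneq M$ and is otherwise vacuous. The simultaneous preservation of both $\vee$ ($\dotplus$) and $\wedge$ under the coordinate projections is automatic from the semifield-homomorphism reformulation given in Proposition \ref{prop_l-group_idempotent_semifield_corr_1}, so no separate lattice-theoretic verification is required.
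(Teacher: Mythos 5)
Your construction is the same as the paper's, merely repackaged: the paper first passes to $\bar{\mathbb{S}}=\mathbb{S}/M$, forms the images $\bar{K_i}=K_i/M$, observes $\bigcap_i\bar{K_i}=\{1\}$ via the quotient-correspondence (Theorem \ref{cor_qoutient_corr}), and applies Remark \ref{rem_sub_direct_prod_construction} there, together with $\bar{\mathbb{S}}/\bar{K_i}\cong\mathbb{S}/K_i$; you instead build $\phi:\mathbb{S}\to\prod_i\mathbb{S}/K_i$, compute $Ker\,\phi=M$, and factor through $M$ by the first isomorphism theorem. These are two descriptions of the identical subdirect representation, so the substance agrees.

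One point is worth tightening. In your irredundancy reduction you claim that if $M=K_{i_0}$ for some $i_0$ one can simply remove $K_{i_0}$ from the list; but removing $K_{i_0}$ \emph{changes} the intersection ($\bigcap_{j\neq i_0}K_j$ may strictly contain $M$), so no clean reduction is obtained. In the degenerate case $K_{i_0}=M$ the projection $\pi_{i_0}\circ\bar\phi$ is an isomorphism, and then this particular representation does not witness reducibility at all — the proposition is implicitly assuming the strict inclusions $K_i\supsetneq M$, which the paper also leaves unstated. So your instinct to require irredundancy is right, but phrase it as a standing hypothesis rather than something you can arrange by discarding factors.
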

\begin{proof}
Consider the quotient semifield $\bar{\mathbb{S}} = \mathbb{S}/M$. Let $\phi : \mathbb{S} \rightarrow \bar{\mathbb{S}}$ be the quotient map. Denote by $\bar{K_i} = \phi(K_i) = K_i/M$ the images of $K_1,...,K_n$ under $\phi$ which are kernels of $\bar{\mathbb{S}}$. Since $M = K_1 \cap K_2 \cap \dots  \cap K_t$, we have that $\bar{K_1} \cap \bar{K_2} \cap \dots  \cap \bar{K_t} = \{1\}$ in $\bar{\mathbb{S}}$ (note that by Corollary \ref{cor_qoutient_corr}  there is a lattice isomorphism between the  kernels of $\bar{\mathbb{S}}$ and the kernels of $\mathbb{S}$ that contain $M$). \\
Thus, by Remark \ref{rem_sub_direct_prod_construction}, we have $\bar{\mathbb{S}}$ is subdirectly reducible
and as \\ $\bar{\mathbb{S}}/\bar{K_i} = (\mathbb{S}/M)/(K_i/M) \cong \mathbb{S}/K_i$ (by the second isomorphism theorem \ref{thm_nother2}) we have that
$$\bar{\mathbb{S}} \xrightarrow[s.d]{} \prod_{i=1}^{t}\bar{\mathbb{S}}/\bar{K_i} \cong \prod_{i=1}^{t} \mathbb{S}/K_i.$$
\end{proof}

\begin{cor}\label{cor_subdirect_decomposition_for_semifield_of_fractions}
In particular, Proposition \ref{prop_subdirect_decomposition_for_idempotent_kernels} applies to the idempotent semifield $\mathbb{H}(x_1,...,x_n)$ where $\mathbb{H}$ is a bipotent semifield (or any of its kernels considered as a semifield) and to any principal kernel $M = \langle f \rangle \in \PCon(\mathbb{H}(x_1,...,x_n))$ such that $M = K_1 \cap K_2 \cap \dots  \cap K_t$ for some $K_i \in \PCon(\mathbb{H}(x_1,...,x_n))$.\\
\end{cor}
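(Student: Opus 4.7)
The plan is to observe that this corollary is essentially a direct instantiation of Proposition \ref{prop_subdirect_decomposition_for_idempotent_kernels}, so all the work consists of verifying the hypotheses of that proposition in the present setting. First I would note that since $\mathbb{H}$ is bipotent it is idempotent, and idempotency of the base semifield is inherited by both the polynomial semiring and its semifield of fractions; this was recorded in the excerpt (notably in the remark following Theorem \ref{thm_kernels_hom_relations} and in the standing assumption that affine extensions of idempotent semifields are idempotent). Hence $\mathbb{H}(x_1,\dots,x_n)$ qualifies as an idempotent semifield $\mathbb{S}$ in the sense required by Proposition \ref{prop_subdirect_decomposition_for_idempotent_kernels}. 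For the parenthetical statement about an arbitrary kernel $K$ of $\mathbb{H}(x_1,\dots,x_n)$ viewed as a semifield in its own right, I would invoke the remark (following Remark \ref{rem_ker_latice_operations}) that every kernel of an idempotent semifield is itself a (subsemi)field, together with Corollary \ref{rem_transitivity_for_idemotent_kernels} which guarantees that kernels of $K$ are again kernels of the ambient semifield, so Proposition \ref{prop_subdirect_decomposition_for_idempotent_kernels} applies verbatim inside $K$ as well.

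Next I would address the principal-kernel aspect. Given $M = \langle f \rangle \in \PCon(\mathbb{H}(x_1,\dots,x_n))$ together with a decomposition $M = K_1 \cap \dots \cap K_t$ with $K_i \in \PCon(\mathbb{H}(x_1,\dots,x_n))$, I may without loss of generality discard any duplicate $K_i$'s, since this does not affect the intersection; this is the only adjustment needed to match the hypothesis of Proposition \ref{prop_subdirect_decomposition_for_idempotent_kernels} that the $K_i$ be distinct. At this point the proposition yields
\[
\mathbb{H}(x_1,\dots,x_n)/M \xrightarrow[s.d]{} \prod_{i=1}^{t} \mathbb{H}(x_1,\dots,x_n)/K_i,
\]
which is exactly the conclusion sought. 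Finally, I would remark that since $\PCon(\mathbb{H}(x_1,\dots,x_n))$ is a sublattice of $\Con(\mathbb{H}(x_1,\dots,x_n))$ by Corollary \ref{cor_principal_kernels_sublattice}, and since homomorphic images and preimages of principal kernels are principal (Remark \ref{rem_iso_thms_for_PCon}), each factor $\mathbb{H}(x_1,\dots,x_n)/K_i$ is itself a quotient by a principal kernel, so the decomposition stays within the principal-kernel framework relevant to the rest of the paper.

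There is no substantive obstacle here: the statement is a corollary by design, and the entire content of the proof is the verification that the ambient semifield (or any of its kernels) meets the idempotency hypothesis of Proposition \ref{prop_subdirect_decomposition_for_idempotent_kernels}, plus the cosmetic step of trimming duplicates from the intersection. If anything, the closest thing to a delicate point is making sure the reader sees why the parenthetical "or any of its kernels considered as a semifield" is legitimate, which is why I would explicitly cite the transitivity result Corollary \ref{rem_transitivity_for_idemotent_kernels} together with the fact that a kernel of an idempotent semifield is a subsemifield.
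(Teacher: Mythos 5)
Your proposal is correct, and it follows essentially the same route as the paper, which in fact states this corollary without an explicit proof precisely because it is an immediate instantiation of Proposition \ref{prop_subdirect_decomposition_for_idempotent_kernels}. The verification you supply — idempotency of $\mathbb{H}(x_1,\dots,x_n)$, the fact that kernels of idempotent semifields are themselves idempotent semifields (with Corollary \ref{rem_transitivity_for_idemotent_kernels} for the parenthetical case), and the harmless trimming of duplicate $K_i$'s — is exactly what the reader is intended to supply.
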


Let $\langle f \rangle \in \langle \mathscr{R} \rangle$ be a principal kernel and let $\langle f \rangle = \bigcap_{i=1}^{s} K_i$, where
$$K_i = (L_i \cdot R_i) \cap \langle \mathscr{R} \rangle = (L_i \cap \langle \mathscr{R} \rangle) \cdot (R_i \cap \langle \mathscr{R} \rangle) = L_i' \cdot R_i'$$
is its (full) HO-decomposition; i.e., for each $1 \leq i \leq s$ \ $R_i \in \PCon(\mathscr{R}(x_1,...,x_n))$ is a region kernel and $L_i\in \PCon(\mathscr{R}(x_1,...,x_n))$ is either an HS-kernel or bounded from below (in which case  $L_i' = \langle \mathscr{R} \rangle$). Then by Corollary \ref{cor_subdirect_decomposition_for_semifield_of_fractions}, we have the subdirect decomposition
$$\langle \mathscr{R} \rangle/ \langle f \rangle  \xrightarrow[s.d]{} \prod_{i=1}^{t} \langle \mathscr{R} \rangle /K_i = \prod_{i=1}^{t} (\langle \mathscr{R} \rangle /L_i' \cdot R_i')$$
where $t \leq s$ is the number of kernels $K_i$ for which $L_i' \neq  \langle \mathscr{R} \rangle$ (for otherwise $\langle \mathscr{R} \rangle /K_i~=~\{1 \}$ and can be omitted from the subdirect product).

The last discussion motivates us to study the semifields $\langle \mathscr{R} \rangle /K_i$ as building blocks for the algebraic structure of the quotient semifield $\langle \mathscr{R} \rangle/ \langle f \rangle$ which in turn is the \linebreak coordinate semifield corresponding to the skeleton $Skel(f)$.

\begin{exmp}\label{exmp_infinite_chain}
Consider the principal kernel $\langle x \rangle \in \PCon(\mathscr{R}(x,y))$. For $\alpha \in \mathscr{R}$ such that $\alpha > 1$, we have the following infinite strictly descending chain of principal kernels
$$\langle x \rangle \supset \langle |x| \dotplus |y+1| \rangle \supset \langle |x| \dotplus |\alpha^{-1}y \dotplus 1| \rangle \supset \langle |x| \dotplus |\alpha^{-2}y \dotplus 1| \rangle \supset \dots$$ $$\supset \langle |x| \dotplus |\alpha^{-k}y \dotplus 1| \rangle \supset \dots$$ and the strictly ascending chain of skeletons corresponding to it (see figure 2.3)
$$Skel(x) \subset Skel(|x| \dotplus |y+1|) \subset \dots \subset Skel(|x| \dotplus |\alpha^{-k}y \dotplus 1|) \subset \dots =$$ $$Skel(x) \subset Skel(x) \cap Skel(y+1) \subset \dots \subset Skel(x) \cap Skel(\alpha^{-k}y \dotplus 1) \subset \dots.$$
\end{exmp}

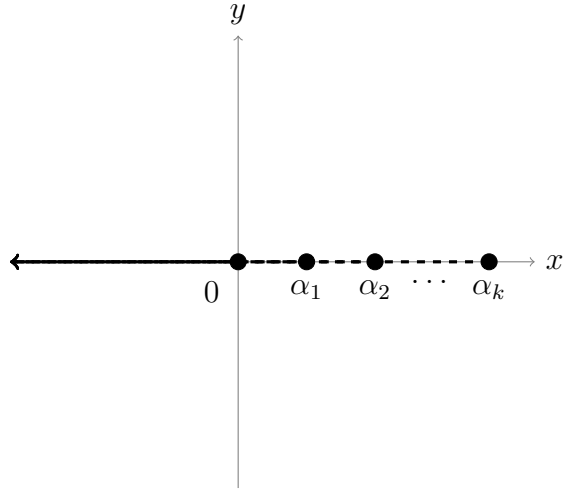
\begin{figure}\label{fig_3}
\centering

\begin{tikzpicture}[scale=3,
    axis/.style={thin, color=gray, ->},
    important line/.style={very thick,dashed, ->}]

    \draw[axis] (-1,0)  -- (1.3,0) node[right, color=black] {$x$} ;
    \draw[axis] (0,-1) -- (0,1) node[above, color=black] {$y$} ;
    \draw[important line] (0,0)  -- (-1,0) ;
    \draw [black, fill=black] (0,0) circle (1pt) node[below=0.4cm,left=0.1cm] {$0$};
    \draw[important line] (0.3,0) -- (-1, 0) ;
    \draw [black, fill=black] (0.3,0) circle (1pt) node[below=0.1cm] {$\alpha_1$};
    \draw[important line] (0.6,0) -- (-1, 0) ;
    \draw [black, fill=black] (0.6,0) circle (1pt) node[below=0.1cm] {$\alpha_2$};
    \draw [black] (0.85,0) circle (0pt) node[below=0.1cm] {$\dots$};
    \draw[important line] (1.1,0) -- (-1, 0) ;
    \draw [black, fill=black] (1.1,0) circle (1pt) node[below=0.1cm] {$\alpha_k$};

 \end{tikzpicture}

\caption{In logarithmic scale : $\alpha_t = \log(\alpha^k) = k \cdot log(\alpha) $} \label{fig:Ex3}
\end{figure}

\ \\

\begin{exmp}\label{exmp_decomposition}
Again, consider the principal kernel $\langle x \rangle \in \PCon(\mathscr{R}(x,y))$. Then $\langle x \rangle = \langle |x| \dotplus (|y \dotplus 1| \wedge |\frac{1}{y} \dotplus 1|) \rangle = \langle  (|x| \dotplus |y \dotplus 1|) \wedge (|x| \dotplus |\frac{1}{y} \dotplus 1|) \rangle = \langle |x| \dotplus |y \dotplus 1| \rangle \cap \langle |x| \dotplus |\frac{1}{y} \dotplus 1|  \rangle$. So, we have that a nontrivial decomposition of $Skel(x)$ as $Skel(|x| \dotplus |y \dotplus 1|) \cup Skel(|x| \dotplus |\frac{1}{y} \dotplus 1|)$ (note that $Skel(|x| \dotplus |y+1|) = Skel(x) \cap Skel(y+1)$ and $Skel(|x| \dotplus |\frac{1}{y} \dotplus 1|) = Skel(x) \cap Skel(\frac{1}{y} \dotplus 1)$). In a similar way, using complementary order kernels,  one can show that every principal kernel can be non-trivially decomposed to a pair of principal kernels.
\end{exmp}

\ \\

\begin{figure}
\centering

\begin{tikzpicture}[scale=3,
    axis/.style={thin, color=gray, ->},
    important line/.style={very thick,dashed, ->}]

    \draw[axis] (-1,0)  -- (1,0) node[right, color=black] {$x$} ;
    \draw[axis] (0,-1) -- (0,1) node[above, color=black] {$y$} ;
    \draw[important line] (0,0)  -- (1,0) node[below=0.4cm,left=0.4cm] {$|x| \dotplus |\frac{1}{y} \dotplus 1|$};
    \draw [black, fill=black] (0,0) circle (1pt) node[below=0.3cm,right=0.1cm] {$0$};
    \draw[important line] (0,0) -- (-1, 0) node[below=0.4cm,right=0.4cm] {$|x| \dotplus |y \dotplus 1|$};

 \end{tikzpicture}

\caption{$Skel(x) = Skel(|x| \dotplus |y \dotplus 1|) \cup Skel(|x| \dotplus |\frac{1}{y} \dotplus 1|)$} \label{fig:Ex4}

\end{figure}

Examples \ref{exmp_infinite_chain} and \ref{exmp_decomposition} demonstrate that the lattice of principal kernels \linebreak $\PCon(\mathscr{R}(x_1,...,x_n))$ (resp. $\PCon(\langle \mathscr{R} \rangle)$) is too rich to define reducibility or dimensionality. Moreover, these examples suggest that this richness is caused by order kernels.  This motivates us to consider $\Theta$-reducibility for some sublattice of kernels \linebreak $\Theta \subset \PCon(\mathscr{R}(x_1,...,x_n))$ (resp. $\Theta \subset \PCon(\langle \mathscr{R} \rangle)$).  There are various families of \linebreak kernels that one can utilize to define the notions of reducibility, dimensionality, etc. We consider the sublattice generated by HP-kernels. Our choice is made due to its connection to the (local) dimension of the linear spaces (in logarithmic scale) defined by the skeleton corresponding to a kernel. Namely, HP-kernels, and more generally \linebreak HS-kernels, define affine subspaces in $\mathscr{R}^n$ (see \ref{subsection:geometric_interpretation}).

\ \\

\begin{defn}
Let $\mathbb{K}$ be a semifield as defined in the beginning of this section. Define $S(\mathbb{K})$
to be the set elements of $\mathbb{K}$ generated by the collection $$\{ |f| : f \in \mathbb{K} \ ; \  \text{f is an HP-fraction} \ \}$$ with respect to the operations $\wedge$ and $\dotplus$ (equivalently $ \vee $). Since $\mathbb{K}$ is a semifield it is closed with respect to   $\wedge$ and $\vee$ and $| \cdot |$, thus $S(\mathbb{K}) \subset \mathbb{K}$. By definition, all HS-fractions in $\mathbb{K}$ are elements of $S(\mathbb{K})$.
Define
$$\Gamma(\mathbb{K}) = S(\mathbb{K})/ \sim_K.$$
Namely, for $f,g \in \Gamma(\mathbb{K})$, $f \sim_K g$ if and only if $\langle f \rangle = \langle g \rangle$ in $\PCon(\mathbb{K})$. Note that $1 \in   \Gamma(\mathbb{K})$. \\
Let $\Omega(\mathbb{K}) \subset \PCon(\mathbb{K})$ be the lattice of kernels generated by the collection of all \linebreak HP-kernels of $\mathbb{K}$, i.e., every element $\langle f \rangle \in \Omega(\mathbb{K})$ is obtained by a finite intersections and products of HP-kernels.\\
By definition, every HS-kernel in $\PCon(\mathbb{K})$ is an element of $\Omega(\mathbb{K})$ as it is a product of finite set of HP-kernels. Note that $1 \in   \Omega(\mathbb{K})$.
\end{defn}

\begin{rem}\label{rem_rho_correspondence}
Let $\Gamma = \Gamma(\mathbb{K})$ and $\Omega = \Omega(\mathbb{K})$.
There is a correspondence $\rho$ between $(\Gamma, \wedge, \vee)$ and $(\Omega, \cap, \cdot)$ defined by
\begin{equation}
\rho(f) = \langle f \rangle.
\end{equation}
This is a lattice homomorphism in the sense that for any $f,g \in \Gamma$
$$ \rho(f \wedge g) = \rho(f) \cap \rho(g), \ \text{and} \ \rho(f \vee g) = \rho(f) \cdot  \rho(g).$$
Note that  $f = |f|$ and $g = |g|$ by the definition of $\Gamma$.
If $f,g \in \Gamma$ such that $\rho(f) = \rho(g)$, then by definition $\langle f \rangle = \langle g \rangle$ and thus $f \sim_K g$. So, $\rho$ is injective. Finally, $\rho$ is onto by the definition of HP-kernels.
\end{rem}

As $\Omega(\mathbb{K})$ is a sublattice of kernels of $\PCon(\mathbb{K})$, we have the notion of \linebreak $\Omega(\mathbb{K})$-irreducible kernel as developed in Subsection \ref{Subsection:Reducibility_of_principal_kernels_and_skeletons} concerning `reducibility of principal kernels and skeletons' (taking $\Theta =  \Omega(\mathbb{K})$). Namely  $\langle f \rangle \in \Omega(\mathbb{K})$ is reducible if there exist some $ \langle g \rangle, \langle h \rangle \in \Omega(\mathbb{K})$ such that $\langle f \rangle  \supseteq \langle g \rangle \cap \langle h \rangle$ while $\langle f \rangle \not \supseteq \langle g \rangle$ and $\langle f \rangle \not \supseteq \langle h \rangle$.
\ \\
\ \\
\begin{lem}
By the construction of $\Omega(\mathbb{K})$, the condition stated above is equivalent to the condition  $\langle f \rangle  = \langle g \rangle \cap \langle h \rangle$ while $\langle f \rangle \neq \langle g \rangle$ and $\langle f \rangle \neq \langle h \rangle.$
\end{lem}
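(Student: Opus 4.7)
The plan is to prove both directions separately, with the backward direction being immediate and the forward direction relying on the distributivity of the kernel lattice (Lemma \ref{lem_kernels_algebra} / Proposition \ref{prop_distributive_lattice_of_kernels}) together with the fact that $\Omega(\mathbb{K})$ is closed under products as a sublattice of $\PCon(\mathbb{K})$.

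For the $(\Leftarrow)$ direction, I would observe that if $\langle f \rangle = \langle g \rangle \cap \langle h \rangle$ then trivially $\langle f \rangle \supseteq \langle g \rangle \cap \langle h \rangle$. Moreover, $\langle f \rangle \subseteq \langle g \rangle$ and $\langle f \rangle \subseteq \langle h \rangle$, so the condition $\langle f \rangle \neq \langle g \rangle$ is equivalent to $\langle f \rangle \not\supseteq \langle g \rangle$, and similarly for $\langle h \rangle$. This gives the original $\Omega(\mathbb{K})$-reducibility condition.

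For the $(\Rightarrow)$ direction, which carries the actual content, I would start from $\langle g \rangle, \langle h \rangle \in \Omega(\mathbb{K})$ with $\langle f \rangle \supseteq \langle g \rangle \cap \langle h \rangle$, $\langle f \rangle \not\supseteq \langle g \rangle$, and $\langle f \rangle \not\supseteq \langle h \rangle$, and produce a witness decomposition by setting
\[
\langle g' \rangle \;=\; \langle f \rangle \cdot \langle g \rangle, \qquad \langle h' \rangle \;=\; \langle f \rangle \cdot \langle h \rangle.
\]
Both kernels lie in $\Omega(\mathbb{K})$ because $\Omega(\mathbb{K})$ is closed under products, and $\langle f \rangle \in \Omega(\mathbb{K})$ by hypothesis. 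Applying the distributive law from Lemma \ref{lem_kernels_algebra} (which holds since $\mathbb{K}$ is idempotent, so all kernels are themselves semifields), I obtain
\[
\langle g' \rangle \cap \langle h' \rangle \;=\; (\langle f \rangle \cdot \langle g \rangle) \cap (\langle f \rangle \cdot \langle h \rangle) \;=\; \langle f \rangle \cdot (\langle g \rangle \cap \langle h \rangle).
\]
Since by assumption $\langle g \rangle \cap \langle h \rangle \subseteq \langle f \rangle$, the right-hand side collapses to $\langle f \rangle$, giving $\langle f \rangle = \langle g' \rangle \cap \langle h' \rangle$.

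Finally, I would verify that $\langle g' \rangle \neq \langle f \rangle$ and $\langle h' \rangle \neq \langle f \rangle$: if $\langle g' \rangle = \langle f \rangle \cdot \langle g \rangle$ were equal to $\langle f \rangle$, then since $\langle g \rangle \subseteq \langle f \rangle \cdot \langle g \rangle$ we would get $\langle g \rangle \subseteq \langle f \rangle$, i.e., $\langle f \rangle \supseteq \langle g \rangle$, contradicting the hypothesis; symmetrically for $\langle h' \rangle$. No step here is a genuine obstacle — the argument hinges entirely on the distributivity identity from Lemma \ref{lem_kernels_algebra}, and the only thing to be careful about is confirming that $\langle f \rangle \cdot \langle g \rangle$ and $\langle f \rangle \cdot \langle h \rangle$ remain in the sublattice $\Omega(\mathbb{K})$, which is immediate from its definition as the lattice generated by HP-kernels under intersection and product.
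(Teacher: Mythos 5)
Your proposal is correct and uses essentially the same mechanism as the paper: both replace $(\langle g\rangle,\langle h\rangle)$ by $(\langle f\rangle\cdot\langle g\rangle,\ \langle f\rangle\cdot\langle h\rangle)$, invoke the distributive identity $(\langle f\rangle\cdot\langle g\rangle)\cap(\langle f\rangle\cdot\langle h\rangle)=\langle f\rangle\cdot(\langle g\rangle\cap\langle h\rangle)$, and use $\langle g\rangle\cap\langle h\rangle\subseteq\langle f\rangle$ to collapse the product to $\langle f\rangle$. The only cosmetic difference is that you argue the nontrivial direction forward, while the paper phrases it (somewhat opaquely) as a contrapositive.
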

\begin{proof}
Assume $\langle f \rangle$ admits the stated condition. If $\langle f \rangle  \supseteq \langle g \rangle \cap \langle h \rangle$, then $\langle f \rangle = \langle f \rangle \cdot \langle f \rangle  = \langle g \rangle \cdot \langle f \rangle \cap \langle h \rangle \cdot \langle f \rangle$. Thus $\langle f \rangle = \langle g \rangle \cdot \langle f \rangle$ or $\langle f \rangle = \langle h \rangle \cdot \langle f \rangle$ and so $\langle f \rangle \supseteq \langle g \rangle$ or $\langle f \rangle \supseteq \langle g \rangle$. The converse is obvious.
\end{proof}

\begin{note}
For the rest of this section, we refer to $\Omega(\mathbb{K})$-irreducibility as irreducibility.
\end{note}

\begin{defn}
Define the \emph{irreducible hyperspace spectrum} of $\mathbb{K}$, $\text{HSpec}(\mathbb{K})$, to be the family of irreducible kernels in $\Omega(\mathbb{K})$.
\end{defn}

\begin{rem}
$HSpec(\mathbb{K})$ is the family of HS-kernels in $\Omega(\mathbb{K})$ which is exactly the family of HS-kernels of $\mathbb{K}$.
\end{rem}

\begin{defn}
A chain $P_0 \subset P_1 \subset \dots \subset P_t$ in $\text{HSpec}(\mathbb{K})$ means an ascending chain of HS-kernels of $\mathbb{K}$, and is said to have \emph{length} $t$. An HS-kernel $P$ has \emph{height} $t$ (denoted $hgt(P)=t$) if there is a chain of length $t$ in $\text{HSpec}(\mathbb{K})$ terminating at $P$, but no chain of length $t+1$ terminates at $P$.
\end{defn}

\begin{rem}\label{rem_correspondence_of_quontient_HSpec}
Let $L$ be a kernel in $\PCon(\mathbb{K})$. Consider the quotient homomorphism $\phi_L : \mathbb{K} \rightarrow \mathbb{K}/L$.
As the image of a principal kernel is the principal kernel generated by the image of any of its generators, we have that for an HP-kernel $\langle f \rangle$ we have $\phi_L(\langle f \rangle)~=~\langle \phi_L(f) \rangle$. Choosing $f$ to be in canonical HP-fraction,  since $\phi_L$ is an $\mathscr{R}$ homomorphism, we have that $\langle \phi_L(f) \rangle$ is a nontrivial HP-kernel in $ \mathbb{K}/L$ if and only if $\phi_L(f) \not \in \mathscr{R}$. Thus the set of HP-kernels of $\mathbb{K}$ mapped to HP-kernels of $\mathbb{K}/L$ is
\begin{equation}\label{eq_subset_of_Omega}
\left\{ \langle g \rangle : \langle g \rangle \cdot \langle \mathscr{R} \rangle \supseteq \phi_L^{-1}\left(\langle \mathscr{R} \rangle\right) = L \cdot \langle \mathscr{R} \rangle \right\}.
\end{equation}
Since $\phi_L$ is an $\mathscr{R}$-homomorphism and onto, we have that one of the preimages of a canonical HP-fraction $g \in \mathbb{K}/L$ (represented by $g$) is $g \in \mathbb{K}$ itself. As $\phi_L$ is an \linebreak $\mathscr{R}$-homomorphism it respects $\vee, \wedge$ and $| \cdot |$  thus $\phi_{L}(\Gamma(\mathbb{K})) = \Gamma(\mathbb{K}/L)$.\linebreak
In fact by Corollary \ref{cor_nother_4_for_principal_kernels}, we have a correspondence identifying $\text{HSpec}(\mathbb{K}/L)$ with the subset of $\text{HSpec}(\mathbb{K})$ which consists of all HS-kernels $P$ of $\mathbb{K}$ such that \linebreak $P \cdot \langle \mathscr{R} \rangle~\supseteq~L~\cdot~\langle \mathscr{R} \rangle$. Moreover, by the same considerations and since $\wedge$ is preserved under a homomorphism we have that the above correspondence extends to a correspondence identifying $\Omega(\mathbb{K}/L)$ with the subset \eqref{eq_subset_of_Omega} of $\Omega(\mathbb{K})$.  Under this correspondence, the maximal (HS) kernels of $\mathbb{K}/L$ correspond to maximal (HS) kernels of $\mathbb{K}$ and reducible kernels of $\mathbb{K}/L$ correspond to reducible kernels of $\mathbb{K}$. Indeed, the latter assertion is obvious since  $\wedge$ is preserved under a homomorphism. For the former assertion, by the second isomorphism theorem $(\mathbb{K}/L)/(P/L) \cong \mathbb{K}/P$ so simplicity of the quotients is preserved. Thus so is maximality of $P$ and $P/L$.
\end{rem}

\begin{defn}\label{defn_Hdim}
The Hyperdimension of $\mathbb{K}$, written $Hdim \mathbb{K}$ (if it exists), is the maximal height of the HS-kernels in $\mathbb{K}$.
\end{defn}

\begin{defn}\label{defn_convex_dep}
Let $A \subset \text{HS}(\mathbb{K})$ be any set of HS-fractions and let $f \in \text{HS}(\mathbb{K})$. Then $f$ is said to be \emph{$\mathscr{R}$-convexly dependent} on $A$  if
\begin{equation}\label{eq_defn_convex_dep}
f \in \left\langle \{g : g \in A \} \right\rangle \cdot \langle \mathscr{R} \rangle,
\end{equation}
otherwise $f$ is said to be \emph{$\mathscr{R}$-convexly-independent} of $A$.
A subset $A \subset \text{HS}(\mathbb{K})$ is said to be $\mathscr{R}$-convexly independent  if
for every $a \in A$, $a$ is $\mathscr{R}$-convexly independent of $A \setminus \{ a \}$  over $\mathscr{R}$.
Note that by assuming $g \in \mathbb{K} \setminus \{1\}$ for some $g \in A$ the condition in \eqref{eq_defn_convex_dep} simplifies to $f \in \left\langle \{g : g \in A \} \right\rangle $. Indeed, under this last assumption we have that  $\langle \mathscr{R} \rangle \subseteq \left\langle \{g : g \in A \} \right\rangle $ and so $\left\langle \{g : g \in A \} \right\rangle \cdot \langle \mathscr{R} \rangle = \left\langle \{g : g \in A \} \right\rangle$.
\end{defn}

\begin{note}
If $\{a_1,...,a_n \}$ is $\mathscr{R}$-convexly dependent (independent), then we also say that $a_1,...,a_n$ are $\mathscr{R}$-convexly dependent (independent).
\end{note}
\begin{rem}
By the definition, we have that an HS-fraction $f$ is $\mathscr{R}$-convexly \linebreak dependent on  $\{g_1,...,g_t\} \subset \text{HS}(\mathbb{K})$  if and only if
$$\langle |f|  \rangle = \langle f  \rangle  \subseteq  \langle g_1,..., g_t \rangle \cdot \langle \mathscr{R} \rangle = \Big\langle \sum_{i=1}^{t} |g_i| \Big\rangle \cdot \langle \mathscr{R} \rangle = \Big\langle \sum_{i=1}^{t} |g_i|  \dotplus  |\alpha| \Big\rangle$$ where $\alpha$ is any element of $\mathbb{K} \setminus \{1\}$.
\end{rem}

\begin{exmp}
For any $\alpha \in \mathscr{R}$ and any $f \in \mathscr{R}(x_1,...,x_n)$,
$$|\alpha f| \leq |f|^2  \dotplus |\alpha|^2 =(|f|  \dotplus  |\alpha|)^2.$$
Thus $\alpha f \in \left\langle (|f|  \dotplus  |\alpha|)^2 \right\rangle = \langle |f|  \dotplus  |\alpha| \rangle = \langle f \rangle \cdot \langle \mathscr{R} \rangle$.
In particular, if $f$ is an HS-fraction then $\alpha f$ is $\mathscr{R}$-convexly dependent on $f$.
\end{exmp}

As a consequence of Proposition \ref{prop_HP_element_is_a_generator}, we have that for two HP-fractions $f,g$, if $g \in \langle f \rangle$ then $\langle g \rangle = \langle f \rangle$. In other words, either $\langle g \rangle = \langle f \rangle$ or $\langle g \rangle \not \subseteq \langle f \rangle$ and $\langle f \rangle \not \subseteq \langle g \rangle$. This motivates us to restrict our attention to the convex dependence relation on the set of HP-fractions. This will be justified later by showing that for each $\mathscr{R}$-convexly independent subset of HS-fractions of size $t$ in $\mathbb{K}$, there exists an $\mathscr{R}$-convexly independent subset of HP-fractions of size $ \geq t$ in $\mathbb{K}$.

\begin{prop}\label{prop_abstract_dependence_properties}

Let $A \subseteq \text{HP}(\mathbb{K})$ and let $f \in \text{HP}(\mathbb{K})$. Then
\begin{enumerate}
  \item If $f \in A$ then $f$ is $\mathbb{H}$-convexly-dependent on $A$.
  \item If $f$ is $\mathscr{R}$-convexly dependent on $A$ and $A_1$ is a set such that $a$ is $\mathscr{R}$-convexly-dependent on $A_1$ for each $a \in A$, then $f$ is $\mathscr{R}$-convexly dependent on $S_1$.
  \item If $f$ is $\mathscr{R}$-convexly-dependent on $A$, then $f$ is $\mathscr{R}$-convexly-dependent on $A_0$ for some finite subset $A_0$ of $A$.
\end{enumerate}
\end{prop}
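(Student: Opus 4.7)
The plan is to handle the three parts in turn, reducing everything to the lattice-theoretic identity $\langle A \rangle \cdot \langle \mathscr{R} \rangle \subseteq \langle A_1 \rangle \cdot \langle \mathscr{R} \rangle$ and to the finiteness property of elements of a product of kernels.

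For (1), I would observe that by definition $\langle A \rangle$ is the smallest kernel containing $A$, so $f \in A$ gives $f \in \langle A \rangle \subseteq \langle A \rangle \cdot \langle \mathscr{R} \rangle$, which is exactly the condition for $\mathscr{R}$-convex dependence. This is immediate from Definition~\ref{defn_generation_of_kernels}.

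For (2), the key observation is that if every $a \in A$ lies in $\langle A_1 \rangle \cdot \langle \mathscr{R} \rangle$, then $A \subseteq \langle A_1 \rangle \cdot \langle \mathscr{R} \rangle$. Since $\langle A_1 \rangle \cdot \langle \mathscr{R} \rangle$ is itself a kernel (by Remark~\ref{rem_ker_latice_operations}), the minimality of $\langle A \rangle$ gives $\langle A \rangle \subseteq \langle A_1 \rangle \cdot \langle \mathscr{R} \rangle$. Multiplying on the right by $\langle \mathscr{R} \rangle$ and using the idempotence $\langle \mathscr{R} \rangle \cdot \langle \mathscr{R} \rangle = \langle \mathscr{R} \rangle$ (since $\langle \mathscr{R} \rangle$ is itself a kernel and thus a semifield), we obtain $\langle A \rangle \cdot \langle \mathscr{R} \rangle \subseteq \langle A_1 \rangle \cdot \langle \mathscr{R} \rangle$, so $f \in \langle A_1 \rangle \cdot \langle \mathscr{R} \rangle$ as required.

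For (3), which I expect to be the main obstacle, I would invoke Proposition~\ref{prop_ker_stracture_by group} together with Remark~\ref{rem_inf_prod_of_groups}: every element $h$ of the kernel $K = \langle A \rangle \cdot \langle \mathscr{R} \rangle = \bigl(\prod_{a \in A} \langle a \rangle\bigr) \cdot \langle \mathscr{R} \rangle$ is expressible as a finite combination $\sum_{i=1}^{n} s_i h_i$ with $\sum s_i = 1$ where each $h_i$ is a finite product drawn from finitely many of the groups $\langle a \rangle$ and from $\langle \mathscr{R} \rangle$. Thus only finitely many generators $a_1, \dots, a_m \in A$ appear in writing $h$, so $h \in \langle a_1, \dots, a_m \rangle \cdot \langle \mathscr{R} \rangle$. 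Now applying Remark~\ref{rem_kernel_by_abs_value} to $f \in K$, there exist $k \in \mathbb{N}$ and $h \in K$ with $|f| \leq |h|^k$; choosing the finite subset $A_0 = \{a_1,\dots,a_m\}$ witnessing $h$, we have $h \in \langle A_0 \rangle \cdot \langle \mathscr{R} \rangle$, hence $f \in \langle A_0 \rangle \cdot \langle \mathscr{R} \rangle$ by Corollary~\ref{cor_principal_ker_by_order}. The delicate point is ensuring that the ``finite support'' argument is valid — this is exactly what Proposition~\ref{prop_ker_stracture_by group} provides, so the proof reduces to combining that structural description with the order-theoretic characterization of generation.
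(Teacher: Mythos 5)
Your arguments for (1) and (2) are exactly the paper's: membership of $f$ in $\langle A\rangle$ for (1), and for (2) the chain $A\subseteq\langle A_1\rangle\cdot\langle\mathscr{R}\rangle \Rightarrow \langle A\rangle\subseteq\langle A_1\rangle\cdot\langle\mathscr{R}\rangle \Rightarrow \langle A\rangle\cdot\langle\mathscr{R}\rangle\subseteq\langle A_1\rangle\cdot\langle\mathscr{R}\rangle$, with your explicit mention of $\langle\mathscr{R}\rangle\cdot\langle\mathscr{R}\rangle=\langle\mathscr{R}\rangle$ merely filling in a small step left implicit in the paper.

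For (3), the first half of your argument is the whole proof, and it matches the paper: by Proposition~\ref{prop_ker_stracture_by group}, any $f\in\langle A\rangle\cdot\langle\mathscr{R}\rangle$ is a finite convex combination $\sum_i s_i g_i$ where each $g_i$ lies in the group generated by $A\cup\mathscr{R}$, and each such $g_i$ is a finite product, so only finitely many $a_1,\dots,a_m\in A$ appear; take $A_0=\{a_1,\dots,a_m\}$ and stop. (Indeed, you state this more carefully than the paper, which writes ``$A_0=\{g_1,\dots,g_k\}$'' even though the $g_i$ lie in the generated group rather than in $A$.) The final sentence, however, is an unnecessary and slightly misguided detour: you apply Remark~\ref{rem_kernel_by_abs_value} and Corollary~\ref{cor_principal_ker_by_order} to produce an intermediate $h$ with $|f|\le|h|^k$, but Remark~\ref{rem_kernel_by_abs_value} characterizes membership in a \emph{principal} kernel $\langle a\rangle$, whereas $K=\langle A\rangle\cdot\langle\mathscr{R}\rangle$ is not principal when $A$ is infinite, so that characterization is not directly applicable and in any case adds nothing: the finite-support observation already places $f$ itself in $\langle A_0\rangle\cdot\langle\mathscr{R}\rangle$. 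Delete that last sentence and your proof is clean and coincides with the paper's.
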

\begin{proof}
(1) Since $f \in A$ we have that  $f \in \langle A \rangle \subseteq  \langle A \rangle \cdot \langle \mathscr{R} \rangle $.\\
(2) If $a$ is convexly-dependent on $S_1$  for each $a \in A$, then $A \subseteq \langle A_1 \rangle \cdot \langle \mathscr{R} \rangle$. Thus $\langle A \rangle \subseteq \langle A_1 \rangle \cdot \langle \mathscr{R} \rangle$. If $f$ is $\mathscr{R}$-convexly dependent on $A$ then $f \in \langle A \rangle \cdot \langle \mathscr{R} \rangle \subseteq \langle A_1 \rangle \cdot \langle \mathscr{R} \rangle$, so, $f$ is $\mathscr{R}$-convexly dependent on $A_1$.\\
(3) $a \in \langle A \rangle \cdot \langle \mathscr{R} \rangle$, so by Proposition \ref{prop_ker_stracture_by group} there exist some $s_1,...,s_k \in \mathbb{K} $ and  $g_1,...,g_k \in G(A \cup \mathscr{R}) \subset \langle A \rangle \cdot \langle \mathscr{R} \rangle$, where $G(A \cup \mathscr{R})$ is the group generated by $A \cup \mathscr{R}$, such that $\sum_{i=1}^{k}s_i = 1$ and $a = \sum_{i=1}^{k}s_i g_i^{d(i)}$ with $d(i) \in \mathbb{Z}$. Thus $a \in \langle g_1,...,g_k \rangle$ and  $A_0  = \{g_1,...,g_k\}$.
\end{proof}

 \begin{rem}\label{rem_smallest_kernel_containing_the_generated_semifield}
 Note that $\langle g_1,..., g_t \rangle \cdot \langle \mathscr{R} \rangle $ is the smallest kernel containing the semifield \ $SF(g_1,..., g_t)$ generated (as a semifield) by  $g_1,..., g_t$ over $\mathscr{R}$.\\
 Indeed,$\mathscr{R} \subset  SF(g_1,...,g_t)$ and $g_1,...,g_t \in SF(g_1,...,g_t)$. Thus any kernel containing \linebreak $SF(g_1,..., g_t)$ must contain $\langle g_1,..., g_t \rangle \cdot \langle \mathscr{R} \rangle$, being the smallest kernel containing $\mathscr{R}$ and $\{ g_1,..., g_t \}$.\\
 \end{rem}

  Since Remark \ref{rem_smallest_kernel_containing_the_generated_semifield} is similar to algebraic dependence, we are led to try to show that convex dependence is an abstract dependence.

\begin{rem}\label{rem_HP_not_contains_contained_H}
For $f \in \text{HP}(\mathbb{K})$ the following hold:
\begin{enumerate}
  \item $\langle \mathscr{R} \rangle \not \subseteq \langle f \rangle$.
  \item If $\mathbb{K}$ is not bounded then $\langle f  \rangle \not \subseteq \langle \mathscr{R} \rangle$.
\end{enumerate}
Indeed, by definition an HP-fraction is not bounded from below. Thus $\langle f \rangle \cap \mathscr{R} = \{ 1 \}$ or equivalently $\langle \mathscr{R} \rangle \not \subseteq \langle f \rangle$.
For the second assertion, by Remark \ref{rem_HP_not_bounded}, an HP-kernel is not bounded when $\mathbb{K}$ is not bounded, so we have that $\langle f \rangle \not \subseteq \langle \mathscr{R} \rangle$.
\end{rem}
\ \\

A direct consequence of Remark \ref{rem_HP_not_contains_contained_H} is
\begin{rem}
If $\mathbb{K}$ is not bounded then any proper HS-kernel (i.e., not $\langle 1 \rangle$) is \linebreak $\mathscr{R}$-convexly independent.
\end{rem}
\begin{proof}
By Remark \ref{rem_HP_not_contains_contained_H} the assertion is true for HP-kernels, and thus for \linebreak HS-kernels, since every HS-kernel contains some HP-kernel.
\end{proof}

\begin{prop}[Exchange axiom]\label{prop_convex_dependence_of_HP_property}
Let $S = \{b_1,...,b_t\} \subset \text{HP}(\mathbb{K})$ and let $f$ and $b$ be elements of $\text{HP}(\mathbb{K})$. Then if $f$ is convexly-dependent on $S \cup \{ b \}$ and $f$ is $\mathscr{R}$-convexly independent of $S$, then $b$ is $\mathscr{R}$-convexly-dependent on $S \cup \{ f \}$.
\end{prop}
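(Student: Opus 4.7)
The plan is to reduce this to the classical exchange property of $\mathbb{Q}$-linear independence. Identifying $(\mathscr{R},\cdot)$ with $(\mathbb{R},+)$ via Theorem \ref{thm_holder_translated}, every HP-fraction $f$ of $\mathbb{K}$ has, up to $\sim_K$-equivalence, a canonical form $\alpha\prod x_i^{k_i}$ (representatives chosen in $\mathscr{R}(x_1,\ldots,x_n)$) and hence a well-defined direction vector $\vec{k}(f)\in\mathbb{Q}^n\setminus\{0\}$; by Remark \ref{rem_radicality_of_ker} (power-radicality) this vector is determined by $\langle f\rangle$ up to nonzero rational scaling. In the quotient case the direction vectors live in the $\mathbb{Q}$-quotient of $\mathbb{Q}^n$ by the subspace cut out by $L$, but all arguments go through verbatim; for clarity I sketch the case $\mathbb{K}=\mathscr{R}(x_1,\ldots,x_n)$.

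I would first establish the key characterization:
\begin{equation*}
f\in\langle g_1,\ldots,g_t\rangle\cdot\langle\mathscr{R}\rangle\quad\Longleftrightarrow\quad \vec{k}(f)\in\mathrm{span}_{\mathbb{Q}}\bigl(\vec{k}(g_1),\ldots,\vec{k}(g_t)\bigr).
\end{equation*}
For $(\Leftarrow)$: clearing denominators yields $N\vec{k}(f)=\sum n_i\vec{k}(g_i)$, so $f^{N}=\beta\prod g_i^{n_i}$ with $\beta\in\mathscr{R}$, hence $f^{N}\in\langle g_1,\ldots,g_t\rangle\cdot\langle\mathscr{R}\rangle$, and by power-radicality $f$ itself lies in this kernel. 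The main obstacle is $(\Rightarrow)$: by Remark \ref{rem_kernel_by_abs_value} together with the Frobenius property, membership is equivalent to the existence of $N\in\mathbb{N}$ and $\alpha\in\mathscr{R}\setminus\{1\}$ with $|f|\leq|g_1|^{N}\dotplus\cdots\dotplus|g_t|^{N}\dotplus|\alpha|^{N}$; in log scale this reads $|F(X)|\leq N\max\bigl(|G_1(X)|,\ldots,|G_t(X)|,C\bigr)$ for all $X\in\mathbb{R}^n$, where $F,G_i$ are the corresponding affine-linear forms. If the linear part $\mathrm{lin}(F)$ were not in $\mathrm{span}_{\mathbb{Q}}\bigl(\mathrm{lin}(G_1),\ldots,\mathrm{lin}(G_t)\bigr)$, I would pick $\vec{v}\in\mathbb{Q}^n$ annihilated by every $\mathrm{lin}(G_i)$ but with $\mathrm{lin}(F)(\vec{v})\neq 0$; along the ray $X(t)=X_0+t\vec{v}$ the $G_i$ are constant while $F$ grows linearly in $t$, contradicting the bound. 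This forces $\vec{k}(f)$ into the $\mathbb{Q}$-span of the $\vec{k}(g_i)$, completing the characterization.

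With the characterization in hand, the proposition follows from elementary linear algebra over $\mathbb{Q}$. Setting $W=\mathrm{span}_{\mathbb{Q}}\bigl(\vec{k}(b_1),\ldots,\vec{k}(b_t)\bigr)$, the two hypotheses translate to $\vec{k}(f)\in W+\mathbb{Q}\vec{k}(b)$ and $\vec{k}(f)\notin W$. Writing $\vec{k}(f)=\vec{w}+c\vec{k}(b)$ with $\vec{w}\in W$ and $c\in\mathbb{Q}$, the second condition forces $c\neq 0$, so
\begin{equation*}
\vec{k}(b)=c^{-1}\bigl(\vec{k}(f)-\vec{w}\bigr)\ \in\ \mathrm{span}_{\mathbb{Q}}\bigl(\vec{k}(b_1),\ldots,\vec{k}(b_t),\vec{k}(f)\bigr).
\end{equation*}
Applying the characterization in the reverse direction yields $b\in\langle b_1,\ldots,b_t,f\rangle\cdot\langle\mathscr{R}\rangle$, which is precisely the assertion that $b$ is $\mathscr{R}$-convexly dependent on $S\cup\{f\}$. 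The main technical hurdle throughout is the $(\Rightarrow)$ direction above; once it is settled, the exchange property is essentially a one-line computation in a $\mathbb{Q}$-vector space.
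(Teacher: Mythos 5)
Your argument takes a genuinely different route from the paper's. You reduce $\mathscr{R}$-convex dependence of HP-fractions to $\mathbb{Q}$-linear dependence of their direction vectors in logarithmic coordinates, then invoke the classical exchange lemma for vector spaces; the key work is the "membership $\Leftrightarrow$ span" characterization, whose $(\Rightarrow)$ direction is handled by the unbounded-ray argument. The paper instead works abstractly: after normalizing so that a constant lies in $S$, it passes to the quotient $\mathbb{K}/\langle S\rangle$, observes that $\phi(f),\phi(b)$ are nontrivial HP-fractions there with $\phi(f)\in\langle\phi(b)\rangle$, and deduces $\langle\phi(f)\rangle=\langle\phi(b)\rangle$ via Corollary \ref{cor_HP-element_is_a_generator} (an HP-fraction inside an HP-kernel is a generator), whence pulling back gives the conclusion in one line. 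Your approach is more transparent and self-contained for $\mathbb{K}=\mathscr{R}(x_1,\ldots,x_n)$, and makes the analogy with linear independence vivid; the paper's is terser and applies uniformly to any $\mathbb{K}$ in which Corollary \ref{cor_HP-element_is_a_generator} holds. The one genuine weakness in your write-up is the assertion that for a general affine $\mathbb{K}=\mathscr{R}(x_1,\ldots,x_n)/L$ "the direction vectors live in the $\mathbb{Q}$-quotient of $\mathbb{Q}^n$ by the subspace cut out by $L$, but all arguments go through verbatim": when $L$ is not an HS-kernel (for instance a region or order kernel), it does not cut out a linear subspace of $\mathbb{Q}^n$, so there is no such quotient and your reduction does not immediately transfer. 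Since the proposition is stated and used for such $\mathbb{K}$, this step needs a real argument (for example, choosing the ray $\vec{v}$ to stay inside the region, or invoking Proposition \ref{prop_order_independence_2} to pull the dependence question back to $\mathscr{R}(x_1,\ldots,x_n)$), rather than a verbatim appeal.
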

\begin{proof}
We may assume that $\alpha \in S$ for some $\alpha \in \mathscr{R}$.
Since $f$ is $\mathscr{R}$-convexly independent of $S$, by definition $f \not \in  \langle S \rangle$ this implies that $\langle S \rangle \subset \langle S \rangle \cdot \langle f \rangle$ (for otherwise $\langle f \rangle \subseteq \langle S \rangle$ yielding that $f$ is $\mathscr{R}$-convexly dependent on $S$).  Since $f$ is $\mathscr{R}$-convexly-dependent on $S \cup \{ b \}$, we have that  $f \in \langle S \cup \{ b \} \rangle = \langle S \rangle \cdot \langle b \rangle$. In particular, we get that $b \not \in \langle S \rangle \cdot \langle \mathscr{R} \rangle$ for otherwise $f$ would be dependent on $S$.
Consider the quotient map $\phi : \mathbb{K}  \rightarrow \mathbb{K}/\langle S \rangle$. Since $\phi$ is a semifield epimorphism and $f,g  \not \in \langle S \rangle \cdot \langle \mathscr{R} \rangle = \phi^{-1}(\langle \mathscr{R} \rangle)$, we have that $\phi(f)$ and $\phi(b)$ are not in $\mathscr{R}$ thus are HP-fractions in the semifield $\Im(\phi) =\mathbb{K}/\langle S \rangle$. By the above,  $\phi(f) \neq 1$ and $\phi(f) \in \phi(\langle b \rangle) = \langle \phi(b) \rangle$. Thus, by Corollary \ref{cor_HP-element_is_a_generator}  we have that $\langle \phi(f) \rangle =\langle \phi(b) \rangle$. So $\langle S \rangle \cdot \langle f \rangle = \phi^{-1}(\langle \phi(f) \rangle) = \phi^{-1}(\langle \phi(b) \rangle) = \langle S \rangle \cdot \langle b \rangle$, consequently $b \in \langle S \rangle \cdot \langle b \rangle = \langle S \rangle \cdot \langle f \rangle = \langle S \cup \{ f \} \rangle $, i.e., $b$ is $\mathscr{R}$-convexly-dependent on $S \cup \{ f \}$.
\end{proof}

\begin{defn}\label{defn_convex_span_in_semifield_of_fractions}
Let $A \subseteq \text{HP}((\mathbb{K})$.
The \emph{convex-span} of $A$ over $\mathscr{R}$ is the set
\begin{equation}
ConSpan_{\mathscr{R}}(A) = \{ a \in \text{HP}(\mathbb{K}) : a \text{ is $\mathscr{R}$-convexly dependent on $A$} \}.
\end{equation}
Let $\mathrm{K} \subseteq \mathbb{K}$ be a subsemifield such that $\mathscr{R} \subseteq \mathrm{K}$.
Then a set  $A \subseteq \text{HP}(\mathbb{K})$  is said to \emph{convexly span} $\mathrm{K}$ over $\mathscr{R}$ if  $$\text{HP}(\mathrm{K}) = ConSpan_{\mathscr{R}}(A).$$
\end{defn}

In view of Propositions \ref{prop_abstract_dependence_properties} and \ref{prop_convex_dependence_of_HP_property}, convex-dependence on  $\text{HP}(\mathbb{K})$ is a (strong) dependence relation. Then by \cite[Chapter 6]{ComView}, we have that:

\begin{cor}\label{cor_basis_for_HP}
Let $V \subset \text{HP}(\mathbb{K})$. Then $V$ contains a basis $B_V \subset V$, which is a maximal convexly independent subset of unique cardinality such that $$ConSpan(B_V) = ConSpan(V).$$
\end{cor}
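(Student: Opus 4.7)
The plan is to verify that convex dependence on $\mathrm{HP}(\mathbb{K})$ satisfies the four axioms of a strong (matroid-like) abstract dependence relation, and then to apply the general basis theorem for such relations, which is exactly what \cite{ComView}, Chapter 6 is being invoked for. Three of the four axioms—reflexivity, transitivity, and finite character—are recorded as items (1)--(3) of Proposition \ref{prop_abstract_dependence_properties}. The Steinitz exchange axiom is precisely Proposition \ref{prop_convex_dependence_of_HP_property}. Once these are in hand, the corollary is purely formal.

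First I would use Zorn's lemma on the poset of convexly independent subsets of $V$ ordered by inclusion. The union of a chain of convexly independent subsets is convexly independent by the finite character axiom (Proposition \ref{prop_abstract_dependence_properties}(3)): any putative dependence of an element on the union already witnesses a dependence on a finite subset, hence on some member of the chain. Thus a maximal convexly independent subset $B_V \subseteq V$ exists.

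Next I would show $ConSpan_{\mathscr{R}}(B_V) = ConSpan_{\mathscr{R}}(V)$. One inclusion is trivial from $B_V \subseteq V$ and the monotonicity implicit in Proposition \ref{prop_abstract_dependence_properties}(2). For the other, given $v \in V$, maximality of $B_V$ means $B_V \cup \{v\}$ is not convexly independent, so some element $w$ of $B_V \cup \{v\}$ is convexly dependent on the rest. If $w = v$ we are done; otherwise $w \in B_V$ is convexly dependent on $(B_V \setminus \{w\}) \cup \{v\}$, and convex independence of $B_V$ prevents $w$ from being convexly dependent on $B_V \setminus \{w\}$. Applying the exchange axiom (Proposition \ref{prop_convex_dependence_of_HP_property}) with $f = w$, $S = B_V \setminus \{w\}$, $b = v$ gives that $v$ is convexly dependent on $B_V$, as required. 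Then transitivity (Proposition \ref{prop_abstract_dependence_properties}(2)) yields $ConSpan_{\mathscr{R}}(V) \subseteq ConSpan_{\mathscr{R}}(B_V)$.

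The main obstacle is the uniqueness of the cardinality, which is the standard but delicate Steinitz replacement argument. Given two bases $B, B'$ of $V$, I would argue that any finite subset $\{b_1,\dots,b_n\} \subseteq B$ can be successively exchanged with elements of $B'$: at the $k$-th step, since $b_{k+1}$ is convexly dependent on $B'$ and convexly independent of $\{b_1,\dots,b_k\}$, by the exchange axiom some element of $B'$ not already used can be replaced by $b_{k+1}$, preserving the convex span. This forces $|B| \leq |B'|$ in the finite case, and the symmetric argument gives equality; in the infinite case the usual cardinal argument based on the finite character axiom applies. The subtle point is that the exchange axiom as stated in Proposition \ref{prop_convex_dependence_of_HP_property} is formulated for finite sets $S$, so extending it to arbitrary $B'$ requires combining it with the finite character property to restrict attention, at each step, to a finite subset of $B'$ witnessing the dependence of $b_{k+1}$.
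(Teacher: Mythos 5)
Your proposal is correct and takes essentially the same approach as the paper: the paper proves this corollary by simply noting that Propositions \ref{prop_abstract_dependence_properties} and \ref{prop_convex_dependence_of_HP_property} show convex dependence is a (strong) dependence relation and then citing \cite[Chapter 6]{ComView} for the standard basis theorem, and you have merely unpacked that citation, spelling out the Zorn's lemma existence argument, the exchange-axiom spanning argument, and the Steinitz replacement argument for uniqueness of cardinality. Your closing remark about needing finite character to extend the finitely-stated exchange axiom is an accurate observation, and it applies to the spanning step as well as to the cardinality argument.
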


\ \\

\begin{defn}
Let $V \subset \text{HP}(\mathbb{K})$ be a set of HP-fractions. We define the \emph{convexity degree} of $V$, $condeg(V)$, to be $|B|$ where $B$ is a basis for $V$.
\end{defn}

\begin{rem}\label{rem_dependence_is_a_lattice}
If $S \subset \text{HP}(\mathbb{K})$, then for any $f, g \in \mathbb{K}$ such that $f,g \in ConSpan(S)$
$$|f|  \dotplus  |g| \in ConSpan(S) \ \ \text{and} \ \ |f| \wedge |g| \in ConSpan(S).$$
\end{rem}
\begin{proof}
First we prove that $|f|  \dotplus  |g| \in ConSpan(S)$. Since  $\langle f \rangle \subseteq \langle S \rangle \cdot \langle \mathscr{R} \rangle$ and $\langle g \rangle \subseteq \langle S \rangle \cdot \langle \mathscr{R} \rangle$, we have
$\langle f, g \rangle =  \langle |f|  \dotplus  |g| \rangle = \langle f \rangle \cdot \langle g \rangle \subseteq \langle S \rangle \cdot \langle \mathscr{R} \rangle$.
For $|f| \wedge |g| \in ConSpan(S)$, $ \langle |f| \wedge|g| \rangle = \langle f \rangle \cap \langle g \rangle \subseteq \langle g \rangle \subseteq \langle  S \rangle \cdot \langle \mathscr{R} \rangle$.
\end{proof}

\begin{rem}
Let $S =\{ f_1,...,f_m\}$ a finite set of HP-fractions. Then
$$ConSpan(S) =  \langle f_1,...,f_n \rangle \cdot \langle \mathscr{R} \rangle.$$
\end{rem}
\begin{proof}
A straightforward consequence of Definition \ref{defn_convex_span_in_semifield_of_fractions}.
\end{proof}

\begin{rem}\label{rem_HS_kernel_finitely_spanned}
If $K$ is an HS-kernel, then $K$ is generated by an HS-fraction $f~\in~\mathbb{K}$ of the form $f = \sum_{i=1}^{t} |f_i|$ where $f_1,...,f_t$ are HP-fractions. So,
$$ConSpan(K) = \langle \mathscr{R} \rangle~\cdot~K =\langle \mathscr{R} \rangle \cdot \langle f \rangle = \langle \mathscr{R} \rangle \cdot \langle \sum_{i=1}^{t} |f_i| \rangle = \langle \mathscr{R} \Big\rangle \cdot \prod_{i=1}^{t} \langle f_i \Big\rangle$$ $$= \langle \mathscr{R} \rangle \cdot \langle f_1,...,f_t \rangle$$ and so, $\{f_1,..., f_t \}$ convexly spans $\langle \mathscr{R} \rangle \cdot K$.
\end{rem}

\begin{rem}\label{rem_HS_depend_on_HP}
Let $f$ be an HS-fraction. Then $f \sim_K \sum_{i=1}^{t} |f_{i}|$  where $f_{i}$ are \linebreak HP-fractions. Since $\langle f \rangle = \prod_{i=1}^{t} \langle f_i \rangle = \langle \{ f_1,...,f_t \} \rangle$, we have that
$f$ is $\mathscr{R}$-convexly dependent on $\{ f_1,...,f_t \}$ .
\end{rem}

\begin{lem}\label{lem_HS_HP_convexity_relations}
 If $\{ b_1, ...,b_m \}$ is a set of HS-fractions, such that $b_i\sim_K\sum_{j=1}^{t_i} |f_{i,j}|$  where $f_{i,j}$ are HP-fractions, then $b_1$ is $\mathscr{R}$-convexly dependent on$\{ b_2, ...,b_m \}$~if and only if all its summands $f_{1,r}$ for $1 \leq r \leq t_1$ are $\mathscr{R}$-convexly dependent on $\{ b_2, ...,b_m \}$.
\end{lem}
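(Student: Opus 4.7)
The plan is to reduce both implications to a single observation: $\langle b_1 \rangle$ is the product of the principal kernels generated by its HP-summands. Specifically, since $b_1 \sim_K \sum_{j=1}^{t_1}|f_{1,j}|$, Corollary \ref{cor_max_semifield_principal_kernels_operations} (or equivalently, Remark \ref{rem_HS_prod_HP}) gives
\[
\langle b_1 \rangle \;=\; \Big\langle \sum_{j=1}^{t_1}|f_{1,j}| \Big\rangle \;=\; \prod_{j=1}^{t_1}\langle f_{1,j}\rangle.
\]
Set $K \;:=\; \langle b_2,\dots,b_m\rangle \cdot \langle \mathscr{R} \rangle$; by definition, $b_1$ (respectively each $f_{1,r}$) is $\mathscr{R}$-convexly dependent on $\{b_2,\dots,b_m\}$ iff $\langle b_1 \rangle \subseteq K$ (respectively $\langle f_{1,r}\rangle \subseteq K$).

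For the forward direction, I would assume $\langle b_1 \rangle \subseteq K$. Then for each $1 \leq r \leq t_1$, $\langle f_{1,r}\rangle \subseteq \prod_{j=1}^{t_1}\langle f_{1,j}\rangle = \langle b_1 \rangle \subseteq K$, so $f_{1,r}$ is $\mathscr{R}$-convexly dependent on $\{b_2,\dots,b_m\}$. For the reverse direction, assume each $\langle f_{1,r}\rangle \subseteq K$. Since $K \in \Con(\mathbb{K})$ is closed under multiplication of its subkernels (Remark \ref{rem_ker_latice_operations}), we obtain $\langle b_1 \rangle = \prod_{r=1}^{t_1}\langle f_{1,r}\rangle \subseteq K$, as desired.

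No step here is genuinely difficult; the only subtlety is to notice that $\mathscr{R}$-convex dependence is phrased in terms of \emph{kernel containment} in $K = \langle b_2,\dots,b_m\rangle\cdot\langle\mathscr{R}\rangle$, and that the product formula $\langle \sum_j |f_{1,j}|\rangle = \prod_j \langle f_{1,j}\rangle$ for HS-fractions translates the statement about a single HS-generator $b_1$ into one about its component HP-summands. Once this translation is in place, both directions are immediate from the lattice-theoretic fact that a finite product of principal kernels lies in $K$ iff each factor does.
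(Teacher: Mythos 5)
Your proof is correct and follows essentially the same approach as the paper: both directions reduce to the factorization $\langle b_1\rangle = \prod_j \langle f_{1,j}\rangle$ together with the reformulation of $\mathscr{R}$-convex dependence as kernel containment in $K = \langle b_2,\dots,b_m\rangle\cdot\langle\mathscr{R}\rangle$. Your packaging of the reverse direction (a kernel containing each factor $\langle f_{1,r}\rangle$ contains their product) is a slightly cleaner rendering of what the paper does by chasing the explicit bounds $|f_{1,r}| \leq \big(\sum_{i,j}|f_{i,j}|\big)^{k_r}$ and taking $k' = \max_r k_r$, but it is the same observation.
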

\begin{proof}
If $b_1$ is $\mathscr{R}$-convexly dependent on $\{ b_2, ...,b_m \}$, then
$$ \prod_{j=1}^{t_1} \langle f_{1,j} \rangle = \Big\langle \sum_{j=1}^{t_1} |f_{1,j}| \Big\rangle = \langle b_1 \rangle \subseteq \big\langle \{ b_1, ...,b_m \} \big\rangle .$$ Since $\langle f_{1,r} \rangle \subseteq \prod_{j=1}^{t_1} \langle f_{1,j} \rangle$ for every $1 \leq r \leq t_1$, we have that  $f_{1,r}$ is $\mathscr{R}$-convexly \linebreak dependent on $\{ b_1, ...,b_m \}$ and by Remark \ref{rem_HS_depend_on_HP} $f_{1,r}$ is $\mathscr{R}$-convexly dependent on $S$ where $S = \left\{ f_{i,j} : 2 \leq i \leq m ; 1 \leq j \leq t_i \right\}$.
Conversely, if each $f_{1,r}$ is $\mathscr{R}$-convexly dependent on $\{ b_2, ...,b_m \}$ for $1 \leq r \leq t_1$, then there exist some $k_1,...,k_{t_1}$ such that \linebreak $|f_{1,r}| \leq   \sum_{i=2}^{m}\sum_{j=1}^{t_i}  |f_{i,j}|^{k_r}$. Taking $k'=\max\{ k_r : r = 1,...,t_1 \}$, we get that $b_1$ is \linebreak $\mathscr{R}$-convexly dependent on $\{ b_2, ...,b_m \}$.
\end{proof}

\begin{lem}\label{lem_HS_base_gives_rise_to_HP_base}
Let $V = \{ f_1, ...,f_m \}$ be a $\mathscr{R}$-convexly independent set of HS-fractions, such that $f_i \sim_K \sum_{j=1}^{t_i} |f_{i,j}|$  where $f_{i,j}$ are HP-fractions. Then there exist a $\mathscr{R}$-convexly independent subset $S_0 \subseteq S =\left\{ f_{i,j} : 2 \leq i \leq m ; 1 \leq j \leq t_i \right\}$ such that $|S_0| \geq |V|$ and $ConSpan(S_0) = ConSpan(V)$.
\end{lem}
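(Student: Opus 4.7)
The plan is to first show $ConSpan(S) = ConSpan(V)$; then, for each $f_i \in V$, to extract one HP-summand $g_i \in S$ whose independence is witnessed by $f_i$; then to verify that the $m$ chosen summands form a convexly independent subset of $S$; and finally to extend this subset to a basis $S_0 \subseteq S$ using the abstract-dependence machinery.

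For the span equality: since $f_i \sim_K \sum_j |f_{i,j}|$, we have $\langle f_i \rangle = \prod_j \langle f_{i,j} \rangle$, so on the one hand each $f_{i,j}$ lies in $\langle f_i \rangle$ and hence is $\mathscr{R}$-convexly dependent on $V$, giving $S \subseteq ConSpan(V)$; on the other hand, Remark~\ref{rem_HS_depend_on_HP} says each $f_i$ is $\mathscr{R}$-convexly dependent on its own summands, hence $V \subseteq ConSpan(S)$. Combining these via transitivity (Proposition~\ref{prop_abstract_dependence_properties}(2)) yields $ConSpan(S) = ConSpan(V)$.

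Next, for each $1 \leq i \leq m$, the convex independence of $V$ gives $f_i \notin \langle V \setminus \{f_i\} \rangle \cdot \langle \mathscr{R} \rangle$, and Lemma~\ref{lem_HS_HP_convexity_relations} (used contrapositively) supplies at least one summand $f_{i,j(i)}$ of $f_i$ that is not $\mathscr{R}$-convexly dependent on $V \setminus \{f_i\}$. I set $g_i := f_{i,j(i)}$ and claim that $T = \{g_1,\dots,g_m\}$ is $\mathscr{R}$-convexly independent. If, on the contrary, some $g_k$ were $\mathscr{R}$-convexly dependent on $T \setminus \{g_k\}$, then since each $g_i$ (for $i \neq k$) is a summand of $f_i$ and hence $\langle g_i \rangle \subseteq \langle f_i \rangle$, transitivity of convex dependence would force $g_k$ to be $\mathscr{R}$-convexly dependent on $V \setminus \{f_k\}$, contradicting the choice of $g_k$.

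Finally, since convex dependence on HP-fractions is a (strong) abstract dependence relation by Propositions~\ref{prop_abstract_dependence_properties} and~\ref{prop_convex_dependence_of_HP_property}, the standard matroid-theoretic result lets me extend the independent set $T \subseteq S$ to a basis $S_0 \subseteq S$ with $ConSpan(S_0) = ConSpan(S) = ConSpan(V)$ and $|S_0| \geq |T| = m = |V|$. The one nontrivial point is this last extension step: Corollary~\ref{cor_basis_for_HP} as stated only asserts that $S$ \emph{contains} a basis, not that an arbitrary independent subset extends to one, so I must invoke the general dependence-relation framework of \cite{ComView} — this is the only real obstacle, and it is handled entirely by citation.
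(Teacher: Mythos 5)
Your proof follows essentially the same route as the paper's: establish $ConSpan(S)=ConSpan(V)$, use Lemma~\ref{lem_HS_HP_convexity_relations} contrapositively to extract from each $f_i$ a summand $g_i$ that is $\mathscr{R}$-convexly independent of $V\setminus\{f_i\}$, note that $\{g_1,\dots,g_m\}$ is therefore independent, and conclude via the abstract-dependence/basis machinery that a basis $S_0\subseteq S$ of cardinality at least $m$ exists. You spell out the transitivity argument behind the independence of $\{g_1,\dots,g_m\}$ more explicitly than the paper does, but there is no substantive difference in method.
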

\begin{proof}
By Remark \ref{rem_HS_depend_on_HP}, \  $f_i$ is dependent on $\{ f_{i,j} :  1 \leq j \leq t_i \} \subset S$ for each \linebreak  $1 \leq i \leq m$, thus $ConSpan(S) = ConSpan(V)$. By Corollary \ref{cor_basis_for_HP}, \ $S$ contains a maximal  $\mathscr{R}$-convexly independent subset $S_0$ such that $ConSpan(S_0) = ConSpan(S)$. By  Lemma~\ref{lem_HS_HP_convexity_relations} \ for each $i=1,...,m$, there exists some HP-fraction $g_i=f_{i,j_i}$ such that $g_i$ is $\mathscr{R}$-convexly independent of $V \setminus \{ f_i \}$, for otherwise $f_i$ would be \linebreak $\mathscr{R}$-convexly dependent on $V \setminus \{ f_i \}$, thus $\{g_1,...,g_m \} \subset S$ is $\mathscr{R}$-convexly independent, so \linebreak $|S_0| \geq m = |V|$.
\end{proof}

Now that our restriction to HP-fractions is justified, we move forward with our \linebreak construction.

\begin{rem}\label{rem_basis_for_HS_kernel}
Let $K \in \mathbb{K}$ be an HS-kernel. Consider the following set
$$ \text{HP}(K) = \left\{ f \in K :  f \ \text{is an HP-fraction} \ \right\}.$$
$K$ is an HS-fraction, so by definition there are some HP-fractions $f_1,...,f_t$ such that $L = \langle \sum_{i=1}^{t} |f_i| \rangle$. By Remark \ref{rem_HS_kernel_finitely_spanned}, $ConSpan(K)$ is convexly-spanned by ${f_1,...,f_t}$. Now, Since $ConSpan(K) = ConSpan(f_1,...,f_t)$ and $\{f_1,...,f_t\} \subset \text{HP}(K) \subset \text{HP}(\mathbb{K})$, by \ref{cor_basis_for_HP} $\{f_1,...,f_t\}$ contains a basis $B=\{b_1,...,b_s\} \subset \{f_1,...,f_t\}$ of $\mathscr{R}$-convexly independent elements such that $ConSpan(B) = ConSpan(f_1,...,f_t)=ConSpan(K)$.  Note that $s \in \mathbb{N}$ is finite and is uniquely determined by $K$.\\
\end{rem}

\begin{defn}
Let $K \in \mathbb{K}$ be an HS-kernel. In the notation of Remark \ref{rem_basis_for_HS_kernel},
we define the \emph{convexity degree} of $ConSpan(K)$, $condeg(K)$ to be $s$, the number of elements in a basis $B$.
\end{defn}

\begin{rem}\label{rem_convexity_degree_of_semifield_of_fractions}
By Example \ref{exmp_basis_for_semifield_of_fractions} we have that $condeg\left(\mathscr{R}(x_1,...,x_n)\right) = n$.
\end{rem}

\begin{nota}
For any semifield homomorphism $\phi : \mathbb{H} \rightarrow \mathbb{S}$, we denote the image of $h \in \mathbb{H}$ under $\phi$ by $\bar{h} = \phi(h)$.
\end{nota}

\begin{prop}\label{prop_order_independence_1}
Let $O$ be an order-kernel of $\mathscr{R}(x_1,...,x_n)$. Then if a set of \linebreak HP-fractions $\{ h_1,...,h_t \}$ is $\mathscr{R}$-convexly dependent in $\mathscr{R}(x_1,...,x_n)$, then $\{ \bar{h}_1,...,\bar{h}_t \}$  is \linebreak $\mathscr{R}$-convexly dependent in the quotient semifield $\mathscr{R}(x_1,...,x_n)/O$.
\end{prop}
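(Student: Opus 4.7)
The plan is to lift the $\mathscr{R}$-convex dependence directly through the quotient homomorphism $\phi : \mathscr{R}(x_1,\dots,x_n) \to \mathscr{R}(x_1,\dots,x_n)/O$, using the fact that $\phi$ is a semifield epimorphism and therefore respects the lattice operations on kernels. Since $\{h_1,\dots,h_t\}$ is $\mathscr{R}$-convexly dependent by hypothesis, by the contrapositive of the definition of $\mathscr{R}$-convex independence there exists an index (which, after relabelling, we take to be $1$) such that $h_1$ is $\mathscr{R}$-convexly dependent on $\{h_2,\dots,h_t\}$; equivalently, $\langle h_1 \rangle \subseteq \langle h_2,\dots,h_t \rangle \cdot \langle \mathscr{R} \rangle$.

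First I would apply $\phi$ to this kernel containment. By Remark \ref{rem_image_of_principal_kernel} the image of a principal kernel is generated by the image of the generator, so $\phi(\langle h_i \rangle) = \langle \bar h_i \rangle$ for each $i$; since $\phi$ is a semifield homomorphism, the product $\langle h_2,\dots,h_t \rangle \cdot \langle \mathscr{R} \rangle$ is sent to $\langle \bar h_2,\dots,\bar h_t \rangle \cdot \phi(\langle \mathscr{R} \rangle)$. Consequently $\bar h_1 = \phi(h_1) \in \langle \bar h_2,\dots,\bar h_t \rangle \cdot \phi(\langle \mathscr{R} \rangle)$ in $\mathscr{R}(x_1,\dots,x_n)/O$. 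To conclude that $\{\bar h_1,\dots,\bar h_t\}$ is $\mathscr{R}$-convexly dependent in the quotient in the sense of Definition \ref{defn_convex_dep}, it remains to identify $\phi(\langle \mathscr{R} \rangle)$ with the kernel $\langle \mathscr{R} \rangle$ inside the quotient.

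The main obstacle (such as it is) lies in this last identification: one must ensure that passing to the quotient by $O$ does not collapse the constants of $\mathscr{R}$. This reduces to showing $O \cap \mathscr{R} = \{1\}$, since then $\phi$ restricts to the identity on $\mathscr{R}$, so by Remark \ref{rem_the contant_generated_kernel} $\phi(\langle \mathscr{R} \rangle) = \langle \phi(\alpha) \rangle = \langle \alpha \rangle = \langle \mathscr{R} \rangle$ in the quotient, for any $\alpha \in \mathscr{R}\setminus\{1\}$. The key point is that $O = \langle 1 \dotplus f \rangle$ for an HP-fraction $f$, and $1 \dotplus f$ attains the value $1$ on the region where $f \le 1$; hence $1 \dotplus f$ is not bounded from below, so by Remarks \ref{rem_down_bounded_kernel_contains_H} and \ref{rem_generator_of_down_bounded_kernel} the kernel $O$ cannot contain $\langle \mathscr{R} \rangle$, forcing $O \cap \mathscr{R} = \{1\}$ as required.

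Combining these steps yields $\bar h_1 \in \langle \bar h_2,\dots,\bar h_t \rangle \cdot \langle \mathscr{R} \rangle$ inside $\mathscr{R}(x_1,\dots,x_n)/O$, which is exactly the definition of $\{\bar h_1,\dots,\bar h_t\}$ being $\mathscr{R}$-convexly dependent there.
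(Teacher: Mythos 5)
Your proof is correct and follows essentially the same route as the paper's: apply the quotient homomorphism $\phi_O$ to the kernel containment $\langle h_1 \rangle \subseteq \langle h_2,\dots,h_t \rangle \cdot \langle \mathscr{R} \rangle$ and then identify $\phi_O(\langle \mathscr{R} \rangle)$ with $\langle \mathscr{R} \rangle$ inside the quotient. The paper simply invokes that $\phi_O$ is an $\mathscr{R}$-homomorphism without comment; your explicit check that $O \cap \mathscr{R} = \{1\}$ (via the observation that an order fraction $1 \dotplus f$ is not bounded from below, so $O$ cannot absorb $\langle \mathscr{R} \rangle$) is a worthwhile justification of that step that the paper leaves implicit.
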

\begin{proof}
Denote by $\phi_O: \mathscr{R}(x_1,...,x_n) \rightarrow \mathscr{R}(x_1,...,x_n)/O$ the quotient $\mathscr{R}$-homomorphism.
First note that since $\phi_O$ is an $\mathscr{R}$ homomorphism, we have that $\phi_O( \langle \mathscr{R} \rangle) =  \langle \phi_O(\mathscr{R}) \rangle = \langle \mathscr{R} \rangle_{\mathscr{R}(x_1,...,x_n)/ O}$.
Now, if $  h_1,...,h_t  $ are $\mathscr{R}$-convexly dependent then there exist some $j$, say without loss of generality $j=1$, such that $ h_1 \in \langle h_2,...,h_t \rangle \cdot \langle \mathscr{R} \rangle$. By assumption,
\begin{align*}
\bar{h}_1 \ & =  \phi_O (h_1)  \in \phi_O \left(\langle h_2,...,h_t, \alpha \rangle \right)\\
& = \left\langle \phi_O(h_2),...,\phi_O(h_t),\phi_O(\alpha) \right\rangle = \langle \bar{h}_1,...,\bar{h}_t, \alpha \rangle\\
& = \langle \bar{h}_1,...,\bar{h}_t  \rangle \cdot \langle \mathscr{R} \rangle
\end{align*}
(the equalities hold by Remark \ref{rem_image_of_principal_kernel} and $\phi_O$ being an $\mathscr{R}$-homomorphism). Thus $\bar{h}_1 $ is $\mathscr{R}$-convexly dependent on $\{ \bar{h}_2 ,...,\bar{h}_t \}$.
\end{proof}

\begin{flushleft}Conversely, we have:\end{flushleft}

\begin{lem}\label{lem_order_independence_2}
Let $O$ be an order-kernel of $\mathscr{R}(x_1,...,x_n)$.  Let $\{ h_1,...,h_t \}$ be a set of HP-fractions. If $\bar{h}_1 ,...,\bar{h}_t$ are $\mathscr{R}$-convexly dependent in the quotient semifield \linebreak $\mathscr{R}(x_1,...,x_n)/O$  and $\sum_{i=1}^{t}\bar{|h_1|} \cap \mathscr{R} = \{1\}$, then $h_1,...,h_t$  are $\mathscr{R}$-convexly dependent \linebreak in $\mathscr{R}(x_1,...,x_n)$.
\end{lem}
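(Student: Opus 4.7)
The plan is to lift the convex dependence from the quotient semifield back to $\mathscr{R}(x_1,\ldots,x_n)$, then use the exchange axiom together with the hypothesis to strip off the spurious dependence on the generator of $O$. By reindexing, assume $\bar{h}_1$ is $\mathscr{R}$-convexly dependent on $\{\bar{h}_2,\ldots,\bar{h}_t\}$, so there exist $\alpha\in\mathscr{R}$ and $k\in\mathbb{N}$ with $|\bar{h}_1|\leq \bigl(\sum_{i=2}^{t}|\bar{h}_i|+|\alpha|\bigr)^{k}$ in $\mathscr{R}(x_1,\ldots,x_n)/O$. Using the description of the induced order on a quotient semifield (the remark preceding Definition \ref{defn_k_kernels}), this lifts to a global inequality $|h_1|\leq o\cdot b^{k}$ in $\mathscr{R}(x_1,\ldots,x_n)$, where $o\in O$ and $b=\sum_{i=2}^{t}|h_i|+|\alpha|$.

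Next I would make the factor $o$ explicit. Writing $O=\langle 1+g\rangle$ for the HP-fraction $g$ defining $O$, Corollary \ref{cor_principal_ker_by_order} together with the Frobenius property gives $|o|\leq |1+g+g^{-1}|^{N}=1+g^{N}+g^{-N}=1+|g|^{N}$ for some $N\in\mathbb{N}$. Therefore
\begin{equation*}
|h_1|\leq (1+|g|^{N})\,b^{k}\leq (b+|g|)^{N+k},
\end{equation*}
the second inequality being a pointwise case split using that $b,|g|\geq 1$ and the Frobenius property in $\mathscr{R}(x_1,\ldots,x_n)$. By Remark \ref{rem_kernel_by_abs_value}, this shows $h_1$ is $\mathscr{R}$-convexly dependent on the enlarged set $\{h_2,\ldots,h_t,g\}$ in $\mathscr{R}(x_1,\ldots,x_n)$. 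I now argue by contradiction: suppose $h_1$ is $\mathscr{R}$-convexly independent of $\{h_2,\ldots,h_t\}$. The exchange axiom (Proposition \ref{prop_convex_dependence_of_HP_property}), applied with $b=g$, then forces $g$ to be $\mathscr{R}$-convexly dependent on $\{h_1,h_2,\ldots,h_t\}$, so there exist $\beta\in\mathscr{R}$ and $M\in\mathbb{N}$ with $|g|\leq \bigl(\sum_{i=1}^{t}|h_i|+|\beta|\bigr)^{M}$.

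Passing this inequality to the quotient and invoking $\overline{1+g}=\bar{1}$ (since $1+g\in O$), which gives $\bar{g}\leq\bar{1}$ and hence $|\bar{g}|=\bar{g}^{-1}$, I obtain $|\bar{g}|\in \langle\sum_{i=1}^{t}|\bar{h}_i|\rangle\cdot\langle\mathscr{R}\rangle$ in $\mathscr{R}(x_1,\ldots,x_n)/O$. Since $g$ is a genuine (nonconstant) HP-fraction, a direct computation with the explicit description of $O=\langle 1+g\rangle$ shows $g\notin O$, so $\bar{g}\neq\bar{1}$, and in fact $|\bar{g}|$ is not bounded in the quotient. Combined with the simplicity of $\mathscr{R}$, this produces some $\beta'\in\mathscr{R}$ with $\beta'>1$ such that $\bar{\beta'}\in\langle\sum_{i=1}^{t}|\bar{h}_i|\rangle$ in the quotient, contradicting the hypothesis $\langle\sum_{i=1}^{t}|\bar{h}_i|\rangle\cap\mathscr{R}=\{1\}$.

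The main obstacle is this final extraction step: converting $|\bar{g}|\in\langle\sum|\bar{h}_i|\rangle\cdot\langle\mathscr{R}\rangle$ into the existence of an honest constant $\beta'>1$ lying in $\langle\sum|\bar{h}_i|\rangle$ in the quotient. This requires carefully separating the contribution of $\langle\mathscr{R}\rangle$ from that of $\langle\sum|\bar{h}_i|\rangle$ in the quotient, making essential use of the hypothesis (which, in its natural reading, asserts that $\sum|\bar{h}_i|$ is not bounded below by any nontrivial constant modulo $O$) and of the fact that $\bar{g}$ is nontrivial in the quotient.
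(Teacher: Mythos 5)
You correctly reduce the problem, via the exchange axiom, to showing that $g$ being $\mathscr{R}$-convexly dependent on $\{h_1,\ldots,h_t\}$ must contradict the hypothesis, and you rightly flag the last extraction step as the main obstacle. Unfortunately that obstacle is a genuine gap, not a technicality to be cleaned up. The data you have at that point --- $|\bar g|\in\langle\sum_{i}|\bar h_i|\rangle\cdot\langle\mathscr{R}\rangle$ together with $|\bar g|$ being unbounded in $\mathscr{R}(x_1,\ldots,x_n)/O$ --- do not force a nontrivial constant into $\langle\sum_i|\bar h_i|\rangle$. They only say that $|\bar g|$ is controlled by $\sum_i|\bar h_i|$ up to a constant factor, and this is perfectly compatible with $\langle\sum_i|\bar h_i|\rangle\cap\mathscr{R}=\{1\}$: already in the one-variable semifield $\mathscr{R}(x)$, the function $|x|$ is unbounded, lies in $\langle |x|\rangle\cdot\langle\mathscr{R}\rangle$, and yet $\langle|x|\rangle\cap\mathscr{R}=\{1\}$. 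Nor is the failure accidental: the conclusion $g\in ConSpan_{\mathscr{R}}(h_1,\ldots,h_t)$ in the ambient semifield is entirely consistent with the $h_i$ having a common zero inside $Skel(O)$ (which is the geometric content of the hypothesis), so the route through the exchange axiom followed by a push-forward to the quotient simply cannot close.

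The paper's proof is genuinely different and uses the hypothesis in a geometric rather than a lattice-theoretic way. The condition $\langle\sum_i|\bar h_i|\rangle\cap\mathscr{R}=\{1\}$ is equivalent to $\bigcap_iSkel(h_i)\cap Skel(O)\neq\emptyset$, and one translates coordinates by a common point so that every $h_i$ becomes a Laurent monomial with unit coefficient. The quotient dependence is then unpacked to a global inequality of the form $|h_1|\dotplus g\leq\sum_{i\geq2}|h_i|^k\dotplus g^k$. Assuming the $h_i$ were independent in the ambient semifield, one chooses for each $m$ a point $x_m$ with $|h_1(x_m)|>\bigl(\sum_{i\geq2}|h_i(x_m)|\bigr)^m$, and then evaluates the inequality at $x_m^{-1}$: the unit-coefficient normalization forces $|h_i(x_m^{-1})|=|h_i(x_m)|$, while writing $g^k=g(1)g'$ with $g'$ of unit coefficient shows $g^k(x_{m_0}^{-1})<1$ for $m_0$ large, producing the pointwise contradiction $\sum|h_i(x_{m_0}^{-1})|^m<\sum|h_i(x_{m_0}^{-1})|^k$ with $m>k$. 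This evaluation at inverted points is the essential analytic step your argument has no analogue of, and it is precisely how the hypothesis enters; replacing it by a second application of the exchange axiom does not recover it.
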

\begin{proof}
Note that $\sum_{i=1}^{t}\bar{|h_1|} \cap \mathscr{R} = \{1\}$ if and only if $\bigcap_{i=1}^{t}Skel(h_1) \cap Skel(O) \neq \emptyset$. Translating the variables by a point $a \in \bigcap_{i=1}^{t}Skel(h_1) \cap Skel(O)$, we may assume that the constant coefficient of each HP-fraction $h_i$ is $1$.
Assume $\bar{h}_1 ,...,\bar{h}_t$ are $\mathscr{R}$-convexly dependent. W.l.o.g. we may assume that $\bar{h}_1$ is $\mathscr{R}$-convexly dependent on $\{ \bar{h}_2 ,...,\bar{h}_{t+1} \}$.   Taking $h_{t+1} \in \mathscr{R}$, we may write $\bar{h}_1 \in \langle \bar{h}_2,...,\bar{h}_t,\bar{h}_{t+1} \rangle$. Considering the pre-images of the quotient map, we have that $$\langle h_1 \rangle \cdot O \subseteq \langle h_2,...,h_t, h_{t+1} \rangle \cdot O.$$ Thus $|h_2|  \dotplus  \dots  \dotplus  |h_{t+1}|  \dotplus  |1  \dotplus  g| \succeq  |h_1|  \dotplus  | 1  \dotplus  g|$ with $1  \dotplus  g$ a generator of $O$. So, by Remark \ref{rem_kernel_by_abs_value}, there exists some $k \in \mathbb{N}$ such that
\begin{equation}\label{eq_order_independence_2_1}
|h_1|  \dotplus  | 1  \dotplus  g| \leq (|h_2|  \dotplus  \dots  \dotplus  |h_{t+1}|  \dotplus  |1  \dotplus  g|)^{k} = |h_2|^{k}  \dotplus  \dots  \dotplus  |h_{t+1}|^{k}  \dotplus  |1  \dotplus  g|^{k}.
\end{equation}
 As $1  \dotplus  g \geq 1$ we have that $|1  \dotplus  g | = 1  \dotplus  g$, and the right hand side of equation~ \eqref{eq_order_independence_2_1} equals $$|h_2|^{k}  \dotplus  \dots  \dotplus  |h_{t+1}|^{k}  \dotplus  (1  \dotplus  g)^{k} = |h_2|^{k}  \dotplus  \dots  \dotplus  |h_{t+1}|^{k}  \dotplus  1  \dotplus  g^{k} = |h_2|^{k}  \dotplus  \dots \dotplus  |h_{t+1}|^{k}  \dotplus  g^{k}.$$ The last equality is due to the fact that $\sum|h_i|^k \geq 1$ so that $1$ is absorbed. The same arguments applied to the left hand side of equation \eqref{eq_order_independence_2_1} \ yields that
\begin{equation}\label{eq_order_independence_2_2}
|h_1|   \dotplus  g \leq |h_2|^{k}  \dotplus  \dots  \dotplus  |h_{t+1}|^{k}  \dotplus  g^{k}.
\end{equation}
Assume on the contrary that $h_1$ is $\mathscr{R}$-convexly independent of $\{ h_2 ,...,h_t  \}$. Then $$\langle h_1 \rangle \not \subseteq \langle h_2,...,h_{t+1} \rangle = \bigg\langle \sum_{i=2}^{t+1}|h_i| \bigg\rangle.$$ Thus for any $m \in \mathbb{N}$ there exists some $x_m \in \mathscr{R}^n$ such that $$|h_1(x_m)| > \bigg|\sum_{i=2}^{t+1}|h_i(x_m)|\bigg|^{m} = \sum_{i=2}^{t+1}|h_i(x_m)|^m .$$ Thus by equation \eqref{eq_order_independence_2_2} and the last observation we get that $$\sum_{i=2}^{t}|h_i(x_m)|^m   \dotplus  g(x_m) < |h_1(x_m)|  \dotplus  g(x_m) \leq  \sum_{i=2}^{t}|h_i(x_m)|^k  \dotplus  g(x_m)^k,$$
i.e., there exists some fixed $k \in \mathbb{N}$ such that for any $m \in \mathbb{N}$,
\begin{equation}\label{eq_order_independence_2_3}
\sum_{i=2}^{t}|h_i(x_m)|^m  < \sum_{i=2}^{t}|h_i(x_m)|^k  \dotplus  g(x_m)^k.
\end{equation}
For $m > k$, since  $|\gamma|^k \leq |\gamma|^m$ for any $\gamma \in \mathscr{R}$, we get that $\sum_{i=2}^{t}|h_i(x_m)|^m \geq \sum_{i=2}^{t}|h_i(x_m)|^k$. Write $g^k(x) = g(1)g'(x)$. Since $g^k$ is an HP-kernel, $g(1)$ is the constant coefficient of $g$ and $g'$ is a Laurent monomial with coefficient $1$. Now, by the way $x_m$ were chosen we have that $\sum_{i=2}^{t}|h_i(x_m)| > 1$ and $\sum_{i=2}^{t}|h_i(x_m)|^m < g(x_m)^k$, and thus for sufficiently large $m_0$ we have that $g(1) < g'(x_{m_0})$. Since $g'$ is a Laurent monomial with coefficient $1$, we have that $g'(x_{m}^{-1}) = g'(x_{m})^{-1}$ so $g^k(x_{m_0}^{-1}) = g(1)g'(x_{m_0}^{-1}) = g(1)g'(x_{m_0})^{-1} < 1$. Thus by \eqref{eq_order_independence_2_3} we must have $\sum_{i=2}^{t}|h_i(x_{m_0}^{-1})|^m < \sum_{i=2}^{t}|h_i(x_{m_0}^{-1})|^k$. A contradiction.
\end{proof}

%

\begin{prop}\label{prop_order_independence_2}
Let $R$ be a region-kernel of $\mathscr{R}(x_1,...,x_n)$.  Let $\{ h_1,...,h_t \}$ be a set of HP-fractions such that $(R \cdot   \langle h_1, ...,h_t \rangle) \cap \mathscr{R} = \{1 \}$. Then $h_1 \cdot R,...,h_t \cdot R$ are \linebreak $\mathscr{R}$-convexly dependent in the quotient semifield $\mathscr{R}(x_1,...,x_n)/R$ if and only if $h_1,...,h_t$ are $\mathscr{R}$-convexly dependent in $\mathscr{R}(x_1,...,x_n)$.
\end{prop}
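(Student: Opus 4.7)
\textbf{Proof plan for Proposition \ref{prop_order_independence_2}.} The forward implication ($\mathscr{R}(x_1,\dots,x_n)$-dependence $\Rightarrow$ quotient-dependence) is almost immediate and follows the pattern of Proposition \ref{prop_order_independence_1}. The quotient map $\phi_R:\mathscr{R}(x_1,\dots,x_n)\to\mathscr{R}(x_1,\dots,x_n)/R$ is an $\mathscr{R}$-semifield epimorphism, so it commutes with $|\cdot|$, $\wedge$, $\dotplus$ and sends $\langle \mathscr{R}\rangle$ to $\langle \mathscr{R}\rangle$. If $h_1\in\langle h_2,\dots,h_t\rangle\cdot\langle\mathscr{R}\rangle$, then applying $\phi_R$ and using Remark \ref{rem_image_of_principal_kernel} yields $\phi_R(h_1)\in\langle\phi_R(h_2),\dots,\phi_R(h_t)\rangle\cdot\langle\mathscr{R}\rangle$. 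Nothing in this argument used that $R$ is an order kernel rather than a region kernel.

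The reverse implication is the substantive half, and I would prove it by induction on the number $v$ of order-kernel factors in the decomposition $R=\prod_{j=1}^{v}O_j$ guaranteed by Remark \ref{rem_Region_prod_Order}. The base case $v=1$ is precisely Lemma \ref{lem_order_independence_2}: the hypothesis $(R\cdot\langle h_1,\dots,h_t\rangle)\cap\mathscr{R}=\{1\}$ translates to $\mathrm{Skel}(O_1)\cap\bigcap_i\mathrm{Skel}(h_i)\neq\emptyset$, which is exactly the hypothesis $\sum\overline{|h_i|}\cap\mathscr{R}=\{1\}$ required there.

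For the inductive step, write $R=R'\cdot O$ with $R'=\prod_{j=1}^{v-1}O_j$ and $O=O_v$, and consider the composition of quotients
\[
\mathscr{R}(x_1,\dots,x_n)\xrightarrow{\phi_O}\mathscr{R}(x_1,\dots,x_n)/O\xrightarrow{\psi}\bigl(\mathscr{R}(x_1,\dots,x_n)/O\bigr)/\phi_O(R')\cong\mathscr{R}(x_1,\dots,x_n)/R,
\]
the last isomorphism coming from Theorem \ref{thm_nother2} together with $R=R'\cdot O$. The condition $(R\cdot\langle h_1,\dots,h_t\rangle)\cap\mathscr{R}=\{1\}$ implies both $(O\cdot\langle h_1,\dots,h_t\rangle)\cap\mathscr{R}=\{1\}$ (needed to apply Lemma \ref{lem_order_independence_2} to $O$ at the end) and, via the correspondence in Remark \ref{rem_correspondence_of_quontient_HSpec}, that in $\mathscr{R}(x_1,\dots,x_n)/O$ the images $\phi_O(h_i)$ remain HP-fractions and the pushed-forward region kernel $\phi_O(R')$ is a product of at most $v-1$ order kernels satisfying the analogous skeleton-nonemptiness condition in the quotient. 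So the induction hypothesis applies: convex dependence of the $\phi_R(h_i)=\psi(\phi_O(h_i))$ pulls back through $\psi$ to convex dependence of the $\phi_O(h_i)$ modulo $\phi_O(R')$ in $\mathscr{R}(x_1,\dots,x_n)/O$, which by the induction hypothesis gives convex dependence of the $\phi_O(h_i)$ in $\mathscr{R}(x_1,\dots,x_n)/O$; then Lemma \ref{lem_order_independence_2} applied to the single order kernel $O$ lifts this to convex dependence of $h_1,\dots,h_t$ in $\mathscr{R}(x_1,\dots,x_n)$.

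The main obstacle is bookkeeping: one must verify carefully that the hypothesis $(R\cdot\langle h_1,\dots,h_t\rangle)\cap\mathscr{R}=\{1\}$ does propagate correctly at each stage of the induction. In particular, after passing to $\mathscr{R}(x_1,\dots,x_n)/O$ one needs $(\phi_O(R')\cdot\langle\phi_O(h_1),\dots,\phi_O(h_t)\rangle)\cap\langle\mathscr{R}\rangle_{\mathscr{R}(x_1,\dots,x_n)/O}=\{1\}$, and the cleanest way to see this is geometrically: the hypothesis is equivalent to $\mathrm{Skel}(R)\cap\bigcap_i\mathrm{Skel}(h_i)\neq\emptyset$, and picking any point in this intersection provides (after translation) a simultaneous ``origin'' at which all the $h_i$ have constant term $1$ and all the $g_j$ defining the $o_j=1\dotplus g_j$ satisfy $g_j(1)\leq 1$; this very origin then continues to witness the analogous non-emptiness condition in every subquotient. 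If one prefers to avoid the geometric translation, the same fact can be checked algebraically using Lemma \ref{lem_kernels_algebra} and the distributive lattice structure of $\mathrm{Con}$, but either way this is the step most likely to be error-prone.
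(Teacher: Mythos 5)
Your proposal is correct and takes essentially the same route as the paper: the paper also disposes of the forward direction via Proposition \ref{prop_order_independence_1} and proves the reverse by factoring $R=\prod_j O_j$ into order kernels (Remark \ref{rem_Region_prod_Order}) and applying Lemma \ref{lem_order_independence_2} to each factor in turn. The paper states this repeated application in a single sentence without organizing it as a formal induction or verifying that the nonemptiness hypothesis $(R\cdot\langle h_1,\dots,h_t\rangle)\cap\mathscr{R}=\{1\}$ propagates through each intermediate quotient; your version makes that bookkeeping explicit, which is a genuine (if modest) improvement in rigor over what is written.
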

\begin{proof}
The `if' part of the assertion follows Proposition \ref{prop_order_independence_1}. As $R = \prod_{i=1}^{m}O_i$ for some order -kernels $\{ O_i \}_{i=1}^{m}$, the `only if' part follows from Lemma \ref{lem_order_independence_2} by applying it repeatedly to each of the $O_i$'s comprising $R$.
\end{proof}
%

\begin{prop}\label{prop_convexity_degree_invariance_for_HS_kernels}
Let $L \in HS(\mathbb{K})$ and let $R \in \PCon(\mathbb{K})$ be a region kernel. Let $\phi_{R} : \mathbb{K} \rightarrow \mathbb{K}/R$ be the quotient map.  Then $$condeg(L) = condeg(L \cdot R).$$
\end{prop}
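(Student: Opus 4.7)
My strategy is to transfer the question to the quotient semifield $\mathbb{K}/R$, where the region kernel $R$ disappears and the computation becomes clean. The key tool is Proposition~\ref{prop_order_independence_2}, which asserts that convex dependence of HP-fractions is preserved by $\phi_R$ in \emph{both} directions (under the mild non-triviality hypothesis that the kernels involved meet $\mathscr{R}$ only at $1$).

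First I would observe that $\phi_R(L\cdot R)=\phi_R(L)\cdot\phi_R(R)=\phi_R(L)\cdot\{1\}=\phi_R(L)$, so passing to the quotient identifies the two kernels whose convexity degrees we want to compare. Next, because $\phi_R$ is an $\mathscr{R}$-epimorphism, it sends HP-fractions to HP-fractions (or to $1$): if $h\in L\cdot R$ is an HP-fraction with $\bar h\neq 1$, then $\bar h$ is an HP-fraction in $\phi_R(L\cdot R)=\phi_R(L)$, and conversely every HP-fraction of $\phi_R(L)$ lifts to an HP-fraction of $L\cdot R$ via a canonical Laurent-monomial representative.

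I would then argue separately that $condeg(L)=condeg(\phi_R(L))$ and $condeg(L\cdot R)=condeg(\phi_R(L))$. For the first: pick a basis $B=\{b_1,\dots,b_s\}\subset \mathrm{HP}(L)$ provided by Corollary~\ref{cor_basis_for_HP}; applying Proposition~\ref{prop_order_independence_2} in the ``if'' direction gives that $\bar B=\{\bar b_1,\dots,\bar b_s\}$ remains convexly independent in $\mathbb{K}/R$, and since $B$ convexly spans $L$ the images $\bar B$ convexly span $\phi_R(L)$, giving $condeg(\phi_R(L))=s$. For the second: any $h\in\mathrm{HP}(L\cdot R)$ has $\bar h\in\phi_R(L)$, so $\bar h$ is convexly dependent on $\bar B$ in $\mathbb{K}/R$; applying Proposition~\ref{prop_order_independence_2} in the ``only if'' direction (to the finite family $\{h\}\cup B$) lifts this dependence to $\mathbb{K}$, showing that $B$ convexly spans $\mathrm{HP}(L\cdot R)$. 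Since $B$ is convexly independent already in $\mathbb{K}$, it is still convexly independent when viewed inside $L\cdot R$, so $B$ is a basis for $\mathrm{HP}(L\cdot R)$ as well, yielding $condeg(L\cdot R)=s=condeg(L)$.

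The main obstacle is verifying the hypothesis of Proposition~\ref{prop_order_independence_2} each time we invoke it, namely that $(R\cdot\langle h_1,\dots,h_t\rangle)\cap\mathscr{R}=\{1\}$. The delicate case is $\mathrm{HP}$-fractions $h$ of $L\cdot R$ for which $\bar h=1$, i.e.\ $h\in R$; by the discussion following Remark~\ref{rem_HP_fraction_not_bounded}, an HP-fraction contained in a pure region kernel $R$ must in fact be the constant $1$, so such $h$ contribute nothing to the convexity degree. For non-trivial HP-fractions $h$ the product $R\cdot\langle h,b_1,\dots,b_s\rangle$ equals $R\cdot L\cdot\langle h\rangle\subseteq L\cdot R$; since $L\neq\{1\}$ is an HS-kernel and HP-fractions are unbounded (Remark~\ref{rem_HP_fraction_not_bounded}), this product does not contain $\mathscr{R}$, so the hypothesis is satisfied. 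Once these checks are handled, the two inequalities $condeg(L\cdot R)\leq condeg(L)$ and $condeg(L\cdot R)\geq condeg(L)$ assemble into the claimed equality, and the trivial case $L=\{1\}$ is handled separately by noting that then $L\cdot R=R$ contains no non-trivial HP-fractions.
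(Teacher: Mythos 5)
Your proof is correct and follows essentially the same route as the paper: both arguments pass to the quotient $\mathbb{K}/R$ where $R$ vanishes, and both rely on the preservation of $\mathscr{R}$-convex (in)dependence of HP-fractions across the quotient map (the paper cites Proposition~\ref{prop_order_independence_1} and Lemma~\ref{lem_order_independence_2} separately, while you invoke their combination, Proposition~\ref{prop_order_independence_2}). You do carry out the hypothesis-checking for Proposition~\ref{prop_order_independence_2} more explicitly than the paper does, and you isolate the degenerate cases $\bar h = 1$ and $L = \{1\}$, but the core mechanism and the key lemma are identical to the paper's.
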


\begin{proof}
Since $L$ is a subsemifield of $\mathbb{K}$, by Theorem \ref{thm_kernels_hom_relations}, we have that \linebreak $\phi_{R}^{-1}(\phi_R(L))=R \cdot L$ and $condeg(L) \geq  condeg(\phi_R(L))$ by Proposition \ref{prop_order_independence_1}, while by Lemma \ref{lem_order_independence_2} we have that  $condeg(\phi_R(L)) \geq condeg(\phi_{R}^{-1}(\phi_R(L)))$. Thus \linebreak $condeg(L) \geq  condeg(R \cdot L)$.  $L \subseteq  R \cdot L$  implies that $condeg(L) \leq  condeg(R \cdot L)$. Thus equality holds.
\end{proof}
\ \\

\begin{exmp}\label{exmp_basis_for_semifield_of_fractions}
As we have previously shown, the maximal kernels in \\$\PCon(\mathscr{R}(x_1,...,x_n))$ are HS-fractions of the form
$L_{(\alpha_1,...,\alpha_n)}=\langle \alpha_1 x_1, \dots, \alpha_n x_n \rangle$ for any $\alpha_1,...,\alpha_n \in \mathscr{R}$.
We have previously shown that $\mathscr{R}(x_1,...,x_n) =   \mathscr{R} \cdot L_{(\alpha_1,...,\alpha_n)} = \langle \mathscr{R} \rangle \cdot L_{(\alpha_1,...,\alpha_n)}$. Thus $\mathscr{R}(x_1,...,x_n) = ConSpan(\{ \alpha_1 x_1, \dots, \alpha_n x_n \})$, i.e., $\{ \alpha_1 x_1, \dots, \alpha_n x_n \}$ convexly spans $\mathscr{R}(x_1,...,x_n)$ over $\mathscr{R}$.  Now, since there is no order relation between $\alpha_i x_i$ and the elements of  $\{ \alpha_j x_j : j \neq i \} \cup \{\alpha\ : \alpha \in \mathscr{R} \}$ we have that $\alpha_k x_k \not \in \langle \bigcup_{j \neq k}\alpha_j x_j \rangle \cdot~\langle\mathscr{R}\rangle$.  Thus $\{ \alpha_1 x_1, \dots, \alpha_n x_n \}$ is $\mathscr{R}$-convexly independent constituting a basis for $\mathscr{R}(x_1,...,x_n)$ for any chosen $\alpha_1,...,\alpha_n \in \mathscr{R}$.

\end{exmp}

%
%
%

\begin{rem}\label{rem_condeg_not_effected_by_region}
Let $R$ be a region kernel of $\mathbb{K}$ and let $$A = \{ \langle g \rangle : \langle g \rangle \cdot \langle \mathscr{R} \rangle \supseteq  R \cdot \langle \mathscr{R} \rangle \}.$$
In view Remark \ref{rem_correspondence_of_quontient_HSpec} and of Proposition \ref{prop_order_independence_2}, we have that $condeg(\mathbb{K}/R) = condeg(A)$. As $L_{a} \supseteq A$ for any $a \in Skel(R) \neq \emptyset$, we have that $condeg(A) = condeg(\mathbb{K})$.\\
\end{rem}
\begin{prop}
If $R$ be a region kernel and $L \in HS(\mathbb{K})$  of $\mathbb{K}$, then
$$condeg(\mathbb{K}/LR) = condeg(\mathbb{K}) - condeg(L).$$
In particular, $$condeg(\mathscr{R}(x_1,...,x_n)/LR) = n - condeg(L).$$
\end{prop}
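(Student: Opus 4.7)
The plan is to reduce the general HO-kernel case $LR$ to the pure HS-kernel case and then establish the formula $condeg(\mathbb{K}/L) = condeg(\mathbb{K}) - condeg(L)$ for $L$ an HS-kernel by constructing an explicit basis transfer.

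First I would carry out the reduction. Let $\phi_R : \mathbb{K} \rightarrow \mathbb{K}/R$ be the quotient $\mathscr{R}$-epimorphism and put $\overline{L} = \phi_R(L) = LR/R$. By Theorem \ref{thm_nother2} we have $\mathbb{K}/LR \cong (\mathbb{K}/R)/\overline{L}$, while Remark \ref{rem_condeg_not_effected_by_region} gives $condeg(\mathbb{K}/R) = condeg(\mathbb{K})$, and Proposition \ref{prop_convexity_degree_invariance_for_HS_kernels} together with Proposition \ref{prop_order_independence_2} (applied to the image of a basis of $L$ through $\phi_R$) yields $condeg(\overline{L}) = condeg(LR) = condeg(L)$. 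Also $\overline{L}$ is again an HS-kernel since $\phi_R$ sends HP-fractions to HP-fractions (Remark \ref{rem_correspondence_of_quontient_HSpec}). Thus proving the formula for $\overline{L}$ inside $\mathbb{K}/R$ delivers the general case.

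Next I would establish $condeg(\mathbb{K}/L) = condeg(\mathbb{K}) - condeg(L)$ for an HS-kernel $L$. Using Corollary \ref{cor_basis_for_HP} applied to $\text{HP}(L)$, fix a convexly independent basis $B_L = \{b_1,\dots,b_s\}$ with $s = condeg(L)$, so that $L \cdot \langle \mathscr{R}\rangle = \langle b_1,\dots,b_s\rangle \cdot \langle \mathscr{R}\rangle$. By the exchange axiom (Proposition \ref{prop_convex_dependence_of_HP_property}) together with the properties listed in Proposition \ref{prop_abstract_dependence_properties}, extend $B_L$ to a basis $B = B_L \cup \{c_1,\dots,c_r\}$ of $\text{HP}(\mathbb{K})$ with $s + r = condeg(\mathbb{K})$. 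I claim $\{\overline{c_1},\dots,\overline{c_r}\}$ is a basis of $\text{HP}(\mathbb{K}/L)$, which gives the formula.

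For spanning: any $\overline{f} \in \text{HP}(\mathbb{K}/L)$ lifts, by Remark \ref{rem_correspondence_of_quontient_HSpec}, to an HP-fraction $f \in \mathbb{K}$ with $\langle f \rangle \cdot \langle \mathscr{R}\rangle \supseteq L \cdot \langle \mathscr{R}\rangle$; since $B$ spans $\text{HP}(\mathbb{K})$ and $\phi_L(b_i) \in \mathscr{R}$ for each $i$, applying $\phi_L$ to an inclusion witnessing convex dependence gives $\overline{f} \in \langle \overline{c_1},\dots,\overline{c_r}\rangle \cdot \langle \mathscr{R}\rangle$. For independence: if, say, $\overline{c_1}$ were convexly dependent on $\{\overline{c_2},\dots,\overline{c_r}\}$, then $\phi_L(c_1) \in \phi_L(\langle c_2,\dots,c_r\rangle \cdot \langle \mathscr{R}\rangle)$, and taking preimages under $\phi_L$ via Theorem \ref{thm_kernels_hom_relations} gives $\langle c_1\rangle \cdot L \subseteq \langle c_2,\dots,c_r\rangle \cdot \langle \mathscr{R}\rangle \cdot L = \langle b_1,\dots,b_s,c_2,\dots,c_r\rangle \cdot \langle \mathscr{R}\rangle$, contradicting the convex independence of $B$. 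The particular case follows immediately from Remark \ref{rem_convexity_degree_of_semifield_of_fractions}.

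The main obstacle I expect is the independence half of the basis argument: one must ensure that the preimage computation actually gives $L \cdot \langle \mathscr{R}\rangle = \langle b_1,\dots,b_s\rangle \cdot \langle \mathscr{R}\rangle$ rather than only an inclusion, which is precisely where the HS-kernel hypothesis on $L$ (as opposed to an arbitrary principal kernel) is used, and where one needs the analogue of Proposition \ref{prop_order_independence_2} for HS-kernel quotients rather than the region-kernel version proved in the text. Care is also needed to verify that extending $B_L$ to a basis of $\text{HP}(\mathbb{K})$ is legitimate inside the strong dependence framework (Proposition \ref{prop_convex_dependence_of_HP_property} combined with the abstract exchange theory invoked before Corollary \ref{cor_basis_for_HP}).
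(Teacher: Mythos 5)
Your proposal follows essentially the same route as the paper's proof: third isomorphism theorem to reduce to $(\mathbb{K}/R)/(LR/R)$, Remark \ref{rem_condeg_not_effected_by_region} to get $condeg(\mathbb{K}/R) = condeg(\mathbb{K})$, Proposition \ref{prop_order_independence_2} to get $condeg(LR/R) = condeg(L)$, and then a basis-extension argument. The paper is content to assert ``we can always choose a basis for $\text{HP}(\mathbb{K}/R)$ containing a basis for $\text{HP}(LR/R)$'' and then subtract cardinalities, leaving the verification to the abstract dependence (matroid) framework of Corollary \ref{cor_basis_for_HP}; you instead carry out the spanning and independence checks on the residual basis $\{\overline{c_1},\dots,\overline{c_r}\}$ explicitly, which is a harmless elaboration of the same idea. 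Your self-reported worry at the end is resolvable within the text: the equality $L\cdot\langle\mathscr{R}\rangle = \langle b_1,\dots,b_s\rangle\cdot\langle\mathscr{R}\rangle$ is exactly what being a basis of $ConSpan(L)$ means via Remark \ref{rem_HS_kernel_finitely_spanned}, so no new ``HS-version of Proposition \ref{prop_order_independence_2}'' is needed — that proposition has already been consumed in the region-kernel reduction, and the rest is the standard rank-of-contraction computation in a strong dependence relation.
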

\begin{proof}
By the third isomorphism theorem, we have that $\mathbb{K}/LR \cong (\mathbb{K}/R)/ (L\cdot R/R)$. We can always choose a basis for HP$(\mathbb{K}/R)$ containing a basis for HP$(L\cdot R/R)$. Taking $L$ instead of $\mathbb{K}$ in Remark \ref{rem_condeg_not_effected_by_region}, we have that $condeg(L \cdot R/R) = condeg(\phi_R(L))$. Note that $L \cdot R \cap \mathscr{R} = \{ 1 \}$, thus By Proposition \ref{prop_order_independence_2} $condeg(\phi_R(L)) = condeg(L)$. So $condeg(\mathbb{K}/LR) = condeg(A)-condeg(L) = condeg(\mathbb{K}) - condeg(L)$.\\

Taking $\mathbb{K} = \mathscr{R}(x_1,...,x_n)$ in the above setting, we get that $$condeg(\mathscr{R}(x_1,...,x_n)/LR) = condeg(A)-condeg(L) = n - condeg(L).$$
\end{proof}

\begin{prop}\label{prop_kernel_descending_chain}
Let $L$ be an HS-kernel in $\mathbb{K}$. Let $\{ h_1, ...,h_t \}$ be a set of HP-fractions in $\text{HSpec}(\mathbb{K})$ such that $ConSpan(h_1, ...,h_t) = ConSpan(L)$ and let $L_i = \langle h_i \rangle$.
Consider the following descending chain of HS-kernels where $Skel(K)~\neq~\emptyset$
\begin{equation}\label{eq_desc_chain_kernels}
L = \prod_{i=1}^{u}L_i  \supseteq \prod_{i=1}^{u-1}L_i  \supseteq \dots \supseteq L_1  \supseteq \langle 1 \rangle .
\end{equation}
Then  the chain \eqref{eq_desc_chain_kernels} is a strictly descending chain of HS-kernels  if and only if $h_1,....,h_u$ are $\mathscr{R}$-convexly independent.
\end{prop}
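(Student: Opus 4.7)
The forward implication reduces to unpacking the definitions. Suppose $\{h_1,\ldots,h_u\}$ is $\mathscr{R}$-convexly independent. For each $1 \leq k \leq u$, convex independence gives $h_k \notin \langle h_1,\ldots,\hat h_k,\ldots,h_u\rangle\cdot\langle\mathscr{R}\rangle$, which contains $\langle h_1,\ldots,h_{k-1}\rangle = \prod_{i=1}^{k-1}L_i$; hence $h_k \notin \prod_{i=1}^{k-1}L_i$ and $\prod_{i=1}^{k}L_i \supsetneq \prod_{i=1}^{k-1}L_i$. The step $k=1$ is automatic since an HP-fraction is unbounded, so $h_1 \neq 1$ and $L_1\supsetneq\langle 1\rangle$.

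The reverse implication is the interesting direction, and its main obstacle is a genuine mismatch between two a~priori different statements: strict descent at step $k$ says $h_k\notin\langle h_1,\ldots,h_{k-1}\rangle$, whereas convex dependence provides only the weaker $h_k\in\langle h_1,\ldots,h_{k-1}\rangle\cdot\langle\mathscr{R}\rangle$. These really do differ in general, as the example $h_1=x$, $h_2=\alpha x$ with $\alpha\in\mathscr{R}\setminus\{1\}$ shows ($h_2\in\langle h_1\rangle\cdot\langle\mathscr{R}\rangle$ but $h_2\notin\langle h_1\rangle$). The hypothesis $Skel(L)\neq\emptyset$ is precisely what forces the two to coincide on the fractions at hand. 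Pick $a\in Skel(L)\subseteq\bigcap_{i}Skel(h_i)$, so that $h_i(a)=1$ for every $i$. In the logarithmic description of $\mathbb{K}$ centred at $a$, each HP-fraction $h_i$ corresponds to a $\mathbb{Q}$-linear functional $H_i$ through the origin, and the relation $h_k\in\langle h_1,\ldots,h_{k-1}\rangle\cdot\langle\mathscr{R}\rangle$ becomes a growth bound
\[
|H_k|\;\leq\;n\cdot\max(|H_1|,\ldots,|H_{k-1}|,C)
\]
for some $n\in\mathbb{N}$ and constant $C>0$. Evaluating this bound along a ray in $\bigcap_{i<k}\ker H_i$ forces $H_k$ to lie in the $\mathbb{Q}$-span of $H_1,\ldots,H_{k-1}$; clearing denominators produces integers $N,m_1,\ldots,m_{k-1}$ with $NH_k=\sum_{i<k} m_iH_i$, so $h_k^N$ and $\prod_i h_i^{m_i}$ share the same exponent vector and, since both take the value $1$ at $a$, coincide as Laurent monomials. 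Thus $h_k^N\in\langle h_1,\ldots,h_{k-1}\rangle$, and the power-radicality of kernels (Remark \ref{rem_radicality_of_ker}) upgrades this to $h_k\in\langle h_1,\ldots,h_{k-1}\rangle$.

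With this bridge in place, the reverse direction concludes via a standard matroid argument. By Propositions \ref{prop_abstract_dependence_properties} and \ref{prop_convex_dependence_of_HP_property}, $ConSpan$ is a dependence relation satisfying the exchange axiom, and hence enjoys the following property: if $I$ is convexly independent and $I\cup\{x\}$ is dependent, then $x\in ConSpan(I)$. Assume toward a contradiction that the chain is strict yet $\{h_1,\ldots,h_u\}$ is convexly dependent, and let $k$ be minimal with $\{h_1,\ldots,h_k\}$ dependent; we have $k\geq 2$ since an HP-fraction is never in $\langle\mathscr{R}\rangle$. Independence of $\{h_1,\ldots,h_{k-1}\}$ together with the exchange principle gives $h_k\in ConSpan(h_1,\ldots,h_{k-1})=\langle h_1,\ldots,h_{k-1}\rangle\cdot\langle\mathscr{R}\rangle$, and the bridge established above upgrades this to $h_k\in\langle h_1,\ldots,h_{k-1}\rangle=\prod_{i=1}^{k-1}L_i$. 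Consequently $\prod_{i=1}^{k}L_i=\prod_{i=1}^{k-1}L_i$, contradicting strict descent at step $k$.
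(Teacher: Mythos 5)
Your proof is correct, and the reverse direction genuinely differs from the paper's. The paper tackles the same obstruction you correctly isolate --- that convex dependence only gives $h_k \in \prod_{i<k}L_i \cdot \langle\mathscr{R}\rangle$, while strict descent failure requires $h_k \in \prod_{i<k}L_i$ --- but it closes the gap abstractly: writing $M = \prod_{i<u}L_i$, the paper observes that $Skel(L)\neq\emptyset$ forces $M\cap\mathscr{R}=\{1\}$, and then asserts that $L_u \subseteq M\cdot\langle\mathscr{R}\rangle$ with $L_u\not\subseteq M$ would force $\langle\mathscr{R}\rangle\subseteq\prod_{i=1}^u L_i$ (essentially because in $\mathbb{K}/M$ the image of $L_u$ is a nontrivial, bounded-from-above HP-kernel, which can only be $\phi(\langle\mathscr{R}\rangle)$ itself), contradicting $\prod_{i=1}^u L_i$ being an HS-kernel. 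Your route instead passes to logarithmic coordinates at a point $a\in Skel(L)$: the boundedness of $H_k$ on $\bigcap_{i<k}\ker H_i$ forces a $\mathbb{Q}$-linear relation, clearing denominators gives an identity $h_k^N = \prod_i h_i^{m_i}$ of Laurent monomials, and power-radicality of kernels (Remark~\ref{rem_radicality_of_ker}) finishes. What you gain is a concrete, self-contained verification of the step the paper only asserts, and you make explicit the integer relation underlying the dependence. What you lose is brevity: the paper's lattice argument is two lines (modulo the tacit claim). One further point in your favour: by choosing $k$ minimal with $\{h_1,\ldots,h_k\}$ dependent and invoking the exchange property to get $h_k\in ConSpan(h_1,\ldots,h_{k-1})$, you cover every way the full set $\{h_1,\ldots,h_u\}$ can be dependent; the paper's text only treats the case where $h_u$ itself is dependent on the rest, leaving the reduction to this case implicit.
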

\begin{proof}
Since $\prod_{i=1}^{t}L_i \subseteq K$,  $Skel(\prod_{i=1}^{t}L_i ) \supset Skel(L) \neq \emptyset$ for every $0 \leq t \leq u$, which in turn implies that $(\prod_{i=1}^{t}L_i ) \cap \mathscr{R} = \{ 1 \}$ for every $0 \leq t \leq u$ (otherwise $Skel(\prod_{i=1}^{t}L_i)~=~\emptyset$). If $\{h_1,....,h_u\}$ is $\mathscr{R}$-convexly independent then $h_u$ is $\mathscr{R}$-convexly independent of the set $\{h_1,....,h_{u-1}\}$ thus $L_u = \langle h_u \rangle \not \subseteq \prod_{i=1}^{u-1}L_i \cdot \langle \mathscr{R} \rangle$. Thus the inclusions of the chain \eqref{eq_desc_chain_kernels} are strict, i.e., it is strictly descending.  On the other hand if $h_u$ is $\mathscr{R}$-convexly dependent on $\{h_1,....,h_{u-1}\}$ then   $L_u = \langle h_u \rangle  \subseteq \prod_{i=1}^{u-1}L_i \cdot \langle \mathscr{R} \rangle$. Assume $L_u = \langle h_u \rangle  \not \subseteq \prod_{i=1}^{u-1}L_i$, then $\langle \mathscr{R} \rangle \subseteq \prod_{i=1}^{u}L_i$ implying that $\prod_{i=1}^{u}L_i$ is not an HS-kernel. Thus $L_u = \langle h_u \rangle   \subseteq \prod_{i=1}^{u-1}L_i$ and the chain is not strictly descending.
\end{proof}

\begin{prop}\label{prop_kernel_descending_chain_properties}
If $L \in HSpec(\mathbb{K})$, then $hgt(L) = condeg(L)$.
 Moreover, every factor of a  descending chain of maximal length is an  HP-kernel.
\end{prop}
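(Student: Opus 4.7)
The plan is to prove both inequalities separately, then read off the ``moreover'' claim from the construction, working under the natural hypothesis $Skel(L)\neq\emptyset$ that is built into Proposition~\ref{prop_kernel_descending_chain} (without which that proposition's ``strictly descending $\Leftrightarrow$ convexly independent'' equivalence collapses).

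\textbf{Lower bound.} Set $s = condeg(L)$ and, via Remark~\ref{rem_basis_for_HS_kernel}, pick a basis $\{h_1,\dots,h_s\}\subset HP(L)$, which is $\mathscr{R}$-convexly independent and satisfies $ConSpan(h_1,\dots,h_s)=ConSpan(L)$. Put $L_i=\langle h_i\rangle$. Applying Proposition~\ref{prop_kernel_descending_chain} to the tuple $(h_1,\dots,h_s)$, convex independence forces the descending chain
\[
L \;=\; \prod_{i=1}^{s} L_i \;\supsetneq\; \prod_{i=1}^{s-1} L_i \;\supsetneq\; \cdots \;\supsetneq\; L_1 \;\supsetneq\; \langle 1\rangle
\]
to be strictly decreasing. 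Reversing it produces a strictly ascending chain in $HSpec(\mathbb{K})$ of length $s$ ending at $L$, so $hgt(L)\geq s=condeg(L)$.

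\textbf{Upper bound and the moreover claim.} Let $\langle 1\rangle=P_0\subsetneq P_1\subsetneq\dots\subsetneq P_t=L$ be a maximal ascending chain in $HSpec(\mathbb{K})$, so $t=hgt(L)$. At each step, since $P_i$ is HS (hence generated as a kernel by its HP-fractions by Remark~\ref{rem_HS_prod_HP} together with Proposition~\ref{kernel_as_a_product}) and $P_i\supsetneq P_{i-1}$ properly, I can choose an HP-fraction $f_i\in HP(P_i)\setminus P_{i-1}$. Then $P_{i-1}\cdot\langle f_i\rangle$ is again a product of HP-kernels, hence an HS-kernel, and lies strictly above $P_{i-1}$ and weakly below $P_i$; maximality of the chain forces
\[
P_i \;=\; P_{i-1}\cdot\langle f_i\rangle.
\]
This already gives the moreover claim: the $i$-th ``factor'' of the maximal chain is the HP-kernel $\langle f_i\rangle$. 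Iterating yields $L=\prod_{i=1}^{t}\langle f_i\rangle$.

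\textbf{Conclusion.} The descending chain $L=\prod_{i=1}^{t}\langle f_i\rangle\supsetneq\prod_{i=1}^{t-1}\langle f_i\rangle\supsetneq\cdots\supsetneq\langle f_1\rangle\supsetneq\langle 1\rangle$ is strictly descending (being the reverse of our strictly ascending maximal chain). Proposition~\ref{prop_kernel_descending_chain} then yields that $\{f_1,\dots,f_t\}\subset HP(L)$ is $\mathscr{R}$-convexly independent. By Corollary~\ref{cor_basis_for_HP} any convexly independent subset of $HP(L)$ has size at most $condeg(L)$, so $t\leq condeg(L)$. Combining with the lower bound gives $hgt(L)=condeg(L)$.

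\textbf{Main obstacle.} Both directions hinge on Proposition~\ref{prop_kernel_descending_chain}, whose equivalence requires $Skel$ of each intermediate kernel to remain nonempty; equivalently, none of them absorbs $\langle\mathscr{R}\rangle$. For example in $\mathscr{R}(x,y)$ the ascending chain $\langle 1\rangle\subsetneq\langle x\rangle\subsetneq\langle x,\alpha\rangle$ (with $\alpha\in\mathscr{R}\setminus\{1\}$) is a strictly ascending chain of HS-kernels in which the second HP-fraction $\alpha x$ is convexly dependent on $x$ and $\langle x,\alpha\rangle$ has empty skeleton — so the converse direction of Proposition~\ref{prop_kernel_descending_chain} genuinely fails without the skeleton hypothesis. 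The delicate step is thus ensuring that the HP-fraction $f_i$ produced at each maximal step can actually be used as an independent generator, which holds precisely under $Skel(L)\neq\emptyset$; this is the setting in which the proposition is meaningful.
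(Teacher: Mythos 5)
Your argument for the main equality $hgt(L)=condeg(L)$ is essentially the paper's argument made more explicit -- the paper simply invokes Proposition~\ref{prop_kernel_descending_chain} and asserts the conclusion, while you spell out the two inequalities and the extraction of HP-fractions $f_i$ at each step of a maximal chain via $P_i=P_{i-1}\cdot\langle f_i\rangle$; your remark about the necessity of $Skel(L)\neq\emptyset$ is a genuine observation that both you and the paper rely on implicitly.

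However, there is a real gap in your treatment of the ``moreover'' claim. The factor of the chain at step $i$ is, per the paper's definitions, the quotient $P_i/P_{i-1}$ inside $\mathbb{K}/P_{i-1}$, \emph{not} the kernel $\langle f_i\rangle$ sitting in $\mathbb{K}$. Writing $P_i=P_{i-1}\cdot\langle f_i\rangle$ does \emph{not} ``already give'' that this quotient is an HP-kernel: by the second isomorphism theorem
\[
P_i/P_{i-1}\;\cong\;\langle f_i\rangle\big/\bigl(\langle f_i\rangle\cap P_{i-1}\bigr),
\]
which is a homomorphic image $\langle\phi(f_i)\rangle$ of the HP-kernel $\langle f_i\rangle$ under the quotient map $\phi:\mathbb{K}\to\mathbb{K}/P_{i-1}$, and a homomorphic image of an HP-kernel need not be an HP-kernel of the target -- the image $\phi(f_i)$ could land in $\mathscr{R}$. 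Knowing $f_i\notin P_{i-1}=\phi^{-1}(1)$ only shows $\phi(f_i)\neq 1$; you would also need $f_i\notin\phi^{-1}(\mathscr{R})=\mathscr{R}\cdot P_{i-1}$. That this holds is exactly what the paper verifies: if $\phi(f_i)\in\mathscr{R}\setminus\{1\}$ then $P_i=\phi^{-1}(\langle\phi(f_i)\rangle)\supseteq P_{i-1}\cdot\langle\mathscr{R}\rangle\supseteq\langle\mathscr{R}\rangle$, forcing $Skel(P_i)=\emptyset$, contradicting the standing hypothesis. The paper carries out this check after applying Theorem~\ref{thm_nother_1_and_3}(2) to rewrite the factor; your write-up needs the same step to close the argument.
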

\begin{proof}
By Proposition \ref{prop_kernel_descending_chain}, we have that the maximal length of a chain of \linebreak HS-kernels descending from an HS-kernel $L$ equals the number of elements in a basis of $ConSpan(L)$; thus we have that the chain is of unique length $condeg(K)$, i.e., \linebreak $hgt(L) = condeg(L)$. Moreover, by Theorem \ref{thm_nother_1_and_3}(2) we have that
$$\prod_{i=1}^{j}L_i / \prod_{i=1}^{j-1}L_i   \cong L_j/\bigg(L_j \cap \prod_{i=1}^{j-1}L_i \bigg).$$ Since $L_j \cdot (L_j \cap \prod_{i=1}^{j-1}L_i  ) = L_j \cap \prod_{i=1}^{j}L_i   \subset \prod_{i=1}^{j}L_i $ and $(\prod_{i=1}^{j}L_i ) \cap \mathscr{R} = \{1\}$, we have that $\left(L_j \cdot (L_j \cap (\prod_{i=1}^{j-1}L_i  ))\right) \cap \mathscr{R} = \{1\}$. So the homomorphic image of the HP-kernel $L_j$ under the quotient map $\mathscr{R}(x_1,...,x_n) \rightarrow \mathscr{R}(x_1,...,x_n)/(L_j \cap (\prod_{i=1}^{j-1}L_i))$  is an HP-kernel. Thus every factor of the chain is an HP-kernel.
\end{proof}

\begin{thm}
If $\mathbb{K}$ is an affine semifield, then
$$Hdim(\mathbb{K}) = condeg(\mathbb{K}).$$
\end{thm}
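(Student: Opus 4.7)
The plan is to combine Proposition \ref{prop_kernel_descending_chain_properties} (which gives $hgt(L)=condeg(L)$ for every HS-kernel $L$, together with the observation that maximal descending chains have HP-kernel factors) with a matching pair of inequalities for $condeg$. Since $Hdim(\mathbb{K}) = \sup\{hgt(L) : L \in \mathrm{HSpec}(\mathbb{K})\} = \sup\{condeg(L) : L \in \mathrm{HSpec}(\mathbb{K})\}$, it suffices to prove that this supremum equals $condeg(\mathbb{K})$.

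First, I would establish the upper bound $Hdim(\mathbb{K}) \leq condeg(\mathbb{K})$. For any HS-kernel $L$, a basis $B \subseteq \mathrm{HP}(L)$ of $ConSpan(L)$ is in particular a convexly independent subset of $\mathrm{HP}(\mathbb{K})$; since convex dependence as in Definition \ref{defn_convex_dep} is defined intrinsically in the ambient semifield, any basis of $ConSpan(L)$ can be extended (via Corollary \ref{cor_basis_for_HP}, applied to $\mathrm{HP}(\mathbb{K})$) to a basis of $\mathrm{HP}(\mathbb{K})$. Hence $condeg(L) \leq condeg(\mathbb{K})$ for every $L \in \mathrm{HSpec}(\mathbb{K})$, and Proposition \ref{prop_kernel_descending_chain_properties} gives $hgt(L) \leq condeg(\mathbb{K})$.

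Next, I would establish the lower bound $Hdim(\mathbb{K}) \geq condeg(\mathbb{K})$ by exhibiting a single HS-kernel achieving the extremal value. Let $m = condeg(\mathbb{K})$ and choose a basis $\{h_1,\ldots,h_m\} \subseteq \mathrm{HP}(\mathbb{K})$ of $\mathrm{HP}(\mathbb{K})$ over $\mathscr{R}$ (existence by Corollary \ref{cor_basis_for_HP}). Define
\[
L_0 \;=\; \prod_{i=1}^{m}\langle h_i\rangle \;=\; \Big\langle \sum_{i=1}^{m}|h_i|\Big\rangle,
\]
which is an HS-kernel by definition since each $h_i$ is an HP-fraction. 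Because $ConSpan(L_0) = \langle h_1,\ldots,h_m\rangle \cdot \langle\mathscr{R}\rangle = ConSpan(\{h_1,\ldots,h_m\})$ and the $h_i$ are convexly independent, they form a basis of $ConSpan(L_0)$, so $condeg(L_0) = m$. Applying Proposition \ref{prop_kernel_descending_chain_properties} again yields $hgt(L_0)=m=condeg(\mathbb{K})$, so $Hdim(\mathbb{K}) \geq condeg(\mathbb{K})$. Combining the two bounds gives the claimed equality.

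The main obstacle I expect is verifying the hypothesis $Skel(L_0)\neq\emptyset$ required by Proposition \ref{prop_kernel_descending_chain} to ensure the chain $L_0\supset \prod_{i=1}^{m-1}\langle h_i\rangle \supset \cdots \supset \langle h_1\rangle \supset \langle 1\rangle$ is strictly descending and consists of genuine HS-kernels (equivalently, that $L_0\cap\mathscr{R}=\{1\}$, i.e.\ that $\langle\mathscr{R}\rangle\not\subseteq L_0$). If $\langle\mathscr{R}\rangle\subseteq L_0$ held, then some $\alpha\in\mathscr{R}\setminus\{1\}$ would be convexly dependent on $\{h_1,\ldots,h_m\}$; but using the logarithmic interpretation of Subsection \ref{subsection:geometric_interpretation}, the $h_i$ correspond to linear functionals on $\mathbb{K}$ viewed as a $\mathbb{Q}$-vector space whose common zero locus is non-empty (by convex independence they impose independent rational-linear constraints), so a constant cannot be forced into their convex span. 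This argument, together with Proposition \ref{prop_convexity_degree_invariance_for_HS_kernels} which shows $condeg$ is invariant under multiplication by region kernels, takes care of the affine case $\mathbb{K}=\mathscr{R}(x_1,\ldots,x_n)/L$ by passing through the quotient map.
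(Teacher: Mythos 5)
Your proof is correct and follows essentially the same strategy as the paper's brief argument, which simply invokes Definition \ref{defn_Hdim} and Proposition \ref{prop_kernel_descending_chain_properties}. You usefully fill in the details the paper elides: the matroid-style basis-extension yielding the upper bound, and the construction of a witnessing HS-kernel $L_0$, for which $Skel(L_0)\neq\emptyset$ is guaranteed because $\langle\mathscr{R}\rangle\subseteq L_0$ would produce a multiplicative relation $\prod h_i^{c_i}=\alpha\neq 1$ among the basis HP-fractions, contradicting their $\mathscr{R}$-convex independence.
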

\begin{proof}
A consequence of Definition \ref{defn_Hdim} and Proposition \ref{prop_kernel_descending_chain_properties}.
\end{proof}

\begin{defn}
A set of region kernels $\{R_1,...,R_t\} \subset \PCon(\mathscr{R}(x_1,...,x_n))$ is said to be a \emph{cover of regions} if $R_1 \cap R_2 \cap \dots \cap R_t = \{1 \}$. In other words, $\{R_1,...,R_t\}$ is a cover of regions if $Skel(R_1) \cup Skel(R_2) \cup \dots \cup Skel(R_t) = \mathscr{R}^n$.
\end{defn}

\begin{rem}\label{cover_of_regions_sub_direct_product}
If $\{R_1,...,R_t\} \subset \PCon(\mathscr{R}(x_1,...,x_n))$ is a cover of regions, then we can express $\mathscr{R}(x_1,...,x_n)$ as a subdirect product
$$\mathscr{R}(x_1,...,x_n) = \mathscr{R}(x_1,...,x_n)/(R_1 \cap R_2 \cap \dots \cap R_t) \xrightarrow[s.d]{} \prod_{i=1}^{t} \mathscr{R}(x_1,...,x_n)/R_i.$$
Let $K \in \Con(\mathscr{R}(x_1,...,x_n))$. Then $R_1 \cap R_2 \cap \dots \cap R_t \cap K = \bigcap_{i=1}^t (R_i \cap K) = \{1 \}$. Since $K$ itself is an idempotent semifield, we have that
$$K = K/\bigcap_{i=1}^t (R_i \cap K) \cong \prod_{i=1}^{t} K/(R_i \cap K) \cong \prod_{i=1}^{t} R_i K/R_i.$$
\end{rem}

As we have seen for every principal regular kernel $\langle f \rangle \in (\mathscr{R}(x_1,...,x_n))$, there exists an explicitly formulated cover of regions $\{R_{1,1},...,R_{1,s},R_{2,1},...,R_{2,t} \}$ such that
$$\langle f \rangle = \bigcap_{i=1}^{s}K_i \cap \bigcap_{j=1}^{t}N_j$$
where $K_i = L_i \cdot R_{1,i}$ for $i =1,...,s$ and appropriate HS-kernels $L_i$ and
$N_j = B_j \cdot R_{2,j}$ for $j =1,...,t$ and appropriate bounded from below kernels $B_j$.
If $\langle f \rangle \in \PCon(\langle \mathscr{R} \rangle)$, as we have shown, then  $B_j = \langle \mathscr{R} \rangle$ for every $j = 1,...,t$. Note that over the different regions in $\mathscr{R}^n$, corresponding to the region kernels $R_{i,j}$, $f$ is locally represented by distinct elements of $HSpec(\mathscr{R}(x_1,...,x_n))$ (HS-fractions). In fact the regions themselves are defined such that the local HS-representation of $f$ will stay invariant over each. Thus the    $R_{i,j}$'s, defining the partition of the space, are uniquely determined as the minimal set of regions over each of which $\langle f \rangle$ comes from a unique HS-kernel. \\

For each $j = 1,...,t$, we have that  $$condeg(N_j) = Hdim(N_j) = 0$$ and $$condeg(\mathscr{R}(x_1,...,x_n)/N_j) = Hdim(\mathscr{R}(x_1,...,x_n)/N_j) = n.$$
For each $i = 1,...,s$, we have that  $$condeg(K_i)= condeg(L_i) = Hdim(L_i) \geq 1$$ and $$condeg(\mathscr{R}(x_1,...,x_n)/K_i) = Hdim(\mathscr{R}(x_1,...,x_n)/K_i) = n - Hdim(L_i) < n.$$

\begin{defn}\label{defn_hyper_dim}
Using the notations used in the discussion above,
let $\langle f \rangle$ be a regular principal kernel in $\PCon(\langle \mathscr{R} \rangle)$.
Define the \emph{Hyper-dimension} of $\langle f \rangle$ to be $$Hdim(\langle f \rangle) = (Hdim(L_1),...,Hdim(L_s))$$ and $$Hdim(\langle f \rangle) = \left(Hdim(\mathscr{R}(x_1,...,x_n)/L_1),..., Hdim(\mathscr{R}(x_1,...,x_n)/L_s)\right).$$
\end{defn}

\ \\
\begin{rem}
In view of the discussion in Subsection \ref{subsection:geometric_interpretation} each term $\mathscr{R}(x_1,...,x_n)/L_i$ in Definition \ref{defn_hyper_dim} corresponds to the linear subspace of $\mathscr{R}^n$ (in logarithmic scale) defined by the linear constraints endowed  on the quotient $\mathscr{R}(x_1,...,x_n)/L_i$ by the HS-kernel $L_i$. One can think of these terms as an algebraic description of the affine subspaces which locally comprise the skeleton $Skel(f)$.
\end{rem}

\clearpage

\bibliographystyle{amsplain}
\bibliography{TAGBib}

\end{document}